\documentclass[12pt, a4paper, parskip=half, abstracton, bibliography=totoc]{scrartcl}
\pdfoutput=1

\usepackage{array}
\usepackage{marginnote}
\usepackage{xcolor}
\usepackage{amscd,amsfonts,amssymb,amsmath,latexsym,amsthm}
\usepackage{hyperref}
\usepackage[all,cmtip]{xy}
\textheight23cm
\textwidth16cm
\usepackage{mathrsfs}
\oddsidemargin0.0cm
\topmargin-1.0cm
\footskip2.0cm
\setlength\parindent{0pt}
\usepackage{graphicx}
\usepackage{caption}
\usepackage{makeidx}
\makeindex

\usepackage{etoolbox}

\def\hB{\hspace*{\fill}$\qed$}

\let\counterwithout\relax

\usepackage{chngcntr} 

\usepackage{defs_pp1}
\usepackage{slashed}
\usepackage[utf8]{inputenc}
\usepackage{microtype}
\usepackage[english]{babel}

\usepackage{bm} 
\usepackage{mathtools}

\title{Homotopy theory with\\ bornological coarse spaces}
\author{
Ulrich Bunke\thanks{Fakult{\"a}t f{\"u}r Mathematik,
Universit{\"a}t Regensburg,
93040 Regensburg,
GERMANY\newline
ulrich.bunke@mathematik.uni-regensburg.de} 
\and
Alexander Engel\thanks{Fakult{\"a}t f{\"u}r Mathematik,
Universit{\"a}t Regensburg,
93040 Regensburg,
GERMANY\newline
alexander.engel@mathematik.uni-regensburg.de}
}

\date{}

\numberwithin{equation}{section}
\setcounter{secnumdepth}{3}
\counterwithout{footnote}{section}

\newtheorem{theorem}{Theorem}[section] 
\newtheorem{prop}[theorem]{Proposition}
\newtheorem{lem}[theorem]{Lemma}
\newtheorem{ddd}[theorem]{Definition}
\newtheorem{kor}[theorem]{Corollary}

\theoremstyle{remark}
\theoremstyle{definition}

\newtheorem{ex}[theorem]{Example}
\newtheorem{rem}[theorem]{Remark}

\newcommand{\sep}{\mathrm{sep}}
\newcommand{\lc}{\mathrm{lc}}
\newcommand{\Loc}{\mathrm{Loc}}
\newcommand{\fin}{\mathrm{fin}}
\newcommand{\sph}{\mathrm{sph}}
\newcommand{\coarse}{\mathrm{coarse}}
\newcommand{\disc}{\mathrm{disc}}
\newcommand{\Kcat}{\mathrm{K}^{C^{*}Cat}}
\newcommand{\la}{\mathrm{la}}
\newcommand{\an}{\mathrm{an}}
\newcommand{\mot}{\mathrm{mot}}
\newcommand{\topp}{\mathrm{top}}
\newcommand{\alg}{\mathrm{alg}}
\newcommand{\Ass}{\mathrm{Ass}}
\newcommand{\ql}{\mathrm{ql}}
\newcommand{\gr}{\mathrm{gr}}
\newcommand{\cJ}{\mathcal{J}}
\newcommand{\Mor}{\mathrm{Mor}}
\newcommand{\Yo}{\mathrm{Yo}}

\newcommand{\yo}{\mathrm{yo}}

\newcommand{\Orb}{\mathbf{Orb}}

\newcommand{\cR}{\mathcal{R}}
\newcommand{\inter}{\mathrm{int}}

\newcommand{\BC}{\mathbf{BornCoarse}}
\newcommand{\TopBorn}{\mathbf{TopBorn}}

 \newcommand{\op}{\mathrm{op}}
 \newcommand{\fl}{\mathrm{fl}}

\newcommand{\Ob}{\mathrm{Ob}}

\newcommand{\Cofib}{\mathtt{Cofib}}
\newcommand{\bB}{{\mathbf{B}}}
\newcommand{\Fib}{{\mathtt{Fib}}}

\newcommand{\incl}{\mathrm{incl}}

\newcommand{\cP}{\mathcal{P}}

\newcommand{\bF}{{\mathbf{F}}}

\newcommand{\cZ}{{\mathcal{Z}}}
\newcommand{\bbA}{{\mathbb{A}}}

\newcommand{\cW}{{\mathcal{W}}}
\newcommand{\PSh}{{\mathbf{PSh}}}

\newcommand{\bA}{{\mathbf{A}}}

\newcommand{\bK}{{\mathbf{K}}}
\newcommand{\const}{{\mathtt{const}}}

\newcommand{\Alg}{{\mathbf{Alg}}}

\newcommand{\cO}{{\mathcal{O}}}
\newcommand{\cU}{{\mathcal{U}}}
\newcommand{\cY}{{\mathcal{Y}}}

\newcommand{\cD}{{\mathcal{D}}}

 \newcommand{\Cat}{{\mathbf{Cat}}}

\newcommand{\Fl}{\mathrm{Fl}}

\newcommand{\cE}{{\mathcal{E}}}

\newcommand{\KXql}{K\!\mathcal{X}_{\mathrm{ql}}}

\newcommand{\KX}{K\!\mathcal{X}}
\newcommand{\HX}{H\!\mathcal{X}}
\newcommand{\CX}{C\!\mathcal{X}}
\newcommand{\KK}{K\!K}
\newcommand{\Cech}{\v{C}ech }
\newcommand{\IK}{\mathbb{K}}
\newcommand{\lf}{\mathrm{lf}}
\newcommand{\free}{\mathrm{free}}
\newcommand{\mixed}{\mathrm{mixed}}

\newcommand{\Sigmalf}{\Sigma_{+}^{\infty,\lf}}
\renewcommand{\epsilon}{\varepsilon}

\renewcommand{\Dirac}{\slashed{D}}
\newcommand{\Born}{\mathbf{Born}}

\newcommand{\Spc}{\mathbf{Spc}}

\newcommand{\IN}{\mathbb{N}}
\newcommand{\IZ}{\mathbb{Z}}
\newcommand{\IR}{\mathbb{R}}
\newcommand{\IC}{\mathbb{C}}

\newcommand{\cCql}{\mathcal{C}_{\mathrm{ql}}}
\newcommand{\cClc}{\mathcal{C}_{\lc}}
\newcommand{\cClcql}{\mathcal{C}_{\lc,\ql}}
\newcommand{\Ccat}{{\mathbf{C}^{\ast}\mathbf{Cat}}}
\newcommand{\Calg}{{\mathbf{C}^{\ast}\mathbf{Alg}}}

\newcommand{\simpp}{\mathrm{simp}}

\newcommand{\Kast}{K^{C^{*}}}
\begin{document}

\maketitle
\vspace*{-3.5ex}
\begin{abstract}
We propose an axiomatic characterization of coarse homology theories defined
on the category of bornological coarse spaces $\BC$. {Using methods
of {abstract} homotopy theory we construct  the universal coarse homology theory $\Yo^{s}\colon\BC\to \Sp\cX$}
which sends a bornological coarse space to its coarse motivic spectrum.
 

The homological properties of $\Yo^{s}$ are used  {to}
 study the coarse motivic spectra of bornological coarse spaces  equipped with  N.~Wright's  hybrid coarse
structure which interpolates between coarse geometry and the local geometry of uniform spaces.

 In this book we construct various examples of coarse homology theories, e.g. coarse ordinary homology and coarse $K$-homology. We further discuss the coarsifications of locally finite homology theories. To this end we develop some background on locally finite homology theories in the context of topological bornological spaces $\TopBorn$ generalizing the theory of B.~Williams  {and} M.~Weiss. The example of locally finite analytic $K$-homology is of particular interest.
 
Besides the construction of coarse homology theories and the devlopment of the corresponding background the main result of the present paper concerns the classification of coarse homology theories by their values on discrete spaces.
These results will be used in subsequent work to prove isomorphism results for  the coarse assembly map
\end{abstract}

\tableofcontents

\section{Introduction}


Coarse geometry was invented by J.~Roe 
 (see e.g.~\cite{MR1147350,roe_lectures_coarse_geometry}) 
 in connection with applications to the index theory of Dirac-type operators on complete Riemannian manifolds. To capture 
 the index of these operators $K$-theoretically or numerically  J.~Roe furthermore constructed first examples  of coarse homology theories, namely coarse $K$-homology, coarse ordinary homology, and   coarsifications of locally finite homology theories.
 
The symbol class of a Dirac-type operator is a locally finite $K$-homology class. 
 The   transition from the symbol class to the coarse index of the  Dirac operator  proceeds in two steps.  The first step
 sends the symbol class to the coarse symbol  class in  the coarsification of the locally finite $K$-homology. This step removes the local information
 and builds a coarsely invariant object which is still of topological nature. The second step consists of an application of
 the  coarse assembly map   \cite[Sec.~5]{roe_lectures_coarse_geometry}. The resulting coarse index captures global analytic properties of the Dirac operator, in particular its invertibility. 
The $K$-theoretic coarse Baum--Connes conjecture   \cite{hr} asks under which conditions this assembly map is an isomorphism. 
We added the adjective ``$K$-theoretic'', {since} we now understand this assembly map as a specialization of a more general construction \cite{ass}.

Information about the $K$-theoretic coarse  assembly  map has many implications to index theory, geometry and topology. 
 In good situations surjectivity implies that 
 some coarse $K$-theory classes can be realized as indices of generalized Dirac operators. Because the analysis of Dirac operators is well-developed, this
  allows conclusions about properties of coarse $K$-theory classes, e.g. vanishing of delocalized traces. On the other hand, injectivity
  has applications to the positive scalar curvature question. Indeed, uniform positive scalar curvature implies that the {spin} Dirac operator 
  is invertible so that its index vanishes. One can then conclude that its symbol class vanishes at least coarsely, and this has consequences for the topology of the manifold. 
  Finally, if the  $K$-theoretic coarse assembly map  for the group $G$ with the word metric is an isomorphism, then this implies via the descent principle the Novikov conjecture
 which has  implications   to the topology of manifolds.


The initial motivation for this book was to uncover the basic structure of the proof of the $K$-theoretic coarse Baum--Connes conjecture  \cite{nw1}. 
Our main new insight is the following.
The proof of the coarse Baum--Connes conjecture  \cite{nw1} under the assumption of finite asymptotic dimension is not specific to $K$-theory. The argument actually  shows that a natural transformation between two coarse homology theories (if suitably axiomatized)  is an isomorphism on a space of finite asymptotic dimension  if it is an isomorphism on discrete spaces. This is analogous to the fact that a natural transformation between homology theories satisfying the Eilenberg--Steenrod axioms (except the dimension axiom) induces an isomorphism of their values on CW-complexes, if it induces an isomorphism on the one-point space. This idea can be applied to the $K$-theoretic Baum--Connes conjecture
by interpreting the $K$-theoretic coarse assembly map as a natural transformation between coarse homology theories in the sense above. This argument will only be completed in \cite{ass} while the present   book provides the construction of the homology theory appearing as  the target of the assembly map.

The first goal of the present book is to propose a set of axioms for coarse homology theories which make this idea work.
 The basic category containing the objects of interest  is the category of bornological coarse spaces $\BC$ (Definition \ref{etgkowergferfrwefrefw}). A bornological coarse space is a set equipped with a coarse structure and a compatible bornology.  A morphism between bornological coarse spaces  is a  map   which is  controlled and proper. The idea of this definition is to work with the minimal amount of structure.
Metric spaces present bornological coarse spaces, but the datum of a metric  is a much finer structure and not preserved by isomorphisms in $\BC$. Moreover there are important examples of bornological coarse spaces which  can not be presented by metric spaces, e.g. the spaces
with the hybrid structure    occurring in the argument of   \cite{nw1} or continuously controlled coarse structures.

{In order to} formulate the basic axioms for a coarse homology theory   technically, we will use the language of $\infty$-categories \cite{Cisinski:2017,htt}.
This theory is probably one of the most rapidly accepted new theories of mathematics and by now indispensible in modern homotopy theory.  We have tried to minimize the required technical knowledge about $\infty$-categories by using it in a  model independent way. 
A reader unfamiliar with this language  should consider this book  as an opportunity to  learn the language of $\infty$-categories  in action. But
 it  is not the place for explaining many internal  technical details of the language.

A coarse homology theory (Definition \ref{rgljogreggregrege}) is a functor 
$$E:\BC\to \bC$$
whose target is some stable $\infty$-category. 
This functor must be coarsely invariant, excisive and $u$-continuous, and it  must annihilate flasque bornological coarse spaces like the ray $[0,\infty)$.

On the one hand the choice of these axioms  is motivated by the application explaind above. On the other hand they  are natural from the point of view of a homotopy theory of bornological coarse spaces which we will explain in the following.

On the morphism sets of $\BC$ we have the equivalence relation called closeness (Definition \ref{ioewfjweoifoijwfw32424}).  Morphisms in $\BC$ which are invertible up to closeness are
called coarse equivalences (Definition \ref{ioioieorwerr3245345345432}).  In coarse homotopy theory 
  one studies coarse spaces up to coarse equivalence. Furthermore, one considers flasque bornological coarse spaces (Definition \ref{efijewifjewiofwifwfew322423424})  as trivial.   
   Coarse homology theories are defined such that they  provide homotopy theoretic invariants
  of bornological coarse spaces which in view of the excision axiom  {are in addition}   local  in a certain sense.  
  
For different   set-ups and axioms for coarse homology theories see \cite{MR1834777,Heiss:2019aa}.

As our goal is to show   results for arbitrary  coarse homology theories, it is natural to prove them for a universal coarse homology theory (Definition \ref{ewiogwergergeffwfrf})
\begin{equation}\label{rqefjknkjfcewfcewfqecqewx}
\Yo^{s}:\BC\to \Sp\cX\, ,
\end{equation}  where $\Sp\cX$ is called the category of coarse motivic spectra.
 For $X$ in $\BC$ we call $\Yo^{s}(X)$ in $ \Sp\cX$ the coarse motivic spectrum   of $X$.
Using modern homotopy theory the  construction of this universal  coarse homology theory is completely standard.
It starts with the Yoneda embedding $$\yo:\BC\to \PSh(\BC)\, .$$ In order to explain the right-hand side note that 
  for a $\infty$-category  $\cC$ we write  $ \PSh(\cC):=\Fun(\cC^{\op},\Spc)$
for the  $\infty$-category of $\Spc$-valued presheaves. 
 The $\infty$-category $\Spc$ is {the} $\infty$-category of spaces which e.g. contains the mapping spaces between objects in arbitrary $\infty$-categories.
In general we view (without explicitly mentioning) ordinary categories as
$\infty$-categories using the nerve functor.   

In order to construct \eqref{rqefjknkjfcewfcewfqecqewx}
one then applies to $\PSh(\BC)$ various localizations forcing the properties
which are dual to the axioms for a coarse homology theory. This yields the  unstable version (Definition \ref{qergkjefkoerfrefwefwerfev})
$$\Yo \colon \BC\to \Spc\cX\, ,$$ where $\Spc\cX$ is the category of coarse motivic spaces. The stable version \eqref{rqefjknkjfcewfcewfqecqewx}  is then obtained 
 by applying a stabilization functor.

We now formulate one of the main results of the present paper.  
We consider the minimal  cocomplete stable full    subcategory
$$\Sp\cX\langle \cA_{\disc}\rangle\subseteq \Sp\cX$$  of $\Sp\cX$ containing    coarse motivic spectra of the elements of the set $\cA_{\disc}$ of discrete bornological coarse spaces. A discrete bornological coarse space has the simplest possible
coarse structure on its underlying set, but its bornology might be interesting. 
 \begin{theorem}[Theorem \ref{jfweofjwoeifjewfoewfewfewfewf}] \label{eifwuewifeiue23423424324}
If $X$ is a  bornological coarse space of weakly finite  {asymptotic} dimension (Definition \ref{wegoijobgwtwferfrewfer}), then $\Yo^{s}(X)\in \Sp\cX\langle \cA_{\disc}\rangle$.
\end{theorem}
By the universal property of $\Yo^{s}$ a 
 coarse homology theory with values in $\bC$ is  the same as a colimit preserving functor  $\Sp\cX\to \bC$. 
 The following corollary is now immediate.
Let $E\to F$ be a transformation between $\bC$-valued coarse homology theories, and let 
 $X$ be in $\BC$.
 \begin{kor}[Corollary \ref{thm:sdf98245csv}] \label{iewfjweifiiweofu98uu98q}Assume:
 \begin{enumerate}
 \item 
The induced map $E(Y)\to F(Y)$ is an equivalence for all discrete  $Y$ in $\BC$. \item 
$X$ has weakly     finite asymptotic dimension.  \end{enumerate}
Then $E(X)\to F(X)$ is an equivalence.
\end{kor}
 
 Under more restrictive assumptions we also have the following result which is closer to the classical uniqueness results in ordinary   algebraic topology. It requires an additional additivity assumption on the coarse homology theory.
 
  \begin{theorem}[Theorem \ref{wekfhweuifhweiui23u4242342rf}] \label{iewfjweifiiweofu98uu98q1}
 Assume:
 \begin{enumerate}
 \item $E,F$ are additive.
 \item 
The induced map $E(*)\to F(*)$ is an equivalence. 
\item  $X$ has weakly finite asymptotic dimension.
\item $X$ has the minimal compatible bornology. \end{enumerate}
Then $E(X)\to F(X)$ is an equivalence.    \end{theorem}
 
The proof of Theorem \ref{eifwuewifeiue23423424324} follows ideas of \cite{nw1} and uses the coarsening space. This is a simplicial complex build from an anti-\v{C}ech system of coverings of the bornological coarse space in question  (see Subsection \ref{rtkohwrgwgvwerwevfds}). 
The condition of weakly finite asymptotic dimension translates to finite-dimensionality of the coarsening space. 
We  equip this coarsening space with the hybrid coarse structure (Definition \ref{defn:sdfuh8934t})  which interpolates between the coarse structure of the original bornological coarse space and the local metric geometry of the simplicial complex. 
We then show that the coarsening space with the hybrid structure is flasque (Theorem \ref{thm:sdfdvw4v}). As a consequence  its   motivic
coarse spectrum vanishes, and the motivic coarse spectrum of the original space is equivalent up to suspension to the 
germs-at-$\infty$ (see below)   of the coarsening space. 
Using the Decomposition Theorem and the Homotopy Theorem also described below we can then decompose these germs-at-$\infty$   into motivic coarse spectra of a collection of cells, and finally of discrete spaces (see Section \ref{triohjorgvwevfdvsfdv}).  In this argument we  argue by
  induction by the dimension and therefore the  finiteness of the latter is important.
Similar techniques (with different words) are also used elsewhere, e.g. in the proof of the Farell--Jones conjecture in 
\cite{blr}. 

The technical details involved in these arguments are tricky. One goal of  Section \ref{efwljkfowpfjowefwefewfwef}  is to develop the background in great generality and in a ready-to-use way.  After introducing hybrid structures we provide the following two main results which we can only formulate
  roughly at this place. We think of the hybrid structure as a  coarse structure constructed from a uniform structure.
    The first is the Homotopy Theorem
\begin{theorem}[Thm.~\ref{fewijwefio23ri3ohewkjfwefewf}]
Uniform homotopy equivalences  between uniform spaces induce equivalences of the coarse motivic spectra of the spaces equipped with the associated hybrid structures.
\end{theorem}

Note that the hybrid structure on  a uniform space $X$ also depends on an exhaustion $\cY$ of the uniform space. 
We consider  the coarse motivic spectrum $\Cofib(\Yo^{s}(\cY)\to \Yo^{s}(X))$ as the motive of the germ-at-$\infty$ of $X$ relative to the exhaustion $\cY$.
The second result is the Decomposition Theorem.
\begin{theorem}[Thm.~\ref{weiofjewi98u3298r32r32rr}]
The germ-at-$\infty$ is excisive for coarsely and uniformly excisive decompositions.
\end{theorem}
These results will    be used in subsequent papers \cite{equicoarse,ass,fj}.

The next goal of this book is to construct various examples of coarse homology theories satisfying our axioms.  
In particular we will discuss the classical examples of coarse ordinary homology, coarse $K$-homology, and coarsifications of locally finite homology theories.   In all cases we must extend the classical definitions to functors  defined on the whole category $\BC$
with values in an appropriate stable $\infty$-category (e.g.\ chain complexes $\Ch_{\infty}$ for ordinary coarse homology or  spectra $\Sp$ for coarse $K$-homology).

Extending and lifting the construction of coarse ordinary homology is essentially a straightforward matter. The results of Subsection \ref{ewifjewoifoi232344234} can be summarized as follows:
\begin{theorem}
There exists a coarse homology theory  $\HX:\BC\to \Ch_{\infty}$ such that $\pi_{*}\HX:\BC\to \Ab^{\Z\gr}$
is the classical ordinary coarse homology.
\end{theorem}

In  Section \ref{ihjiovqceqwecewcqwecq} we provide a systematic study of locally finite homology theories and their coarsifications.  The part on locally finite homology theories could be understood as a generalization of \cite{ww_pro}.
A locally finite homology theory (Definition \ref{rewfiewrjgoiegergreefwerfw}) is a 
  functor $$F\colon\TopBorn\to \bC\, ,$$ where $\TopBorn$ is the category of topological bornological spaces
(i.e., topological spaces with an additional compatible bornology, see Definition \ref{fljwlewkiou2ori23r2323}), and $\bC$ is a complete and cocomplete stable $\infty$-category. 
Besides the usual homotopy invariance and excision properties  we require the local finiteness condition:
$$\lim_{B} F(X\setminus B)\simeq 0\, ,$$
where the limit runs over the bounded subsets of $X$.

We show that every object $C$ of $\bC$ gives rise to a locally finite homology  {theory} $$(C\wedge \Sigma_{+}^{\infty,\topp})^{\lf}:\BC\to \bC \, .$$
Another example  of completely different origin is the $\Sp$-valued analytic locally finite $K$-homology $K^{\an,\lf}$ which is constructed using Kasparov-$K\! K$-theory. We have $K^{\an,\lf}(*)\simeq KU$ and it is an interesting question to compare  $K^{\an,\lf}(*)$ with its  homotopy theoretic version $(KU\wedge \Sigma_{+}^{\infty,\topp})^{\lf}$.

Motivated by this question we provide a general classification result for locally finite homology theories which we again only formulate in a rough way at this point.
\begin{prop}[Proposition \ref{ifjewifweofewifuoi23542335345}]
The values of  a locally finite homology theory on nice spaces are completely determined by {the} value of the theory  on the point.
\end{prop}
 Since locally finite simplicial complexes are nice
we can conclude:
\begin{kor}[Corollary \ref{wergkjeowrgerfwefref}]  $(KU\wedge \Sigma_{+}^{\infty,\topp})^{\lf}$ and $K^{\an,\lf}$ coincide 
on  topological bornological spaces which are homotopy equivalent to locally finite simplicial complexes.
\end{kor} 
We think that this result is folklore, but we could not locate a proof in the literature.

In the present book our  main usage of locally finite homology theories is as an input for the coarsification machine.
The main result of Subsection \ref{jhksdf2323r} can be formulated as follows.

\begin{theorem}
To  every locally finite homology theory $F$ we can associate functorially a  coarse homology theory $QF$ such that $QF(*)\simeq F(*)$.
\end{theorem}
 The coarse homology $QF$ is called the coarsification of $F$. 
 The coarseification of the analytic locally finite homology theory $QK^{\an,\lf}$ is of particular importance since it 
 is the domain of the $K$-theoretic coarse assembly map which we explain in Subsection \ref{sec:kjnbsfd981}.

The last example of a coarse homology theory considered in the present book is the coarse topological $K$-homology.
In contrast to the construction of the coarse ordinary homology theory, lifting the classical definition of coarse $K$-homology from a group-valued functor defined on proper metric spaces to a spectrum-valued functor defined on $\BC$ is highly non-trivial.  The whole Section \ref{sec:sdn934} is devoted to this problem.
 Its  short summary can be stated as follows (Theorems~\ref{irgfjiwoowierer23423424243} and~\ref{fwefiwjfeiooi234234324434}):
\begin{theorem}
There exists a coarse homology theory $\KX\colon\BC\to \Sp$ such that  
$\pi_{*}\KX(X)$ is the classical coarse $K$-homology for proper metric spaces $X$.
\end{theorem}
The construction of the functor $\KX$ proceeds in two steps. In the first one associates to a bornological coarse space $X$ a Roe category
$\bC^{*}(X)$ (Definition \ref{foiefoiefoewfewfewiofef}). This is a $C^{*}$-category which generalizes the classical construction of a Roe algebra. The functor $\KX$ is then obtained from $\bC^{*}$ by composing with a topological $K$-theory functor for $C^{*}$-categories (Definition \ref{wegjioegergewfewrfewef}).
In   Section \ref{sec:sdn934}  will also give a selfcontained presentation of the necessary background
on $C^{*}$-categories and their $K$-theory.

In the following we indicate why the construction of $\KX$ required some new ideas.
The classical  construction  presents  $\KX_{*}$ in terms of  the  $K$-theory groups of Roe algebras.
 Note that the construction of a Roe algebra (Definition \ref{qekdfjqodqqwdqwdqwdqdwq})
involves the choice of an $X$-controlled  Hilbert space   and is therefore not strictly functorial in the space $X$. 
The idea behind using the Roe category 
 is to ensure
functoriality  by working   with the $C^{*}$-category of all possible choices. Unfortunately, the details are more complicated.
For example, in order to get the correct $K$-theory the $X$-controlled Hilbert space used in the construction of the Roe algebra must be ample   (Definition \ref{oiwfiowioewioewfewfew}). But 
 ample Hilbert spaces only exist under additional assumptions on $X$ (Proposition \ref{fkwjlkwejlewfewfewfewf3242344}).

The morphism spaces  in $\bC^{*}(X)$ are defined as closures of spaces of operators of controlled propagation.
There is a slightly weaker condition called quasi-locality which leads 
to a slightly bigger $C^{*}$-category $\bC^{*}_{\ql}(X)$. By composing the functor $\bC^{*}_{\ql}$ with  the  topological $K$-theory functor for $C^{*}$-categories we get a  another coarse $K$-theory functor $\KX_{\ql}$.
The inclusions $\bC^{*}(X)\to \bC^{*}_{\ql}(X)$ induce a natural transformation of coarse homology theories
$\KX\to \KX_{\ql}$ which is obviously an equivalence on discrete bornological coarse spaces.
So Corollary \ref{iewfjweifiiweofu98uu98q} has the following application.
Let $X$ be in $\BC$.
\begin{kor}\label{eqriojgerwgwergregwf}
If $X$ has weakly finite asymptotic dimension, then $\KX(X)\to \KX_{\ql}(X)$ is an equivalence.
\end{kor}

After the preprint version of this book appeared,  \v{S}pakula--Tikuisis \cite{st} have shown that Corollary \ref{eqriojgerwgwergregwf} is true under the weaker condition that $X$ has finite straight {decomposition complexity.}    Later, \v{S}pakula--Zhang generalized the result further to exact spaces \cite{spakula_zhang}.

In order to apply Corollary \ref{iewfjweifiiweofu98uu98q} or Theorem \ref{iewfjweifiiweofu98uu98q1}  
to a coarse assembly map 
it is necessary to 
interpret this coarse assembly map   as a natural transformation between coarse homology theories in the sense above. In the present book
this goal is only reached partially and only in the case of coarse $K$-theory, see Subsection \ref{sec:kjnbsfd981}. 
 
 In the following we explain some details.
 A coarse homology theory $E:\BC\to \bC$ gives rise to the object $E(*)$ in $\bC$.
  As explained above the object $E(*)$  in $\bC$ gives rise to a locally finite 
homology theory
$$  (E(*)\wedge \Sigma^{\infty,\topp}_{+})^{\lf} :\TopBorn\to \bC\, .$$
We can now form its coarsification \begin{equation}\label{dcqwoijodqwcdadscdascadsc}
Q(E(*)\wedge \Sigma^{\infty,\topp}_{+})^{\lf}:\BC\to \bC\, .
\end{equation}
 Our original intention was to construct the coarse assembly map as a natural transformation 
 $$ Q(E(*)\wedge \Sigma^{\infty,\topp}_{+})^{\lf}\to E$$
 since this was suggested by the classical example of coarse $K$-homology which we discuss in  Subsection \ref{sec:kjnbsfd981}.
 The true nature of the coarse assembly map  
 as a natural transformation between coarse homology theories
 was only uncovered in the follow-up paper \cite{ass}. The  main new insight explained in that paper is that  \eqref{dcqwoijodqwcdadscdascadsc} is not 
 the correct domain of the coarse assembly map. 
 The correct domain is described in \cite{ass}.  
In {that} paper we will  study these domains called local homology theories and the  coarse assembly map in a motivic way.
 
   But fortunately, as also shown in \cite{ass},   the  correct domain of the coarse assembly map for $E$   and
  $Q(E(*)\wedge \Sigma^{\infty,\topp}_{+})^{\lf}$ are equivalent on nice spaces like finite-dimensional simplicial complexes with their natural path metric or complete Riemannian manifolds. 
  Using the framework developed in this book we  will show in    \cite{ass} a variety of isomorphism results for the   coarse assembly map in general, most of 
  which were previously only known for the $K$-theoretic version.

  After the first version of this book was written it turned out that the setup developed here can be applied to other interesting problems. 
  {As} in the case of the proof of  the $K$-theoretic coarse Baum--Connes conjecture   \cite{nw1} it turned out that the proof of
  the Novikov conjecture for algebraic $K$-theory 
    in \cite{Guentner:2010aa,Ramras:2011aa} for groups whose underlying metric space has   finite-decomposition complexity 
  is actually  not specific to algebraic $K$-theory. In the following we will give a very rough overview of this development. 
  For a group $G$ we  consider the orbit category $G\Orb$ of transitive $G$-sets and equivariant maps.  For a functor
  $M:G\Orb\to \bC$ and a family of subgroups $\cF$ we then consider the assembly map \begin{equation}\label{dasclkrnqfoefvdfvafvadsv}
\Ass_{M,\cF}:\colim_{G_{\cF}\Orb} M\to M(*)\, ,
\end{equation}
   where $G_{\cF}\Orb$ is the full subcategory of orbits with stabilizers in $\cF$, and $*$ is the one-point orbit.
  The basic question is now under which conditions on $G$, $\cF$ and $M$ this assembly map is an equivalence or at least split injective.
  
  If $M$ is the equivariant topological $K$-theory $K^{\topp,G}$,  and $\cF$ is the family of finite subgroups, then this assembly map is the homotopy theoretic version\footnote{{In  \cite{hamped}  it is stated  that  it coincides with the analytic definitions using $K\!K$-theory, but some details of the proof must still be worked out.}}
  of the Baum--Connes assembly map. 
  If $M$ is the algebraic $K$-theory $K^{\alg}\bA^{G}$ associated to an additive category  $\bA$ with $G$-action  and $\cF$ is the family of virtually cyclic subgroups, then this assembly map is the one appearing in the Farell--Jones conjecture.
  We refer to \cite{davis_lueck} for the construction of the  examples of the  functors $M$ mentioned above.
 The main new development based on this book is the observation, that the  proof  from \cite{Guentner:2010aa,Ramras:2011aa}
 for algebraic $K$-theory only depends on the fact that the functor $K^{\alg}\bA^{G}$ extends to an equivariant coarse homology theory
 with some very natural additional properties. The precise formulation of this condition is condensed in the notion of a CP-functor
 introduced in \cite{injectivity}.
 
The whole set-up introduce in the present book has a natural $G$-equivariant generalization which is developed in \cite{equicoarse}.
The proof of the split injectivity for CP-functors is based on the descent principle whose details are presented in  {\cite{injectivity}.} 
In this paper we also compare the assembly map \eqref{dasclkrnqfoefvdfvafvadsv} with an equivariant versions of the coarse assembly map. In order to apply the descent principle one  must know  that this equivariant generalization of the coarse assembly map 
is an equivalence. In  \cite{transb} we show that this is the case  even on  a motivic  level under the geometric assumption that the group
has finite decomposition complexity.

In \cite{equicoarse} we axiomatized equivariant coarse homology theories in complete analogy to the non-equivariant case considered in the present book. One of the additional structures on a coarse homology theory needed in order to verify the CP-condition are transfers.  These transfers are  discussed in \cite{coarsetrans}.

As explained above, in order to show injectivity of the assembly map for a given functor on the orbit category we must extend it to an equivariant coarse homology theory. This motivated the construction of further examples 
of equivariant coarse homology theories:
\begin{enumerate}
\item equivariant coarse ordinary homology \cite{equicoarse}.
\item equivariant coarse algebraic $K$-homology associated to an additive category with $G$-action  \cite{equicoarse}.
\item equivariant coarse Waldhausen $K$-homology of spaces \cite{Bunke:aa}.
\item equivariant coarse $K$-homology  associated to a left-exact $\infty$-category with $G$-action \cite{unik}.
\item topological equivariant coarse $K$-homology  associated to a  $C^{*}$-category with $G$-action \cite{Bunke:ad}.
\end{enumerate}
Eventually, for all of these cases we can show injectivity results for the corresponding assembly map.

The Farell--Jones conjecture asserts that the assembly map \eqref{dasclkrnqfoefvdfvafvadsv}
for the case $M= K^{\alg}\bA$ and the family  of virtually cyclic subgroups is an equivalence. Again it turned out that
the proofs (under additional geometric assumptions on the group) given e.g. in  \cite{blr,MR3598160}
are not specific to algebraic $K$-theory. In \cite{fj} we will show that the method extends to the $K$-theory functor
for left-exact $\infty$-categories with $G$-action. This case subsumes all previously known cases of the Farell--Jones conjecture
and provides a variety of new examples. The argument again depends on an extension of the functor on the orbit category to an equivariant coarse homology theory satisfying our axiomatics. This extension actually differs from the extension used for the injectivity results.

At the end of this overview we mention some further developments based on the foundations provided by this book. 
In   \cite{coho} we introduce axioms for coarse cohomology theories by dualizing the axioms for coarse homology theories. 
Examples of coarse cohomology theories can be constructed by dualizing coarse homology theories. 
As an application we settle a conjecture of Klein \cite[Conj.~on Page~455]{klein} that the dualizing spectrum of  a group is a quasi-isometry invariant.  We further discuss the pairing between coarse $K$-homology and coarse $K$-cohomology.

In \cite{caputi_coarseHH} equivariant coarse cyclic and Hochschild homology was defined.  These homology theories are related by a trace map with equivariant coarse algebraic $K$-homology. 
In \cite{Bunke:2017aa} a universal equivariant coarse algebraic $K$-theory functor  with values in the motives  of \cite{MR3070515}
was constructed.   

Coarse fixed point spaces and localization results for equivariant coarse homology theories were studied in 
\cite{Bunke:2019aa}. In  \cite{Bunke:2019ab} we showed that coarse algebraic $K$-theory  of additive categories is actually a lax symmetric monoidal functor. A similar fact for left-exact $\infty$-categories is a crucial ingredient of \cite{fj}.

Aspects of index theory using (equivariant) coarse homology theories were considered in 
\cite{indexclass,cw}.

The present book lays the foundations for all these development. 
It provides a reference for the technical material in Sections \ref{efwljkfowpfjowefwefewfwef}, \ref{feiojoiwfiowfu234234324}, \ref{ihjiovqceqwecewcqwecq} and \ref{sec:sdn934} which will be used in the subsequent papers. These papers   employ with references  the basic notions and constructions  provided in Sections  \ref{fwwefeopwfewioufo234234}, \ref{lfjwefiowefewi9f0234} and \ref{fwelfjweoflewfewf}.

This book   develops the basics of coarse geometry in the framework of the category $\BC$ from scratch.   This part  should be accessible to students on the Bachelor level and could serve as a condensed  introduction to the  basic notions and basic examples of the field.

As mentioned above the discussion of coarse homology theories and the motivic approach requires some basic experience with the language of $\infty$-categories, say on the level of  one-semester user-oriented introduction.   
The combination of Sections \ref{fwwefeopwfewioufo234234}, \ref{lfjwefiowefewi9f0234} and \ref{fwelfjweoflewfewf}
 is a comprehensive introduction to coarse homotopy theory using the modern language of $\infty$-categories.
This part could also be seen as a demonstration of the motivic approach which is completely analogous to
the one in algebraic geometry or differential cohomology theory. 
 The discussion of  locally finite homology theories and their coarsification using  the language of $\infty$-categories is new. Adopting the level reached in the preceeding sections  does not require any additional $\infty$-categorial prerequisites.

In order to understand the discussion of coarse
{$K$-homology} one needs background knowledge about $C^{*}$-algebras and their $K$-theory.  But we  will explain the necessary background about $C^{*}$-categories in detail.

The book served as  the basis for PhD-level courses on coarse homotopy theory, coarse  $K$-homology and assembly maps.

\paragraph{Acknowledgements}
\textit{The authors were supported by the SFB 1085 ``Higher Invariants'' funded by the Deutsche Forschungsgemeinschaft DFG. The second named author was also supported by the Research Fellowship EN 1163/1-1 ``Mapping Analysis to Homology'' of the Deutsche Forschungsgemeinschaft DFG.}

\textit{Parts of the material in the present paper were presented in the course ``Coarse Geometry'' held in the summer term 2016 at the University of Regensburg. We thank the participants for their valuable input.}

\textit{We thank Clara Löh  for her helpful comments  on a first draft of this paper. Furthermore we thank Daniel Kasprowski and Christoph Winges for suggesting  various improvements, and Rufus Willett for helpful discussions.}

\part{ {Motivic coarse spaces and spectra}}

\section{Bornological coarse spaces}\label{fwwefeopwfewioufo234234}

In this section we introduce from scratch the category $\BC$ of bornological {coarse} spaces.
We further provide basic examples for objects and morphisms {of} $\BC$ {and} finally we discuss some categorical properties of $\BC$.

\begin{rem}
 {In} this remark we fix 
 the set-theoretic size issues. We choose a sequence of three Grothendieck universes
whose elements  {are} called very small, small and large sets. 

The objects of the categories of geometric objects  like $\Set$, $\Top$, $\BC$ considered below belong to the very small universe, while these categories themselves are   small.

If not said differently, categories or $\infty$-categories will assumed to be small, and cocompleteness or completeness refers to the existence of all colimits or limits indexed by very small categories.

By this convention $\Spc$ and $\Sp$ are the small $\infty$-categories of very small spaces and spectra. 

But we will also consider the large $\infty$-categries $\Spc^{\la}$ and $\Sp^{\la}$ of small spaces or small spectra.\index{$\Sp^{\la}$} Furthermore, the categories of motivic coarse spaces $\Spc\cX$ and $\Sp\cX$ constructed below are large. 
\hB
\end{rem}

\subsection{Basic definitions}
 
In this section sets are very small sets.

Let $X$ be a set, and let $\cP(X)$ denote the set of all subsets of $X$.
\begin{ddd}
\index{bornology}
A bornology on $X$ is a subset $\cB$ of $ \cP(X)$\index{$\cB$|see{bornology}}  which is closed under taking finite unions {and} subsets{,} and such that $\bigcup_{B\in \cB}B=X$.
\end{ddd}
The elements of $\cB$ are called the bounded subsets\index{subset!bounded}\index{bounded} of $X$.

The main role of bounded subsets in the present paper  is to declare certain regions of $X$ to be small in order to define  further  finiteness conditions. 
Let $D$ be a subset of a space $X$ with a bornology $\cB$.
\begin{ddd}
\label{defn:asdfwewt}
\index{locally!finite!subset}\index{locally!countable!subset}$D$ is locally finite (resp., locally countable) if for every $B$ in $\cB$  the intersection $B\cap D$ is finite (resp., countable).
 \end{ddd}

For a subset  $U $ of $X\times X$ we define the inverse by $$U^{-1}:=\{(x,y)\in X\times X : (y,x)\in U\}\, .$$ If $U^{\prime}$ is a second subset of $X\times X$, then we  define their composition by
\[U\circ U^{\prime}:=\{(x,y)\in X\times X :  (\exists z\in X :  (x,z)\in U \text{ and } (z,y)\in U^{\prime})\}.\]

Let $X$ be a set.
\begin{ddd}\label{ewgiubhwiergfvsvdfvsfdvs}
\index{coarse structure}
A coarse structure on $X$ is a subset $\cC$ {of} $\cP(X\times X)$\index{$\cC$|see{coarse structure}} which is closed under taking finite unions, composition,  inverses and subsets, and which contains the diagonal   of~$X$.
\end{ddd}

\begin{rem}
{Coarse structures are not always required to contain the diagonal, especially in early publications on this topic. In these days coarse structures containing the diagonal were called unital. See Higson--Pedersen--Roe \cite[End of Sec.~2]{higson_pedersen_roe} for an example of a naturally occuring non-unital coarse structure. We are using here the nowadays common convention to requiere the coarse structures to contain the diagonal and correspondingly we drop the adjective `unital'.}
\hB
\end{rem}

The elements of $\cC$ will be called entourages\index{entourage} of $X$ or controlled subsets.\index{subset!controlled}\index{controlled!subset} 

For a subset $U$ of $X\times X$ and a subset $B$ of $X$ we define the $U$-thickening\index{$U$-!thickening}\index{thickening!$U$-}\index{$U[-]$|see{$U$-thickening}} of $B$ by
$$U[B]:=\{x\in X :  (\exists b\in B :  (x,b)\in U)\}\, .$$

If $X$ is equipped with a coarse structure $\cC$, then we call the subsets $U[B]$ for  $U$ in $\cC$ the controlled thickenings of $B$\index{controlled!thickening}\index{thickening!controlled}.
\begin{ddd}
A bornology and a coarse structure are said to be compatible\index{compatible!bornology and coarse structure} if every controlled thickening of a bounded subset is again bounded.
\end{ddd}

\begin{ddd}
\label{jewflewjjwoiuoifewfewe}
A bornological coarse space\index{bornological coarse space}\index{space!bornological coarse} is a triple $(X,\cC,\cB)$\index{$(X,\cC,\cB)$}  of a set $X$  with a coarse structure $\cC$  and a bornology $\cB$ such that the bornology and coarse structure are compatible.
\end{ddd}

We now consider a  map  $f:X\to X^{\prime}$ between sets equipped with bornological structures $\cB$ and $\cB^{\prime}${, respectively}.

\begin{ddd}\label{iefewiofeuwio2342342434} \mbox{}
\begin{enumerate}
\item We say that $f$ is proper\index{proper map}\index{map!proper} if for every $B^{\prime}$ in $\cB^{\prime}$  we have  $f^{-1}(B^{\prime})\in \cB$.
\item The map $f$ is bornological\index{bornological map}\index{map!bornological} if for every $B$ in $\cB$ we have $f(B)\in \cB^{\prime}$.
\end{enumerate}
\end{ddd}

Assume   that $f:X\to X^{\prime}$ is a map between  sets equipped with
  coarse structures $\cC$ and $ \cC^{\prime}$, respectively.
\begin{ddd}\label{efijwefoie98u32234234}
We say that $f$ is controlled\index{controlled!map}\index{map!controlled} if for every   $U$ in $\cC$   we have $(f\times f)(U)\in \cC^{\prime} $.
\end{ddd}

We now consider two bornological coarse spaces $X$ and $X^{\prime}$.
\begin{ddd}
A morphism\index{morphism!between bornological coarse spaces} between bornological coarse spaces $f:X\to X^{\prime}$  is a map  between the underlying sets  $f :  X\to X^{\prime}$  
which is controlled and proper.
\end{ddd}

\begin{ddd}\label{etgkowergferfrwefrefw}
\index{bornological coarse space}
We let $\BC$\index{$\BC$|see{bornological coarse space}} denote the small category of very small bornological coarse spaces and  morphisms.
\end{ddd}

\subsection{Examples}

\begin{ex}\label{sdf89232111332}
Let $X$ be a set and consider  a subset $A $ of  $\cP(X\times X)$. Then we can consider the minimal coarse structure
$\cC\langle A\rangle$ containing $A$. We say that $\cC\langle A\rangle$ is generated by $A$.\index{coarse structure!generated by $-$}

Similarly, let $W$ be a subset of   $\cP(X)$. Then we can consider  the minimal bornology $\cB\langle W\rangle$
containing $W$. Again we say that $ \cB\langle W\rangle$ is generated by $W$. \index{bornology!generated by $-$}

If for every generating entourage $U$ in $A$ and  every generating bounded subset $B$ in $W $ the subsets $U[B]$  {and $U^{-1}[B]$ are} contained in finite unions of members of $W$, 
then the bornology $ \cB\langle W\rangle$ and the coarse structure $\cC\langle A\rangle$ are compatible.\index{compatible!bornology and coarse structure}
\hB
\end{ex}

\begin{ex}\label{welkfjwekfo23u4234234}
 Every set $X$ has a minimal coarse structure $\cC_{min}:=\cC\langle{\emptyset}\rangle$ generated by the empty set.\index{coarse structure!minimal}\index{minimal!coarse structure}
 This coarse structure consists of all subsets of the diagonal $\diag_{X}\subseteq X\times X$ and is compatible with  every bornological structure on $X$.
 In particular, it is compatible with the minimal bornological structure $\cB_{min}$ which consists of the finite subsets of $X$.\index{bornology!minimal}\index{minimal!bornology}

 The maximal coarse structure on $X$ is $\cC_{max}:=\cP(X\times X)$.
 The only compatible bornological structure is then the maximal one $\cB_{max}:=\cP(X)$.\index{bornology!maximal}\index{maximal!bornology}\index{coarse structure!maximal}\index{maximal!coarse structure}
   \hB
   \end{ex}

\begin{ddd}
\label{ijwieorfjuwefuwe9few435345}
We call a bornological coarse space $(X,\cC,\cB)$ discrete if 
$\cC=\cC_{min} $.\index{discrete!bornological coarse space}\index{bornological coarse space!discrete}
\end{ddd}

Note that a discrete bornological coarse space has a maximal entourage $\diag_{X}$.

We consider a coarse space $(X,\cC)$, an entourage $U$ in $\cC$, and a subset $B$ of $X$.
\begin{ddd}\label{jfewiofjweofwef234}
 The subset  $B$ called  is $U$-bounded if  $B\times B\subseteq U$.\index{$U$-!bounded}\index{bounded!$U$-}\index{subset!$U$-bounded}
\end{ddd}

If $(X,\cC,\cB)$ is a bornological coarse space, then the bornology $\cB$ necessarily contains all  subsets which are $U$-bounded for some $U$ in $\cC$.
 
\begin{ex}\label{wfijiofweofewfwefewfewf}
Let $(X,\cC)$ be a coarse space. Then there exists a minimal compatible bornology $\cB$\index{minimal!compatible bornology}\index{compatible!minimal bornology}. 
It is the bornology  consisting of  {finite unions of} 
the 
 subsets of $X$ which are bounded for some entourage of $X$. We also say that $\cB$ is generated by $\cC$.
\hB
\end{ex}

\begin{ex}\label{wfijweifjewfoiuewoifoewfu98fewfwfw}
Let $(X^{\prime},\cC^{\prime},\cB^{\prime})$ be a bornological coarse space, and let $f:X\to X^{\prime}$ be a map of sets. We define the induced bornological coarse structure by\index{induced!bornological coarse structure}
$$f^{*}\cC:=\cC\langle \{(f\times f)^{-1}(U^{\prime}) :  U^{\prime}\in \cC^{\prime}\}\rangle$$ and
$$f^{*}\cB:=\cB\langle \{f^{-1}(B^{\prime})   :   B^{\prime}\in \cB^{\prime}   \}\rangle\, .$$
Then
$$f :  (X,f^{*}\cC^{\prime},f^{*}\cB^{\prime})\to (X^{\prime},\cC^{\prime},\cB^{\prime})\, .$$
is a morphism of bornological coarse spaces.
In particular, if $f$ is the inclusion of a subset, then we say that this morphism is the inclusion of a bornological coarse subspace.
\hB
\end{ex}
 
\begin{ex}\label{welifjwelife89u32or2}
Let $(X,d)$ be a metric space\index{metric space}\index{space!metric}. In the present book we will allow infinite distances. Such metrics are sometimes called quasi-metrics. For every positive real number $r$ we define the entourage\index{induced!coarse structure from a metric}\index{coarse structure!from a metric}
\begin{equation}\label{ffwefewfe24543234wfefewef}
U_{r}:=\{(x,y)\in X\times X :  d(x,y)< r\}.
\end{equation} The collection of these entourages generates
  the coarse structure
\begin{equation}\label{hvhdgqwuztd8t1z87231231233}
 \cC_{d}:= \cC\langle \{U_{r} :  r\in (0,\infty)\}\rangle 
\end{equation}
 which will be called be the coarse structure induced by the metric.

We further define the bornology of metrically bounded sets\index{bornology!of metrically bounded sets}
$$\cB_{d}:=\cB\langle\{B_{d}(r,x) :  r\in (0,\infty)\:\text{and}\:x\in X\} \rangle\, ,$$
where $B_{d}(r,x)$ denotes the metric ball of radius $r$ centered at $x$.  

We say that   $ (X,\cC_{d},\cB_{d})$ is the underlying bornological coarse space of the metric space $(X,d)$.\index{bornological coarse space!underlying a metric space}
We will denote this bornological coarse space often by $X_{d}$.
\hB
\end{ex}
 
Recall that a path metric space\index{path metric space}\index{metric space!path metric space}\index{space!path metric space} is a metric space where the distance between any two points is given by the infimum of the lengths of all curves connecting these two points. 
 
Let $(X,d)$ be a path metric space, and let $\cC_{d}$ be the associated coarse structure  \eqref{hvhdgqwuztd8t1z87231231233}.
\begin{lem}\label{lem:sdf7834}
There exists an entourage $U$  in $\cC_{d}$ such that   $\cC_{d}=\cC\langle U\rangle$.
\end{lem}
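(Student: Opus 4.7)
The plan is to show that a single entourage, for instance $U := U_1 = \{(x,y) : d(x,y) < 1\}$, already generates all of $\cC_d$. Since $\cC_d$ is generated by the family $\{U_r : r \in (0,\infty)\}$, it suffices to show that for every $r > 0$ one has $U_r \subseteq U^{\circ n}$ for some $n = n(r) \in \mathbb{N}$, because then $U_r \in \cC\langle U\rangle$.

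The key input is the path-metric assumption. Given $(x,y) \in U_r$, so that $d(x,y) < r$, by the definition of a path-metric space we can choose a rectifiable curve $\gamma\colon [0,1]\to X$ from $x$ to $y$ of length $L < r$. Setting $n := \lceil r\rceil$ and subdividing $[0,1]$ into $n$ equal parts, I obtain a chain of points $x = \gamma(0), \gamma(1/n), \ldots, \gamma(n/n) = y$ with consecutive distances bounded by $L/n < r/n \leq 1$. Each consecutive pair therefore lies in $U$, so $(x,y) \in U^{\circ n}$. This shows $U_r \subseteq U^{\circ n}$.

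Since $\cC\langle U\rangle$ is closed under composition and subsets, we conclude $U_r \in \cC\langle U\rangle$ for every $r > 0$. The reverse inclusion $\cC\langle U\rangle \subseteq \cC_d$ is automatic from $U = U_1 \in \cC_d$. Hence $\cC_d = \cC\langle U\rangle$.

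There is no real obstacle here; the only small point to be careful about is that the infimum defining the path-metric distance need not be attained, but since we only need a path of length strictly less than $r$ (which exists whenever $d(x,y) < r$), this causes no problem. The role of the path-metric hypothesis is precisely to replace a single jump of length $<r$ by a chain of $\lceil r\rceil$ jumps of length $<1$.
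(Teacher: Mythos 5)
Your proof is correct and follows the same route as the paper, which simply asserts that for a path-metric space $\cC_{d}=\cC\langle \{U_{r}\}\rangle$ for any single $r>0$; your chain-subdivision argument supplies the details behind that assertion. (One pedantic point: subdivide the curve into $n$ sub-arcs of equal \emph{length} rather than the parameter interval $[0,1]$ into equal parts, so that consecutive points are at distance at most $L/n<1$; this changes nothing essential.)
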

\begin{proof}
Recall  that the coarse structure $\cC_{d}$ is generated by the  set of  entourages $U_{r}${, see} \eqref{ffwefewfe24543234wfefewef}, for {all} $r$ in $(0,\infty)$. For a path metric space one can check that  
$\cC_{d}=\cC\langle \{U_{r}\}\rangle$ for any $r$  in $(0,\infty)$.
\end{proof}

\begin{ex} Let $X$ be a set,  and let $U$ be an entourage on $X$. Then the coarse structure $\cC\langle U\rangle$ 
  is 
  induced by a metric as in Example~\ref{welifjwelife89u32or2}. In fact, we have  $\cC\langle U\rangle=\cC_{d}$ for the quasi-metric
$$d(x,y):=\inf \{n\in \nat\:|\: (x,y)\in V^{n}\} \, ,$$
where $V:=(U\circ U^{-1})\cup \diag_{X}$, and where we set $V^{0}:=\diag_{X}$.

In general one can check that a coarse structure can be presented by a metric if and only if it is countably generated.\hB
 \end{ex}

 \begin{ex}\label{ewfijwfioiouiou23roi2r32rrew}
 Let $\Gamma$ be a group. We equip $\Gamma$ with the bornology $\cB_{min}$ consisting of all finite subsets.
 For every finite subset $B$ we can consider the $\Gamma$-invariant entourage $\Gamma(B\times B)$. The family of these entourages generates the canonical coarse structure \index{canonical!coarse structure} \index{entourage!$\Gamma$-invariant}
  $$\cC_{can}:=\cC\langle \{\Gamma(B\times B) :  B\in \cB_{min}\}\rangle\, .$$
 Then $(\Gamma,\cC_{can},\cB_{min})$ is a bornological coarse space. The group $\Gamma$ acts via left multiplication on this bornological coarse space by automorphisms. This example is actually an example of a $\Gamma$-bornological coarse space which we will define now.
   \hB
 \end{ex}
 
 \begin{ddd}
 A $\Gamma$-bornological coarse space\index{bornological coarse space!$\Gamma$-equivariant} is a bornological coarse space $(X,\cC,\cB)$ with an action of $\Gamma$  by automorphisms such that the set $\cC^{\Gamma}$ of $\Gamma$-invariant entourages  is cofinal in $\cC$.
\end{ddd}

The foundations of the theory  of $\Gamma$-bornological coarse spaces will be {thoroughly} developed in \cite{equicoarse}.

\subsection{Categorical properties of \texorpdfstring{$\BC$}{BornCoarse}}

The  families of objects considered in this subsection are always indexed by very small sets.

\begin{lem}\label{wleifjewfiewiofuew9fuewofewf}
The category $\BC$ has {all} non-empty products.\index{product!in $\BC$}
\end{lem}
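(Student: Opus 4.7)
The plan is to construct the product explicitly as a set equipped with a suitably chosen coarse structure and bornology, and then to verify the universal property by a direct computation. Given a family $(X_i,\cC_i,\cB_i)_{i\in I}$ in $\BC$, I would put on the set-theoretic product $X:=\prod_{i\in I}X_i$, with projections $p_i\colon X\to X_i$, the coarse structure
\[ \cC := \{U \subseteq X \times X : (p_i \times p_i)(U) \in \cC_i \text{ for all } i \in I\} \]
and the bornology $\cB:=\cB\langle\{p_i^{-1}(B_i) : i\in I,\ B_i\in\cB_i\}\rangle$ generated by preimages of bounded sets under the projections, in the sense of Example~\ref{sdf89232111332}.

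Next I would check the structural axioms. That $\cC$ is a coarse structure is routine, since the assignment $U\mapsto(p_i\times p_i)(U)$ preserves diagonals, unions, subsets and inverses, and satisfies $(p_i\times p_i)(U\circ U')\subseteq(p_i\times p_i)(U)\circ(p_i\times p_i)(U')$, so each closure property of $\cC_i$ transports back to $\cC$. For $\cB$ the only nontrivial point is the covering condition, which holds because every singleton $\{x\}\subseteq X$ lies in the generator $p_i^{-1}(\{x_i\})$. For compatibility it suffices, by the description of $\cB$ via generators together with closure under subsets and finite unions, to check that $U[B]\in\cB$ for every $U\in\cC$ and every generator $B=p_i^{-1}(B_i)$; a short direct computation yields $U[B]\subseteq p_i^{-1}((p_i\times p_i)(U)[B_i])$, and the bracketed set lies in $\cB_i$ by compatibility inside $X_i$.

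After this I would verify the universal property. By construction each projection $p_i$ is controlled (by the definition of $\cC$) and proper (since $p_i^{-1}(B_i)$ is itself a generator of $\cB$), hence a morphism in $\BC$. Given morphisms $f_i\colon Y\to X_i$ out of a test object $Y$, the only set-theoretic candidate for a factorization is $f:=(f_i)_{i\in I}\colon Y\to X$. It is controlled because $(p_i\times p_i)((f\times f)(U))=(f_i\times f_i)(U)\in\cC_i$ forces $(f\times f)(U)\in\cC$, and it is proper because on generators $f^{-1}(p_i^{-1}(B_i))=f_i^{-1}(B_i)\in\cB_Y$, whence on all of $\cB$ by closure under subsets and finite unions.

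The step that I expect to require the most care is identifying the correct bornology on $X$: the naive choice of ``subsets of products of bounded sets'' fails to make the projections $p_i$ proper, so one is forced into the strictly larger bornology generated by the preimages $p_i^{-1}(B_i)$ (which, in particular, contains the ``slab'' $B_i\times\prod_{j\neq i}X_j$ even when the other $X_j$ are huge). The main substantive content of the proof is then the verification that this enlarged bornology remains compatible with $\cC$.
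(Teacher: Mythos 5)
Your proof is correct and follows essentially the same construction as the paper: the bornology generated by the preimages $p_i^{-1}(B_i)$ is exactly the paper's bornology generated by the slabs $B_j\times\prod_{i\neq j}X_i$, and your coarse structure (all $U$ with every $(p_i\times p_i)(U)$ controlled) coincides with the paper's structure generated by the products $\prod_{i\in I}U_i$, since $U\subseteq\prod_{i\in I}(p_i\times p_i)(U)$. The paper states the verification of compatibility and of the universal property without detail; your write-up supplies those details correctly.
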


\begin{proof}
Let $(X_i,\cC_i,\cB_i)_{i\in I}$  be a family of bornological coarse spaces. On the set  $X:=\prod_{i \in I} X_i$
we define the coarse structure $\cC
$ generated by the entourages
$\prod_{i\in I} U_{i}$ for all elements $(U_{i})_{i\in I}$ in $\prod_{i\in I}\cC_{i}$, and the bornology $\cB$ generated by the subsets  $B_j \times \prod_{i \in I\setminus\{j\}} \! \! X_i$ for all $j$ in $I$ and $B_{j}$ in $\cB_{j}$.
%
The projections  from $(X,\cC,\cB)$ to the factors are morphisms. This bornological coarse space  together with the projections represents the product of the family $(X_i,\cC_i,\cB_i)_{i\in I}$.   
\end{proof}
 
 Note that $\BC$ does not have a final object and therefore does not admit the empty product.
 
 \begin{ex}\label{hffweiufweif}
We will often consider the following variant $X\ltimes X^{\prime}$\index{$- \ltimes -$}\index{product!$- \ltimes -$} of a product of two bornological coarse spaces $(X,\cC,\cB)$ and $(X^{\prime},\cC^{\prime},\cB^{\prime})$.
 In detail, the underlying set of  this bornological coarse space is $X\times X^{\prime}$, its coarse structure is $\cC\times \cC^{\prime}$, but its bornology differs from the cartesian product and is generated by the subsets $X\times B^{\prime}$ for all bounded
subsets $B^{\prime}$ of $X^{\prime}$. The projection  to the second factor $X\ltimes X^{\prime}\to X^{\prime}$ is a morphism, but the projection to the first factor is in general not. On the other hand, 
for a point $x\in X$ the map $X^{\prime}\to X\ltimes X^{\prime}$ given by $x^{\prime}\mapsto (x,x^{\prime})$ is a morphism. In general, the cartesian product does not have this property.
\hB
\end{ex}

\pagebreak[2]
\begin{lem}\label{ewoifweofe90wf09wef2334}
The category $\BC$ has all coproducts.\index{coproducts!in $\BC$}
\end{lem}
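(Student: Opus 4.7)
The plan is to construct the coproduct directly, the main design choice being how to equip the disjoint union with a bornology. Given a family $(X_i, \cC_i, \cB_i)_{i \in I}$, I set $X := \bigsqcup_{i \in I} X_i$ with canonical inclusions $\iota_i : X_i \hookrightarrow X$, equip it with the coarse structure $\cC := \cC\langle \bigcup_i (\iota_i \times \iota_i)(\cC_i)\rangle$ generated by the entourages of the components, and take $\cB := \{B \subseteq X : B \cap X_i \in \cB_i \text{ for all } i\}$ as its bornology---the largest bornology making each $\iota_i$ proper. Checking that $\cB$ is a bornology and that each $\iota_i$ is a morphism of bornological coarse spaces is routine.

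For compatibility I would first establish a block-diagonal structural lemma: every $U \in \cC$ satisfies $U \subseteq \bigsqcup_i X_i \times X_i$ with $U \cap (X_i \times X_i) \in \cC_i$. This holds on the generators (using $\diag_X = \bigsqcup_i \diag_{X_i}$) and is preserved by finite unions, inverses, and subsets; for compositions one observes that a composable pair $(x,z), (z,y)$ with $x, y \in X_i$ forces $z \in X_i$ as well, so composition stays within a single component. Given this, for $U \in \cC$ and $B \in \cB$ we have $U[B] \cap X_j = (U \cap (X_j \times X_j))[B \cap X_j] \in \cB_j$ by compatibility inside $X_j$, so $U[B] \in \cB$.

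For the universal property, given morphisms $f_i : X_i \to Y$ to some bornological coarse space $Y$, I assemble them into the unique set map $f : X \to Y$ with $f \circ \iota_i = f_i$. Properness follows from $f^{-1}(B) \cap X_i = f_i^{-1}(B) \in \cB_i$ for each $B \in \cB_Y$. For control, the collection $\{U \subseteq X \times X : (f \times f)(U) \in \cC_Y\}$ is a coarse structure containing each $\cC_i$ (since each $f_i$ is controlled), hence containing all of $\cC$.

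The main subtlety I expect is the choice of bornology. Defining $\cB$ instead as the bornology generated by the individual $\cB_i$, i.e.\ as finite unions of bounded subsets from finitely many components, would fail the universal property: already when all $X_i$ are bounded and one wishes to map them into a single bounded $Y$, the preimage of a bounded set in $Y$ under the assembled map is all of $X$, which is not in this smaller bornology if $I$ is infinite. Allowing subsets with bounded intersection on every component is precisely what is needed so that arbitrary families of proper maps assemble into a proper map.
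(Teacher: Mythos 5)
Your construction is exactly the one the paper uses: the disjoint union with coarse structure generated by the component entourages and the bornology of subsets meeting every component in a bounded set; the paper simply asserts the verification that you spell out. Your closing remark about why the generated bornology $\cB\langle\bigcup_i\cB_i\rangle$ would fail the universal property is also consistent with the paper, which reserves that smaller bornology for the free union (Definition \ref{foifewieof89u8924r443535}).
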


\begin{proof}
Let $(X_{i},\cC_{i},\cB_{i})_{i\in I}$ be a family of bornological coarse spaces.
We define  
  the bornological coarse space
$(X,\cC,\cB)$ by $$X:= \bigsqcup_{i\in I}  X_{i} 
\, , \quad \cC:=\cC \big\langle \bigcup_{i\in I} \cC_{i} \big\rangle\, , \quad \cB:=\{B\subseteq X\:|\: (\forall i\in I: B\cap X_{i}\in \cB_{i})\}  \, .$$
Here we secretly identify subsets of $X_{i}$ or of $X_{i}\times X_{i}$ with the corresponding subset of $X$ or of $X\times X$, respectively.
The obvious embeddings $X_{i}\to X$ are morphisms. The bornological coarse space $(X,\cC,\cB)$ together with these embeddings represents the   coproduct ${\coprod_{i\in I}} (X_{i},\cC_{i},\cB_{i})$ of the family in $\BC$. 
\end{proof}

\begin{rem}
It turns out that the category $\BC$ has all non-empty very small limits and many more (but not all) very small colimits. We refer to  \cite[Sec. 2.2]{equicoarse} for more details. Even better, one can  modify the definition of a bornology slightly in order to obtain a complete and cocomplete category $\widetilde{\BC}$ of generalized bornological coarse spaces without changing the homotopy theory developed below \cite{Heiss:2019aa}.
\end{rem}

Let $(X_{i},\cC_{i},\cB_{i})_{i\in I}$ be a family of bornological coarse spaces.
\begin{ddd}\label{foifewieof89u8924r443535}
The free union\index{free union!in $\BC$}\index{union!free!in $\BC$} $\bigsqcup_{i\in I}^{\free} (X_{i},\cC_{i},\cB_{i})$ of the family is the following bornological coarse space:
\begin{enumerate}
\item The underlying set of  the free union is the disjoint union $\bigsqcup_{i\in I} X_{i}$.
\item The coarse structure of the free union is generated by the entourages
$\bigcup_{i\in I} U_{i}$ for all families $(U_{i})_{i\in I}$ with $U_{i}$ in $\cC_{i}$.
\item The bornology of the free union is given by $\cB\big\langle \bigcup_{i\in I} \cB_{i}\big\rangle$.
\end{enumerate}
\end{ddd}

\begin{rem}
The free union should not be confused with the coproduct. The coarse structure of the free union is bigger, while the bornology is smaller. The free union plays a role in the discussion of additivity of coarse homology theories.
\hB
\end{rem}

\begin{ddd}\label{eiofewiofo9u23948234325}
We define the mixed union\index{mixed union}\index{union!mixed} $\bigsqcup_{i\in I}^{\mixed} (X_{i},\cC_{i},\cB_{i})$ such that the underlying coarse space is that of the coproduct and the bornology is the one from the free union.
\end{ddd}

Note that the identity morphism on the underlying set gives morphisms in $\BC$
\begin{equation}
\label{jknsd8923hd}
\coprod_{i \in I} X_i \to \bigsqcup_{i\in I}^{\mixed} X_{i} \to \bigsqcup_{i\in I}^{\free} X_{i}\, .
\end{equation}

Let $(X,\cC)$ be  a set with a coarse structure.
Then $$\cR_{\cC}:=\bigcup_{U\in \cC} U\subseteq X\times X$$ is an equivalence relation on $X$.

\pagebreak[2]
\begin{ddd}\label{ekhqiuheiuqe123122342334}\mbox{}
\begin{enumerate}
\item We call the equivalences classes of $X$ with respect to {the equivalence relation} $\cR_\cC$ the coarse components of $X$.\index{coarse!components}
\item We will use the notation $\pi_{0}^{\coarse}(X,\cC)$ for the set of  coarse   components of $(X,\cC)$.
\item 
We call $(X,\cC)$ coarsely connected\index{coarsely!connected} if  $\pi_{0}^{\coarse}(X,\cC)$ has a single element.
  \end{enumerate}
\end{ddd}
 We can apply the construction to bornological coarse spaces. If $ (X_{i})_{i\in I}$ is a family of bornological coarse spaces, then we have a bijection
$$\pi_{0}^{\coarse} \big( \coprod_{i\in I} X_{i} \big)\cong \bigsqcup_{i\in I}\pi_{0}^{\coarse}( X_{i}) \, .$$

If a  bornological coarse space has finitely many coarse components then  it is the coproduct of its coarse components with the induced structures. But  in the case of infinitely many components this is not true    in general.

\begin{ex}\label{wfeoihewiufh9ewu98u2398234324234}
Let $(X,\cC,\cB)$   be a bornological coarse space. For an entourage $U$ of $X$ we consider the bornological coarse space\index{$-_U$}
$$X_{U}:=(X,\cC\langle \{U\}\rangle,\cB)\, ,$$ i.e., we keep the bornological
 structure but we consider the coarse structure generated by a single entourage.
 In $\BC$ we have an isomorphism
$$X\cong \colim_{U\in \cC} X_{U}$$
induced by the natural  morphisms $X_{U}\to X$.
This observation often allows us to reduce arguments to the case of bornological coarse spaces whose coarse structure is generated by a single entourage.
\hB
\end{ex}

\begin{ex}\label{eiofweoifwefuewfieuwf9wwfwef}
Let $(X,\cC,\cB)$ and $(X^{\prime},\cC^{\prime},\cB^{\prime})$ be bornological coarse spaces. We will often consider
the bornological coarse space\index{$-  \otimes  -$!bornological coarse space}\index{product!$-  \otimes  -$}
$$(X,\cC,\cB) \otimes (X^{\prime},\cC^{\prime},\cB^{\prime}):=(X\times X^{\prime},\cC\times \cC^{\prime},\cB\times \cB^{\prime})\, .$$
We use the notation $\otimes$ in order to distinguish this product from the cartesian product. In fact, the 
  bornology 
$$ \cB\times \cB^{\prime}=\cB\langle\{B\times B^{\prime} :  B\in \cB \text{ and } B^{\prime}\in \cB^{\prime} \}\rangle$$
is the one of the cartesian product in the category  of bornological spaces and bornological\footnote{instead of proper} maps.

 The product $$-\otimes-:\BC\times \BC\to \BC$$ is a symmetric monoidal structure on $\BC$ with tensor unit $*$. Note that it is not the cartesian product in $\BC$.  The latter has no tensor unit since $\BC$ does not have a final object. 
 \hB
\end{ex}

\section{Motivic coarse spaces}\label{lfjwefiowefewi9f0234}

In this section we introduce the   $\infty$-category of motivic coarse spaces $\Spc\cX$. Our goal is to do homotopy theory with bornological coarse spaces. To this end we first complete the category of bornological coarse spaces formally and then implement the desired geometric properties through localizations. 

In the following $\Spc^{\la}$ is the large presentable $\infty$-category of small spaces\index{$\Spc^{\la}$}. This category 
can be characterized as the universal large presentable $\infty$-category generated by a final  object.
We start with the large presentable $\infty$-category of space-valued presheaves
 $$\PSh (\BC):=\Fun(\BC^{\op},\Spc^{\la})\, .$$
 We  construct the $\infty$-category of motivic coarse spaces $\Spc\cX$  by a sequence of 
   localization{s} (descent, coarse equivalences, vanishing on flasque bornological coarse spaces, $u$-continuity) of $\PSh (\BC)$ at various sets of morphisms. Our goal is to encode well-known invariance and descent properties of coarse homology theories in a motivic way.
  The result of this construction is the functor  $$\Yo\colon\BC\to \Spc\cX$$ (Definition \ref{qergkjefkoerfrefwefwerfev})
  which sends a bornological coarse space to its motivic coarse space. 
 
   Technically one could work with model categories and
   Bousfield localizations or, as we prefer, with $\infty$-categories (Remark \ref{fwioejewif98u329r32r32r}). A reference for the general theory of localizations  is Lurie \cite[Sec.~5.5.4]{htt}.
   

   Our approach is parallel to constructions in $\mathbb{A}^{1}$-homotopy theory \cite{Hoyois:2015aa} or differential cohomology theory \cite{Bunke:2013aa}, \cite{MR3462099}.  
   
   In the final Subsection \ref{efijoefvvfvsdfvdfsdfvsfvsfdv} we show some further properties of $\Yo$ which could be considered as first steps into the field of coarse homotopy theory.

\subsection{Descent}

We  introduce a Grothendieck topology $\tau_{\chi}$ on $\BC$ and the associated $\infty$-category of sheafs. This Grothendieck topology will encode coarse excision.

\begin{rem}
The naive idea to define a Grothendieck topology on $\BC$ would be to use covering families given by coarsely excisive pairs \cite{higson_roe_yu}, see Definition \ref{efjwelkfwefoiu2or4234234}. The problem with this approach is that in general a pullback does not preserve coarsely excisive pairs, see Example \ref{ewf3424342323}. Our way out will be to replace excisive pairs by complementary pairs of a subset and a big family, see Example \ref{sdjfh32r23r}.
\hB
\end{rem}

Let $ X $ be a bornological coarse space.
\begin{ddd} \label{ifjweijwoiefewfewf}
A big family\index{big family!in $\BC$}   $\cY$  on $X$ is a filtered family of subsets $(Y_{i})_{i\in I}$ of $X$ such that for every
  $i$ in $I$ and entourage $U$ of $X$   there exists $j$ in $I$ such that $U[Y_{i}]\subseteq Y_{j}$.
\end{ddd}

In this definition the set $I$ is a filtered, partially ordered set and the map $I\to \cP(X)$, $i\mapsto Y_{i}$ is order-preserving where $\cP(X)$ is equipped with the partial order given by the subset relation.

\begin{ex}\label{fwijweiofjweoifewfu9fuwe98ff}
Let $\cC$ denote the coarse structure of $X$. If $A$ is a subset of $X$, then we can consider the big family in $X$\index{$\{-\}$|see{big family generated by $-$}}\index{big family!generated by $-$}
\begin{equation}\label{erkjnkjnvkfvfvfvdscac}
\{A\}:=(U[A])_{U\in \cC}
\end{equation} generated by $A$. Here we use that the set of entourages $\cC$ is  a filtered partially ordered set with respect to the inclusion relation. 
\hB
\end{ex}

\begin{ex}\label{4oj3iotiu43otu43t4390t3t430t}
If $(X,\cC,\cB)$ is a bornological coarse space, its family $\cB$ of bounded subsets is big. Indeed,
the condition stated in Definition \ref{ifjweijwoiefewfewf} is exactly the compatibility condition between the bornology and the coarse structure required in Definition \ref{jewflewjjwoiuoifewfewe}. In this example we consider $\cB$ as a filtered partially ordered set with the inclusion relation.
\hB
\end{ex}

Let $X$ be a bornological coarse space.
\begin{ddd}\label{jfwoifjweofeiwf234234324}
A complementary pair\index{complementary pair} $(Z,\cY)$ is a pair consisting of a subset $Z$ of $X$ and a big family $\cY=(Y_{i})_{i\in I}$
such that there exists an index $i$ in $I$ with $Z\cup Y_{i}=X$.
\end{ddd}

\begin{rem}
Implicitly the definition implies that if a big family $(Y_{i})_{i\in I}$ is a part of a complementary pair, then the index set $I$ is not empty. But if the other component $Z$ is the whole space $X$, then  it could be the family $(\emptyset)$.
\hB
\end{rem}

\begin{ex}\label{wokfewofwfkopewfefewf}
Let $(Z,\cY)$ be a complementary pair on a bornological coarse space $X$. Then $Z$ has an induced bornological coarse structure. For its construction we apply the construction given in Example \ref{wfijweifjewfoiuewoifoewfu98fewfwfw} to the embedding $Z\to X$. We define the family $$Z\cap \cY:=(Z\cap Y_{i})_{i\in I}$$ of subsets of $Z$.
This is then  a big family on $Z$.
\hB
\end{ex}

\begin{ex}\label{sdjfh32r23r}
Let $(Z,\cY)$ be a complementary pair on a bornological coarse space $X$ and $f: X^{\prime} \to X$ be a morphism. Then $f^{-1} \cY := (f^{-1}(Y_{i}))_{i \in I}$ is a big family on $X^{\prime}$ since $f$ is controlled. Furthermore, $(f^{-1}(Z), f^{-1} \cY)$ is a complementary pair on $X^{\prime}$.
\hB
\end{ex}


\begin{rem}\label{fwioejewif98u329r32r32r}
In the present paper we work with $\infty$-categories (or more precisely {with} $(\infty,1)$-categories) in a model-independent way. For concreteness one could   think of quasi-categories as introduced by Joyal \cite{Joyal} and worked out in detail by Lurie  \cite{htt,HA}.  

Implicitly, we will consider an ordinary category as an $\infty$-category using nerves.
  In order to construct a model for $\Spc^{\la}$\index{large $\infty$-category of spaces}\index{$\infty$-category of spaces!large}  one could start with the large ordinary category $\sSet^{\la}$ of small simplicial sets. We let $W$ denote the set of weak homotopy equivalences in $\sSet^{\la}$. Then  
\begin{equation}\label{fwefwe234234234}
\Spc^{\la}\simeq  \sSet^{\la}[W^{-1}]\, .
\end{equation}
 The $\infty$-category of spaces is also characterized by a universal property: 
it is the universal large presentable $\infty$-category generated by a final object (usually denoted by $*$).

If $\bC$ is some small $\infty$-category, then we can consider the large category of space-valued presheaves\index{presheaves!space valued}\index{$\PSh(-)$}
$$\PSh(\bC):=\Fun(\bC^{\op},\Spc^{\la})\, .$$
 
We have  the Yoneda embedding\index{Yoneda embedding}\index{$\yo$}   $$\yo:\bC\to \PSh(\bC)\, .$$ It presents   
the category of presheaves on $\bC$  as the free  cocompletion of $\bC$.
We will often use the identification (following from Lurie \cite[Thm.~5.1.5.6]{htt})\footnote{Lurie's formula is the marked equivalence in the chain \[\mathclap{\PSh(\bC)= \Fun(\bC^{\op},\Spc^{\la})\simeq \Fun(\bC,\Spc^{la,\op})^{\op}\stackrel{!}{\simeq} \Fun^{\colim}(\PSh(\bC),\Spc^{la,\op})^{\op}\simeq \Fun^{\lim}(\PSh(\bC)^{\op},\Spc^{\la}).}\]}
$$\PSh(\bC)\simeq \Fun^{\lim}(\PSh (\bC)^{\op},\Spc^{\la} )\, ,$$
where the superscript $\lim$ stands for small limit-preserving functors.
In particular, if $E$ and $F$ are presheaves, then the evaluation $E(F)$ in $\Spc^{\la}$ is defined  and satisfies
\begin{equation}\label{ejfiowejfoifjoifewfefef}E(F)\simeq \lim_{(\yo(X)\to F)\in \bC/F} E(X)\, ,\end{equation}
where $\bC/F$ denotes the slice category.
 Note that for $X$ in  $\bC$ we have the equivalence $E(\yo(X))\simeq E(X)$.
\hB
\end{rem}

 For a big family $\cY=(Y_{i})_{i\in I}$ we define   $$\yo(\cY):=\colim_{i\in I} \yo(Y_{i})\in  \PSh(\BC) \, .$$
  Note that we take the colimit after applying the Yoneda embedding. If we would apply these operations in the opposite order, then in general we would get a different result.
%
In order to simplify the notation, for a presheaf $E$  we will often abbreviate $E(\yo(\cY))$ by
 $E(\cY)$. By definition we have the equivalence
 $E(\cY)  \simeq \lim_{i\in I} E(Y_{i})$.
 
\bigskip 

Let  $E$ be in  $ \PSh(\BC)$.
  \begin{ddd}\label{ewlfjwefewoi2r543555}
We say that
$E $ satisfies descent\index{descent} for
complementary pairs if $E(\emptyset)\simeq *$ and if for every complementary pair\index{complementary pair} $(Z,\cY)$ on a bornological coarse space $X$ the square
 \begin{equation}\label{wefkewjfio32ru23r23r23r32r}
\xymatrix{E(X)\ar[r]\ar[d]&E(Z)\ar[d]\\E(\cY)\ar[r]&E(Z\cap \cY)} 
\end{equation}
is cartesian in $\Spc$.
\end{ddd}

\begin{lem}
There is a Grothendieck topology\index{Grothendieck topology} $\tau_{\chi}$ on $\BC$ such that
the $\tau_{\chi}$-sheaves are exactly the presheaves which satisfy descent  for
complementary pairs.
\end{lem}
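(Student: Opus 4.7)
The plan is to construct $\tau_{\chi}$ by specifying a class of generating covering sieves and then invoking the standard fact that any pullback-stable class of sieves generates a Grothendieck topology whose sheaves are exactly the presheaves satisfying the sheaf condition for the generators. First I would declare, for every bornological coarse space $X$ and every complementary pair $(Z,\cY)$ on $X$ with $\cY = (Y_i)_{i \in I}$, the sieve $S_{(Z,\cY)}$ on $X$ generated by the inclusions $Z \hookrightarrow X$ and $Y_i \hookrightarrow X$ for $i \in I$ to be a covering sieve, and define $\tau_{\chi}$ as the smallest Grothendieck topology on $\BC$ containing all such sieves (together with the empty sieve on the empty bornological coarse space, to encode $E(\emptyset) \simeq *$).

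Next I would verify pullback stability of the generating class: given a morphism $f : X' \to X$, Example~\ref{sdjfh32r23r} produces the complementary pair $(f^{-1}(Z), f^{-1}\cY)$ on $X'$, and the pullback sieve $f^{*}S_{(Z,\cY)}$ contains $S_{(f^{-1}(Z), f^{-1}\cY)}$ by construction. Pullback stability of the generators guarantees that the topology $\tau_{\chi}$ is generated in a controlled way, and that a presheaf is a $\tau_{\chi}$-sheaf precisely if it satisfies the sheaf condition with respect to each generating sieve $S_{(Z,\cY)}$.

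The remaining step is to identify the sheaf condition for $S_{(Z,\cY)}$ with the descent condition of Definition~\ref{ewlfjwefewoi2r543555}. By the complementary pair axiom there is $i_{0} \in I$ with $Z \cup Y_{i_0} = X$, and since $\cY$ is filtered the subfamily of indices $j$ with $Z \cup Y_{j} = X$ is cofinal. For each such $j$ the sieve $S_{(Z,\cY)}$ contains the two-element cover $\{Z \hookrightarrow X, Y_{j} \hookrightarrow X\}$ with intersection $Z \cap Y_{j}$, so the sheaf condition yields a cartesian square
\[
\xymatrix{E(X) \ar[r] \ar[d] & E(Z) \ar[d] \\ E(Y_{j}) \ar[r] & E(Z \cap Y_{j})}
\]
Taking the limit of these cartesian squares over the cofinal subfamily and commuting limits on the lower-left corner gives the square \eqref{wefkewjfio32ru23r23r23r32r}. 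Conversely, given the descent square for the pair $(Z,\cY)$, I would reconstruct the sheaf condition for $S_{(Z,\cY)}$ by first restricting to the cofinal subfamily where $Z \cup Y_j = X$ and then noting that the indexed $\infty$-categorical Čech limit along $S_{(Z,\cY)}$ is the filtered limit of the pullbacks $E(Z) \times_{E(Z \cap Y_j)} E(Y_j)$, which commutes past the filtered limit defining $E(\cY)$ and $E(Z \cap \cY)$.

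The main obstacle is the interaction between the filtered big family and the $\infty$-categorical Čech limit: I must justify that the limit of a presheaf along the large sieve $S_{(Z,\cY)}$ can be computed from the two-element subfamilies $\{Z, Y_{j}\}$ by first taking pullbacks and then the filtered limit in $j$. This uses commutation of $\lim$ with $\lim$ together with cofinality of the subsystem of indices with $Z \cup Y_j = X$, both of which are standard but need to be spelled out carefully in the $\infty$-categorical setting.
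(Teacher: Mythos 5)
Your construction runs in the opposite direction from the paper's: you build $\tau_{\chi}$ from below as the \emph{smallest} topology generated by the sieves $S_{(Z,\cY)}$, verify that this generating class is pullback-stable (a coverage) using Example~\ref{sdjfh32r23r}, and then invoke the comparison between sheaves for a coverage and sheaves for the generated topology. The paper instead takes $\tau_{\chi}$ to be the \emph{largest} topology for which every descent-satisfying presheaf is a sheaf; this collection of sieves is automatically a Grothendieck topology, so no pullback-stability check or coverage-comparison lemma is needed, and the only remaining point is that the sieve generated by $Z$ and the $Y_i$ lies in $\tau_{\chi}$, which follows once one knows that locality with respect to that sieve is equivalent to the descent condition of Definition~\ref{ewlfjwefewoi2r543555}. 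Both routes are legitimate and both hinge on exactly that equivalence; your version buys an explicit generating set for the topology at the cost of the extra coverage bookkeeping, while the paper's buys brevity at the cost of leaving the topology less explicitly described.

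The one place where your argument is thinner than it should be is the direction ``descent $\Rightarrow$ sheaf condition for $S_{(Z,\cY)}$''. The limit of $E$ over the sieve generated by the infinite family $\{Z\}\cup\{Y_i\}_{i\in I}$ is not by inspection a filtered limit of the two-term pullbacks $E(Z)\times_{E(Z\cap Y_j)}E(Y_j)$: the \v{C}ech data of the full family involves all the $Y_i$ simultaneously, and the pairwise intersections $Y_i\cap Y_j$ need not belong to the family. The clean way to close this gap is to identify the sieve, as a sub-presheaf of $\yo(X)$, with the union $\yo(Z)\cup\yo(\cY)$, which (being a union of subobjects) is the pushout $\yo(Z)\sqcup_{\yo(Z\cap\cY)}\yo(\cY)$ with $\yo(\cY)=\colim_i\yo(Y_i)$; mapping this into $E$ produces exactly the cartesian square \eqref{wefkewjfio32ru23r23r23r32r}. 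This is also the content the paper elides with ``the main non-trivial point is now to observe\dots'', so you are not worse off than the printed proof, but your appeal to ``commuting limits and cofinality'' does not by itself supply it.
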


\begin{proof}
We consider the largest Grothendieck topology  $\tau_{\chi}$ on $\BC$ for which all the presheaves satisfying descent  for
complementary pairs  are sheaves.  The main non-trivial point is now to observe that the condition of descent
for a  complementary pair $(Z,\cY)$ on a bornological coarse space $X$ is equivalent to  the condition of descent for the sieve generated by the covering family of $X$ consisting of $Z$ and the subsets $Y_{i}$ for all $i$ in $I$. By the definition of the Grothendieck topology $\tau_{\chi}$ this sieve belongs to $\tau_{\chi}$. Therefore a $\tau_{\chi}$-sheaf satisfies descent for   complementary pairs. For more details we refer to \cite{Heiss:2019aa}.
\end{proof}

We let $$\Sh (\BC)\subseteq \PSh(\BC)$$ denote the full subcategory of $\tau_{\chi}$-sheaves. We have the usual sheafification\index{sheafification} adjunction   $$L :\PSh (\BC)\leftrightarrows \Sh (\BC):\incl\, .$$

\begin{lem}\label{wfeoijweiofoewfwe234234}
The Grothendieck topology $\tau_{\chi}$ on $\BC$ is subcanonical.\index{subcanonical}
\end{lem}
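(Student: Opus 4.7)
My plan is to show that for every $X\in\BC$ the representable presheaf $\yo(X)$ satisfies descent for complementary pairs, in the sense of Definition \ref{ewlfjwefewoi2r543555}; then $\yo(X)$ is a $\tau_\chi$-sheaf and $\tau_\chi$ is subcanonical. The condition $\yo(X)(\emptyset)\simeq *$ is immediate. Fix a bornological coarse space $W$ with a complementary pair $(Z,\cY)$, $\cY=(Y_i)_{i\in I}$, and let $i_0\in I$ be such that $Z\cup Y_{i_0}=W$. Since $\yo(X)$ is valued in discrete spaces, I must show that the map
\[\Hom_{\BC}(W,X)\longrightarrow \Hom_{\BC}(Z,X)\times_{\lim_i\Hom_{\BC}(Z\cap Y_i,X)}\lim_{i\in I}\Hom_{\BC}(Y_i,X)\]
is a bijection. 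So I start with a compatible family $(f_Z,(f_i)_{i\in I})$ and construct a unique morphism $f\colon W\to X$ restricting to $f_Z$ on $Z$ and to $f_i$ on each $Y_i$.

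On the level of sets, $f$ is forced and well defined: using $W=Z\cup Y_{i_0}$, put $f|_Z:=f_Z$ and $f|_{Y_{i_0}}:=f_{i_0}$; these agree on $Z\cap Y_{i_0}$ by hypothesis, and the compatibility with inclusions $Y_{i_0}\subseteq Y_j$ coming from the filteredness of $\cY$ forces $f|_{Y_j}=f_j$ for every $j$ with $i_0\le j$. Uniqueness of $f$ as a map of sets is clear.

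The main point to check is that $f$ is a morphism of bornological coarse spaces, and the subtle case is controlledness. Let $U$ be an entourage of $W$; after replacing $U$ by $U\cup U^{-1}\cup\diag_W$ we may assume $U$ is symmetric and contains the diagonal. Using that $\cY$ is big, choose $j\in I$ with $i_0\le j$ and $U[Y_{i_0}]\subseteq Y_j$ (so in particular $Y_{i_0}\subseteq Y_j$). I claim
\[U\subseteq (Z\times Z)\cup (Y_j\times Y_j).\]
Indeed, if $(w_1,w_2)\in U$ and both $w_1,w_2\in Z$ we are in the first piece; otherwise at least one of them, say $w_2$, lies in $Y_{i_0}$ (since $W=Z\cup Y_{i_0}$), and then by symmetry of $U$ both $w_1,w_2\in U[Y_{i_0}]\subseteq Y_j$. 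Consequently
\[(f\times f)(U)\subseteq (f_Z\times f_Z)(U\cap (Z\times Z))\cup (f_j\times f_j)(U\cap (Y_j\times Y_j)),\]
which is a union of two entourages of $X$ since $f_Z$ and $f_j$ are controlled; hence $(f\times f)(U)$ is an entourage of $X$.

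Properness is easier: for a bounded $B\subseteq X$, the decomposition $W=Z\cup Y_{i_0}$ yields $f^{-1}(B)=f_Z^{-1}(B)\cup f_{i_0}^{-1}(B)$, and since $Z$ and $Y_{i_0}$ carry the induced bornological coarse structures from $W$ (Example \ref{wokfewofwfkopewfefewf}), each summand is bounded in $W$, so the union is bounded. This produces the required morphism $f$ and shows that the comparison map is bijective, completing the proof. The only genuine difficulty in the argument is the handling of the mixed pairs in an entourage when verifying controlledness; this is exactly what the defining property of a big family (Definition \ref{ifjweijwoiefewfewf}) is tailored for, and this is also the reason complementary pairs, rather than excisive pairs, are the correct input for the Grothendieck topology.
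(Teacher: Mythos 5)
Your proof is correct and follows essentially the same route as the paper's: reduce to descent for the set-valued representable presheaf, build the glued map from $f_Z$ and $f_{i_0}$, and use bigness of $\cY$ to control the pairs straddling $Z$ and $Y_{i_0}$. The only difference is cosmetic — you symmetrize $U$ and cover it by $(Z\times Z)\cup(Y_j\times Y_j)$ in one step, where the paper splits $U$ into four pieces and absorbs the two mixed pieces into some $Y_i\times Y_i$ separately.
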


\begin{proof}
We must show that for every $X^{\prime}$ in $\BC$ the representable
presheaf $\yo(X^{\prime})$ is a sheaf. Note that $\yo(X^{\prime})$ comes from the  set-valued presheaf $y(X^{\prime})$
by postcomposition with the functor $$\iota:\Set^{\la}\to \sSet^{\la}\to  \Spc^{\la}\, ,$$
where $$y:\BC\to \PSh_{\Set^{\la}}(\BC)$$ is the usual one-categorical Yoneda embedding from bornological coarse spaces to set-valued presheaves on $\BC$.
Since $\iota$ preserves limits
it  suffices to show descent for the set-valued sheaf $y(X^{\prime})$ in $\PSh_{\Set^{\la}}(\BC)$. Let $(Z,\cY)$ be a complementary pair on a bornological coarse space $X$. Then we must show that the natural map
$$y(X^{\prime})(X)\to y(X^{\prime})(Z)\times_{y(X^{\prime})(Z\cap \cY)}y(X^{\prime})(\cY)$$
is a bijection. The condition that there exists an $i_{0}$ in $I$ such that $Z\cup Y_{i_{0}}=X$ implies injectivity. 
We consider now an element on the right-hand side. It consists of a morphism $f:Z\to X^{\prime}$ and a compatible family of morphisms $g_{i}:Y_{i}\to X^{\prime}$ for all $i$ in $I$ whose restrictions to $Z\cap Y_{i}$ coincide with the restriction of $f$.
We therefore get a well-defined map of sets $h:X\to X^{\prime}$ restricting to $f$ and the $g_{i}$, respectively. In order to finish the argument it suffices to show that $h$ is a morphism in $\BC$.
If $B^{\prime}$ is a bounded subset of $X^{\prime}$, then the equality  $$h^{-1}(B^{\prime})=\big(g^{-1}(B^{\prime})\cap Z\big)\cup \big(f_{i_{0}}^{-1}(B^{\prime})\cap Y_{i_{0}}\big)$$ shows that $f^{-1}(B)$ is bounded. Therefore $h$ is proper.
We now show that $h$ is controlled. Let $U$ be an entourage of $X$. Then we have a non-disjoint decomposition
$$U= \big(U\cap (Z\times Z)\big) \cup  \big(U\cap (Y_{i_{0}}\times Y_{i_{0}})\big)\cup  \big(U\cap (Z\times Y_{i_{0}})\big)\cup  \big(U\cap (Y_{i_{0}}\times Z)\big)\, .$$ We see that
$$(h\times h)(U\cap (Z\times Z))=(f\times f)(U\cap (Z\times Z))$$ and   $$ (h\times h)(U\cap (Y_{i_{0}}\times Y_{i_{0}}))= (g_{i_{0}}\times g_{i_{0}})(U\cap (Y_{i_{0}}\times Y_{i_{0}})) $$
are controlled. For the remaining two pieces we argue as follows. Since the family $\cY$ is big there exists $i$ in 
$I$ such that $U\cap (Y_{i_{0}}\times Z)\subseteq U\cap (Y_{i}\times Y_{i})$.
Then
$$(h\times h)(U\cap (Y_{i_{0}}\times Z))\subseteq (g_{i}\times g_{i})( U\cap (Y_{i}\times Y_{i}))$$
is controlled. For the remaining last piece we argue similarly. 
We finally conclude that $(h\times h)(U)$ is controlled. 
\end{proof}

\subsection{Coarse equivalences}

Coarse geometry is the study of  bornological coarse spaces up to equivalence.  {In order to define} the notion of equivalence we first introduce the relation of closeness between morphisms.

Let $X, X^{\prime}$ be bornological coarse spaces, and  let $f_{0},f_{1}\colon X\to  X^{\prime}$ be a pair of morphisms.
\begin{ddd} \label{ioewfjweoifoijwfw32424} We say that the 
  morphisms $f_{0}$ and $f_{1} $  are  close\index{close morphisms}  {to each other} if \-
$(f_{0}\times f_{1})(\diag_{X})$ is an entourage of $X^{\prime}$.
\end{ddd}
Since the coarse structure on $X^{\prime}$ contains the diagonal and is closed under symmetry and composition the relation of
being close to each other is an equivalence relation on the set of morphisms from $X$ to $X^{\prime}$.

Let $f:X\to X^{\prime}$ be a morphism in $\BC$.
 
\begin{ddd} \label{ioioieorwerr3245345345432}
 We say  $ f$ is an equivalence\index{equivalence!in $\BC$}\index{coarse!equivalence}   if there exists   a morphism $g:X^{\prime}\to X$ in $\BC$ such that $f\circ g$ and $g\circ f$ are close to the respective identities.
\end{ddd}

\begin{ex}\label{fijweiofjwefoejufoeifueofeffff1}
Let $X$ be a bornological coarse space and $Y$  be a subset of $X$. If $U$ is an entourage of $X$ containing the diagonal, then the inclusion $Y\to U[Y]$ is an equivalence in $\BC$ if we equip the subsets with the induced structures.
In order to define an inverse $g: U[Y]\to Y$ we simply choose for every point  $x$ in $U[Y]$ a point $g(y)$ in $Y$ such that $(x,g(x))$ in $U$. 
\hB
\end{ex}

\begin{ex}
Let $(X,d)$ and $(X^{\prime},d^{\prime})$ be metric spaces and $f\colon X\to X^{\prime}$ be a map between the underlying sets. Then $f$ is a quasi-isometry if there exists $C,D,E$ in $(0,\infty)$ such that for all $x,y$ in $X$ we have
$$C^{-1}d^{\prime}(f(x),f(y)-D\le d (x,y)\le Cd^{\prime}(f(x),f(y))+D$$ and for every $x^{\prime}$ in $X^{\prime}$ there exists $x$ in $X$ such that $d^{\prime}(f(x),x^{\prime})\le E$.

If $f$ is a quasi-isometry, then the map
$f:X_{d}\to X_{d}^{\prime}$ (see Example \ref{welifjwelife89u32or2}) is an equivalence.\hB
\end{ex}

\begin{ex}
Invariance under equivalences is a basic requirement for useful invariants of bornological coarse spaces.
Typical examples of such invariants with values in $\{\text{false}, \text{true}\}$ are   local and global finiteness conditions or conditions on the generation of the coarse or bornological structure. Here is a list of examples of such invariants for a bornological coarse space $X$.
\begin{enumerate}
\item $X$ is locally countable (or locally finite), {see Definition \ref{defn:okmnsf}.} 
\item $X$ has the minimal compatible bornology, {see Example \ref{wfijiofweofewfwefewfewf}.}
\item $X$ has locally bounded geometry, {see Definition \ref{fwejfoiwejfewojfoefjewfewfewf}.} 
\item $X$ has bounded geometry, {see Definition \ref{dsf892323234}.}
\item The bornology of $X$ is countably generated, {see Example \ref{sdf89232111332}.}
\item The coarse structure of $X$ has one (or finitely many, or countably many) generators, {see Example \ref{sdf89232111332}.}
\end{enumerate}
The set $\pi_{0}^{\coarse}(X)$  of coarse components of $X$ is an example of a $\Set$-valued invariant, see Definition \ref{ekhqiuheiuqe123122342334}.\hB
\end{ex}

Our next localization implements the idea that a sheaf $E$ in $\Sh (\BC)$ should send equivalences in $\BC$ to equivalences in $\Spc^{\la}$.  To this end
we consider the coarse bornological space $\{0,1\}$ with the maximal bornological and coarse structures.
We now observe 
that two morphisms $f_{0},f_{1}:X\to X^{\prime}$ are close to each other if and only if we can combine them to a single morphism
$\{0,1\} \otimes X \to X^{\prime}$, see Example \ref{eiofweoifwefuewfieuwf9wwfwef} for $\otimes$.
 For every bornological coarse space $X$ we have a projection $\{0,1\}\otimes X\to X$.

We now implement the invariance under coarse equivalences into   $\Sh(\BC)$ using the interval object $\{0,1\}$ similarly as in the approach to $\bbA^{1}$-homotopy theory by Morel--Voevodsky \cite[Sec. 2.3]{mv99}.

Let $E$ be in $\Sh (\BC)$.
\begin{ddd}
We call  $E$ coarsely invariant\index{coarsely!invariant} if  for every $X$ in $\BC$ the projection
induces an equivalence
$$E(X)\to E(\{0,1\} \otimes X)\, .$$
\end{ddd}

\begin{lem}\label{ifjewoifjufioewfewfewfewf} The 
coarsely invariant sheaves form a full localizing subcategory of \linebreak$\Sh(\BC)$. 
\end{lem}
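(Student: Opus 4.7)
The plan is to exhibit the coarsely invariant sheaves as the $S$-local objects inside $\Sh(\BC)$ for an explicitly given (small) set of morphisms $S$, and then invoke the general theory of accessible reflective localizations of presentable $\infty$-categories.

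Concretely, for every $X \in \BC$ let $p_X \colon \{0,1\} \times X \to X$ denote the projection, which is a morphism in $\BC$ because the coarse and bornological structures on $\{0,1\}$ are the maximal ones and hence compatible with any product. I would then set
$$S := \bigl\{\, L(\yo(p_X)) \colon L\yo(\{0,1\}\times X) \to L\yo(X) \,\bigr\}_{X \in \BC},$$
where $L \colon \PSh(\BC) \to \Sh(\BC)$ is the sheafification functor. Using the sheafification adjunction together with the Yoneda lemma one has, for any sheaf $E$ and any $Y \in \BC$, natural equivalences
$$\Map_{\Sh(\BC)}(L\yo(Y), E) \simeq \Map_{\PSh(\BC)}(\yo(Y), E) \simeq E(Y).$$
Applied to $Y = X$ and $Y = \{0,1\}\times X$, this shows that a sheaf $E$ is $S$-local precisely when $E(X) \to E(\{0,1\}\times X)$ is an equivalence for every $X$, i.e.\ exactly when $E$ is coarsely invariant.

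The category $\Sh(\BC)$ is presentable, as it is an accessible left-exact localization of the presheaf category $\PSh(\BC)$. By the general existence result for accessible reflective localizations of presentable $\infty$-categories (Lurie, \cite{htt}, Prop.~5.5.4.15), the full subcategory of $S$-local objects of a presentable $\infty$-category with respect to a small set of morphisms is reflective, with the reflection exhibiting it as the Bousfield localization at $S$. This is the sense in which it is a localizing subcategory. Fullness is built into the definition, so no further argument is required.

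The only real subtlety is the one explicitly deferred in the paper's set-theoretic disclaimer: the class of objects of $\BC$ is not small, so the collection $S$ above is a priori only a proper class. This is handled in the standard way by choosing a small, cofinal subclass of representatives (e.g.\ bounding the cardinality of the underlying sets, together with the generating entourages and bounded sets) and observing that $S$-locality against this subclass is equivalent to coarse invariance on all of $\BC$ since both sides of the equivalence $E(X) \to E(\{0,1\}\times X)$ are natural in $X$. With this truncation in place the proof reduces to the cited general statement, and this step is where all the work has been swept under the rug.
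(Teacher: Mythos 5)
Your proof is correct and follows essentially the same route as the paper: identify the coarsely invariant sheaves as the local objects with respect to the set of projections $\{0,1\}\times X\to X$ and invoke Lurie's Prop.~5.5.4.15. The only cosmetic difference is that you sheafify the Yoneda images via $L$, whereas the paper observes (via subcanonicality of $\tau_{\chi}$, Lemma~\ref{wfeoijweiofoewfwe234234}) that the projections are already morphisms of sheaves; your explicit treatment of the set-theoretic size issue goes slightly beyond what the paper writes, which defers this to its general disclaimer.
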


\begin{proof}
The subcategory of coarsely invariant sheaves is the full subcategory of objects which are local \cite[Def. 5.5.4.1]{htt} with respect to the small set of projections $\yo(\{0,1\} \otimes X)\to \yo(X)$ for all $X$ in $\BC$. The latter  are morphisms of sheaves by   Lemma \ref{wfeoijweiofoewfwe234234}. We then apply  \cite[Prop. 5.5.4.15]{htt} to finish the proof.
\end{proof}

We denote the    subcategory   of coarsely invariant sheaves by  $\Sh^{ \{0,1\}}(\BC)$\index{$\Sh^{ \{0,1\}}(-)$}. We have a corresponding adjunction
\[H^{\{0,1\}}  :  \Sh(\BC)\leftrightarrows  \Sh^{ \{0,1\}}(\BC):\incl\, .\]

Note that in order to show that a sheaf in  $\Sh (\BC)$ is coarsely invariant it suffices to show that it sends pairs of morphisms in $\BC$ which are close to each other to  pairs of equivalent morphisms in $\Spc^{\la}$. 

\begin{rem}
We use the notation $H^{\{0,1\}}$ since this functor resembles the homotopification functor $\cH$ in differential cohomology theory \cite{MR3462099}. The analogous  functor   in the construction of the motivic homotopy category in algebraic geometry is discussed, e.g., in  Hoyois \cite[Sec.~3.2]{Hoyois:2015aa} and denoted there by $L_{\cA}$.
\hB
\end{rem}

\subsection{Flasque spaces}

The following notion is modeled after  Higson--Pedersen--Roe \cite[Sec.~10]{higson_pedersen_roe}.

Let $X$ be in $\BC$.

\begin{ddd} 
 \label{efijewifjewiofwifwfew322423424}
 $X$ is  flasque\index{bornological coarse space!flasque} \index{flasque!bornological coarse space} if it admits an  endomorphism $f:X\to X$ with the following properties:
 \begin{enumerate}
\item \label{asdfgjtzkj561} The morphisms $f$ and $\id_{X}$ are close to each other.  
\item \label{sdfn3443243t61} For every entourage $U$ of $X$ the union
$\bigcup_{k\in \nat} (f^{k}\times f^{k})(U)$
is again an entourage of~$X$.
\item\label{fwoiejweoifeowi231} For every bounded subset $B$ of $X$ there exists $k  $ in $\IN$ such that  $f^{k}(X)\cap B=\emptyset$.
\end{enumerate}
\end{ddd}

In this case we  will say that flasqueness of $X$ is implemented by $f$.\index{flasqueness!implemented by $-$}

\begin{ex} 
\label{feijweifewfoiuwe9fuewfewfwf}
Consider $[0,\infty) $ as a bornological coarse space  with the structure induced by the metric; see~Example~\ref{welifjwelife89u32or2}.
Let $X $ be a bornological coarse space. Then the bornological coarse space
  $[0,\infty)\otimes X$ (see Example \ref{eiofweoifwefuewfieuwf9wwfwef}) 
 is flasque. For $f$ we can take the map
$f(t,x):=(t+1,x)$.   
Let $\cB$ denote the bornology of $X$ and 
note that the bornology  $ \cB_{d}\times \cB$ of $[0,\infty)\otimes X$ is smaller than the bornology of the cartesian product $[0,\infty)\times X$; see Lemma~\ref{wleifjewfiewiofuew9fuewofewf}.
Indeed, the bornological coarse space $[0,\infty)\times X$
 is not  flasque because Condition \ref{fwoiejweoifeowi231} of Definition~\ref{efijewifjewiofwifwfew322423424} is violated. But note that $X\ltimes [0,\infty)$ (see Example~\ref{hffweiufweif})
is flasque, too.

Similarly, if $\nat_{d}$ denotes the $\nat$ with structures induced form the embedding into $[0,\infty)$, then
$\nat_{d}\otimes X$ is flasque for every 
 bornological coarse space  $X$. Flasqueness is again witnessed by the map
 $f:\nat_{d}\otimes X\to \nat_{d}\otimes X$ given by $f(n,x):=f(n+1,x)$.
\hB
\end{ex}

Let $E$  be in  $\Sh^{ \{0,1\}}(\BC)$.
\begin{ddd}
We say that  $E$ vanishes on flasque bornological coarse spaces if  
  $E(X)$ is a final object in $\Spc$ for every flasque bornological coarse space $X$.
\end{ddd}

We will say shortly that $E$ vanishes on flasque spaces.

\begin{rem}\label{rgoirehgoir34rt43t3}
If $ \emptyset_{\Sh}$ denotes an initial object in $\Sh(\BC)$, then for  every $E$ in $\Sh(\BC)$ we have the equivalence  of spaces $$ \Map_{\Sh(\BC)}(\emptyset_{\Sh} ,E)\simeq *\, .$$
 On the other hand, by the sheaf condition (see Definition \ref{ewlfjwefewoi2r543555})
we have the equivalence of spaces $$\Map_{\Sh(\BC)}(\yo(\emptyset),E)\simeq E(\emptyset) \simeq*   \, .$$
We conclude that 
$$\yo(\emptyset)\simeq  \emptyset_{\Sh}\, ,$$ i.e., that $\yo(\emptyset)$ is an initial object of $\Sh({\BC})$.
Hence a sheaf $E$ vanishes on the  flasque bornological coarse space  $X$  if and only if it is local with respect to the morphism
$\yo(\emptyset)\to \yo(X)$.
\hB
\end{rem}


\begin{lem}\label{ifjweiofwoeifuew9u923445}
The coarsely invariant sheaves which vanish on flasque spaces form a 
full localizing subcategory of $ \Sh^{ \{0,1\}}(\BC)$.
\end{lem}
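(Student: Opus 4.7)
The plan is to mimic the proof of Lemma \ref{ifjewoifjufioewfewfewfewf} and realize the subcategory as the full subcategory of $\Sh^{\{0,1\}}(\BC)$ consisting of the objects local (in the sense of \cite[Def.~5.5.4.1]{htt}) with respect to a suitable set of morphisms, then apply \cite[Prop.~5.5.4.15]{htt} to conclude that it is a reflective (hence full localizing) subcategory.

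Concretely, I would form the set $S$ whose elements are the morphisms
$$H^{\{0,1\}}L(\emptyset_{\PSh}) \to H^{\{0,1\}}L\yo(X)$$
in $\Sh^{\{0,1\}}(\BC)$ indexed by the flasque bornological coarse spaces $X$; here $L\colon \PSh(\BC)\to \Sh(\BC)$ is the sheafification adjunction and $H^{\{0,1\}}$ is the reflector onto coarsely invariant sheaves. By Lemma \ref{wfeoijweiofoewfwe234234} the representable presheaf $\yo(X)$ is already a sheaf, and $H^{\{0,1\}}L(\emptyset_{\PSh})$ is an initial object of $\Sh^{\{0,1\}}(\BC)$. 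Modulo the set-theoretic disclaimer in Section \ref{fewijweioi23423435545}, $S$ is a small set of morphisms.

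Next I would verify that for a coarsely invariant sheaf $E$, locality with respect to $S$ is equivalent to vanishing on flasque spaces. Unwinding the adjunctions $L\dashv \mathrm{incl}$ and $H^{\{0,1\}}\dashv \mathrm{incl}$, the map of mapping spaces induced by a morphism in $S$ is canonically identified with the map $E(\yo(X))\to E(\emptyset_{\PSh})$, which is an equivalence for all flasque $X$ precisely under the condition of the lemma. Applying \cite[Prop.~5.5.4.15]{htt} to the set $S$ then produces the desired reflective localization. The only subtlety is the bookkeeping needed to translate the vanishing condition into a locality condition on mapping spaces; once this is done there is no real obstacle and the argument reduces to a direct invocation of the same Lurie proposition used in Lemma \ref{ifjewoifjufioewfewfewfewf}.
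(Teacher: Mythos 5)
Your proposal is correct and is essentially the paper's argument: both realize the subcategory as the local objects with respect to a set of morphisms built from $\emptyset_{\PSh}\to \yo(X)$ for flasque $X$ and then invoke \cite[Prop.~5.5.4.15]{htt}, exactly as in Lemma \ref{ifjewoifjufioewfewfewfewf}. The only (immaterial) difference is bookkeeping: the paper localizes $\Sh(\BC)$ at the union of this set with the coarse-invariance morphisms, whereas you localize the already-reflective subcategory $\Sh^{\{0,1\}}(\BC)$ at the images of these morphisms under the reflector $H^{\{0,1\}}\circ L$; the two descriptions agree by the standard compatibility of iterated accessible localizations.
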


\begin{proof}
By Remark \ref{rgoirehgoir34rt43t3} the  category of coarsely invariant sheaves which vanish on flasque  spaces  is
  the full subcategory of sheaves which are local with respect to the union of the small sets
 of morphisms considered in  the proof of Lemma \ref{ifjewoifjufioewfewfewfewf} and the morphisms 
$ \yo(\emptyset)\to \yo(X)$ for all flasque $X$ in  $\BC$.
\end{proof}

We denote the subcategory  of coarsely invariant sheaves which vanish on flasque spaces  by
$\Sh^{ \{0,1\},\fl}(\BC)$. We have an adjunction
$$\Fl :\Sh^{ \{0,1\} }(\BC)\leftrightarrows \Sh^{ \{0,1\},\fl}(\BC):\incl\, .$$

\begin{rem}\label{fiwefhj43898345895}
The empty bornological coarse space $\emptyset$ is flasque. Therefore we have the equivalence $E(\emptyset)\simeq *$ for any $E$ in  $\Sh^{ \{0,1\},\fl}(\BC)$. This is compatible with the requirements for $E$ being a sheaf.
 
If $X$ in $\BC$ is flasque, then the canonical map $\emptyset \to X$ induces an equivalence $\Fl(\yo(\emptyset))\to \Fl(\yo(X))$.
\hB
\end{rem}

In some applications 
one considers an apparently more general definition of flasqueness. Let $X$ be in $\BC$.
\begin{ddd}[{\cite[Def.~3.10]{nw1}}]\label{ddd:rf435f}
$X$  is flasque in the generalized sense\index{flasque!generalized sense} if it admits a sequence
 $(f_{k})_{k\in \nat}$ of endomorphisms $f_{k}  :  X\to X$ with the following properties:
\begin{enumerate}
\item \label{sdfn34r} $f_{0}=\id_{X}$.
\item \label{asdfgjtzkj56}
The union $\bigcup_{k\in \nat} (f_{k}\times f_{k+1})(\diag_{X})$ is an entourage of $X$.
\item \label{sdfn3443243t6} For every entourage $U$ of $X$ the union
$\bigcup_{k\in \nat} (f_{k}\times f_{k})(U)$
is again an entourage of $X$.
\item\label{fwoiejweoifeowi23} For every bounded subset $B$ of $X$ there exists a $k_{0}$  in $\IN$ such that for all $k$ in $\IN$ with $k\ge k_{0}$ we have $f_{k}(X)\cap B=\emptyset$.
\end{enumerate}
\end{ddd}
If $X$ is flasque in the sense of Definition \ref{feijweifewfoiuwe9fuewfewfwf} with flasqueness implemented by $f$, then
it is flasque in the generalized sense. We just take  $f_{k}:=f^{k}$ for all $k $ in $\nat$.

Let $X$ be a bornological coarse space. In the following lemma $\nat_{d}$ denotes the bornological coarse space with the structures induced by the standard metric. Further, $\iota \colon X\to \nat_{d}\otimes X$ is the embedding determined by the point $0$ in $\nat$.
\begin{lem}\label{weiogowergfrfwrffwref}
The following assertions are equivalent:
\begin{enumerate}
\item $X$ is flasque in the generalized sense.
\item\label{weroigjorewfferwf} There exists a morphism $F:\nat_{d}\otimes X\to X$ such that $F\circ \iota=\id_{X}$.
\end{enumerate}
\end{lem}
\begin{proof}
 {Let $X$ be} flasque in the generalized sense. 
Let $(f_{k})_{k\in \nat}$ be as in Definition~\ref{ddd:rf435f}.
Then we define
$$F :  \IN_{d} \otimes X \to X \, \quad F(k,x) := f_k(x)\, .$$ 
We argue that $F$ is a morphism.  
We first show that $F$ is proper. Let $B$ be a bounded subset of $X$. 
Let $k_{0}$ in $\nat$ be as in Condition~\ref{ddd:rf435f}.\ref{fwoiejweoifeowi23}.
Then $F^{-1}(B)\subseteq  \bigcup_{k=0}^{k_{0}-1} \{k\}\times  f_{k}^{-1}(B)$. This subset is bounded in $\nat_{d}\otimes X$. We now show that $F$ is controlled. The coarse structure of $\nat_{d}\otimes X$ is generated by the entourages $V:=\{(n,n+1):n\in \nat\}\times \diag_{X}$ and
$\diag_{\nat}\times U$ for all entourages $U$ of $X$. We have
$F(V)=\bigcup_{k\in \nat} (f_{k}\times f_{k+1})(\diag_{X})$ which is an entourage of $X$ by Condition~\ref{ddd:rf435f}.\ref{asdfgjtzkj56}.
Furthermore, $F(U)=\bigcup_{k\in \nat} (f_{k}\times f_{k})(U)$ is an entourage of $X$ by Condition~\ref{ddd:rf435f}.\ref{sdfn3443243t6}. Finally, by Condition~\ref{ddd:rf435f}.\ref{sdfn34r} we have $F\circ \iota=\id_{X}$. 

Vice versa, assume that a morphism $F:\nat_{d}\otimes X\to X$ is given such that $F\circ \iota=X$. Then we define family of morphisms $(f_{k})_{k\in \nat}$ by
$f_{k}(x):=F(k,x)$. Similar considerations as above show that this family satisfies the conditions listed in Definition~\ref{ddd:rf435f}. 
\end{proof}

\begin{rem}
It is not clear that the condition of being flasque is coarsely invariant. But it is almost obvious that
flasqueness in the generalized sense is invariant under coarse equivalences.
Let $X$ be flasque in the generalized sense witnessed by the family $(f_{k})_{k\in \nat}$. Let furthermore
$g:X\to Y$ be a coarse equivalence. Then we can choose an inverse $h:Y\to X$ up to closeness.
We then define a family of endomorphisms $(f_{k}')_{k\in \nat}$ of $Y$ by $f'_{0}:=\id_{Y}$ and
$f_{k+1}':=g\circ f_{k}\circ h$. It is now easy to check that the family $(f_{k}')_{k\in \nat}$ satisfies the conditions listed 
in Definition \ref{ddd:rf435f}.  Hence the family  $(f_{k}')_{k\in \nat}$ witnesses that $Y$ is flasque in the generalized sense.
\hB
\end{rem}

Let $E$ be in $\Sh^{ \{0,1\},\fl}(\BC)$ and $X$ be in $\BC$.
\begin{lem}\label{rkgjergergergergerg}
If $E\in \Sh^{ \{0,1\},\fl}(\BC)$ and $X$ is flasque in the generalized sense, then $E(X)\simeq *$.
\end{lem}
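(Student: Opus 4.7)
My plan is to reduce to narrow flasqueness (Definition~\ref{efijewifjewiofwifwfew322423424}) by exhibiting $E(X)$ as a retract of $E(Y)\simeq *$ for an auxiliary space $Y$ that packages the sequence $(f_k)_{k\in\nat}$ into a single shift endomorphism. Concretely, I would take $Y$ to have underlying set $\nat\times X$, bornology generated by the subsets $\{k\}\times B$ for $k\in\nat$ and $B\in\cB_X$, and coarse structure generated by the ``block'' entourages $V_U:=\{((k,x),(k,y)):k\in\nat,\ (x,y)\in U\}$ for $U\in\cC_X$ together with the ``shift'' entourage $S:=\{((k,x),(k+1,x)):k\in\nat,\ x\in X\}$. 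Compatibility of bornology and coarse structure is immediate because each generating entourage applied to a generating bounded subset returns a finite union of generating bounded subsets.

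I then claim that $Y$ is flasque in the narrow sense with flasqueness implemented by the shift $\sigma\colon(k,x)\mapsto(k+1,x)$: the entourage $S$ witnesses closeness of $\sigma$ and $\id_Y$ (axiom~\ref{asdfgjtzkj561}); the generators $V_U$ and $S$ are essentially $\sigma$-invariant, so $\bigcup_n(\sigma^n\times\sigma^n)(V_U)\subseteq V_U$ and $\bigcup_n(\sigma^n\times\sigma^n)(S)\subseteq S$, giving axiom~\ref{sdfn3443243t61}; and every bounded subset of $Y$ meets only finitely many sheets $\{k\}\times X$, which gives axiom~\ref{fwoiejweoifeowi231}. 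Hence $E(Y)\simeq *$ by hypothesis on $E$.

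The key step is to construct morphisms $\iota_0\colon X\to Y$, $x\mapsto(0,x)$, and $\pi\colon Y\to X$, $(k,x)\mapsto f_k(x)$. The map $\iota_0$ is trivially controlled and proper. For $\pi$, controlledness on the generator $V_U$ reduces to the fact that $(\pi\times\pi)(V_U)=\bigcup_k(f_k\times f_k)(U)$ is an entourage of $X$ by axiom~\ref{sdfn3443243t6}, and on $S$ to the fact that $(\pi\times\pi)(S)=\bigcup_k(f_k\times f_{k+1})(\diag_X)$ is an entourage by axiom~\ref{asdfgjtzkj56}. Properness of $\pi$ follows from axiom~\ref{fwoiejweoifeowi23}: for bounded $B\subseteq X$ there exists $k_0$ with $f_k^{-1}(B)=\emptyset$ for $k\geq k_0$, while for $k<k_0$ each $f_k^{-1}(B)$ is bounded because $f_k$ is a morphism; thus $\pi^{-1}(B)\subseteq\bigsqcup_{k<k_0}\{k\}\times f_k^{-1}(B)$ is bounded in $Y$.

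Finally, axiom~\ref{sdfn34r} gives $\pi\circ\iota_0=\id_X$, so $E(\iota_0)\circ E(\pi)=\id_{E(X)}$ realizes $E(X)$ as a retract of $E(Y)\simeq *$ in $\Spc$; a retract of a terminal object is terminal, so $E(X)\simeq *$. The main thing to verify is that $\pi$ is a morphism, but this is precisely what the last three axioms of Definition~\ref{ddd:rf435f} are designed to supply, so no descent or coarse-invariance input is needed beyond the vanishing axiom defining $\Sh^{\{0,1\},fl}(\BC)$.
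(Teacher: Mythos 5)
Your proof is correct and follows essentially the same route as the paper: the authors also factor $\id_X$ through $\nat\times X$ equipped with the product coarse structure and the bornology generated by products of bounded sets (which agrees with the structure you generate from the block entourages $V_U$, the shift entourage $S$, and the sheets $\{k\}\times B$), observe that this space is flasque in the narrow sense via the shift, and conclude by the retract argument. Your write-up just makes explicit the verifications (that $\pi(k,x)=f_k(x)$ is controlled and proper, and that the shift implements flasqueness) which the paper leaves implicit.
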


\begin{proof}
By Lemma \ref{weiogowergfrfwrffwref} we  have the factorization 
  \begin{equation}\label{rwrgtret5345454z}
X \stackrel{ \iota}{\to}  \IN_{d} \otimes X \stackrel{F}{\to} X
\end{equation}
of the identity of $X$.
We apply $E$ and get the factorization
$$E(X) \stackrel{E(F)}{\to} E( \IN_{d} \otimes X)  \stackrel{E( \iota)}{\to} E(X)$$
of the identity of $E(X)$.
Since $E$  vanishes on flasques and $\nat_{d}\otimes X$ is flasque by Example~\ref{feijweifewfoiuwe9fuewfewfwf} we conclude that $\id_{E(X)}\simeq 0$. This implies $E(X)\simeq 0$.
%
%
\end{proof}

\subsection{\texorpdfstring{$u$}{u}-continuity and motivic coarse spaces}

Finally we will enforce that the Yoneda embedding preserves the colimits considered in Example~\ref{wfeoihewiufh9ewu98u2398234324234}.

Let $E$ be in $\Sh^{ \{0,1\},\fl}(\BC)$.
\begin{ddd}
$E$  is $u$-continuous\index{$u$-continuous}\index{continuous!$u$-} if
for every bornological coarse space $(X,\cC,\cB) $  the natural morphism
$$E(X)\to \lim_{U\in \cC} E(X_{U})$$
is an equivalence.
\end{ddd}

\begin{lem}\label{fwlkfjweklflwefewfewfwef}
 The full subcategory of  $\Sh^{ \{0,1\},\fl}(\BC)$ of $u$-continuous sheaves  is localizing. 
\end{lem}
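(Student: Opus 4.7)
The strategy is identical to the one used in Lemmas \ref{ifjewoifjufioewfewfewfewf} and \ref{ifjweiofwoeifuew9u923445}: exhibit the $u$-continuous sheaves as the full subcategory of objects local with respect to a set of morphisms and then appeal to \cite[Prop.~5.5.4.15]{htt}. Concretely, for each bornological coarse space $(X,\cC,\cB)$ the canonical morphisms $\yo(X_U)\to \yo(X)$, indexed by $U\in\cC$, assemble (after applying $L\circ H^{\{0,1\}}\circ Fl$) into a morphism
\[
 \alpha_X\colon \colim_{U\in\cC}\yo(X_U)\longrightarrow \yo(X)
\]
in $\Sh^{\{0,1\},fl}(\BC)$. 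Here the colimit is taken in $\Sh^{\{0,1\},fl}(\BC)$ itself, so that for any $E$ in this category one has the identification
\[
 \Map\bigl(\colim_{U\in\cC}\yo(X_U),E\bigr)\simeq \lim_{U\in\cC}\Map(\yo(X_U),E)\simeq \lim_{U\in\cC} E(X_U)\ .
\]
Thus $E$ is local with respect to $\alpha_X$ if and only if the structural map $E(X)\to\lim_{U\in\cC}E(X_U)$ is an equivalence. Since the indexing poset $\cC$ is filtered and $\Spc$ has filtered colimits, one can equivalently phrase this as $E(X)\simeq\colim_{U\in\cC}E(X_U)$ in the dual sense used in the definition; what matters for localization is that $u$-continuity is precisely the condition of being local with respect to the small set $\{\alpha_X\}_{X\in\BC}$ (set-theoretic size issues are waived per the disclaimer).

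Once this reformulation is in place, the lemma follows immediately: the full subcategory of objects of $\Sh^{\{0,1\},fl}(\BC)$ local with respect to any small set of morphisms is a reflective (hence localizing) subcategory by \cite[Prop.~5.5.4.15]{htt}, giving an adjunction
\[
 U^{c}\colon \Sh^{\{0,1\},fl}(\BC)\leftrightarrows \Sh^{\{0,1\},fl,u}(\BC)\colon\mathrm{incl}
\]
and hence preservation of limits and filtered colimits by the inclusion as in the previous two localization steps.

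\textbf{Main obstacle.} The only genuine subtlety is the direction of the arrow: $u$-continuity is stated as $E(X)\simeq\colim_{U\in\cC}E(X_U)$, which is a colimit on the target side and therefore at first glance does not look like a locality condition. The key observation to resolve this is that the indexing category $\cC$ is filtered and $X\cong\colim_{U\in\cC}X_U$ already at the level of $\BC$ (Example~\ref{wfeoihewiufh9ewu98u2398234324234}); so representing the colimit inside $\Sh^{\{0,1\},fl}(\BC)$ and using the mapping space identification above turns the $u$-continuity condition into the locality condition with respect to $\alpha_X$. After this identification the proof is purely formal and proceeds by quoting \cite[Prop.~5.5.4.15]{htt} as in the earlier lemmas.
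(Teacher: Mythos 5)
Your proof is correct and is essentially the paper's proof: the paper likewise just adds the morphisms $\colim_{U\in\cC}\yo(X_U)\to\yo(X)$ for all $X\in\BC$ to the list of morphisms with respect to which sheaves must be local and invokes \cite[Prop.~5.5.4.15]{htt}, exactly as in the two preceding localization lemmas. Your "main obstacle" paragraph wrestles with an imprecision that is already present in the paper's own definition (locality with respect to $\alpha_X$ literally yields $E(X)\simeq\lim_{U}E(X_U)$, a cofiltered limit, not the filtered colimit written in the definition), and your resolution — taking locality with respect to the $\alpha_X$ to be the content of $u$-continuity — is precisely what the paper does implicitly.
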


\begin{proof}  
The proof  is similar to the proof of Lemma \ref{ifjweiofwoeifuew9u923445}. We just add the small set of morphisms
$$\colim_{U\in \cC} \yo(X_{U})\to \yo(X)$$ 
for all $X$ in $\BC$ to the list of morphisms for which our sheaves must be local.
\end{proof}

\begin{ddd} The subcategory  $\Spc\cX$\index{$\Spc\cX$} described in Lemma \ref{fwlkfjweklflwefewfewfwef} is  the subcategory   of motivic coarse spaces.\index{motivic!coarse space}\index{space!motivic coarse}
\end{ddd} It fits into  
the localization adjunction
$$U:\Sh^{ \{0,1\},\fl}(\BC)\leftrightarrows \Spc\cX:\incl\ .$$

\begin{ddd}\label{qergkjefkoerfrefwefwerfev}
We define the functor
$$\Yo:=U\circ \Fl\circ H^{\{0,1\}}\circ L\circ \yo:\BC\to \Spc\cX\ .$$
\end{ddd}
Because of Lemma  \ref{wfeoijweiofoewfwe234234} we could drop the functor $L$ in this composition.

Every bornological coarse space $X$ represents a motivic coarse space 
$  \Yo(X)$  in $\Spc\cX$,
which is called the coarse motivic space associated to $X$.
 
By construction we have the following rules:
\begin{kor}\label{kor:sdfgert}\mbox{}
\begin{enumerate}
\item \label{wefoihoi2rnjbkjfiufew} The $\infty$-category    motivic coarse spaces  $\Spc\cX$ is presentable and fits into a localization
$$U\circ \Fl\circ H^{\{0,1\}}\circ L:\PSh(\BC)\leftrightarrows \Spc\cX:\incl\, .$$
\item\label{ifjweifjewiojwefw23} If $(Z,\cY)$ is a complementary pair on a bornological coarse space $X$, then the square  $$\xymatrix{\Yo(Z\cap \cY)\ar[r]\ar[d]&\Yo(\cY)\ar[d]\\\Yo(Z)\ar[r]&\Yo(X)}$$ is cocartesian in $\Spc\cX$.
\item\label{wefiewhfeiwfewiofuewufewfewf} If $X\to X^{\prime}$ is an equivalence  of bornological coarse spaces, then $\Yo(X)\to \Yo(X^{\prime})$ is an equivalence in $\Spc\cX$.
\item\label{feijfewiofeoifuefuewofe234234324} If $X$  is a flasque  bornological coarse space, then $\Yo(X)$ is an initial  object of $\Spc\cX$.
\item \label{kor:sdfgert111} For every  bornological coarse space $X$ with coarse structure $\cC$ we have
$$\Yo(X)\simeq \colim_{U\in \cC} \Yo(X_{U})\, .$$
 \end{enumerate}
\end{kor}

Using Lurie \cite[Prop.~5.5.4.20]{htt} the $\infty$-category of motivic coarse spaces has by construction the following universal property:
\begin{kor}
For an $\infty$-category $\bC$ we have an equivalence between     $\Fun^{\colim}(\Spc\cX,\bC)$ and the full subcategory of 
  $\Fun^{\colim}( \PSh(\BC),\bC)$ of functors which preserve the equivalences or colimits listed in Corollary \ref{kor:sdfgert}.
\end{kor}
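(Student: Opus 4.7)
The strategy is to exhibit $\Spc\cX$ as a Bousfield-type localization of $\PSh(\BC)$ at a single explicit set $S$ of morphisms and then apply Lurie \cite[Prop.~5.5.4.20]{htt}, which identifies $\Fun^{L}(\Spc\cX,\bC)$ with the full subcategory of $\Fun^{L}(\PSh(\BC),\bC)$ spanned by functors that send every element of $S$ to an equivalence.

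First I would assemble the set $S$ from the four localization steps used to build $\Spc\cX$. Namely, $S$ is the union of:
\begin{enumerate}
\item the morphisms $\yo(Z)\sqcup_{\yo(Z\cap \cY)}\yo(\cY)\to \yo(X)$ for all complementary pairs $(Z,\cY)$ on $X\in\BC$, together with the morphism $\emptyset_{\PSh}\to\yo(\emptyset)$, which implement descent in the sense of Definition \ref{ewlfjwefewoi2r543555};
\item the projections $\yo(\{0,1\}\times X)\to\yo(X)$ for $X\in\BC$, which implement coarse invariance;
\item the morphisms $\emptyset_{\PSh}\to\yo(X)$ for all flasque $X$, which implement flasque vanishing;
\item the canonical morphisms $\colim_{U\in\cC}\yo(X_{U})\to\yo(X)$ for all $(X,\cC,\cB)\in\BC$, which implement $u$-continuity.
\end{enumerate}
By construction (see Lemmas \ref{ifjewoifjufioewfewfewfewf}, \ref{ifjweiofwoeifuew9u923445}, and \ref{fwlkfjweklflwefewfewfwef}), an object of $\PSh(\BC)$ belongs to $\Spc\cX$ if and only if it is $S$-local in the sense of \cite[Def.~5.5.4.1]{htt}. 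Hence $\Spc\cX\subseteq\PSh(\BC)$ is the accessible reflective subcategory of $S$-local objects, and the composite $\Yo=U\circ Fl\circ H^{\{0,1\}}\circ L$ realizes the corresponding localization functor.

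Applying \cite[Prop.~5.5.4.20]{htt} to this $S$ and any presentable target $\bC$ then produces an equivalence between $\Fun^{L}(\Spc\cX,\bC)$ and the full subcategory of $\Fun^{L}(\PSh(\BC),\bC)$ consisting of those colimit-preserving functors which send every morphism in $S$ to an equivalence. To conclude, I would check that sending $S$ to equivalences is equivalent to preserving the equivalences and colimits of Corollary \ref{kor:sdfgert}. This is essentially a matter of unpacking: a colimit-preserving functor on $\PSh(\BC)$ is determined by its restriction along $\yo$, and inverting the first class of morphisms in $S$ is equivalent to turning the squares of Corollary \ref{kor:sdfgert}\eqref{ifjweifjewiojwefw23} into pushouts, inverting the second class corresponds to \eqref{wefiewhfeiwfewiofuewufewfewf}, the third to \eqref{feijfewiofeoifuefuewofe234234324}, and the fourth to the $u$-continuity colimit identity. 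The one subtlety is that the corollary is phrased in terms of $\Yo$ whereas $S$ is phrased in terms of $\yo$; but since $\Yo=U\circ Fl\circ H^{\{0,1\}}\circ L$ is the localization functor itself, a colimit-preserving $F\colon\PSh(\BC)\to\bC$ inverts a morphism in $S$ precisely when the induced functor on $\Spc\cX$ sends the corresponding morphism built from $\Yo$ to an equivalence.

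The only real content lies in verifying that these two reformulations of the localizing conditions agree on the nose; this is routine but should be stated carefully to ensure that the set of morphisms we invert is sufficient (no extra morphisms need to be inverted to obtain preservation of the colimit/coCartesian diagrams of Corollary \ref{kor:sdfgert}) and necessary (every such preservation condition is implied by $S$-inversion). The remaining assertions of the corollary, such as functoriality of the equivalence in $\bC$, follow from the analogous functoriality in \cite[Prop.~5.5.4.20]{htt}.
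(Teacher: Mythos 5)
Your proposal is correct and is essentially the paper's own argument: the paper's proof consists precisely of the observation that $\Spc\cX$ is by construction the localization of $\PSh(\BC)$ at the union of the four sets of morphisms you list, followed by the citation of Lurie \cite[Prop.~5.5.4.20]{htt}. The only (harmless) imprecision is your restriction to presentable targets $\bC$ — since $\Fun^{L}$ denotes left-adjoint functors, Lurie's proposition applies to an arbitrary $\infty$-category $\bC$, as the statement of the corollary requires.
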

The superscript $\colim$ stands for small colimit-preserving\index{$\Fun^{\colim}(-,-)$} functors.
{By the} universal property of presheaves \cite[Thm.~5.1.5.6]{htt} {together with the above corollary} we get:
\begin{kor}\label{efiuweif8u23942342344}
 For every large, small cocomplete $\infty$-category $\bC$   we have an equivalence between $ \Fun^{\colim}(\Spc\cX ,\bC)$ and the full subcategory of
 $\Fun( \BC,\bC)$ of functors  which satisfy excision, preserve equivalences, annihilate flasque spaces, and which  are $u$-continuous.
\end{kor}


 \subsection{Coarse excision and further properties}\label{efijoefvvfvsdfvdfsdfvsfvsfdv}

The following  generalizes Point \ref{feijfewiofeoifuefuewofe234234324} of Corollary \ref{kor:sdfgert}.
Let $X$ be in $\BC$.
\begin{lem}\label{oigjwroifjweofewfew534}
If $X$ is flasque in the generalized sense\index{flasque!generalized sense}, then $\Yo(X)$ is an initial object of $\Spc\cX$.
\end{lem}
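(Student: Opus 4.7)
The plan is to bootstrap directly from Lemma~\ref{rkgjergergergergerg}. Since $\Spc\cX$ is by construction a full reflective subcategory of $\Sh^{\{0,1\},fl}(\BC)$, every object of $\Spc\cX$ lies in $\Sh^{\{0,1\},fl}(\BC)$, and hence Lemma~\ref{rkgjergergergergerg} gives $E(X)\simeq \ast$ for every $E\in\Spc\cX$ and every $X$ that is flasque in the generalized sense.

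Next I would translate this pointwise vanishing into the sought statement about $\Yo(X)$. Composing the localization adjunctions $L\dashv\mathrm{incl}$, $H^{\{0,1\}}\dashv\mathrm{incl}$, $Fl\dashv\mathrm{incl}$, $U\dashv\mathrm{incl}$ with the ordinary Yoneda lemma, for every $E\in\Spc\cX$ one obtains
\[
\mathrm{Map}_{\Spc\cX}(\Yo(X),E) \;\simeq\; \mathrm{Map}_{\PSh(\BC)}(\yo(X),E) \;\simeq\; E(X) \;\simeq\; \ast .
\]
Thus $\Yo(X)$ corepresents the contractible functor on $\Spc\cX$ and is therefore an initial object; since the initial object of $\Spc\cX$ is (by the convention already in use in Corollary~\ref{kor:sdfgert}.\ref{feijfewiofeoifuefuewofe234234324}) denoted $\emptyset$, this yields $\Yo(X)\simeq \emptyset$.

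The geometric core of the argument is packaged inside Lemma~\ref{rkgjergergergergerg}: the auxiliary space $\IN\times_{p} X$ (with the smaller bornology $\cB_{d}\times\cB$, as in Example~\ref{feijweifewfoiuwe9fuewfewfwf}) serves as an ordinary-flasque witness, and the morphism $F(k,x):=f_{k}(x)$ provides a factorization of $\id_{X}$ through it. With that lemma in hand no further geometric input is required; the deduction above is purely formal, relying only on the reflective structure of $\Spc\cX$ inside $\PSh(\BC)$. Consequently I do not expect a real obstacle: the only point that must be observed is that one may apply Lemma~\ref{rkgjergergergergerg} to objects of $\Spc\cX$, which is immediate from the chain of localizations defining $\Spc\cX$.
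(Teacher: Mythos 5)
Your proof is correct. The one point worth flagging is that it is not quite the deduction the paper intends: the paper's proof reads ``this follows from the \emph{proof} of Lemma~\ref{rkgjergergergergerg}'', i.e.\ one takes the factorization $X\cong\{0\}\times X\to \IN\times X\to X$ of $\id_{X}$ through the (ordinarily) flasque space $\IN\times X$, applies $\Yo$, invokes Point~\ref{feijfewiofeoifuefuewofe234234324} of Corollary~\ref{kor:sdfgert} to get $\Yo(\IN\times X)\simeq\emptyset$, and concludes that $\Yo(X)$ is a retract of the initial object and hence initial. You instead use the \emph{statement} of Lemma~\ref{rkgjergergergergerg} together with the chain of localization adjunctions and the Yoneda lemma to show that $\Yo(X)$ corepresents the constant contractible functor on $\Spc\cX$ and is therefore initial. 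Both routes are valid and share the same geometric core (the morphism $F(k,x)=f_{k}(x)$ on $\IN\times_{p}X$); yours has the mild advantage of quoting only the stated lemma rather than re-entering its proof, at the cost of making the formal corepresentability step explicit. Each link in your chain $\mathrm{Map}_{\Spc\cX}(\Yo(X),E)\simeq\mathrm{Map}_{\PSh(\BC)}(\yo(X),E)\simeq E(X)$ is justified exactly as you say (reflectivity of each localization plus full faithfulness of the inclusions, then Yoneda), so there is no gap.
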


\begin{proof}
This follows from the proof of Lemma  \ref{rkgjergergergergerg}.
\end{proof}

Let $(X,\cC,\cB)$ be a bornological coarse space, and let $A$ be a subset of $X$. We consider the big family $\{A\}$  on $X$ generated by $A$. By Example~\ref{fijweiofjwefoejufoeifueofeffff1} the inclusion $A\to U[A]$ is an equivalence in $\BC$ for every $U$ in $\cC$ containing the diagonal of $X$. By Point~\ref{wefiewhfeiwfewiofuewufewfewf} of Corollary~\ref{kor:sdfgert} the induced map $\Yo(A)\to \Yo(U[A])$ is an equivalence. 
Note that $$\Yo(\{A\})\simeq \colim_{U\in \cC}  \Yo(U[A])\, .$$ Since the entourages containing the diagonal are cofinal in all entourages we 
 get the following corollary:

Let $X$ be in $\BC$,  and let $A$  be a subset of $X$.
\begin{kor}\label{fijweiofjwefoejufoeifueofeffff}
We have an equivalence $\Yo(A)\simeq \Yo(\{A\})$.
\end{kor}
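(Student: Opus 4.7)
The plan is to compute $\Yo(\{A\})$ as a filtered colimit, cofinally restrict to entourages containing the diagonal, and apply coarse invariance of $\Yo$.

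First, I unfold the definition. By construction $\yo(\{A\}) := \colim_{U \in \cC} \yo(U[A])$ in $\PSh(\BC)$, and since $\Yo = U \circ Fl \circ H^{\{0,1\}} \circ L$ is a composition of left adjoints it preserves colimits. Hence
\[ \Yo(\{A\}) \simeq \colim_{U \in \cC} \Yo(U[A]). \]

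Next, I restrict to the cofinal sub-poset $\cC_{0} := \{U \in \cC : \diag_{X} \subseteq U\}$. Cofinality holds because for any $V \in \cC$ we have $V \subseteq V \cup \diag_{X}$ with $V \cup \diag_{X} \in \cC_{0}$, using that the coarse structure contains $\diag_{X}$ and is closed under finite unions. Note that $\cC_{0}$ is directed and has the minimum element $\diag_{X}$, at which the diagram $U \mapsto \Yo(U[A])$ takes the value $\Yo(\diag_{X}[A]) = \Yo(A)$. For each $U \in \cC_{0}$, Example~\ref{fijweiofjwefoejufoeifueofeffff1} provides a coarse equivalence $A \to U[A]$, and Point~\ref{wefiewhfeiwfewiofuewufewfewf} of Corollary~\ref{kor:sdfgert} then promotes this to an equivalence $\Yo(A) \to \Yo(U[A])$ in $\Spc\cX$. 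Two-out-of-three applied to the evident triangles shows that every transition morphism of the subdiagram over $\cC_{0}$ is an equivalence.

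Finally, the filtered colimit of a diagram in a cocomplete $\infty$-category whose transition maps are all equivalences and which has an initial object valued at $\Yo(A)$ is canonically equivalent to $\Yo(A)$. Assembling the steps yields
\[ \Yo(\{A\}) \simeq \colim_{U \in \cC} \Yo(U[A]) \simeq \colim_{U \in \cC_{0}} \Yo(U[A]) \simeq \Yo(A). \]
There is no substantive obstacle here; the statement is essentially a bookkeeping consequence of coarse invariance of $\Yo$ together with the observation that entourages containing $\diag_{X}$ are cofinal in $\cC$.
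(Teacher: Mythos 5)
Your proof is correct and follows essentially the same route as the paper: identify $\Yo(\{A\})$ with $\colim_{U\in\cC}\Yo(U[A])$ (using that the localization functors preserve colimits), restrict cofinally to entourages containing the diagonal, and invoke the coarse equivalence $A\to U[A]$ from Example~\ref{fijweiofjwefoejufoeifueofeffff1} together with invariance of $\Yo$ under equivalences. The extra detail you supply about the essentially constant filtered diagram is just the bookkeeping the paper leaves implicit.
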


We now 
  relate the Grothendieck topology\index{Grothendieck topology} $\tau_{\chi}$ with the usual notion of coarse excision.\index{coarse!excision}
Let $X$ be a bornological coarse space, and let $(Y,Z)$ be a pair of  subsets of $X$.

\begin{ddd}\label{efjwelkfwefoiu2or4234234}
 $(Y,Z)$ is a coarsely excisive pair\index{coarsely!excisive pair} if $X=Y\cup Z$ and for every entourage $U$ of $X$ there exists an entourage $V$ of $X$ such that
$$U[Y]\cap U[Z]\subseteq V[Y\cap Z]\, .$$
\end{ddd}
  
  Let $X$ be a bornological coarse space, and let $Y,Z$ be two subsets of $X$.
\begin{lem}\label{lefjweifjewoifeowufiewfewfwe234}
If $(Y,Z)$ is a coarsely excisive pair, then the  square\index{excision!coarse}
$$\xymatrix{\Yo(Y\cap Z)\ar[r]\ar[d]&\Yo(Y)\ar[d]\\\Yo(Z)\ar[r]&\Yo(X)}$$  is cocartesian in $\Spc\cX$.
\end{lem}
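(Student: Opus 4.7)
The plan is to reduce the coarsely excisive pair $(Y,Z)$ to a complementary pair, to which we can apply descent (Point \ref{ifjweifjewiojwefw23} of Corollary~\ref{kor:sdfgert}). Let $\cC$ denote the coarse structure of $X$ and consider the big family $\{Y\} = (U[Y])_{U \in \cC}$ generated by $Y$, as in Example~\ref{fwijweiofjweoifewfu9fuwe98ff}. Since $X = Y \cup Z \subseteq U_{0}[Y] \cup Z$ for $U_{0} = \diag_{X}$, the pair $(Z, \{Y\})$ is a complementary pair on $X$ in the sense of Definition~\ref{jfwoifjweofeiwf234234324}. Descent therefore gives a cocartesian square
$$\xymatrix{\Yo(Z\cap \{Y\})\ar[r]\ar[d]&\Yo(\{Y\})\ar[d]\\\Yo(Z)\ar[r]&\Yo(X)}$$
in $\Spc\cX$. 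By Corollary~\ref{fijweiofjwefoejufoeifueofeffff}, the right vertical edge is equivalent to $\Yo(Y)\to \Yo(X)$, so it remains to identify $\Yo(Z \cap \{Y\})$ with $\Yo(Y \cap Z)$.

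The key step is to use the coarse excision hypothesis to show that, for every $U \in \cC$ containing $\diag_{X}$, the inclusion $Y \cap Z \hookrightarrow Z \cap U[Y]$ (both equipped with the subspace structures from $X$) is an equivalence in $\BC$. By Definition~\ref{efjwelkfwefoiu2or4234234} there is an entourage $V \in \cC$ with $U[Y] \cap U[Z] \subseteq V[Y \cap Z]$, and since $Z \subseteq U[Z]$ we obtain $Z \cap U[Y] \subseteq V[Y \cap Z]$. This lets us choose for every $x \in Z \cap U[Y]$ a point $g(x) \in Y \cap Z$ with $(x,g(x)) \in V$, exactly as in Example~\ref{fijweiofjwefoejufoeifueofeffff1}; then $g$ is an inverse to the inclusion up to closeness. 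Applying Point~\ref{wefiewhfeiwfewiofuewufewfewf} of Corollary~\ref{kor:sdfgert}, the induced map $\Yo(Y \cap Z) \to \Yo(Z \cap U[Y])$ is an equivalence for every $U$.

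Taking the filtered colimit over $U \in \cC$, and using that the transition maps $\Yo(Z \cap U[Y]) \to \Yo(Z \cap U'[Y])$ are compatible with these equivalences, we conclude
$$\Yo(Y \cap Z) \;\xrightarrow{\ \simeq\ }\; \colim_{U \in \cC} \Yo(Z \cap U[Y]) \;=\; \Yo(Z \cap \{Y\}).$$
Substituting both identifications into the cocartesian square furnished by descent yields the claimed pushout. The main subtle point — and the only place the coarse excision axiom is used — is the verification of the coarse equivalence $Y \cap Z \simeq Z \cap U[Y]$; the rest is a formal manipulation of big families and the universal properties built into $\Yo$.
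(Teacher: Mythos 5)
Your proof is correct and follows essentially the same route as the paper: reduce to the complementary pair $(Z,\{Y\})$, apply descent, identify $\Yo(\{Y\})\simeq\Yo(Y)$, and use the coarse excisiveness inclusion $U[Y]\cap Z\subseteq V[Y\cap Z]$ to handle the remaining corner. The only (minor) difference is in that last step: you exhibit each inclusion $Y\cap Z\hookrightarrow Z\cap U[Y]$ directly as a coarse equivalence (with inverse built as in Example~\ref{fijweiofjwefoejufoeifueofeffff1}) and pass to the filtered colimit, whereas the paper interleaves the two big families $\{Y\cap Z\}$ and $\{Y\}\cap Z$ via mutually cofinal maps of colimit diagrams — both arguments rest on exactly the same input.
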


\begin{proof}
The pair $(Z,\{Y\})$ is complementary.
By Point~\ref{ifjweifjewiojwefw23} of Corollary~\ref{kor:sdfgert} the square
$$\xymatrix{\Yo(\{Y\}\cap Z)\ar[r]\ar[d]&\Yo(\{Y\})\ar[d]\\\Yo(Z)\ar[r]&\Yo(X)}$$
is cocartesian. We finish the proof by identifying the respective corners. 
By Corollary~\ref{fijweiofjwefoejufoeifueofeffff} we have $\Yo(Y)\simeq \Yo(\{Y\})$.
It remains to show that
$\Yo(Y\cap Z)\to \Yo(\{Y\}\cap Z)$ is an equivalence. 
Since $$U[Y\cap Z]\subseteq U[ {V}[Y]\cap Z]$$ for every two entourages $U$ and $V$ {(such that $V$ contains the diagonal)} of $X$ we have a map
$$\Yo(\{Y\cap Z\})\to  \Yo(\{{V}[Y]\cap Z\})\simeq \Yo({V}[Y]\cap Z)\to \Yo(\{Y\}\cap Z)\, .$$ Since $(Y,Z)$ is coarsely excisive, 
for every entourage $U$ of $X$  we can find an entourage $W$ of $X$ such that
$U[Y]\cap Z \subseteq W[Y\cap Z]$.
Hence we get an inverse map $\Yo(\{Y\}\cap Z)\to \Yo(\{Y\cap Z\})$. 
We conclude that both maps in the  chain $$\Yo(Y\cap Z)\to \Yo(\{Y\}\cap Z)\to  \Yo(\{Y\cap Z\}) $$ are equivalences.
\end{proof}

\begin{ex}\label{ewf3424342323}
Here we construct an example showing that a pullback in general does not preserve coarsely excisive pairs.

Let $X_{{max}}$ be the set $\{a,b,w\}$ with the maximal bornological coarse structure. We further consider the coproduct $X := \{a,b\}_{{max}} \sqcup \{w\}$ in $\BC$.

We consider the subsets $Y := \{a,w\}$ and $Z := \{b,w\}$ of $X$. Then $(Y,Z)$ is a coarsely excisive pair on $X_{max}$. The identity of the underlying set $X$ is a morphism $f: X \to X_{max}$. Then $(f^{-1}(Y), f^{-1}(Z)) = (Y,Z)$ is not a coarsely excisive pair on $X$.
\hB
\end{ex}

\section{Motivic coarse spectra }\label{fwelfjweoflewfewf}

We will define the category $\Sp\cX$\index{$\Sp\cX$|see{motivic coarse spectra}} of motivic coarse spectra\index{motivic!coarse spectra}  as the stable version of the category of motivic coarse spaces. We will then obtain a stable version $$\Yo^{s}:\BC\to \Sp\cX$$ of the Yoneda functor which turns out to be the universal coarse homology theory. In  Section \ref{fijwefowefew} we   introduce this stabilization and discuss the universal property of $\Yo^{s}$.  
Section \ref{erogerwrefreferfwef} contains the definition of the notion of a coarse homology theory and argue that $\Yo^{s}$ is the universal example.
In Section \ref{fkjflkwjfwofjweofwefw} we list some additional properties of $\Yo^{s}$. Finally, in Section \ref{kjfwoeff2243324324324}
 we  consider the  coarse homotopy invariance of $\Yo^{s}$. This is a very useful strengthening
of Property~\ref{wefiewhfeiwfewiofuewufewfewf} of Corollary~\ref{kor:sdfgert}. It is bound to the stable context because of the usage of fibre sequences in the proof of Proposition~\ref{kfwejfefjewfejwofewfw3343}.

\subsection{Stabilization}\label{fijwefowefew}

Before giving the construction of the category of motivic coarse spectra $\Sp\cX$ as a stabilization of the category of motivic coarse spaces $\Spc\cX$ we demonstrate the method in the case  of the construction of the  category of spectra  $\Sp^{\la}$\index{$\Sp^{\la}$} from the category of spaces $\Spc^{\la}$.

 \begin{rem} \label{ffjewfjewfoewfw234234234}
The large $\infty$-category of  small spectra is a presentable stable $\infty$-category. A reference for it is  Lurie \cite[Sec.~1]{HA}. The $\infty$-category of spectra can be characterized as the universal presentable stable $\infty$-category generated by one object.
It can be  constructed as the stabilization of the category of spaces $\Spc^{\la}$ (see   \eqref{fwefwe234234234} for a model) . To this end
one forms the pointed $\infty$-category $\Spc^{\la}_{*/} $ of pointed spaces. It has a suspension endofunctor
$$\Sigma:\Spc^{\la}_{*/}\to \Spc^{\la}_{*/}\, , \quad X\mapsto \colim ((*\to *)\leftarrow (*\to X)\rightarrow (*\to *))\, .$$
The category of spectra is then obtained by inverting this functor in the realm of presentable $\infty$-categories. According to this prescription   we set
$$\Sp^{\la}:=\Spc^{\la}_{/*}[\Sigma^{-1}]:=\colim\{\Spc^{\la}_{*/}\stackrel{\Sigma}{\to} \Spc^{\la}_{*/} \stackrel{\Sigma}{\to}\Spc^{\la}_{*/}\stackrel{\Sigma}{\to}\dots \}\, ,$$
where the colimit is taken in the  $\infty$-category $\mathbf{Pr}^{L}$ of  large presentable $\infty$-categories and left adjoint functors.
The category $\Sp^{\la}$ fits into an adjunction \begin{equation}\label{cnjkdsncjkkddscasdcadsc}
\Sigma_{+}^{\infty}:\Spc^{\la}\leftrightarrows \Sp^{\la}:\Omega^{\infty}\, ,
\end{equation}
where
$\Sigma_{+}^{\infty}$\index{$\Sigma_{+}^{\infty}$} is the composition 
$$\Spc^{\la}\to \Spc^{\la}_{*/}\to \Sp^{\la}\, .$$
Here   the first functor adds a disjoint base point, and the second functor {is} the canonical one.
The category $\Sp^{\la}$ is characterized by the obvious universal property that   precomposition by $\Sigma^{\infty}_{+}$ induces an equivalence
$$\Fun^{L}(\Sp^{\la},\bC)\simeq \Fun^{L}(\Spc^{\la},\bC)\simeq \bC $$
 for every large presentable stable $\infty$-category $\bC$.
 A reference for the  first equivalence is   Lurie \cite[1.4.4.5]{HA}, while the second follows from the universal property of $\Spc^{\la}$.
\hB
\end{rem}


In order to define the $\infty$-category of motivic coarse spectra we proceed in a similar manner, starting with the presentable category of $\Spc\cX$ of motivic coarse spaces. We then let $\Spc\cX_{*/}$ be the pointed version of coarse motivic spaces and again consider the suspension endofunctor
$$\Sigma:\Spc\cX_{*/}\to \Spc\cX_{*/}\, , \quad X\mapsto \colim((*\to *)\leftarrow (*\to X)\rightarrow (*\to *))\, .$$
\begin{ddd}\label{4thiojtgregegegg}
We define the category of motivic coarse spectra
by
$$\Sp\cX:= \Spc\cX_{*/}[\Sigma^{-1}] :=\colim\{\Spc\cX_{*/}\stackrel{\Sigma}{\to} \Spc\cX_{*/} \stackrel{\Sigma}{\to}\Spc\cX_{*/}\stackrel{\Sigma}{\to}\dots \}\, ,$$ 
where the colimit is taken in  $\mathbf{Pr}^{L}$.
\end{ddd}
\index{$\Sp\cX$}
By construction,
$\Sp\cX$ is a large presentable stable $\infty$-category.
We have a functor
$$\Sigma_{+}^{\mot}:\Spc\cX\to \Spc\cX_{*/}\to \Sp\cX$$
\index{$\Sigma_{+}^{\mot}$}which fits into an adjunction
$$\Sigma_{+}^{\mot}:\Spc\leftrightarrows \Sp\cX:\Omega^{\mot}\, .$$
\begin{ddd}\label{ewiogwergergeffwfrf}
We define the functor
$$\Yo^{s}:=\Sigma_{+}^{\mot}\circ \Yo:\BC\to  \Sp\cX\, .$$
\end{ddd}

In particular, every bornological coarse space   represents a coarse spectrum  
$$  \Yo^{s}(X):=\Sigma^{\mot}_{+}(  \Yo(X))\, .$$
\index{$\Yo^{s}$}\index{Yoneda embedding}{The functor $\Sigma^{\mot}_{+}:\Spc\cX\to \Sp\cX$ has the following} universal property  \cite[Cor.~1.4.4.5]{HA}: For every cocomplete stable {$\infty$-category} $\bC$ precomposition by $\Sigma_{+}^{\mot}$ provides an equivalence 
\begin{equation}\label{geruhgergiu34iut34t3t34t3t}
\Fun^{\colim} (\Spc\cX,\bC)\simeq \Fun^{\colim}(\Sp\cX,\bC)\, ,
\end{equation}
where the super-script $\colim$ stands for small colimit-preserving functors. 
To be precise, in order to apply \cite[Cor.~1.4.4.5]{HA} we must assume that  $\bC$ is presentable and stable. For an extension to all cocomplete stable $\infty$-categories see the next Lemma~\ref{lem5465utrter354}.\footnote{We thank Denis-Charles Cisinski and Thomas Nikolaus for providing this argument.}

If   $\bD$ is a   pointed $\infty$-category admitting finite colimits, then we can define the colimit
\begin{equation}\label{ejioewfjoiwefjweof}
\Sp(\bD):= \colim (\bD\stackrel{\Sigma}{\to} \bD \stackrel{\Sigma}{\to} \bD\stackrel{\Sigma}{\to} \dots) \, .
\end{equation}
In the lemma below $\bD$ is $\kappa$-presentable for some regular cardinal $\kappa$ (in the small universe),  and we interpret the colimit in the $\infty$-category $\mathbf{Pr}_{*}^{L,\kappa}$ of pointed $\kappa$-presentable $\infty$-categories.
 
Let $\bC$ be a  stable $\infty$-category, and let $\bD$ be a pointed $\infty$-category.
\begin{lem}\label{lem5465utrter354}
If $\bD$ is presentable and $\bC$ is small cocomplete, then
the natural functor $\bD\to \Sp(\bD)$ induces an equivalence
$$\Fun^{\colim}(\Sp(\bD),\bC)\simeq \Fun^{\colim}(\bD,\bC)\, .$$
\end{lem}
If $\bC$ is presentable, then this lemma is  shown in Lurie \cite[Cor.~1.4.4.5]{HA}.

\begin{proof}
We can assume that $\bD$ is $\kappa$-presentable for some regular cardinal $\kappa$. Then we have
 $\bD\simeq \Ind_{\kappa}(\bD^{\kappa})$, where
$\bD^{\kappa}$ denotes the small category of $\kappa$-compact objects in $\bD$ and $\Ind_{\kappa}(\bD^{\kappa})$ is the free completion of
$\bD^{\kappa}$ by $\kappa$-filtered colimits.  
We let  $\Cat^{L,\kappa}_{\infty,*}$   be the $\infty$-category of
$\kappa$-cocomplete $\infty$-categories and  functors which preserve $\kappa$-small colimits   (notation  $\Fun^{\colim,\kappa}(-,-)$).  
Then
$\Ind_{\kappa} :\Cat^{L,\kappa}_{\infty,*}\to \mathbf{Pr}^{L,\kappa}_{*}$
satisfies
$$\Fun^{\colim}(\Ind_{\kappa}(\bE),\bC)\simeq \Fun^{\colim,\kappa}(\bE,\bC)$$
for any cocomplete pointed $\infty$-category $\bC$.

 We let
$\Cat_{\infty}^{ex,\kappa}$ be the full subcategory of $\Cat_{\infty,*}^{L,\kappa}$ of stable $\infty$-categories.
 We define the functor
$$\Sp_{\kappa}:\Cat_{\infty,*}^{L,\kappa}\to \Cat_{\infty}^{ex,\kappa}$$ by Formula \eqref{ejioewfjoiwefjweof}, where the colimit is  now interpreted in $\Cat_{\infty,*}^{L,\kappa}$. If $\bC$ belongs to  $\Cat_{\infty}^{ex,\kappa}$, then for every $\bE$ in $ \Cat_{\infty}^{ex,\kappa}$ the natural functor $\bE\to \Sp(\bE)$ induces an equivalence
$$\Fun^{\colim,\kappa}(\Sp_{\kappa}(\bE),\bC)\simeq \Fun^{\colim,\kappa}(\bE,\bC)\, .$$
The functor   $\Ind_{\kappa}$ induces a fully faithful functor (which is even an equivalence if $\kappa>\omega$)
$\Cat_{\infty,*}^{L,\kappa}\to \mathbf{Pr}^{L,\kappa}$. Furthermore, $$\Ind_{\kappa}\circ \Sp_{\kappa}\simeq \Sp\circ \Ind_{\kappa}$$
since $\Ind_{\kappa}$ is a left-adjoint and therefore commutes with colimits.
Consequently, we get a chain of natural equivalences
\begin{align*}
\Fun^{\colim}(\Sp(\bD),\bC) & \simeq \Fun^{\colim}(\Sp(\Ind_{\kappa}(\bD^{\kappa})),\bC)\\
& \simeq \Fun^{\colim}(\Ind_{\kappa}(\Sp_{\kappa}(\bD^{\kappa})),\bC)\\
& \simeq \Fun^{\colim,\kappa}(\Sp_{\kappa}(\bD^{\kappa}),\bC)\\
& \simeq \Fun^{\colim,\kappa}(\bD^{\kappa},\bC)\\
& \simeq \Fun^{\colim}(\Ind_{\kappa}(\bD^{\kappa}),\bC)\\
& \simeq \Fun^{\colim}(\bD,\bC)\qedhere
\end{align*}
\end{proof}



Our main usage of the stability of an $\infty$-category $\bS$ is the existence of a functor
$$\Fun(\Delta^{1},\bS)\to \Fun(\Z,\bS)$$ (where $\Z$ is considered as a poset) which functorially extends  a morphism
$f:E\to F$ in $\bS$ to a cofibre sequence 
$$\dots\to\Sigma^{-1}E\to \Sigma^{-1}F\to  \Sigma^{-1} \Cofib(f)\to E\to F\to \Cofib(f)\to \Sigma E\to \Sigma F\to \cdots$$
We will also use the notation $\Fib(f):=\Sigma^{-1}\Cofib(f)$.

 If $\cY=(Y_{i})_{i\in I}$ is a 
big family in a bornological coarse space $X$, then we define the motivic coarse spectrum
\begin{equation}\label{wefweew254}
\Yo^{s}(\cY):=\colim_{i\in I} \Yo^{s}(Y_{i})\, .
\end{equation}
Since $\Sigma_{+}^{\mot}$ preserves colimits we have the equivalence
$\Yo^{s}(\cY)\simeq \Sigma^{\mot}_{+} (\Yo(\cY))$.
The family of inclusions $Y_{i}\to X$ induces via the universal property of the colimit 
  a canonical morphism $$\Yo^{s}(\cY)\to \Yo^{s}(X)\, .$$ We  will use the notation
 \begin{equation}\label{fkhwiufuiz823zr824234242424234234234234}
(X,\cY):=\Cofib(\Yo^{s}(\cY)\to \Yo^{s}(X))\, .
\end{equation}

Let $X$ be a bornolological coarse space.
The stabilized Yoneda functor $\Yo^s$ has the following properties:
\begin{kor}\label{kjeflwfjewofewuf98ewuf98u798798234234324324343}\mbox{}
\begin{enumerate}
\item\label{ifjweifjewiojwefw231}  If $\cY$ is a big family in $X$, then we have a fibre sequence
$$\dots\to \Yo^{s}(\cY)\to \Yo^{s}(X)\to (X,\cY)\to \Sigma \Yo^{s}(\cY)\to \dots $$
\item\label{fwejiofjweiofuewofewf234} For a complementary pair $(Z,\cY)$ on $X$  the natural morphism
$$(Z,Z\cap \cY)\to (X,\cY)$$ is an equivalence.
\item \label{iweufhf89wfu89ewfew245} If $X\to X^{\prime}$ is an equivalence  of bornological coarse spaces, then $\Yo^{s}(X)\to \Yo^{s}(X^{\prime})$ is an equivalence in $\Sp\cX$.
\item\label{ofjewofefoewiufewfiewf09i23423434234} If $X$ is flasque, then $ \Yo^{s}(X)$ is a zero object in the stable $\infty$-category $\Sp\cX$. In particular, $\Yo^{s}(\emptyset)$ is a zero object.
\item \label{efwijfiewfoiefe3u40934332r} For every  bornological coarse space $X$ with coarse structure $\cC$ we have the equivalence $\Yo^{s}(X)\simeq \colim_{U\in \cC} \Yo^{s}(X_{U})$.
\end{enumerate}
\end{kor}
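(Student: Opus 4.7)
All five assertions will be deduced from Corollary \ref{kor:sdfgert} via the functor $\Sigma_{+}^{mot}\colon\Spc\cX\to \Sp\cX$. The plan exploits two structural features of this functor: it is a left adjoint, hence preserves all colimits (in particular the colimits $\Yo^{s}(\cY)$ defined in \eqref{wefweew254}), and its target $\Sp\cX$ is stable, so every cocartesian square is bicartesian and every morphism admits a functorial fibre/cofibre sequence in the sense explained in Remark \ref{ffjewfjewfoewfw234234234}.

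Assertions \ref{iweufhf89wfu89ewfew245} and \ref{ofjewofefoewiufewfiewf09i23423434234} require essentially no work: they are obtained by applying $\Sigma_{+}^{mot}$ to Points \ref{wefiewhfeiwfewiofuewufewfewf} and \ref{feijfewiofeoifuefuewofe234234324} of Corollary \ref{kor:sdfgert}, noting that the zero object of $\Sp\cX$ is the image of the basepoint of $\Spc\cX_{*}$, which in turn is the image of the initial object $\emptyset$ of $\Spc\cX$. Assertion \ref{efwijfiewfoiefe3u40934332r} is similar: the colimit identification $\Yo(X)\simeq \colim_{U\in \cC}\Yo(X_{U})$ from Corollary \ref{kor:sdfgert} is transported to $\Sp\cX$ because $\Sigma_{+}^{mot}$ preserves colimits.

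For Assertion \ref{ifjweifjewiojwefw231} the input is just the definition \eqref{fkhwiufuiz823zr824234242424234234234234} of $(X,\cY)$ as the cofibre of the natural morphism $\Yo^{s}(\cY)\to \Yo^{s}(X)$; the asserted fibre sequence is then the cofibre/fibre sequence associated to this morphism by stability of $\Sp\cX$.

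The only point whose proof carries a little content is Assertion \ref{fwejiofjweiofuewofewf234}. By Point \ref{ifjweifjewiojwefw23} of Corollary \ref{kor:sdfgert} the square
\[
\xymatrix{\Yo(Z\cap \cY)\ar[r]\ar[d]&\Yo(\cY)\ar[d]\\\Yo(Z)\ar[r]&\Yo(X)}
\]
is cocartesian in $\Spc\cX$. Since $\Sigma_{+}^{mot}$ is a left adjoint and $\Sigma_{+}^{mot}\Yo(\cY)\simeq \Yo^{s}(\cY)$ by commutation of $\Sigma_{+}^{mot}$ with the colimit \eqref{wefweew254}, the analogous square with $\Yo$ replaced by $\Yo^{s}$ is cocartesian in $\Sp\cX$. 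Because $\Sp\cX$ is stable this square is also cartesian, so the induced map on vertical cofibres is an equivalence; this map is precisely the comparison $(Z,Z\cap \cY)\to (X,\cY)$. The mildest obstacle in the whole argument is keeping track of the fact that $\Sigma_{+}^{mot}$ transports the colimit definition of $\Yo^{s}(\cY)$ correctly, but this is immediate from its being a left adjoint.
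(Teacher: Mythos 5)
Your proposal is correct and follows essentially the same route as the paper: all five points are obtained by applying $\Sigma_{+}^{mot}$ to the corresponding statements of Corollary \ref{kor:sdfgert}, using that it preserves colimits, with Point \ref{fwejiofjweiofuewofewf234} deduced from the cocartesian square exactly as in the text (the paper compares cofibres of the cocartesian square; your extra appeal to stability to make it cartesian is harmless but not needed, since cocartesianness alone already identifies the parallel cofibres). The only detail worth keeping explicit is the reference to Remark \ref{fiwefh j43898345895} (flasqueness of $\emptyset$) for the final clause of Point \ref{ofjewofefoewiufewfiewf09i23423434234}, which your argument implicitly uses.
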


\begin{proof}
The Property \ref{ifjweifjewiojwefw231} is clear from the definition \eqref{fkhwiufuiz823zr824234242424234234234234} of the relative motive and the stability of $\Sp\cX$.

The remaining assertions follow from the corresponding assertions of Corollary \ref{kor:sdfgert}
and the fact that $\Sigma_{+}^{\mot}$ is a left-adjoint. Hence it preserves colimits  and initial objects. In the following we give some details.

Property \ref{fwejiofjweiofuewofewf234} is an immediate consequence of
the fact that the square
\begin{equation}\label{hoijiojdio32ddd23ddd}
\xymatrix{\Yo^{s}(Z\cap \cY)\ar[r]\ar[d]&\Yo^{s}(\cY)\ar[d]\\\Yo^{s}(Z)\ar[r]&\Yo^{s}(X)}
\end{equation}
is cocartesian. This   follows from Point~\ref{ifjweifjewiojwefw23} in Corollary \ref{kor:sdfgert}.  

Property  \ref{iweufhf89wfu89ewfew245} immediately follows from Point \ref{wefiewhfeiwfewiofuewufewfewf} of  Corollary \ref{kor:sdfgert}.

We now show Property  \ref{ofjewofefoewiufewfiewf09i23423434234}.
  If $X$ in $\BC$ is flasque, then
$\Yo(X)$ is an initial object of $\Spc\cX$ by Point \ref{feijfewiofeoifuefuewofe234234324} of Corollary \ref{kor:sdfgert}. Hence
$\Yo^{s}(X)\simeq \Sigma^{\mot}_{+}(\Yo(X))$ is an  initial object of $\Sp\cX$.
Since $\Sp\cX$ is stable an initial object in $\Sp\cX$ is a zero object.

Property \ref{efwijfiewfoiefe3u40934332r}   follows   from the $u$-continuity of $\Yo$ stated in Point  \ref{kor:sdfgert111} of Corollary \ref{kor:sdfgert}.
\end{proof}



\subsection{Further properties of \texorpdfstring{$\Yo^{s}$}{Yo-s}}\label{fkjflkwjfwofjweofwefw}

Let $X$ be a bornological coarse space.
\begin{lem}\label{ewfifjewiofoiewfewfewfef}
If $X$ is flasque in the generalized sense, then $\Yo^{s}(X)\simeq 0$.
\end{lem}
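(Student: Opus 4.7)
The plan is to reduce the stable statement to the unstable analogue already at our disposal. Lemma \ref{oigjwroifjweofewfew534} tells us that if $X$ is flasque in the generalized sense, then $\Yo(X)\simeq\emptyset$ in $\Spc\cX$, where $\emptyset$ denotes the initial object. By definition $\Yo^{s}=\Sigma^{mot}_{+}\circ\Yo$, and $\Sigma^{mot}_{+}\colon\Spc\cX\to\Sp\cX$ is the left adjoint to $\Omega^{mot}$, hence preserves all colimits. In particular it carries the initial object of $\Spc\cX$ to the initial object of the stable $\infty$-category $\Sp\cX$, which is the zero object $0$. Transporting $\Yo(X)\simeq\emptyset$ across $\Sigma^{mot}_{+}$ then yields $\Yo^{s}(X)\simeq 0$.

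As a cross-check, and as a route that avoids invoking the unstable result, I would replay the factorization argument of Lemma \ref{rkgjergergergergerg} directly in $\Sp\cX$. Given a sequence $(f_{k})_{k\in\mathbb{N}}$ implementing generalized flasqueness of $X$, one forms $\mathbb{N}\times X$ with the product coarse structure $\cC_{d}\times\cC$ and the (non-product) bornology $\cB_{d}\times\cB$, and the morphism $F(k,x):=f_{k}(x)$. The composition
\[X\cong\{0\}\times X\hookrightarrow\mathbb{N}\times X\xrightarrow{F}X\]
is $\id_{X}$, while $\mathbb{N}\times X$ is flasque in the original sense of Definition~\ref{efijewifjewiofwifwfew322423424}, with flasqueness implemented by the shift $(n,x)\mapsto(n+1,x)$. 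Applying $\Yo^{s}$ and invoking Point~\ref{ofjewofefoewiufewfiewf09i23423434234} of Corollary~\ref{kjeflwfjewofewuf98ewuf98u798798234234324324343}, the identity of $\Yo^{s}(X)$ factors through $\Yo^{s}(\mathbb{N}\times X)\simeq 0$, forcing $\Yo^{s}(X)\simeq 0$.

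There is no real obstacle: either one reads the statement off the unstable Lemma~\ref{oigjwroifjweofewfew534} via colimit-preservation of the left adjoint $\Sigma^{mot}_{+}$, or one copies the proof of Lemma~\ref{rkgjergergergergerg} and uses vanishing on ordinary flasque spaces from Corollary~\ref{kjeflwfjewofewuf98ewuf98u798798234234324324343} in place of vanishing on flasque objects in $\Sh^{\{0,1\},fl}(\BC)$.
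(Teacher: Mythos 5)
Your first paragraph is exactly the paper's proof: it cites Lemma~\ref{oigjwroifjweofewfew534} together with the fact that $\Sigma^{mot}_{+}$, being a left adjoint, sends $\emptyset$ to $0$. The proposal is correct and takes the same route; the second paragraph is a valid but redundant cross-check.
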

\begin{proof}
This follows from Lemma \ref{oigjwroifjweofewfew534} and the fact that $\Sigma_{+}^{\mot}$ sends initial objects to zero objects.
\end{proof}

Let $X$ be in $\BC$, and let $A$ be a subset of $X$.  
\begin{lem}\label{leqwjfefeowewfew23r}
We have an equivalence $\Yo^{s}(A)\simeq \Yo^{s}(\{A\})$.
\end{lem}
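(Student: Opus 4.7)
The plan is to reduce this to the unstable analog, Corollary \ref{fijweiofjwefoejufoeifueofeffff}, and then transport the equivalence through $\Sigma^{mot}_{+}$. Concretely, by the definition of $\Yo^{s}$ and \eqref{wefweew254},
\[
\Yo^{s}(A)\simeq \Sigma^{mot}_{+}\Yo(A)\quad \text{and}\quad \Yo^{s}(\{A\})\simeq \Sigma^{mot}_{+}\Yo(\{A\}),
\]
where the second equivalence uses that $\Sigma^{mot}_{+}$ is a left adjoint and hence commutes with the filtered colimit defining $\Yo(\{A\})$. Applying $\Sigma^{mot}_{+}$ to the equivalence $\Yo(A)\simeq \Yo(\{A\})$ of Corollary \ref{fijweiofjwefoejufoeifueofeffff} then yields the claim.

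Alternatively, and more self-contained, I would reprove this directly in the stable setting. Let $\cC$ denote the coarse structure of $X$ and write $\cC^{\diag}\subseteq \cC$ for the cofinal subposet of entourages containing the diagonal. For every $U\in \cC^{\diag}$ the inclusion $A\hookrightarrow U[A]$ is an equivalence of bornological coarse spaces by Example \ref{fijweiofjwefoejufoeifueofeffff1}, so by Point \ref{iweufhf89wfu89ewfew245} of Corollary \ref{kjeflwfjewofewuf98ewuf98u798798234234324324343} the induced morphism $\Yo^{s}(A)\to \Yo^{s}(U[A])$ is an equivalence in $\Sp\cX$. Using cofinality of $\cC^{\diag}$ in $\cC$ we compute
\[
\Yo^{s}(\{A\})\simeq \colim_{U\in \cC}\Yo^{s}(U[A])\simeq \colim_{U\in \cC^{\diag}}\Yo^{s}(U[A])\simeq \Yo^{s}(A),
\]
the last equivalence holding because the diagram is essentially constant with value $\Yo^{s}(A)$ and the indexing category is filtered (nonempty since $\diag_{X}\in \cC^{\diag}$).

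I do not anticipate any serious obstacle here; the only small point of care is the cofinality of $\cC^{\diag}$ in $\cC$ (immediate since $U\cup \diag_{X}\in \cC^{\diag}$ for any $U\in \cC$) and the compatibility of the structure maps $\Yo^{s}(U[A])\to \Yo^{s}(U'[A])$ for $U\subseteq U'$ with the equivalences from $\Yo^{s}(A)$, which is automatic from functoriality of $\Yo^{s}$.
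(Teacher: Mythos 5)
Your first argument is exactly the paper's proof: it deduces the statement from Corollary \ref{fijweiofjwefoejufoeifueofeffff} together with the fact that $\Sigma_{+}^{mot}$ preserves colimits. The alternative direct computation is also fine (it just replays the proof of that corollary in the stable setting), but the main route matches the paper.
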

\begin{proof}
This follows from Corollary \ref{fijweiofjwefoejufoeifueofeffff} and the fact that
$\Sigma_{+}^{\mot}$ preserves colimits.
\end{proof}

 Let $X$ be a bornological coarse space, and let $Y,Z$ be two subsets of $X$.
\begin{lem}\label{lgjrgrogijreogregregregegegrg}
If $(Y,Z)$ is a coarsely excisive pair on $X$, then the  square 
$$\xymatrix{\Yo^{s}(Y\cap Z)\ar[r]\ar[d]&\Yo^{s}(Y)\ar[d]\\\Yo^{s}(Z)\ar[r]&\Yo^{s}(X)}$$  is cocartesian.
\end{lem}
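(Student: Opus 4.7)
The statement is the direct stable analogue of Lemma \ref{lefjweifjewoifeowufiewfewfwe234}, so the cleanest strategy is to reduce to that result. My plan is to invoke the fact that $\Yo^{s} = \Sigma^{mot}_{+} \circ \Yo$ and that $\Sigma^{mot}_{+}\colon \Spc\cX \to \Sp\cX$ is a left adjoint (being the left half of the adjunction $\Sigma^{mot}_{+} \dashv \Omega^{mot}$ discussed just before the lemma), hence preserves all small colimits and in particular pushouts. Applying $\Sigma^{mot}_{+}$ to the cocartesian square in $\Spc\cX$ provided by Lemma \ref{lefjweifjewoifeowufiewfewfwe234} therefore yields a cocartesian square in $\Sp\cX$, which is exactly the square in the statement.

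If one prefers a direct argument paralleling the unstable proof, the steps would be the following. First, observe that $(Z,\{Y\})$ is a complementary pair on $X$, since by assumption $Y \cup Z = X$ and the big family $\{Y\} = (U[Y])_{U \in \cC}$ contains $Y$ itself. By Point~\ref{fwejiofjweiofuewofewf234} of Corollary~\ref{kjeflwfjewofewuf98ewuf98u798798234234324324343}, the square
$$\xymatrix{\Yo^{s}(\{Y\}\cap Z)\ar[r]\ar[d]&\Yo^{s}(\{Y\})\ar[d]\\\Yo^{s}(Z)\ar[r]&\Yo^{s}(X)}$$
is cocartesian in the stable $\infty$-category $\Sp\cX$ (equivalently, the cofibres of its two horizontal arrows are equivalent). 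It then remains to identify the corners with $\Yo^{s}(Y)$ and $\Yo^{s}(Y \cap Z)$.

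The identification $\Yo^{s}(\{Y\}) \simeq \Yo^{s}(Y)$ is Lemma~\ref{leqwjfefeowewfew23r}. For the intersection corner, the coarse excisiveness of $(Y,Z)$ is used exactly as in the proof of Lemma~\ref{lefjweifjewoifeowufiewfewfwe234}: for every entourage $U$ of $X$ there exists an entourage $W$ with $U[Y] \cap U[Z] \subseteq W[Y \cap Z]$, which together with the trivial inclusion $Y \cap Z \subseteq U[Y] \cap Z$ produces mutually inverse morphisms between $\Yo^{s}(\{Y\cap Z\})$ and $\Yo^{s}(\{Y\} \cap Z)$ after passing to the colimits over entourages. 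Combining this with Lemma~\ref{leqwjfefeowewfew23r} applied to $A = Y \cap Z$ yields $\Yo^{s}(Y\cap Z) \simeq \Yo^{s}(\{Y\}\cap Z)$, finishing the proof.

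There is no substantive obstacle here: both arguments rest entirely on infrastructure already assembled in the preceding sections, and the first (appealing to the preservation of pushouts by $\Sigma^{mot}_{+}$) is the more efficient presentation.
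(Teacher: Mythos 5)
Your first argument is exactly the paper's proof: Lemma \ref{lefjweifjewoifeowufiewfewfwe234} combined with the fact that $\Sigma^{mot}_{+}$ preserves colimits. The proposal is correct and takes essentially the same approach; the alternative direct argument you sketch is also sound but unnecessary.
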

\begin{proof} This follows from Lemma \ref{lefjweifjewoifeowufiewfewfwe234} and the fact that
$\Sigma_{+}^{\mot}$ preserves colimits. \end{proof}

\begin{ex}
We consider a bornological coarse space $X$. As we have explained in the Example~\ref{feijweifewfoiuwe9fuewfewfwf} the bornological coarse space $[0,\infty) \otimes X$ is flasque. The pair of subsets
$$((-\infty,0] \otimes X,[0,\infty) \otimes X)$$  of $\R \otimes  X$  is coarsely excisive. 
By Lemma \ref{lgjrgrogijreogregregregegegrg} we get the push-out square $$\xymatrix{\Yo^{s}(X)\ar[r]\ar[d]&\Yo^{s}([0,\infty) \otimes  X)\ar[d]\\\Yo^{s}((-\infty,0] \otimes  X)\ar[r]&\Yo^{s}(\R \otimes X)}$$ 

The lower left and the upper right corners  are motives of flasque bornological coarse spaces and hence vanish. Excision therefore provides an equivalence 
\begin{equation}\label{dqwdqwdqwdwqwqdqwd}
  \Yo^{s}(\R \otimes  X)\simeq \Sigma\Yo^{s}(X)\, .
\end{equation}
Iterating this we get
\begin{equation}\label{dqwdqwdqwdwqwqdqwd1}
 \Yo^{s}(\R^{k} \otimes  X)\simeq \Sigma^{k} \Yo^{s}(X)
\end{equation}
for all $k\ge 0$.
\hB
\end{ex}

\begin{ex}\label{jwefjweofewfewfewfewf}
\index{union!mixed}\index{mixed union}The following is  an   application of the stability of the category of coarse motivic spectra.
We consider a family $(X_{i})_{i\in I}$ bornological coarse spaces and form  the {free union (Definition \ref{foifewieof89u8924r443535}) $X:={\bigsqcup^{\free}_{i\in I}} X_{i}$.}

We fix   an element $i\in I$ and set     $I^{\prime}:=I\setminus \{i\}$. Then
$(X_{i} ,{\bigsqcup^{\free}_{j\in I^{\prime}}} X_{j})$ is a coarsely excisive pair on {$X$}. 
Since the intersection of the two entries is disjoint and $\Yo^{s}(\emptyset)\simeq 0$, excision (Lemma \ref{lgjrgrogijreogregregregegegrg})  gives an equivalence
$$\Yo^{s}(X)\simeq \Yo^{s}(X_{i})\oplus \Yo^{s}\big({\bigsqcup^{\free}_{j\in I^{\prime}}X_{j}}\big)\, .$$
In particular we obtain a projection
\begin{equation}\label{uewhwiuhiewufheihfi2342341}
p_i : \Yo^{s}(X) \to \Yo^{s}(X_{i})\, .
\end{equation} 
Note that this projection does not come from a morphism in $\BC$. We can combine the projections $p_{i}$ for all $i$ in $ I$ to a morphism \begin{equation}\label{uewhwiuhiewufheihfi234234}
p:\Yo^{s}(X)\to  \prod_{j\in I} \Yo^{s}(X_{j})\, .
\end{equation} 
This map will play a role in the definition of additivity of a coarse homology theory later.
If $I$ is finite, then it is an equivalence by excision.
\hB
\end{ex}
 
Excision can deal with finite decompositions. The following lemma investigates what happens in the case of inifinite coproducts.

Let $\cA$ be a  set  of objects in   $\BC$.
 \begin{ddd}\label{defn:sdf09232}
We let $\Sp\cX\langle \cA\rangle$ denote the minimal cocomplete stable full subcategory  of  $\Sp\cX$\index{$\Sp\cX\langle -\rangle$} containing $\Yo^{s}(\cA)$.
\end{ddd}

 {We let} $\cA_{\disc}$ be the set   of discrete
bornological coarse spaces (see Definition \ref{ijwieorfjuwefuwe9few435345}).

Let $(X_{i})_{i\in I}$ be a family in $\BC$. 
Then we have a canonical maps \begin{equation}\label{adscasdcdqwefeweddecca}
\bigoplus_{i\in I}\Yo^{s}(X_{i})\to \Yo^{s}(\bigsqcup_{i\in I}X_{i})\ , \quad  \bigoplus_{i\in I}\Yo^{s}(X_{i})\to \Yo^{s}(\bigsqcup^{\mixed}_{i\in I}X_{i})
\end{equation}
induced by the inclusions of the components $X_{i}$ into the coproduct or the mixed union.
 \begin{lem}\label{efiwofoewf234234244}
The fibres of the maps in \eqref{adscasdcdqwefeweddecca} belong to $\Sp\cX\langle \cA_{\disc}\rangle$.
\end{lem}
\begin{proof}
We consider the case of a mixed union.
Let $X := \bigsqcup^{\mixed}_{i \in I}X_{i}$. Let $\cC$ denote  the coarse structure of $X$ and recall that we have the equivalence
$$\Yo^{s}(X)\simeq \colim_{U\in \cC} \Yo^{s}(X_{U})\, .$$
Let now $U$ in $\cC$ be given such that it contains the diagonal. Then there exists a finite subset $J$ of $I$   and entourages $U_{j}$ in $\cC_{j}$ for all $j$ in $J$ such that $$U=  \bigcup_{j\in J} U_{j}\cup \bigcup_{i\in I\setminus J} \diag(X_{i})\, .$$
 We conclude that
$$X_{U}\cong \coprod_{j\in J} X_{j,U_{j}}\sqcup \bigsqcup^{\mixed}_{i\in I\setminus J} X_{i,\disc}\, ,$$
where $X_{i,\disc}:=(X_{i},\cC\langle\emptyset\rangle,\cB_{i})$.
We conclude that
$$\Yo^{s}(X_{U})\simeq \bigoplus_{j\in J} \Yo^{s}(X_{j,U_{j}})\oplus \Yo^{s}\big({\bigsqcup^{\mixed}_{i\in I\setminus J}} X_{i,\disc}\big)\, .$$ 
Let not $J'$ be a second finite subset of $I$ such that 
 $J\subseteq J^{\prime}$. Then by excision  we have a decomposition
$$ \Yo^{s} \big( \bigsqcup^{\mixed}_{i\in I\setminus J} X_{i,\disc}\big)\simeq \Yo^{s}\big(\bigsqcup^{\mixed}_{i\in I\setminus J^{\prime}} X_{i,\disc}\big)\oplus \bigoplus_{i\in J^{\prime}\setminus J} \Yo^{s}(X_{i,\disc})\, .$$
In particular we have a projection
$$ \Yo^{s}\big( \bigsqcup^{\mixed}_{i\in I\setminus J} X_{i,\disc}\big)\to  \Yo^{s}\big( \bigsqcup^{\mixed}_{i\in I\setminus J^{\prime}} X_{i,\disc}\big)\, .$$

 We now take the colimit over $\cC$ in two stages such that the outer colimit increases $J$ and the inner colimits runs over the entourages with a fixed $J$. Then we get a fibre sequence $$\Sigma^{-1}R\to \bigoplus_{i\in I}\Yo^{s}(X_{i})\to \Yo^{s}(X) \to R\, ,$$ where \begin{equation}\label{vefjkbnerkjvbrjebvjkjvnrekvvrrev}
R:= \colim_{J \subseteq I\textit{ finite}}   \Yo^{s}\big( \bigsqcup^{\mixed}_{i\in I\setminus J} X_{i,\disc}\big)
\end{equation}
is a remainder term as claimed by the lemma.

The above argument works word for word also in the case of the coproduct $X := \coprod_{i \in I} X_i$. In this case the remainder term will be \begin{equation}\label{wefuhiuhj4fhjiuwefqef}
\colim_{J}   \Yo^{s}( \coprod_{i\in I\setminus J} X_{i,\disc} )\ .\qedhere
\end{equation}
\end{proof}

 \subsection{Homotopy invariance}\label{kjfwoeff2243324324324}

In order to define the notion of  homotopy in the context of bornological coarse spaces we introduce appropriate cylinder objects (see Mitchener \cite[Sec.~3]{mit} for a similar construction).
A coarse cylinder on $X$ depends on the choice $p=(p_{-},p_{+})$ of two bornological (Definition~\ref{iefewiofeuwio2342342434}) 
maps
$p_+  :  X\to [0,\infty)$ and $p_{-}  :  X\to (-\infty,0]$, where the rays have the usual bornology of bounded subsets.

Let $(X,\cC,\cB )$ be a bornological coarse space. Furthermore, we equip $\R$ with the  metric bornological coarse structure   $\cC_{d}$ and $\cB_{d}$. We consider the bornological coarse space
$\R \otimes X$ introduced in Example \ref{eiofweoifwefuewfieuwf9wwfwef}.


\begin{rem}
 Using $\R \otimes X$ instead of the cartesian product $\R\times X$ 
 has the effect that the projections $\R \otimes X\to \R$ (for unbounded $X$) and $\R \otimes X\to X$  are not    morphisms of bornological coarse spaces since they are not proper.   On the other hand, our definition guarantees that the  inclusion
$X\to \R \otimes X$ given by  $\{0\}$ in $\R$  is an inclusion of a bornological coarse subspace. 
A further reason for using $\otimes$ instead of $\times$ is that we want the half cylinders in $\R \otimes X$ to be flasque, see Example~\ref{feijweifewfoiuwe9fuewfewfwf}. This property will be used in an essential manner in the proof of Proposition~\ref{kfwejfefjewfejwofewfw3343} below.
\hB
\end{rem}

Let $X$ be a bornological coarse space,  and let  $p_{+}:X\to [0,\infty)$ and $p_{-}:X\to (-\infty,0]$ be bornological maps.
\begin{ddd}
The coarse cylinder\index{coarse!cylinder} $I_{p}X$\index{$I_{p}-$|see{coarse cylinder}} is defined as the subset
$$I_{p}X:=\{(t,x)\in \R\times X :  p_{-}(x)\le t\le p_{+}(x)\}\subseteq \R \otimes X$$
equipped with the induced bornological coarse structure.
\end{ddd}
\begin{figure}[ht]
\centering
\includegraphics[scale=0.4]{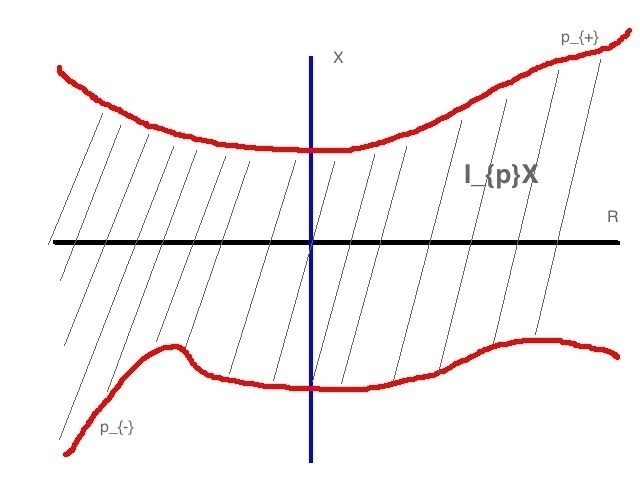}\caption*{Coarse cylinder}
\end{figure}

\begin{lem}
The projection $\pi:I_{p}X\to X$ is a morphism of bornological coarse spaces.
\end{lem}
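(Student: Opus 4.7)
The plan is to verify the two conditions for being a morphism in $\BC$, namely that $\pi$ is controlled (Definition~\ref{efijwefoie98u32234234}) and proper (Definition~\ref{iefewiofeuwio2342342434}), using the definitions of the bornological coarse structures on $\R\times_{p}X$ and its subspace $I_{p}X$.

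\textbf{Controlledness.} The coarse structure on $I_{p}X$ is the one induced from $\R\times_{p}X$, so every entourage of $I_{p}X$ has the form $V\cap (I_{p}X\times I_{p}X)$ for some entourage $V$ of $\R\times_{p}X=(\R\times X,\cC_{d}\times \cC,\cB_{d}\times \cB)$. By construction the coarse structure $\cC_{d}\times \cC$ is generated by entourages of the form $U_{r}\times U$ with $U_{r}\in \cC_{d}$ and $U\in \cC$, and the ordinary projection $\R\times X\to X$ sends $U_{r}\times U$ to $U\in \cC$. Since $(\pi\times\pi)(V\cap(I_{p}X\times I_{p}X))\subseteq (\pi\times\pi)(V)$, we conclude that $\pi$ is controlled.

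\textbf{Properness.} Let $B\in \cB$ be a bounded subset of $X$. We must show that $\pi^{-1}(B)=\{(t,x)\in I_{p}X : x\in B\}$ is bounded in $I_{p}X$, i.e., contained in a bounded subset of $\R\times_{p}X$. Since $p_{+}$ and $p_{-}$ are bornological, the images $p_{+}(B)\subseteq [0,\infty)$ and $p_{-}(B)\subseteq (-\infty,0]$ are bounded, so there is some $R>0$ with $p_{-}(B)\cup p_{+}(B)\subseteq [-R,R]$. Hence
\[
\pi^{-1}(B)\subseteq \{(t,x)\in \R\times X : x\in B,\ -R\le t\le R\}=[-R,R]\times B\ ,
\]
and the right hand side is a bounded subset of $\R\times_{p}X$ since its bornology $\cB_{d}\times \cB$ is generated by products of bounded sets. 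Therefore $\pi^{-1}(B)$ is bounded in $I_{p}X$, which proves that $\pi$ is proper.

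There is essentially no obstacle here; the whole point of using $\R\times_{p}X$ (rather than the categorical product $\R\times X$) together with the assumption that $p_\pm$ are \emph{bornological} is precisely to make properness of $\pi$ automatic. Combined with controlledness, this shows that $\pi:I_{p}X\to X$ is a morphism of bornological coarse spaces.
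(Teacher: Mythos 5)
Your proof is correct and follows exactly the same route as the paper: controlledness is immediate from the product coarse structure, and properness is deduced from the fact that $p_{\pm}$ are bornological, giving $\pi^{-1}(B)\subseteq [-R,R]\times B$, which is bounded for the bornology $\cB_{d}\times\cB$ of $\R\times_{p}X$. Your write-up just spells out the controlledness step in slightly more detail than the paper does.
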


\begin{proof}
The projection is clearly controlled. We must show that it is proper. Let $B$ be a bounded subset of $X$.
Since the maps $p_{\pm}$ are bornological there exists a positive real number $C$ such that
$|p_{\pm}(x)|\le C$ for all $x$ in $B$. We then have
$\pi^{-1}(B)\subseteq [-C,C]\times B\cap I_{p}X$, and this subset is bounded.
\end{proof}

\begin{prop}\label{kfwejfefjewfejwofewfw3343}
For a coarse cylinder $I_{p}X$ on $X$ the projection $I_{p}X\to X$ induces an equivalence
$$\Yo^{s}(I_{p}X)\to \Yo^{s}(X)\, .$$
\end{prop}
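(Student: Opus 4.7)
The plan is to reduce the proposition to a collection of coarse excision statements made inside carefully chosen flasque ambient spaces. Since the zero section $s : X \to I_{p}X$, $x\mapsto(0,x)$, satisfies $\pi\circ s = \id_{X}$, it suffices to prove that $\Yo^{s}(s)$ is an equivalence in $\Sp\cX$. Introduce the upper and lower half-cylinders $I_{p}^{+}X := \{(t,x)\in I_{p}X : t \geq 0\}$ and $I_{p}^{-}X := \{(t,x)\in I_{p}X : t \leq 0\}$, each with the bornological coarse structure induced from $I_{p}X$. These cover $I_{p}X$ and intersect in $\{0\}\times X \cong X$, and they form a coarsely excisive pair in the sense of Definition~\ref{efjwelkfwefoiu2or4234234}: a point $(t,x)$ lying in the $U$-thickenings of both must have $t$ simultaneously close to a nonnegative and a nonpositive value, whence $|t|$ is bounded by the $\R$-width of $U$ and so $(t,x)$ is close to $(0,x)$. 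Lemma~\ref{lgjrgrogijreogregregregegegrg} then produces a cocartesian square with $\Yo^{s}(X)$ in the upper-left and $\Yo^{s}(I_{p}X)$ in the lower-right corner, and it remains to show that the two edge maps $\Yo^{s}(X)\to\Yo^{s}(I_{p}^{\pm}X)$ induced by the zero section are equivalences.

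For $I_{p}^{-}X$ I would introduce the auxiliary subspace $Q^{-} := \{(t,x)\in\R\times_{p}X : t\geq p_{-}(x)\}$ with its induced structure. The shift $f(t,x):=(t+1,x)$ restricts to a self-morphism of $Q^{-}$ since $t+1\geq t\geq p_{-}(x)$; it is close to the identity; and since the bornology of $\R\times_{p}X$ is the product $\cB_{d}\times\cB$ (Example~\ref{eiofweoifwefuewfieuwf9wwfwef}), every bounded subset of $Q^{-}$ has bounded $t$-coordinate, so the iterates $f^{k}$ eventually leave it. Horizontal shifts preserve relative position, so the union $\bigcup_{k}(f^{k}\times f^{k})(U)$ remains contained in an entourage. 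Hence $Q^{-}$ is flasque in the sense of Definition~\ref{efijewifjewiofwifwfew322423424} and $\Yo^{s}(Q^{-})\simeq 0$. Now decompose $Q^{-}$ as the union of $I_{p}^{-}X$ and $[0,\infty)\times_{p}X$; the second piece is flasque by Example~\ref{feijweifewfoiuwe9fuewfewfwf}, the intersection is exactly $\{0\}\times X$, and the same $|t|\leq r$ argument shows the pair is coarsely excisive in $Q^{-}$. Feeding this into Lemma~\ref{lgjrgrogijreogregregregegegrg} yields a cocartesian square
\[
\xymatrix{\Yo^{s}(X)\ar[r]\ar[d] & \Yo^{s}(I_{p}^{-}X)\ar[d] \\ 0 \ar[r] & 0}
\]
and the vanishing of both bottom corners forces $\Yo^{s}(X)\to\Yo^{s}(I_{p}^{-}X)$ to be an equivalence. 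The symmetric construction with $Q^{+}:=\{(t,x)\in\R\times_{p}X : t \leq p_{+}(x)\}$, flasque via the reverse shift $(t,x)\mapsto(t-1,x)$, handles $I_{p}^{+}X$.

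Plugging the two equivalences back into the excision pushout from the first paragraph collapses it to $\Yo^{s}(X)$, and the composite $\Yo^{s}(X)\to\Yo^{s}(I_{p}^{\pm}X)\to\Yo^{s}(I_{p}X)$ is then an equivalence. Since this composite is $\Yo^{s}(s)$, the identity $\pi\circ s=\id_{X}$ forces $\Yo^{s}(\pi)$ to be an equivalence as well. The main obstacle I expect is the book-keeping for the three coarse-excisivity checks, each of which amounts to extracting a uniform bound on $|t|$ from the entourage data of $\R\times_{p}X$; it is also worth remarking, as in Example~\ref{feijweifewfoiuwe9fuewfewfwf}, that replacing $\R\times_{p}X$ by the cartesian product $\R\times X$ would invalidate the flasqueness of $Q^{\pm}$, since bounded subsets would no longer have bounded $t$-coordinate.
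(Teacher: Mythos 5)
Your proof is correct and follows essentially the same route as the paper's: your flasque ambient spaces $Q^{+}$ and $Q^{-}$ are literally the paper's $W=(-\infty,0]\times X\cup I_{p}X$ and $V=I_{p}X\cup [0,\infty)\times X$, and excision along $\{0\}\times X$ inside these flasque spaces is the same key step, merely packaged via coarsely excisive pairs (Lemma~\ref{lgjrgrogijreogregregregegegrg}) rather than complementary pairs with big families. The only organizational difference is your additional third excision square decomposing $I_{p}X$ into its two halves, whereas the paper relates both $X$ and $I_{p}X$ directly to the single half-cylinder $Z=I_{p}X\cap([0,\infty)\times X)$.
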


\begin{proof}
We consider the bornological coarse    subspaces of $\R \otimes X$
$$W:=(-\infty,0] \times X \cup    I_{p}(X) \, , \quad Z:=  [0,\infty)\times X\cap I_{p}(X)$$ and the big family
$\{Y\}$ generated by the subset  $Y:=(-\infty,0]\times X$, see Example~\ref{fwijweiofjweoifewfu9fuwe98ff}.
Then $(Z,\{Y\})$ is a complementary pair on $W$. By Point \ref{ifjweifjewiojwefw231} of Corollary~\ref{kjeflwfjewofewuf98ewuf98u798798234234324324343} we
 have   fibre sequences
 $$\dots\to \Yo^{s}(\{Y\}) \to \Yo^{s}(W) \to(W,\{Y\})\to \dots$$
and
$$\dots\to \Yo^{s}(Z\cap \{Y\}) \to \Yo^{s}(Z) \to(Z,Z\cap \{Y\})\to \dots\, ,$$
where we use the construction in Example~\ref{wokfewofwfkopewfefewf} in the second case.
By Point~\ref{fwejiofjweiofuewofewf234} of Corollary~\ref{kjeflwfjewofewuf98ewuf98u798798234234324324343} we have the excision equivalence \begin{equation}\label{wejkhwejkhwekjfw243}
 (Z,Z\cap \{Y\})\simeq  (W,\{Y\}) \, .
\end{equation}

We now observe that $Y$ and $W$ are flasque. In both cases flasqueness is implemented by the self maps  given by the restriction of the map 
$$f :\R \otimes X\to \R \otimes X\, , \quad  (t,x)\mapsto (t-1,x)\, .$$
The three conditions listed in Definition \ref{efijewifjewiofwifwfew322423424} are easy to verify.
It follows from Point~\ref{ofjewofefoewiufewfiewf09i23423434234} of Corollary~\ref{kjeflwfjewofewuf98ewuf98u798798234234324324343}
that $\Yo^{s}(W)\simeq 0$, and from Point~\ref{ofjewofefoewiufewfiewf09i23423434234} of Corollary~\ref{kjeflwfjewofewuf98ewuf98u798798234234324324343} together with  Lemma~\ref{fijweiofjwefoejufoeifueofeffff} that  
$\Yo^{s}(\{Y\})\simeq 0$. The first fibre sequence now implies that $(W,\{Y\})\simeq 0$. Using the second and \eqref{wejkhwejkhwekjfw243} we get the equivalence
$$\Yo^{s}(Z\cap \{Y\})\simeq  \Yo^{s}(Z)\, .$$
Note that
$Z\cap \{Y\}\simeq \{X\}$, where we identify $X$ with the subset $\{0\}\times X$ of $Z$.
We thus get the equivalence \begin{equation}\label{r32r23r23r}
\Yo^{s}(X)\simeq  \Yo^{s}(Z)\, .
\end{equation}

We now consider the subsets
$$V:=I_{p}X\cup [0,\infty)\times X \, , \quad  U:=[0,\infty)\times X $$ of $\R \otimes X$. We again have a complementary pair
$(I_{p}X, \{U\})$ in $V$. 
Argueing as above and using that
$V$ and $U$ are flasque we get an equivalence
$$\Yo^{s}(I_{p}X\cap \{U\})\simeq \Yo^{s}(I_{p}X)\, .$$
We now observe that
$I_{p}X\cap U=Z$. This implies 
by Point \ref{ofjewofefoewiufewfiewf09i23423434234} of Corollary~\ref{kjeflwfjewofewuf98ewuf98u798798234234324324343}   that
$$\Yo^{s}(I_{p}X\cap \{U\})\simeq \Yo^{s}(Z)\, .$$
Combining this with the  equivalence  \eqref{r32r23r23r} we get the equivalence
$$\Yo^{s}(X)\simeq \Yo^{s}(I_{p}X)$$
induced by the inclusion as the zero section. This is clearly an inverse of the projection.
\end{proof}
 
We consider a coarse cylinder $I_{p}X$ over $X$. We can consider the maps of sets
$$i_{\pm}:X\to I_{p}X\, , \quad i_{\pm}(x)=(p_{\pm}(x),x)\, .$$
These maps are morphisms if the bornological maps $p_{+}:X\to [0,\infty)$ and $p_{-}:X\to (-\infty,0]$ are in addition controlled.

Let $X$ and $X^{\prime}$ be bornological coarse  spaces and $f_{+}, f_{-} :  X\to X^{\prime}$ be two morphisms.
\begin{ddd}\label{jgerlgjreolgregregregreg}
We say that $f_{+}$ and $f_{-}$ are homotopic\index{homotopic morphisms}\index{morphisms!homotopic} to each other if there exist a pair of controlled and bornological maps
$p=(p_{+},p_{-})$ and a morphism $h :  I_{p}X\to X^{\prime}$ such that
  {$f_{\pm}=h \circ i_{\pm}$.}
\end{ddd}

Observe that $\pi\circ i_{\pm}=\id_{X}$. Since $\Yo^{s}(\pi)$ is an equivalence, the functor
$\Yo^{s}$ sends $i_{+}$ and $i_{-}$ to equivalent maps. We have the following consequence.

Let $X$ and $X^{\prime}$ be bornological coarse  spaces and $f_{+}, f_{-} :  X\to X^{\prime}$ be two morphisms.

\begin{kor}\label{liwejfweioiewfwefewfewf}
If $f_{+}$ and $f_{-}$ are homotopic, then $\Yo^{s}(f_{+})$ and $\Yo^{s}(f_{-})$ are equivalent.
\end{kor}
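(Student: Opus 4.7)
The plan is to deduce the corollary directly from Proposition \ref{kfwejfefjewfejwofewfw3343} together with the definition of homotopy.

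First, I would unpack Definition \ref{jgerlgjreolgregregregreg}: the hypothesis gives a pair $p=(p_+,p_-)$ of controlled bornological maps and a morphism $h \colon I_p X \to X'$ with $f_\pm = h \circ i_\pm$, where $i_\pm \colon X \to I_p X$ are the sections $x \mapsto (p_\pm(x),x)$. Note that these $i_\pm$ are morphisms precisely because $p_\pm$ are now assumed to be controlled in addition to bornological. Applying the functor $\Yo^s$ yields $\Yo^s(f_\pm) \simeq \Yo^s(h) \circ \Yo^s(i_\pm)$, so the claim reduces to showing that $\Yo^s(i_+) \simeq \Yo^s(i_-)$ in $\Sp\cX$.

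Next, I would invoke Proposition \ref{kfwejfefjewfejwofewfw3343}, which asserts that the projection $\pi \colon I_p X \to X$ induces an equivalence $\Yo^s(\pi)\colon \Yo^s(I_p X) \to \Yo^s(X)$. Since $\pi \circ i_\pm = \id_X$ at the level of morphisms in $\BC$, functoriality gives $\Yo^s(\pi) \circ \Yo^s(i_\pm) = \id_{\Yo^s(X)}$. Both $\Yo^s(i_+)$ and $\Yo^s(i_-)$ are therefore right inverses of the equivalence $\Yo^s(\pi)$, and in any (in particular stable) $\infty$-category the inverse of an equivalence is unique up to canonical equivalence. Hence $\Yo^s(i_+) \simeq \Yo^s(\pi)^{-1} \simeq \Yo^s(i_-)$.

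Composing with $\Yo^s(h)$ then yields $\Yo^s(f_+) \simeq \Yo^s(h) \circ \Yo^s(i_+) \simeq \Yo^s(h) \circ \Yo^s(i_-) \simeq \Yo^s(f_-)$, as desired. There is no real obstacle: the entire content of the corollary is packed into the nontrivial Proposition \ref{kfwejfefjewfejwofewfw3343}, and the corollary itself is a one-line formal consequence of the fact that an equivalence has an essentially unique inverse. The only small point to note, as the paper remarks immediately before the statement, is that the controlledness (not merely bornologicity) of $p_\pm$ is what makes $i_\pm$ into morphisms in $\BC$, so that $\Yo^s$ can be applied to them.
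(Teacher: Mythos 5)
Your proof is correct and is essentially identical to the paper's own argument: the paper likewise observes that $\pi\circ i_{\pm}=\id_{X}$, deduces from Proposition \ref{kfwejfefjewfejwofewfw3343} that $\Yo^{s}(i_{+})$ and $\Yo^{s}(i_{-})$ are both inverse to the equivalence $\Yo^{s}(\pi)$ and hence equivalent, and then composes with $\Yo^{s}(h)$. Your remark on the controlledness of $p_{\pm}$ matches the paper's own caveat preceding Definition \ref{jgerlgjreolgregregregreg}.
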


Let $f:X\to X^{\prime}$ be a morphism between bornological coarse spaces.
\begin{ddd}
We say that $f$ is a  homotopy equivalence\index{homotopy equivalence}\index{equivalence!homotopy} if there exist a morphism $g:X^{\prime}\to X$ such that the compositions $f\circ g$ and $g\circ f$ are   homotopic to the respective identities.
\end{ddd}

\begin{kor}\label{liwejfweioiewfwefewfewf1}
The functor $\Yo^{s}$ sends homotopy equivalences to equivalences.
\end{kor}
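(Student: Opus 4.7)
The plan is to deduce this directly from Corollary \ref{liwejfweioiewfwefewfewf}, which says that $\Yo^s$ identifies homotopic morphisms. The whole argument is essentially a formal two-out-of-three argument in the $\infty$-category $\Sp\cX$, with no new geometric input needed.

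In detail, I would suppose $f: X \to X'$ is a homotopy equivalence with homotopy inverse $g: X' \to X$, so that $g \circ f$ is homotopic to $\id_X$ and $f \circ g$ is homotopic to $\id_{X'}$. Applying the functor $\Yo^s$ and using that it is a functor (so it preserves compositions and identities), together with Corollary \ref{liwejfweioiewfwefewfewf}, I obtain equivalences
\[
\Yo^s(g) \circ \Yo^s(f) \;=\; \Yo^s(g \circ f) \;\simeq\; \Yo^s(\id_X) \;=\; \id_{\Yo^s(X)}
\]
in $\Sp\cX$, and symmetrically $\Yo^s(f) \circ \Yo^s(g) \simeq \id_{\Yo^s(X')}$. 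Hence $\Yo^s(f)$ admits a two-sided inverse up to equivalence and is therefore an equivalence in $\Sp\cX$.

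There is no real obstacle here; the content is packaged entirely in the preceding Proposition \ref{kfwejfefjewfejwofewfw3343} (homotopy invariance of $\Yo^s$ on coarse cylinders) and its Corollary \ref{liwejfweioiewfwefewfewf}. The present statement is the formal consequence that a homotopy equivalence between bornological coarse spaces descends to an honest equivalence of motivic coarse spectra.
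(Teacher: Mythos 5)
Your proof is correct and is exactly the argument the paper intends: the corollary is stated without proof as the formal consequence of Corollary \ref{liwejfweioiewfwefewfewf} together with functoriality of $\Yo^{s}$, which is precisely the two-sided-inverse argument you give. No issues.
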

It is easy to see that if $f_{+}$ and $f_{-}$ are close to each other, then they are homotopic. By Point~\ref{iweufhf89wfu89ewfew245} of Corollary~\ref{kjeflwfjewofewuf98ewuf98u798798234234324324343} the functor $\Yo^{s}$ sends close morphisms to equivalent morphisms. 
But homotopy is a much weaker relation.  
The Corollary~\ref{liwejfweioiewfwefewfewf1} can be considered as a vast improvement of Point~\ref{iweufhf89wfu89ewfew245} of Corollary~\ref{kjeflwfjewofewuf98ewuf98u798798234234324324343}.

\begin{ex}\label{efwiojweiofeofewfefewfewf}
Let $n$ be in $\nat$ such that $n\ge 1$.
 For $r$  in $(0,\infty) $ we let $ B^{n}(0,r)$   denote the closed {E}uclidean  ball in $\R^{n}$ at the origin of radius $r$. 
Let $I$ be some set and $r:I\to (0,\infty)$ be some function.
We  consider the set $$X:=\bigsqcup_{i\in I}  B^{n}(0,r(i))\, .$$ We define  a coarse structure on $X$ to be generated by the family 
 of  entourages $(\tilde U_{r})_{r\ge 0}$ on $X$, where  $\tilde U_{r}:=\bigcup_{i\in I} U_{r,i}$,  and  where $U_{r,i}$ is the entourage $U_{r}$ (see Example \ref{ffwefewfe24543234wfefewef}) considered as an entourage of the component of $X$ with index $i$ in $I$. In other words, we consider $X$ as a quasi-metric space
where all components have the induced {E}uclidean metric and their mutual distance is infinite. We let the bornology on $X$  be generated by the  subsets $B^{n}(0,r(i))$ (the component with index $i$) for all $i$ in $I$. The bornology and the coarse structure are compatible.
In this way we have described a bornological coarse space $X$.

We consider $I$ with the discrete coarse structure and the minimal bornology. Then we   have a natural inclusion $$\iota:I\hookrightarrow X$$
as the centers of the balls.

We claim that {$\iota$} is a homotopy equivalence with inverse the projection $\pi:X\to I$.
Note that $\pi\circ \iota\cong \id_{I}$. We must exhibit a homotopy between $\iota\circ \pi$ and $\id_{X}$.
We let $(i,x)$ denote the point $x\in B^{n}(0,r(i))$ of the $i$'th component.
 We consider the functions $p_{-}:=0$ and $p_{+}(i,x):=\|x\|$ and form the coarse cylinder
$I_{p}X$ associated to the pair $p:=(p_{-},p_{+})$. One can check that
$$h:I_{p}X\to X\, , \quad h(t,(i,x)):= \left\{ \begin{array}{cc}(i,x-t\frac{x}{\|x\|})&x\not=0\\
(i,0)&x=0\end{array}\right. $$
is a morphism. Since the functions $p_{\pm}$ are controlled this morphism is a homotopy from the composition 
$\iota\circ \pi$ and $\id_{X}$ as required.

We conclude that
$$\Yo^{s}(I)\simeq \Yo^{s}(X)\, .$$ 

If the function $r$ is unbounded, then $\iota$ and $\pi$ are not coarse equivalences.
\hB
\end{ex}
 
  \subsection{Axioms for a coarse homology theory}\label{erogerwrefreferfwef}

Let $\bC$ be a  cocomplete  stable   $\infty$-category, and let $$E:\BC\to \bC$$ be a functor.
For a big family $\cY=(Y_{i})_{i\in I}$ on a bornological coarse space $X$ we set $$E(\cY):=\colim_{i\in I} E(Y_{i})$$
and  $$E(X,\cY):=\Cofib(E(\cY)\to E(X))\ .$$

\begin{ddd}\label{rgljogreggregrege}
$E$ is a  coarse homology theory \index{classical coarse homology theory}\index{homology theory!classical coarse} if it has the following properties\index{axioms of coarse homology theories}:
\begin{enumerate}
\item (excision) For every complementary pair $(Z,\cY)$ on a bornological coarse space $X$  the natural morphism
$$E(Z,Z\cap \cY)\to E(X,\cY)$$ is an equivalence.
\item (coarse invariance)  If $X\to X^{\prime}$ is an equivalence  of bornological coarse spaces, then $E(X)\to E(X^{\prime})$ is an equivalence in ${\bC}$.
\item (vanishing on flasques)  If $X$ is a flasque bornological coarse space, then $0\simeq E(X) $.
\item\label{efofjewofop3294i0234324} ($u$-continuity) For every  bornological coarse space $X$ with coarse structure $\cC$ we have the equivalence $$E(X)\simeq \colim_{U\in \cC}E(X_{U})$$ induced by the collection of  canonical morphisms $X_{U}\to X$.
\end{enumerate}
\end{ddd}

\begin{rem}
The excision axiom is equivalent to the requirement, that for every  complementary pair $(Z,\cY)$ on a bornological coarse space $X$  the square
$$\xymatrix{E(Z\cap \cY)\ar[r]\ar[d]&E(Z)\ar[d]\\E(\cY)\ar[r]&E(X)}$$
is a push-out square.  \hB
\end{rem}

As a consequence of Corollary \ref{efiuweif8u23942342344} 
and \eqref{geruhgergiu34iut34t3t34t3t}  the $\infty$-category $\Sp\cX$ of motivic coarse spectra has the following universal property\index{universal property!motivic coarse spectra}:
\begin{kor}\label{ofwoief239042309444324}
For every large, small cocomplete stable $\infty$-category $\bC$   precomposition by $\Yo^{s}$ induces an equivalence between $ \Fun^{\colim}(\Sp\cX ,\bC)$ and the full subcategory of $\Fun( \BC,\bC)$ of  $\bC$-valued coarse homology theories.
\end{kor}
Note that $\colim$ stands for the small colimit-preserving functors.
In particular, $\Yo^{s}$ is a  $\Sp\cX$-valued coarse homology theory.

Usually the targets $\bC$ of our coarse homology theories are small and admit all very small colimits. To transfer motivic results to such coarse homology theories using Corollary~\ref{ofwoief239042309444324} we employ the following trick.

There exists a fully faithful, very small colimit preserving  inclusion $i:\bC\to \bC^{\la}$ of $\bC$ into a large  stable  $\infty$-category which admits all small colimits, see Remark \ref{qwrfglkjqodfewewfdewdqeqwedew}.

Here are two  typical arguments using this trick. 

If $E:\BC\to \bC$ is a coarse homology theory, then we define $$E^{\la}:=i\circ E:\BC\to \bC^{\la}\ .$$
Because $i$ preserves colimits, it is again a coarse homology theory. By Corollary \ref{ofwoief239042309444324} it corresponds essentially uniquely to a small  colimit preserving functor  $\Sp\cX\to \bC^{\la}$ for which we use the same symbol.
For $X$ in $\BC$ we then have $i(E(X))\simeq E^{\la}(\Yo^{s}(X))$.

Assume that $f:X\to Y$ is a morphism  in $\BC$.

\begin{kor}\label{qwrfglkjqodfewewfdewdqeqwedew1}If  
$\Yo^{s}(f)\colon \Yo^{s}(X)\to \Yo^{s}(Y)$ is an equivalence, then $E(f)\colon E(X)\to E(Y)$ is an equivalence.
\end{kor}
\begin{proof}
We use that  
 $i$ is fully faithful.    
 \end{proof}

 Assume that $E\to E'$ is a natural transformation of coarse homology theories,
 and let $X$ be in $\BC$. 
 
 \begin{kor}\label{qwrfglkjqodfewewfdewdqeqwedew2}
 If  
 $E^{\la}(\Yo^{s}(X))\to E^{\prime,la}(\Yo^{s}(X))$ is an equivalence, then  $E(X)\to E'(X)$ is an equivalence.
\end{kor}
\begin{proof}
We use that $E(X)\simeq  E^{\la}(\Yo^{s}(X))$ and again that  $i$ is fully faithful.    
\end{proof}

\begin{rem}\label{qwrfglkjqodfewewfdewdqeqwedew}
The construction of such an embedding $i$ is standard.  We let $\Sp^{\la}$ be the category of large spectra\index{large spectra}\index{spectra!large} and consider the stable Yoneda embedding 
$$\yo^{s}:\bC\to \Fun(\bC^{\op},\Sp^{\la})\ .$$
We let $S$ be the small set of the canonical morphisms   
$$\colim_{I} \yo^{s}(D)\to \yo^{s}(\colim_{ I} D)$$ in $ \Fun(\bC^{\op},\Sp^{\la})$ 
for all diagrams $D:I\to \bC$ indexed by very small categories $I$. Since   $\Fun(\bC^{\op},\Sp^{\la})$ is presentable 
we can define $\bC^{\la}$ as the category of $S$-local objects in $\Fun(\bC^{\op},\Sp^{\la})$.
It fits into a adjunction
$$ L:\Fun(\bC^{\op},\Sp^{\la})\leftrightarrows \bC^{\la}:\incl\ .$$
 Note  that $\yo^{s}(C)$ is $S$-local for every object $C$ of $\bC$. 
This follows from
\begin{eqnarray*}
\map_{\Fun(\bC^{\op},\Sp^{\la})}(\yo^{s}(\colim_{I}D),\yo^{s}(C))&\simeq&
\map_{\bC}(\colim_{I}D,C)\\&\simeq&\lim_{I}\map_{\bC}( D,C)\\&\simeq&
\lim_{I}\map_{\Fun(\bC^{\op},\Sp^{\la})}( \yo^{s}(D),\yo^{s}(C))\\&\simeq&
\map_{\Fun(\bC^{\op},\Sp^{\la})}(\colim_{I} \yo^{s}( D),\yo^{s}(C))\ ,
\end{eqnarray*}
where $\map(-,-)$ stands for the mapping spectrum in the respective stable $\infty$-category, and $D:I\to \bC$
is any diagram with a very small index category.
 
 We now set $i:=  \yo^{s}:\bC\to \bC^{\la}$. This immediately implies that $i$ is fully faithful.
%
We finally observe that $i$ preserves very small colimits. 
Let $C$ be in $\bC^{\la}$. 
Then we have the following chain of equivalences
\begin{eqnarray*}
\map_{\bC^{\la}}(i(\colim_{I}D),C)&\simeq&
\map_{\Fun(\bC^{\op},\Sp^{\la})}(\yo^{s}(\colim_{I}D),C)\\&\stackrel{!}{\simeq}&
\map_{\Fun(\bC^{\op},\Sp^{\la})}(\colim_{I}\yo^{s}(D) ,C)\\&\simeq&
\map_{\bC^{\la}}(L(\colim_{I}\yo^{s}(D)) ,C)\\&\simeq&
\map_{\bC^{\la}}( \mbox{$\colim^{\bC^{a}}_{I}$}\yo^{s}(D) ,C)
\end{eqnarray*}
where $\colim_{I}\yo^{s}(D)$ is interpreted in $\Fun(\bC^{\op},\Sp^{\la})$, and
$ \colim^{\bC^{a}}_{I}\yo^{s}(D)$ is interpreted in $\bC^{\la}$.
For the   equivalence marked by $!$ we use the assumption that $C$ is $S$-local.
\hB
\end{rem}

Let $E:\BC\to \bC$ be a coarse homology theory.
 \begin{kor}\label{foijfowffewfwefewfewf}
 The following statements are true:
 \begin{enumerate}
 \item\label{rglkeglregregporeigroepgergregregg} If $A$ is a subset of a bornological coarse space with the induced bornological coarse structure, then the natural morphism induces an equivalence 
 $E(A)\simeq E(\{A\})$.
 \item\label{wefifjweiofioww823232434} If $X$ is a  bornological coarse space which is  flasque in the generalized sense (Definition~\ref{ddd:rf435f}), then $E(X)\simeq 0$.
 \item \label{fijwoeifjoewifewfewfewf}
 $E$ sends homotopic morphisms (Definition \ref{jgerlgjreolgregregregreg}) to equivalent morphisms.
 \item \label{fwlfjwefeoifuwefwf3244} $E$ is coarsely excisive, i.e., for a coarsely excisive pair $(Y,Z)$ (Definition~\ref{efjwelkfwefoiu2or4234234}) on $X$ the square
 $$\xymatrix{E(Y\cap Z)\ar[r]\ar[d]&E(Y)\ar[d]\\ E(Z)\ar[r]&E(X)}$$ is cocartesian.
\end{enumerate}
\end{kor}

\begin{proof}
We use
  Corollary \ref{qwrfglkjqodfewewfdewdqeqwedew1} in order to transfer the motivic relations relations between values of $E$.
  
 The Assertion  \ref{wefifjweiofioww823232434} now follows from Lemma \ref{ewfifjewiofoiewfewfewfef}.
The Assertion   \ref{fijwoeifjoewifewfewfewf} follows from   Corollary  \ref{liwejfweioiewfwefewfewf1}.
The Assertion\ref{fwlfjwefeoifuwefwf3244} is a consequence of Lemma \ref{lgjrgrogijreogregregregegegrg}. And finally, the Assertion \ref{rglkeglregregporeigroepgergregregg} follows from
 Lemma \ref{leqwjfefeowewfew23r}.
\end{proof}

\section{Merging coarse and uniform structures}\label{efwljkfowpfjowefwefewfwef}

If a bornological coarse space has an additional compatible uniform structure and a big family of subsets, then we can define a new bornological coarse structure called the hybrid structure (introduced by Wright \cite[Sec.~5]{nw1}). This new structure will allow us to coarsely decompose simplicial complexes into cells and therefore to perform inductive arguments by dimension. Such arguments are at the core of proofs of the coarse Baum--Connes conjecture \cite{nw1}, but also an important ingredient in proofs of the Farell--Jones conjecture in various cases \cite{blr,Bartels:2011fk}. 
 
 In Section \ref{rgoihrwiui23orr234r} we introduce the hybrid coarse structure. Our main technical results are the decomposition theorem and the homotopy theorem shown in Sections \ref{woihfiow356} and \ref{efoiweiof34t3455}.
 These results will be applied in Section \ref{fiwehjfi892938u9r23r23rr} in order to decompose the motives of simplicial complexes.
In Section \ref{ekfjweofjweoifewfwefwefewf} we provide sufficient conditions for the flasqueness of hybrid spaces which will be used to study  coarsening spaces in Section \ref{rgpojeroi34t35546456}.

In the subsequent papers \cite{ass} and \cite{equicoarse} we formalize bornological coarse spaces with additional uniform structures
by introducing the category of uniform bornological coarse spaces $\mathbf{UBC}$. 
 
 \subsection{The hybrid structure}\label{rgoihrwiui23orr234r}

 We start with recalling the notion of a uniform structure.
 
 Let $X$ be a set.
 \begin{ddd} A uniform structure\index{uniform!structure} on $X$ is a 
  subset $\cT$\index{$\cT$|see{uniform structure}} of $\cP(X\times X)$  satisfying  following conditions 
   \begin{enumerate}
   \item For every $W$ in $\cT$ we have $\diag_{X}\subseteq W$.
   \item $  \cT$ is closed under taking supersets.     
   \item $\cT$ is closed under taking inverses.
 \item $\cT$ is closed under finite intersections.
   \item For every $W$ in $\cT$ there exists $W^{\prime}$ in $\cT$ such that $W^{\prime}\circ W^{\prime}\subseteq W$.
\end{enumerate}
\end{ddd}
The elements of $\cT$ will be called uniform entourages\index{uniform!entourages}. We will occasionally use the term coarse entourage for elements of a coarse structure on $X$ in order to clearly distinguish them from uniform entourages.
 
A uniform space is a pair     $(X,\cT)$
of a set $X$ with a uniform structure $\cT$\index{$(X,\cT)$}\index{uniform!space}.

Let $\cT$ be a uniform structure on a set $X$.
\begin{ddd}
A uniform structure $\cT$ is Hausdorff\index{Hausdorff!uniform structure} if $\bigcap_{W\in \cT} W=\diag(X)$.
\end{ddd}

\begin{ex}\label{efwwelfewfop}
If $(X,d)$ is a metric space, \index{uniform!structure!from a metric} then we can define a uniform structure $\cT_{d}$.
It is the smallest uniform structure {containing} the subsets $U_{r}$ for $r$ in $(0,\infty)$  given by \eqref{ffwefewfe24543234wfefewef}. The uniform structure $\cT_{d}$ is Hausdorff.
\hB
\end{ex}

Let $X$ be a set with a uniform structure $\cT$ and a coarse structure $\cC$. 
 \begin{ddd}\label{defn:sd23d9023}
 We say that  $\cT$  and $\cC$ are compatible\index{compatible!uniform and coarse structure}  if $\cT\cap \cC\not=\emptyset$. 
\end{ddd} 

In other words, a uniform and a coarse structure are compatible if there exist a controlled uniform entourage.

\begin{ex}\label{ex:jnksdf34} 
If $(X,d)$ is a metric space, then 
 the coarse structure $\cC_{d}$ introduced in Example \ref{welifjwelife89u32or2}  and the uniform structure $\cT_{d}$ {from} Example \ref{efwwelfewfop} are compatible.
\hB
\end{ex}

\begin{ex}
Let $X$ be a set. The minimal coarse structure  $\cC_{min} $ on $X$ is only compatible with  the discrete uniform structure\index{uniform!structure!discrete}\index{discrete!uniform structure} $\cT_{\disc}=\cP(X\times X)$. On the other hand, the maximal coarse structure $\cC_{max} $ is compatible with every uniform structure on $X$.
\hB
\end{ex}

Let $(X,\cC,\cB)$ be a bornological coarse space equipped with an additional compatible uniform structure $\cT$ and with a big family $\cY=(Y_{i})_{i\in I}$. We consider $\cT$ as a filtered poset using the opposite of the inclusion relation
\[U\le U^{\prime} \::=\: U^{\prime}\subseteq U\,.\]

Let $I,J$ be partially ordered sets, and let $\phi:I\to J$ be a map between the underlying sets.
\begin{ddd}
The map $\phi$ is called a function\index{function between posets}  if it is order preserving.
It {is} cofinal\index{cofinal!function between posets}, if in addition for every $j$ in $J$ there exists $i$ in $I$ such that $j\le \phi(i)$.
\end{ddd}
 
\begin{ex}
{We} assume that the uniform structure $\cT_{d}$ is induced by a metric $d$ (see Example~\ref{efwwelfewfop}). For a positive real number  $r$  we can consider the uniform entourages $U_{r}$ as defined in \eqref{ffwefewfe24543234wfefewef}. We equip $(0,\infty)$ with the usual order and let $(0,\infty)^{\op}$ denote the same set with the opposite order.
Consider a function $\phi:I\to (0,\infty)^{\op}$. Then   $I\ni i\mapsto U_{\phi(i)}\in \cT_{d}$ is cofinal if and only if $\lim_{i\in I} \phi(i)=0$.
\hB
\end{ex}

We consider $\cP(X\times X)$ as a poset with the inclusion relation and we consider a function
$\phi:I\to \cP(X\times X)^{\op}$, where $I$ is some poset. 
\begin{ddd}
The function $\phi$ is $\cT$-admissible\index{$\cT$-admissible} if for every $U$ in $\cT$ there exists $i$ in $I$ such that $\phi(i)\subseteq U$.
\end{ddd}
Note that we do not require that $\phi$ takes values in $\cT$.

Let   $\cY=(Y_{i})_{i\in I}$ be a filtered family of subsets of a set $X$. For a function  $\phi:I\to  \cP(X\times X)^{\op}$ we consider the following subset of $X\times X$:
\begin{equation}\label{dqdqw2345r2t2444t3t4t}
U_{\phi}:= \{(x,x')\in X\times X :  (\forall i\in I :  x,x'\in Y_{i} \text{ or } (x,x')\in \phi(i)\setminus ( Y_{i}\times  Y_{i}))\}\, .
\end{equation}
Let us spell {this} out in words: a pair  $(x,x') $ belongs to $U_{\phi}$ if and only if for every $i$ in $I$  both entries $x,x'$ belong to $Y_{i}$ or, if not, the pair $(x,x')$ belongs to the subset $\phi(i)$ of $X\times X$.

 Let $(X,\cC,\cB)$ be a bornological coarse space equipped with a  compatible uniform structure $\cT$ and  a big family $\cY=(Y_{i})_{i\in I}$.

\begin{ddd}\label{defn:sdfuh8934t}
We define the hybrid coarse structure\index{coarse structure!hybrid}\index{hybrid coarse structure}  on $X$ by\index{$\cC_{h}$|see{hybrid coarse structure}}
\[\cC_{h}:=\cC\langle\{U_{\phi}\cap V :  V\in \cC \text{ and } \phi:I\to \cP(X\times X)^{\op} \text{ is $\cT$-admissible}\}\rangle\,.\]
and set
\[X_{h}:=(X,\cC_{h},\cB)\,.\]
\end{ddd}

By definition we have the inclusion $\cC_{h}\subseteq \cC$. This implies in particular that $\cC_{h}$ is compatible with the bornological structure $\cB$. The triple $(X,\cC_{h},\cB)$ is thus a bornological coarse space. Furthermore, the identity of the
underlying sets is a morphism of bornological coarse spaces
\[(X,\cC_{h},\cB)\to (X,\cC,\cB)\,.\]
Note that the hybrid coarse structure depends on the  original coarse structure $\cC$, the uniform structure $\cT$, and the big family $\cY$.


\begin{rem}\label{fwoejweofewfewfwefwefwe34345345345345}
If the uniform structure $\cT$ is determined by a metric and we have $I\cong \nat$ as partially ordered sets, then we can describe the hybrid structure  $\cC_{h}$ as follows:

\emph{A subset $U$ of $X\times X$  belongs to $\cC_{h}$ if $U$ in $\cC$ and
if   for every $\epsilon$ in $(0,\infty)$ there exists an $i$ in $I$ such that
$U\subseteq (Y_{i}\times Y_{i})\cup U_{\epsilon}$. }

It is clear that any hybrid entourage\index{entourage!hybrid} satisfies this condition.
In the other direction, it is easy to check that a subset $U$ satisfying the condition above is contained in $U\cap U_{\phi}$ for an appropriate $\phi$. In order to construct such a $\phi$ one uses the cofinal family $(U_{1/n})_{n\in \nat\setminus \{0\}}$ in $\cT$ and that $I=\nat$ is well-ordered.
\hB
\end{rem}

\begin{ex}\label{ex:df8734r}
Let $(X,\cC,\cB)$ be a bornological coarse space equipped with an additional uniform structure $\cT$. 
Then we can define the hybrid structure associated to $\cC$, $\cT$ and the big family $\cB$ (see Example \ref{4oj3iotiu43otu43t4390t3t430t}). This hybrid structure is called the $\cC_{0}$-structure\index{$\cC_{0}$-structure} by Wright \cite[Def.~2.2]{nw1}.
  \hB
\end{ex}

We now consider  functoriality of the hybrid structure.

Let $(X,\cT)$ and $(X^{\prime},\cT^{\prime})$ be uniform spaces, and let $f:X\to X^{\prime}$ be a map of sets.
\begin{ddd}\label{lfjwfefoewfefwefewf}
The map $f$ is uniformly continuous\index{uniformly continuous}
if for every $V^{\prime}$ in $\cT^{\prime}$ there exists $V$ in $\cT$ such that $(f\times f)(V)\subseteq V^{\prime}$.
\end{ddd}


Let $X$, $X^{\prime}$ be bornological coarse spaces which come equipped with uniform structures~$\cT$ and $\cT^{\prime}$ and big families $\cY=(Y_{i})_{i\in I}$ and $\cY^{\prime}=(Y^{\prime}_{i^{\prime}})_{i^{\prime}\in I^{\prime}}$. Let $f  :  X\to X^{\prime}$ be a morphism of bornological coarse spaces.
In the following we introduce the condition of being compatible in order to express that $f$ behaves well with respect to the additional structures. The main motivation for imposing the conditions is to ensure Lemma \ref{oepwfweoifewofiewofewf} below.

\begin{ddd}\label{ejwiofeoifefewfewf}
The morphism $f$ is compatible\index{compatible!morphism}\index{morphisms!compatible} if the following conditions are satisfied: \begin{enumerate}
\item $f$ is uniformly continuous. 
\item For every $i$ in  $I$ there exists $i^{\prime}$ in $I^{\prime}$ such that $ f(Y_{i})\subseteq Y^{\prime}_{i^{\prime}}$.
\end{enumerate}
\end{ddd}

\begin{lem}\label{oepwfweoifewofiewofewf}
If $f$ is compatible, then it is a morphism  $f :  X_{h}\to X_{h}^{\prime}$.
\end{lem}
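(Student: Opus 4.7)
The plan is to check that $f$ is a morphism $X_h \to X'_h$ in $\BC$. Since the hybrid construction does not change the bornology and $f$ is already proper as a morphism $X\to X'$ in $\BC$, properness is automatic, and the entire content of the lemma is to verify the control condition with respect to the hybrid coarse structures.

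By Definition \ref{defn:sdfuh8934t} the coarse structure $\cC_{h}$ is generated by entourages of the form $U_\phi \cap V$ with $V\in \cC$ and $\phi:I\to \cT^{u}$ cofinal. Since $\cC'_{h}$ is closed under passing to subsets and finite unions, it is enough to show that for each such generator the image $(f\times f)(U_\phi \cap V)$ lies in $\cC'_{h}$. Because $f$ is controlled with respect to the original coarse structures, $(f\times f)(V)\in \cC'$. The task thus reduces to producing a cofinal function $\phi':I'\to \cT^{u,\prime}$ with
$$(f\times f)(U_\phi \cap V) \subseteq U_{\phi'} \cap (f\times f)(V)\ .$$

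The natural candidate is $\phi':= s\circ \phi \circ \lambda$, where $s:\cT^{u}\to \cT^{u,\prime}$ is the cofinal map witnessing strong uniform continuity of $f$ (so $(f\times f)(W)\subseteq s(W)$ for every $W\in \cT^{u}$) and $\lambda:I'\to I$ comes from the compatibility with the big families (so $f(Y_{\lambda(i')})\subseteq Y'_{i'}$). The required inclusion can then be verified directly from \eqref{dqdqw2345r2t2444t3t4t}: given $(a,b)\in U_\phi$ and any $i'\in I'$, applying the defining dichotomy at the index $\lambda(i')\in I$ yields either $a,b\in Y_{\lambda(i')}$, whence $f(a),f(b)\in f(Y_{\lambda(i')})\subseteq Y'_{i'}$, or $(a,b)\in \phi(\lambda(i'))$, whence $(f(a),f(b))\in (f\times f)(\phi(\lambda(i')))\subseteq s(\phi(\lambda(i')))=\phi'(i')$. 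Intersecting with $(f\times f)(V)$ gives the desired containment.

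The one place where care is required is verifying that $\phi'=s\circ \phi\circ \lambda$ is indeed cofinal in $\cT^{u,\prime}$, since a priori a composition of cofinal functions between directed posets need not be cofinal. I would first argue that without loss of generality $s$ and $\phi$ may be taken monotone with respect to the reversed-inclusion orders on $\cT^{u}$ and $\cT^{u,\prime}$, using the fact that both posets are closed under finite intersections (which provide joins in the reversed-inclusion order) and that the big families are filtered. Once $s$ and $\phi$ are monotone, cofinality of $\phi'$ follows by chaining: given $U'\in \cT^{u,\prime}$, cofinality of $s$ gives $U\in \cT^{u}$ with $s(U)\subseteq U'$, cofinality of $\phi$ gives $i\in I$ with $\phi(i)\subseteq U$, cofinality of $\lambda$ gives $i'\in I'$ with $\lambda(i')\ge i$, and monotonicity propagates this to $\phi'(i')=s(\phi(\lambda(i')))\subseteq s(\phi(i))\subseteq s(U)\subseteq U'$. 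This monotonicity reduction is the only nontrivial technical point I anticipate.
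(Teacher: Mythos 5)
Your proof is correct and is essentially the paper's proof: the same reduction to generators $U_\phi\cap V$, the same choice $\phi'=s\circ\phi\circ\lambda$, and the same dichotomy at the index $\lambda(i')$ using $f(Y_{\lambda(i')})\subseteq Y'_{i'}$ and $(f\times f)(\phi(\lambda(i')))\subseteq s(\phi(\lambda(i')))$. The only divergence is the cofinality of $\phi'$: the paper disposes of this with the one-line assertion that a composition of cofinal functions is again cofinal, so your instinct that monotonicity is secretly needed is a fair criticism of the original argument rather than a gap in yours. Do note, however, that your proposed monotonization is itself delicate: taking pointwise meets such as $\tilde\phi(i)=\bigcap_{j\le i}\phi(j)$ can involve infinite intersections, which need not belong to $\cT^{u}$; in the situations where the hybrid structure is actually deployed in the paper ($I=\nat$, metrizable uniform structures) this reduction does go through.
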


\begin{proof}
It is clear that $f$ is proper since we have not changed the bornological structures.

We must check that $f$ is controlled with respect to the hybrid structures.

Let $\phi:I\to \cP(X\times X)^{\op}$ be a $\cT$-admissible function, and let $V$ be a coarse entourage of $X$. We then consider the entourage
$U_{\phi}\cap V$ of $X_{h}$.
We have
$$(f\times f)(U_{\phi}\cap V)\subseteq (f\times f)(U_{\phi})\cap (f\times f)(V)\, .$$ Since
$(f\times f)(V)$ is an entourage of $X^{\prime}$ it suffices to show that there exists
a suitable $\cT^{\prime}$-admissible function $\phi^{\prime}:I^{\prime}\to \cP(X^{\prime}\times X^{\prime})^{\op}$ such that we have
$(f\times f)(U_{\phi})\subseteq U_{\phi^{\prime}}$.
%
Since $f$ is compatible   
we can choose a map of sets
$\lambda:I\to I^{\prime}$ such that
$Y_{i}\subseteq Y^{\prime}_{\lambda(i)}$ for all $i$ in $I$.
We now define a map of sets
$\tilde \phi^{\prime}:I^{\prime}\to  {\cP(X^{\prime}\times X^{\prime})}$
by $$\tilde \phi^{\prime}(i^{\prime}):= \bigcap_{i\in \lambda^{-1}(i^{\prime})}(f\times f)( \phi(i))\, .$$
Note that the empty intersection of subsets of $X^{\prime}$ is by convention equal to $X^{\prime}\times X^{\prime}$.
Then we construct the order preserving function
$\phi^{\prime}: I^{\prime}\to {\cP(X^{\prime}\times X^{\prime})}^{\op}$ by
 $$\phi^{\prime}(i^{\prime}):=\bigcap_{j^{\prime}\le i^{\prime}} \tilde \phi^{\prime}(j^{\prime})\, .$$

We claim that $\phi^{\prime}$ is $\cT^{\prime}$-admissible. Let $W^{\prime}$ in $\cT^{\prime}$ be given. Since $f$ is uniformly continuous we can choose $W$ in $\cT$ such that $(f\times f)(W)\subseteq W^{\prime}$. Since  $\phi$ is $\cT$-admissible we can choose $i$ in $I$ such that $\phi(i)\subseteq W$. Then
 $\phi^{\prime}(\lambda(i))\subseteq W^{\prime}$ by construction.


Assume that $(x,y)$ is in $U_{\phi}$.  
We must show that $(f(x),f(y))\in U_{\phi^{\prime}}$.
We consider   $i^{\prime}$ in $I^{\prime}$ and have the following two cases:
\begin{enumerate}
\item $(f(x),f(y))\in Y^{\prime}_{i^{\prime}}\times Y^{\prime}_{i^{\prime}}$.
\item $(f(x),f(y))\not\in Y^{\prime}_{i^{\prime}}\times Y^{\prime}_{i^{\prime}}$. In this case we must show that 
$(f(x),f(y))\in \phi^{\prime}(i^{\prime})$. Let $j^{\prime}$ in $I^{\prime}$ be any element such that $j^{\prime}\le i^{\prime}$. 
Then we must show that $(f(x),f(y))\in \tilde \phi^{\prime}(j^{\prime})$. {We have two subcases:}
\begin{enumerate} 
\item $j^{\prime}$ is in the image  of  $\lambda$. For any choice  of a preimage
  $j$ in $\lambda^{-1}(j^{\prime})$ we have  $(x,y)\not\in Y_{j}\times Y_{j}$ since otherwise  $(f(x),f(y))\in Y^{\prime}_{j^{\prime}}\times Y^{\prime}_{j^{\prime}}\subseteq Y^{\prime}_{i^{\prime}}\times Y^{\prime}_{i^{\prime}}$. Hence for all 
  $j$ in $\lambda^{-1}(j^{\prime})$ we have
$(x,y)\in \phi(j)$. Then $(f(x),f(y))\in \tilde \phi^{\prime}(j^{\prime})$ as required.
 \item $j^{\prime}$ is not in the image of $\lambda$. Then $(f(x),f(y))\in \tilde \phi^{\prime}(j^{\prime})=X^{\prime}\times X^{\prime}$.
\end{enumerate}
\end{enumerate}{All the above shows that $f$ is controlled, which finishes this proof.}
%
\end{proof}

\begin{ex}\label{ifuweiofeowfew982342343434}
A typical example of a bornological coarse space with a $\cC_{0}$-structure\index{$\cC_{0}$-structure} is the cone $\cO(Y)$\index{$\cO(-)$|see{cone}}\index{cone} over a uniform space $(Y,\cT_{Y})$. We consider $Y$ as a bornological coarse space with the maximal structures. We then  form the bornological coarse space 
$$ ([0,\infty)\times Y,\cC,\cB):=[0,\infty) \otimes  Y\, ,$$ see Example \ref{eiofweoifwefuewfieuwf9wwfwef}.  The metric   {induces a} uniform structure {$\cT_d$  on $[0,\infty)$, and we form the} product uniform structure\index{product!uniform structure}
$\cT:=\cT_{d}\times \cT_{Y}$ on $[0,\infty)\times Y$.   The cone over $(Y,\cT_{Y})$ is  now defined as
\[\cO(Y):=([0,\infty) \otimes  Y,\cC_{0},\cB)\,,\]
where one should not overlook that we use the $\cC_0$-structure.
 
Note that instead of all bounded subsets we could also take the big family
$$\cY:=([0,n]\times Y)_{n\in \nat}\, .$$ The resulting hybrid structure is equal to the $\cC_{0}$-structure since  $\cY$ is cofinal in $\cB$.
  
  Note that $ [0,\infty) \otimes  Y$ is flasque by Example~\ref{feijweifewfoiuwe9fuewfewfwf}, but the cone $\cO(Y)$
  it is not flasque in general  since it has the smaller $\cC_0$-structure.
  
Let $\bU$\index{$\bU$|see{uniform space}} denote the category of uniform spaces\index{uniform!space} and uniformly continuous maps. It follows from Lemma \ref{oepwfweoifewofiewofewf}
  that the cone construction determines a functor
  $$\cO:\bU \to \BC$$
which is of great importance in coarse algebraic topology, see \cite{ass} and \cite{equicoarse}.
\hB
\end{ex}

 Let $(X,\cC,\cB)$ be a bornological coarse space equipped with a  compatible uniform structure $\cT$ and  a big family $\cY=(Y_{i})_{i\in I}$. We consider furthermore a subset $Z$ of $X$. It has an induced bornological coarse structure $(\cC_{|Z},\cB_{|Z})$, an  induced uniform structure $\cT_{|Z}$, and  a big family $Z\cap \cY:=(Z\cap Y_{i})_{i\in I}$. We let $(\cC_{|Z})_{h}$ denote the hybrid coarse structure on $Z$  defined by these induced structures. Let furthermore $(\cC_{h})_{|Z}$ be the coarse structure  on $Z$ induced from the hybrid coarse structure $\cC_{h}$ on $X$.
 
 \begin{lem}\label{folrje099045u45z4}
  We have an equality $ (\cC_{h})_{|Z}= (\cC_{|Z})_{h}$.
\end{lem}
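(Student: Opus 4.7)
The plan is to establish the equality of the two coarse structures on $Z$ via a useful characterisation of the hybrid coarse structure that is symmetric between $X$ and $Z$. Specifically, I shall prove the following reformulation: an entourage $U \subseteq X \times X$ belongs to $\cC_h$ if and only if $U \in \cC$ and for every uniform neighbourhood $W \in \cT^u$ there exists an index $i \in I$ such that $U \subseteq (Y_i \times Y_i) \cup W$. An analogous characterisation with $(\cC_{|Z}, \cT^u_{|Z}, Z \cap \cY)$ in place of $(\cC, \cT^u, \cY)$ then holds for $(\cC_{|Z})_h$, and it generalises the metric description in Remark \ref{fwoejweofewfewfwefwefwe34345345345345}.

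To prove the forward implication of the characterisation, I will check that the predicate ``$U \in \cC$ together with the stated decomposition property'' is stable under all coarse-structure operations (subsets, finite unions, inverses, compositions) and is satisfied by the generators $U_\phi \cap V$. Only the composition step is delicate: given $W \in \cT^u$, I choose $W' \in \cT^u \cap \cC$ with $W' \circ W' \subseteq W$ (possible by the compatibility of uniform and coarse structures from Definition~\ref{defn:sd23d9023} combined with the usual half-entourage axiom of uniform structures), and then exploit that $W'$ is controlled so that $W'[Y_i]$ lies in some $Y_j$ by the big family property. For the reverse implication, from such a $U$ I construct a cofinal $\phi \colon I \to \cT^u$ as follows: after picking a monotonic cofinal family $(W_i)_{i\in I}$ in $\cT^u$, I set $\phi(i) := W_i \cup (U \setminus (Y_i \times Y_i))$, which belongs to $\cT^u$ as a superset of $W_i$, satisfies $U \subseteq U_\phi$ by construction, and is cofinal because for any $W \in \cT^u$ one can pick $i$ large enough that $W_i \subseteq W$ and simultaneously $U \setminus (Y_i \times Y_i) \subseteq W$ by the hypothesis on $U$ and monotonicity of $(Y_i)$. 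Hence $U = U_\phi \cap U$ lies in $\cC_h$.

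Given these characterisations, both inclusions in the lemma reduce to one-line verifications. For $(\cC_h)_{|Z} \subseteq (\cC_{|Z})_h$: given $E \in \cC_h$ with $E \subseteq Z \times Z$ and $W' \in \cT^u_{|Z}$, lift $W' = W \cap (Z \times Z)$ to some $W \in \cT^u$, apply the characterisation of $\cC_h$ to produce $i$ with $E \subseteq (Y_i \times Y_i) \cup W$, and intersect with $Z \times Z$ to obtain $E \subseteq ((Z \cap Y_i) \times (Z \cap Y_i)) \cup W'$. For the opposite inclusion $(\cC_{|Z})_h \subseteq (\cC_h)_{|Z}$: given $E \in (\cC_{|Z})_h$ and $W \in \cT^u$, set $W' := W \cap (Z \times Z) \in \cT^u_{|Z}$ and apply the characterisation on $Z$ to obtain $i$ with $E \subseteq ((Z \cap Y_i) \times (Z \cap Y_i)) \cup W' \subseteq (Y_i \times Y_i) \cup W$, whence $E \in \cC_h$; since $E \subseteq Z \times Z$ we conclude $E \in (\cC_h)_{|Z}$.

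The hard part of the argument will be the reverse direction of the alternative characterisation, which requires the existence of a (monotonic) cofinal map $I \to \cT^u$; this is the same implicit assumption that makes the generators of $\cC_h$ non-trivial, and it holds in the situations of interest (e.g., metric uniform structures with $I \cong \IN$). The forward direction is essentially bookkeeping, with the compatibility condition $\cT^u \cap \cC \neq \emptyset$ playing a silent but essential role in the closure under composition.
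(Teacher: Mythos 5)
Your proof is correct, but it takes a genuinely different route from the paper. The paper's proof treats the inclusion $(\cC_h)_{|Z}\subseteq(\cC_{|Z})_h$ as immediate (restricting a generator $U_\phi\cap V$ to $Z\times Z$ lands in a generator of $(\cC_{|Z})_h$), and for the converse it extends a cofinal function $\phi\colon I\to\cT^u_{|Z}$ to a cofinal $\psi\colon I\to\cT^u$ with $\psi(i)\cap(Z\times Z)=\phi(i)$, so that $U_\phi\cap V=(U_\psi\cap V)\cap(Z\times Z)$. You instead first prove a generator-free characterisation of $\cC_h$ --- extending Remark~\ref{fwoejweofewfewfwefwefwe34345345345345} from the metric, $I\cong\nat$ setting to the general one --- and then both inclusions become symmetric one-line restrictions of the decomposition $U\subseteq(Y_i\times Y_i)\cup W$. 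Your verification of the composition step (choosing $W'\in\cT^u\cap\cC$ with $W'\circ W'\subseteq W$ and absorbing the mixed case into some $Y_j$ via bigness of $\cY$) and of cofinality of $\phi(i):=W_i\cup(U\setminus(Y_i\times Y_i))$ both check out. The trade-off: your characterisation is reusable and makes the lemma transparent, but its reverse direction needs a \emph{monotone} cofinal family $I\to\cT^u$, which is strictly stronger than the mere existence of a cofinal function that the definition of $\cC_h$ presupposes (the two coincide when $I=\nat$, which covers every application in the paper); be aware that your parenthetical claim that this is ``the same implicit assumption'' is therefore slightly off for general $I$. The paper's proof nominally needs only some cofinal $\psi''\colon I\to\cT^u$, though its own cofinality check for $\psi$ quietly requires a common index $i$ realising two separate cofinality conditions, so in practice the two arguments rest on comparable hypotheses.
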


\begin{proof}
It is clear that $(\cC_{h})_{|Z}\subseteq (\cC_{|Z})_{h}$.

Let now $\phi:I\to \cP(Z\times Z)^{\op}$ be $ \cT_{|Z}$-admissible, and let $V$ in $\cC_{|Z}$ and $U_{\phi}$ be as in \eqref{dqdqw2345r2t2444t3t4t}.
We must show that $U_{\phi}\cap V\in (\cC_{h})_{|Z}$. Let $\psi$ be the composition of $\phi$ with the inclusion $\cP(Z\times Z)^{\op}\to \cP(X\times X)^{\op}$. Then $\psi$ is $\cT$-admissible  and
$$U_{\phi}\cap V=(U_{\psi}\cap V)\cap (Z\times Z)\, ,$$ hence
 $U_{\phi}\cap V\in (\cC_{h})_{|Z}$.
\end{proof}

\begin{rem}In this proof it is important to allow that $\cT$-admissible functions can have values which do not belong to 
$\cT$. In fact, the inclusion $\cP(Z\times Z)\to \cP(X\times X)$ in general does not map $\cT_{|Z}$
to $\cT$.\hB
\end{rem}

\subsection{Decomposition theorem}\label{woihfiow356}
  
In this subsection we show the decomposition theorem.  It roughly states that a uniform decomposition 
of a space with a hybrid structure  associated to a family $\cY$ induces a decomposition of its  motivic  coarse spectrum relative to this family.
The precise formulation is Theorem \ref{weiofjewi98u3298r32r32rr}. As a consequence we deduce an excisiveness result for the cone functor.

\subsubsection{Uniform decompositions and statement of the theorem}

We consider a uniform space $(X,\cT)$. For a uniform entourage  $U$ in $\cT$ we write $\cT_{\subseteq U}$ (or $\cP(X\times X)^{\op}_{\subseteq U}$, respectively) for the partially ordered subset
of uniform entourages (or subsets, respectively) which are contained in $U$.

%
%
%
%
%
 
Let $(Y,Z)$ be a pair of two subsets of $X$.
\begin{ddd}\label{fewoifweifoew2324234234}
The pair $(Y,Z)$  is called a uniform decomposition\index{uniform!decomposition}\index{decomposition!uniform} if:
\begin{enumerate}
\item $X=Y\cup Z$.
\item There exists $V$ in $\cT$ and a function $s:\cP(X\times X)^{\op}_{\subseteq V}\to \cP(X\times X)^{\op}$
such that:
\begin{enumerate}
\item The restriction $s_{|\cT_{\subseteq V}}:\cT_{\subseteq V}\to  \cP(X\times X)^{\op}$ is  $\cT$-admissible.
\item For every $W$ in  $\cP(X\times X)_{\subseteq V}^{\op}$  we have  the relation
\begin{equation}\label{eqwe98798dv}
W[Y]\cap W[Z]\subseteq s(W)[Y\cap Z]\, .
\end{equation}
\end{enumerate}
\end{enumerate}
\end{ddd}

\begin{rem}
Note that the condition in Definition \ref{fewoifweifoew2324234234} is apparently stronger than the condition (compare with Definition \ref{efjwelkfwefoiu2or4234234} of a coarsely excisive decomposition)
that for every $W$ in $\cP(X\times X)_{\subseteq V}^{\op}$ there exists  $W'$ in $\cP(X\times X) $ such that
$W[Y]\cap W[Z]\subseteq W'[Y\cap Z]$. 
First of all $W'$ must become small if $W$ becomes small. And even stronger, the choice of $W'$ for given $W$ must be expressible by a function as stated.
 \hB
\end{rem}

\begin{ex}\label{ifweoifewoufuew98fu9wefweff}
Assume that $(X,d)$ metric space such that the restriction of $d$ to every component of $X$ is a path-metric. We further assume    that there exists a constant $c>0$ such that  the distance between every two components of $X$ is  greater than $ c$. We consider a pair
  $(Y,Z)$  of closed subsets such that
$Y\cup Z=X$. If we equip $X$ with the uniform structure $\cT_{d}$ induced by the metric, then $(Y,Z)$ is a uniform decomposition. 

Here is the argument:

For every $r$ in $[0,\infty)$ we define the subset  $$\bar U_{r}:=\{(x,y)\in X\times X: d(x,y)\le r \}$$ of $X\times X$. Note that $\bar U_{r}$ is a uniform entourage provided $r$ is in $r>0$.
We define the uniform entourage $$V:=\bar U_{c}\cap \bar U_{1}\, ,$$
{where $c$ is as above.}
We consider the subset  $$Q:= \{0\}\cup \{n^{-1}: n\in \nat\ \&\  n\not=0\}$$ of $[0,1]$. 
 For $W\in \cP(X\times X)^{\op}_{\subseteq V}$ we let $d(W)$ in $Q$ be the minimal element such that
 $W\subseteq \bar U_{d(W)}$. Note that $d(W)$ exists, and if $W$ in $\cT_{V}$ is such that $W^{\prime}\le  W$ (i.e., $W\subseteq W^{\prime}$) then
 $d(W)\le d(W^{\prime})$.
 We then define the function
 $$s:\cP(X\times X)^{\op}_{\subseteq V} \to \cP(X\times X)^{\op}\, , \quad  s(W):=\bar U_{d(W)}\, .$$
 Since 
$s(\bar U_{r})=\bar U_{r}$ for $r$ in $Q$
this function is $\cT_{d}$-admissible.

Let us check that this function $s$ satisfies \eqref{eqwe98798dv}. We consider an element $x$ of $ W[Y]\cap W[Z]$. Since $Y \cup Z = X$ we have $x \in Y$ or $x \in Z$. Without loss of generality we can assume that $x \in Y$. Then there exists $z$  in $Z$ such that $(x,z) \in W$. We choose a path $\gamma$ from $x$ to $z$ realizing the distance from $x$ to $z$ (note that $W \subseteq V \subseteq \bar U_{c}$ and therefore $x$ and $z$ are in the same path component of $X$). Let now $x^\prime$ be the first point on the path $\gamma$ with $x^\prime \in Y \cap Z$. Such a point exists since $Y$ and $Z$ are both closed, $x \in Y$ and $z \in Z$. Then $(x,x^\prime) \in s(W)$, which finishes the proof of \eqref{eqwe98798dv}.
\hB
\end{ex}

 We consider a bornological coarse space $(X,\cC,\cB)$ with an additional compatible uniform structure  $\cT $, big family $\cY=(Y_{i})_{i\in I} $, and uniform decomposition  $Y,Z$.  We let $X_{h}$ denote the associated bornological coarse space with the hybrid structure. Moreover, we let $Y_{h}$\footnote{This should not be confused with the notation  $Y_{i}$ for the members of the big family $\cY$.} and $Z_{h}$ denote the subsets $Y$ and $Z$ equipped with the bornological coarse structure induced from $X_{h}$. By Lemma~\ref{folrje099045u45z4} this induced structure coincides with the hybrid structure defined by the induced uniform structures and big families on these subsets.

 Recall the pair notation \eqref{fkhwiufuiz823zr824234242424234234234234}.
\begin{theorem}[Decomposition theorem]\label{weiofjewi98u3298r32r32rr}
\index{Decomposition Theorem}We assume that $I=\nat$ and that the uniform structure on $X$ is Hausdorff. Then the following square in $\Sp\cX$ is cocartesian
\begin{equation}\label{dqwdqd3kjnr23kjrn23r32r32}
\xymatrix{((Y\cap Z)_{h},  Y\cap Z\cap \cY)\ar[r]\ar[d]&(Y_{h},Y\cap\cY )\ar[d]\\(Z_{h},Z\cap \cY )\ar[r]&(X_{h},\cY)}
\end{equation}
\end{theorem}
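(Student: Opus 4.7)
The plan is to deduce the theorem from two applications of coarse excision (Lemma~\ref{lgjrgrogijreogregregregegegrg}) combined with the pasting of cocartesian squares in the stable $\infty$-category $\Sp\cX$. The main technical input is the following.

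\emph{Key Lemma.} If $(Y,Z)$ is a uniform decomposition of $(X,\cT^u)$, then the pair $(Y,Z)$ is coarsely excisive on the bornological coarse space $X_h$; moreover, for every $i\in I$, the pair $(Y\cap Y_i, Z\cap Y_i)$ is coarsely excisive on $Y_i$ equipped with the coarse structure induced from $X_h$, which by Lemma~\ref{folrje099045u45z4} coincides with the hybrid structure of $Y_i$ with its induced uniform structure and big family $Y_i\cap\cY$.

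To prove the Key Lemma I fix a generating hybrid entourage $U=U_\phi\cap V$, with $\phi\colon I\to\cT^u$ cofinal and $V\in\cC$. Using the uniform-decomposition data $V_0\in\cT^u$ and $s\colon \cT^u_{V_0}\to\cT^u$, I first replace $\phi$ by the still cofinal function $i\mapsto \phi(i)\cap V_0$, so that $\phi(i)\subseteq V_0$ for every $i$, and then set $\phi':=s\circ\phi\colon I\to\cT^u$, which is cofinal as a composition of cofinal maps. Given $x\in U[Y]\cap U[Z]$, choose witnesses $y\in Y$ and $z\in Z$ with $(x,y),(x,z)\in U_\phi\cap V$. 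For every $i\in I$ for which $x,y,z$ are not all in $Y_i$ one has $(x,y),(x,z)\in\phi(i)$, so $y\in\phi(i)[Z]\cap Y$, and the uniform-decomposition inclusion $\phi(i)[Y]\cap\phi(i)[Z]\subseteq \phi'(i)[Y\cap Z]$ produces a $w\in Y\cap Z$ with $(y,w)\in\phi'(i)$. Composing $(x,y)\in\phi(i)\subseteq \phi'(i)$ with $(y,w)\in\phi'(i)$ and with the controlled data $V$ places $(x,w)$ in a hybrid entourage of the form $U_{\phi''}\cap V'$ built from $\phi'$ and $V\circ V^{-1}\circ V$. The ``bounded'' case, in which $x,y,z$ all lie in some $Y_i$, is absorbed by enlarging $V'$ using compatibility of $\cB$ with $\cC$. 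The subspace statement is proved by the same argument on $Y_i$, again invoking Lemma~\ref{folrje099045u45z4}.

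Given the Key Lemma, Lemma~\ref{lgjrgrogijreogregregregegegrg} yields two cocartesian squares in $\Sp\cX$: at the level of $X_h$,
$$\xymatrix{\Yo^s((Y\cap Z)_h)\ar[r]\ar[d] & \Yo^s(Y_h)\ar[d] \\ \Yo^s(Z_h)\ar[r] & \Yo^s(X_h)}$$
and, for each $i\in I$ separately,
$$\xymatrix{\Yo^s(Y\cap Z\cap Y_i)\ar[r]\ar[d] & \Yo^s(Y\cap Y_i)\ar[d] \\ \Yo^s(Z\cap Y_i)\ar[r] & \Yo^s(Y_i)}$$
Passing to the filtered colimit over $i\in I$, which preserves cocartesian squares in the presentable stable $\infty$-category $\Sp\cX$, and using the definition~\eqref{wefweew254} of $\Yo^s(\cY)$, one obtains a cocartesian square
$$\xymatrix{\Yo^s(Y\cap Z\cap\cY)\ar[r]\ar[d] & \Yo^s(Y\cap\cY)\ar[d] \\ \Yo^s(Z\cap\cY)\ar[r] & \Yo^s(\cY)}$$
The inclusions $Y_i\hookrightarrow X$ induce a natural transformation from this second square to the first. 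Taking pointwise cofibers and using the pair notation~\eqref{fkhwiufuiz823zr824234242424234234234234} identifies the resulting square with \eqref{dqwdqd3kjnr23kjrn23r32r32}; since cofibers of natural transformations between cocartesian squares are cocartesian in any stable $\infty$-category, this square is cocartesian, as claimed.

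\emph{Main obstacle.} The combinatorial heart of the argument is the Key Lemma, and specifically the construction of the replacement cofinal function $\phi'$ out of the uniform-decomposition witness $s$. The delicate point is that one has to maintain the large-scale data $V\in\cC$ (so the new entourage remains controlled) and the small-scale data encoded by $\phi$ against the big family $\cY$ (so the new entourage remains hybrid) at the same time, while the uniform-decomposition estimate is only available for uniform neighbourhoods contained in $V_0$. Juggling these three layers of data — coarse, uniform, and bornological — is what makes the proof technically nontrivial.
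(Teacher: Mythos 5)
Your reduction to coarse excision does not work: the Key Lemma is false. A uniform decomposition is a purely small-scale condition and imposes no constraint at the coarse scale, so $(Y,Z)$ need not be a coarsely excisive pair on $X_{h}$. Concretely, take $X=\nat$ with the metric bornological coarse structure, $\cT^{u}=\cT^{u}_{d}$, $Y_{n}=[0,n]$, $Y=2\nat$ and $Z=2\nat+1$. Since $U_{1/2}=\diag_{\nat}$, the pair $(Y,Z)$ is a uniform decomposition with $Y\cap Z=\emptyset$ (take $V=U_{1/2}$ and $s$ arbitrary in Definition~\ref{fewoifweifoew2324234234}). But for a hybrid entourage $U=U_{\phi}\cap U_{2}$ with $\phi(0)\supseteq U_{2}$ the point $1$ lies in $U[Y]\cap U[Z]$, while $W[Y\cap Z]=\emptyset$ for every $W$. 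So your first square of absolute motives is not cocartesian, and the same phenomenon kills the squares over the $Y_{i}$. The two failure modes are exactly the ones you flag as "delicate" and then dismiss: (i) for $x$ in a bounded region the hybrid entourage imposes no uniform-scale constraint (early values of a cofinal $\phi$ may be huge), so the uniform decomposition gives you no witness $w\in Y\cap Z$ at all, and enlarging the coarse part $V'$ cannot manufacture one; (ii) even when a witness $w$ exists at scale $s(\phi(i_{x}))$ for the last index $i_{x}$ with $x\notin Y_{i_{x}}$, the hybrid condition for $(x,w)$ must also hold at the transitional indices $i>i_{x}$ where $x\in Y_{i}$ but possibly $w\notin Y_{i}$, and there you have no control.

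This is precisely why the statement of the theorem is about pairs relative to $\cY$ and why the paper's proof does not use the pair $(Y,Z)$ directly. Instead it runs excision for the complementary pairs $(Z\cup Y_{i},\{Y\cup Y_{i}\})$ — note the fattening by $Y_{i}$ — takes cofibres by $\Yo^{s}(Y_{i})$ and colimits over $i$, and then identifies the upper-left corner via the equivalence $\Yo^{s}(\{Y\cup\cY\}\cap(Z\cup\cY))\simeq\Yo^{s}(\{Y\cap Z\}\cup\cY)$. The inclusion actually proved there is
\[
V[Y\cup Y_{i}]\cap(Z\cup Y_{i})\subseteq W[Y\cap Z]\cup Y_{j}\ ,
\]
with an error term $Y_{j}$ on the right that absorbs exactly the bounded and transitional points above and then dies in the pair $(X_{h},\cY)$. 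Your formal endgame (colimits of cocartesian squares, cofibres of maps of cocartesian squares) is fine, but the input squares have to be produced by this relative argument, not by coarse excisiveness of $(Y,Z)$ itself.
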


The proof of the theorem will be given in the next section.

\begin{rem}
The assumption in the decomposition theorem that $I=\nat$ seems to be technical. It covers our main applications  to cones or the coarsening spaces. Therefore we have not put much effort into a generalization.
\hB
\end{rem}

\subsubsection{Proof of the decomposition theorem}

We retain the notation from the statement of the theorem.

In order to simplify the notation we omit the subscript $h$ at $Y$ and $Z$. All subsets of $X$ occuring in the following argument are considered with the induced bornological coarse structure from $X_{h}$.

In general we use the symbol $I$ for the index set, but at the place where the assumption $I=\nat$ is relevant, we use $\nat$ instead.

For every $i$ in $I$ we have a push-out square (compare with \eqref{hoijiojdio32ddd23ddd} and see also \eqref{erkjnkjnvkfvfvfvdscac} for notation)
$$\xymatrix{\Yo^{s}(\{Y\cup Y_{i}\}\cap (Z\cup Y_{i}) ) \ar[r]\ar[d]&\Yo^{s}(\{Y\cup Y_{i}\})\ar[d]\\\Yo^{s}(Z\cup Y_{i})\ar[r]&\Yo^{s}(X_{h})}$$
associated to the complementary pair $(Z\cup Y_{i},\{Y\cup Y_{i}\})$.
Taking the cofibre of the obvious map from the push-out square $$\xymatrix{\Yo^{s}(Y_{i})\ar[r]\ar[d]&\Yo^{s}(Y_{i})\ar[d]\\\Yo^{s}(Y_{i})\ar[r]&\Yo^{s}(Y_{i})}$$
we get the push-out square (with self-explaining pair notation)
$$\xymatrix{ (\{Y\cup Y_{i}\}\cap (Z\cup Y_{i}) ,Y_{i}) \ar[r]\ar[d]&\ (\{Y\cup Y_{i}\},Y_{i})\ar[d]\\ (Z\cup Y_{i},Y_{i})\ar[r]& (X_{h},Y_{i})}$$
  We now form the colimit over $i$ in $I$ and get a push-out square  $$\xymatrix{ (\{Y\cup \cY\}\cap  (Z\cup \cY) ,\cY) \ar[r] \ar[d] & (\{Y\cup \cY\},\cY)\ar[d] \\ ( Z\cup \cY ,\cY)\ar[r]& (X_{h},\cY)}$$
  Note that the combination of $\{-\}$ and $\cY$ indicates a double colimit.
In the following we show that this square is equivalent to the square \eqref{dqwdqd3kjnr23kjrn23r32r32} by identifying the corresponding corners.
In Remark~\ref{jhfjkiuewfweew23424} below we will verify the  equivalence
\begin{equation}\label{eqio24rfed}
(Z\cup \cY,\cY)\simeq (Z,Z\cap \cY)\, .
\end{equation}
Using Corollary~\ref{fijweiofjwefoejufoeifueofeffff} and this remark again (for $Y$ in place of $Z$) we get the equivalence
$$ (\{Y\cup \cY\},\cY)\simeq  ( Y\cup \cY ,\cY)\simeq (Y,Y\cap \cY)\, .$$
 The core of the argument is to observe the first equivalence in the chain
$$(\{Y\cup \cY\}\cap  (Z\cup \cY),\cY)\simeq (\{Y\cap Z\}\cap Z\cup   \cY ,\cY)\simeq ((Y\cap Z)\cup \cY,\cY)\simeq (Y\cap Z,Y\cap Z\cap \cY)\, ,$$
where the remaining equivalences are  other instances of the cases considered above. 
We must show that
\begin{equation}\label{dqwdq31241234213}
\Yo^{s}(\{Y\cup \cY\}\cap  (Z\cup \cY))\simeq \Yo^{s}( \{Y\cap Z\}\cap Z\cup   \cY)\, . 
\end{equation} 
Note  that both sides are filtered colimits over families  {of} motives of subsets of $X_{h}$. We will show that
these families of subsets  dominate each other  in a cofinal way. This then implies the equivalence of the colimits.

One direction is easy. Let $V$ be a coarse entourage of $X_{h}$ containing the diagonal and let $i$ be in $I$. Then clearly
$$V[Y\cap Z]\cap Z\cup Y_{i}\subseteq V[Y\cup Y_{i}]\cap (Z\cup Y_{i})\, .$$
The other direction is non-trivial.

 From now on we write $\nat$ instead of $I$ and choose $i$ in $\nat$.
We furthermore consider a coarse entourage $U$ of $X$ and  a $\cT$-admissible function
$$\phi:\nat\to \cP(X\times X)^{\op}\, .$$
This data defines a coarse entourage $$V:=U\cap U_{\phi}$$ of $X_{h}$.  
We must show that there exists $j$ in $I$ and an entourage $W$ of $X_{h}$ such that
$$V[Y\cup Y_{i}]\cap (Z\cup Y_{i})\subseteq W[Y\cap Z]\cap Z\cup Y_{j}\ .$$

The uniformity of the decomposition $(Y,Z)$ provides a uniform entourage $C$ in $\cT$ (called $V$ in Definition \ref{fewoifweifoew2324234234})  and a $\cT$-admissible function
$$s:\cP(X\times X)^{\op}_{\subseteq C}\to \cP(X\times X)^{\op}\, .$$  
Since the uniform and coarse structures of $X$ are compatible  we can choose  an entourage $E$
which is uniform and coarse. Using that $s$ is $\cT$-admissible   we can assume (after decreasing $C$ if necessary) that $s$ takes values in $\cP(X\times X)_{\subseteq E}^{\op}$, so in particular in coarse entourages. Without loss of generality we can furthermore assume that $\diag(X)\subseteq s(W)$ for every $W$ in $\cP(X\times X)^{\op}_{\subseteq C}$ (otherwise replace $s$ by $s\cup \diag(X)$).

We define the function $$\phi_{C}:\nat \to \cP(X\times X)^{\op}_{\subseteq C}\, ,\quad  \phi_{C}(i):=(C\cap \phi(i))\cup \diag(X)\, .$$
We furthermore define the maps $$  k^{\prime}:\nat\to \nat\cup\{\infty
\}\, , \quad  \psi:\nat \to \cP(X\times X)^{\op} $$ as follows. For $\ell\in \nat$  we consider the set 
$$N(\ell):=\big\{k^{\prime}\in \nat\::\: \big(\forall k\in \nat\:|\: k\le k^{\prime}\Rightarrow Y_{k}\times s(\phi_{C}(k-1))^{-1}[Y_{k}]\subseteq Y_{\ell}\times Y_{\ell}\big)\big\}\, .$$
\begin{enumerate}
\item If $N(\ell)$ is not empty and bounded, then we set  $$k^{\prime}(\ell):=\max N(\ell)\, , \quad \psi(\ell):=s(\phi_{C}(k^{\prime}(\ell)))\, .$$
\item If $N(\ell)$ is unbounded,  then we set
$$ k^{\prime}(\ell):=\infty\, , \quad 
 \psi(\ell):=\diag(X)\, .$$
 \item If $N(\ell)$ is empty   (this can only happen for small $\ell$), then we set $$k^{\prime} (\ell):=0\, , \quad \psi(\ell):=X\times X\, .$$ 
\end{enumerate}

We show that $\psi$ is a $\cT$-admissible function. 

We  first observe that $k'$ is monotonously increasing since the family $(Y_{\ell})_{\ell\in I}$ is so.
Since $s$ and $\phi_{C}$ are functions,  $\psi$ is also a function.

Let $Q$ in $\cT$ be given. We must show that there exists $\ell$ in $\nat$ such that $\psi(\ell)\subseteq Q$.
  Since $s_{|\cT_{\subseteq C}}$ is $\cT$-admissible we can find $W$ in $\cT_{\subseteq C}$ such that
$s(W)\subseteq Q$.  Since $\phi_{C}$ is $\cT$-admissible we can chose $i$ in $\nat $ such that $\phi_{C}(i)\subseteq W$.
Using that $\cY$ is big and that $s$ is bounded by $E$ we see that $\lim_{\ell\to \infty} k'(\ell)=\infty$. Hence we can
 choose $\ell$ in $\nat $ so large that  $k^{\prime}(\ell)\ge i$. 
 Then
$\psi(\ell)\subseteq Q$.  This finishes the proof that $\psi$ is $\cT$-admissible.

We now fix $i'$ in $\nat $ so large that $i\le i'$ and $\phi(i')\subseteq C$. This is possible by the $\cT$-admissibility of $\phi$. We then set
  $$W:=(\psi(i')\cap U_{\psi})\cup\diag(X)\in \cC_{h} \, .$$ Using that $\cY$ is big we further choose $j$ in $\nat $ such that $i'\le j$ and $U[Y_{i'}]\subseteq Y_{j}$.

The following chain of inclusions finishes the argument:
$$V[Y\cup Y_{i}]\cap (Z\cup Y_{i})\subseteq  V[Y\cup Y_{i'}]\cap (Z\cup Y_{i'}) \subseteq W[Y\cap Z]\cap Z\cup Y_{j}\, .$$
The first inclusion is clear since $i\le i'$ and therefore $Y_{i}\subseteq Y_{i'}$. It remains to show the second inclusion.

We show the claim by the following case-by-case discussion. Let $x$ be in $V[Y\cup Y_{i'}]\cap (Z\cup Y_{i'})$.
\begin{enumerate}
\item If $x\in Y_{i'}$, then $x\in Y_{j}$ since $Y_{i'}\subseteq U[Y_{i'}]\subseteq Y_{j}$ by the choice of $j$.
\item Assume now that $x\in Z\setminus Y_{i'}$.
Then there exists $b$ in $Y\cup Y_{i'}$ such that $(x,b)\in V$.
\begin{enumerate}
\item If $b\in Y_{i}$, then $x\in U[Y_{i'}]\subseteq Y_{j}$.
\item Otherwise $b\in Y\setminus Y_{i}$.  In view of the definition \eqref{dqdqw2345r2t2444t3t4t}
of $U_{\phi}$ we then have $(x,b)\in \phi(i')$. By our choice of $i'$ this also implies $(x,b)\in \phi_{C}(i')$. 
We distinguish two cases:   
\begin{enumerate}
\item There exists $k$ in $\nat$ such that $(x,b)\in Y_{k}\times Y_{k}$. We can assume that $k$ is minimal with this property. Then $(x,b)\in \phi_{C}(k-1)$, where we set $\phi_{C}(-1):=C$. 
Consequently,  $x\in \phi_{C}(k-1)[Y]\cap Z$, and by the defining property of $s$ there exists $c$ in $Y\cap Z$ such that $(x,c)\in s(\phi_{C}(k-1))$.
For every $k^{\prime}$ in $\nat$ we then have exactly one the following  two cases:
\begin{enumerate}
\item $k^{\prime}\le k-1$: Then $(x,c)\in s(\phi_{C}(k^{\prime}))$.
\item $k^{\prime}\ge k$: Then $(x,c)\in Y_{k}\times s(\phi_{C}(k-1))^{-1}[Y_{k}]$.
\end{enumerate}
Let now $\ell$ in $\nat$ be given. We let $k^{\prime}:=k^{\prime}(\ell)$ be as in the definition of $\psi$.
If $(x,c)\not\in \psi(\ell)=s(\phi_{C}(k^{\prime}))$, then $k^{\prime}\ge k$ and we have case B:
$$(x,c)\in    Y_{k}\times s(\phi_{C}(k-1))^{-1}[Y_{k}]\subseteq Y_{\ell}\times Y_{\ell} \, ,$$ where the last inclusion holds true by the definition of $k^{\prime}(\ell)$.
This implies that $x\in W[Y\cap Z]\cap Z$.
\item For every $k$ in $\nat$ we have $(x,b)\not\in Y_{k}\times Y_{k}$.
Then we have
$$(x,b)\in \bigcap_{k\in \nat} \phi(k)=\diag(X)$$ (the last equality holds true since  $\phi$ is $\cT$-adimissible and $\cT$ is Hausdorff by assumption), i.e., $x=b$.
It follows that $x\in Z\cap Y$, hence $x\in W[Y\cap Z]\cap Z$.
\end{enumerate}
\end{enumerate}
\end{enumerate}
The proof is therefore finished.

\begin{rem}\label{jhfjkiuewfweew23424}
We have to verify the equivalence \eqref{eqio24rfed}.

We retain the notation introduced for Theorem \ref{weiofjewi98u3298r32r32rr}. We have the following chain of equivalences:
\begin{alignat*}{2}
(Z & ,Z\cap \cY)\\
&\simeq \colim_{i\in I} (Z,Z\cap Y_{i}) & \quad & \mbox{definition}\\
&\simeq \colim_{i\in I}  (Z,Z\cap \{Y_{i}\}) && \mbox{coarse invariance}\\
&\simeq  \colim_{i\in I}  \colim_{j\in I, j\ge i}(Z\cup Y_{j},   (Z\cup  Y_{j}) \cap \{Y_{i}\})  && \mbox{excision}\\
&\simeq  \colim_{i\in I}  \colim_{j\in I, j\ge i} \colim_{U\in \cC_{h}}(Z\cup Y_{j},   (Z\cup  Y_{j}) \cap U[Y_{i}]) && \mbox{definition}\\
&\simeq \colim_{i\in I}  \colim_{U\in \cC_{h}}  \colim_{j\in I, j\ge i , Y_{j}\supseteq U[Y_{i}]} (Z\cup Y_{j}, U[Y_{i}]) && \mbox{$\cY$ is big and Lemma \ref{leqwjfefeowewfew23r}}\\
&\simeq  \colim_{i\in I}  \colim_{j\in I, j\ge i }(Z\cup Y_{j},     Y_{i} ) && \mbox{coarse invariance, Corollary \ref{kjeflwfjewofewuf98ewuf98u798798234234324324343}.\ref{iweufhf89wfu89ewfew245}} \\
&\simeq \colim_{i\in I} (Z\cup Y_{i},Y_{i}) && \mbox{cofinality} \\
&\simeq (Z\cup \cY,\cY) && \mbox{definition}
\end{alignat*}

This verifies the equivalence \eqref{eqio24rfed}.
\hB
\end{rem}

\subsubsection{Excisiveness of the cone-at-infinity}
\label{ofiweifewufou9823u92343432434}

Recall the cone functor   $\cO\colon \bU \to \BC$   from Example \ref{ifuweiofeowfew982342343434}.

  If $f:(X,\cT_{X})\to (X^{\prime},\cT_{X^{\prime}})$ is a morphism of uniform spaces, then by Lemma~\ref{oepwfweoifewofiewofewf} we get an induced morphism $\cO(f)\colon \cO(X)\to \cO(X^{\prime})$ in $\BC$. We can thus define a functor
\begin{equation}\label{bbgboitgiotuoitutrttttbb}\index{$\cO^{\infty}$}
\cO^{\infty}\colon \bU\to \Sp\cX\, , \quad (X,\cT_{X})\mapsto (\cO(X),\cY)\, ,
\end{equation}
where we use
\begin{equation}\label{ervgiuheiogeerev}
\cY:=([0,n]\times X)_{n\in \nat}
\end{equation}
as the big family.



\begin{kor}\label{wfuhweuifhewiufewf243}
If the uniform structure on $X$ is Hausdorff and  $(Y,Z) $  is a uniform decomposition of $X$, then the 
 square $$\xymatrix{\cO^{\infty}(Y\cap Z)\ar[r]\ar[d]&\cO^{\infty}(Y)\ar[d]\\\cO^{\infty}(Z)\ar[r]&\cO^{\infty}(X)}$$
is cocartesian.
\end{kor}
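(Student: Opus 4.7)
The plan is to deduce the corollary directly from the Decomposition Theorem~\ref{weiofjewi98u3298r32r32rr} by exhibiting the cone datum $\cO(X) = ([0,\infty)\times_{p} X, \cC_{0}, \cB)$ together with the lift of the uniform decomposition $(Y,Z)$ to $[0,\infty)\times X$ as an instance of the hypotheses of that theorem, and by identifying all four corners of the resulting square with the desired values of $\cO^{\infty}$.

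First I would set up the data on $[0,\infty)\times_{p} X$: the product uniform structure $\cT^{u} = \cT_{d}\times \cT^{u}_{X}$ (which together with the coarse structure gives rise to the $\cC_{0}$-structure used in the definition of the cone, see Example~\ref{ifuweiofeowfew982342343434}), and the big family $\cY := \cB$ consisting of all bounded subsets of $[0,\infty)\times_{p} X$. The candidate uniform decomposition is $(\tilde Y, \tilde Z) := ([0,\infty)\times Y,\, [0,\infty)\times Z)$. To check that this is indeed a uniform decomposition (Definition~\ref{fewoifweifoew2324234234}), note that if $s:\cT^{u}_{X,V}\to \cT^{u}_{X}$ is a cofinal function witnessing the uniform decomposition $(Y,Z)$ of $X$, then for any product uniform entourage $W = U_{r}\times W_{0}$ with $W_{0}\in \cT^{u}_{X,V}$ one has
\[ W[\tilde Y]\cap W[\tilde Z] \;=\; [0,\infty)\times (W_{0}[Y]\cap W_{0}[Z]) \;\subseteq\; [0,\infty)\times s(W_{0})[Y\cap Z] \;=\; s'(W)[\tilde Y\cap \tilde Z],\]
where $s'(U_{r}\times W_{0}) := U_{r}\times s(W_{0})$ is manifestly cofinal. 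Since such product entourages are cofinal in $\cT^{u}$, this suffices.

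Next I would identify the corners of the square produced by Theorem~\ref{weiofjewi98u3298r32r32rr}. By Lemma~\ref{folrje099045u45z4}, the bornological coarse structure on $\tilde Y$ (resp.~$\tilde Z$, resp.~$\tilde Y\cap \tilde Z$) induced from the hybrid (i.e.\ $\cC_{0}$-) structure on $[0,\infty)\times_{p} X$ agrees with the hybrid structure built from the induced uniform structure and the restricted big family; and these induced structures are exactly the cone structures on $Y$, $Z$ and $Y\cap Z$, respectively. The restriction $\tilde Y\cap \cY$ of the bornology of $\cO(X)$ to $\tilde Y$ is cofinal in (and hence has the same colimit of $\Yo^{s}$ as) the bornology of $\cO(Y)$, and analogously for $\tilde Z$ and $\tilde Y\cap \tilde Z$. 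Consequently the four corners $((Y\cap Z)_{h}, Y\cap Z\cap \cY)$, $(Y_{h}, Y\cap \cY)$, $(Z_{h}, Z\cap \cY)$, $(X_{h}, \cY)$ of the Decomposition Theorem's square are canonically equivalent to $\cO^{\infty}(Y\cap Z)$, $\cO^{\infty}(Y)$, $\cO^{\infty}(Z)$, $\cO^{\infty}(X)$, respectively, as required.

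The main technical obstacle is the verification in the first step that the lifted pair $(\tilde Y, \tilde Z)$ is a uniform decomposition with respect to the genuine \emph{product} uniform structure (not merely that generated by product entourages of a fixed shape); this requires using that the product entourages are cofinal and that $s'$ can be chosen cofinal in all of $\cT^{u}$. The identification of the restricted big families in the third step is straightforward because $\cB$ is cofinally represented by the family $([0,n]\times X)_{n\in\nat}$, whose intersection with $\tilde Y$ is cofinal in the bornology $\cB_{\cO(Y)}$; invoking Lemma~\ref{leqwjfefeowewfew23r} as in the proof of Theorem~\ref{weiofjewi98u3298r32r32rr} absorbs any remaining mismatch.
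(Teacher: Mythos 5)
Your proposal is correct and follows exactly the paper's own (one-line) argument: the paper simply notes that $([0,\infty)\times Y,[0,\infty)\times Z)$ is a uniform decomposition of $[0,\infty)\times X$ and applies the Decomposition Theorem~\ref{weiofjewi98u3298r32r32rr}, leaving the verification of the lifted decomposition and the identification of the corners implicit. You have merely spelled out those routine checks in more detail.
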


\begin{proof}
Note that  $([0,\infty)\times Y,[0,\infty)\times Z)$ is a uniform decomposition of $[0,\infty)\times X$ with the product uniform structure (which is again Hausdorff), and the big family  $\cY$ in \eqref{ervgiuheiogeerev} is indexed by $\nat$.
\end{proof}

\subsection{Homotopy theorem}\label{efoiweiof34t3455}

In this subsection we state and prove the homotopy theorem. It asserts that the motivic  coarse  spectrum of a bornological coarse space
with the hybrid structure associated to a uniform structure and big family
is  homotopy invariant.  Here the notion of homotopy is  compatible with the uniform structure.
The idea is to relate this notion of homotopy with the notion of coarse homotopy introduced in Section \ref{kjfwoeff2243324324324}. The details are surprisingly complicated.

\subsubsection{Statement of the theorem}

Let $X$ be a bornological coarse space with an additional compatible uniform structure $\cT$ and a big family $\cY=(Y_{i})_{i\in I}$. We consider  the unit interval $[0,1]$ as a bornological coarse space with the structures induced by the metric (hence with the maximal structures) and the compatible metric uniform structure.  We abbreviate $IX:=[0,1]\ltimes X$, see Example~\ref{hffweiufweif}.
The projection $IX\to X$ is a morphism.
The bornological coarse space $IX$ has a big family
$I \cY:=([0,1]\times Y_{i})_{i\in I}$ and a compatible product uniform structure. 
We let $(IX)_{h}$ denote the corresponding hybrid bornological coarse space (Definition \ref{defn:sdfuh8934t}).
The projection is compatible (Definition \ref{ejwiofeoifefewfewf}) so that by Lemma \ref{oepwfweoifewofiewofewf} it is also a morphism \begin{equation}\label{grtoijoig3gtgertg}
(IX)_{h}\to X_{h}\, .
\end{equation}

Let $X$ be a bornological coarse space with a compatible uniform structure $\cT$ and a big family $\cY=(Y_{i})_{i\in I}$.

\begin{theorem}[Homotopy theorem]\label{fewijwefio23ri3ohewkjfwefewf}
\index{Homotopy Theorem}
Assume:
\begin{enumerate} 
\item \label{fgroiwfewfwefewff} $I=\nat$.
\item  For every bounded subset $B$ of $X$ there exists $i$ in $I$ such that $B\subseteq Y_{i}$.\end{enumerate}
Then the projection \eqref{grtoijoig3gtgertg} induces an equivalence
$$\Yo^{s}((IX)_{h})\to  \Yo^{s}(X_{h})\, .$$
\end{theorem}

\subsubsection{Proof of the homotopy theorem}

The rough idea is to identify $(IX)_{h}$ with  a coarse cylinder over $X_{h}$ and then to apply Proposition \ref{kfwejfefjewfejwofewfw3343}.  In the following we make this idea precise.
Point \ref{efwijfiewfoiefe3u40934332r} of Corollary~\ref{kjeflwfjewofewuf98ewuf98u798798234234324324343} gives an equivalence
$$ \Yo^{s}((IX)_{h})\simeq \colim_{U\in I\cC_{h}}\Yo^{s}((IX)_{U})\, ,$$
where the colimit runs over the poset $I\cC_{h}$ of the  entourages   of the hybrid coarse structure of $(IX)_{h}$. See also Example \ref{wfeoihewiufh9ewu98u2398234324234} for notation.

We consider $[1,\infty)$ as a bornological coarse space with the structures induced from the metric. 
    Let $q:X\to [1,\infty)$ be a function.
  Then we define the pair of functions $p:=(q,0)$ and   consider the cylinder
$I_{p}X$. We  define the maps of sets
$$f:I_{p}X\to[0,1]\times X\, , \quad f(t,x):=(t/q(x),x) $$
and 
$$g:[0,1]\times X\to I_{p}X\, , \quad g(t,x):=(tq(x),x)$$
by rescaling the time variable. They are inverse to each other.

 By Lemma \ref{erogjreeo43t34t}, for every coarse entourage $V$ of $X$ we can find a function $q$ as above such that  $q:X_{V}\to [1,\infty)$  is bornological and controlled  and 
   \begin{equation}\label{wefe42245524}\lim_{i\in I}\sup_{x\in X\setminus Y_{i}} q(x)^{-1} =0\, .\end{equation}   
 Note that we take the supremum  for subsets in the ordered set $[0,\infty]$, and that the supremum of the empty set is equal to $0$.
This is relevant if there exists {an} $i$ in $I$ with $Y_{i}=X$.

It suffices to show:
\begin{enumerate} 
\item  For any entourage $V_{h}$ in $\cC_{h}$ and   controlled and bornological function $q\colon X_{V_{h}}\to [1,\infty)$  with  \eqref{wefe42245524} there exists an entourage $U_{h}$ in $I\cC_{h}$ such that
$f\colon I_{p}X_{V_{h}}\to (IX)_{U_{h}}$ is a morphism.
\item  For every $U_{h}$ in $I\cC_{h}$  there exists  $V_{h}'$ in $\cC_{h}$ and a   controlled and bornological function $q'\colon X_{V'_{h}}\to [1,\infty)$  with  \eqref{wefe42245524}   such that $g'\colon (IX)_{U_{h}}\to I_{p'}X_{V'_{h}}$ is a morphism.
\end{enumerate}
We {then} choose $U_{h}'$ for $V_{h}'$ similarly.
We can assume that $V_{h}\subseteq V_{h}'$ and $U_{h}\subseteq U_{h}'$.
We obtain a commuting diagram  
\begin{align*}
\mathclap{
\xymatrix{
\cdots\ar[rr]\ar[dr]&& \Yo^{s}((IX)_{U_{h}})\ar[dr]^-{g'}\ar[rr]&&\Yo^{s}((IX)_{U_{h}^{\prime}})\ar[rr]&&\cdots\\
&\Yo^{s}(I_{p}X_{V_{h}})\ar[ur]^-{f}\ar[dr]_-{\simeq}&&\Yo^{s}(I_{p'}X_{V'_{h})}\ar[ur]^-{f'}\ar[dr]_-{\simeq}&&&\\
\cdots\ar[rr]&&\Yo^{s}(X_{V_{h}})\ar[rr]&&\Yo^{s}(X_{V^{'}_{h}})\ar[rr]&&\cdots
}
}
\end{align*}
where the upper and lower horizontal morphisms are induced by the identities of the underlying sets, and the morphisms indicated as equivalences are the projections, see 
Proposition \ref{kfwejfefjewfejwofewfw3343}.
%
The diagram induces by a cofinality consideration  the middle equivalence between the  colimits in the following chain
$$\Yo^{s}(X_{h})\simeq \colim_{V_{h}\in \cC_{h}} \Yo^{s}(X_{V_{h}})\simeq \colim_{U_{h}\in I\cC_{h}} \Yo^{s}((IX)_{U_{h}})\simeq \Yo^{s}((IX)_{h})\, .$$

 The maps $f$ and $g$ are clearly proper. So we must discuss under which conditions  they are controlled.
 
 We now take advantage that we can explicitly parametrize 
 a cofinal set of coarse entourages of $I\cC_{h}$ in the form
$ U_{h}:= IV\cap  V_{\phi}$. In detail, we take 
 a coarse entourage $V$ of $X$ and define the entourage $IV:=[0,1]\times [0,1]\times V$ of the product  $[0,1]\times X$. We also  choose a pair   $\phi=(\kappa,\psi)$ of a function $\kappa: I\to (0,\infty)$ with $\lim_{i\in I}\kappa(i)=0$ and a $\cT$-admissible function $\psi:I\to \cP(X\times X)^{\op}$.  
Using the metric on $[0,1]$ the function $\phi$ determines a function $I\to \cP(IX\times IX)^{\op}$ which is admissible with respect to the product uniform structure on $IX$. In particular, $\phi$ defines  a subset
$V_{\phi}\subseteq \cP(IX\times IX)$ as in \eqref{dqdqw2345r2t2444t3t4t}. 
In detail, $((s,x),(t,y))\in V_{\phi}$ if for every $i$ in $I$ we  have $(x,y)\in Y_{i}\times Y_{i}$ or
$|s-t|\le \kappa(i)$ and $(x,y)\in V\cap \psi(i)$.

We will now analyse $f$.  We fix the data for the entourage $V_{h}$. To this end we  
 fix 
  $V$ in $\cC$ and  a  $\cT$-admissible function $\psi:I\to \cP(X\times X)^{\op}$.
  Then we set $V_{h}:=V\cap V_{\psi}$ in $\cC_{h}$. 
  We furthermore fix  $e$ in $(0,\infty)$ and let $U_{e}$ be the corresponding metric entourage on $\R$.
  Then $\tilde V_{h}:= U_{e}\times V_{h}$ (we omit to write the restriction from $\R\times X$ to $I_{p}X$) generates the coarse structure of  $I_{p}X_{V_{h}}$. 
  Our task is now to find an entourage 
  $U_{h}$ in $I\cC_{h}$ such that $(f\times f)(\tilde V_{h})\subseteq U_{h}$.


Since $q:X_{V_{h}}\to [1,\infty)$ is controlled and $V_{h}\subseteq V$ there exists a constant $C$ in $(0,\infty)$ such that
$(x,y)\in V$ implies that $|q(x)-q(y)|\le C$.
We define the function $\kappa:\nat\to  [0,\infty)$ by   $$\kappa(i):=(C+e)\cdot\sup_{x\in X\setminus Y_{i}} q(x)^{-1} \, .$$
By \eqref{wefe42245524}  we have $\lim_{i\in I}\kappa(i)=0$. We now set
$\phi:=(\kappa,\psi)$
and define $U_{h}:=IV\cap V_{\phi}$.

We  now show that $(f\times f)(\tilde V_{h})\subseteq U_{h}$.
We assume that $((s,x),(t,y))$ is in $\tilde V_{h}$. Then  $|t-s|\le e$ and 
$(x,y)\in V\cap V_{\psi}$. 
For every $i$ in $I$ we have two cases:
\begin{enumerate}
\item If $(x,y)\in V\cap (Y_{i}\times Y_{i})$. Then  $((s,x),(t,y))\in ([0,1]\times Y_{i})\times  ([0,1]\times Y_{i})$.
 \item Otherwise,  $(x,y)\in V\cap \phi(i)$ and   w.l.o.g $x\not\in Y_{i}$. Then
 $$|s/q(x)-t/q(y)|\le e/q(x)+t(|q(x)-q(y)|) q(x)^{-1}q(y)^{-1}\le (e+C) q(x)^{-1}\le \kappa(i) \, .$$
 \end{enumerate}
This shows that $(f(s,x),f(t,y))\in U_{h}$ as required.

We will now investigate $g$. We choose a coarse entourage $V$ of $X$ and $\phi=(\kappa,\psi)$ as above.
These choices define the entourage
$U_{h}:=IV\cap V_{\phi}$ in $I\cC_{h}$. 
We must find $V_{h}'$  in $\cC_{h}$ and a function $q'$ appropriately. 
We take $V_{h}':=V\cap V_{\psi}$. In order to find $q'$ we use the following lemma 
which we will prove afterwards.

\begin{lem}\label{erogjreeo43t34t} For every coarse entourage $V$ of $X$
there exists a controlled and bornological  function $q:X_{V}\to (0,\infty)$  such that the sets of real numbers
\[ \left\{ q(x) \cdot \Big( \inf_{ i\in I :  x\not\in Y_{i} } \kappa(i) \Big)  :  x\in X \right\} \text{ and } \bigg\{ q(x)  :  x\in \bigcap_{i\in I} Y_{i} \bigg\} \]
are bounded by some positive real number $D $ and \eqref{wefe42245524} holds.
\end{lem}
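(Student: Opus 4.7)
The plan is to construct $q$ as a function of the ``depth'' of $x$ in the big family. Since $I=\nat$ is filtered, I may pass to a cofinal subfamily and assume $Y_0\subseteq Y_1\subseteq Y_2\subseteq\cdots$. By hypothesis every bounded subset (in particular every singleton) is contained in some $Y_i$, so $X=\bigcup_i Y_i$ and each $x\in X$ has a well-defined depth $i(x):=\min\{i : x\in Y_i\}$. With this notation $\bigcap_i Y_i=Y_0$, and for $x\notin Y_0$ the set $\{i : x\notin Y_i\}$ equals $\{0,1,\dots,i(x)-1\}$; consequently $\inf_{i : x\notin Y_i}\kappa(i)=m(i(x))$, where $m(k):=\min_{j<k}\kappa(j)$ is non-increasing in $k$ and tends to $0$ because $\kappa(j)\to 0$.

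I will set $q(x):=f(i(x))$ for a suitable non-decreasing $f:\nat\to(0,\infty)$ satisfying (i) $f(i+1)-f(i)\le 1$, (ii) $f(i)\cdot m(i)\le 1$ for $i\ge 1$, and (iii) $f(i)\to\infty$. Such an $f$ exists via the inductive recipe $f(0):=1$ and $f(i+1):=\min\bigl(f(i)+1,\,1/m(i+1)\bigr)$: since $1/m$ is non-decreasing and diverges, the bound $1/m(i+1)\ge 1/m(i)\ge f(i)$ (by induction) guarantees $f$ is non-decreasing, and at every step either $f$ grows by $1$ or is pinned to the diverging sequence $1/m$, giving $f(i)\to\infty$. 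The three numerical conditions on $q$ are then immediate with $D=1$: the bound on $\{q(x) : x\in\bigcap Y_i\}$ follows from $q\equiv 1$ on $Y_0$; the bound on $\{q(x)\cdot\inf_{i: x\notin Y_i}\kappa(i)\}$ is exactly $f(i(x))\,m(i(x))\le 1$; and \eqref{wefe42245524} reads $\sup_{x\notin Y_i}q(x)^{-1}\le 1/f(i+1)\to 0$ by monotonicity of $f$. Bornologicality is clear because $q$ maps $Y_i$ into the bounded set $[1,f(i)]$.

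The main obstacle is controlledness of $q$. Given an entourage $V\in\cC$ and $(x,y)\in V$ with $i(y)\le i(x)$, bigness of $\cY$ yields $j\in\nat$ with $V[Y_{i(y)}]\subseteq Y_j$, forcing $i(x)\le j$, so the unit-bounded increments (i) give $|q(x)-q(y)|\le j-i(y)$. To bound this uniformly in $y$ for each fixed $V$, I will refine the reindexing once more: assuming a countable cofinal family $V_1\subseteq V_2\subseteq\cdots$ in $\cC$ (as is tacitly available in the setting of the Homotopy Theorem), a diagonal argument produces a subsequence $(Y_{n_k})_{k\in\nat}$ with $V_\ell[Y_{n_k}]\subseteq Y_{n_{k+\ell}}$ for all $k,\ell$, so that the depth function with respect to $(Y_{n_k})$ varies by at most $\ell$ across $V_\ell$. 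Applying the $f$-construction to this refined family then produces a $q$ that is controlled, while Conditions on $q$ at $\bigcap Y_i$, on the product with $\kappa$, and the limit \eqref{wefe42245524} all survive the thinning unchanged (up to redefining the constant $D$). This passage to a cofinal subsequence is the only delicate point; everything else is a bookkeeping exercise.
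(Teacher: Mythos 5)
Your construction is, at its core, the same as the paper's: the paper also thins the big family by an auxiliary sequence $a(0)<a(1)<\cdots$ with $V[Y_{a(n)}]\subseteq Y_{a(n+1)}$ and sets $q(x)=1/\mu(i_0(x))$ where $i_0(x)$ is the depth of $x$ and $1/\mu$ is a non-decreasing function that increases by at most $1$ from one shell to the next and is pinned below the reciprocal of (a monotone majorant of) $\kappa$; your $f(i(x))$ built from $m(k)=\min_{j<k}\kappa(j)$ on the thinned family is exactly this. Two remarks. First, the point you flag as "the only delicate point" is a non-issue: the lemma is invoked inside the proof of the Homotopy Theorem for a \emph{single fixed} entourage $V$, and the only consequence of controlledness ever used afterwards is the existence of $C$ with $(x,y)\in V\Rightarrow|q(x)-q(y)|\le C$ (which also suffices for control with respect to the singly generated structure relevant there); the paper's own proof establishes no more than this. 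Your proposed upgrade to all entourages via a countable cofinal family $V_1\subseteq V_2\subseteq\cdots$ rests on an assumption that is \emph{not} among the hypotheses (only the index set of the big family is assumed to be $\nat$), so it should be dropped rather than relied upon — fortunately your single-$V$ thinning already gives everything that is needed. Second, a small normalization slip: with $f(0)=1$ and $f(1)=\min\bigl(f(0)+1,\,1/m(1)\bigr)=\min\bigl(2,\,1/\kappa(0)\bigr)$, monotonicity fails at the first step whenever $\kappa(0)>1$, so the inductive claim $1/m(i)\ge f(i)$ does not start at $i=0$; this is harmless (the decrease is $<1$, $f$ is non-decreasing from $i=1$ on, and the constant can be absorbed into $D$, or one simply sets $f(0):=\min\bigl(1,1/\kappa(0)\bigr)$), but as written the justification of property (i)/(iii) is slightly off.
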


Let $q'$  (called $q$ in the lemma) and   $D$  be as in the lemma.
Since $q'$ is controlled we can choose    $C$ in $(0,\infty)$ such that
$(x,y)\in V$ implies that $|q'(x)-q'(y)|\le C$.
We define the entourage $\tilde V_{h}:=U_{C+D}\times V_{h}'$ of $I_{p'}X_{V'_{h}}$.
Assume that $((s,x),(t,y))$ belongs to $U_{h}$. For every $i$ in $I$ we distinguish two cases:
\begin{enumerate}
\item\label{jfwelfeiofewfewf} $(x,y)\in V\cap(Y_{i}\times Y_{i})$.  
 \item\label{jfwelfeiofewfewf1} Otherwise we can w.l.o.g. assume that $x\not\in Y_{i}$. Then
 \begin{equation}\label{jhfjkwehfuiwezfiiew}
 \left| sq'(x)-tq'(y) \right| \le q'(x) \cdot \Big( \inf_{ i\in I  :  x\not\in Y_{i} }\kappa(i) \Big)  +|q'(x)-q'(y)|\le D+C\end{equation}
 and $(x,y)\in V\cap \psi(i)$.
\end{enumerate}
If    $(x,y)\in V\cap(Y_{i}\times Y_{i})$ for all $i\in I$, then $$|sq'(x)-tq'(y)|\le D\le C+D$$ by the second bound claimed in Lemma~\ref{erogjreeo43t34t}.  

 The estimates thus show the inclusion 
$(g\times g)(U_{h})\subseteq \tilde V_{h}$. 
This completes the proof of the Homotopy Theorem \ref{fewijwefio23ri3ohewkjfwefewf}.

\begin{proof}[Proof of Lemma~\ref{erogjreeo43t34t}]
It is here where we use that $I=\nat$.
In a first step we define the  monotonously decreasing function $$\lambda: \nat \to (0,\infty)\, , \quad \lambda(i):=  \sup_{i\le j} \kappa(j)\, .$$ It satisfies $\kappa\le \lambda$ and $\lim_{i\to \infty} \lambda(i)=0$.

    We now choose by  induction a new function
$\mu:\nat\to (0,\infty)$ and an auxilary function $a:\nat\to \nat$ as follows.

We set $\mu(0) := \lambda(0)$ and $a(0) := 0$. 

Assume now that $a$ is defined on all numbers $\le n$ and
 $\mu$ is defined for all numbers $\le a(n)$.
Then we choose $a(n+1)$ in $\nat$ such that $a(n+1)> a(n)$ and $V[Y_{a(n)}]\subseteq Y_{a(n+1)}$.
We then define for all $k$ in $(a(n),a(n+1)]$
$$\mu(k):=\max\Big\{\frac{\mu(a(n))}{\mu(a(n))+1}, \lambda (a(n))  \Big\}\ .$$
This has the effect that $\mu$ is constant on the intervals $(a(n),a(n+1)]$ for all $n$ in $\nat$, and that $\mu^{-1}$ increases by at most one if one increases the argument from $a(n)$ to $a(n)+1$.
We furthermore have the estimate
 $\lambda(k)\le \mu(k)$ for all $k$ in $\nat$,   $\mu$ is monotonously decreasing, and it
 satisfies $\lim_{i\to \infty} \mu(i)=0$.
 We  define the function
$$q:X\to [1,\infty)\, , \quad q(x):=1+\frac{1}{\sup\{\mu(i) :  x\in Y_{i}\}}\, .$$
 Note that $q$ is well-defined since for every $x$ in $X$ the index set of the supremum is non-empty and $\mu$ is positive.
Let $B$ be a bounded subset of $X$. Then we can find $i$ in $I$ such that
$B\subseteq Y_{i}$.  Then $\sup_{x\in B} q(x)\le \mu(i)^{-1}+1$. This shows that $q$ is bornological. For $x$ in $\bigcap_{i\in I} Y_{i}$ we have
$q(x)\le \mu(i)^{-1}+1$ for every $i$.  This in particular shows   the second estimate claimed in the lemma.

We now argue that $q$ is controlled.  We will show that
for 
$(x,y)$ in $V$ we have   $|q(x)-q(y)|\le 1$. We can choose $n$ in $\nat$ such that $x\in Y_{a(n)}$ and $q(x)=\mu(a(n))^{-1}+1$, and $m\in \nat$ such that
$q(y)=\mu(a(m))^{-1}+1$. After flipping the roles of $x$ and $y$ if necessary we can assume that $n\le m$.
 Then $y\in V[Y_{a(n)}]\subseteq Y_{a(n+1)}$.   We then have the following options:
\begin{enumerate}
\item If $n=m$, then $q(x)=q(y)$.
\item If $n<m$, then $m=n+1$ (here we use that $\mu$ decreases monotonously).
In this case $q(y)-q(x)\le 1$.
\end{enumerate}

We now consider  $x$ in $X$ and $i$ in $I$ such that $x\not\in Y_{i}$.
We have $\kappa(i)q(x)\le \mu(i) q(x)$. We must minimize over $i$.
Assume that $n$ is such that $x\in Y_{a(n+1)}\setminus Y_{a(n)}$.
We see that $$ {\inf_{i\in I :  x\not\in Y_{i}} \kappa(i)q(x) \le} \inf_{i\in I :  x\not\in Y_{i}} \mu(i)q(x)= 
\mu(a(n)) (1+
\frac{1}{\mu(a(n+1)) })\le 2\mu(a(0))\ .$$
This finishes the 
  proof.
\end{proof}

\begin{rem}
The assumption that $I=\nat$ in the statement of the homotopy theorem was used in the proof of Lemma  \ref{erogjreeo43t34t}.
At the moment we do not know if this assumption is only technical or  really necessary. 
\hB
\end{rem}

 
\subsubsection{Uniform homotopies and the cone functors} 
 In the category of uniform spaces we have a natural notion of a uniform homotopy.
 
 Let $X$ be a uniform space.

\begin{ddd}
 A uniform homotopy is a uniformly continuous map $[0,1]\times X\to X$, where the product is equipped with the product uniform structure.
\end{ddd}

Let $X$ and $X^{\prime}$ be bornological coarse spaces   equipped with uniform structures $\cT$ and $\cT^{\prime}$ and big families $\cY=(Y_{i})_{i\in I}$ and $\cY^{\prime}=(Y^{\prime}_{i^{\prime}})_{i^{\prime}\in I^{\prime}}$, respectively.   Let $f_{0},f_{1}:X\to X^{\prime}$  be two compatible morphisms.
\begin{ddd}
We say that $f_{0}$ and $f_{1}$ are compatibly   homotopic\index{compatibly homotopic} if there is
a compatible morphism
$h\colon IX\to X^{\prime}$ such that
$f_{i}=h_{|\{i\}\times X}$ for $i$ in $\{0,1\}$.
\end{ddd}
Note that compatibly homotopic maps are close to each other and uniformly homotopic.

 Let $X$ and $X^{\prime}$ be bornological coarse spaces   equipped with uniform structures $\cT$ and $\cT^{\prime}$ and big families $\cY=(Y_{i})_{i\in I}$ and $\cY^{\prime}=(Y^{\prime}_{i^{\prime}})_{i^{\prime}\in I^{\prime}}$, respectively.  
  
\begin{kor} \label{efwjweiofiu97249873942342423423424}
Assume that $I=\nat$ and that 
   for every bounded subset $B$ of $X$ there exists $i$ in $I$ such that $B\subseteq Y_{i}$. 
If $f_{0},f_{1}:X\to X^{\prime}$ are  compatibly homotopic to each other, we have an equivalence
$$\Yo^{s}(f_{0})\simeq \Yo^{s}(f_{1}):\Yo^{s}(X_{h})\to \Yo^{s}(X^{\prime}_{h})\, .$$ 
\end{kor}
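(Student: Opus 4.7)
The plan is to realize both morphisms $f_0$ and $f_1$ as compositions $h \circ i_k$ through the hybrid cylinder $(IX)_h$, and then exploit the Homotopy Theorem to force the two endpoint inclusions $i_0,i_1$ to become equivalent after applying $\Yo^s$.

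First I would check that the two obvious endpoint inclusions
\[
 i_k : X_h \longrightarrow (IX)_h, \qquad i_k(x)=(k,x), \quad k \in \{0,1\},
\]
are in fact morphisms of bornological coarse spaces. The coarse and bornological parts are immediate from the definition of $[0,1]\ltimes X$, and by Lemma \ref{folrje099045u45z4} the hybrid structure on $X$ coincides with the restriction of the hybrid structure on $IX$ to $\{k\}\times X$, since the big family $I\cY$ and the product uniform structure restrict correctly. In particular both $i_0$ and $i_1$ satisfy $\pi \circ i_k = \id_{X_h}$, where $\pi : (IX)_h \to X_h$ is the projection.

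Next, since $h : IX \to X'$ is a compatible morphism by hypothesis, Lemma \ref{oepwfweoifewofiewofewf} promotes it to a morphism $h : (IX)_h \to X'_h$, and by restriction we obtain the factorizations
\[
 f_k \;=\; h \circ i_k : X_h \longrightarrow X'_h, \qquad k\in\{0,1\}.
\]
Applying the functor $\Yo^s$ we get $\Yo^s(f_k) \simeq \Yo^s(h) \circ \Yo^s(i_k)$ in $\Sp\cX$.

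The key ingredient is the Homotopy Theorem \ref{fewijwefio23ri3ohewkjfwefewf}, whose hypotheses ($I=\nat$ and cofinality of $\cY$ in the bornology) are exactly the assumptions of the corollary. It tells us that $\Yo^s(\pi) : \Yo^s((IX)_h) \to \Yo^s(X_h)$ is an equivalence. Because $\pi \circ i_0 = \id_{X_h} = \pi \circ i_1$, both $\Yo^s(i_0)$ and $\Yo^s(i_1)$ are sections of the equivalence $\Yo^s(\pi)$, and therefore both coincide (up to canonical equivalence) with the inverse of $\Yo^s(\pi)$. Hence $\Yo^s(i_0) \simeq \Yo^s(i_1)$, and composing with $\Yo^s(h)$ yields $\Yo^s(f_0) \simeq \Yo^s(f_1)$.

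I do not expect a substantial obstacle here: the only mildly delicate point is verifying that the inclusions $i_k$ and the projection $\pi$ are genuinely morphisms into and out of the hybrid spaces. The real work has already been done in the Homotopy Theorem, from which this corollary is essentially a formal consequence, modelled on Corollary \ref{liwejfweioiewfwefewfewf} in the unhybridized setting.
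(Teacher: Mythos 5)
Your proof is correct and follows essentially the same route as the paper: promote the compatible homotopy $h$ to a morphism $(IX)_h \to X'_h$ via Lemma \ref{oepwfweoifewofiewofewf}, realize $f_0,f_1$ as precompositions with the two endpoint inclusions, and conclude from the Homotopy Theorem \ref{fewijwefio23ri3ohewkjfwefewf} that both inclusions become equivalent after $\Yo^s$ since each is a section of the equivalence $\Yo^s(\pi)$. The paper's own proof is just a terser version of exactly this argument.
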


\begin{proof}
Let $h:[0,1]\times X\to X^{\prime}$ be a compatible homotopy between $f_{0}$ and $f_{1}$.
By the Lemma~\ref{oepwfweoifewofiewofewf} it induces a  morphism
$$(IX)_{h} \to X^{\prime}_{h}$$
which yields the morphisms 
$f_{i}:X_{h}\to X_{h}^{\prime} $ by precomposition.
The assertion now follows from the Homotopy Theorem~\ref{fewijwefio23ri3ohewkjfwefewf}.
\end{proof}
  
 We continue Example~\ref{ifuweiofeowfew982342343434} and the cone-at-infinity from Section~\ref{ofiweifewufou9823u92343432434}.
 We consider two maps  $f_{0},f_{1}:Y\to Y^{\prime}$ between uniform spaces.
 \begin{kor}
 If  $f_{0}$ and $f_{1}$ are uniformly homotopic, then we have an equivalence $ \Yo^{s}(\cO(f_{0}))\simeq  \Yo^{s}(\cO(f_{1}))$.
 \end{kor}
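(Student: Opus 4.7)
The plan is to reduce the claim to Corollary~\ref{efwjweiofiu97249873942342423423424} by upgrading the given strongly uniform homotopy $H:[0,1]\times Y\to Y'$ (with $H(i,-)=f_i$ for $i\in\{0,1\}$) to a compatible homotopy between the induced morphisms $\cO(f_0),\cO(f_1):\cO(Y)\to \cO(Y')$. Example~\ref{ifuweiofeowfew982342343434} already arranges the cone as the hybrid bornological coarse space associated to $[0,\infty)\times_p Y$ with the cofinal big family $\cY:=([0,n]\times Y)_{n\in\nat}$, so the index set is $I=\nat$ and every bounded subset of $\cO(Y)$ is swallowed by some $[0,n]\times Y$; in particular the standing hypotheses of Corollary~\ref{efwjweiofiu97249873942342423423424} are automatic here.

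The obvious candidate for the compatible homotopy is the map of sets $h:[0,1]\times[0,\infty)\times Y\to[0,\infty)\times Y'$ given by $h(s,t,y):=(t,H(s,y))$. Plainly $h|_{\{i\}\times\cO(Y)}=\cO(f_i)$ for $i\in\{0,1\}$, so the real task is to show that $h$ defines a compatible morphism $I\cO(Y)\to \cO(Y')$ in the sense of Definition~\ref{ejwiofeoifefewfewf}, after which Lemma~\ref{oepwfweoifewofiewofewf} turns it into a morphism between the hybrid spaces and Corollary~\ref{efwjweiofiu97249873942342423423424} concludes.

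The cofinality condition on the big families is immediate: with $\lambda:=\id_\nat$ one has $h([0,1]\times[0,n]\times Y)\subseteq [0,n]\times Y'$. The only point requiring care is strong uniform continuity of $h$ with respect to the product uniform structures, which I expect to be the main (and mild) obstacle. Letting $s_H:\cT^u_{[0,1]\times Y}\to \cT^u_{Y'}$ witness strong uniform continuity of $H$, one verifies on basic uniform neighbourhoods of the form $V_{[0,1]}\times V_{[0,\infty)}\times V_Y$ of $I\cO(Y)$ that the assignment
\[V_{[0,1]}\times V_{[0,\infty)}\times V_Y\ \longmapsto\ V_{[0,\infty)}\times s_H(V_{[0,1]}\times V_Y)\]
extends to a cofinal function $s$ with $(h\times h)(V)\subseteq s(V)$ on all basic opens, using cofinality of $s_H$ to control the $Y'$-factor and the identity in the $[0,\infty)$-factor. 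With these two verifications in place, the compatible homotopy $h$ exists and Corollary~\ref{efwjweiofiu97249873942342423423424} delivers the equivalence $\Yo^{s}(\cO(f_{0}))\simeq \Yo^{s}(\cO(f_{1}))$.
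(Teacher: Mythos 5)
Your proposal is correct and is exactly the argument the paper intends (the corollary is stated without proof as a direct consequence of Corollary~\ref{efwjweiofiu97249873942342423423424} applied to the cone): the strongly uniform homotopy $H$ induces the compatible homotopy $h(s,t,y)=(t,H(s,y))$ on $I\cO(Y)=[0,1]\ltimes([0,\infty)\times_p Y)$, the hypotheses $I=\nat$ and cofinality of the family $([0,n]\times Y)_{n}$ in the bornology hold by the construction of the cone in Example~\ref{ifuweiofeowfew982342343434}, and the compatibility checks (strong uniform continuity via $s_H$ and the product structure, and the big-family condition via $\lambda=\id_\nat$) go through as you describe. No gaps.
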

Therefore the cone functor $\Yo^{s}\circ \cO \colon  \bU\to \Sp\cX$ sends uniformly homotopy equivalent uniform spaces to equivalent motivic spectra. It shares this property with the functor $\cO^{\infty} \colon  \bU\to \Sp\cX$ introduced in \eqref{bbgboitgiotuoitutrttttbb} which is in addition excisive by Corollary~\ref{wfuhweuifhewiufewf243}.

\begin{kor}\label{efjwefu23u2394u2394324234234}
The functor $ \cO^{\infty} \colon  \bU\to \Sp\cX$ sends uniformly homotopy equivalent uniform spaces to equivalent motivic spectra. 
\end{kor}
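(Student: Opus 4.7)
The plan is to deduce this from Corollary \ref{efwjweiofiu97249873942342423423424}, which handles the analogous equivalence at the level of $\Yo^{s}$ of hybrid spaces, and then to upgrade that total-space equivalence to an equivalence of the cofibres defining $\cO^{\infty}$. It suffices to show that $\cO^{\infty}$ sends strongly uniformly homotopic maps to equivalent morphisms, since a strong uniform homotopy equivalence then becomes invertible in $\Sp\cX$. Given a strong uniform homotopy $h \colon [0,1] \times Y \to Y'$ between $f_0$ and $f_1$, I would set $X := [0,\infty) \times_{p} Y$ with the product uniform structure and the big family $\cY := ([0,n] \times Y)_{n \in \nat}$, which is cofinal in the bornology of $X$; then $X_{h} = \cO(Y)$ and $\cO^{\infty}(Y) \simeq (X_{h}, \cY)$, and similarly for $X'$. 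The homotopy produces a candidate map $H \colon IX \to X'$ by $H(t,s,y) := (s, h(t,y))$. A straightforward check shows that $\cO(f_{0})$, $\cO(f_{1})$, $H$, and the projection $\pi \colon IX \to X$ are all compatible morphisms in the sense of Definition \ref{ejwiofeoifefewfewf}, with $H \circ i_{j} = \cO(f_{j})$ and $\pi \circ i_{j} = \id$ for the boundary inclusions $i_{j} \colon X \to IX$ at $t = j$.

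Since $I = \nat$ and every bounded subset of $X$ lies in some $[0,n]\times Y$, the Homotopy Theorem \ref{fewijwefio23ri3ohewkjfwefewf} gives an equivalence $\Yo^{s}(\pi) \colon \Yo^{s}((IX)_{h}) \xrightarrow{\simeq} \Yo^{s}(X_{h})$. The main work is then to show that $\pi$ also induces an equivalence on the big-family terms, so that after taking cofibres one obtains an equivalence $\pi_{*} \colon ((IX)_{h}, I\cY) \xrightarrow{\simeq} \cO^{\infty}(Y)$. I would verify this by showing, for each $n$, that the restricted projection $[0,1]\times [0,n]\times Y \to [0,n]\times Y$ is a coarse equivalence in $\BC$, with section $i_{0}$. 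By Lemma \ref{folrje099045u45z4} the induced coarse structures on these bounded subsets are themselves hybrid structures whose induced big families eventually exhaust the subset; for a cofinal $\phi$ that is chosen maximal on the finitely many initial indices (where the big family does not yet cover), the corresponding $U_{\phi}$-entourage witnesses that $i_{0}\circ \pi$ is close to the identity. This careful unwinding of the hybrid structure on the bounded members is what I expect to be the principal technical obstacle; once established, Point \ref{iweufhf89wfu89ewfew245} of Corollary \ref{kjeflwfjewofewuf98ewuf98u798798234234324324343} yields an equivalence of motives that passes to the colimit over $n$.

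Once $\pi_{*}$ is an equivalence of pair motives, the sections $i_{0}$ and $i_{1}$ of $\pi$ induce sections of $\pi_{*}$ in $\Sp\cX$, and since any two sections of an equivalence in an $\infty$-category coincide with its inverse, they are equivalent as morphisms $\cO^{\infty}(Y) \to ((IX)_{h}, I\cY)$. Post-composing with the map induced by $H$ then yields
\[
\cO^{\infty}(f_{0}) \simeq \cO^{\infty}(H) \circ i_{0} \simeq \cO^{\infty}(H) \circ i_{1} \simeq \cO^{\infty}(f_{1}),
\]
and the corollary follows.
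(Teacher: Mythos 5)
Your proof is correct and follows the route the paper intends: the paper states this corollary without writing a proof, the implicit argument being exactly the application of the Homotopy Theorem \ref{fewijwefio23ri3ohewkjfwefewf} (via Corollary \ref{efwjweiofiu97249873942342423423424}) to the compatible homotopy $[0,1]\ltimes\cO(Y)\to\cO(Y')$ induced by a strong uniform homotopy. Your write-up supplies the one detail the paper leaves implicit — that the equivalence must also be checked on the big-family terms before passing to the cofibre defining $\cO^{\infty}$ — and your observation that the members $[0,n]\times Y$ carry maximal induced structures (so the restricted projections are trivially coarse equivalences) disposes of it correctly.
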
   

The following corollary to the combination of Corollaries \ref{wfuhweuifhewiufewf243} and \ref{efjwefu23u2394u2394324234234} says that $\cO^{\infty}$ behaves like a $\Sp\cX$-valued homology theory on  {the} subcategory $\bU_{\sep}$  of $\bU$ of uniform Hausdorff spaces (see Mitchener \cite[Thm.~4.9]{mit} for a related result).
\begin{kor}
The functor $\cO^{\infty}\colon \bU_{\sep}\to \Sp\cX$ is uniformly homotopy invariant and satisfies excision for  uniform  decompositions.
\end{kor}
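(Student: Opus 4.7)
The plan is simply to observe that both assertions of the corollary have already been established as separate results in the preceding material, so the proof is a direct citation combining them.

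First, strong uniform homotopy invariance of $\cO^{\infty}$ is precisely the content of Corollary \ref{efjwefu23u2394u2394324234234}, which states that strongly uniformly homotopy equivalent uniform spaces are sent to equivalent motivic coarse spectra by $\cO^{\infty}$. That corollary in turn was obtained by applying the Homotopy Theorem \ref{fewijwefio23ri3ohewkjfwefewf} to the cone construction, using that the big family $\cB$ appearing in the definition of $\cO(Y) = ([0,\infty) \times_p Y, \cC_0, \cB)$ is cofinally indexed by $\nat$ (via $\cY = ([0,n] \times Y)_{n \in \nat}$) and exhausts the bornology by bounded subsets, so the hypotheses of Corollary \ref{efwjweiofiu97249873942342423423424} are met.

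Second, excision for uniform decompositions is precisely Corollary \ref{wfuhweuifhewiufewf243}, obtained by applying the Decomposition Theorem \ref{weiofjewi98u3298r32r32rr} to the uniform decomposition $([0,\infty) \times Y, [0,\infty) \times Z)$ of $[0,\infty) \times X$ induced by a uniform decomposition $(Y,Z)$ of $X$, together with the identification of $\cO^{\infty}$ with the relevant pair term coming from the big family $\cB$ of bounded subsets of the cone.

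Thus the plan is to simply assemble these two observations into a one-line proof: \emph{This is the combination of Corollaries \ref{wfuhweuifhewiufewf243} and \ref{efjwefu23u2394u2394324234234}.} There is no independent obstacle to overcome here; the only substantive work has been done upstream (namely in proving the Decomposition and Homotopy Theorems for hybrid structures and in checking that the cone functor factors through compatible maps and satisfies the finiteness hypothesis $I = \nat$ needed for the Homotopy Theorem). The corollary is a synthesis, recording the two axiom-like properties that make $\cO^{\infty}$ behave like a coarse-motivic-spectrum valued homology theory on $\Top^{cu}$.
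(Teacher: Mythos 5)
Your proposal is correct and matches the paper exactly: the paper introduces this corollary with the phrase that it is ``the combination of \ref{wfuhweuifhewiufewf243} and \ref{efjwefu23u2394u2394324234234}'' and gives no further argument, which is precisely your one-line synthesis. Your tracing of the upstream inputs (the Decomposition Theorem for excision and the Homotopy Theorem, via Corollary \ref{efwjweiofiu97249873942342423423424} with the cofinal family $([0,n]\times Y)_{n\in\nat}$, for homotopy invariance) is also accurate.
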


This observation is the starting point of a translation of the homotopy theory on $\bU$ into motivic coarse spectra. It will be further discussed in  {the} subsequent paper \cite{ass}.

\subsection{Flasque hybrid spaces}\label{ekfjweofjweoifewfwefwefewf}

We consider a bornological coarse space $(X,\cC,\cB)$ with a compatible uniform structure~$\cT$ and a big family $\cY=(Y_{i})_{i\in I}$. These data determine the hybrid space $X_{h}$ (Definition \ref{defn:sdfuh8934t}). In this section we analyse which additional structures on $X$ guarantee that $X_{h}$ is flasque in the generalized sense (see Definition \ref{ddd:rf435f}).
 
We assume that the uniform structure $\cT$ is induced from a metric $d$. We will denote the uniform entourage of radius $r$ by $U_{r}$, see \eqref{ffwefewfe24543234wfefewef}.

We furthermore assume that the big family is indexed by $\nat$ and determined through a function
 $e\colon X\to [0,\infty)$ by $Y_{i}:=\{e\le i\}$ for all $i$ in $\nat$. Moreover, we assume that    $e$  has the following properties:
 
\begin{enumerate}
\item $e$ is uniformly continuous.
\item $e$ is controlled. This ensures that the family of subsets { $(Y_i)_{i \in \IN}$} as defined above is big. Indeed, let $V$ be a coarse entourage of $X$. Then we can find an $s$ in $\nat$ such that  $|e(x)-e(y)|\le s$ for all $(x,y)$ in $V$. For $i$ in $\nat$ we then have $V[Y_{i}]\subseteq Y_{i+s}$.
\item $e$ is bornological. This implies that for every bounded subset $B$ of $X$ there exists $i$ in $\nat$ such that $B\subseteq Y_{i}$.
\end{enumerate}

The reason why we make these restrictions on the index set of the big family and $\cT$ is that we are going to use the simple characterization of hybrid entourages given in Remark~\ref{fwoejweofewfewfwefwefwe34345345345345}. We do not know whether this restriction is of technical nature or essential.

The essential structure is a map $\Phi  :  [0,\infty)\ltimes X\to X$. We write $\Phi_{t}(x):=\Phi(t,x)$.
 Recall that the semi-direct product has the product coarse structure and the bounded sets are generated by $[0,\infty)\times B$ for bounded subsets $B$ of $X$. We assume that  $\Phi$  has the following properties:

\begin{enumerate}
\item $\Phi$ is a morphism in $\BC$.
\item $\Phi$ is uniformly continuous.
\item $\Phi_{0}=\id$.
\item\label{sdjnfwersfd43} For every bounded subset $B$ of $X$ there exists $t$ in $\R$ such that
$\Phi([t,\infty)\times X)\cap B=\emptyset$.
\item\label{fwijwefoi2434545345} $\Phi$ is contracting in the following sense: For every $i$  in $ \IN$, $\epsilon$ in $(0,\infty)>0$ and coarse entourage $V$ of $X$   there exists $T$  in $(0,\infty)$ such that we have
\[\forall x,y\in X, \ \forall t\in [T,\infty) \: : \:   (x,y)\in V\cap (Y_{i}\times Y_{i}) \Rightarrow {(\Phi_{t}(x),\Phi_{t}(y))} \in U_{\epsilon}\,.\]
\item\label{qrgijoqwfdeqdwqedq} $\Phi$ is compatible with the big family in the sense that for all $s,t$ in $[0,\infty)$   we have  $\Phi_{t}(\{e\le s\})\subseteq \{e\le s+t\}$.
\end{enumerate}

Recall the Definition \ref{ddd:rf435f} of flasqueness in the generalized sense.
\begin{theorem}\label{thm:sdf09234}
Let all the above assumptions be satisfied. Then $X_h$ is flasque\index{flasque!hybrid space} in the generalized sense.
\end{theorem}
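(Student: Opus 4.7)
My plan is to exhibit the sequence $(f_k)_{k \in \nat}$ witnessing flasqueness in the generalized sense by setting $f_k := \Phi_k = \Phi(k,\cdot)$, and to verify the four conditions of Definition~\ref{ddd:rf435f} for $X_h$. Condition~\ref{sdfn34r} ($f_0 = \id_X$) is immediate from $\Phi_0 = \id_X$. Condition~\ref{fwoiejweoifeowi23} follows from the escape assumption~\ref{sdjnfwersfd43}: given a bounded $B$, pick $t$ with $\Phi([t,\infty) \times X) \cap B = \emptyset$ and set $k_0 := \lceil t \rceil$. Each $f_k$ is a morphism, since it is the composition of $\Phi$ with the inclusion $x \mapsto (k,x) \colon X \to [0,\infty) \ltimes X$.

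For Conditions~\ref{asdfgjtzkj56} and~\ref{sdfn3443243t6}, I will use the characterization of hybrid entourages from Remark~\ref{fwoejweofewfewfwefwefwe34345345345345}: a subset $E \subseteq X \times X$ lies in $\cC_h$ iff $E \in \cC$ and for every $\epsilon > 0$ there exists $j \in \nat$ with $E \subseteq (Y_j \times Y_j) \cup U_\epsilon$. For Condition~\ref{sdfn3443243t6}, fix $U \in \cC_h$. The set $\bigcup_k (f_k \times f_k)(U)$ lies in $\cC$ because it is contained in the image under $\Phi \times \Phi$ of the entourage $\{((t,x),(t,y)) : t \in [0,\infty),\, (x,y) \in U\}$ of $([0,\infty) \ltimes X)^2$. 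For the hybrid part, fix $\epsilon > 0$. Strong uniform continuity of $\Phi$, specialized at $s = t$, yields $\delta > 0$ with $d(x,y) < \delta \Rightarrow d(\Phi_s(x), \Phi_s(y)) < \epsilon$ uniformly in $s \ge 0$. Using hybridness of $U$, pick $j' \in \nat$ with $U \subseteq (Y_{j'} \times Y_{j'}) \cup U_\delta$. Then the contracting assumption~\ref{fwijwefoi2434545345}, applied with $i = j'$, $V = U$, and $\epsilon$, provides $T > 0$. Pairs in $U \cap U_\delta$ land in $U_\epsilon$ under $\Phi_k \times \Phi_k$ for every $k$; pairs in $U \cap (Y_{j'} \times Y_{j'})$ with $k \ge T$ land in $U_\epsilon$ by contracting; and pairs in $U \cap (Y_{j'} \times Y_{j'})$ with $k < T$ land in $Y_{j'+T} \times Y_{j'+T}$ by the support condition on $\Phi$, since $\Phi_k(Y_{j'}) \subseteq Y_{j'+k} \subseteq Y_{j'+T}$. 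Setting $j := j'+T$ yields the desired containment.

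Condition~\ref{asdfgjtzkj56} is the most delicate. The set $E_2 := \bigcup_k \{(\Phi_k(x), \Phi_{k+1}(x)) : x \in X\}$ lies in $\cC$ by the same morphism argument, with the unit-time-shift entourage $\{((t,x),(t+1,x)) : t \in [0,\infty),\, x \in X\}$ of $([0,\infty) \ltimes X)^2$ replacing the diagonal-in-time entourage. For the hybrid condition, the pattern of Condition~\ref{sdfn3443243t6} extends: strong uniform continuity gives a modulus for the pair, the support condition on $\Phi$ confines the small-$k$ part of $E_2$ inside a fixed $Y_j^2$, and the contracting assumption~\ref{fwijwefoi2434545345}, applied to the controlled entourage $V_\Phi := \{(\Phi_s(x), \Phi_{s+1}(x)) : s \ge 0,\, x \in X\}$ with a suitably chosen $i$, places the large-$k$ part of $E_2$ into $U_\epsilon$. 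The main obstacle is that, unlike in Condition~\ref{sdfn3443243t6}, the two slots $\Phi_k$ and $\Phi_{k+1}$ differ, so the reduction to contracting (which shrinks pairs in $V_\Phi \cap (Y_i \times Y_i)$ only under further application of $\Phi_t$, and does not a priori control the pair $(\Phi_k(x),\Phi_{k+1}(x))$ itself) must be coordinated carefully with the uniform escape from~\ref{sdjnfwersfd43} so that the exceptional pairs of $E_2$ not in $U_\epsilon$ can all be absorbed into a single $Y_j \times Y_j$.
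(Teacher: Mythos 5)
Your choice $f_k := \Phi_k$ breaks down at Condition~\ref{asdfgjtzkj56}, and the obstacle you flag at the end is not merely a delicate point to be "coordinated carefully" --- it is fatal for this choice of $f_k$. None of the hypotheses on $\Phi$ controls the pair $(\Phi_k(x),\Phi_{k+1}(x))$ when $e(x)$ is large: uniform continuity of $\Phi$ only shrinks the image-distance when the \emph{time gap} is small, and here the gap is fixed at $1$; the contracting assumption~\ref{fwijwefoi2434545345} only shrinks pairs lying in some $Y_i\times Y_i$, and only after a \emph{further, equal-time} application of $\Phi_t$, which never produces the pair $(\Phi_k(x),\Phi_{k+1}(x))$ itself (there is no semigroup law). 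One can see the failure concretely in the main application, the coarsening space of Theorem~\ref{thm:sdf934v}: there $\Phi_k(s,x)$ pushes a point at level $s\le k$ up to level $k$, so $(\Phi_k(s,x),\Phi_{k+1}(s,x))$ has its two entries at levels $k$ and $k+1$, hence at distance roughly $1$, and lies outside $Y_j\times Y_j$ once $k>j$. Thus for $\epsilon<1$ no $j$ makes $\bigcup_k(\Phi_k\times\Phi_{k+1})(\diag_X)\subseteq (Y_j\times Y_j)\cup U_\epsilon$, and Condition~\ref{asdfgjtzkj56} fails in the hybrid structure. (Your verifications of Conditions~\ref{sdfn34r}, \ref{sdfn3443243t6} and~\ref{fwoiejweoifeowi23} for this choice of $f_k$ are essentially fine.)

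The paper's proof avoids this by \emph{reparametrizing the time variable as a function of the point}: it sets $\Psi_S(x):=\Phi_{r_S(e(x))}(x)$ for a family of cut-off functions $r_S$ with two key features. First, $r_S(t)=0$ for $t$ large (depending on $S$), so $\Psi_S$ is the identity outside a bounded region and the exceptional pairs for small $S$ are absorbed into a single $Y_j\times Y_j$. Second, $\sup_t|r_S(t)-r_{S+1}(t)|=O(1/\log S)\to 0$, so for large $S$ the time gap between $\Psi_S$ and $\Psi_{S+1}$ is genuinely small and uniform continuity of $\Phi$ applies to put $(\Psi_S(x),\Psi_{S+1}(x))$ into $U_\epsilon$. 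Some device of this kind --- making the applied time depend on $e(x)$ and letting consecutive maps differ by a uniformly small amount of time --- is needed; constant-time slices $\Phi_k$ cannot work.
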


\begin{proof}
For every $S$ in $\nat \setminus \{0,1\}$ we consider the map $r_{S} :  [0,\infty)\to [0,\log(S)]$ given by
\[r_{S}(t):=\left\{\begin{array}{ll}
\log(S)-\frac{t}{S} & \text{ for } 0\le t\le \frac{S}{\log(S)}\\
\log(S)- \frac{1}{\log(S)} -\big( t- \frac{S}{\log(S)} \big) & \text{ for }\frac{S}{\log(S)}\le t\le \log(S)- \frac{1}{\log(S)}+ \frac{S}{\log(S)}\\
0 & \text{ for } t\ge \log(S)- \frac{1}{\log(S)}+\frac{S}{\log(S)}
\end{array}\right.\]
We further set
$r_1 :=  0$. The family of maps $(r_{S})_{S\in \nat \setminus \{0\}}$ is uniformly controlled and uniformly equicontinuous.

\begin{figure}[ht]
\centering
\includegraphics[scale=0.4]{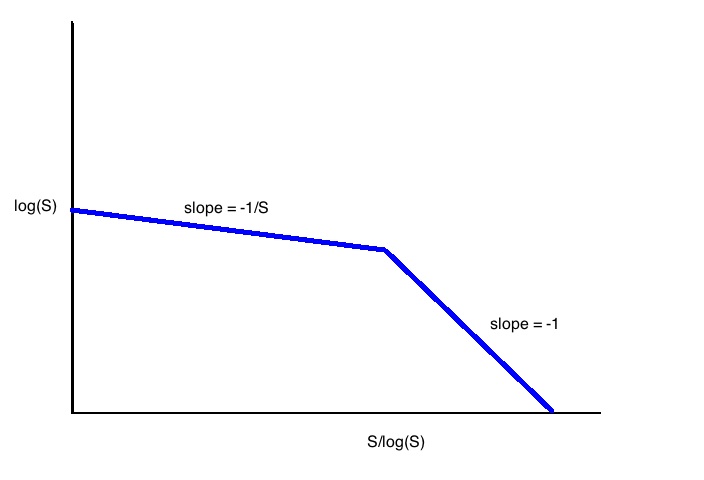}
\caption*{Function $r_{S}(t)$}
\end{figure}

 We consider the composition of maps
\[\Psi_{S}  :  X\xrightarrow{(e,\id)}[0,\infty)\times X \xrightarrow{(r_{S},\id)}  [0,\log (S)]\ltimes X\xrightarrow{\Phi} X\,.\]
 
The first map is controlled since $e$ is controlled. Furthermore it is      proper since its second component is the identity. Hence it is
a morphism in $\BC$.  The second map is also a morphism (since we use the semi-direct product it is proper).
The last map is a morphism by assumption since it is a restriction of a morphism to a subset. 
All three maps are uniformly continuous and we have $\Psi_S(\{e \le s\}) \subseteq \{e \le s + \log(S)\}$. So the maps $\Psi_S$ are compatible in the sense of Definition~\ref{ejwiofeoifefewfewf} and therefore are morphisms
\[\Psi_{S}  :  X_{h}\to X_{h}\,.\]

Our claim is now that these maps implement flasqueness of $X_h$ in the generalized sense. More precisely,  we will show that the family of morphisms $(\Psi_S)_{S \in \nat \setminus \{0\}}$ satisfies the four conditions of Definition~\ref{ddd:rf435f}.

\textbf{Condition \ref{sdfn34r}}

Since $r_{1}  \equiv 0$ we get $\Psi_1 = \id$.

\textbf{Condition \ref{asdfgjtzkj56}}

We must   show that the union $$\bigcup_{S \in \nat \setminus \{0\}} (\Psi_S \times \Psi_{S+1})(\diag_{X_h})$$ is an entourage of $X_h$. Hence we must control the pairs
\[(\Psi_{S}(x),\Psi_{S+1}(x)) \text{ {for} } x \text{ in } X \text{  {and} } S \text{ in }  \nat \setminus \{0\}\, .\]
 
Since the family $(r_{S})_{ \nat \setminus \{0\}}$ is uniformly equicontinuous the family $(S\mapsto \Psi_{S}(x))_{x\in X}$ of functions  is uniformly equicontinuous. Hence there exists a coarse entourage $V$ of $X$ such that for all $x $ in $X$ and $S$ in $ \nat \setminus \{0\}$
$$ (\Psi_{S}(x),\Psi_{S+1}(x))\in  V\, .$$
 
In view of Remark \ref{fwoejweofewfewfwefwefwe34345345345345} it remains to show that for every $\epsilon$ in $(0,\infty)$ there exists $j$ in $\nat$ such that
\begin{equation}\label{velkvkekorevrevevev}
\forall x\in X\  \forall S\in  \nat \setminus \{0\}   \: : \:  (\Psi_{S}(x),\Psi_{S+1}(x))\in (Y_{j}\times Y_{j})\cup U_{\epsilon} \, .
\end{equation}

First observe using the explicit formula for $r_{S}$ that  {there} exists a $C$ in $(0,\infty)$ such that for all $t$ in $[0,\infty)$ and $S$ in $\nat$ we have the inequality  \[|r_{S}(t)-r_{S+1}(t)|\le \frac{C}{\min\{1,\log(S)\}} \, . \]
 
We now fix $\epsilon$ in $(0,\infty)$. Using the above estimate and that $\Phi$ is uniformly continuous  we can find $S_{0}$ in $ \nat \setminus \{0,1\}$ sufficiently large such that
\begin{equation}\label{dqwdqwd2342342343242432424}
 \forall x\in X\  \forall S\in \nat \ \mbox{with}\ S\ge S_{0}\: :  \:(\Psi_{S}(x),\Psi_{S+1}(x))\in U_{\epsilon} 
\end{equation}
It remains to control the pairs  with $1\le S\le S_{0}$.

Set $$i := \sup_{1 < S \le S_0} \Big( \log(S)- \frac{1}{\log(S)}+\frac{S}{\log(S)} \Big)$$ (note that $i$ is finite). Then we have $r_{S}(e(x)) = 0$ for all $x$ in $X \setminus Y_{i}$ and all $S $  in $[1,S_0]$. Consequently, $\Psi_S(x) = x$ for all $x$ in $X \setminus Y_{i}$ and all $S$ in $[1,S_0]$.  

We now choose
the integer $j$ in $\nat$ such that $\Psi_{S}(Y_{i})\subseteq Y_{j}$ for all $S$ in $[1,S_0]$.
Then  we have 
\begin{equation}\label{qweqweqwewwe}
 \forall x\in X\  \forall S\in \nat \ \mbox{with}\ 1\le S\le  S_{0} \: : \:(\Psi_{S}(x),\Psi_{S+1}(x)) \in (Y_{j}\times Y_{j})\cup \diag_{X} \, .
\end{equation}
The combination of \eqref{qweqweqwewwe} and \eqref{dqwdqwd2342342343242432424} implies \eqref{velkvkekorevrevevev}.

\textbf{Condition \ref{sdfn3443243t6}}

Let $U$ be a coarse entourage of $X_{h}$. We have to show that the union $\bigcup_{S \in  \nat \setminus \{0\}} (\Psi_S \times \Psi_S)(U)$ is again a coarse entourage of $X_h$.

By definition of the hybrid structure and  Remark \ref{fwoejweofewfewfwefwefwe34345345345345} there is a coarse entourage $V$ of $X$ such that for every $\epsilon$ in $(0,\infty)$ we can find $i$ in $\nat\setminus \{0\}$  with
\[U\subseteq V\cap ((Y_{i}\times Y_{i})\cup   U_{\epsilon}).\]
 
First of all we observe that the family $(r_{S})_{S\in  \nat \setminus \{0\}}$ is uniformly controlled. Since
 $\Phi$ is controlled with respect to the original structures there exists an entourage $V^{\prime}$ in $\cC$ such that
\begin{equation}\label{ferfmo34jrtoi34ut43tu43tu89}
(\Psi_{S}\times \Psi_{S})(V)\subseteq V^{\prime}
\end{equation} for all $S$ in $ \nat \setminus \{0\}$.

We now fix $\delta$ in $(0,\infty)$.
 Since the family $(r_{S})_{S \in  \nat \setminus \{0\}}$ is uniformly equicontinuous the same is true for the family $(\Psi_{S})_{S \in  \nat \setminus \{0\}}$.
Hence there exists $i $ in $ \IN$ such that
\begin{equation}
\label{eq:sdfnkj345sef}
(\Psi_{S} \times \Psi_S)(U\setminus (Y_{i}\times Y_{i}))\subseteq U_{\delta}
\end{equation}
 for all $S\in \nat \setminus \{0\}$.
 
Assume now that is $(x,y)$ in $(Y_{i}\times Y_{i})\cap V$.
The control of $e$ yields a positive number $R$ (which only depends on $V$) such that 
$|e(x)-e(y)| \le R$. Furthermore, we have
$e(x),e(y)\le i$.  

In the following steps we are going to choose $S_{0}$ in  $\nat \setminus \{0,1\}$ by step increasingly bigger such that certain estimates   hold true.

In the first step we choose $S_{0}$ such that
$$i\le \frac{S_{0}}{\log(S_{0})}\, .$$
Then for all $S$ in $\nat$ with $S\ge S_{0}$ and $(x,y)$  in  $V\cap(Y_{i}\times Y_{i})$ we have \begin{equation}\label{fwegreegergerg43535eopwfe2}
|r_{S}(e(x))-r_{S}(e(y))|\le \frac{R}{S_{0}}\, .
\end{equation}

We observe that
\begin{equation}\label{qdq23242323535}
\operatorname*{lim}_{S_{0}\to \infty} \inf_{S\ge S_{0},x\in Y_{i}} r_S(e(x))=\infty\, .
\end{equation}


We have by definition
\[\big( \Psi_S(x), \Psi_S(y) \big) = \big( \Phi_{r_S(e(x))}(x), \Phi_{r_S(e(y))}(y) \big).\]
Since $\Phi$ is uniformly continuous, in view of \eqref{fwegreegergerg43535eopwfe2} we can choose $S_{0}$ so large that \begin{equation}\label{r34rkljr34ktt43oitoiuu9} \forall S\ge S_{0} ,\ \forall (x,y)\in V\cap (Y_{i}\times Y_{i}) \:\: :\:\:
\big( \Phi_{r_S(e(x))}(x), \Phi_{r_S(e(y))}(x) \big)\in U_{\delta/2}\, .
\end{equation}

We 
  employ that $\Phi_t$ is contracting and \eqref{qdq23242323535} in order to see that we can increase $S_{0}$ even more such that      \begin{equation}\label{3tiojriojoijjjoj3jgoi34t}
 \forall S\ge S_{0} ,\ \forall (x,y)\in V\cap (Y_{i}\times Y_{i}) \:\: :\:\:\big( \Phi_{r_S(e(y))}(x), \Phi_{r_S(e(y))}(y) \big) \in U_{\delta / 2} \, .
\end{equation} 
The equations \eqref{r34rkljr34ktt43oitoiuu9} and \eqref{3tiojriojoijjjoj3jgoi34t} together give
$$  \forall S\ge S_{0} ,\ \forall (x,y)\in V\cap (Y_{i}\times Y_{i}) \:\: :\:\: \big( \Phi_{r_S(e(x))}(x), \Phi_{r_S(e(y))}(y) \big) \in U_{\delta}\, .$$

This implies \begin{equation}\label{gergergergegeg43r3r3r34r23r2}\forall S\ge S_{0}\:\::\:\:
(\Psi_{S} \times \Psi_S)((Y_i \times Y_i) \cap V) \subseteq U_\delta \, .
\end{equation}


We now use the compatibility of $\Phi$ with the big family and \eqref{ferfmo34jrtoi34ut43tu43tu89}   in order to find    an integer $j$  in $\nat$ such that for all  $S $ in $[1,S_{0}]$
\[    (\Psi_{S} \times \Psi_S)((Y_i \times Y_i) \cap V) \subseteq V^{\prime}\cap(Y_{j}\times Y_{j})\, .\]
 

In conclusion we have shown that  there is $V^{\prime}$ in $\cC$ and for given $\delta$ in $(0,\infty)$ we can choose $j$ in $\nat$ appropriately such that
\[\forall S\in  \nat\setminus \{0\}\:\::\:\: (\Psi_{S} \times \Psi_S)(U)\subseteq V^{\prime}\cap ((Y_{j}\times Y_{j})\cup U_{\delta})\, .\] 
In view of  Remark \ref{fwoejweofewfewfwefwefwe34345345345345} this proves that  $\bigcup_{S \in \IN} (\Psi_S \times \Psi_S)(U)$ is   an entourage of $X_h$.

\textbf{Condition \ref{fwoiejweoifeowi23}}

If $B$ is a bounded subset of $X$, then $e_{|B}$ is bounded. We now use \eqref{qdq23242323535} and
  Property~\ref{sdjnfwersfd43} of $\Phi$ in order to verify the condition.
\end{proof} 


\subsection{Decomposition of simplicial complexes}\label{fiwehjfi892938u9r23r23rr}

The  main result of the present section is Theorem \ref{qdhdqwidqwihdqwdqwd}. 
It roughly states that the motivic coarse spectrum  of a simplicial complex with a hybrid structure  relative to an exhaustion 
can be expressed as a colimit  of motivic coarse spectra of discrete bornological coarse spaces.

\subsubsection{Metrics on simplicial complexes}

     By a simplicial complex we understand a topological space 
presented as a geometric realization of an abstract  simplicial complex.

We can model the $n$-dimensional simplex $\Delta^{n}$ as the intersection $S^{n}\cap [0,\infty)^{n+1}$ in $\R^{n+1}$. The restriction of the Riemannian distance on the sphere to $\Delta^{n}$ is called the spherical metric $d_{\sph}$.  
If we model $\Delta^{n}$ by the subset $\{x\in \R^{n}\:|\: \sum_{i=0}^{n}x_{i}=1\}\cap  [0,\infty)^{n+1}$ of $\R^{n+1}$, then the induced metric is the {E}uclidean metric and will be denoted by $d_{eu}$. Another metric 
  is the $\ell^{1}$-metric $d_{\ell^{1}}$ on $\Delta^{n}$ given in the same model by 
$d(x,x'):=\sum_{i=0}^{n}|x_{i}-x_{i}'|$.

We let $X$ be a  simplicial complex with a metric $d$.
\begin{ddd}\label{fwoiefjeoiwfefewf}
The metric $d$ is {good}\index{good metric}\index{metric!good}  if it has the following propertries:
\begin{enumerate}
\item\label{fekwjefeoiewfioewfwefwef} The restriction of $d$ to every component of $X$      a path metric.
\item \label{fekwjefeoiewfioewfwefwef1}There exists $c$ in $(0,\infty)$  such that any two components of $X$ have distance at least $c$.
\item \label{fjwefwefewlfeioo} For every $n$ in $\nat$ there exists $C_{n}$ in $(0,\infty)$ such that   the metric induced on every  $n$-simplex $\Delta\subseteq X$ satisfies $ C_{n}^{-1} \cdot d_{\sph}\le d_{|\Delta}\le C_{n} \cdot d_{\sph} $. 
\end{enumerate}
\end{ddd}

Note that we allow that different components of $X$ have finite or infinite distances, i.e., our metrics are actually quasi-metrics. 
 
\begin{ex}  \label{gwerjgiorejgoregrferwfwerfwfrwfrfwrf}
An example of a good metric  on a simplicial complex is
the path-metric induced from the  spherical metrics on the simplices.  In this case we can take $C_{n}=1$ for all $n$ in $\nat$ and the components have infinite distance from each other  (see Wright \cite[Appendix]{nw1}).  

In the case of  finite-dimensional simplicial complexes we can also start with the {E}uclidean or the $\ell^{1}$-metric to generated a good path metric.
\hB
\end{ex}

%

\begin{ex}\label{ioefjewoifeu23r23r32r}
If $\Delta$ is the standard simplex, then  for $\mu$ in $(0,1]$ we let $\mu\Delta\subseteq \Delta$ be the image of the radial scaling (with respect to the barycenter) by the factor $\mu$. 

Assume that $X$ is equipped with a good metric.  For $n$ in $\nat$ and $\mu$ in $(0,1)$
let $Z$ be the subset of $X$ given by   the disjoint union of the $\mu$-scalings of the $n$-simplices of $X$ with the induced metric.
After reparametrization of the simplices  we can consider $Z$ as a simplicial complex which is  topologically  a disjoint union of $n$-simplices. The restriction of the good metric of $X$ to $Z$ is again good.

Two simplices of $Z$ which are scalings of simplices in the same component of $X$ are in finite distance.
\hB
\end{ex}

 \begin{ex} \label{ergi3234r534555}
 In the following we describe an example of a simplicial complex $X$ with the property that the identity of $X$ does not induce a morphism $X_{d_{e}}\to X_{d_{s}}$ of coarse spaces, where $d_{e}$ and $d_{s}$ are the path metrics induced by the Euclidean and the spherical metrics on the simplices, respectively. That this can happen for infinite-dimensional simplicial complexes has essentially been observed by Wright \cite[Appendix]{nw1}.
 
 The complex $X$ is defined as the quotient of $\bigsqcup_{n\in \nat}\Delta^{n}$ by identifying for all $n$ in $\nat$ the last face $\partial_{n}\Delta^{n}$ of $\Delta^{n}$ with the first face of the first face $\partial_{0}\partial_{0}\Delta^{n+1}$ of $\Delta^{n+1}$. The entourage $ \{d_{e}\le 1\}$ in $\cC_{d_{e}}$ does not belong to $\cC_{d_{s}}$, as shown below.
\begin{figure}[ht]
\centering\includegraphics[scale=0.7]{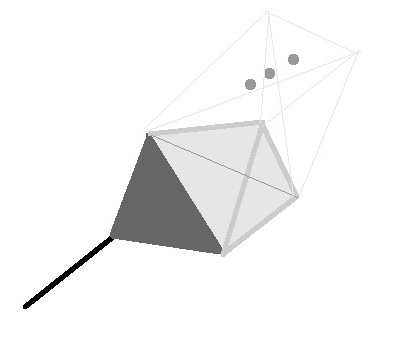}
\caption*{The complex $X$ in Example~\ref{ergi3234r534555}} \end{figure}

For  the barycenters $b_{n}$ and $b_{n+1}$ of $\Delta^{n}$ and $\Delta^{n+1}$ we have
\begin{equation}\label{rvelkj34oitjr3o4}d_{e}(b_{n},b_{n+1})=\sqrt{\frac{1}{n(n+1)}}+\sqrt{\frac{2}{n(n+2)}}\, ,
\end{equation}
while
$$d_{s}(b_{n},b_{n+1})=\pi/2\, .$$
Define $k:\nat \to \nat$ by
$$k(n):=\max\{m\in \nat: d_{e}(b_{n},b_{n+m})\le 1\}\, .$$
As a consequence of \eqref{rvelkj34oitjr3o4} we have $\lim_{n\to \infty} k(n)=\infty$. 
Hence we have
$$\forall n\in \nat\:\::\:\:d_{e}(b_{n},b_{n+k(n)})\le 1$$
while 
\[d_{s}(b_{n},b_{n+k(n)})=\frac{k(n)\pi}{2}\xrightarrow{n\to\infty}  \infty\, .\]
{Hence the identity of $X$ does not induce a controlled map $X_{d_{e}}\to X_{d_{s}}$.}
\hB
\end{ex}

\subsubsection{Decomposing simplicial complexes}\label{triohjorgvwevfdvsfdv}

Assume that $X$ is equipped with a good metric $d$.  
We let  $\cT_{d}$ be   the uniform structure on $X$ determined by the metric. 
We further assume that $X$ comes with a bornological coarse structure $(\cC,\cB)$ such that
$\cC$ and $\cT_{d}$ are compatible (see Definition \ref{defn:sd23d9023}). For example, $\cC$ could be (but this is not necessary) the structure $\cC_{d}$ induced by the metric. Note that the compatibility  condition  and Condition~\ref{fjwefwefewlfeioo} of Definition \ref{fwoiefjeoiwfefewf} imply that the size of the simplices of a fixed dimension  is uniformly bounded with respect to $\cC$.

 Finally we assume that $X$ comes with a big family $\cY=(Y_{i})_{i\in I}$ of subcomplexes
 such that
 every bounded subset of $X$ is contained in $Y_{i}$ for some $i$ in  $I$. In this situation we can consider the hybrid structure $\cC_{h}$ (see Definition \ref{defn:sdfuh8934t}).
We write $X_{h}$ for the corresponding bornological coarse space.

We  now assume that $X$ is $n$-dimensional. 
For $k$ in $\nat$ we let $X^{k}$ denote the $k$-skeleton of $X$.
 We let  the subset $Z$ of $ X$ be obtained as in Example \ref{ioefjewoifeu23r23r32r} by scaling all $n$-simplices by the factor $2/3$. Furthermore we let
$Y$  be the subspace of $X$ obtained from $X$ by removing the interiors of the unions
of the $1/3$-scaled $n$-simplices.
 Then $Y\cup Z=X $ is a decomposition of $X$ into two closed subsets. 
 
\begin{figure}[ht]
\centering
\includegraphics[scale=0.29]{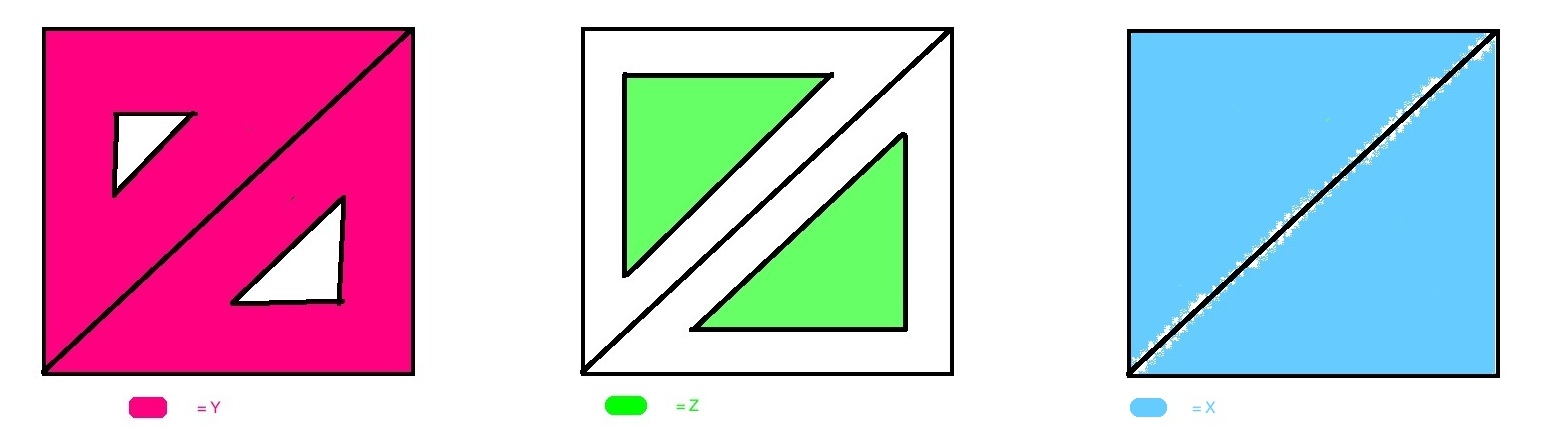}
\caption*{Decomposition $X=Y\cup Z$}
\end{figure}

Conditions  
\ref{fekwjefeoiewfioewfwefwef} and  \ref{fekwjefeoiewfioewfwefwef1} of Definition \ref{fwoiefjeoiwfefewf} ensure  
by Example \ref{ifweoifewoufuew98fu9wefweff} that the pair $(Y,Z)$ is a uniform decomposition (Definition \ref{fewoifweifoew2324234234}) of $X $. If $I=\nat$, then the Decomposition Theorem~\ref{weiofjewi98u3298r32r32rr}  asserts that 
we have a push-out square in $\Sp\cX$
\begin{equation}\label{fjhwefuiewfhzewfzz89z29834234234}
 \xymatrix{((Y\cap Z)_{h},  Y\cap Z\cap \cY)\ar[r]\ar[d]&(Y_{h},Y\cap\cY )\ar[d]\\(Z_{h},Z\cap \cY )\ar[r]&(X_{h},\cY)}
\end{equation}

In the following we analyse the corners of \eqref{fjhwefuiewfhzewfzz89z29834234234}. Let $C $ be the set of the barycenters of the $n$-simplices of $X$ with the  induced bornological coarse structure (see Example \ref{wfijweifjewfoiuewoifoewfu98fewfwfw}). 
The restriction of the good metric to $C$ induces the    discrete uniform structure $\cT_{\disc}$. 
For $i$ in $I$ we let   
$C_{i}$  denote the  subset of  $C $ of the barycenters of the $n$-simplices contained in~$Y_{i}$. 
Then $(C_{ i})_{i\in I}$ is a big family in $C$. We can consider the  associated hybrid  space $C_{h}$.

\begin{lem}\label{efwiofweife89u9ewf324423434}
If $I=\nat$, then in $\Sp\cX$ we have an equivalence
$$(C_{h} ,(C_{ i})_{i\in I})\simeq (Z_{h},Z\cap \cY)\, .$$
\end{lem}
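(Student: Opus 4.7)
The plan is to establish the equivalence via natural mutually inverse morphisms in $\Sp\cX$. Define the inclusion $\iota\colon C\to Z$ sending each barycenter to itself (viewed inside its $2/3$-scaled $n$-simplex), and the projection $\pi\colon Z\to C$ sending every point of the $2/3$-scaled $n$-simplex $(2/3)\sigma$ to the barycenter of $\sigma$. Directly from the definitions, $\pi\circ\iota=\id_{C}$, and $\iota(C_{i})\subseteq Z\cap Y_{i}$ and $\pi(Z\cap Y_{i})=C_{i}$ for every $i$ (since $Y_{i}$ is a subcomplex, so $Z\cap Y_{i}$ is a disjoint union of $2/3$-scaled $n$-simplices of $Y_{i}$), hence $\iota$ and $\pi$ restrict to maps between the respective big families.

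The first substantive step is to verify that $\iota$ and $\pi$ are compatible morphisms $C_{h}\to Z_{h}$ and $Z_{h}\to C_{h}$ (invoking Lemma \ref{folrje099045u45z4} to identify the hybrid structures induced on $C$ and $Z$ inside $X_{h}$ with those defined via the induced data). Because distinct barycenters in $C$ are uniformly separated by a positive constant (coming from Condition \ref{fjwefwefewlfeioo} of Definition \ref{fwoiefjeoiwfefewf}), the uniform structure $\cT^{u}_{disc}$ is discrete, so any hybrid entourage of $C$ is contained in $(C_{i}\times C_{i})\cup\diag_{C}$ for some $i$; applying $\iota\times\iota$ yields a subset of $(Y_{i}\times Y_{i})\cup\diag_{Z}$, which is a hybrid entourage of $Z$. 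For $\pi$, I use that the $2/3$-scaled $n$-simplices are also uniformly separated by a positive distance $\delta>0$. Given a hybrid entourage $U$ of $Z$ with $U\subseteq(Y_{j}\times Y_{j})\cup U_{\epsilon}$ for $\epsilon<\delta$, any pair in $U_{\epsilon}\cap(Z\times Z)$ lies in a single scaled simplex and is therefore collapsed by $\pi\times\pi$ into $\diag_{C}$; combined with the fact that each simplex has uniformly bounded diameter (so $\pi$ is coarsely close to $\id_{Z}$ along the simplex factor of $X$), the image $(\pi\times\pi)(U)$ is a hybrid entourage of $C$. Compatibility with the big families is immediate.

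Next, I construct a compatible homotopy $h\colon IZ\to Z$ between $\id_{Z}$ and $\iota\circ\pi$ by the simplex-wise affine contraction $h(t,x):=(1-t)x+t\,\pi(x)$, using the affine structure of each $n$-simplex. Since $h(t,x)$ always lies in the same simplex as $x$, the map $h$ is a morphism in $\BC$ and sends $[0,1]\times(Z\cap Y_{i})$ into $Z\cap Y_{i}$. It is strongly uniformly continuous because it is $(1-t)$-Lipschitz within each simplex (and the scaled simplices are uniformly separated, so on small uniform scales one stays inside a single simplex). Thus $h$ is compatible in the sense of Definition \ref{ejwiofeoifefewfewf}, and by Corollary \ref{efwjweiofiu97249873942342423423424} we obtain an equivalence $\Yo^{s}(\iota\circ\pi)\simeq\Yo^{s}(\id_{Z})$ as morphisms $\Yo^{s}(Z_{h})\to\Yo^{s}(Z_{h})$; together with $\pi\circ\iota=\id_{C}$ this shows that $\iota$ and $\pi$ are mutually inverse equivalences between $\Yo^{s}(C_{h})$ and $\Yo^{s}(Z_{h})$ in $\Sp\cX$.

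Finally, the very same construction restricts, for each $i$, to a compatible homotopy on $I(Z\cap Y_{i})$ between $\id_{Z\cap Y_{i}}$ and $\iota_{|C_{i}}\circ\pi_{|Z\cap Y_{i}}$, so the previous arguments applied to the subspaces produce an equivalence $\Yo^{s}((Z\cap Y_{i})_{h})\simeq\Yo^{s}(C_{i,h})$ natural in $i$. Taking the colimit over $i\in I$ of the induced map of cofibre sequences and using the pair notation \eqref{fkhwiufuiz823zr824234242424234234234234} then yields the desired equivalence of pairs $(C_{h},(C_{i})_{i\in I})\simeq(Z_{h},Z\cap\cY)$. The main obstacle is the compatibility of $\pi$ with the hybrid coarse structures, which is precisely where the uniform separation of the $2/3$-scaled simplices is essential.
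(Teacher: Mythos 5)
Your proof is correct and follows essentially the same route as the paper: the same inclusion/projection pair between the barycenters and the scaled simplices, compatibility via Lemma \ref{oepwfweoifewofiewofewf}, and the affine contraction as a compatible homotopy handled by Corollary \ref{efwjweiofiu97249873942342423423424}. The only (harmless) variation is in the treatment of the members of the big family, where the paper simply observes that $g_{|C_i}:C_i\to Z\cap Y_i$ is already an equivalence in $\BC$ because the hybrid and original structures coincide on these bounded subsets, whereas you re-run the homotopy argument on each $Z\cap Y_i$.
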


\begin{proof}
The inclusion $g:C\to Z$ and the projection $p:Z\to C$ which sends each component to the center
are compatible morphisms. For properness of $p$ we use the fact noticed above that the size of the simplices of $X$ is uniformly bounded with respect to $\cC$. Furthermore the compositions $g\circ p$ and $p\circ g$ are compatibly homotopic to the respective identities. By Lemma \ref{oepwfweoifewofiewofewf} we obtain morphisms
$g :  C_{h}\to Z_{h}$ and $p :  Z_{h}\to C_{h}$.  By Corollary \ref{efwjweiofiu97249873942342423423424}
$g$  induces  an equivalence
$\Yo^{s}(C_{h})\simeq \Yo^{s}(Z_{h})$. 
In general, $g$ is not a coarse equivalence, but
one  could look at Example \ref{efwiojweiofeofewfefewfewf} in order to understand the mechanism giving this equivalence. 
  The restriction  $g_{|C_{i}}  :C_{i}\to Z_{h}\cap Y_{i}$ is already an equivalence in $\BC$ for every $i$ in $I$, where $C_{i}$ has the bornological coarse structure induced from $C_{h}$. In fact, the   bornological coarse structures on $C_{i}$ and $Z_{h}\cap Y_{i}$ induced from the hybrid structures coincide with the bornological coarse structures induced from the original bornological coarse structures $C$ or $Z$, respectively.
\end{proof}


Recall the Definition \ref{ijwieorfjuwefuwe9few435345}
 of the notion of a discrete bornological coarse space.
 \begin{lem}\label{fekwfewpkfeokeopefefefewfe}
There exists a filtered family  $(W_{j},\cW_{j})_{j\in J}$ of  pairs of discrete bornological coarse spaces  and big families such that in $\Sp\cX$ we have an equivalence $$ (C_{h}, (C_{i})_{i\in I})\simeq \colim_{j\in J} (W_{j},\cW_{j}) \, .$$
\end{lem}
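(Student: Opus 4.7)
The plan is to apply $u$-continuity (Point \ref{efwijfiewfoiefe3u40934332r} of Corollary \ref{kjeflwfjewofewuf98ewuf98u798798234234324324343}) in order to present
$\Yo^{s}(C_{h})\simeq \colim_{U\in \cC_{h}}\Yo^{s}((C_{h})_{U})$
and, compatibly, $\Yo^{s}((C_{i})_{i\in I})$ as a filtered colimit indexed by the same $U$ (invoking Lemma \ref{folrje099045u45z4} to match the induced hybrid structure on each $C_{i}$ with the restriction from $C_{h}$). Since cofibres commute with filtered colimits in $\Sp\cX$, it then suffices to identify, for each hybrid entourage $U$, the pair $((C_{h})_{U}, \{C_{i}\cap(C_{h})_{U}\}_{i\in I})$ with a pair coming from a discrete bornological coarse space.

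The key structural observation I would exploit is that every hybrid entourage $U\in\cC_{h}$ satisfies $U\setminus\diag_{C}\subseteq C_{i(U)}\times C_{i(U)}$ for some $i(U)\in I$. Indeed, $U$ is contained in some $U_{\phi}\cap V$ with $V\in\cC$ and $\phi\colon I\to\cT^{u}_{disc}$ cofinal, and since the uniform structure on $C$ is discrete, cofinality forces $\phi(i_{0})=\diag_{C}$ for some $i_{0}\in I$. The condition at index $i_{0}$ in \eqref{dqdqw2345r2t2444t3t4t} then implies that any off-diagonal $(a,b)\in U_{\phi}$ must lie in $C_{i_{0}}\times C_{i_{0}}$. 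Consequently $(C_{h})_{U}$ admits the mixed-union decomposition (Definition \ref{eiofewiofo9u23948234325})
\[(C_{h})_{U}\cong A_{U}\sqcup^{mixed}(C\setminus C_{i(U)})_{disc},\]
where $A_{U}$ is supported on $C_{i(U)}$ and the second piece is already discrete.

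I would then define $W_{U}:=\pi_{0}^{coarse}((C_{h})_{U})$, equipped with the discrete coarse structure and the bornology generated by the images of $\cB_{C}$ under the canonical projection $\pi_{U}\colon(C_{h})_{U}\to W_{U}$, together with the big family $\cW_{U}:=(\pi_{U}(C_{i}))_{i\in I}$. For $U\subseteq U'$ the induced map $W_{U}\to W_{U'}$ (obtained by merging coarse components) is a morphism of pairs of discrete bornological coarse spaces, so $J:=\cC_{h}$ with the assignment $U\mapsto(W_{U},\cW_{U})$ is a filtered diagram of the required shape.

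The main obstacle will be to verify that for each $U$ the projection $\pi_{U}$ induces an equivalence $\Yo^{s}((C_{h})_{U})\simeq\Yo^{s}(W_{U})$ compatibly with the big families. Choosing a section $\sigma\colon W_{U}\to(C_{h})_{U}$ by representatives, one needs $\sigma\circ\pi_{U}$ to be close to the identity, which amounts to showing that the coarse components of $(C_{h})_{U}$ inside $C_{i(U)}$ have uniformly bounded $\bar U$-diameter, say contained in $\bar U^{k}$ for a single $k$. This is precisely where the good-metric hypothesis on $X$ and the compatibility of the coarse and uniform structures on the finite-dimensional simplicial complex enter, forcing the discrete set of barycenters in $C_{i(U)}$ to be locally finite for the entourage $U$. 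Once this is settled, passing to the filtered colimit over $U\in\cC_{h}$ gives the asserted equivalence $(C_{h},(C_{i})_{i\in I})\simeq \colim_{U\in J}(W_{U},\cW_{U})$.
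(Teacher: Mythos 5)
Your first half is exactly the paper's argument: reduce by $u$-continuity to $(C_{U},(C_{i})_{i\in I})$ for a single hybrid entourage $U=V\cap U_{\phi}$, and observe that cofinality of $\phi$ into the discrete uniform structure forces $\phi(i)=\diag_{C}$ for some $i$, whence $U\setminus\diag_{C}\subseteq C_{i}\times C_{i}$. The divergence, and the gap, is in what you do with this observation. The point of the lemma is that the only non-discrete part of $C_{U}$ is concentrated on $C_{i}$, which is a \emph{member of the big family} and is therefore invisible in the relative pair. The paper simply excises it: by Point~\ref{fwejiofjweiofuewofewf234} of Corollary~\ref{kjeflwfjewofewuf98ewuf98u798798234234324324343} applied to the complementary pair $(T,(C_{i})_{i\in I})$ with $T:=C_{U}\setminus C_{i}$, one gets $(C_{U},(C_{i})_{i\in I})\simeq(T,T\cap(C_{i})_{i\in I})$, and since $U\cap(T\times T)\subseteq\diag_{T}$ the induced coarse structure on $T$ is already discrete. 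No further identification is needed.

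Your route instead tries to discretize all of $C_{U}$ by collapsing coarse components via $\pi_{U}\colon C_{U}\to\pi_{0}^{coarse}(C_{U})$, and this step does not go through. The subcomplexes $Y_{i}$ (hence the sets $C_{i}$) are not assumed bounded — the hypothesis is only that every bounded set is contained in some $Y_{i}$ — so the coarse components of $C_{U}$ inside $C_{i(U)}$ can be unbounded, in which case $\pi_{U}$ fails to be proper and is not even a morphism in $\BC$; and even when all components are bounded, the equivalence $\sigma\circ\pi_{U}\sim\id$ requires all components to be $U^{k}$-bounded for a single $k$, which fails as soon as their diameters are unbounded. The good-metric hypothesis on $X$ cannot rescue this: it controls the geometry of individual simplices, not the size of $Y_{i(U)}$, which is where the failure originates. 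So the step you yourself flag as "the main obstacle" is genuinely false in the stated generality, and the correct repair is not to control component diameters but to replace the collapsing by the excision against the big family, which removes the non-discrete locus entirely.
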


\begin{proof}
Using Point \ref{efwijfiewfoiefe3u40934332r} of Corollary \ref{kjeflwfjewofewuf98ewuf98u798798234234324324343} we have the equivalence of coarse motivic spectra
$$ (C_{h}, (C_{i})_{i\in I})\simeq \colim_{U\in \cC_{h}} (C_{U},(C_{i})_{i\in I} )\, .$$
Let $U$ in $\cC_{h}$ be an entourage of $C_{h}$ of the form $V\cap U_{\phi}$ for
a coarse entourage $V$ of $C$ and a cofinal function $\phi:I\to \cT_{\disc}$.  This cofinality implies that there exists $i$ in $I$ such that
$\phi(i)=\diag_{C} $. We let $T:=C_{U}\setminus C_{i}$.
Using excision (Point~\ref{fwejiofjweiofuewofewf234} of Corollary~\ref{kjeflwfjewofewuf98ewuf98u798798234234324324343}) with respect to the complementary pair $(T, (C_{i})_{i\in I} )$ we have the equivalence
$$ (C_{U},(C_{i})_{i\in I} )\simeq (T,T \cap  (C_{i})_{i\in I})\, ,$$
where   $T$ has the structures induced from $C_{U}$.
The crucial  observation is now that the coarse structure induced from $C_{U}$ on  $T$
is discrete.
\end{proof}

We now study the pair $(Y_{h},Y\cap \cY)$.  
\begin{lem}\label{eifjweiofwefwefwefewf} If $I=\nat$, then in $\Sp\cX$
we have an equivalence
$$(X^{n-1}_{h},X^{n-1}\cap \cY )\simeq (Y_{h},Y\cap \cY)\, .$$
\end{lem}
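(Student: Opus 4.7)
The plan is to exhibit $Y$ as being compatibly homotopy equivalent to $X^{n-1}$ relative to the filtration $\cY$, and then invoke Corollary~\ref{efwjweiofiu97249873942342423423424}. Geometrically, $Y$ is obtained from $X^{n-1}$ by gluing in, for each top-dimensional simplex $\Delta$ of $X$, an annular region $\Delta \setminus \mathrm{int}(\tfrac{1}{3}\Delta)$ along $\partial\Delta$. Radial projection from the barycenter onto $\partial\Delta$ therefore defines a natural retraction $r \colon Y \to X^{n-1}$, and the obvious straight-line radial homotopy from $\iota \circ r$ to $\id_Y$ (where $\iota \colon X^{n-1} \hookrightarrow Y$ is the inclusion) gives the expected deformation retraction.

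First I would verify that $\iota$ and $r$ are compatible morphisms in the sense of Definition~\ref{ejwiofeoifefewfewf}. The inclusion is trivially compatible. For $r$, properness follows because the preimage of a bounded set is contained in a uniform thickening of it (using that the top simplices have uniformly bounded diameter by Condition~\ref{fjwefwefewlfeioo} of Definition~\ref{fwoiefjeoiwfefewf}), and controlledness follows similarly. Strong uniform continuity (Definition~\ref{lfjwfefoewfefwefewf}) is the key point: on each $n$-simplex $\Delta$ of $X$ the radial projection onto $\partial\Delta$ has modulus of continuity depending only on $n$ and $C_n$, hence is uniformly continuous with a single modulus across all top simplices. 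Finally, since each $Y_j$ is a subcomplex, we have $r(Y \cap Y_j) \subseteq X^{n-1} \cap Y_j$, and both $\iota$ and $r$ are compatible with the big family $\cY$.

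Next I would construct the radial homotopy $h \colon [0,1] \ltimes Y \to Y$ sending $(t,y)$ for $y$ in the annular piece of $\Delta$ to the point moving along the radial ray from $y$ to $\partial\Delta \cap \text{(ray from barycenter through } y\text{)}$ parametrized by $t$, and being the identity on $X^{n-1}$. By the same estimates on each $\Delta$, this map is a compatible morphism in the sense required by the statement preceding Corollary~\ref{efwjweiofiu97249873942342423423424}, and it realizes a compatible homotopy between $\id_Y$ and $\iota \circ r$; the composition $r \circ \iota = \id_{X^{n-1}}$ needs no homotopy. Applying Corollary~\ref{efwjweiofiu97249873942342423423424} (whose hypotheses are met, since $I = \nat$ and by Example~\ref{4oj3iotiu43otu43t4390t3t430t} the bornological hypothesis on the big family holds by assumption on $\cY$) yields that $\iota$ induces an equivalence $\Yo^{s}(X^{n-1}_h) \simeq \Yo^{s}(Y_h)$.

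For the pair version, observe that because $r$ and $h$ preserve each $Y_j$ (since $Y_j$ is a subcomplex and each annular region lies in a single top simplex), the same argument applied to the restricted hybrid spaces gives equivalences $\Yo^{s}((X^{n-1} \cap Y_j)_h) \simeq \Yo^{s}((Y \cap Y_j)_h)$ for every $j \in I$. Passing to the colimit over $j$ using \eqref{wefweew254} yields $\Yo^{s}(X^{n-1} \cap \cY) \simeq \Yo^{s}(Y \cap \cY)$, and the resulting map of cofibres \eqref{fkhwiufuiz823zr824234242424234234234234} is then an equivalence, establishing the claimed equivalence of pairs. The main technical obstacle is the uniform verification of strong uniform continuity of $r$ and $h$ across all simplices simultaneously; this is precisely where the uniform Lipschitz comparison in Condition~\ref{fjwefwefewlfeioo} of the good-metric definition is essential, since without a uniform bound on the metric distortion one could not produce a single cofinal $s \colon \cT^{u} \to \cT^{u}$ as required by Definition~\ref{lfjwfefoewfefwefewf}.
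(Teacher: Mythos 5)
Your proof is correct and follows essentially the same route as the paper: the radial retraction $r\colon Y\to X^{n-1}$, verified to be a compatible morphism and a compatible homotopy inverse to the inclusion, followed by Corollary~\ref{efwjweiofiu97249873942342423423424}. The only (harmless) difference is in the pair part: the paper simply notes that on each $Y\cap Y_i$ and $X^{n-1}\cap Y_i$ the hybrid structure coincides with the original one and the restrictions of $r$ and $\iota$ are already inverse equivalences in $\BC$, whereas you re-run the homotopy argument on the pieces before passing to the colimit of cofibres — both are valid.
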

\begin{proof}
 We  consider the   retraction $r:Y\to X^{n-1}$ which is locally modeled by the radial retraction of $\Delta^n \setminus \inter(1/3 \Delta^n)$ to the boundary $\partial \Delta^n$. This is a morphism in $\BC$ which is compatible. It provides a compatible homotopy inverse to the inclusion
 $X^{n-1}\to Y$. 
 By  Corollary \ref{efwjweiofiu97249873942342423423424} this inclusion induces an equivalence
 $\Yo^{s}(X_{h}^{n-1})\simeq \Yo^{s}(Y_{h})$. We again observe that for every
 $i$ in $I$ the original bornological coarse structure  and the hybrid structure induce the same bornological coarse structure on
 $Y\cap Y_{i}$ and $X^{n-1}\cap Y_{i}$. Moreover, the restrictions of $r$ and the inclusion are inverse to each other 
 equivalences   of bornological coarse spaces.
 This implies the assertion. 
\end{proof}

 We let $D\subseteq X$ be the subset which locally is modeled by $\partial(1/2\Delta^n)\subseteq \Delta^n$. 
 After reparametrization we can consider $D$ as an $(n-1)$-dimensional simplicial complex equipped with the induced bornological coarse structure and the induced good metric from   $X$. It furthermore comes with a  big family $D\cap \cY$.
 
 \begin{lem}\label{fiojweoifjioefefefewfwf}
  If $I=\nat$, then in $\Sp\cX$ we have an equivalence
 $$(D_{h},D \cap \cY)\simeq ((Z\cap Y)_{h},(Z\cap Y\cap \cY))\, .$$
\end{lem}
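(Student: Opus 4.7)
The plan is to exhibit a deformation retraction between $(Z\cap Y)_h$ and $D_h$ that respects all the relevant additional structures (big family, uniform structure), so that the Homotopy Theorem (via Corollary~\ref{efwjweiofiu97249873942342423423424}) and an identification of the induced structures on the big family members will give the equivalence of pairs.

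First I would define two maps. Let $g:D\to Z\cap Y$ be the inclusion of $D$ as the locus modeled by $\partial(1/2\Delta^n)\subseteq \Delta^n$; since $1/2\in[1/3,2/3]$, this lands in $Z\cap Y$. Let $r:Z\cap Y\to D$ be the radial projection locally modeled by the map which sends a point in the annular region $\overline{2/3\Delta^n}\setminus \inter(1/3\Delta^n)$ to the point on $\partial(1/2\Delta^n)$ lying on the same ray from the barycenter. Both maps are well-defined globally because they are identities on the lower skeleton $X^{n-1}\cap Y$ (which is contained in both $Z\cap Y$ and $D$ when viewed correctly — actually, $D$ is purely $(n-1)$-dimensional and lives inside the simplices, so I would instead describe $r$ as locally the radial retraction onto $\partial(1/2\Delta^n)$). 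The composition $r\circ g=\id_D$, while $g\circ r$ is compatibly homotopic to $\id_{Z\cap Y}$ via the obvious straight-line homotopy $h_s(x):=(1-s)x+s\, g(r(x))$ taken locally in each simplex.

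Next I would verify that $g$, $r$, and the homotopy $h$ are compatible morphisms in the sense of Definition~\ref{ejwiofeoifefewfewf}. They are morphisms of bornological coarse spaces for the original structures: the maps are locally Lipschitz on each simplex with uniform constants (using Condition~\ref{fjwefwefewlfeioo} of Definition~\ref{fwoiefjeoiwfefewf}), hence controlled with respect to the metric coarse structures; properness follows from the fact that preimages of simplices are contained in simplices, and the size of simplices is uniformly bounded. Strong uniform continuity follows from the uniform Lipschitz bounds. Finally, they are compatible with the big family $\cY=(Y_i)_{i\in I}$ because $g$, $r$, and $h$ all preserve the simplices of $X$ (each simplex is mapped into itself), so one can take $\lambda=\id$ in the second clause of Definition~\ref{ejwiofeoifefewfewf}.

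By Lemma~\ref{oepwfweoifewofiewofewf}, $g$ and $r$ descend to morphisms $g:D_h\to (Z\cap Y)_h$ and $r:(Z\cap Y)_h\to D_h$, and by Corollary~\ref{efwjweiofiu97249873942342423423424} (using $I=\IN$) the compatible homotopy $h$ implies $\Yo^s(g\circ r)\simeq \Yo^s(\id)$, whence $\Yo^s(D_h)\simeq \Yo^s((Z\cap Y)_h)$. To upgrade to an equivalence of pairs, I would observe that for every $i\in I$ the restriction $g_{|D\cap Y_i}:D\cap Y_i\to (Z\cap Y)\cap Y_i$ is already an equivalence of bornological coarse spaces in $\BC$ (with the structures induced from the original ones on $X$, which by Lemma~\ref{folrje099045u45z4} coincide with the structures induced from the hybrid ones). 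Passing to the colimit over $i\in I$ and using that $\Yo^s$ preserves colimits, we obtain the asserted equivalence of pairs.

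The main technical step is the verification of strong uniform continuity of $r$ and the homotopy $h$; this boils down to the fact that radial projection away from the barycenter is Lipschitz on the annular region $\overline{2/3\Delta^n}\setminus \inter(1/3\Delta^n)$ with a uniform Lipschitz constant, which works because we bounded away the barycenter. Everything else is bookkeeping analogous to the arguments already used in Lemmas~\ref{efwiofweife89u9ewf324423434} and~\ref{eifjweiofwefwefwefewf}.
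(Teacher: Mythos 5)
Your proposal is correct and follows essentially the same route as the paper: the paper's proof likewise takes the natural inclusion $D\to Z\cap Y$ and the locally radial retraction $r:Z\cap Y\to D$, observes that these are compatible and compatibly homotopy inverse to each other, and concludes via Corollary~\ref{efwjweiofiu97249873942342423423424} (with the pair statement handled, as you do, by noting that the restrictions to the members of the big families are already inverse equivalences in $\BC$ for the induced structures). Your write-up merely supplies more of the Lipschitz/compatibility bookkeeping that the paper leaves implicit.
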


\begin{proof}
We have a natural inclusion $D\to  Z\cap Y$ and we have also a (locally radial) retraction $r:Z\cap Y\to D$.
This retraction is compatible and a  compatible homotopy inverse to the inclusion.
The assertion of the lemma now follows from  Corollary \ref{efwjweiofiu97249873942342423423424}
\end{proof}

\subsection{Flasqueness of the coarsening space}\label{rgpojeroi34t35546456}

In this section we will introduce the coarsening space of Wright \cite[Sec.~4]{nw1}. 
The main result is that the coarsening space is flasque in the generalized sense if it is equipped with a suitable hybrid structure. This fact has first  been shown by Wright  \cite[Sec.~4]{nw1}. In the present section we will adapt the arguments of Wright to our setting.

\subsubsection{Construction of the coarsening space}\label{rtkohwrgwgvwerwevfds}

Let $(X,\cC,\cB)$ be a bornological coarse space,  and let $\cU=(U_{i})_{i\in I}$ be a cover of $X$, i.e., a family of subsets of $X$ such that $\bigcup_{i\in I} U_{i}=X$.  
\begin{ddd}
\mbox{}
\begin{enumerate}
\item The cover $\cU$ is bounded\index{bounded!cover} by   $V$ in $\cC$   if for every $i$ in $I$ the subset $U_{i}$ is $V$-bounded (see Definition \ref{jfewiofjweofwef234}). We say that $\cU$ is   bounded if it is $V$-bounded for some $V$ in $\cC$.
\item An entourage $V$ in $\cC$ is a Lebesgue\index{Lebesgue entourage}\index{entourage!Lebesgue} entourage\footnote{This name should resemble the notion of Lebesgue number in the case of metric spaces.} of $\cU$ if for every $V$-bounded subset $B$ of $X$ there exists $i$ in $I$ such that $B\subseteq U_{i}$.
 \end{enumerate}
\end{ddd}
The nerve $N(\cU)$ of a cover\index{nerve of a cover}
   $\cU$ is a simplicial set whose (non-degenerate) $n$-simplies are the $(n+1)$-tuples  $(i_{0},\dots,i_{n})$ in $I^{n+1}$  (without repetitions)  such that
   $\bigcap_{j=0}^{n} U_{i_{j}}\not=\emptyset$. The geometric realization of the nerve is   a  simplicial complex which  will be denoted by $\|\cU\|$.  It will be equipped 
with the path-metric induced from the spherical metric on the simplices. 
The choice of the spherical metric is required in order to have Point \ref{ewfjweofoewiio23423424} of Lemma \ref{grejhegiurz38t3z384rfwerfwfe}
which would not be true in general for other metrics, e.g., the Euclidean metric (see Example~\ref{ergi3234r534555}) {or the $\ell^1$-metric, if $\|\cU\|$ is infinite-dimensional.}
 If $\|\cU\|$ is not connected,  then we define the metric component-wise and let different components have  infinite distance. 
As in Example \ref{welifjwelife89u32or2} the metric defines a coarse structure $\cC_{\cU}$ on $\|\cU\|$.

We consider a cover $\cU=(U_{i})_{i\in I}$ of $X$. Note that we can identify the index set $I$ with the zero skeleton $\|\cU\|^{0}$ of the simplicial space  $\|\cU\|$.
 For every $x$ in $X$ we can choose $i$ in $I$ such that
$x\in U_{i}$.  This choice defines a map 
$$f:X\to \|\cU\|$$  
which actually maps $X$ to the zero skeleton.
In the other direction we define a map $$g:\|\cU\|\to X$$ such that it maps a point in the interior 
of a simplex in $\|\cU\|$ corresponding to a non-degenerate simplex $(i_{0},\dots,i_{n})$ in $N(\cU)$  to a point in the intersection
 $\bigcap_{j=0}^{n} U_{i_{j}}$.
 We use the map
$g:\|\cU\|\to X$ in order to induce a bornology $\cB_{\cU}:=g^{*}\cB$ on $\|\cU\|$ (Example~\ref{wfijweifjewfoiuewoifoewfu98fewfwfw}). 
 Recall the Definition~\ref{efijwefoie98u32234234} of a controlled map between sets equipped with coarse structures.
 
The following lemma is an easy exercise. 
\begin{lem}\label{grejhegiurz38t3z384rfwerfwfe}
\mbox{}
\begin{enumerate} 
\item The composition $f\circ g$ is $\pi/4$-close to the identity of $\|\cU\|$. \item
If $V$ in $\cC$ is a Lebesgue entourage of $\cU$, then $f:(X,\cC\langle V\rangle) \to (\|\cU\|,\cC_{\cU})$ is controlled. \item \label{ewfjweofoewiio23423424}
If  $\cU$ is bounded by $W$ in $\cC$, then $g:(\|\cU\|,\cC_{\cU})\to  (X,\cC\langle W\rangle)$ is controlled. Moreover $g\circ f$ is $W$-close to the identity of $X$. 
\item If $\cU$ is bounded, then the bornology $\cB_{\cU}$ is compatible with the coarse structure $\cC_{\cU}$.
\end{enumerate}
\end{lem}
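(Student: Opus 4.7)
The four clauses are independent routine verifications; I would address them in order, each amounting to a short unpacking of the definitions of $f$, $g$ and the spherical metric on $\|\cU\|$.

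For~(1), I fix $p\in\|\cU\|$ lying in the interior of a simplex $\sigma=[i_0,\ldots,i_n]$ of $N(\cU)$. By construction $g(p)\in\bigcap_j U_{i_j}$, and by definition of $f$ we have $g(p)\in U_{f(g(p))}$. Hence $\{f(g(p)),i_0,\ldots,i_n\}$ spans a simplex of $N(\cU)$ that contains both $p$ and $f(g(p))$, and the $\pi/4$ bound is extracted from an explicit calculation of distances inside a single spherical simplex (using that vertices of such a simplex are pairwise at distance $\pi/2$ and that midpoints of edges are at distance $\pi/4$ from the endpoints).

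For~(2), given a Lebesgue entourage $V\in\cC$ of $\cU$ and a pair $(x,y)\in V$, the subset $\{x,y\}$ is $V$-bounded and hence lies in some $U_i$. Then $f(x),i,f(y)$ are all vertices of a common simplex of $N(\cU)$, so $d_{\|\cU\|}(f(x),f(y))$ is uniformly bounded. Since $\cC\langle V\rangle$ is generated by $V$ alone, controllability of $f$ follows from this bound by composition and symmetrization.

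For~(3), I use Lemma~\ref{lem:sdf7834} to reduce to showing that $(g\times g)(U_r)$ is in $\cC\langle W\rangle$ for some fixed small $r$. For $(p,q)\in U_r$ with $r$ sufficiently small, the points $p$ and $q$ either lie in a common simplex or in two simplices sharing a common vertex $i$; in either case $g(p),g(q)\in U_i$, which is $W$-bounded, so $(g(p),g(q))\in W$. For larger entourages one iterates this along a near-geodesic from $p$ to $q$, the chain length being bounded by a function of the radius, and the $k$-fold composition stays in $\cC\langle W\rangle$. The $W$-closeness of $g\circ f$ to $\id_X$ is immediate: both $x$ and $g(f(x))$ lie in $U_{f(x)}$.

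For~(4), compatibility of $\cB_\cU=g^*\cB$ with $\cC_\cU$ is a direct consequence of~(3). Indeed, for $B\in\cB_\cU$ and $V\in\cC_\cU$ we have $g(V[B])\subseteq (g\times g)(V)[g(B)]$; the right-hand side is bounded in $X$ by the compatibility of $\cC$ and $\cB$ applied to the entourage $(g\times g)(V)\in\cC\langle W\rangle\subseteq\cC$ and the bounded set $g(B)$, so $V[B]\in\cB_\cU$.

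I do not expect any substantive obstacle; the only places where some care is required are the spherical-metric computations in part~(1) (to justify the specific constant $\pi/4$) and the choice of the small radius $r$ in part~(3) (ensuring that $r$-close points belong to simplices sharing a common vertex).
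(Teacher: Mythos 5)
The paper offers no proof of this lemma at all --- it is introduced with ``The following lemma is an easy exercise'' --- so there is no argument of the authors to compare yours against; I can only assess your proposal on its own terms. Your overall strategy for all four parts is the standard and correct one: put $p$ and $f(g(p))$ in a common simplex for (1), use the Lebesgue property to place $f(x)$ and $f(y)$ in the star of a common vertex for (2), reduce (3) to a local statement and chain along a subdivided path, and deduce (4) from (3) together with the compatibility of $\cC$ and $\cB$ on $X$. Two points, however, are not right as written.

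First, in (1) your justification of the constant fails. The relevant quantity is the distance from a point $p$ in the interior of its carrier $\sigma$ to the vertex $f(g(p))$ of the simplex spanned by $f(g(p))$ together with the vertices of $\sigma$ --- not the distance from an endpoint of an edge to its midpoint. In the spherical model $p$ is a unit vector supported on the coordinates belonging to $\sigma$, and whenever $f(g(p))$ is not itself a vertex of $\sigma$ it corresponds to an orthogonal basis vector, so $d(p,f(g(p)))$ equals exactly $\pi/2$; likewise the barycenter of an $n$-simplex lies at distance $\arccos(1/\sqrt{n+1})>\pi/4$ from every vertex once $n\ge 2$. The uniform bound this argument yields is therefore $\pi/2$, not $\pi/4$ (the constant in the statement itself appears to be a slip). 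Since closeness in $\BC$ only requires some finite bound, nothing downstream is affected, but you cannot claim to derive $\pi/4$ from midpoints of edges. A smaller issue of the same kind occurs in (2): $\{x,y\}$ is $V$-bounded only if $V$ also contains $(x,x)$, $(y,y)$ and $(y,x)$, so one should first replace $V$ by $V\cup V^{-1}\cup\diag_{X}$ (which does not change $\cC\langle V\rangle$) and note that this is harmless in every situation where the lemma is applied.

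Second, the one genuinely nontrivial step of the whole lemma is precisely the one you defer in (3): the existence of an $r_{0}>0$, \emph{independent of the dimension of the simplices}, such that $d(p,q)<r_{0}$ forces the carriers of $p$ and $q$ to share a vertex. This is the entire reason for choosing the spherical rather than the Euclidean metric (the paper says so immediately before the lemma, and Example \ref{ergi3234r534555} shows the analogous statement fails for the Euclidean path metric), so it should not be left as a closing remark. Here is the missing argument. For a vertex $v$ the barycentric coordinate $t_{v}$ is a well-defined continuous function on $\|\cU\|$, and in the spherical model $t_{v}(x)=x_{v}^{2}$. Hence, writing $\sigma$ for the carrier of $p$, the function $u:=\sum_{v\in\sigma}t_{v}$ restricts on each simplex to $x\mapsto |Px|^{2}$ for an orthogonal coordinate projection $P$, whose gradient on the unit sphere has norm at most $2$; thus $u$ is $2$-Lipschitz for the path metric. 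Since $u(p)=1$ while $u(q)=0$ whenever the carrier of $q$ shares no vertex with $\sigma$, disjoint carriers force $d(p,q)\ge 1/2$, and one may take $r_{0}=1/2$. With this in place your chaining argument for (3), the $W$-closeness of $g\circ f$, and part (4) all go through as you describe.
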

 
The above lemma immediately implies the following corollary.

 \begin{kor}\label{lem:sdf234r2}
 \mbox{}
 \begin{enumerate}
 \item  If $\cU$ is a bounded cover, then $(\|\cU\|,\cC_{\cU},\cB_{\cU})$
 is a bornological coarse space and $g : (\|\cU\|,\cC_{\cU},\cB_{\cU})\to (X,\cC,\cB)$ is a morphism.
 \item If $V$ is a Lebesgue entourage and $W$ is a bound of the cover $\cU$ such that  $ \cC\langle V\rangle=\cC\langle W\rangle$,   then $f$ and $g$ are inverse to each other equivalences in $\BC$ between
$X_{V}$ (see Example \ref{wfeoihewiufh9ewu98u2398234324234}) and $(\|\cU\|,\cC_{\cU},\cB_{\cU})$. 
\end{enumerate}
\end{kor}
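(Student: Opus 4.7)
The plan is to deduce both assertions directly from Lemma~\ref{grejhegiurz38t3z384rfwerfwfe} combined with the definitions of bornological coarse spaces and of the induced structures $\cC_{\cU}$, $\cB_{\cU}$. In each case the controlledness of $f$ and $g$ together with the closeness of their compositions to identities is already provided by the lemma; what remains is to verify properness and to translate the closeness statements into the correct coarse structures.

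For the first part, compatibility of $\cB_{\cU}$ with $\cC_{\cU}$ is Point~(4) of Lemma~\ref{grejhegiurz38t3z384rfwerfwfe}, so $(\|\cU\|,\cC_{\cU},\cB_{\cU})$ is a bornological coarse space. Controlledness of $g\colon(\|\cU\|,\cC_{\cU})\to(X,\cC)$ is Point~(3) (applied with $W$ a bound of $\cU$, noting $\cC\langle W\rangle\subseteq\cC$). Properness of $g$ is tautological: by construction $\cB_{\cU}=g^{*}\cB$, so $g^{-1}(B)\in\cB_{\cU}$ for every $B\in\cB$.

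For the second part, assume $V$ is a Lebesgue entourage and $W$ a bound with $\cC\langle V\rangle=\cC\langle W\rangle$. I first check that $f\colon X_{V}\to(\|\cU\|,\cC_{\cU},\cB_{\cU})$ is a morphism. Controlledness is Point~(2) of the lemma. For properness I use that by Point~(3) the composition $g\circ f$ is $W$-close to $\id_{X}$, so for every $B\in\cB$ one has $(g\circ f)^{-1}(B)\subseteq W^{-1}[B]$, which is bounded by compatibility of $\cC$ and $\cB$. Since every element of $\cB_{\cU}$ is contained in a finite union of sets $g^{-1}(B)$ with $B\in\cB$, the preimage $f^{-1}$ of any bounded subset of $\|\cU\|$ is a finite union of bounded subsets of $X_{V}$ and hence bounded. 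Thus $f$ is a morphism.

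To finish, the fact that $f$ and $g$ are inverse equivalences in $\BC$ follows from Definition~\ref{ioioieorwerr3245345345432} once we observe that the closeness relations of Points~(1) and~(3) of Lemma~\ref{grejhegiurz38t3z384rfwerfwfe} are witnessed by entourages of the respective coarse structures: the $\pi/4$-neighbourhood of the diagonal is a member of $\cC_{\cU}$ by construction of the metric on $\|\cU\|$, while $W$ lies in $\cC\langle W\rangle=\cC\langle V\rangle$, the coarse structure of $X_{V}$. I anticipate no real obstacle; the only mildly delicate step is the translation of $g\circ f\sim\id_{X}$ into properness of $f$, which is the one place where compatibility of bornology and coarse structure enters in an essential way.
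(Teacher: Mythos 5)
Your proof is correct and follows the same route as the paper, which simply states that the corollary is an immediate consequence of Lemma~\ref{grejhegiurz38t3z384rfwerfwfe}; you have filled in the routine verifications (in particular properness of $f$ via the $W$-closeness of $g\circ f$ to $\id_X$ and the compatibility of $\cB$ with $\cC$) that the paper leaves implicit.
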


%
%

Let $\hat \cV = (\cV_i)_{i \in I}$ be a family of covers of $X$ indexed by a partially ordered set $I$.
\begin{ddd}
\index{anti-\v{C}ech system}
\label{wojwfooewfofewf234324}
 We call $\hat \cV$ an anti-\Cech system if the following two conditions are satisfied:
\begin{enumerate}
\item  For all $i,j$ in $I$ with $j<i$ exists a Lebesgue entourage of $\cV_{i}$ which is a bound of $\cV_{j}$. 
\item Every entourage of $X$ is a Lebesgue entourage for some member  of the family $\hat \cV$. 
 \end{enumerate}
\end{ddd}

\begin{ex}\label{wefhweofwfwefwefw}
Note that the notion of an anti-\Cech system only depends on the coarse structure. Let $(X,\cC)$ be a coarse space. For every entourage $U$ in $\cC$ we consider the cover $\cV_U$ consisting of all $U$-bounded subsets of $X$. Then $(\cV_{U})_{U\in \cC}$ is an anti-\Cech system. In this case $\cV_{U}$ is $U$-bounded and has  $U$ as a Lebesgue entourage.
\hB
\end{ex}

We will now construct the coarsening space.
We   consider a bornological coarse space $(X,\cC,\cB)$  {and} assume that 
$\cC=\cC\langle U\rangle$ for  an entourage $U$ in $\cC$.   We consider an 
anti-\Cech system $\hat \cV = (\cV_n)_{n \in \nat}$ indexed by the natural numbers  {and} assume that $U$ is a Lebesgue entourage for $\cV_{n}$ for all $n$ in $ \nat$. Our assumption on $X$ implies that   $\cC\langle U\rangle=\cC\langle V\rangle$ for every coarse entourage $V$ of $X$ containing $U$. Hence $(X,\cC,\cB)$ and $(\|\cV_{n}\|,\cC_{\cV_{n}},\cB_{\cV_{n}})$ are equivalent bornological coarse spaces by Corollary~\ref{lem:sdf234r2}.


For every $n$ in $\nat$ we can choose a refinement $\cV_{n}\to \cV_{n+1}$. In detail, let  $\cV_{n}=(V_{i})_{i\in I_{n}}$.  Then a refinement is a map
$\kappa_{n}:I_{n}\to I_{n+1}$ such that $V_{i}\subseteq V_{\kappa(i)}$ for all $i$ in $I_{n}$. Such a refinement determines a morphism of simplicial sets $N(\cV_{n})\to N(\cV_{n+1})$ and finally a map
  \begin{equation}\label{gerij234oijo4ifjofnewofewfew34r4}
\phi_n :  \|\cV_n\| \to \|\cV_{n+1}\|
\end{equation}
between the geometric realizations of the nerves. Note that these maps are contracting {since we are using the spherical metric on the simplices (they would be also contracting for the $\ell^1$-metric on the simplices, but in general not for the Euclidean metric).}
  
%

We  construct the coarsening space $(\|\hat \cV\|,\cC_{\hat \cV},\cB_{\hat \cV})$ following Wright \cite[Def.~4.1]{nw1}. It will furthermore be equipped with a uniform structure $\cT_{\hat \cV}$.
  The underlying topological space of the coarseninig space\index{coarsening space}\index{space!coarsening} is given by
 \begin{equation}\label{fwhhefwifiui23r2r23423434}
\|\hat \cV\| := \big( [0,1] \times \|\cV_0\| \big) \cup_{\phi_0} \big( [1,2] \times \|\cV_1\| \big) \cup_{\phi_1}([2,3]\times \|\cV_{2}\|)\cup_{\phi_{2}} \cdots.
\end{equation} 
We triangulate the products $[n,n+1] \times \|\cV_n\|$ for every $n\in \nat$ in the standard way. The structure of a simplicial complex provides a path-metric. These path-metrics induce a path-metric on $\|\hat \cV\|$, which in turn induces a coarse structure $\cC_{\hat \cV}$ and a uniform structure~$\cT_{\hat \cV}$. 
The bornology $\cB_{\hat \cV}$ is 
 generated by sets of the form $[n,n+1] \times B$ for all $n$ in $\nat$ and   $B$ in $\cB_{\cV_{n}}$.
 One checks that this bornology is compatible with the coarse structure. We thus get a bornological coarse space $(\|\hat \cV\|,\cC_{\hat \cV},\cB_{\hat \cV})$.
 Furthermore, the uniform structure $\cT_{\hat \cV}$  is compatible, see Example \ref{ex:jnksdf34}.  {Since we are using the spherical metric on the simplices, we get that for every $n$  in $\IN$ the subspace $\{n\} \times \|\cV_n\|$ of $\hat \cV$ with the coarse structure induced from $\cC_{\hat \cV}$ is equivalent to $\|\cV_n\|$ with the coarse structure $\cC_{\cV_{n}}$ \cite[Lem.~A.5]{nw1}. It is not known to the authors whether this property also holds if we equip our simplicial complexes with the $\ell^1$-metric or with the Euclidean metric.}
 
 If we choose another family of refinements  $(\kappa_{n}^{\prime})_{n\in \nat}$, then we get
 a different simplicial complex $\|\hat \cV \|^{\prime}$. It consists of the same pieces $[i,i+1)\times \|\cV_{i}\|$ which are glued differently. One can check that the obvious (in general non-continuous) bijection which identifies these pieces is an equivalence of bornological coarse spaces between  $\|\hat \cV \|$ and $\|\hat \cV \|^{\prime}$.
 
\begin{rem}
   Philosophically, the choice of the family of refinements $(\kappa_{n})_{n\in \nat}$ should be part of  the data of an  anti-\Cech system. Then the notation $\|\hat \cV \|$ would be unambiguous. 
   But we prefer to use the definition as stated since in arguments below we need the freedom to change the
   family  of refinements. 
   Also Example \ref{wefhweofwfwefwefw} would need some adjustment.
\hB
\end{rem}

We show below flasqueness of the coarsening space for two different coarse structures, which we define now.

We consider the   projection to the first coordinate
\begin{equation}\label{fwehfwuiefhiewufuiewhfiewf2343}
\pi:\|\hat \cV\|\to [0,\infty)
\end{equation}
and   define the subsets $\|\hat \cV\|_{\le n}:=\pi^{-1}([0,n])$. We observe that $(\|\hat \cV\|_{\le n})_{n\in \nat}$ is a big family.
Following  Wright \cite{nw1} we introduce the following notation:  
  \begin{itemize}
\item By  $\|\hat \cV\|_0$ we denote $\|\hat \cV\|$ equipped with the $\cC_0$-structure (see Example \ref{ex:df8734r}). 
\item\label{wfeopfjweoif234434} By  $\|\hat \cV\|_h$ we denote $\|\hat \cV\|$ with the hybrid structure (see Definition \ref{defn:sdfuh8934t}) associated to the big family $(\|\hat \cV\|_{\le n})_{n \in \IN}$.
\end{itemize}

\subsubsection{Flasqueness for the \texorpdfstring{$\cC_0$}{C0}-structure}

We consider a bornological coarse space $(X,\cC,\cB)$,  an entourage $U$ of $X$, and an anti-\v{C}ech system $\hat \cV$ for $X$.
\begin{theorem}[{\cite[Thm.~4.5]{nw1}}]
\label{thm:sdf934v}
 We assume that
 \begin{enumerate}
  \item $\cC=\cC\langle U\rangle$,
  \item  $\cB$ is generated by  $\cC$ (see Example \ref{wfijiofweofewfwefewfewf}), and
  \item  for every $n$ in $\nat$ there exists a bound $U_{n}$ in $\cC$ of $\cV_{n}$ such that $U_{n}^{4}$ is a Lebesgue entourage of $\cV_{n+1}$.
  \end{enumerate}
   Then  $\|\hat \cV\|_0$ is flasque in the generalized sense.
\end{theorem}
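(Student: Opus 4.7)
The plan is to exhibit a sequence of maps $(f_k)_{k\in \IN}$ with $f_k\colon \|\hat{\cV}\|_0 \to \|\hat{\cV}\|_0$ satisfying the four conditions of generalized flasqueness (Definition~\ref{ddd:rf435f}). The natural candidate is the time-shift coupled with iterated refinement: on the piece $[n,n+1]\times \|\cV_n\|$ of~\eqref{fwhhefwifiui23r2r23423434}, set
\[ f_k(s, x) := \bigl(s+k,\ \phi_{n+k-1}\circ\cdots\circ\phi_n(x)\bigr) \in [n+k,n+k+1]\times \|\cV_{n+k}\|. \]
Compatibility with the gluing identifications $(n+1,x)\sim (n+1,\phi_n(x))$ is immediate. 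Each $f_k$ is controlled for the path-metric (the refinements are $1$-Lipschitz) and proper (the preimage of $\pi^{-1}([0,N])$ under $f_k$ lies in $\pi^{-1}([0,N-k])$, hence is bounded in $\cB_{\hat{\cV}}$), so is a morphism in $\BC$ prior to checking the $\cC_0$-condition.

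Conditions~\ref{sdfn34r}, \ref{asdfgjtzkj56} and~\ref{fwoiejweoifeowi23} of Definition~\ref{ddd:rf435f} reduce to routine verifications: $f_0=\id$; the pair $(f_k(p),f_{k+1}(p))$ sits on a single length-one arc across the gluing by $\phi_{n+k}$, so $\bigcup_k (f_k\times f_{k+1})(\diag)$ lies in a fixed path-metric entourage; and since every bounded $B\in \cB_{\hat{\cV}}$ lies in $\pi^{-1}([0,N])$ for some $N$ while $f_k(\|\hat{\cV}\|)\subseteq \pi^{-1}([k,\infty))$, we have $f_k(\|\hat{\cV}\|)\cap B=\emptyset$ whenever $k>N$.

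The main obstacle is Condition~\ref{sdfn3443243t6}: for every $\cC_0$-entourage $U$ of $\|\hat{\cV}\|$, the union $\bigcup_{k\in\IN}(f_k\times f_k)(U)$ must again be a $\cC_0$-entourage. By Remark~\ref{fwoejweofewfewfwefwefwe34345345345345} (which applies since the uniform structure is metric and the big family is indexable by $\IN$), this means the union must be a path-metric entourage which for every $\epsilon>0$ differs from $U_\epsilon$ only inside some bounded product $B\times B$. Since $f_k$ shifts time by $k$, its images eventually escape any bounded region; what really needs to be proved is a uniform contraction estimate: $d(f_k(p),f_k(q))\to 0$ as $k\to \infty$, uniformly in $(p,q)\in U$. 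This reduces to showing that the iterated refinement $\phi_{n+k-1}\circ\cdots\circ\phi_n$ contracts arbitrarily much as $k$ grows.

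The contraction is forced by the hypothesis that $U_n$ is a bound of $\cV_n$ and $U_n^4$ is a Lebesgue entourage of $\cV_{n+1}$: if $p\in V_i$ and $q\in V_j$ in $\cV_n$ with $(p,q)\in U_n$, then $V_i\cup V_j\subseteq U_n^2[p]$ is $U_n^4$-bounded and hence contained in a common member of $\cV_{n+1}$. One can therefore choose the refinement $\kappa_n$ so that $U_n$-close vertices of $I_n$ map to a common vertex of $I_{n+1}$, forcing $\phi_n$ to identify nearby vertices of $\|\cV_n\|$. Iterating, a $\cC_0$-close pair $(p,q)$ escaping to sufficiently high time must be $U_m$-close for arbitrarily large $m$, so after finitely many further refinements its image is supported on a single vertex of $\|\cV_{n+k}\|$, giving $d(f_k(p),f_k(q))<\epsilon$ for all large $k$. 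Controlling this collapse quantitatively---keeping careful track of how initial path-metric data is transported through successive $\phi_n$'s---is the main technical step, following Wright's original argument in~\cite[\S4]{nw1}.
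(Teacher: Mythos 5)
Your maps $f_k(s,x)=(s+k,\phi_{n+k-1}\circ\cdots\circ\phi_n(x))$ cannot satisfy Condition~\ref{asdfgjtzkj56} of Definition~\ref{ddd:rf435f}, and this is a fatal structural problem rather than a missing estimate. Since the projection $\pi\colon\|\hat\cV\|\to[0,\infty)$ is $1$-Lipschitz for the path-metric and $\pi(f_{k+1}(p))-\pi(f_k(p))=1$, every pair in $(f_k\times f_{k+1})(\diag)$ has distance at least $1$; moreover such pairs occur at time $s+k$, hence outside every bounded subset. By Remark~\ref{fwoejweofewfewfwefwefwe34345345345345}, a $\cC_0$-entourage must, for $\epsilon=1/2$, be contained in $(Y\times Y)\cup U_{1/2}$ for a single bounded $Y$; your union $\bigcup_k(f_k\times f_{k+1})(\diag)$ contains distance-$\ge 1$ pairs arbitrarily far out and so is an entourage of the path-metric structure but never of the $\cC_0$-structure. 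This is exactly the phenomenon noted in Example~\ref{ifuweiofeowfew982342343434}: the shift $(t,x)\mapsto(t+1,x)$ witnesses flasqueness of $[0,\infty)\times_p Y$ but fails for the cone $\cO(Y)$ because the $\cC_0$-structure is smaller, and the coarsening space is a thickened cone.

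The paper's proof avoids this by not shifting time at all at infinity. It applies Theorem~\ref{thm:sdf09234} with $e(x)=d(*_x,x)$ and the map $\Phi$ of \eqref{eq:fweh9432r}, which pushes a point at time $s$ forward only to time $t$ when $s\le t$ and fixes it otherwise; the flasqueness witnesses are $\Psi_S=\Phi_{r_S(e(\cdot))}$ built from the ramp functions $r_S$, which are the identity outside the bounded set $\{e\le \log(S)-1/\log(S)+S/\log(S)\}$ and satisfy $|r_S-r_{S+1}|\le C/\min\{1,\log(S)\}$, so that $\Psi_S$ and $\Psi_{S+1}$ differ by at most $\epsilon$ outside a bounded set — this is what makes Condition~\ref{asdfgjtzkj56} work in the $\cC_0$-structure. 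Your observation about the hypothesis that $U_n^4$ is a Lebesgue entourage of $\cV_{n+1}$ forcing the refinements to collapse $U_n$-close data is essentially correct and is the content of Wright's Lemma~4.4 as reproved in the paper (it is what makes $\Phi$ contracting, i.e.\ Condition~\ref{fwijwefoi2434545345} of Section~\ref{ekfjweofjweoifewfwefwefewf}), but it enters as an input to Theorem~\ref{thm:sdf09234}, not as a repair of the time-shift maps.
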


\begin{proof}
We will apply Theorem~\ref{thm:sdf09234}. Every connected component of $\|\hat \cV\|$ meets the front face $  \{0\} \times \|\cV_0\|$.
For every component we choose a base point in this front face.
 We then define the function
$$e:\|\hat \cV\| \to [0,\infty)\, , \quad e(x) := d(*_{x},x)\, ,$$ 
 where $*_{x}$ denotes  the chosen base point $*_{x}$ in the component of $x$. 
 This function is Lipschitz continuous with Lipschitz constant~$1$. It is  therefore  uniformly continuous and controlled. 
 By Corollary  \ref{lem:sdf234r2} the bornology on the pieces $\|\cV_{n}\|$ is the bornology of metrically bounded subsets.
 This implies that $e$ is also  bornological.
 Consequently,   
 $e$ satisfies the three  conditions listed in Section \ref{ekfjweofjweoifewfwefwefewf}.   
 Furthermore note that the family $(\{e \le n\})_{n \in \IN}$ of subsets  is a cofinal sequence in the bornology $\cB_{ \hat \cV}$. Hence the hybrid structure determined by $e$ is the $\cC_{0}$-structure.

It remains to construct the  morphism $\Phi  \colon  [0,\infty)\ltimes \|\hat \cV\|\to \|\hat \cV\|$ with the properties listed in  Section \ref{ekfjweofjweoifewfwefwefewf}. First note that we are free to choose the  family of  refinements $(\kappa_{n}\colon  \cV_n\to\cV_{n+1})_{n\in \nat}$
since being flasque in the generalized sense is invariant under equivalences of bornological coarse spaces.

We will choose the refinements later after we have explained which properties they should have. For the moment we adopt some choice. Then  
we define  \cite[Def.~4.3]{nw1}  the map $\Phi  :  [0,\infty)\ltimes \|\hat \cV\| \to \|\hat \cV\|$ by
\begin{equation}
\label{eq:fweh9432r}
\Phi(t,(s,x)) :=
\begin{cases}
\big( t,(\phi_{\lfloor t \rfloor -1 } \circ \cdots \circ \phi_{\lfloor s \rfloor})(x) \big) & \text{ for }  \lfloor s \rfloor < \lfloor t \rfloor \\
\big( t,  \phi_{\lfloor s \rfloor}(x) \big) & \text{ for }  s \le t,   \lfloor s \rfloor = \lfloor t \rfloor \\
 \big( s,x \big)& \text{ for } s \ge t
\end{cases}
\end{equation}
(see \eqref{gerij234oijo4ifjofnewofewfew34r4} for $\phi_{n}$).
Note that  \begin{equation}\label{vfsvrewwesdfvsfdvsdfverfv}
\Phi(t,\Phi(s,(u,x))=\Phi(\max\{t,s\},(u,x))\, .
\end{equation} 
We must check that $\Phi$ has the properties required for  Theorem~\ref{thm:sdf09234}. 
By definition of $\Phi$ we have $\Phi_{0}=\id$ and  Conditions \ref{sdjnfwersfd43} and \ref{qrgijoqwfdeqdwqedq} are satisfied. Using that 
the maps $\phi_{n}$ are contracting we see that $\Phi$ is uniformly continuous.
Since the maps $\phi_{n}$ are coarse equivalences one furthermore checks that
$\Phi$ is a morphism in $\BC$.

The only non-trivial one is that $\Phi$ is contracting. Below we show that we can choose the refinements for the anti-\v{C}ech system such that $\Phi$ is contracting. 
 So by Theorem~\ref{thm:sdf09234} we conclude that $\|\hat \cV\|_0$ is flasque in the generalized sense.

We now reprove \cite[Lem.~4.4]{nw1} in our somewhat more general setting, which will imply that   $\Phi$ is contracting {(we will give the argument for this at the end of this proof).} The statement is that the refinements $\kappa_n  \colon  I_n \to I_{n+1}$ can be chosen with the following property: for every bounded subset $B$ of $X$ which is contained in a single coarse component (see Definition \ref{ekhqiuheiuqe123122342334}) there exists an integer $N$ such that {for all $k$ in $\nat$ with $k \le N$.}
\begin{equation}
\label{eq:dsf98234}
(\kappa_{N}\circ \kappa_{N-1}\circ \dots \circ {\kappa_{k}})(\{i\in {I_{k}} :   U_{i}\cap B\not=\emptyset \}) \subseteq I_N
\quad \text{consists of a single point}\, .
\end{equation}
 
As a first step we observe that we can assume that $X$ is coarsely connected. Indeed we can do the construction below for every coarse component separately. 
%
%

We now assume that $X$ is coarsely connected and we fix a base point $x_{0}$ in $X$. For every $n$ in $\nat $ we let $J_{n}$  be the index subset of $I_{n}$ of members of the covering $\cV_{n}$ which intersect   $U_{n}[x_{0}]$ non-trivially. We now define the connecting maps  with  the property that $\kappa_{n}(J_{n})$ consists of a single point: 
Since $U_{n}$ is a bound of $\cV_{n}$ the union $\bigcup_{j\in J_{n}} V_{j}$ is bounded by $U_{n}^{4}$. Since this is by assumption a Lebesgue entourage of $\cV_{n+1}$ there exists $i$ in $I_{n+1}$ such that
$V_{j}\subseteq V_{i}$ for all $j$ in $J_{n}$. We define $\kappa_{n}(j):=i$ for all $j$ in $J_{n}$. Then we extend $\kappa_{n}$ to $I_{n}\setminus J_{n}$.

Assume now that $B$ is bounded. Since we assume that the bornology is generated by the coarse structure  there exists an entourage $U_{B}$ in $\cC$ such that
$B\subseteq U_{B}[x_{0}]$. If we choose $N$ in $\nat$ so large that $U_{B}\subseteq {U_{N}}$, then 
\eqref{eq:dsf98234} holds.

We now show now that \eqref{eq:dsf98234} implies that $\Phi$ is contracting. Given $i$ in $\IN$, $\varepsilon$ in $(0,\infty)$   and a coarse entourage $V$ of $\|\hat \cV\|$ we must provide a number  $T$ in $(0,\infty)$ with the property
\[\forall x,y\in \|\hat \cV\|, \ \forall t\in [T,\infty) \: : \:   (x,y)\in V \cap (Y_{i}\times Y_{i}) \Rightarrow (\Phi_{t}(x),\Phi_{t}(y)) \in U_{\epsilon}\, .\]
Here $Y_i = \{e \le i\}$ is a member of the big family which is used to define the $\cC_0$-structure. Now $Y_i$ is a bounded subset of $\|\hat \cV\|$. In particular, it is contained in a finite union of generating bounded subsets which are of the form $[n,n+1] \times B$ with $B$  in $\cB_{\cV_n}$ for $n$ in $\nat$.
%
%

Let $x$ and $y$ be points in  $Y_i$. There are an integer~$j$, natural numbers $n_1, \ldots, n_j  $ and bounded subsets $B_l$ in $\cB_{\cV_{n_l}}$ for $1 \le l \le j$ such that $Y_i \subseteq \bigcup_{l = 1}^j [n_l, n_l + 1] \times B_{n_l}$. Corollary~\ref{lem:sdf234r2} provides an equivalence of bornological coarse spaces $\omega_0 : X \to \|\cV_0\|$, and we get equivalences $\omega_r : X \to \|\cV_r\|$ by composing $\omega$ with the equivalence $\phi_{r-1} \circ \cdots \circ \phi_0$, see \eqref{gerij234oijo4ifjofnewofewfew34r4}. We get a bounded subset
\[B_X := \omega^{-1}_{n_0}(B_{n_0}) \cup \ldots \cup \omega^{-1}_{n_l}(B_{n_l})\]
of $X$. By \eqref{eq:dsf98234} we get a corresponding integer $N$, and then $T := \max\{N + 1, i+1\}$   does the job. 
\end{proof}

\begin{rem}
If $\hat \cV$ is an anti-\v{C}ech system on a bornological coarse space $X$ whose coarse structure has a single generating entourage and whose bornology is the underlying one, then by passing to a subsystem we can satisfy the third condition of Theorem \ref{thm:sdf934v}.
\hB
\end{rem}

\subsubsection{Flasqueness for the hybrid structure}

We now discuss flasqueness of $\|\hat \cV\|_{h}$.

Let $(X,\cC)$ be a coarse space.
\begin{ddd}[{\cite[Def.~2.7]{dadarlat_guentner}}]
\index{exact space}\index{space!exact}
\label{ljweofijweoifuewfewu9fewfwefewfewfewfewf}
  $(X,\cC)$ is exact, if for all  $U$ in $\cC$ and all $\epsilon$ in $(0,\infty)$ there exists a   bounded 
cover $\cW = (W_i)_{i\in I}$   and a partition of unity $(\varphi_i)_{i \in I}$   subordinate to $\cW$ such that
\begin{equation}
\label{eq:xvcjnk34}
\forall (x,y)\in U :\:\: \sum_{i \in I} |\varphi_i(x) - \varphi_i(y)| \le \epsilon\, .
\end{equation}
\end{ddd}

\begin{rem}\label{dqjoiqiwqwodu98u89123213}
We assume that  $(X,d)$ is a metric space and   consider the coarse structure $\cC:=\cC_{d}$ on $X$ defined by the metric. In this situation Dadarlat--Guentner \cite[Prop.~2.10]{dadarlat_guentner} proved that we have the chain of implications
$$\xymatrix{&\text{finite asymp.\ dim.}\ar@{=>}[d]&\\\text{Property A}\ar@{=>}[r]& \text{exact}\ar@{=>}[r]& \text{uniformly embeddable}  }$$
Here uniformly embeddable\index{uniformly!embeddable} abbreviates uniformly embeddable into a Hilbert space \cite{yu_embedding_Hilbert_space}. 

Property A\index{Property A} has been introduced by Yu~\cite[Sec.~2]{yu_embedding_Hilbert_space}, where he also showed that finitely generated, amenable groups have Property A \cite[Ex.~2.3]{yu_embedding_Hilbert_space}.

A metric space {$(X,d)$} is uniformly discrete if there exists an $\epsilon$ in $(0,\infty)$   such that $d(x,y) > \epsilon$ for all $x, y$ in $X$ with $ x\not= y $.
{Such a space} has bounded geometry\index{bounded geometry!of a discrete metric space} if for all $r $ in $(0,\infty)$ we have $$\sup_{x \in X} |B_d(x,r)| < \infty\, ,$$ where $|A|$ denotes the number of points in the set $A$.
Equivalently, {a uniformly discrete metric space $X$ has bounded geometry, if} the bornological coarse space $X_{d}$ has strongly bounded geometry in the sense of Definition \ref{ejfwjefklwejlkewfwefwefwefewf}.
If $(X,d)$ is {uniformly} discrete and has bounded geometry, then exactness is equivalent to Property A {\cite[Prop.~2.10(b)]{dadarlat_guentner}}. {Tu proved that under the bounded geometry assumptions there are many more equivalent reformulations of Property A \cite[Prop.~3.2]{tu}.}

Higson--Roe \cite[Lem.~4.3]{higson_roe_propA} proved  that bounded geometry metric spaces with finite asymptotic dimension\index{finite asymptotic dimension}  {(Definition~\ref{frjewifoewfewfewf})} have Property A.  {Hence such spaces are exact.}

 {Willett \cite[Cor.~2.2.11]{w_exact} has shown that a 
 metric space of finite asymptotic dimension is exact. Note that the cited corollary states that spaces of finite asymptotic dimension have Property A under the assumption   of bounded geometry. A close examination of the proof reveals that it is first shown that spaces of finite asymptotic dimension are exact, and then  bounded geometry  is used to verify   Property A.}

Exactness has the flavor of being a version of coarse para-compactness. The relationship of Property A to versions of coarse para-compactness was further investigated in \cite{cencelj_dydak_vavpetic} and \cite{dydak}.
\hB
\end{rem}

 {
Let $(X,\cC)$ be a coarse space.
\begin{ddd}[{{\cite[Sec.\ 1.E]{gromov}}}]\label{frjewifoewfewfewf}
  $(X,\cC )$ has finite asymptotic dimension\index{asymptotic dimension}\index{finite asymptotic dimension} if it admits an anti-\Cech system $\hat \cV=(\cV_{n})_{n\in \nat}$ with $\sup_{n\in \nat} \dim \|\cV_{n}\|<\infty$.
\end{ddd}
}

We consider a bornological coarse space $X$ with coarse structure $\cC$ and bornology $\cB$. Furthermore, let     $\hat \cV=(\cV_{n})_{n\in \nat}$ be an  anti-\Cech system on $X$.
\begin{theorem}[Wright~{\cite[Thm.~5.9]{nw1}}]
\label{thm:sdfdvw4v}
Assume:\begin{enumerate}
  \item There exists  $U$ in $\cC$ such that  $ \cC=\cC\langle U\rangle$.
  \item \label{oifjqoefewfqwfqewf} $\sup_{n\in \nat} \dim \|\cV_{n}\|<\infty$.
\end{enumerate}
Then   the bornological coarse space $\|\hat \cV\|_h$ is flasque in the generalized sense.
\end{theorem}

\begin{proof}
Since $X$ admits an anti-\Cech system $\hat \cV=(\cV_{n})_{n\in \nat}$ with $\sup_{n\in \nat} \dim \|\cV_{n}\|<\infty$  
it has finite asymptotic dimension.

We will apply Theorem~\ref{thm:sdf09234}. We let $e:=\pi$   be the projection defined in \eqref{fwehfwuiefhiewufuiewhfiewf2343}. It is straightforward to check that $e$ is uniformly continuous, controlled and bornological, i.e., $e$ satisfies all  three conditions  listed in Section \ref{ekfjweofjweoifewfwefwefewf}. 

We must  provide the morphism $\Phi_{h}  :  [0,\infty)\ltimes \|\hat \cV\| \to \|\hat \cV\|$ (we use the subscript $h$ in order to distinguish it from the morphism $\Phi$ in Equation \eqref{eq:fweh9432r}).

Corollary~\ref{lem:sdf234r2} provides an equivalence of bornological coarse spaces $\omega_0 : X \to \|\cV_0\|$, and we get equivalences $\omega_r : X \to \|\cV_r\|$ by composing $\omega$ with the equivalence $\phi_{r-1} \circ \cdots \circ \phi_0$, see \eqref{gerij234oijo4ifjofnewofewfew34r4}.  {Since asymptotic dimension is a coarsely invariant notion it follows that  each $\|\cV_r\|$ also has finite asymptotic dimension. Furthermore, the zero skeleton $\|\cV_r\|^{0} $ with the induced bornological coarse structure is coarsely equivalent to $\|\cV_r\|$ and therefore also has finite asymptotic dimension. By Willett \cite[Cor.~2.2.11]{w_exact} a quasi-metric space of finite asymptotic dimension is exact, hence $\|\cV_r\|^{0}$ is exact.} For each $r$ we fix an inverse equivalence $\omega_r^0$ to the inclusion $\|\cV_r\|^{0} \to \|\cV_r\|$.

The argument below will involve the following construction.
It depends on an integer $T$ in $\nat$, a number $\varepsilon$ in $(0,\infty)$ and an entourage $V$ of $\|\cV_{T}\|^{0}$. 
The construction then produces an integer $S$ with $T\le S$ and a map $ \Psi_{T,S} :  \|\cV_T\| \to \|\cV_S\|$
such that \begin{equation}
\label{eq_43wwwt23e3}
\forall (x,y)\in V : \operatorname{dist}(\Psi_{T,S}(x),\Psi_{T,S}(y)) \le C\cdot \varepsilon\, ,
\end{equation}
where $C > 0$ is a constant only depending on the dimension of $\|\hat \cV\|$.  At this point we use Assumption~\ref{oifjqoefewfqwfqewf}.
Note that
\eqref{eq:xvcjnk34} gives an estimate with respect to the $\ell^{1}$-metric and the constant arrises since we work with the spherical metric.

 We now describe the construction. 
Let  $T$ in $\IN$,  an entourage $V$ {of  $\|\cV_T\|^{0}$}, and $\epsilon$ in $(0,\infty)$ be given.   By  exactness of $\|\cV_T\|^{0}$ we can find a   bounded cover $\cW_T$ of $\|\cV_T\|^{0}$ and a subordinate partition of unity $(\varphi_W)_{W \in \cW_T}$ with Property \eqref{eq:xvcjnk34} for the given $\varepsilon$ and entourage $V$ (in place of $U$). We define a map $\Psi_T :  \|\cV_T\|^{0} \to \|\cW_T\|$~by
\[\Psi_T(x) := \sum_{W \in \cW_T} \varphi_W(x)[W]\, .\]

Note that we can identify $\|\cV_{T}\|^{0}=I_{T}$.
Since $\hat \cV$ is an anti-\Cech system and $\cW_T$ is   bounded, there is an $S$ in $\IN $ with $S > T$ such that for every member $W$ of $\cW_T$  there exists an element denoted by $\kappa(W)$ in $  I_{S}$ such that $V_{i}\subseteq V_{\kappa(W)}$ for all $i$ in $W$. The map $\kappa$ from the index set of $\cW_{T}$ to $I_{S}$ determines a map  $\phi_{T,S} :  \|\cW_T \|\to \|\cV_S\|$.    The composition $\phi_{T,S}\circ \Psi_T :  \|\cV_T\|^{0} \to \|\cV_S\|$ can be extended linearly to a map $\Psi_{T,S} :  \|\cV_T\| \to \|\cV_S\|$. 
%
%
%

We now repeatedly apply the construction above. {We start with the data $T_0 := 0$, $\varepsilon_{0} := 1$ and $V := ((\omega_0^0 \circ \omega_0) \times (\omega_0^0 \circ \omega_0))(U)$, where the entourage $U$ of $X$ is as in the statement of this theorem. We get  the integer $T_1 $  and a map $\Psi_{0,T_1} : \|\cV_0\| \to \|\cV_{T_1}\|$. In the $n$'th step we use the data $T_n$, $\varepsilon_{n} := 1/n$ and $V := ((\omega_{T_n}^0 \circ \omega_{T_n}) \times (\omega_{T_n}^0 \circ \omega_{T_n}))(U^n)$, and get a  integer $T_{n+1}$  and a map $\Psi_{T_n,T_{n+1}} : \|\cV_{T_n}\| \to \|\cV_{T_{n+1}}\|$.}

 
Let $i$ in $I$ be given and assume that $(s,x)$ is a point in $[T_{i-1},T_{i-1}+1) \times \|\cV_{T_{i-1}}\|$. If 
$\Phi$ is the map from Equation~\eqref{eq:fweh9432r}, then
  we define $\Theta_{i-1}(s,x) \in \{T_{i}\} \times \|\cV_{T_{i}}\|$ by linear interpolation  (writing $s=T_{i-1}+\bar{s}$)
\[\Theta_{i-1}(s,x) := (1-\bar{s})(T_{i},\Psi_{T_{i-1},T_{i}}(x)) + \bar{s}\Phi(T_{i},(s,x))\, ,\] 
where $\Phi$ is the map from Equation~\eqref{eq:fweh9432r}.

We now define the map $\Phi_{h}  :  [0,\infty)\ltimes \|\hat \cV\| \to \|\hat \cV\|$ by
\[\Phi_{h}(t,(s,x)) :=
\begin{cases}
\Phi(t,  \Psi_{T_{i(t)-1},T_{i(t)}} \circ \Phi_{T_{i(t)-1}}(s,x))& \text{ for }s \le T_{i(t)-1}\\
{\Phi(t,\Theta_{i(t)-1}(s,x))} & {\text{ for } T_{i(t)-1} \le  s < T_{i(t)-1}+1}\\
\Phi(t,(s,x))& {\text{ for } T_{i(t)-1}+1 < s \le t}\\
(s,x)& \text{ for } s > t
\end{cases}\]
where we have used the notation
\begin{equation}\label{eq1243trgdfwqe}
i(t) := \sup_{T_i \le t} i\, .
\end{equation}
In order to check {the} continuity at $s=T_{i(t)-1}+1$ we use  the relation \eqref{vfsvrewwesdfvsfdvsdfverfv}.

We have to check that $\Phi_{h}$ has the required properties listed above Theorem~\ref{thm:sdf09234}. The functions
 $\Phi$ and $\Psi$ are $1$-Lipschitz. This implies that $\Phi_{h}$ is uniformly continuous. The only remaining non-obvious property is contractivity.

It remains to show that $\Phi_h$ is contractive. Given $j$ in $\IN$, $\varepsilon$ in $(0,\infty)$, and a coarse entourage $V$ of $\|\hat \cV\|$ we have to provide a number  $T  $ in $(0,\infty)$ with the property
\begin{align*}
\forall (s,&x) ,(r,y) \in \|\hat \cV\|, \ \forall t\in [T,\infty) \: :\\
& ((s,x),(r,y))\in V \cap (Y_{j}\times Y_{j}) \Rightarrow (\Phi_h(t,(s,x)),\Phi_h(t,(r,y))) \in U_{\epsilon}\, .
\end{align*}
We define a map (which is not a morphism in $\BC$) $\Omega : \|\hat \cV\| \to X$  which sends the point $(s,x)$  in $\|\hat \cV\|$   to $\omega^\prime_m(x)$ for $s$  in $[m,m+1)$, where $\omega^\prime_m$ is a choice of inverse equivalence to $\omega_m$. Let $n$ in $\IN$ be such that $(\Omega \times \Omega)(V \cap (Y_{j}\times Y_{j})) \subseteq U^n$. We choose $T $ in $\IR$ with
\begin{enumerate}
\item $i(T)-1>\max\{\lceil C/ \varepsilon\rceil,n\}$ (see \eqref{eq1243trgdfwqe}), and
\item $T_{i(T)-1}>j$.
\end{enumerate}
The second inequality implies  that the application of $\Phi_h$ to $(t,(s,x))$ and to $(t,(r,y))$ will fall into the first case of the definition of $\Phi_h$, i.e., the function $\Psi_{T_{i(T)-1},T_{i(T)}}$ will be applied. But the latter function, by the first inequality,  maps pairs of points lying in the entourage $(\omega_{T_{i(T)-1}} \times \omega_{T_{i(T)-1}})(U^n)$ to pairs of points lying a distance at most $\varepsilon$ from each other {due to \eqref{eq_43wwwt23e3} and the choice of $\varepsilon_{n}=1/n$ in the construction above.} Since $\Phi$ is $1$-Lipschitz we get the desired statement for $\Phi_h$.
\end{proof}

\begin{rem}
We do not know whether the   assumption  {of finite asymptotic dimension}  in Theorem~\ref{thm:sdfdvw4v} is necessary. It would be interesting to construct a space $X$ such that its coarsening space equipped with the hybrid coarse structure is not flasque. One could try to adapt the constructions of surjectivity counter-examples to the coarse Baum--Connes conjecture (\`{a} la Higson \cite{counterex_coarse_BC}) in order to show that there exists a non-trivial element in the coarse $K$-homology $\KX_{\ast}(\|\hat \cV\|_h)$ in the case of an expander $X$. But we were not able to carry out this idea.
\hB
\end{rem}

\subsection{The motivic coarse spectra of simplicial complexes and coarsening spaces}

 Recall Definition \ref{defn:sdf09232}.
Let $\cA_{\disc}$ denote the set of discrete bornological coarse spaces. 
 
   We consider a simplicial complex $X$ equipped with a bornological coarse structure $(\cC,\cB)$, a metric $d$, and an ordered family of subcomplexes $\cY:=(Y_{n})_{n\in \nat}$. Let $\cT_{d}$ denote the uniform structure associated to the metric. In the statement of the following theorem $X_{h}$ denotes the set $X$ with the hybrid structure defined in Definition \ref{defn:sdfuh8934t}.
 We furthermore use the  notation \eqref{fkhwiufuiz823zr824234242424234234234234}.
 \begin{theorem}\label{qdhdqwidqwihdqwdqwd}
We assume:  \begin{enumerate}
\item $d$ is good (Definition \ref{fwoiefjeoiwfefewf}). 
\item   $\cC$ and $\cT_{d}$ are compatible (Definition \ref{defn:sd23d9023}). 
\item The family $(Y_{n})_{n\in \nat}$ is big (Definition \ref{ifjweijwoiefewfewf}). 
\item For every  $B$ in $\cB$  there exists $i$ in $\nat$ such that $  B\subseteq Y_{i}$.
\item {$X$ is finite-dimensional.}
\end{enumerate}
 Then we have 
 $ (X_{h},\cY)\in  \Sp\cX\langle\cA_{\disc}\rangle$
 \end{theorem}
 \begin{proof}
 We argue by induction on the dimension. We fix   $n$ in $\nat$ and  assume that the theorem has been shown for all dimensions $k$ in $\nat$ with $k< n$. We now assume that  $\dim(X)=n$.
We   consider the decomposition  \eqref{fjhwefuiewfhzewfzz89z29834234234}.
Combining Lemmas \ref{efwiofweife89u9ewf324423434} and \ref{fekwfewpkfeokeopefefefewfe}
 we see that the lower left corner of  \eqref{fjhwefuiewfhzewfzz89z29834234234} belongs to  $\Sp\cX\langle\cA_{\disc}\rangle$.
In view of the Lemmas \ref{eifjweiofwefwefwefewf} and \ref{fiojweoifjioefefefewfwf} the induction hypothesis implies
that the two upper corners of the push-out square belong to $\Sp\cX\langle\cA_{\disc}\rangle$.
We conclude that $(X_{h},\cY)\in \Sp\cX\langle\cA_{\disc}\rangle$.
\end{proof}
 

Let $(X,\cC)$ be a coarse space. 
\begin{ddd}\label{wegoijobgwtwferfrewfer}
$X$ has weakly finite asymptotic dimension if here exists a cofinal subset   $\cC^{\prime}$ of $ \cC$ such that
for every $U$ in $ \cC^{\prime}$ the coarse space  $(X, \cC\langle U\rangle)$ has finite asymptotic dimension {(Definition~\ref{frjewifoewfewfewf}).}
\end{ddd}
If $X$ is a bornological coarse space, then we apply this definition to its underlying coarse space.

We consider a bornological coarse space $X$.
\begin{theorem}\label{jfweofjwoeifjewfoewfewfewfewf}
 If $X$ has weakly finite asymptotic dimension, then we have $\Yo^{s}(X )\in \Sp\cX\langle\cA_{\disc}\rangle$.
\end{theorem}

\begin{proof}
Let $\cC$ be the coarse structure of $X$, and let $\cC'$ be as in Definition \ref{wegoijobgwtwferfrewfer}.
By $u$-continuity we have
 $$\Yo^{s}(X)\simeq \colim_{U\in \cC^{\prime}} \Yo^{s}(X_{U})\, .$$
 It {therefore} suffices to show that $\Yo^{s}(X_{U})\in   \Sp\cX\langle\cA_{\disc}\rangle$ for all $U$ in $\cC^{\prime}$.

 We now consider $U$ in $\cC^{\prime}$. The bornological coarse space $X_{U}$ admits an anti-\Cech system $\hat \cV$ satisfying the condition formulated in Definition \ref{frjewifoewfewfewf}. 
 
   We consider the coarsening space 
 $\|\hat \cV\|_{h}$ with the big family
 $( \|\hat \cV\|_{\le n})_{n\in \nat}$. By~Theorem~\ref{thm:sdfdvw4v} and Lemma \ref{ewfifjewiofoiewfewfewfef} we have $\Yo^{s}(\|\hat \cV\|_{h})\simeq 0$.    By Point \ref{ifjweifjewiojwefw231} of Corollary 
 \ref{kjeflwfjewofewuf98ewuf98u798798234234324324343} this implies
 $$\Sigma \Yo^{s}(( \|\hat \cV\|_{\le n})_{n\in \nat})\simeq (\|\hat \cV\|_{h} ,(\|\hat \cV\|_{\le n})_{n\in \nat})\, .$$
 We     apply Theorem  \ref{qdhdqwidqwihdqwdqwd} to the pair $(\|\hat \cV\|_{h} ,(\|\hat \cV\|_{\le n})_{n\in \nat})$ in order to conclude that $$\Sigma \Yo^{s}(( \|\hat \cV\|_{\le n})_{n\in \nat})\in  \Sp\cX\langle\cA_{\disc}\rangle\, .$$
 We use that
 $X_{U}\to \|\hat \cV\|_{0} \to  \|\hat \cV\|_{\le n}$ is an equivalence in $\BC$ for every $n$ in $\nat$ and consequently
   $\Yo^{s}(X_{U})\simeq  \Yo^{s}(( \|\hat \cV\|_{\le n})_{n\in \nat})$ (see \eqref{wefweew254} for notation) in $\Sp\cX$.
It follows that  
 $\Yo^{s}(X_{U})\in   \Sp\cX\langle\cA_{\disc}\rangle$ as required.
\end{proof}

\part{Coarse and locally finite homology theories}

\section{First examples and comparison  of coarse homology theories}\label{feiojoiwfiowfu234234324}

The notion of a coarse homology theory was introduced in Definition \ref{rgljogreggregrege}. We first show that 
the condition of $u$-continuity can be enforced. This result may be helpful for the construction of coarse homology theories. 
We furthermore discuss some additional additivity properties for coarse homology theories.  

We then construct first examples of coarse homology theories, namely coarse ordinary homology and the coarsification of stable homotopy. 

Before we discuss further  examples of coarse homology theories in subsequent sections we
 state and prove the comparison theorems which motivated this book.

\subsection{Forcing \texorpdfstring{$u$}{u}-continuity}\label{egihfiebgbsdbsbfdbs}

Let $\bC$ be a  cocomplete stable   $\infty$-category.
In the following we describe a construction which turns a functor 
$E \colon \BC \to {\bC}$  into  a $u$-continuous\index{$u$-continuous}\index{continuous!$u$-} one $E_u\colon \BC \to {\bC}$.

We will have a natural trans\-for\-ma\-tion $E_{u}\to E$ which is an equivalence 
on  most bornological coarse spaces of interest (e.g., path metric spaces like Riemannian manifolds, or finitely generated groups with a word metric). {Furthermore,} if $E$ satisfies {one of the conditions} coarse invariance, excision or vanishes on flasque spaces, then $E_u$ retains these properties.

The obvious idea to define $E_u : \BC \to {\bC}$ is to set
\begin{equation}
\label{erwe999424}
E_u(X ) := \colim_{U \in \cC} E(X_U)\ , 
\end{equation}
where $\cC$ denotes the coarse structure of $X$ and we use the notation introduced in Example \ref{wfeoihewiufh9ewu98u2398234324234}.
This formula  defines the value of $E_u$  on objects, only.  In order to define $E_{u}$ as a functor   we first introduce the category $\BC^{\cC}$. Its objects are pairs $(X ,U)$ of a bornological coarse space $X$ and a coarse  entourage $U$ of  $X$. A morphism
$f:(X ,U)\to(X^{\prime} ,U^{\prime})$ in $\BC^{\cC}$ is a morphism $f:X \to X^{\prime} $ in $\BC $ such that $(f\times f)(U)\subseteq U^{\prime}$. We have a forgetful functors $$\BC\stackrel{q}{\leftarrow} \BC^{\cC}\stackrel{p}{\to} \BC\, , \quad X_{U} \leftarrow (X,U)\to X \, .$$
We now consider the diagram
$$\xymatrix{\BC^{\cC}\ar[rr]^-{E\circ q}\ar[d]^{p}&&{\bC}\\
\BC\ar@{--}[rru]^{E_u}\ar@/_{1cm}/[urr]^-{E}&&}$$
 The functor $E_u$  is then obtained from $E\circ q$ by a left Kan extension along $p$.  

There is a natural transformation $q\to p$ of functors $\BC^{\cC}\to \BC$ given on $(X,U)$ in $\BC^{\cC}$ by the morphism $X_{U}\to X$.  Applying $E$ we get a transformation $E\circ q\to E\circ p$.
This now induces the   natural transformation $E_{u}\to E$ via the universal property of the left Kan extension.

The pointwise formula for left Kan extensions gives
$$E_u(X)\simeq \colim_{\BC^{\cC}/X} E\circ q \, ,$$
where $\BC^{\cC}/(X,\cC,\cB)$ is the slice category of objects from $\BC^{\cC}$ over the bornological coarse space $X $. We observe that the subcategory of objects $(( X ,U) ,\id_{X})$ for $U$ in $\cC$   is cofinal in
$\BC^{\cC}/X $. Hence we can restrict the colimit to this subcategory. We then get the formula \eqref{erwe999424} as desired.

\begin{prop} \label{ergkjwergrereferfwrfwerfw}For a bornological coarse space $X$
the transformation $E_{u}(X)\to E(X)$ is an equivalence if the coarse structure  of $X$ is generated by a single entourage.

The following properties of $E$ are inherited by $E_{u}$:
\begin{enumerate}
\item  coarse invariance,
\item  excision,  
\item \label{ewrgijewroigergrefwreferf} vanishing on flasque  bornological coarse    spaces.\end{enumerate}
\end{prop}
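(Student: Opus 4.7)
The plan is to verify each claim in turn, exploiting that $E_u$ is defined as the filtered colimit $\colim_{U \in \cC} E(X_U)$ and that filtered colimits in $\Sp$ commute with finite limits and colimits, hence preserve equivalences, (co)cartesian squares, and zero objects.

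For the first statement, suppose $\cC = \cC\langle U_0\rangle$. Any entourage $U \in \cC$ containing $U_0$ then satisfies $\cC\langle U\rangle = \cC$, so $X_U = X$ in $\BC$. The subposet of such $U$ is cofinal in $\cC$, so the diagram defining $E_u(X)$ becomes essentially constant with value $E(X)$ after restriction, and the natural transformation $E_u(X) \to E(X)$ is an equivalence.

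For coarse invariance, let $f : X \to X'$ be a coarse equivalence with inverse $g : X' \to X$, and let $V \in \cC$, $V' \in \cC'$ be entourages witnessing the closeness of $gf$ and $fg$ to the respective identities. Given $U \in \cC$, set $U' := (f\times f)(U) \cup V' \in \cC'$ and $U'' := U \cup V \cup (g\times g)(U') \in \cC$. Then $f : X_U \to X'_{U'}$ and $g : X'_{U'} \to X_{U''}$ are morphisms in $\BC$, and $gf$ (respectively $fg$) is close to the identity as a morphism $X_U \to X_{U''}$ (respectively $X'_{U'} \to X'_{U'}$). Coarse invariance of $E$ then gives mutually inverse maps in the filtered colimit system, and a straightforward cofinality argument yields $E_u(X) \simeq E_u(X')$.

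For excision, let $(Z, \cY)$ with $\cY = (Y_i)_{i \in I}$ be a complementary pair on $X$. Bigness of $\cY$ with respect to $\cC\langle U\rangle$ follows immediately from bigness with respect to $\cC$, so $(Z_U, (Y_{i,U})_{i\in I})$ is a complementary pair on $X_U$ for every $U \in \cC$, where the subsets carry the induced structures. Excision for $E$ yields cocartesian squares
\[
\xymatrix{E(Z_U \cap Y_{i,U})\ar[r]\ar[d] & E(Z_U) \ar[d] \\ E(Y_{i,U})\ar[r] & E(X_U)}
\]
after passing to the colimit in $i$, and these remain cocartesian upon taking the further filtered colimit in $U$. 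A standard interchange of colimits identifies the result with the corresponding square for $E_u$, proving excision.

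For vanishing on flasques, let $f : X \to X$ implement flasqueness of $X$. Given $U \in \cC$, set
\[
U^+ := \bigcup_{k \in \IN} (f^k \times f^k)\bigl(U \cup (f \times \id_X)(\diag_X) \cup \diag_X\bigr),
\]
which is an entourage of $X$ by Condition \ref{sdfn3443243t61} of Definition \ref{efijewifjewiofwifwfew322423424}. Then $f$ restricts to a self-map $X_{U^+} \to X_{U^+}$, is close to $\id_{X_{U^+}}$ since $(f \times \id_X)(\diag_X) \subseteq U^+$, and satisfies Condition \ref{sdfn3443243t61} trivially because $(f^k \times f^k)(U^+) \subseteq U^+$ for all $k$. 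Since the bornology is unchanged, Condition \ref{fwoiejweoifeowi231} is inherited, so $X_{U^+}$ is flasque and $E(X_{U^+}) \simeq 0$. The entourages of the form $U^+$ are cofinal in $\cC$, hence $E_u(X) \simeq 0$.

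The main obstacle is the bookkeeping in the coarse invariance argument, where one must simultaneously enlarge entourages on both sides to witness both the morphism condition and the closeness of the composites to identities; setting this up as a pair of cofinal functors between the entourage posets is the one genuinely delicate point, but it is routine.
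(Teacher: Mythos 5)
Your proof is correct, and three of the four items coincide with the paper's argument: the cofinality observation for a singly generated coarse structure, excision via the fact that a complementary pair on $X$ restricts to a complementary pair on $X_U$ (followed by an interchange of colimits), and flasqueness via the saturated entourage $\bigcup_{k}(f^{k}\times f^{k})(U\cup V)$ with $V$ an entourage witnessing closeness of $f$ to the identity — your $U^{+}$ is exactly the paper's $W$ with $V=(f\times \id_X)(\diag_X)\cup\diag_X$. The one place you genuinely diverge is coarse invariance. You run a direct zig-zag with the equivalence $f$ and its inverse $g$, enlarging entourages on both sides until the composites become close to the structure maps of the colimit system, and then invoke the standard fact that such an eventually-inverse system induces an equivalence on filtered colimits. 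The paper instead uses the characterization of coarse invariance by inverting the projections $\{0,1\}\times_{p} X\to X$, together with the isomorphism $(\{0,1\}\times_{p} X)_{\tilde U}\cong \{0,1\}\times_{p} X_{U}$ for $\tilde U=\{0,1\}^{2}\times U$ and the cofinality of such $\tilde U$; this reduces everything to a single colimit of equivalences $E(\{0,1\}\times_{p}X_{U})\simeq E(X_{U})$ and avoids the two-sided bookkeeping entirely. Your route is more elementary but requires the care you flag, and strictly speaking one further enlargement on the $X'$ side: as written, $f\circ g$ is a morphism $X'_{U'}\to X'_{U'''}$ for some $U'''\supseteq U'$, not an endomorphism of $X'_{U'}$. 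This is exactly the routine cofinality point you already acknowledge and does not constitute a gap.
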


\begin{proof}
The first claim is clear from \eqref{erwe999424}.

We now show that  if $E$ is coarsely invariant, then  $E_{u}$ is coarsely invariant. 
It suffices to show that for every bornological coarse space $X$ the projection $\{0,1\}\times X\to X$ induces an equivalence  \begin{equation}\label{fiohiufo34f3f}
E_{u}(\{0,1\} \otimes  X)\to E_{u}(X)\, ,
\end{equation} 
where $\{0,1\}$ has the maximal bornology and coarse structure.

For every entourage $U$ of $X$ we have an isomorphism of bornological coarse spaces
$$(\{0,1\} \otimes  X)_{\tilde U}  \cong \{0,1\} \otimes  X_{U}\, ,$$ where $\tilde U:=\{0,1\}^{2}\times U$. Moreover, the   entourages
of $\{0,1\} \otimes  X$ of the form $\tilde U$ are cofinal in all entourages of this bornological coarse space.
We now have an equivalence
$$E((\{0,1\} \otimes  X)_{\tilde U})\simeq E(\{0,1\} \otimes  X_{U})\simeq E(X_{U})\, ,$$
where for the second we use that $E$ is coarsely invariant. Forming the colimit of these equivalences over all entourages $U$ of $X$ and using the description \eqref{erwe999424} of the evaluation of $E_{u}$ we get the desired equivalence \eqref{fiohiufo34f3f}.


If $E$ is coarsely excisive, then $E_u$ is also coarsely excisive. This follows from the fact that complementary pairs on $X$ are also   complementary pairs on $X_U$.

Assume that  $E$ vanishes on flasque spaces. Let $X$ be a flasque space. In order to show that $E_{u}(X)\simeq 0$ it   suffices to show that
 $E(X_{W})\simeq 0$ for a cofinal set of entourages $W$ of $X$. Let   $f : X \to X$ be a morphism  implementing   flasqueness. Then there exists an entourage $V$ of $X$ such that  $f$ is $V$-close to $\id_X$. For any entourage $U$ of $X$ consider the entourage $$W :=  \bigcup_{k \in \IN} (f^k \times f^k)(U \cup  V)\, .$$ Then $X_W$ is flasque with flasqueness implemented by the morphism $f$ and $U\subseteq W$.  Hence we have $E(X_{W})\simeq 0$.
\end{proof}

Let $\bC$ be a cocomplete stable $\infty$-category, and let $E:\BC\to \bC$ be a functor.

\begin{kor}
Assume:
\begin{enumerate}
\item $E$ is coarsely invariant.
\item $E$ vanishes on flasques.
\item $E$ satisfies  excision.
\end{enumerate}
Then: 
\begin{enumerate}  
\item$E_{u}$ is a coarse homology theory.
\item If $E$ is  coarse homology theory, then $E_{u}\to E$ is an equivalence.
\item \label{rgoihewroigregrefgwef} If the coarse structure of $X$ is generated by a single entourage, then 
$E_{u}(X)\to E(X)$ is an equivalence.
\end{enumerate}
\end{kor}

 Note that Assertion \ref{rgoihewroigregrefgwef} applies, e.g.\ if $X$ is a path metric space.

\subsection{Additivity and coproducts}\label{jkndsfjh23}

Note that the axioms for a coarse homology theory do not include an additivity property for, say, infinite coproducts (finite coproducts are covered by excision). We refer to the discussion in Example \ref{jwefjweofewfewfewfewf} and Lemma~\ref{efiwofoewf234234244}. But most of the known coarse homology theories are in one way or the other compatible with forming infinite coproducts and / or infinite free unions. In this section we are going to formalize this.

\subsubsection{Additivity}

Let $E:\BC\to \bC$ be a coarse homology theory and assume  that $\bC$ is in addition  complete.
 
\begin{ddd}\label{jknsd23cs}
  $E$ is strongly additive\index{strongly additive!coarse homology theory}\index{coarse!strongly additive homology theory}\index{additive!strongly} if for every family $(X_i)_{i \in I}$ of bornological coarse spaces the morphism
\[E \big( \bigsqcup^{\free}_{i \in I} X_i \big) \to \prod_{i \in I} E(X_i)\]
induced by the collection of morphisms \eqref{uewhwiuhiewufheihfi2342341} is an equivalence.
\end{ddd}

\begin{ddd}\label{cdqedoihqiuqwdqwd}
We call $E$ additive\index{additive!coarse homology theory}\index{coarse!additive homology theory} if for every set $I$ the morphism
\[E\big( \bigsqcup^{\free}_{I}* \big)\to \prod_{I} E(*)\]
is an equivalence.
\end{ddd}

Clearly, a strongly additive coarse homology theory is additive. The notion of additivity features in Theorem~\ref{wekfhweuifhweiui23u4242342rf}.

\begin{ex}
Coarse ordinary homology (Proposition \ref{ewgiowjgwoergfrefrwfref}), the coarsification of stable homotopy (Proposition \ref{jnksdui2332}) and coarsifications of locally finite homology theories (Proposition \ref{cegiojiojergergeg}) are strongly additive. 
In {\cite{Bunke:ad}} we show that  coarse $K$-homology $\KX$  is strongly additive. {We refer} to \cite[Ex.~10.5]{ass} for an extended list of strongly additive coarse homology theories which have been constructed in sequels
to the present book.

We were not able to show that the quasi-local version  $\KXql$ of coarse $K$-homology is strongly additive. But both $\KX$ and $\KXql$   are additive (Corollary~\ref{cor_additivitiy_KX_KXql}).

In Example~\ref{hhfweweiufufwefwefwef} we will provide a coarse homology theory which is not additive.
\hB
\end{ex}

Let $E \colon \BC \to {\bC}$ be a coarse homology theory, and let $(X_i)_{i \in I}$ be a family of bornological coarse spaces.

\begin{lem}\label{jknsdiu2323}
If $E$ is strongly additive, then we have a fibre sequence
\begin{equation}
\label{dqwhdqwdqwiudqwidu}
\bigoplus_{i \in I} E(X_i) \to E\big( \bigsqcup^{\mixed}_{i \in I} X_{i} \big) \to \colim_{J \subseteq I \:\textit{finite}} \prod_{i \in I \setminus J} E(X_{i,\disc})\, .
\end{equation}
\end{lem}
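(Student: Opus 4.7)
The plan is to deduce the fibre sequence directly from the fibre sequence constructed in the proof of Lemma~\ref{efiwofoewf234234244}, by applying the coarse homology theory $E$ and then using strong additivity on the remainder term.

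More precisely, I would first recall that the proof of Lemma~\ref{efiwofoewf234234244} establishes, for $X := \bigsqcup^{mixed}_{i \in I} X_{i}$, a fibre sequence in $\Sp\cX$ of the form
\[\bigoplus_{i\in I} \Yo^{s}(X_{i}) \to \Yo^{s}(X) \to R,\]
where the remainder term $R$ is given by \eqref{vefjkbnerkjvbrjebvjkjvnrekvvrrev}, namely
\[R \simeq \colim_{J\subseteq I \text{ finite}} \Yo^{s}\bigl(\bigsqcup^{mixed}_{i\in I\setminus J} X_{i,disc}\bigr).\]
Since $E \colon \Sp\cX \to \Sp$ is colimit-preserving between stable $\infty$-categories, it sends this fibre sequence to the fibre sequence
\[\bigoplus_{i\in I} E(X_{i}) \to E(X) \to E(R),\]
with $E(R) \simeq \colim_{J\subseteq I \text{ finite}} E\bigl(\bigsqcup^{mixed}_{i\in I\setminus J} X_{i,disc}\bigr)$.

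The second step is to observe that for a family of \emph{discrete} bornological coarse spaces the mixed union coincides with the free union: both structures have as coarse structure the coarse structure generated by the diagonal of the disjoint union (since each component contributes only subsets of its own diagonal as entourages), and by definition they share the same bornology (that of the free union, see Definitions~\ref{foifewieof89u8924r443535} and \ref{eiofewiofo9u23948234325}). Consequently
\[\bigsqcup^{mixed}_{i\in I\setminus J} X_{i,disc} \;=\; \bigsqcup^{free}_{i\in I\setminus J} X_{i,disc}\]
for every finite $J \subseteq I$.

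Finally, applying the assumption that $E$ is strongly additive (Definition~\ref{jknsd23cs}) to each term of the filtered colimit gives
\[E\bigl(\bigsqcup^{free}_{i\in I\setminus J} X_{i,disc}\bigr) \simeq \prod_{i\in I\setminus J} E(X_{i,disc}),\]
so that $E(R) \simeq \colim_{J\subseteq I \text{ finite}} \prod_{i\in I\setminus J} E(X_{i,disc})$, yielding the claimed fibre sequence \eqref{dqwhdqwdqwiudqwidu}. There is no real obstacle here; the only substantive input beyond the already-established fibre sequence of motives is the identification of mixed and free unions for discrete spaces, which is a straightforward unwinding of the definitions.
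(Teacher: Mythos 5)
Your proof is correct and follows exactly the route the paper takes: apply $E$ to the fibre sequence of motives from Lemma~\ref{efiwofoewf234234244}, identify the mixed and free unions of the discrete spaces $X_{i,disc}$, and then rewrite the remainder term using strong additivity. No issues.
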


\begin{proof}This 
follows from Lemma~\ref{efiwofoewf234234244} and the fact that we can rewrite the  remainder term \eqref{vefjkbnerkjvbrjebvjkjvnrekvvrrev} using strong additivity of $E$ (note that for discrete bornological coarse spaces the mixed and the free union coincide).
\end{proof}

 
\begin{ex}

Let $E$ be an additive coarse homology theory with $E(*)\not\simeq 0$.
Such a coarse homology theory exists, e.g., $\HX$ defined in {Section}~\ref{ewifjewoifoi232344234}. Applying \eqref{dqwhdqwdqwiudqwidu} to $E$ in the case that $X_{i}=*$ for all $i$ in $I$ (in this case it suffices to assume that  $E$   is  just additive in order to have the fibre sequence \eqref{dqwhdqwdqwiudqwidu}), we get the fibre sequence
$$\bigoplus_{I}E(*)\to  \prod_{I} E(*) \to \colim_{J \subseteq I \textit{finite}} \prod_{i \in I \setminus J} E(*)\, ,$$
which shows that the remainder term is non-trivial provided  $I$ is infinite. Here we have again used that for discrete bornological spaces the mixed and free union coincide.
\hB
\end{ex}

\begin{ex}
We refer to Theorem \ref{qrgiojoergergrefwrefref} for further examples of additive coarse homology theories. \hB
\end{ex}

\subsubsection{Coproducts}

Let   $E \colon \BC \to {\bC}$ be a coarse homology theory.

\begin{ddd}\index{coproducts!preserved!in $\BC$}
 $E$ preserves coproducts if for every family $(X_i)_{i \in I}$ of bornologial coarse spaces the canonical morphism
\[\bigoplus_{i \in I} E(X_i) \to E \big( \coprod_{i \in I} X_i \big)\]
is an equivalence.
\end{ddd}

\begin{rem}\label{jnksdfhiu23}
Due to the nature of the remainder term \eqref{wefuhiuhj4fhjiuwefqef} derived in the proof of Lemma~\ref{efiwofoewf234234244}  in order to show that a coarse homology theory preserves coproducts it suffices to check this only on discrete bornological coarse spaces.
\hB
\end{rem}
 
\begin{ex}
We show in this paper that coarse ordinary homology and  both versions of coarse $K$-homology preserve coproducts. Coarsifications of locally finite homology theories preserve coproducts if the locally finite theory does so. 
\hB
\end{ex}

\newcommand{\bP}{\mathbf{P}}

\begin{ex} 
Let $E\colon\BC\to \bC$ {be} a coarse homology theory. In \cite{ass} we construct a new coarse homology theory denoted by 
$E\cO^{\infty}\bP\colon \BC\to \bC$.  It serves as the domain of the coarse assembly map $\mu_{E}\colon E\cO^{\infty}\bP\to \Sigma E $ which is the main topic of  {that} paper.
Using information on this assembly map and   Lemma \ref{efiwofoewf234234244}  in \cite{ass} we will observe  {the following theorem}. 
 \begin{theorem}\label{qrgiojoergergrefwrefref} Assume that $E$ is strong  {(see \cite[[Def.~4.19]{equicoarse})}.
\begin{enumerate}
\item If $E$ is additive, then so is $E\cO^{\infty}\bP$.
\item If $E$ preserves coproducts, then so does $E\cO^{\infty}\bP$.
\end{enumerate}
\end{theorem}  In particular, we get more examples of coarse homology theories which preserve coproducts. \hB 
\end{ex}
\newcommand{\LF}{\mathrm{LF}}
 
In \cite{equicoarse} we introduce the notion of continuity. 
If $X$ is a bornological coarse space, then
by $\LF(X)$ we denote its poset of locally finite subsets, see Definition \ref{defn:asdfwewt}. The elements of $\LF(X)$ will be equipped with the bornological coarse structure induced from $X$. Note that by definition the induced bornology is  the minimal one.

Let $\bC$ be a cocomplete stable $\infty$-category, and let $E:\BC\to \bC$ be a functor.
\begin{ddd}
$E$ is continuous if for every $X$ in $\BC$ the canonical map
$$\colim_{L\in \LF(X)} E(L)\to E(X)$$ is an equivalence.
\end{ddd}

\begin{rem}\label{wethgrtgffsvfdvsvsdfv}
It is {straightforward} to check that a continuous coarse homology theory preserves coproducts \cite[Lem.~5.17]{equicoarse}. 

The coarse ordinary homology (Definition \ref{rekgopegergwfwerf}) and the two versions $\KX$ and $\KXql$ of coarse $K$-homology (Definition \ref{oiwefewiofu9234234234324})
are continuous.  

In general, given a coarse homology theory, using a method similar to the one in Section~\ref{egihfiebgbsdbsbfdbs}
one can construct a new coarse homology which is continuous  \cite[Lem.~5.26]{equicoarse}. 
So this construction produces further examples of coarse homology theories which preserve coproducts. 

Continuity of coarse homology theories is relevant for their application in the study of assembly maps.
In the present book we will not discuss continuity further and refer to \cite{equicoarse} and \cite{injectivity}
for more information and applications. \hB
 \end{rem}

\subsection{Coarse ordinary homology}\label{ewifjewoifoi232344234}
 
In this section we introduce a coarse version  $\HX$ of ordinary homology.   After defining the functor our main task is the verification of the {properties} listed in Definition \ref{rgljogreggregrege}. The fact that the coarse homology  $\HX$ exists and is non-trivial is a first indication that  $\Sp\cX$ is non-trivial. 

Since the following constructions and  arguments are in principle well-known we will be sketchy at various points in order to keep this section short. {Our task here will be mainly to put the well-known construction into the setup developed in this paper.} The structure of this section is  very much the model for the analogous Sections \ref{kjoijfoewifueofieuwfewfwefewfef} and  \ref{sec:jkbewrgh} dealing with coarsification and coarse $K$-{homology}.

It is hard to trace back the first definition of coarse ordinary homology. One of the first definitions was the uniform version {(i.e., a so-called rough homology theory)} by Block and Weinberger \cite{block_weinberger_1}.

Let $\Ch$\index{$\Ch$} denote the small category of very small chain complexes. Its objects are chain complexes of very small abelian groups. The morphisms in $\Ch$ are chain maps. The target  category of the coarse ordinary homology will be the stable $\infty$-category $\Ch_{\infty}$ obtained by a Dwyer--Kan localization
\begin{equation}\label{qewflkklefmklqwefqwefqfqefqwefe}
\ell: \Ch\to \Ch[W^{-1}]=:\Ch_{\infty}\, ,
\end{equation}
where $W$ is the set of quasi-isomorphisms in $\Ch$. The homotopy category of $\Ch_{\infty}$ is the usual unbounded derived category of abelian groups.  It is well known that $\Ch_{\infty}$ is stable, very small complete and cocomplete. We refer to \cite[Sec.~1.3]{HA} for details.
We will use the following properties of the functor $\ell$:
\begin{enumerate}
\item\label{ewfoifewoifeowfewfewf} $\ell$ sends chain homotopic maps to equivalent morphisms.
\item\label{ewfoifewoifeowfewfewf12} $\ell$ preserves filtered colimits.
\item \label{fewiofjoifewfewfewf} $\ell$ sends short exact sequences of chain complexes to fibre sequences of spectra.
\item \label{weoifewoifowiowefio90uwe908fwefewf} $\ell$ preserves sums and products.
\end{enumerate}

We start by defining a functor $$\CX\colon \BC\to \Ch$$\index{$C\cX$}which associates to a bornological coarse space $X$ the chain complex of locally finite controlled chains.  

In order to talk about properties of chains on {a bornological coarse space} $X$ we introduce the following language elements.
Let $n$ be in $ \nat$ and $B$   be a subset of $X$. We say that a point $ (x_{0},\dots,x_{n})$ in $ X^{n+1}$ meets $B$ if there exists an index $i$ in $\{0,\dots,n\}$ such that $x_{i}\in B$. 

Let $U$ be an entourage of $X$. We say that $(x_{0},\dots,x_{n})$ is $U$-controlled if for every two indices $i,j$ in $\{0,\dots,n\}$ we have $(x_{i},x_{j})\in U$.

An $n$-chain on $X$ is by definition just  a function $c:X^{n+1}\to \Z$.
Its support\index{support!of a chain} is the set
$$\supp(c):=\{x\in X^{n+1}\:|\: c(x)\not=0\}\, .$$

Let $X$ be a bornological coarse space with an entourage $U$. Let furthermore  $c$  be an $n$-chain on $X$.   
\begin{ddd}\mbox{}
\begin{enumerate}
\item $c$ is locally finite\index{locally finite!chain}, if for every bounded $B$ the set of points in $\supp(c)$ which meet $B$ is finite.
\item $c$ is $U$-controlled\index{$U$-!controlled chain} if every point in $\supp(c)$ is $U$-controlled.
\end{enumerate}
We say that an $n$-chain is controlled\index{controlled!chain} if it is $U$-controlled for some entourage $U$ of $X$.
\end{ddd}

Let $X$ be a bornological coarse space.
\begin{ddd} We define $\CX_{n}(X)$\index{$C\cX_{n}(-)$} to be the abelian group of locally finite and controlled $n$-chains.
\end{ddd}
 
It is often useful to represent $n$-chains on $X$ as (formal, {potentially infinite}) sums
$$\sum_{x\in X^{n+1}} c(x)x\, .$$
For $i$ in $\{0,\dots,n\}$ we define $\partial_{i}:X^{n+1}\to X^{n}$ by
$$\partial_{i}(x_{0},\dots,x_{n}):=(x_{0},\dots \hat x_{i},\dots,x_{n})\, .$$
So $\partial_{i}$ is the projection leaving out the $i$'th coordinate.

If $x$ is $U$-controlled, then so is $\partial_{i}x$.

Using that the elements of $\CX_{n}(X)$ are locally finite and controlled
 one checks that
$\partial_{i}$ extends linearly {(also for infinite sums)} in a {canonical} way to a map
$$\partial_{i}:\CX_{n}(X)\to \CX_{n-1}(X)\, .$$
We consider the sum $\partial:=\sum_{i=0}^{n} (-1)^{i}\partial_{i}$. One easily verifies that
$\partial\circ \partial=0$. The family $(\CX_{n}(X))_{n\in \nat}$ of abelian groups  together with
the differential $\partial$ is the desired chain complex 
$\CX(X)$ in $\Ch$. 

Let now $f:X\to X^{\prime}$ be a morphism of bornological coarse spaces.
The map
$$(x_{0},\dots,x_{n})\mapsto (f(x_{0}),\dots,f(x_{n}))$$
extends linearly {(also for infinite sums)} in a {canonical} way to a map
$$\CX(f):\CX_{n}(X)\to \CX_{n}(X^{\prime})\, .$$
This map involves sums over fibres of $f$ which become finite since $f$ is proper and controlled and since we apply it to locally finite chains.
Using that $f$ is controlled we conclude that the induced map sends controlled chains to controlled chains.
It is easy to see that $\CX(f)$ is a chain map.
 This finishes the construction of the functor
$$\CX\colon \BC\to \Ch\, .$$

%

\begin{ddd}\label{rekgopegergwfwerf}
\index{coarse!ordinary homology}\index{ordinary coarse homology}
We define the functor $\HX\colon\BC\to \Ch_{\infty}$\index{$\HX$}
 by $$\HX:=\ell\circ \CX\, .$$ 
\end{ddd}

\begin{theorem}\label{oifjweoifewfefwefwefewf}
$\HX$ is a $\Sp$-valued coarse homology theory.
\end{theorem}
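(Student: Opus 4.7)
My plan is to verify the four properties of Corollary \ref{fejkwefweopfiof234234} for $H\cX = \cE\cM \circ C\cX$ one after another. The $u$-continuity is essentially immediate, coarse invariance and flasqueness both reduce to a single chain-level construction (the prism operator and the Eilenberg swindle built on it), and excision is the most delicate of the four because it requires a careful decomposition of chains using the complementary pair.

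For $u$-continuity I would observe that every locally finite controlled $n$-chain on $X$ is $U$-controlled for some entourage $U\in\cC$, so that $C\cX_n(X) = \colim_{U\in\cC} C\cX_n(X_U)$ as abelian groups, hence as chain complexes. Since $\cE\cM$ preserves filtered colimits (property \ref{ewfoifewoifeowfewfewf12} of \eqref{hfwkjrfhwkejw8uz92834234}), property \ref{efofjewofop3294i0234324} follows at once. For coarse invariance I would exhibit a chain homotopy between $C\cX(f_0)$ and $C\cX(f_1)$ for close morphisms $f_0,f_1\colon X\to X'$ by the classical prism formula
\[h_n(x_0,\dots,x_n) := \sum_{i=0}^n (-1)^i \, (f_0 x_0,\dots,f_0 x_i, f_1 x_i,\dots, f_1 x_n).\]
Closeness ensures $(f_0\times f_1)(\diag_X)$ is an entourage of $X'$, which together with the controlled and proper nature of the $f_j$ shows $h_n$ carries a $U$-controlled locally finite chain on $X$ to a suitably controlled locally finite chain on $X'$. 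The standard verification $\partial h + h\partial = C\cX(f_0) - C\cX(f_1)$ then holds, and property \ref{ewfoifewoifeowfewfewf} of $\cE\cM$ yields $H\cX(f_0)\simeq H\cX(f_1)$. Applied to $(\id,\mathrm{pr})\colon\{0,1\}\times X\rightrightarrows X$ and to an equivalence and its close inverse, this gives coarse invariance.

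For vanishing on flasques I would apply the Eilenberg swindle. Let $f\colon X\to X$ implement flasqueness. Conditions \ref{sdfn3443243t61} and \ref{fwoiejweoifeowi231} of Definition \ref{efijewifjewiofwifwfew322423424} imply that for any $U$-controlled locally finite chain $c$ the sum $S(c):=\sum_{k\ge 0} C\cX(f^k)(c)$ defines a $\tilde U$-controlled locally finite chain with $\tilde U := \bigcup_k (f^k\times f^k)(U)$: controlledness comes from \ref{sdfn3443243t61}, while local finiteness survives because by \ref{fwoiejweoifeowi231} only finitely many of the sets $f^k(X)$ meet a given bounded subset. Thus $S$ is a chain endomorphism of $C\cX(X)$ satisfying $S = \id + C\cX(f)\circ S$. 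Since $f$ is close to $\id_X$, the previous paragraph yields $C\cX(f)\sim \id$, whence $\id\sim 0$ and $H\cX(X)\simeq 0$.

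For excision, given a complementary pair $(Z,\cY)$ with $\cY=(Y_i)_{i\in I}$, I would produce a short exact sequence of chain complexes
\[0 \to C\cX(Z\cap\cY) \to C\cX(Z)\oplus C\cX(\cY) \to C\cX(X) \to 0,\]
where $C\cX(\cY):=\colim_i C\cX(Y_i)$ and similarly $C\cX(Z\cap\cY)$. Injectivity and exactness in the middle are direct (if $c\in C\cX(Z)$ equals a chain in some $C\cX(Y_i)$, then $c$ is supported in $(Z\cap Y_i)^{n+1}$ and lies in $C\cX(Z\cap Y_i)$). For surjectivity, take a $U$-controlled locally finite $c\in C\cX_n(X)$; choose $i_0\in I$ with $Z\cup Y_{i_0}=X$ and then, using bigness of $\cY$, choose $j\in I$ with $U[Y_{i_0}]\subseteq Y_j$. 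Split $c = c_Z + c_Y$ by assigning each generator $(x_0,\dots,x_n)\in\supp(c)$ to $c_Z$ if all $x_k\in Z$ and to $c_Y$ otherwise. In the latter case some $x_k\notin Z$ lies in $Y_{i_0}$, so $U$-controlledness forces $(x_0,\dots,x_n)\in Y_j^{n+1}$; hence $c_Z\in C\cX(Z)$ and $c_Y\in C\cX(Y_j)\subseteq C\cX(\cY)$. Applying $\cE\cM$ and using properties \ref{fewiofjoifewfewfewf} and \ref{ewfoifewoifeowfewfewf12} then yields the required cocartesian square for $H\cX$. The main obstacle in the whole argument is this decomposition step: it is the only place where the complementary-pair condition and the bigness of $\cY$ must be combined carefully, but once the splitting is written down everything else is formal.
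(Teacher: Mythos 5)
Your proposal is correct and follows essentially the same route as the paper: the prism operator for coarse invariance, the Eilenberg swindle $S=\sum_k C\cX(f^k)$ for flasqueness, the colimit description of controlled chains for $u$-continuity, and the splitting $c=c_Z+c_Y$ (using $Z\cup Y_{i_0}=X$ and $U[Y_{i_0}]\subseteq Y_j$) for excision. The only cosmetic difference is that you package the excision step as a Mayer--Vietoris short exact sequence $0\to C\cX(Z\cap\cY)\to C\cX(Z)\oplus C\cX(\cY)\to C\cX(X)\to 0$, whereas the paper shows the equivalent statement that $C\cX(Z)/C\cX(Z\cap\cY)\to C\cX(X)/C\cX(\cY)$ is an isomorphism; both rest on the identical decomposition of chains.
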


\begin{proof}
In the following four propositions we will verify the {properties} listed in Definition~\ref{rgljogreggregrege}, which will complete the proof.
\end{proof}

\begin{prop}\label{fweofwefewfewfewfwef}
$\HX$ is coarsely invariant.
\end{prop}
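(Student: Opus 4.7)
The plan is to imitate the classical simplicial prism argument at the level of the controlled locally finite chain complex $C\cX$, and then transport the result to spectra via $\cE\cM$. First I would reduce the statement to the assertion that for every pair of close morphisms $f_0,f_1:X\to X'$ (Definition~\ref{ioewfjweoifoijwfw32424}) the chain maps $C\cX(f_0)$ and $C\cX(f_1)$ are chain homotopic. Granting this, coarse invariance follows formally: for an equivalence $f:X\to X'$ with inverse $g$ (up to closeness) the Eilenberg--MacLane functor $\cE\cM$ sends $C\cX(f\circ g)$ and $C\cX(g\circ f)$ to maps equivalent to the respective identities (by Property~\ref{ewfoifewoifeowfewfewf} of $\cE\cM$), so $H\cX(f)$ and $H\cX(g)$ are mutually inverse equivalences. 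Applied to the sections $i_0:X\to\{0,1\}\times X$ of the projection $\pi$ this yields the equivalence $H\cX(\pi)$.

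The technical heart of the argument is the construction of the prism operator
\[
P:C\cX_n(X)\to C\cX_{n+1}(X'),\qquad P(x_0,\ldots,x_n):=\sum_{i=0}^n(-1)^i\bigl(f_0(x_0),\ldots,f_0(x_i),f_1(x_i),\ldots,f_1(x_n)\bigr),
\]
extended by (possibly infinite) linearity to chains. I would first verify that $P$ is well-defined on $C\cX_n(X)$. For control: if $(x_0,\ldots,x_n)$ is $U$-controlled, then pairs of the form $(f_\epsilon(x_i),f_\epsilon(x_j))$ lie in $(f_\epsilon\times f_\epsilon)(U)$, and pairs of the form $(f_0(x_i),f_1(x_j))$ lie in the composition $(f_0\times f_1)(\diag_X)\circ(f_1\times f_1)(U)$, which is an entourage of $X'$ because the coarse structure is closed under composition and because $(f_0\times f_1)(\diag_X)$ is an entourage by closeness. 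For local finiteness: if a prism simplex in the support of $P(c)$ meets a bounded set $B'\subseteq X'$, then the original simplex meets the bounded set $f_0^{-1}(B')\cup f_1^{-1}(B')$ by properness of $f_0,f_1$, and hence only finitely many original simplices contribute. The same bookkeeping shows that infinite sums are not an obstruction.

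Next I would verify the standard chain homotopy identity $\partial\circ P+P\circ\partial=C\cX(f_1)-C\cX(f_0)$; this is a direct simplicial computation, identical to the one used for singular chains, and only needs the definitions of $\partial_i$ and $P$. Applying $\cE\cM$ to this identity and invoking Property~\ref{ewfoifewoifeowfewfewf} of \eqref{hfwkjrfhwkejw8uz92834234} yields $H\cX(f_0)\simeq H\cX(f_1)$, which by the reduction above completes the proof.

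The main obstacle is not the chain-level combinatorics, which are classical, but the verification that $P$ preserves the two finiteness/control conditions that cut out $C\cX$ inside all chains; once the control entourage and the bounding bounded set above are produced correctly using closeness and properness of $f_0$ and $f_1$, the rest of the argument is routine.
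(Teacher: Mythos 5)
Your proposal is correct and follows essentially the same route as the paper: the paper's proof also reduces coarse invariance to showing that close morphisms induce chain-homotopic maps on $C\cX$, constructs exactly the same prism operator (using closeness to control the mixed pairs $(f_0(x_i),f_1(x_j))$ and properness/local finiteness to justify the infinite linear extension), and concludes via Property~\ref{ewfoifewoifeowfewfewf} of $\cE\cM$.
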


\begin{proof}
Let $f,g :  X\to X^{\prime}$ be two morphisms which are close to each other (Definition~\ref{ioewfjweoifoijwfw32424}). We must show that the morphisms $\HX(f)$ and $\HX(g)$ from $\HX(X)$ to $\HX(X^{\prime})$ are equivalent.
We will construct a chain homotopy from $\CX(g)$ to $\CX(f)$ and then apply {Property \ref{ewfoifewoifeowfewfewf} of the functor $\ell$}.

For $i$ in $\{0,\dots,n\}$ we consider the map
$$h_{i}:X^{n+1}\to {{X^\prime}^{n+2}}\, , \quad (x_{0},\dots,x_{n})\mapsto (f(x_{0})\dots,f(x_{i}),g(x_{i}),\dots,g(x_{n}))\, .$$
We observe that it extends linearly {(again for possibly infinite sums)} to
$$h_{i}:\CX_{n}(X)\to \CX_{n+1}(X^{\prime})$$ in a canonical way.
Here we need the fact that $f$ and $g$ are close to each other  in order to control the pairs $(f(x_{i}),g(x_{j}))$.
We now define
$$h:=\sum_{i=0}^{n} (-1)^{i} h_{i}\, .$$
One checks in a straightforward manner that
$$\partial\circ h+h\circ \partial=\CX(g)-\CX(f)\, .$$
Hence $h$ is a chain homotopy between $\CX(g)$ and $\CX(f)$ as desired.
\end{proof}

\begin{prop}
$\HX$ satisfies excision.
\end{prop}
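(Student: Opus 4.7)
The plan is to verify excision at the chain-complex level and then transport it to spectra via $\cE\cM$. For any subspace $A \subseteq X$ equipped with the induced bornological coarse structure, the inclusion $A \hookrightarrow X$ identifies $C\cX(A)$ with the subcomplex of $C\cX(X)$ consisting of those chains whose support is contained in $A^{n+1}$ (this uses that bounded subsets of $A$ are precisely intersections with $A$ of bounded subsets of $X$, and that an entourage of $A$ is the restriction of one from $X$). Under this identification $C\cX(\cY) := \colim_{i\in I} C\cX(Y_i)$ is the subcomplex of chains supported in some $Y_i^{n+1}$, and one directly checks the equality of subcomplexes $C\cX(Z) \cap C\cX(\cY) = C\cX(Z \cap \cY)$.

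The key combinatorial step is to establish the equality
$$C\cX(Z) + C\cX(\cY) = C\cX(X)$$
as subcomplexes of $C\cX(X)$. Given an $i_{0}\in I$ with $Z \cup Y_{i_{0}} = X$ and a $U$-controlled locally finite $n$-chain $c$ on $X$, I would split $c = c_{Z} + c_{Y}$ where $c_{Z}$ is the restriction of $c$ to $Z^{n+1}$ and $c_{Y}$ the restriction to $X^{n+1}\setminus Z^{n+1}$. Then $c_{Z}\in C\cX(Z)$ by construction. Every tuple in $\supp(c_{Y})$ has at least one coordinate outside $Z$, hence in $Y_{i_{0}}$; $U$-control forces every coordinate to lie in $U[Y_{i_{0}}]$, and bigness of $\cY$ furnishes $j\in I$ with $U[Y_{i_{0}}]\subseteq Y_{j}$, so $c_{Y}\in C\cX(Y_{j})\subseteq C\cX(\cY)$.

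The second isomorphism theorem for subcomplexes then yields a natural isomorphism of chain complexes
$$C\cX(Z)/C\cX(Z\cap \cY) \;\xrightarrow{\cong}\; C\cX(X)/C\cX(\cY)\ .$$
Now I would apply $\cE\cM$. Since $\cE\cM$ preserves filtered colimits (Property \ref{ewfoifewoifeowfewfewf12}), the canonical map $\colim_{i\in I} H\cX(Y_{i}) \to \cE\cM(C\cX(\cY))$ is an equivalence, and likewise for $Z\cap \cY$. Since $\cE\cM$ sends short exact sequences of chain complexes to fibre sequences (Property \ref{fewiofjoifewfewfewf}), the cofibres $H\cX(Z,Z\cap\cY)$ and $H\cX(X,\cY)$ are computed by applying $\cE\cM$ to the respective quotient complexes. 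Combining these with the displayed isomorphism of quotients produces the required equivalence $H\cX(Z,Z\cap\cY)\xrightarrow{\simeq}H\cX(X,\cY)$. The only real content is the decomposition $c = c_{Z}+c_{Y}$ above; everything else is a formal unwinding of definitions and the standard properties of the Eilenberg--MacLane functor.
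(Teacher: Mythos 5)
Your proof is correct and follows essentially the same route as the paper: the paper also reduces to showing that $C\cX(Z)/C\cX(Z\cap \cY)\to C\cX(X)/C\cX(\cY)$ is an isomorphism of chain complexes, with injectivity amounting to your intersection identity $C\cX(Z)\cap C\cX(\cY)=C\cX(Z\cap\cY)$ and surjectivity given by exactly your decomposition $c=c_Z+c_Y$ with $c-c_Z\in C\cX(Y_j)$ for $U[Y_{i_0}]\subseteq Y_j$, before applying $\cE\cM$ via its exactness and filtered-colimit properties. Your phrasing via the second isomorphism theorem is just a cleaner packaging of the same argument.
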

\begin{proof}
Let $X$ be a bornological coarse space, and let $(Z,\cY)$ a complementary pair (Definition~\ref{jfwoifjweofeiwf234234324}) with the big family $\cY=(Y_{i})_{i\in I}$.
By {Property \ref{ewfoifewoifeowfewfewf12} of the functor $\ell$} we have an equivalence
$$\HX(\cY)\simeq \ell(\colim_{i\in I} \CX(Y_{i}))\, .$$
Since for an injective map $f$ between bornological coarse spaces the induced map $\CX(f)$ is obviously injective we can
interpret the colimit on the right-hand side as a union of subcomplexes in $\CX(X)$.
Similarly we can consider $\CX
(Z)$ as a subcomplex of $\CX(X)$ naturally.

We now consider the following diagram
$$\xymatrix{0\ar[r]&\CX(Z\cap \cY)\ar[d]\ar[r]&\CX(Z)\ar[r]\ar[d]&\CX(Z)/\CX(Z\cap \cY)\ar[d]\ar[r]&0\\
0\ar[r]&\CX(\cY)\ar[r]&\CX(X)\ar[r]&\CX(X)/\CX(\cY)\ar[r]&0}$$
We want to show that the left square gives a push-out square after applying $\ell$.
Using {Property \ref{fewiofjoifewfewfewf} of the functor $\ell$} we see that it suffices to show that the {right} vertical map is a quasi-isomorphism. 
We will actually show that it is an isomorphism of chain complexes.

For injectivity we consider a chain $c$ in $\CX(Z)$ whose class $[c]$ in $\CX(Z)/\CX(Z\cap \cY)$ is sent to zero.
Then $c\in \CX(\cY)\cap \CX(Z)=\CX(Z\cap \cY)$. Hence $[c]=0$.

For surjectivity  consider  $c$ in $\CX(X)$. We fix $i$ in $I$ such that $Z\cup Y_{i}={X}$. We can find a coarse entourage $U$ of $X$ such that $c$ is $U$-controlled.
Let $c_{Z}$ be the restriction of $c$ to $Z^{n+1}$, where $n$ is the degree of $c$.
Using that $\cY$ is big we choose 
 $j$ in $I$   such that $U[Y_{i}]\subseteq Y_{j}$. 
 Then one can check that $c-c_{Z}\in \CX(Y_{j})$.
 Consequently, the class  $[c_{Z}]$ in $\CX(Z)/\CX(Z\cap \cY)$ is mapped to the class of $[c]$. 
\end{proof}

\begin{prop}
If $X$ is a flasque bornological coarse space, then $\HX(X)\simeq 0$.
\end{prop}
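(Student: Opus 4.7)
The plan is to carry out an Eilenberg swindle at the level of the chain complex $C\cX(X)$ and then transport it to spectra via $\cE\cM$. Let $f\colon X\to X$ be a self-morphism implementing flasqueness, with the three properties of Definition~\ref{efijewifjewiofwifwfew322423424}.

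The key step is to define a chain endomorphism $F\colon C\cX(X)\to C\cX(X)$ by the formula
\[ F(c) := \sum_{k=0}^{\infty} C\cX(f)^{k}(c)\ . \]
I would first argue that this infinite sum makes sense as a locally finite controlled chain. Local finiteness uses property~\ref{fwoiejweoifeowi231} together with the observation that $f^{k+1}(X)=f^{k}(f(X))\subseteq f^{k}(X)$, so that the vanishing $f^{k_{0}}(X)\cap B=\emptyset$ propagates to all $k\ge k_{0}$; hence only finitely many summands meet any given bounded subset. Controlledness uses property~\ref{sdfn3443243t61}: if $c$ is $U$-controlled then $C\cX(f)^{k}(c)$ is $(f^{k}\times f^{k})(U)$-controlled, so $F(c)$ is controlled by the entourage $\bigcup_{k\in\nat}(f^{k}\times f^{k})(U)$. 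Then $F$ is a chain map (being a sum of chain maps with locally finite overlap).

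Next I would record the telescoping identity
\[ C\cX(f)\circ F \;=\; \sum_{k=1}^{\infty} C\cX(f)^{k} \;=\; F - \id_{C\cX(X)}\ . \]
Applying the Eilenberg--MacLane functor $\cE\cM$ and writing $\varphi:=\cE\cM(F)$ and $\psi:=H\cX(f)$, this gives an equivalence $\psi\circ\varphi\simeq\varphi-\id_{H\cX(X)}$ in $\Sp$, where the subtraction is taken in the abelian group $[H\cX(X),H\cX(X)]$. Since $f$ is close to $\id_{X}$, the coarse invariance established in Proposition~\ref{fweofwefewfewfewfwef} (whose proof constructs an explicit chain homotopy between $C\cX(f)$ and $\id_{C\cX(X)}$) yields $\psi\simeq \id_{H\cX(X)}$, so the identity rewrites as $\varphi\simeq \varphi-\id_{H\cX(X)}$, hence $\id_{H\cX(X)}\simeq 0$. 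A spectrum whose identity map is null is a zero object, so $H\cX(X)\simeq 0$.

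The only mildly delicate point is the verification that $F$ takes values in $C\cX(X)$; once properties~\ref{sdfn3443243t61} and~\ref{fwoiejweoifeowi231} of flasqueness are parsed correctly this is routine, and everything else is formal manipulation using properties~\ref{ewfoifewoifeowfewfewf} and~\ref{fewiofjoifewfewfewf} of $\cE\cM$. I do not anticipate any genuine obstacle.
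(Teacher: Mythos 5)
Your proposal is correct and follows essentially the same route as the paper: the paper defines the same operator $S(x):=\sum_{k\in\nat}C\cX(f^{k})(x)$, verifies it lands in locally finite controlled chains using exactly the two flasqueness properties you cite, and then derives $\id_{H\cX(X)}\simeq 0$ from the telescoping identity combined with coarse invariance (your sign $F-\id$ is the accurate form of the identity the paper writes as $\id+S$). No gaps.
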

\begin{proof}
Let $f:X\to X$ implement flasqueness of $X$ (Definition \ref{efijewifjewiofwifwfew322423424}).
For every $n$ in $\nat$ we define
$$S\colon X^{n+1}\to \CX_{n}(X)\, , \quad S(x):=\sum_{k\in \nat} \CX(f^{k})(x) \, .$$
Using the properties of $f$ listed in Definition \ref{efijewifjewiofwifwfew322423424}  one checks that $S$ extends linearly
to a chain map
$$S\colon \CX (X)\to \CX
(X)\, .$$
For example, we use Point~\ref{fwoiejweoifeowi231} of Definition \ref{efijewifjewiofwifwfew322423424} in order to see that $S$ takes values in locally finite chains. We further use {Point~\ref{sdfn3443243t61} of Definition} \ref{efijewifjewiofwifwfew322423424} in order to see that $S$ maps controlled chains to controlled chains.

From the construction of $S$ we immediately conclude that
$$\id_{\CX(X)}+\CX(f)\circ S=S$$
and applying $\ell$ we get
$$ \id_{{\HX(X)}}+\HX(f)\circ \ell(S)\simeq \ell(S)\, .$$
We now use that $f$ is close to $\id_{X}$ (\ref{efijewifjewiofwifwfew322423424}.\ref{asdfgjtzkj561}) and Proposition \ref{fweofwefewfewfewfwef} to conclude that
$$\id_{{\HX(X)}}+  \ell(S)= \ell(S)\, .$$
This implies that $\id_{{\HX(X)}}\simeq 0$ and hence $\HX(X)\simeq 0$. \end{proof}

\begin{prop}
$\HX$ is $u$-continuous.
\end{prop}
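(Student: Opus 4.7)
The plan is to reduce the statement to the analogous statement on the level of chain complexes, and then invoke Property \ref{ewfoifewoifeowfewfewf12} of $\cE\cM$. First I would observe that, by the very definition of controlled chains, an $n$-chain $c : X^{n+1}\to \Z$ lies in $C\cX_{n}(X)$ if and only if it is locally finite and there exists some $U\in \cC$ with $c$ being $U$-controlled; equivalently, $c$ lies in $C\cX_{n}(X_{U})$ for this $U$. Note that local finiteness only depends on the bornology $\cB$, which is the same for $X$ and $X_{U}$. Since $\cC$ is a filtered poset under inclusion (any two entourages are contained in their union), and since a $U$-controlled chain is automatically $V$-controlled whenever $U\subseteq V$, the canonical maps $C\cX(X_{U})\to C\cX(X_{V})$ and $C\cX(X_{U})\to C\cX(X)$ are just the obvious inclusions of chain groups. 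It follows that
\[ C\cX(X)\;\cong\; \colim_{U\in \cC} C\cX(X_{U}) \]
as an equality of chain complexes (the differentials match because $\partial_{i}$ preserves $U$-controlledness).

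Having established this, I would apply the Eilenberg--MacLane functor $\cE\cM$ to both sides. By Property \ref{ewfoifewoifeowfewfewf12} of $\cE\cM$ listed just before the Definition of $H\cX$, the functor $\cE\cM$ preserves filtered colimits, and since $\cC$ (viewed as a partially ordered set under inclusion) is filtered, the filtered colimit $\colim_{U\in \cC} C\cX(X_{U})$ is sent to the corresponding colimit of spectra. Combining, we obtain the desired chain of equivalences
\[ H\cX(X)\;=\;\cE\cM\big(C\cX(X)\big)\;\simeq\;\cE\cM\big(\colim_{U\in \cC} C\cX(X_{U})\big)\;\simeq\;\colim_{U\in \cC}\cE\cM\big(C\cX(X_{U})\big)\;=\;\colim_{U\in \cC} H\cX(X_{U}). \]
Moreover, this equivalence is induced by the maps coming from the morphisms $X_{U}\to X$ in $\BC$, as required by Point \ref{efofjewofop3294i0234324} of Corollary \ref{fejkwefweopfiof234234}. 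There is no real obstacle here; the argument is essentially a tautological unwinding of the definitions, the only slightly subtle point being the observation that the transition maps in the filtered system are genuine inclusions of chain subgroups so that one really gets an equality, not merely an isomorphism, of chain complexes.
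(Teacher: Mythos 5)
Your proof is correct and follows essentially the same route as the paper: identify $C\cX(X)$ with the filtered colimit (union) of the subcomplexes $C\cX(X_{U})$ and then apply the fact that $\cE\cM$ preserves filtered colimits. The only point worth tightening is that $C\cX(X_{U})$ consists of chains controlled by some entourage of $\cC\langle U\rangle$ (e.g.\ $U^{n}$-controlled chains for arbitrary $n$), not only the $U$-controlled ones, so the asserted ``equivalence'' is really a one-directional inclusion — but since every such chain is still controlled by an entourage of $\cC$, the identification $C\cX(X)\cong\colim_{U\in\cC}C\cX(X_{U})$ and hence the whole argument goes through unchanged.
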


\begin{proof}
Let $X$ be a bornological coarse space with coarse structure $\cC$.
Then $\CX(X)$ is the union of its subcomplexes of $U$-controlled locally finite chains over all $U$ in $\cC$.
Furthermore, for fixed $U$ in $\cC$ the union of the $U^{n}$-controlled chains, where $n$ runs over $n$ in $\nat$, is the image of $\CX(X_{U})$.
Using {Property \ref{ewfoifewoifeowfewfewf12} of the functor $\ell$} we immediately get the desired equivalence
$$\HX(X)\simeq \ell(\colim_{U\in \cC} \CX(X_{U}))\simeq  \colim_{U\in \cC}\ell( \CX(X_{U}))\simeq \colim_{U\in \cC} \HX(X_{U})\, .$$
\end{proof}

\begin{prop}\label{ewgiowjgwoergfrefrwfref}
$\HX$ is strongly additive.
\end{prop}

\begin{proof}
{Let $(X_i)_{i \in I}$ be a family of bornological coarse spaces. An inspection of the definitions shows that controlled  simplices on  $\bigsqcup^{\free}_{i\in I} X_i$ 
can not mix the components.  It follows that
$$\CX \big(  \bigsqcup^{\free}_{i \in I} X_i \big) \cong \prod_{i \in I} \CX(X_i)\, .$$
We now use Property \ref{weoifewoifowiowefio90uwe908fwefewf} of the functor $\ell$ in order to conclude the equivalence
\[\HX \big( \bigsqcup^{\free}_{i \in I} X_i \big) \simeq \prod_{i \in I} \HX(X_i)\, .\]
One easily checks that this equivalence is induced by the correct morphism.}
\end{proof}

\begin{lem}
$\HX$ preserves coproducts.
\end{lem}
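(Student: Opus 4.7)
The plan is to show the stronger statement that the natural inclusion of chain complexes
\[\bigoplus_{i\in I} C\cX(X_i)\ \longrightarrow\ C\cX\big(\coprod_{i\in I} X_i\big)\]
is an isomorphism, and then invoke the fact that $\cE\cM$ commutes with direct sums (which follows from Properties \ref{ewfoifewoifeowfewfewf12} and \ref{fewiofjoifewfewfewf} of $\cE\cM$, since a direct sum of chain complexes is a filtered colimit of finite direct sums, and finite direct sums are finite coproducts). This plainly gives the desired equivalence, and by construction of the inclusion from the coproduct inclusions $X_i\to\coprod_i X_i$ it coincides with the canonical comparison map.

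The first step is to analyse the coarse structure $\cC$ on $\coprod_{i\in I} X_i$ (Lemma \ref{ewoifweofe90wf09wef2334}). Since $\cC=\cC\langle\bigcup_i \cC_i\rangle$ and compositions across distinct $X_i$'s are empty, any $U\in\cC$ is contained in a union of the form $\big(\bigsqcup_{i\in F} U_i\big)\cup \diag_X$ with $F\subseteq I$ finite and $U_i\in\cC_i$. A straightforward connectivity argument then shows that every $U$-controlled tuple in $X^{n+1}$ lies entirely inside a single $X_i$. Hence any controlled $n$-chain $c$ on the coproduct decomposes uniquely as a formal sum $c=\sum_i c_i$ with $c_i:=c|_{X_i^{n+1}}$, and each $c_i$ is controlled in $X_i$ with the entourage $U\cap(X_i\times X_i)\in\cC_i$.

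The second and key step is to show that local finiteness of $c$ on the coproduct forces $c_i=0$ for all but finitely many $i\in I$. Suppose $c_i\neq 0$ for infinitely many $i$; for each such $i$ pick $x^{(i)}\in\supp(c_i)\subseteq X_i^{n+1}$ and let $b_i\in X_i$ be its first coordinate. Then $B:=\{b_i\}_i$ has $B\cap X_j$ a singleton or empty, hence bounded in each $X_j$, so $B\in\cB$ by the definition of the bornology of the coproduct. However $x^{(i)}$ meets $B$ for infinitely many $i$, contradicting local finiteness of $c$. Conversely, any bounded $B_i\subseteq X_i$ is bounded in $\coprod_i X_i$, so local finiteness of $c$ on the coproduct restricts to local finiteness of $c_i$ on $X_i$, yielding $c_i\in C\cX_n(X_i)$. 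This establishes the desired isomorphism of abelian groups, which is manifestly compatible with the face operators $\partial_k$ and therefore is an isomorphism of chain complexes.

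The main technical obstacle is precisely the local-finiteness verification in the second step, which is why the bornology of the coproduct (being the smallest rather than the largest compatible one, cf.\ the contrast with free unions in Definition~\ref{foifewieof89u8924r443535}) is essential here; with the free-union bornology the same argument would fail, which is consistent with our expectation that $H\cX$ is moreover strongly additive (Lemma proved above) but that these two additivity properties are genuinely different phenomena.
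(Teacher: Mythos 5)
Your proposal is correct and follows essentially the same route as the paper: decompose $C\cX(\coprod_i X_i)$ as $\bigoplus_i C\cX(X_i)$ by observing that controlled simplices live in a single component and that local finiteness (via the coproduct bornology) forces a chain to be supported on finitely many components, then apply the compatibility of $\cE\cM$ with direct sums. You merely spell out the details that the paper's proof leaves implicit, and they check out.
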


\begin{proof}
Let $(X_i)_{i \in I}$ be a family of bornological coarse spaces. Since coarse chains are locally finite, we conclude that every coarse chain on $\coprod_{i \in I} X_i$ is  supported   on finitely many components $X_i$. Furthermore, a controlled simplex is supported on a single component. So we get an isomorphism of complexes
\[\bigoplus_{i \in I} \CX(X_i) \cong \CX\big( \coprod_{i \in I} X_i \big)\]
and the assertion of the lemma follows with Property~\ref{weoifewoifowiowefio90uwe908fwefewf} of the functor $\ell$.
\end{proof}

In the following we consider abelian groups as chain complexes concentrated in degree $0$.
\begin{lem}
If $X$ is a set considered as a bornological coarse space with the maximal structures, then
 $\HX(X)\simeq \ell(\Z)$.
\end{lem}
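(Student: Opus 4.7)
With the maximal bornology $\cB_{max}=\cP(X)$ every subset of $X$ is bounded, and with the maximal coarse structure $\cC_{max}=\cP(X\times X)$ every point of $X^{n+1}$ is controlled (take $U=X\times X$). In particular $X$ itself is bounded, so the local finiteness condition in the definition of $C\cX_n(X)$ forces the support to be globally finite. Consequently $C\cX_n(X)$ is just the free abelian group $\Z[X^{n+1}]$ (finitely supported functions), and the differential is the familiar alternating sum of face maps. (If $X=\emptyset$ the statement is vacuous; assume $X\neq\emptyset$.)

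My plan is to show that $C\cX(X)$ is quasi-isomorphic to $\Z$ concentrated in degree $0$ by exhibiting a contracting chain homotopy on the augmented complex, and then invoke Property~\ref{fewiofjoifewfewfewf} of the Eilenberg--MacLane functor (or rather the fact that $\cE\cM$ inverts quasi-isomorphisms, cf.~\eqref{hfwkjrfhwkejw8uz92834234}) to conclude $H\cX(X)\simeq H\Z$.

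First I would pick a basepoint $p\in X$ and define the ``cone'' map
\[
h\colon C\cX_n(X)\to C\cX_{n+1}(X),\qquad h(x_0,\dots,x_n):=(p,x_0,\dots,x_n),
\]
extended linearly. This is well-defined because the enlarged tuples remain finitely supported and are automatically controlled (every pair is in $\cC_{max}$). Together with the standard augmentation $\epsilon\colon C\cX_0(X)\to \Z$ summing coefficients, a direct computation (identical to the one used in the proof of Proposition~\ref{fweofwefewfewfewfwef}) gives
\[
\partial\circ h + h\circ \partial = \id_{C\cX_n(X)}\quad (n\ge 1),\qquad \partial h(x_0)=x_0-p\ \text{on $C\cX_0(X)$}.
\]
Hence the augmented complex $\cdots\to C\cX_1(X)\to C\cX_0(X)\xrightarrow{\epsilon} \Z\to 0$ is contractible, so $C\cX(X)$ is quasi-isomorphic to the complex $\Z[0]$.

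Finally I would apply the Eilenberg--MacLane correspondence~\eqref{hfwkjrfhwkejw8uz92834234}, which factors through $\Ch[W^{-1}]$ and therefore sends quasi-isomorphisms to equivalences, to obtain
\[
H\cX(X)=\cE\cM(C\cX(X))\simeq \cE\cM(\Z[0])\simeq H\Z.
\]
There is no real obstacle here; the only point that needs a moment of care is the observation that maximality of both structures collapses the local-finiteness-plus-control condition to ``finitely supported'', so that the contracting homotopy $h$ lands back in $C\cX_{n+1}(X)$.
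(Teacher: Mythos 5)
Your proof is correct, but it takes a different route from the paper's. The paper first observes that (for $X\neq\emptyset$) the inclusion $*\to X$ is an equivalence of bornological coarse spaces — the constant map back is proper because every subset is bounded, and the two composites are close to the identities because $X\times X$ is an entourage — so coarse invariance of $H\cX$ (Proposition~\ref{fweofwefewfewfewfwef}) reduces the claim to the point, where $C\cX(*)$ is written down explicitly as $\Z\xleftarrow{0}\Z\xleftarrow{1}\Z\xleftarrow{0}\cdots$ with homology $\Z$ in degree $0$. You instead work directly on $X$: you identify $C\cX_n(X)$ with $\Z[X^{n+1}]$ (correct, since maximality of the bornology forces global finiteness of supports and maximality of the coarse structure makes every tuple controlled) and contract the augmented complex with the cone homotopy $h$. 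Your homotopy is essentially the chain homotopy from the proof of Proposition~\ref{fweofwefewfewfewfwef} specialized to the pair $(\id_X,\mathrm{const}_p)$, so in effect you re-derive the needed instance of coarse invariance at the chain level rather than quoting it. The paper's route is shorter given the machinery already in place; yours is self-contained and has the small merit of making the $X=\emptyset$ caveat explicit (the empty space is flasque, so the statement genuinely requires $X\neq\emptyset$, a hypothesis the paper leaves implicit). Both correctly conclude via the fact that $\cE\cM$ inverts quasi-isomorphisms.
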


\begin{proof}
We have $\HX(*)\simeq \ell(\Z)$. Indeed, $$\CX(*)\cong \dots0 \leftarrow\Z\stackrel{0}{\leftarrow} \Z\stackrel{1}{\leftarrow} \Z\stackrel{0}{\leftarrow} \Z\stackrel{1}{\leftarrow} \Z\stackrel{0}{\leftarrow} \Z\dots$$ 
and the inclusion of the left-most $\Z$ sitting in degree $0$ induces an equivalence after applying $\ell$.
The inclusion $*\to X$ is an equivalence. Conseqently,
$$\ell(\Z)\simeq \HX(*)\simeq \HX(X)\, ,$$
as claimed.
\end{proof}


\subsection{Coarsification of {stable homotopy}}\label{kjoijfoewifueofieuwfewfwefewfef}

Let  $\Sp$ denote the small stable $\infty$-category of very small spectra.
 In usual homotopy theory the sphere spectrum $S$ in $\Sp$ represents the homology theory $$ \Sigma^{\infty,\topp}_{+}:\Top\to \Sp $$  called the stable homotopy theory.  
The goal of the present section is to construct a coarse homology theory $$Q:\BC\to \Sp $$
which is a coarse version of stable homotopy theory.
The construction uses the process of  coarsification of a locally finite homology theor{y} which was introduced by Roe \cite[Def.~5.37]{roe_lectures_coarse_geometry} using anti-\v{C}ech systems. We employ spaces of probability measures as Emerson and Meyer \cite[Sec.~4]{em}. In the present section we go directly to the construction of $Q$. In the subsequent Section \ref{ihjiovqceqwecewcqwecq} we will develop the theory of locally finite homology theories and their coarsification  in general,  generalizing  the ideas of the present subsection.

The  coarsification of stable homotopy theory $Q$ gives rise to may further coarse homology theories.
If $\bC$ is a cocomplete  stable   $\infty$-category{, then} it is tensorized over $\Sp$, i.e, we have {a bi-}functor
\begin{equation}\label{wervewrvwevfevfdvsfdvdfsvsfv}
\bC\times \Sp\to \bC\ , \quad  (C,R)\mapsto C\wedge R
\end{equation} 
which preserves very small colimits in both arguments. Every  object $C$ in $\bC$ gives rise to a
$\bC$-valued homology theory
$$C\wedge  \Sigma^{\infty,\topp}_{+}  :\Top \to \bC\ , \quad X\mapsto C\wedge  \Sigma^{\infty,\topp}_{+}(X) $$
whose value on a point is $C$.  
Similarly, we can define a $\bC$-valued coarse  homology theory
$$C\wedge  Q:\BC\to \bC\, , \quad X\mapsto C\wedge Q(X) \, ,$$
see Corollary \ref{goirjgio334t43gregergrege}.

%
%
%
%
%

Let $\bC$ be a complete and cocomplete  $\infty$-category. In some arguments below 
  we  need the following property of $\bC$.

Let  $(J_{i})_{i\in I}$ be a very small family of very small filtered categories. Then   $J:=\prod_{i\in I}J_{i}$ is again a very small filtered category.  If $(C_{i}:J_{i}\to \bC)_{i\in I}$    is a  family of diagrams, then we can form a diagram $\prod_{i\in  I}C_{i}:J\to \bC$. 

\begin{ddd}[{{\cite[Sec.~1.2.(iv)]{TAC_continuous_cat}}}]\label{wekrgergfvsfgsgfdg}
$\bC$ has the property that  {products distribute over  filtered colimits,}
\index{products distribute over  filtered colimits}\index{distribute!products  over colimits }
if for every family of diagrams $(C_{i}:J_{i}\to \bC)_{i\in I}$ as above
the canonical morphism
$$\colim_{J}\prod_{i\in I} C_{i}\to \prod_{i\in I} \colim_{J_{i}} C_{i}$$
is an equivalence.
\end{ddd}

\begin{rem}\label{qerugiergregrefrefwerf}
It is easy to see that the category $\Set$ has this property. 
 Using the fact that  the  functors $\pi_{i}:\Spc\to \Set$  (the group structure is not relevant here) for $i$ in $\nat$  preserves filtered colimits and products, and that they  
  jointly detect equivalences we can conclude that $\Spc$ has the property, too. 
One can then conclude that every compactly generated $\infty$-category $\bC$ has the property defined in Definition \ref{wekrgergfvsfgsgfdg}.
To this end one uses the functors $\Map_{\bC}(K,-):\bC\to \Spc$ for compact objects $K$ in $\bC$ in order to reduce to the case of $\Spc$.

 In particular, in the $\infty$-categories $\Sp$ and $\Ch_{\infty}$   {products distribute over  filtered colimits.}
 
We will also need {the} dual property:
  {sums distribute over cofiltered limits} \index{sums distribute over cofiltered}\index{distribute! sums over limits  } in $\bC$ if {products distribute over  filtered colimits,}   in $\bC^{\op}$.  
\hB
\end{rem}

\subsubsection{Rips complexes and a coarsification of stable homotopy}\label{eriogjeoigerfwerfwerfwrf}

A set $X$ can be considered as a discrete topological space. It is locally compact and Hausdorff. Its compact subsets are the finite subsets. We let $C_{0}(X)$ be the Banach space closure of the space of compactly supported, continuous (this a void condition) functions on $X$ with respect to the $\sup$-norm. For every point $x$ in $X$ we define the function $$e_{x}\in C_{0}(X)\, , \quad e_{x}(y):=\left\{\begin{array}{cc}1&y=x\\0&\text{else}\end{array}\right.$$
We can consider $(X,\cP(X))$ as the  Borel measurable space associated to the topological space $(X,\cP(X))$. By  $P(X)$\index{$P(-)$} we denote the space of regular probability measures on $X$. {Note that a regular probability measure is in this case just a positive function on the set~$X$ with $\ell^1$-norm $1$.} 
The topology on $P(X)$  is induced from the inclusion of $P(X)$ into the unit sphere of the Banach dual  of $C_{0}(X)$ which is compact {and Hausdorff} in the weak $*$-topology.
A map of sets $f:X\to X^{\prime}$ functorially induces a continuous map
$$f_{*}:P(X)\to P(X^{\prime})\, , \quad \mu\mapsto f_{*}\mu\, .$$
The support\index{support!of a probability measure} of a probability measure $\mu$ in $P(X)$ is defined by
$$\supp(\mu):=\{x\in X\:|\: \mu(e_{x})\not=0\}\, .$$
We assume that $X$ is a bornological coarse space with coarse structure $\cC$ and bornology $\cB$. For an entourage $U$ in $\cC$ we let
\begin{equation}\label{gegljio3tuio38tut3t3t34}
P_{U}(X):=\{\mu\in P(X) :  \supp(\mu) \text{ is }U\text{-bounded}\}
\end{equation}
(see Definition \ref{jfewiofjweofwef234} for the notion of $U$-boundedness).

We claim that $P_{U}(X)$ is a closed subspace of $P(X)$.  We will show that $P(X)\setminus P_{U}(X)$ is open. 
Let $\mu$ be in the complement $P(X)\setminus P_{U}(X)$. Then there exist two points $x,y$ in $X$ such that $(x,y)\not\in U$ and $\mu(e_{x})\not=0$ and $\mu(e_{y})\not=0$. Then $\{\nu\in P(X)\:|\: \nu(e_{x})\not=0 \, \& \,\nu(e_{y})\not=0\}$ is an open neighbourhood of $\mu$ contained in $P(X)\setminus P_{U}(X)$. This shows  the claim.

Let $X'$ be a second bornological coarse space with structures $\cC'$ and $\cB'$, and let $U'$ be in $\cC'$.
If $f :  X\to X^{\prime}$ is  map of sets and $(f\times f)(U)\subseteq U^{\prime}$, then $f_{*}$ restricts to a map
$$f_{*} :  P_{U}(X)\to P_{U^{\prime}}(X^{\prime})\, .$$ 
If $Y$ is a subset of $X$, then we get a map 
$$P_{U_{Y}}(Y)\to P_{U}(X)\ ,$$
where  $U_{Y}:=U\cap (Y\times Y)$. 
In the following we will simplify the notation and write  $P_{U}(Y)$ instead of $P_{U_{Y}}(Y)$. 

 

In the following we use the functor \index{$\Sigma_{+}^{\infty,\topp}$} 
\begin{equation}\label{vervelkvjekvevweqvevwv}
\Sigma_{+}^{\infty,\topp}:\Top \to \Sp\, . 
\end{equation} 
It is  defined as the composition \begin{equation}\label{frknfkejnfkjfrefwefwerfwref}
\Top\xrightarrow{\sing} \sSet \xrightarrow{\ell}  \sSet [W^{-1}]\stackrel{!}{\simeq}\Spc  \xrightarrow{\Sigma_{+}^{\infty}}
\Sp\, .
\end{equation}
Here $\sing$ sends a topological space to the very small simplicial set of singular simplices. 
The functor $\ell$ is the Dwyer--Kan localization at the set $W$ of weak homotopy equivalences.
The  equivalence  marked  by $!$ depends on the choice of a model for $\Spc$ and could be taken as a definition of $\Spc$.
Finally, the functor $\Sigma^{\infty}_{+}$ is the small version of the left-adjoint in \eqref{cnjkdsncjkkddscasdcadsc}.
In order to distinguish \eqref{vervelkvjekvevweqvevwv} from this functor we added the superscript ``$\topp$''.

 In the arguments below we will use the following well-known  properties of this functor:

\begin{enumerate}
\item \label{efwkjwoefwefewfwfq2223} $\Sigma_{+}^{\infty,\topp}$ sends homotopic maps to equivalent maps.
\item \label{efwkjwoefwefewfwfq22231} If $(T_{i})_{i\in I}$ is a filtered family of subspaces of some space such that for every $i$ in $I$ there exists $j$ in $J$ such that $T_{j}$ contains an open neighbourhood of $T_{i}$, then $$\colim_{i\in I} \Sigma_{+}^{\infty,\topp} (T_{i})\simeq \Sigma_{+}^{\infty,\topp} \Big( \bigcup_{i\in I} T_{i} \Big)\, .$$
\item \label{ewifjweoifiewfewfwfewfewfwfwf}If $(U,V)$ is an open covering of a topological space $X$, then we have a push-out square in spectra
$$\xymatrix{\Sigma_{+}^{\infty,\topp}(U\cap V)\ar[r]\ar[d]&\Sigma_{+}^{\infty,\topp}V\ar[d]\\\Sigma_{+}^{\infty,\topp}U\ar[r]&\Sigma_{+}^{\infty,\topp}X}$$
\end{enumerate}

\begin{ddd}\label{efifjweiof89247u23553435534543}
We define the functor\index{coarsification!of a bornological coarse space}\index{$Q$} $$Q :  \BC\to \Sp$$ {on objects} by
\begin{equation}\label{erwerjh23ui423423424}
 Q(X):=\colim_{U\in\cC} \ \lim_{B\in \cB} \ \Cofib \big( \Sigma_{+}^{\infty,\topp}P_{U}(X\setminus B)\to \Sigma_{+}^{\infty,\topp}P_{U}(X) \big)\, ,
\end{equation}
where $\cC$ and $\cB$ denote the coarse structure and the bornology of $X$.
For the complete definition also on morphisms see Remark \ref{fjwefoijewojf923ur9u23refewfwefewf} below.
\end{ddd}

 \begin{ex}
 We have $Q(*)\simeq S$, where $S$ in $\Sp$ is the sphere spectrum.\hB
 \end{ex}

\begin{rem}\label{fjwefoijewojf923ur9u23refewfwefewf}
The formula  \eqref{erwerjh23ui423423424} defines $Q$ on objects. 
In the following  we recall the standard procedure to interpret this formula as a definition of a functor.
Let $\BC^{\cC,\cB}$ be the category of triples  $(X,U,B)$ of a bornological coarse space $X $, a coarse entourage $U$ of $X$,   and a bounded subset $B$ of $X$. 
A morphism $$f:(X,U,B)\to(X^{\prime} ,U^{\prime},B^{\prime})$$ in $\BC^{\cC,\cB}$ is
a morphism $f:X \to X^{\prime} $ in $\BC$ such that $(f\times f)(U)\subseteq U^{\prime}$ and $f^{-1}(B^{\prime})\subseteq B$. 

A similar category $\BC^{\cC}$ of pairs $(X ,U)$ has been considered in Subsection \ref{egihfiebgbsdbsbfdbs}.

We have a chain of forgetful functors
$$\BC^{\cC,\cB}\to \BC^{\cC}\to \BC$$
where the first forgets the bounded subset, and the second forgets the entourage.
We now consider the diagram 
$$
\xymatrix{\BC^{\cC,\cB}\ar[rrd]^{Q''}\ar[d]&&\\ \BC^{\cC}\ar@{..>}[rr]^{Q'}\ar[d]&&\Sp\\\BC\ar@{--}[rru]^{Q}&&}
$$
The functor $Q^{''}$ is defined as  the composition
$$\BC^{\cC,\cB}\to   \Mor(\Sp^{sm})\stackrel{\Cofib}{\to} \Sp^{sm}\, ,$$
where the first functor sends
$(X,U,B)$ to the morphism of spectra
$$
 \Sigma^{\infty,\topp}_{+}P_{U}(X\setminus B)\to \Sigma^{\infty,\topp}_{+}P_{U}(X)\, ,$$
 and the second takes the cofibre. Therefore, $Q^{\prime\prime}$  is the functor
 $$(X,U,B)\mapsto \Cofib \big( \Sigma^{\infty,\topp}_{+} P_{U}(X\setminus B \big) \to \Sigma_{+}^{\infty,\topp}P_{U}(X))\, .$$
 We now define $Q^{\prime}$ as the right Kan extension of $Q^{\prime\prime}$ along the forgetful functor
\[\BC^{\cC,\cB}\to \BC^{\cC}\, .\]
By the pointwise formula for the evaluation of the Kan extension on objects we have
 $$Q^{\prime}(X,U):=\lim_{(X,U)/\BC^{\cC,\cB}} Q^{\prime\prime} \, ,$$
  {where ${(X ,U)/\BC^{\cC,\cB}}$ is the category of objects from $\BC^{\cC,\cB}$ under $(X,U)$.}
One now observes that the subcategory of  objects $\big( (X,U,B ), \id_{(X,U)} \big)$ with $B$  in $\cB$ is final in $(X,U)/\BC^{\cC,\cB}$. Hence we can restrict the limit to this final subcategory and get
$$Q^{\prime}(X,U)\simeq \lim_{B\in \cB} \ \Cofib \big( \Sigma_{+}^{\infty,\topp}P_{U}(X\setminus B)\to \Sigma_{+}^{\infty,\topp}P_{U}(X) \big)\, .$$
The functor $Q$ is then obtained from $Q^{\prime}$ by a left Kan extension along the forgetful functor $$\BC^{\cC}\to \BC\, .$$ The pointwise formula gives
$$Q(X)\simeq \colim_{\BC^{\cC}/X} Q^{'} \, ,$$
 {where $\BC^{\cC}/X$ is the category of objects from $\BC^{\cC}$ over the bornological coarse space $X$.}
As in Subsection \ref{egihfiebgbsdbsbfdbs} we observe that the subcategory of objects $\big(  (X,U) ,\id_{X} \big)$ for $U$ in $ \cC$   is cofinal in
$\BC^{\cC}/X$. Hence we can restrict the colimit to this subcategory. We then get the formula \eqref{erwerjh23ui423423424} as desired.
\hB 
\end{rem}


\begin{theorem}\label{efwifjwifou23984u23984u3294823424234}
The functor $Q$ is an $\Sp$-valued coarse homology theory.
\end{theorem}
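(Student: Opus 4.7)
The plan is to verify the four axioms of Corollary~\ref{fejkwefweopfiof234234}: $u$-continuity, coarse invariance, excision, and vanishing on flasque spaces. These will be checked using the pointwise formula \eqref{erwerjh23ui423423424} together with the three properties of $\Sigma^\infty_+$ listed just before Definition~\ref{efifjweiof89247u23553435534543}.

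For $u$-continuity, since the bornology of $X_U$ agrees with that of $X$ and the entourages of $X_U$ are precisely the elements of $\cC\langle U\rangle$, a straightforward rearrangement of the outer $\colim_{U\in\cC}$ in \eqref{erwerjh23ui423423424} yields the equivalence $Q(X)\simeq \colim_{U\in\cC} Q(X_U)$. For coarse invariance, given close morphisms $f_0,f_1:X\to X'$ with $W:=(f_0\times f_1)(\diag_X)$ controlled, I would build the continuous affine homotopy
$$H:[0,1]\times P_U(X)\to P_{U'}(X'),\qquad H(t,\mu):=(1-t)f_{0*}\mu+tf_{1*}\mu,$$
where $U'$ is chosen to contain $(f_0\times f_0)(U)\cup(f_1\times f_1)(U)\cup W$, so that $\supp H(t,\mu)\subseteq f_0(\supp\mu)\cup f_1(\supp\mu)$ is $U'$-bounded. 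For every bounded $B'\subseteq X'$, setting $B'':=f_0^{-1}(B')\cup f_1^{-1}(B')$ (bounded by properness), $H$ restricts to a homotopy $[0,1]\times P_U(X\setminus B'')\to P_{U'}(X'\setminus B')$. Applying Property~\ref{efwkjwoefwefewfwfq2223} stage-by-stage through the $\Cofib/\lim_B/\colim_U$ construction then gives $Q(f_0)\simeq Q(f_1)$.

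For excision on a complementary pair $(Z,\cY=(Y_i)_{i\in I})$ with $Z\cup Y_{i_0}=X$, for each $U$ and each $i\geq i_0$ the key observation is the closed decomposition $P_U(X)=P_U(U[Z])\cup P_U(U[Y_i])$ with intersection $P_U(U[Z]\cap U[Y_i])$: any $\mu\in P_U(X)$ satisfies $\supp(\mu)\subseteq U[x_0]$ for a chosen $x_0\in\supp\mu$, and $x_0$ lies in either $Z$ or $Y_i$. To access Property~\ref{ewifjweoifiewfewfwfewfewfwfwf} I would pass to the open thickenings $\{\mu:\mu(U[A])>1-\epsilon\}$ (open by lower semicontinuity of $\mu\mapsto\mu(A)$ on $P_U(X)$) and use Property~\ref{efwkjwoefwefewfwfq22231} to reach the closed set $P_U(U[A])$ in the $\epsilon$-limit, giving a Mayer--Vietoris pushout square of spectra. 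Taking $\Cofib$ against the $X\setminus B$-pieces, then $\lim_B$ and $\colim_{U,i}$, using that $\cY$ is big (so $\{U[Y_i]\}$ is cofinal in $\cY$) and the already-established coarse invariance (which identifies $Q(A)\simeq Q(U[A])$ for any subset $A$ with the induced structure), the four corners identify with the motives of $Z\cap\cY$, $Z$, $\cY$, and $X$.

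The hard part will be vanishing on flasque spaces. Let $f:X\to X$ implement flasqueness. Three ingredients are at hand: (i) by Condition~\ref{sdfn3443243t61} of Definition~\ref{efijewifjewiofwifwfew322423424}, $V:=\bigcup_k(f^k\times f^k)(U)$ is an entourage for every $U$; (ii) by the coarse invariance just established together with functoriality, $Q(f^k)\simeq \id_{Q(X)}$ for every $k\in\IN$; (iii) by Condition~\ref{fwoiejweoifeowi231}, for each bounded $B$ there exists $k_B\in\IN$ with $f^{k_B}(X)\cap B=\emptyset$, so $f^{k_B}_*$ factors through $P_V(X\setminus B)$. Combining (iii) with the lim--cofiber, the composite $\Sigma^\infty_+P_U(X)\xrightarrow{f^{k_B}_*}\Sigma^\infty_+ P_V(X)\to\Cofib(\Sigma^\infty_+P_V(X\setminus B)\to\Sigma^\infty_+P_V(X))$ is null, while by (ii) the composite computing the natural projection $Q(X)\to Q(X)\to\Cofib(\Sigma^\infty_+P_V(X\setminus B)\to\Sigma^\infty_+P_V(X))$ agrees with this null composite up to equivalence. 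Assembling these pointwise nullhomotopies coherently through $\lim_B$ and $\colim_U$ via an Eilenberg-style swindle--mirroring the chain-level formula $S=\sum_k C\cX(f^k)$ used for $H\cX$ in Section~\ref{ewifjewoifoi232344234}--should yield $Q(X)\simeq 0$. The delicate point is executing this swindle at the spectrum level, where the algebraic infinite sum available on chains must be replaced by a homotopy-coherent telescope argument respecting both the $\lim_B$ and the $\colim_U$ structure.
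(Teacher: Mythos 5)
Your treatment of $u$-continuity and coarse invariance matches the paper's, and your overall four-axiom strategy is the right one, but there are two genuine gaps.

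\textbf{Excision.} You fix $i$ and try to excise along the closed decomposition $\bigl(P_U(U[Z]),P_U(U[Y_i])\bigr)$, proposing to reach each closed piece from its open $\epsilon$-thickenings $\{\mu:\mu(U[A])>1-\epsilon\}$ ``in the $\epsilon$-limit'' via Property~\ref{efwkjwoefwefewfwfq22231}. That property concerns filtered \emph{colimits} over increasing families of subspaces whose \emph{union} is the target; your thickenings shrink to $P_U(U[A])$ as an \emph{intersection}, and $\Sigma_{+}^{\infty}$ does not commute with such limits. So the Mayer--Vietoris square at fixed $i$ is not justified --- $\Sigma_+^\infty$ is only known to satisfy open excision, and these pieces are closed. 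The lower-semicontinuity neighbourhoods you construct are nevertheless exactly the right ingredient, used in the opposite direction: if $U[Y_i]\subseteq Y_j$, then $P_U(Y_j)$ contains an open neighbourhood of $P_U(Y_i)$ (around $\mu$ take $\{\nu:|\nu(e_x)-\mu(e_x)|\le\mu(e_x)/2\}$ for some $x\in\supp(\mu)$). Hence the union $P_U(\cY)=\bigcup_{i}P_U(Y_i)$ over the \emph{big} family is open in $P_U(X)$, Property~\ref{efwkjwoefwefewfwfq22231} identifies $\colim_i\Sigma_+^\infty P_U(Y_i)\simeq\Sigma_+^\infty P_U(\cY)$, the same holds for $P_U(\{Z\})$, and $(P_U(\{Z\}),P_U(\cY))$ is an \emph{open} cover of $P_U(X)$ to which Property~\ref{ewifjweoifiewfewfwfewfewfwfwf} applies directly. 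One must take the colimit over the big family \emph{before} excising, not afterwards.

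\textbf{Flasqueness.} The argument you actually sketch --- each projection $Q(X)\to\Cofib\bigl(\Sigma_+^\infty P_V(X\setminus B)\to\Sigma_+^\infty P_V(X)\bigr)$ is null because $f^{k_B}_*$ factors through $P_V(X\setminus B)$ while $Q(f^{k_B})\simeq\id$ --- does not imply $\id_{Q(X)}\simeq 0$: componentwise nullhomotopies of a map into $\lim_B$ must be chosen coherently, and supplying that coherence is precisely what the swindle is for. The paper builds the map $\sum_k f^k_*$ as a limit of the finite partial sums $\id_{X,*}+f_*+\cdots+f^n_*$, each landing in $P_U(X,f^n(X))$ because $f_*,\dots,f^n_*$ are canonically null there (via the factorization $X\to f(X)\to X$ and the vanishing of the middle column); the induced endomorphism $F$ of $Q(X)$ then satisfies $F\simeq Q(f)\circ F\simeq Q(f)+F$, whence $Q(f)\simeq 0$ and, since $f$ is close to $\id_X$, $\id_{Q(X)}\simeq Q(f)\simeq 0$. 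You correctly name this as the delicate point, but it is the substantive content of this part of the proof and cannot be deferred.
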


The proof of Theorem~\ref{efwifjwifou23984u23984u3294823424234} will be given in Section~\ref{sec_proof_Q_coarsehomology} below.

\begin{rem}
In the literature  one  often works with the subspace
$P^{\fin}_{U}(X)\subseteq P_{U}(X)$ of probability measures whose support is $U$-bounded and in addition finite (also we do in \cite{ass}, \cite{equicoarse}). The constructions above go through with $P_{U}(X)$ replaced by the finite version  $P_{U}^{\fin}(X)$. We have no particular reason to prefer the larger space.

Note that if $X$ has the property that its $U$-bounded subsets are finite, then  we have an equality $P^{\fin}_{U}(X)= P_{U}(X)$. This is the case, for example, if $X$ has strongly locally bounded geometry (Definition \ref{oijioegewr}).
\hB
\end{rem}

\subsubsection{Proof of Theorem~\ref{efwifjwifou23984u23984u3294823424234}}
\label{sec_proof_Q_coarsehomology}

In view of Definition~\ref{rgljogreggregrege} the Theorem~\ref{efwifjwifou23984u23984u3294823424234} follows from the following four lemmas which verify the required four properties.

\begin{lem}\label{fewkljlwefewfewfewfwfwef} $Q$ is coarsely invariant. 
\end{lem}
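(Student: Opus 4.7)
The plan is to build a natural homotopy at the level of spaces of probability measures and then invoke the homotopy invariance of $\Sigma^\infty_+$ (Property \ref{efwkjwoefwefewfwfq2223}). Once the homotopy is in place, functoriality of cofibres and the compatibility of the cofibres with the limit over bounded subsets and the colimit over entourages propagate the equivalence to $Q$ itself.

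More concretely, let $f_0,f_1\colon X\to X'$ be close, so that $V:=(f_0\times f_1)(\diag_X)$ is an entourage of $X'$. Given an entourage $U$ of $X$, I would produce an entourage $U'$ of $X'$ containing $(f_i\times f_j)(U)$ for all $i,j\in\{0,1\}$: the diagonal cases $(f_i\times f_i)(U)$ are entourages since $f_i$ is controlled, and the cross cases fit into $(f_0\times f_0)(U)\circ V$ and its symmetric counterpart, which are entourages by closure of $\cC'$ under compositions. Then the affine linear interpolation
\[H(t,\mu):=(1-t)\,(f_0)_{*}\mu+t\,(f_1)_{*}\mu\]
defines a continuous map $H\colon[0,1]\times P_U(X)\to P_{U'}(X')$. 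Continuity is immediate from weak-$*$ continuity; the containment in $P_{U'}(X')$ follows because $\supp H(t,\mu)\subseteq f_0(\supp\mu)\cup f_1(\supp\mu)$ and this set is $U'$-bounded by the choice of $U'$.

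Next, for a bounded $B'\in\cB'$, I set $B:=f_0^{-1}(B')\cup f_1^{-1}(B')$, which is bounded since the $f_i$ are proper. Then for $\mu\in P_U(X\setminus B)$ we have $f_i(\supp\mu)\cap B'=\emptyset$ for $i=0,1$, so $H$ restricts to $[0,1]\times P_U(X\setminus B)\to P_{U'}(X'\setminus B')$. This yields a homotopy in the arrow category between the maps of pairs induced by $(f_0)_*$ and $(f_1)_*$; applying $\Sigma^\infty_+$ and taking cofibres produces an equivalence between the two induced morphisms of cofibres (using Property \ref{efwkjwoefwefewfwfq2223} of $\Sigma^\infty_+$ and the functoriality of the cofibre construction $\Fun(\Delta^1,\Sp)\to\Sp$).

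Finally, I would check that the equivalence is natural enough to pass through the right Kan extension along $\BC^{\cC,\cB}\to\BC^{\cC}$ and the left Kan extension along $\BC^{\cC}\to\BC$ used in Remark~\ref{fjwefoijewojf923ur9u23refewfwefewf} to define $Q$: the assignments $U\mapsto U'$ and $B'\mapsto B$ above can be made functorial by enlarging to cofinal subsystems, and the homotopies $H$ are compatible with enlarging $U$ and shrinking $X\setminus B$, so one obtains equivalent morphisms $Q(f_0)\simeq Q(f_1)$ after passing to the colimit in $U$ and the limit in $B'$. The only nontrivial point is the support estimate that places $H(t,\mu)$ inside $P_{U'}(X')$; everything else is straightforward.
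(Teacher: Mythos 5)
Your proof is correct and follows essentially the same route as the paper: both use the affine interpolation $(t,\mu)\mapsto (1-t)(f_0)_*\mu + t(f_1)_*\mu$ on spaces of controlled probability measures, apply the homotopy invariance of $\Sigma^{\infty}_{+}$, and pass the resulting equivalence through the cofibre, the limit over $\cB$, and the colimit over $\cC$. The only cosmetic difference is that the paper treats the universal pair of inclusions $i_0,i_1\colon X\to\{0,1\}\times X$ (from which the general case of close morphisms follows by factorization), whereas you handle arbitrary close morphisms directly and verify the support and properness estimates in more detail.
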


\begin{proof}
Let $X$ be a bornological coarse space. We consider the two inclusions $$i_{0},i_{1}:X\to \{0,1\} \otimes X\, .$$
We must show that
 $Q(i_{0})$ and $Q(i_{1})$ are equivalent.
 
For every entourage $U$ of $X$ we have a continuous map
$$[0,1]\times P_{U}(X)\to  P_{\tilde U}(\{0,1\}\times X)\, , \quad (t,\mu)\mapsto t i_{0,*}\mu+(1-t)i_{1,*}\mu\, ,$$
where $\tilde U:=[0,1]^{2}\times U$. This shows that  
$ P_{U}(i_{0})$ and $P_{U}(i_{1})$ are naturally homotopic. Using Property \ref{efwkjwoefwefewfwfq2223} of the functor $\Sigma_{+}^{\infty,\topp}$
we see that $ \Sigma_{+}^{\infty,\topp}P_{U}(i_{0})$ and $\Sigma_{+}^{\infty,\topp}P_{U}(i_{1})$ are naturally equivalent. Using the naturality of the homotopies with respect to $X$ we see that this equivalence survives
{the}  limit over $\cB$ and the colimit over $\cC$ as well as the cofibre functor. We conclude that 
 $Q(i_{0})\simeq Q(i_{1})$ are equivalent. 
\end{proof}

\begin{lem}\label{lefjefewjfieofeifw24}
$Q$ is excisive.
\end{lem}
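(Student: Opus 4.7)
The goal is to show that for every complementary pair $(Z,\cY)$ with $\cY=(Y_i)_{i\in I}$ on a bornological coarse space $X$, the square
$$\xymatrix{Q(Z\cap \cY)\ar[r]\ar[d]&Q(\cY)\ar[d]\\ Q(Z)\ar[r]&Q(X)}$$
is cocartesian. The strategy is to establish such a cocartesian square first at the level of the spaces $P_U(-)$ for every entourage $U$ and a suitably chosen member of the big family, and then propagate it through the cofibre construction and the limit/colimit in the definition of $Q$ given in \eqref{erwerjh23ui423423424}.

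\textbf{Step 1 (the geometric decomposition).} Fix $i_0\in I$ with $Z\cup Y_{i_0}=X$. For every $U\in\cC$ we use that $\cY$ is big to pick $j(U)\in I$ with $U[Y_{i_0}]\subseteq Y_{j(U)}$. The key observation is that
$$P_U(X)=P_U(Z)\cup P_U(Y_{j(U)}),\qquad P_U(Z)\cap P_U(Y_{j(U)})=P_U(Z\cap Y_{j(U)})\ ,$$
and the analogous equalities hold with $X,Z,Y_{j(U)}$ replaced by their intersections with $X\setminus B$ for any $B\in\cB$. Indeed, given $\mu\in P_U(X)$ with $U$-bounded support, either $\supp(\mu)\subseteq Z$ or $\supp(\mu)$ contains some $y\in Y_{i_0}$, in which case $\supp(\mu)\subseteq U[y]\subseteq Y_{j(U)}$.

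\textbf{Step 2 (pushout at the level of spectra).} The space $P_U(X)$ is a simplicial complex whose simplices are indexed by the non-empty $U$-bounded finite subsets of $X$; the subsets $P_U(Z)$ and $P_U(Y_{j(U)})$ are subcomplexes covering $P_U(X)$ with intersection $P_U(Z\cap Y_{j(U)})$. By the standard CW excision (equivalently, a small open thickening turns the cover into an excisive open cover to which Property \ref{ewifjweoifiewfewfwfewfewfwfwf} of $\Sigma_+^\infty$ applies), we obtain a cocartesian square in $\Sp$
$$\xymatrix{\Sigma_+^\infty P_U(Z\cap Y_{j(U)})\ar[r]\ar[d]&\Sigma_+^\infty P_U(Y_{j(U)})\ar[d]\\ \Sigma_+^\infty P_U(Z)\ar[r]&\Sigma_+^\infty P_U(X)}$$
and an analogous cocartesian square after removing any $B\in\cB$. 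Taking the cofibre of the vertical map from the second to the first yields, for every $U,B$, a cocartesian square of cofibres of the form appearing inside \eqref{erwerjh23ui423423424}.

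\textbf{Step 3 (assembling into $Q$).} Take the limit of these squares over $B\in\cB$, using that for a subset $A\in\{Z,Y_{j(U)},Z\cap Y_{j(U)}\}$ the family $\{B\cap A : B\in\cB\}$ is cofinal in the induced bornology of $A$. Then take the colimit over $U\in\cC$; since the assignment $U\mapsto j(U)$ can be chosen to be monotone and $j(U)$ is cofinal in $I$ as $U$ grows, the colimit $\colim_U Q'((Y_{j(U)},U))$ (with $Q'$ as in Remark~\ref{fjwefoijewojf923ur9u23refewfwefewf}) is equivalent to $Q(\cY)=\colim_{i\in I} Q(Y_i)$, and analogously for $Z\cap\cY$. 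Since cocartesian squares in $\Sp$ are preserved by arbitrary (co)limits of squares (using stability: cocartesian $\Leftrightarrow$ cartesian), the resulting square is the desired cocartesian square for $Q$.

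\textbf{Main obstacle.} The technically most delicate point is Step 3: one must verify that the ``mixed'' double-index colimit $\colim_U$ (together with a choice $j(U)$) really produces $\colim_i Q(Y_i)$ and $\colim_i Q(Z\cap Y_i)$, i.e., that the bookkeeping of cofinalities between the entourages of $X$, the entourages of the subspaces $Y_i$ and $Z$, and the index set $I$ works out. This is essentially a careful but routine cofinality argument based on the fact that the induced coarse and bornological structures on $Z$, $Y_i$ and $Z\cap Y_i$ are restrictions of those of $X$, together with the bigness of $\cY$.
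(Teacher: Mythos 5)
Your overall architecture (decompose $P_U(X)$, get a pushout of suspension spectra, pass to cofibres, then to the limit over $\cB$ and the colimit over $\cC$ using stability) matches the paper's, but there is a genuine gap in Step~2. The cover $(P_U(Z),P_U(Y_{j(U)}))$ you produce in Step~1 is a cover by \emph{closed} subsets of $P_U(X)$, and $\Sigma_+^\infty$ only satisfies \emph{open} excision --- the paper states explicitly in Example~\ref{iowdqwioduqz9q2342343424} that closed excision fails for $\Sigma_+^\infty$. Your two proposed fixes do not work as stated: $P_U(X)$ is \emph{not} a simplicial complex, since the paper's $P_U(X)$ consists of all regular probability measures with $U$-bounded (possibly infinite) support, carrying the weak-$*$ topology (only the finite-support variant $P_U^{fin}(X)$ discussed in the closing remark of Section~\ref{kjoijfoewifueofieuwfewfwefewfef} is a simplicial complex, and even then the weak-$*$ topology need not agree with the CW topology); and ``a small open thickening'' of a closed cover in a general topological space need not preserve the homotopy types of the pieces and of their intersection, so it does not automatically produce an excisive open cover.

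Supplying exactly this missing openness is the technical core of the paper's proof, and it is why the paper works with the big families $\{Z\}$ and $\cY$ rather than with $Z$ and a single member $Y_{j(U)}$: one shows that for $U[Y_i]\subseteq Y_j$ the subspace $P_U(Y_j)$ contains an open neighbourhood of $P_U(Y_i)$ (via the explicit weak-$*$ neighbourhood $W=\{\nu : |\mu(e_x)-\nu(e_x)|\le\mu(e_x)/2\}$, using that a measure in $W$ has $x$ in its support and hence support in $U[x]$), so that $P_U(\cY)=\bigcup_i P_U(Y_i)$ and likewise $P_U(\{Z\})$ are \emph{open} in $P_U(X)$; Property~\ref{efwkjwoefwefewfwfq22231} of $\Sigma_+^\infty$ then identifies $\colim_i\Sigma_+^\infty P_U(Y_i)\simeq\Sigma_+^\infty P_U(\cY)$, open excision applies to the open cover $(P_U(\{Z\}),P_U(\cY))$, and at the very end coarse invariance gives $Q(\{Z\})\simeq Q(Z)$. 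This also dissolves most of the cofinality bookkeeping you flag as the ``main obstacle'' in Step~3, since the colimit over the big families is performed inside each $P_U$ rather than interleaved with the colimit over $U$. To repair your proof you should either restrict to $P_U^{fin}$ and justify the CW excision there, or, better, adopt the big-family/openness argument.
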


\begin{proof}
Let $X$ be a bornological coarse space, $U$ an entourage of $X$, and let
  $\cY=(Y_{i})_{i\in I}$ be a big family on $X$.
For every $i$ in $I$  the inclusion $Y_{i}\to X$ induces an inclusion of a closed subset
$P_{U}(Y_{i})\to P_{U}(X)$. Since $\cY$ is big we can find    $j$ in $I$ be such that $U[Y_{i}]\subseteq Y_{j}$.
We claim that $P_{U}(Y_{j})$ contains an open neighbourhood of $P_{U}(Y_{i})$.
Let $\mu$ be in $P_{U}(Y_{i})$, and let $x$ be in $\supp(\mu)$. Then we consider the open neighbourhood
$$W:=\{\nu\in P_{U}(X) : |\mu(e_{x})-\nu(e_{x})|< \mu(e_{x})/2\} $$
of $\mu$. If $\nu$ belongs to this neighbourhood, then $x\in \supp(\nu)$ and hence $\supp(\nu)\subseteq U[x]\subseteq  Y_{j}$.
This shows that $W\subseteq P_{U}(Y_{j})$.
Consequently, the union $$P_{U}(\cY):=\bigcup_{i\in I}P_{U}(Y_{i})$$ is an  open subset  of  $ P_{U}(X)$.
Furthermore, it implies by {Property \ref{efwkjwoefwefewfwfq22231} of the functor $\Sigma_{+}^{\infty,\topp}$} that
$$ \colim_{i\in I}\Sigma^{\infty,\topp}_{+}P_{U}(Y_{i})\simeq \Sigma_{+}^{\infty,\topp} P_{U}(\cY)\, .$$

If $(Z,\cY)$ is a complementary pair on $X$, then
$(P_{U}(\{Z\}),P_{U}(\cY))$ is an open decomposition of $P_{U}(X)$.
By {Property \ref{ewifjweoifiewfewfwfewfewfwfwf} of the functor $\Sigma_{+}^{\infty,\topp}$} this gives a push-out square
 $$\xymatrix{\Sigma^{\infty,\topp}_{+}P_{U}(\{Z\}\cap \cY)\ar[r]\ar[d]&\Sigma^{\infty,\topp}_{+}P_{U}(\cY)\ar[d]\\\Sigma^{\infty,\topp}_{+}P_{U}(\{Z\})\ar[r]&\Sigma^{\infty,\topp}_{+}P_{U}(X)}$$
The same can be applied to the complementary pair  $(Z\setminus B,\cY\setminus B)$ on $X\setminus B$ for every $B$ in $\cB$. We now form the limit of the corresponding cofibres of the map of push-out squares induced by
 $X\setminus B\to X$  over $B$ in $\cB$ and the colimit over $U$ in $\cC$. Using stability (and hence that push-outs are pull-backs and cofibres are fibres up to shift) we get the push-out square
 $$\xymatrix{Q(\{Z\}\cap \cY)\ar[r]\ar[d]&Q(\cY)\ar[d]\\Q(\{Z\})\ar[r]&Q(X) }$$
 By Lemma \ref{fewkljlwefewfewfewfwfwef} we have an equivalence  $Q(Z)\simeq Q(\{Z\})$. Similarly we have an equivalence
 $Q(Z\cap \cY)\simeq Q(\{Z\}\cap \cY)$. We conclude that   $$\xymatrix{Q(Z\cap \cY)\ar[r]\ar[d]&Q(\cY)\ar[d]\\Q(Z)\ar[r]&Q(X) }$$
 is a push-out square.
\end{proof}

\begin{lem}\label{ekfjweiofu9o4r34555}
$Q$ vanishes on flasque bornological {coarse} spaces.
\end{lem}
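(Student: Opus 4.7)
My strategy is to run an Eilenberg-swindle argument paralleling the one used for $H\cX$ in Proposition~\ref{oifjweoifewfefwefwefewf}. Since no infinite sum $\sum_{k}f^{k}_{*}\mu$ is itself a probability measure, the swindle has to be implemented via a straight-line homotopy in $P_{U}(X)$. Abbreviate $C_{U,B}:=\Cofib(\Sigma^{\infty}_{+}P_{U}(X\setminus B)\to \Sigma^{\infty}_{+}P_{U}(X))$, so that $Q(X)\simeq \colim_{U}\lim_{B}C_{U,B}$.

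Let $f\colon X\to X$ implement flasqueness. Properties~(1) and~(2) of Definition~\ref{efijewifjewiofwifwfew322423424} provide two auxiliary entourages: $U_{\infty}:=\bigcup_{k}(f^{k}\times f^{k})(U)$ for each $U\in \cC$, and $W:=\bigcup_{j}(f^{j}\times f^{j})(V)$ with $V:=(f\times \id_{X})(\diag_{X})$. Given $B\in \cB$, property~(3) supplies $k(B)\in \IN$ with $f^{k(B)}(X)\cap B=\emptyset$; choosing $k(B)$ minimally makes $B\mapsto k(B)$ monotone, and $f^{k(B)}_{*}$ carries $P_{U}(X)$ into $P_{U_{\infty}}(X\setminus B)$. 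The key construction is the continuous straight-line homotopy
$$H^{B}_{t}(\mu):=(1-t)\mu + t\,f^{k(B)}_{*}\mu,\qquad t\in[0,1].$$
A direct computation based on the bound $(y,f^{k(B)}(y'))\in U\circ W^{k(B)}$ for $(y,y')\in U$ shows that $H^{B}_{t}(\mu)\in P_{U'(B)}(X)$ for the entourage $U'(B):=U\cup U_{\infty}\cup W^{k(B)}\cup (U\circ W^{k(B)})$, and since $f^{k(B)}(X)\subseteq X\setminus B$ the homotopy restricts to $P_{U}(X\setminus B)\times[0,1]\to P_{U'(B)}(X\setminus B)$. Thus $H^{B}$ is a homotopy of pairs from the canonical inclusion to a map factoring through the sub-pair $(P_{U'(B)}(X\setminus B),P_{U'(B)}(X\setminus B))$; applying $\Sigma^{\infty}_{+}$ and taking cofibres yields a canonical null-homotopy of $C_{U,B}\to C_{U'(B),B}$.

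Passing to $P_{\infty}(X):=\colim_{U}P_{U}(X)$, the extended homotopy exhibits $f^{k(B)}_{*}$ as a two-sided homotopy inverse to the inclusion $P_{\infty}(X\setminus B)\hookrightarrow P_{\infty}(X)$ (the point being that $H^{B}$ preserves $P_{\infty}(X\setminus B)$ because $f^{k(B)}(X)\subseteq X\setminus B$). Hence $\colim_{U}C_{U,B}\simeq 0$ for every $B\in \cB$, and consequently $\lim_{B}\colim_{U}C_{U,B}\simeq 0$.

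The \emph{main obstacle} is the passage from this to $Q(X)=\colim_{U}\lim_{B}C_{U,B}\simeq 0$, since filtered colimits and cofiltered limits do not commute in $\Sp$ in general. I plan to handle this by exploiting the monotonicity $B\subseteq B'\Rightarrow k(B)\leq k(B')$: for any $B'\in \cB$ the single choice $k=k(B')$ already suffices in the null-homotopy for every $B\subseteq B'$, producing a coherent family of null-homotopies $C_{U,B}\to C_{U'(k(B')),B}$ indexed by the downward-closed subfamily $\{B\in \cB:B\subseteq B'\}$. Letting $B'$ range over a cofinal chain in $\cB$ and correspondingly enlarging the target $U'(k(B'))$ inside the colimit over $U$ should assemble these pieces into a null-homotopy of $Q'(X,U)\to Q(X)$ for every $U\in \cC$, yielding $Q(X)\simeq 0$.
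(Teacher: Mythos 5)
Your level-wise observation is correct and is in fact more elementary than what the paper does: for fixed $B$ the convex interpolation $(1-t)\mu+t\,f^{k(B)}_{*}\mu$ does give, for every $U$, a null-homotopy of $C_{U,B}\to C_{U'(B),B}$, hence $\colim_{U}C_{U,B}\simeq 0$. But the gap you flag as the "main obstacle" is genuine and your proposed fix does not close it. The subfamily $\{B\in\cB: B\subseteq B'\}$ is an initial segment of the cofiltered index category over which $\lim_{B}$ is taken (it has $B'$ as its extreme object), so a coherent family of null-homotopies indexed by it carries no more information than the single null-homotopy at $B'$; it says nothing about $\lim_{B\in\cB}C_{U,B}$. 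What is actually needed is coherence in the \emph{cofinal} direction: the null-homotopies at successive stages $B'\subseteq B''$ use different translates $f^{k(B')}_{*}$ and $f^{k(B'')}_{*}$ and land in different targets $C_{U'(B'),\,\bullet}$ and $C_{U'(B''),\,\bullet}$ with $U'(B')\subsetneq U'(B'')$ growing without bound, so they do not commute with the transition maps $C_{U,B''}\to C_{U,B'}$ of the inverse system. To null-homotope a map out of a cofiltered limit in $\Sp$ one must supply a fully coherent system of such data (including all higher homotopies between the interpolations $f^{k(B')}_{*}\rightsquigarrow f^{k(B'')}_{*}$), and none is provided; "should assemble" is exactly where the argument fails. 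Indeed, if the argument worked as stated it would show that $\colim_{U}\lim_{B}C_{U,B}$ vanishes whenever $\lim_{B}\colim_{U}C_{U,B}$ does, which is false in general.

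For comparison, the paper avoids this interchange problem entirely by running an Eilenberg swindle instead of a contraction: using stability it forms the partial sums $\id_{X,*}+f_{*}+\dots+f^{n}_{*}$ as maps into the relative terms $P_{U}(X,f^{n}(X))$ of an auxiliary sequential tower, where compatibility with the tower maps is a level-wise fact coming from the factorization $X\to f(X)\to X$ (which forces $f^{n+1}_{*}\simeq 0$ into $P_{U}(X,f^{n}(X))$). The resulting operator $F=\sum_{k}f^{k}_{*}$ on $Q(X)$, obtained by comparing the tower $(f^{n}(X))_{n}$ with the system $(X\setminus B)_{B}$ via Condition~\ref{fwoiejweoifeowi231} of Definition~\ref{efijewifjewiofwifwfew322423424}, satisfies the swindle identity forcing $Q(f)\simeq 0$, while coarse invariance gives $Q(f)\simeq\id_{Q(X)}$. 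If you want to salvage your approach, you would essentially have to rebuild this telescope: replace the family of disconnected homotopies $H^{B}$ by a single homotopy through all powers of $f$ indexed by $[0,\infty)$ and reindex the limit over $\cB$ by the tower $(f^{n}(X))_{n}$ — at which point you have reproduced the paper's argument.
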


\begin{proof}
Let $X $ be a bornological coarse space, {and let $U$  be an entourage of $X$.} For a subset
$Y$ of $ X$ with the induced structures we write
 $$P_{U}(X,Y):= \Cofib \big( \Sigma_{+}^{\infty,\topp}P_{U}(Y)\to \Sigma_{+}^{\infty,\topp}P_{U}(X) \big)\, .$$
Let us now assume that flasquenss of $X$ is implemented by the endomorphism $f:X\to X$.
We consider the diagram
\[\xymatrix{
\colim_{U\in\cC}\lim_{B\in \cB} P_{U}(X,X\setminus B) \ar[rr]^-{\id_{X,*}} \ar@{=}[d] & & \colim_{U\in\cC}P_{U}(X,X)  \\
\colim_{U\in\cC} \lim_{B\in \cB} P_{U}(X,X\setminus B) \ar@{=}[d] \ar[rr]^-{\id_{X,*}+f_{*}} & & \colim_{U\in\cC}P_{U}(X,f(X))\ar[u] \\
\colim_{U\in\cC}\lim_{B\in \cB} P_{U}(X,X\setminus B) \ar@{=}[d] \ar[rr]^-{\id_{X,*}+f_{*}+f^{2}_{*}} & & \colim_{U\in\cC}P_{U}(X, f^{2}(X))\ar[u] \\
\vdots \ar@{=}[d] & & \vdots \ar[u] \\
\colim_{U\in\cC}\lim_{B\in \cB} P_{U}(X,X\setminus B) \ar@{=}[d] \ar[rr]^-{\sum_{k}f^{k}_{*} } & & \colim_{U\in\cC}\lim_{n\in \nat}P_{U}(X,f^{k}(X)) \ar[u]\ar[d] \\
Q(X) \ar[rr]^-{F} & & Q(X)
}\]
All the small cells in the upper part commute. For example the filler of the upper square is  an equivalence $f_{*}\simeq 0$. It   is obtained from the factorization   of $f$ as $X\to f(X) \to X$. Indeed, we have the diagram $$\xymatrix{\Sigma^{\infty,\topp}_{+}P_{U}(X\setminus B)\ar[r]\ar[d]&\Sigma^{\infty,\topp}_{+}P_{U}(f(X))\ar[d]\ar[r]&\Sigma^{\infty,\topp}_{+}P_{U}(f(X))\ar[d]\\\Sigma^{\infty,\topp}_{+}P_{U}(X)\ar[d]\ar[r]&\Sigma^{\infty,\topp}_{+}P_{U}(f(X))\ar[r]\ar[d]&\Sigma^{\infty,\topp}_{+}P_{U}(X)\ar[d]\\P_{U}(X,X\setminus B)\ar[r]&0\ar[r]&P_{U}(X,f(X))}$$
where the columns are pieces of fibre sequences and the lower map is the definition of the induced map $f_{*}$.
The fillers provide the desired equivalence $f_{*}\simeq 0$.
 
The upper part of the diagram 
defines the map denoted by the suggestive symbol   $\sum_{k}f^{k}_{*}$.
This map further induces $F$.

 We now observe that $$ Q(f)+F\simeq Q(f)+ Q(f)\circ F\simeq F\, .$$
This implies $Q(f)\simeq 0$. Since $Q$  maps close maps to equivalent morphisms and $f$ is close  to $\id_{X}$   we get the first equivalence in $\id_{Q(X)}\simeq Q(f)\simeq 0$.
\end{proof}

\begin{lem}
For every $X $ in  $\BC$  with coarse structure $\cC$ we have
$$Q(X)=\colim_{U\in \cC} Q(X_{U})\, .$$
\end{lem}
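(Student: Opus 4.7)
The plan is to unwind the definition of $Q$ and reorganize a double colimit. Let $(X,\cC,\cB)\in\BC$ and for each entourage $V\in\cC$ define
\[F(V) := \lim_{B\in\cB} \Cofib\big(\Sigma^{\infty}_{+}P_{V}(X\setminus B) \to \Sigma^{\infty}_{+}P_{V}(X)\big)\ .\]
Observe that $F(V)$ only depends on $V$ as a subset of $X\times X$ and on the bornology $\cB$; it is insensitive to the ambient coarse structure. Since $X_{U}=(X,\cC\langle U\rangle,\cB)$ has the same underlying set and the same bornology as $X$, the unwinding in Remark~\ref{fjwefoijewojf923ur9u23refewfwefewf} gives
\[Q(X)\simeq \colim_{V\in\cC} F(V)\ ,\qquad Q(X_{U})\simeq \colim_{V\in\cC\langle U\rangle} F(V)\ .\]

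For each $U\in\cC$ the canonical morphism $X_{U}\to X$ corresponds to the inclusion of posets $\cC\langle U\rangle\hookrightarrow\cC$, which induces the comparison $Q(X_{U})\to Q(X)$. Assembling these over $U\in\cC$ and using the Fubini-type rewriting of iterated colimits yields
\[\colim_{U\in\cC} Q(X_{U})\simeq \colim_{U\in\cC}\colim_{V\in\cC\langle U\rangle} F(V)\simeq \colim_{(U,V)\in\mathcal{I}} F(V)\ ,\]
where $\mathcal{I}$ is the Grothendieck construction of the functor $\cC\ni U\mapsto\cC\langle U\rangle$, i.e.\ the filtered poset of pairs $(U,V)$ with $V\in\cC\langle U\rangle$.

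The main point is that the projection $p:\mathcal{I}\to\cC$, $(U,V)\mapsto V$, is cofinal. Both $\mathcal{I}$ and $\cC$ are filtered posets, so by the cofinality criterion for filtered colimits it suffices to check that for every $V\in\cC$ the slice $V/p$ is non-empty and connected. Non-emptiness is immediate: the pair $(V,V)$ lies in $\mathcal{I}$ and maps to $V$. Connectedness follows from the fact that given two objects $(U_{1},V_{1})$ and $(U_{2},V_{2})$ of $V/p$ (so $V\subseteq V_{1}$ and $V\subseteq V_{2}$), the pair $(U_{1}\cup U_{2}\cup V, V_{1}\cup V_{2})$ lies in $\mathcal{I}$, maps to an entourage containing $V$, and admits morphisms from both given objects. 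Hence $p$ is cofinal and
\[\colim_{U\in\cC} Q(X_{U})\simeq \colim_{(U,V)\in\mathcal{I}} F(V)\simeq \colim_{V\in\cC} F(V)\simeq Q(X)\ .\]
Tracing through the identifications shows that this equivalence agrees with the map induced by the family of morphisms $X_{U}\to X$, which finishes the proof.
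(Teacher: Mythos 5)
Your proof is correct and takes the same route as the paper — the paper's entire proof of this lemma is the single sentence ``This is clear from the definition,'' and what it considers clear is precisely the observation you make: the inner term $F(V)$ depends only on $V$ and the bornology, so both sides are colimits of the same diagram over cofinal index posets. Your careful verification of the Fubini rewriting and the cofinality of $(U,V)\mapsto V$ is a valid (and more explicit) account of that argument.
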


\begin{proof}
This is clear from the definition.
\end{proof}

  
 

\subsubsection{Further properties of the functor \texorpdfstring{$Q$}{Q} and generalizations}

In this section we show that  $Q$ is strongly additive. We furthermore show how $Q$ can be used to construct more examples of coarse homology theories.

\begin{prop}\label{jnksdui2332}
The coarse homology theory $Q$ is strongly additive.
\end{prop}

\begin{proof}
Let $(X_i)_{i \in I}$ be a family of bornological coarse spaces and set $X := \bigsqcup_{i \in I}^{\free} X_i$. For an entourage $U$ of $X$ every point of $P_U(X)$ is a probability measure supported on one of the components. We conclude that
\begin{equation}
\label{njdsfui232}
P_U(X) \cong \coprod_{i \in I} P_{U_i} (X_i)\, ,
\end{equation}
where we set $U_i := U\cap (X_i \times X_i)$.

We now analyze $$\Cofib(\Sigma_{+}^{\infty,\topp}P_{U}(X\setminus B)\to \Sigma_{+}^{\infty,\topp}P_{U}(X))$$ for a bounded subset $B$ of $X$.
We set $B_{i}:=B\cap X_{i}$ and define the set $J_{B}:=\{i\in I: B_{i}\not=\emptyset\}$. Then  we have an equivalence
\begin{align}
\Cofib(\Sigma_{+}^{\infty,\topp}P_{U}(X & \setminus B)\to \Sigma_{+}^{\infty,\topp}P_{U}(X))\label{jnkd823jnsd}\\
& \simeq \Cofib \Big( \Sigma_{+}^{\infty,\topp} \big( \coprod_{i \in J_B} P_{U_i}(X_i \setminus B_i) \big) \to \Sigma_{+}^{\infty,\topp} \big( \coprod_{i \in J_B} P_{U_i}(X_i) \big) \Big)\, .\notag
\end{align}
By the definition of the bornology of a free union,  $J_B$ is a finite set. We can use Property~\ref{ewifjweoifiewfewfwfewfewfwfwf} of the functor $\Sigma_{+}^{\infty,\topp} $ to identify \eqref{jnkd823jnsd} with
\begin{equation}
\label{knjdvjnk3sd}
\bigoplus_{i \in J_B} \Cofib \big( \Sigma_{+}^{\infty,\topp} P_{U_i}(X_i \setminus B_i) \to \Sigma_{+}^{\infty,\topp} P_{U_i}(X_i) \big)\, .
\end{equation}
In order to get $Q(X) $ we now  take the limit over   $B$ in $\cB$   and then the colimit over $U$ in $\cC$. We can restructure the index set of the limit  replacing $\lim_{B\in \cB}$ by the two limits $\lim_{J \subseteq I, J \textit{finite}} \lim_{ (B_{i})_{i \in J} \in \prod_{i\in J}\cB_{i}}$ and arrive at
\begin{align*}
Q(X) &\simeq \colim_{U \in \cC} \lim_{\substack{J \subseteq I\\ J \textit{finite}}}   \lim_{ (B_{i})_{i \in J} \in \prod_{i\in J}\cB_{i}} \bigoplus_{i \in J} \Cofib \big( \Sigma_{+}^{\infty,\topp} P_{U_i}(X_i \setminus B_{i}) \to \Sigma_{+}^{\infty,\topp} P_{U_i}(X_i) \big)\\ &\simeq \colim_{U \in \cC} \lim_{\substack{J \subseteq I\\ J \textit{finite}}} \bigoplus_{i \in J} \lim_{B_{i} \in \cB_i} \Cofib \big( \Sigma_{+}^{\infty,\topp} P_{U_i}(X_i \setminus B_{i}) \to \Sigma_{+}^{\infty,\topp} P_{U_i}(X_i) \big)\\
& \simeq \colim_{U \in \cC} \prod_{i \in I} \lim_{B_{i} \in \cB_i} \Cofib \big( \Sigma_{+}^{\infty,\topp} P_{U_i}(X_i \setminus B_{i}) \to \Sigma_{+}^{\infty,\topp} P_{U_i}(X_i) \big)\, .
\end{align*} 
In the last step we want to commute the colimit of $U$ in $\cC$ with the product. To this end we note that in view of the definition of the coarse structure of a free union  we have a bijection 
$\prod_{i\in I}\cC_{i}\cong \cC$  sending the family $(U_{i})_{i\in I}$ of entourages to the entourage $\bigsqcup_{i\in I}U_{i}$.
The inverse sends $U$ in $\cC$ to the family $(U\cap(X_{i}\times X_{i}))_{i\in I}$. Using that in $\Sp$ products distribute over filtered colimits (see Remark \ref{qerugiergregrefrefwerf})
 we  get an equivalence \begin{eqnarray*}
\lefteqn{
  \colim_{U \in \cC} \prod_{i \in I} \lim_{B_{i} \in \cB_i} \Cofib \big( \Sigma_{+}^{\infty,\topp} P_{U_i}(X_i \setminus B) \to \Sigma_{+}^{\infty,\topp} P_{U_i}(X_i) \big)}&&\\&\simeq&  \prod_{i\in I} \colim_{U_{i}\in \cC_{i}}\lim_{B_{i} \in \cB_i} \Cofib \big( \Sigma_{+}^{\infty,\topp} P_{U_i}(X_i \setminus B) \to \Sigma_{+}^{\infty,\topp} P_{U_i}(X_i) \big)\\&\simeq&
  \prod_{i\in I}Q(X_{i})
\end{eqnarray*}
{which proves the assertion of the lemma.}
\end{proof}

Let $\bC$ be a complete and cocomplete stable $\infty$-category. We now use the tensor structure  \eqref{wervewrvwevfevfdvsfdvdfsvsfv}.

%
Let $C$ be an object of $\bC$. 
\begin{kor}\label{goirjgio334t43gregergrege}
\begin{equation}
\label{eq:sfd98230990}
C\wedge Q:\BC\to \Sp\, , \quad X\mapsto (C\wedge Q(X))
\end{equation}
is a $\bC$-valued coarse homology theory. 
\end{kor}

\begin{proof}The proof
follows from Theorem~\ref{efwifjwifou23984u23984u3294823424234} and  the fact that the functor $C\wedge {-}$ preserves colimits.
\end{proof}

\begin{ex}\label{hhfweweiufufwefwefwef}  
We can use the coarsification in order to produce an example of a non-additive coarse homology theory. We choose a spectrum $C$ in $\Sp$ such that the functor
$C\wedge - \colon \Sp\to \Sp$ does not send
$\prod_{i\in I}S$  to $\prod_{i\in I}C$. Here $S$ is the sphere spectrum and $I$ is some suitable infinite set.  
For example we could take $E:=\bigoplus_{p} H\Z/p\Z$, where $p$ runs over all primes.
Then $C\wedge Q$ is not additive.
\hB
\end{ex}


\subsection{Comparison of coarse homology theories}\label{ewfiweifioeuoiu29834u24434}

Let $\bC$ be a cocomplete stable $\infty$-category. In the present subsection we discuss the problem to which extend a coarse homology theory 
 $E:\BC\to \bC$  is determined by the value $E(*)$, or at least by the values $E(X)$  on discrete bornological coarse spaces $X$.

\begin{ex}\label{ewfioweufoo234234}
We consider the category $\Sp$ and the complex $K$-theory spectrum $KU$ in $\Sp$. In the present paper we construct various examples of coarse homology theories which all assume the value $KU$ on the one-point space:
\begin{enumerate}
\item $KU \wedge Q$ (Corollary \ref{goirjgio334t43gregergrege})
\item $Q(KU \wedge \Sigma^{\infty}_{+})^{\lf}$ (Definition \ref{wefiwiofuwe987u982523453453})
\item $QK^{\an,\lf}$ (Definition \ref{wefiwiofuwe987u982523453453} and Definition \ref{foiehweiofui23ur892342424})
\item $\KX$ and $\KXql$ (Definition \ref{oiwefewiofu9234234234324})
\end{enumerate}
A further example is constructed in \cite{ass}.
One could ask whether these homology theories are really different. 

Note that  $Q(KU \wedge \Sigma^{\infty}_{+})^{\lf}$ and $QK^{\an,\lf}$ are additive.
But we do not expect that $KU \wedge Q$ is additive for a {similar reason as in Example~\ref{hhfweweiufufwefwefwef}.}

If $X$ is a  bornological coarse spaces of locally bounded geometry  {(see Definition \ref{fwejfoiwejfewojfoefjewfewfewf})}, then we have
the  coarse assembly map {(see Definition \ref{wefijweiofoiwe245435})}
$$\mu \colon  QK_{*}^{\an,\lf}(X) \to \KX_\ast(X)\, .$$
The known counter-examples to the coarse Baum--Connes conjecture show that this map is   not always surjective.     
But this leaves open the possibility  {of} the existence of a different transformation which is an equivalence.

In general we have a natural transformation 
$$\KX\to \KXql\, .$$
The goal of the present subsection  is to provide positive results asserting that such a transformation is an equivalence on certain spaces.
%
\hB
\end{ex}

 In the following we will assume that $\bC$ is a  cocomplete  stable $\infty$-category. 
%
 We consider two  coarse homology theories $E,F:\BC\to \bC$  and a  natural transformation $$T :  E \to F\, .$$  

  Let $\cA$ be a set of objects in $\BC$ and recall Definition \ref{defn:sdf09232} of $\Sp\cX\langle \cA\rangle$.

 Let $X$ be a bornological coarse space.
\begin{lem}\label{fkjfwefowefweopfewfe}Assume:
\begin{enumerate}
\item $T\colon E(Y)\to F(Y)$ is an equivalence for all $Y$ in $\cA$.
\item $\Yo^{s}(X)\in \Sp\cX\langle \cA\rangle$.
\end{enumerate}
Then 
  $E(X)  \to  F(X)$ is an equivalence.
\end{lem}
\begin{proof}
We use  Corollary \ref{qwrfglkjqodfewewfdewdqeqwedew1}.
The assertion follows from the fact that the extension of $E$ and $F$ to functors 
 $E^{\la}:\Sp\cX\to \bC^{\la}$ and $F^{\la}:\Sp\cX\to \bC^{\la}$ preserve colimits. 
\end{proof}

  Combining Theorem  \ref{jfweofjwoeifjewfoewfewfewfewf}
 and Lemma  \ref{fkjfwefowefweopfewfe} we  get the following consequence.

 Let $X$ be a bornological coarse space.
 
\begin{kor}\label{thm:sdf98245csv}
Assume:
\begin{enumerate}
\item 
 $T\colon E(Y)\to F(Y)$ is an equivalence for all discrete bornological coarse spaces $Y$.
 \item $X$ has weakly finite asymptotic dimension (Definition \ref{wegoijobgwtwferfrewfer}).  
\end{enumerate}
Then $T\colon E(X)\to F(X)$ is 
   an equivalence.
\end{kor}

For a transformation between additive coarse homology theories we can simplify the assumption on the transformation considerably. 

{We assume that $\bC$ is a complete and cocomplete stable $\infty$-category.}
Let $T\colon E\to F $ be as above,  and let $X$ be a bornological coarse space.

\begin{theorem}\label{wekfhweuifhweiui23u4242342rf}Assume:
\begin{enumerate}
\item 
 $T:E(*)\to F(*)$ is an equivalence.
 \item $E$ and $F$ are additive.
\item  $X$ has weakly finite asymptotic dimension.
\item $X$ has the minimal compatible bornology.
 \end{enumerate}
Then $T\colon E(X)\to F(X)$ is 
   an equivalence.
\end{theorem}

\begin{proof}
Let $\cA_*$ be the class consisting of the motivic coarse spectra of the form $\Yo^s(\bigsqcup^{\free}_{I}*)$ for a set $I$. An inspection of the  intermediate steps of Theorem  \ref{jfweofjwoeifjewfoewfewfewfewf} shows that $\Yo^{s}(X)\in \Sp\cX\langle \cA_*\rangle$. We now conclude using Lemma \ref{fkjfwefowefweopfewfe}.
\end{proof}

%
%

\section{Locally finite homology theories and coarsification}\label{ihjiovqceqwecewcqwecq}

\subsection{Locally finite homology theories}\label{wkhwfhfwefuwefewfvewvewffewff}

In this section we review locally finite homology theories in the context of $\TopBorn$ of topological spaces with an additional bornology. 
It gives a more systematic explanation of the appearance of the limit over bounded subsets in \eqref{erwerjh23ui423423424}. We will see that the constructions of Section \ref{kjoijfoewifueofieuwfewfwefewfef} can be extended to a construction of coarse homology theories from locally finite homology theories.

Note that our notion of a locally finite homology theory differs from the one studied by Weiss--Williams \cite{ww_pro}, but our set-up contains a version of classical Borel--Moore homology of locally finite chains. There is also related work by Carlsson--Pedersen \cite{MR1634649}.

One  motivation to develop this theory in the present paper  is to capture the example $K^{\an,\lf}$ of analytic locally finite $K$-homology. In order to relate this with the homotopy theoretic version of the locally finite homology theory
(as discussed in Example \ref{weoifujweoifowe234234345}) associated to   spectrum $KU$ we derive the analogs of the classification results of Weiss--Williams \cite{ww_pro} in the  context of topological bornological spaces.
 
We think that the notions and results introduced     in this section  are interesting in itself, independent of their application to coarse homology theories.

The coarsification of the analytic locally finite $K$-homology is the domain of the assembly map {discussed} in Section \ref{sec:kjnbsfd981}.  
In \cite{ass} we will introduce the closely related concept of    a local homology theory which is better suited as a domain of the coarse assembly map in general.
On nice spaces it behaves like a locally finite homology theory.


\subsubsection{Topological bornological spaces}

In order to talk about locally finite homology theories we introduce the category $\TopBorn$ of topological bornological spaces.

Let $X$ be a set with a topology $\cS$\index{$\cS$|see{topology}}\index{topology} and a bornology $\cB$.
\begin{ddd}\label{fewilfjweiofoi4325345345}
$\cB$ is compatible\index{compatible!bornology and topology} with $\cS$ if $\cS \cap \cB$ is cofinal in $\cB$ and $\cB$ is closed under taking closures.
\end{ddd}
 
\begin{ex}\label{fljwlewkiou2ori23r2323}
 In words, compatibility means 
 that  every bounded subset   has a bounded closure and a bounded open neighbourhood.
 
A compatible bornology on a topological space $X$ contains all relatively compact subsets. Indeed, let $B$  be a relatively compact subset of $X$. By compatibility, for every point  $b$ in $X$ there is a bounded  open neighbourhood $U_{b}$ of $b$. Since $B$ is relatively compact, there  
  exists a finite subset $I$  of $\bar B$ such that $B\subseteq \bigcup_{b\in B} U_{b}$. But then $B$ is bounded since a finite union of bounded subsets is bounded and a bornology is closed under taking subsets.
\hB
\end{ex}

\begin{ddd} 
A topological bornological space\index{topological bornological space}\index{space!topological bornological} is a triple $(X,\cS,\cB)$\index{$(X,\cS,\cB)$} consisting of a set $X$ with a topology $\cS$ and a bornology $\cB$ which are compatible. 
\end{ddd}

In general we will just use the symbol $X$ to denote topological bornological spaces.

Let $X,X'$ be  topological bornological spaces, and let $f:X\to X'$ be a map between the underlying sets.

\begin{ddd}
 $f$ is a morphism   between topological bornological spaces\index{morphism!between topological bornological spaces} 
 if $f$ is continuous and proper.
 \end{ddd}
 
Note that the word \emph{proper} refers to the bornologies and means that $f^{-1}(\cB^{\prime})\subseteq \cB$.
 We let $\TopBorn$\index{$\TopBorn$} denote the small category of  very small topological bornological spaces and morphisms.

\begin{ex}\label{eigoffewerevfdsvsdfvsdfv}
A topological space is compact if it is Hausdorff and every open covering admits a finite subcovering.  A subspace of a topological spaces is relatively compact if its closure is compact. 
By convention, a locally compact topological space is a topological space which is Hausdorff and such that every point admits a compact neighbourhood.  

If $X$ is a locally compact topological space, then the family of relatively compact subsets of $X$ turns $X$ into a topological bornological space. We get an inclusion $$\Top^{\lc}\to \TopBorn$$\index{$\Top^{\lc}$}of the category of locally compact spaces and proper maps as a full subcategory. 

Assume that $X$ is locally compact and $U$  is an open subset of $X$. If we equip $U$ with the induced topology and bornology, then $U\to X$ is a morphism in $\TopBorn$. But in general, $U$ equipped with these induced structures  is not contained in the image of the functor  $\Top^{\lc}\to \TopBorn$ since the bornology on $U$ induced from $X$ is too large.

Consider, e.g., the open subset $(0,1)$ of $\R$. The induced bornology of $(0,1)$ is the maximal bornology, while e.g.\ $(0,1)$ is not relatively compact in $(0,1)$.

In order to define the notion of descent, on the one hand we want to work with open coverings. On the other hand the
inclusions of the open subsets should be morphisms in the category. In $\Top^{\lc}$ this is impossible and motivates to work in  $\TopBorn$.  This category just provides to minimal amount of structure for the consideration of locally finite homology theories.  \hB
\end{ex}

\begin{lem}
The category $\TopBorn$ has {all} very small products.\index{product!in $\TopBorn$}
\end{lem}

\begin{proof}
Let $(X_{i})_{i \in I}$ be a family of topological bornological spaces. 
Then the product of the family is represented by the topological bornological space  $ \prod_{i \in I} X_i$ 
whose underlying topological space is the product in of the underlying topological spaces, and whose bornology is the minimal one such that the projections to the factors are proper (compare with the bornology  of a product of bornological coarse spaces in see Lemma \ref{wleifjewfiewiofuew9fuewofewf}).
  \end{proof}

\begin{ex}
The category $\TopBorn$ is not complete. It does not have a final objects.
In order to get a complete and cocomplete category one could remove the condition on the bornology that all points are bounded, see \cite{Heiss:2019aa}.
\hB
\end{ex}

\begin{ex}\label{goiejrgoierjgeorgjreg43jut9034}
The category $\TopBorn$ has a symmetric monoidal structure\index{$-\otimes-$!topological bornological space}
\[(X,X^{\prime})\mapsto X\otimes X^{\prime}\, .\]
The underlying topological space of $X\times X^{\prime}$ is the product of the underlying topological spaces of $X$ and $X^{\prime}$. The bornology of $X\times X^{\prime}$ is generated by the sets $B\times B^{\prime}$ for all bounded subsets $B$ of $X$ and $B^{\prime}$ of $X^{\prime}$. The tensor unit is the one-point space.
As in the case of bornological coarse spaces (Example \ref{eiofweoifwefuewfieuwf9wwfwef})  the tensor product in general differs from the cartesian product.
\hB
\end{ex}

\begin{lem}
The category $\TopBorn$ has all very small coproducts.\index{coproducts!in $\TopBorn$}
\end{lem}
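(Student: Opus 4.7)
The plan is to mimic the construction of coproducts in $\BC$ from Lemma \ref{ewoifweofe90wf09wef2334}, equipping the disjoint union with the obvious topology in addition to the sectionwise bornology. Concretely, given a family $(X_i,\cT_i,\cB_i)_{i\in I}$ of topological bornological spaces, I would form the set $X:=\bigsqcup_{i\in I} X_i$, equip it with the disjoint union topology $\cT$ (so $U\subseteq X$ is open iff $U\cap X_i\in\cT_i$ for every $i$), and equip it with the bornology $\cB:=\{B\subseteq X : B\cap X_i\in\cB_i\text{ for every }i\in I\}$.

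The first step is to check that $(X,\cT,\cB)$ is indeed an object of $\TopBorn$, i.e.\ that the bornology and topology are compatible in the sense of Definition~\ref{fewilfjweiofoi4325345345}. Closedness under taking closures is immediate because the closure of a subset of $\bigsqcup X_i$ in the disjoint union topology is computed sectionwise, and each $\cB_i$ is closed under closures. For the cofinality of $\cT\cap \cB$ in $\cB$, given any $B\in \cB$ one uses compatibility of each $(\cT_i,\cB_i)$ to choose for each $i$ a bounded open neighbourhood $U_i\in\cT_i\cap\cB_i$ of the closure of $B\cap X_i$; the disjoint union $\bigsqcup_i U_i$ is then an open bounded neighbourhood of $B$ in $X$.

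The second step is to verify that the canonical inclusions $\iota_j:X_j\hookrightarrow X$ are morphisms in $\TopBorn$: continuity is immediate from the definition of the disjoint union topology, and properness follows from $\iota_j^{-1}(B)=B\cap X_j\in\cB_j$ for any $B\in\cB$. The third step is the universal property: given a family of morphisms $f_i:X_i\to Y$ to a topological bornological space $Y$, the induced map of sets $f:X\to Y$ is continuous by the universal property of the disjoint union topology, and it is proper because for $C\in\cB_Y$ one has $f^{-1}(C)\cap X_i = f_i^{-1}(C)\in\cB_i$, showing $f^{-1}(C)\in\cB$.

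No step here is a genuine obstacle; the proof is entirely routine once one has written down the right candidate structures. The only place that requires a moment's thought is verifying that $\cT\cap\cB$ remains cofinal in $\cB$, since a priori a bounded set in $X$ could fail to have a bounded open neighbourhood if one sectionwise neighbourhood blew up, but the sectionwise definition of $\cB$ together with the compatibility of each $(\cT_i,\cB_i)$ rules this out. The analogous construction fails in $\Top^{lc}$ (one would not generally get a locally compact space with a proper bornology of relatively compact sets when $I$ is infinite), which is one of the reasons why $\TopBorn$ is the more convenient ambient category.
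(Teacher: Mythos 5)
Your construction is exactly the one the paper uses: the disjoint union topology together with the bornology $\cB=\{B\subseteq X : B\cap X_i\in\cB_i \text{ for all } i\}$, with the routine verifications of compatibility, that the inclusions are morphisms, and the universal property. The proposal is correct and takes essentially the same approach as the paper (which simply states the construction without spelling out these checks).
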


\begin{proof}
Let $(X_{i} )_{i\in I}$ be a family of topological bornological spaces.
Then the coproduct of the family is represented by the topological bornological space $\coprod_{i\in I} X_{i} $
whose underlying topological space is the coproduct of the underlying topological spaces, and whose bornology
is given by 
 \[\cB:=\{B\subseteq X\:|\: (\forall i\in I: B\cap X_{i}\in \cB_{i})\}\, ,\]
 where $\cB_{i}$ is the bornology of $X_{i}$.
\end{proof}

Note that the bornology of the coproduct is given by the same formula as in the case of bornological coarse spaces; see Definition \ref{ewoifweofe90wf09wef2334}.

{Let $(X_{i} )_{i\in I}$ be a family of topological bornological spaces.
\begin{ddd}\label{foi24r443535}
The free union\index{free union!in $\TopBorn$}\index{union!free!in $\TopBorn$} $\bigsqcup_{i\in I}^{\free} X_{i} $ of the family is the following topological bornological space:
\begin{enumerate}
\item The underlying set of the free union is the disjoint union $\bigsqcup_{i\in I} X_{i}$.
\item The topology of the free union is the one of the coproduct of topological spaces.
\item The bornology of the free union is given by $\cB\big\langle \bigcup_{i\in I} \cB_{i}\big\rangle$, where $\cB_{i}$ is the bornology of $X_{i}$.
\end{enumerate}
\end{ddd}}

\begin{rem}
The free union should not be confused with the coproduct. The topology of the free union is the same, but the bornology is smaller. The free union plays a role in the discussion of additivity of locally finite theories.
\hB
\end{rem}

 \subsubsection{Definition of locally finite homology theories}

We will use the following notation.
 Let $E $ be any functor from some category to a stable $\infty$-category. If $Y\to X$ is a morphism in the domain of $E$, then we write
$$E(X,Y):=\Cofib \big( E(Y)\to E(X) \big)\, .$$

We now introduce the notion of local finiteness. It is this property of a functor from topological bornological spaces
to spectra which involves the bornology and distinguishes locally finite homology theories amongst all homotopy  invariant  and excisive functors.
 
Let $\bC$ be a  complete stable $\infty$-category. We consider a functor $$E:\TopBorn\to {\bC}\, .$$
\begin{ddd}\label{kfjwekfjklewfefwefwfewf}
$E$ is locally finite\index{locally finite!functor} if the natural morphism
$$E(X)\to \lim_{B\in \cB}  E(X,X\setminus B)$$
is an equivalence for all $X$ in $\TopBorn$.
\end{ddd}

\begin{rem}\label{eofjewoifjefoewi23452345}
$E$ is locally finite if and only if
\[\lim_{B\in \cB} E(X\setminus B)\simeq 0\]
{for all $X$ in $\TopBorn$.}
\hB
\end{rem}

\begin{ex}
A typical feature which is captured by the notion of local finiteness is the following. Let $X$ be in $  \TopBorn$ and assume that its bornology $\cB$ has a countable cofinal subfamily $(B_{n})_{n\in \nat}$. 
  Then we have the equivalence
$$E(X)\simeq \lim_{n\in \nat} E(X,X\setminus B_{n})\, .$$
Assume that $E$ takes values in spectra $\Sp$. Then in homotopy groups this is reflected by the presence of Milnor $\lim^{1}$-sequences
$$0\to {\lim_{n\in \nat}}^{1} \pi_{k+1}(E(X,X\setminus B_{n}))\to \pi_{k}(E(X))\to \lim_{n\in \nat }  \pi_{k}(E(X,X\setminus B_{n}))\to 0$$
for all $k$ in  $\Z$.
The presence of the $\lim^{1}$-term shows that one can not construct graded abelian group valued locally finite homology as the limit
$\lim_{n\in \nat }  \pi_{k}(E(X,X\setminus B_{n}))$. In general the latter would not satisfy descent in the sense that we have Mayer--Vietoris sequences for appropriate covers. 
\hB
\end{ex}

{Let $\bC$ be a  complete stable  $\infty$-category, and let
$E:\Top\Born\to {\bC}$ be any functor.}

\begin{ddd}\label{fiojwoifjwoifuwe45435}
We define the locally finite evaluation\index{locally finite!evaluation} $$E^{\lf}:\TopBorn\to {\bC}$$
by
$$E^{\lf}(X) :=\lim_{B\in \cB} E(X,X\setminus B) .$$\index{$-^{\lf}$}
\end{ddd}

\begin{rem}
In order to turn this description of the locally finite evaluation on objects into a definition of a functor we use right Kan extensions as described in Remark \ref{fjwefoijewojf923ur9u23refewfwefewf}. We consider the category $\TopBorn^{\cB}$ of pairs $(X,B)$, where $X$ is a topological bornological space and $B$ is a bounded subset of $X$.  A morphism $f:(X,B)\to  (X^{\prime},B^{\prime})$ is a continuous map such that $f(B)\subseteq B^{\prime}$.
We have a forgetful functor $$p:\TopBorn^{\cB}\to \TopBorn \, , \quad (X,B) \mapsto X\, .$$
The locally finite evaluation is then defined as the right Kan extension of the functor
$$\tilde E: \TopBorn^{\cB}\to {\bC}\, , \quad (X,B)\mapsto E(X,X\setminus B)$$ as indicated in
the following diagram
$$\xymatrix{\TopBorn^{\cB}\ar[rr]^-{\tilde E }\ar[dd]^{p}&&{\bC}\\
&\ar@{::>}[ul]&\\
\TopBorn\ar@{..>}@/_1pc/[uurr]_-{E^{\lf}}
}$$
\hB
\end{rem}

We have a canonical natural transformation
\begin{equation}\label{fwefqewfewedqdqwedqwedq}
E\to E^{\lf}\, .
\end{equation}
By Definition \ref{kfjwekfjklewfefwefwfewf} the
functor $E$ is locally finite if and only if  the natural {transformation}  \eqref{fwefqewfewedqdqwedqwedq} is an equivalence.

{Let $\bC$ be a complete   stable  $\infty$-category, and let}
  $E:\Top\Born\to {\bC}$ be any functor. 
\begin{lem}\label{fjwelkfwjoii4234}
$E^{\lf}$ is locally finite.
\end{lem}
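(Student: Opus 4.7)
The plan is to invoke Remark~\ref{eofjewoifjefoewi23452345} in order to reduce the claim to showing that
\[
\lim_{B\in\cB} E^{\mathit{lf}}(X\setminus B)\simeq 0
\]
for every $X\in\TopBorn$. I would first unfold the inner locally finite evaluation. A subset $B'\subseteq X\setminus B$ is bounded in $X\setminus B$ (for the induced bornology) iff it is bounded in $X$; hence $B'\mapsto C:=B\cup B'$ gives a bijection between the indices of the inner limit and the bounded subsets $C\in\cB$ containing $B$, satisfying $(X\setminus B)\setminus B'=X\setminus C$. Consequently,
\[
E^{\mathit{lf}}(X\setminus B)\simeq \lim_{C\in\cB,\, C\supseteq B} E(X\setminus B,X\setminus C).
\]

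Commuting the two limits (Fubini for limits rewrites the iterated limit as a single limit over the poset $I:=\{(B,C)\in\cB\times\cB :  B\subseteq C\}$, which in turn can be computed by projecting to $C$), I would obtain
\[
\lim_{B\in\cB} E^{\mathit{lf}}(X\setminus B)\simeq \lim_{C\in\cB} \lim_{B\subseteq C} E(X\setminus B,X\setminus C).
\]
For fixed $C$, I would analyse the inner limit by inspecting variance. For $B\subseteq B'\subseteq C$, functoriality of cofibres applied to the square with top row $E(X\setminus C)=E(X\setminus C)$ and bottom row the inclusion-induced map $E(X\setminus B')\to E(X\setminus B)$ yields a structure morphism $E(X\setminus B',X\setminus C)\to E(X\setminus B,X\setminus C)$. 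Thus the assignment $B\mapsto E(X\setminus B,X\setminus C)$ is contravariant on $\cB^{\leq C}$, or equivalently a covariant diagram on the opposite poset $(\cB^{\leq C})^{op}$, in which $C$ is the initial object. Since the limit of a diagram on a category with initial object equals its value there, the inner limit evaluates to $E(X\setminus C,X\setminus C)\simeq\Cofib(\id_{E(X\setminus C)})\simeq 0$. Therefore $\lim_{C}0\simeq 0$, which proves the desired vanishing.

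The only delicate point is bookkeeping the variance of the diagrams so that the reparametrization and Fubini identifications are genuine equivalences and so that $C$ really appears as an initial (rather than final) object in the correct opposite category. Once this is settled the conclusion is immediate from the triviality of $\Cofib(\id)$; no further hypothesis on $E$ (such as excision or homotopy invariance) is used, in accordance with the statement.
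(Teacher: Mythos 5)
Your argument is correct and follows essentially the same route as the paper's proof: reduce via Remark~\ref{eofjewoifjefoewi23452345} to showing $\lim_{B\in\cB} E^{\mathit{lf}}(X\setminus B)\simeq 0$, unfold the inner limit, swap the two limits, and observe that the inner limit collapses because the relevant cofibre is that of an identity. The only cosmetic difference is that the paper indexes the double limit over the full product $\cB\times\cB$ (writing the terms as $\Cofib\big(E(X\setminus(B\cup B'))\to E(X\setminus B)\big)$) and then restricts the inner limit to the cofinal subfamily $B\supseteq B'$, on which every term is already $\Cofib(\mathrm{id})\simeq 0$, whereas you reparametrize by $C=B\cup B'$ and evaluate the inner limit at its terminal index $B=C$.
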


\begin{proof}
Let $X$ be a topological bornological space. We have
\begin{eqnarray*}
\lim_{B\in \cB} E^{\lf}(X\setminus B)&\simeq &\lim_{B\in \cB} \lim_{B^{\prime}\in \cB} \Cofib\big(E(X\setminus (B\cup B^{\prime}))\to E(X\setminus B)\big)\\
&\simeq & \lim_{B^{\prime}\in \cB} \lim_{B\in \cB}   \Cofib\big(E(X\setminus (B\cup B^{\prime}))\to E(X\setminus B)\big)\\&\simeq&
\lim_{B^{\prime} \in \cB} \lim_{B\in \cB, B^{\prime}\subseteq B}  \Cofib\big(E(X\setminus (B\cup B^{\prime}))\to E(X\setminus B)\big)\\&\simeq&\lim_{B^{\prime} \in \cB} \lim_{B\in \cB, B^{\prime}\subseteq B}  \Cofib\big(E(X\setminus B\ )\to E(X\setminus B)\big)\\&\simeq&
 0\, .
\end{eqnarray*}
By Remark \ref{eofjewoifjefoewi23452345} this equivalence implies that $E^{\lf}$ is locally finite.
\end{proof}

Note that Lemma \ref{fjwelkfwjoii4234} implies that the natural transformation   \eqref{fwefqewfewedqdqwedqwedq} induces an equivalence
\[E^{\lf}\simeq (E^{\lf})^{\lf}\, .\]
 
We now introduce the notion of homotopy invariance of functors defined on $\TopBorn$. 
To this end we equip the unit interval $[0,1]$ with its maximal bornology. Then for every topological bornological space $X$ the projection $[0,1]\otimes X\to X$ (see Example \ref{goiejrgoierjgeorgjreg43jut9034} for $-\otimes-$) is a morphism in $\TopBorn$. Moreover, the inclusions $X\to [0,1]\otimes X$ are morphisms. This provides a notion of homotopy\index{homotopy!topological bornological space} in the category $\TopBorn$. In the literature one often talks about proper homotopy. We will not add this word \emph{proper} here since it is clear from the context $\TopBorn$ that all morphisms are proper, hence also the homotopies.

Let $E:\TopBorn\to {\bC}$ be a functor.
\begin{ddd}
 $E$ is homotopy invariant  \index{homotopy invariant!topological bornological space} if the projection 
$[0,1] \otimes X\to X$ induces an equivalence $E([0,1] \otimes X)\to E(X)$.
\end{ddd}

Next we shall see that the combination of homotopy invariance  {and} local finiteness of a  functors allows Eilenberg swindle arguments. Similar as in the bornological coarse case we encode this in the property that
the functor vanishes on certain flasque spaces.

\begin{ddd}
A topological bornological space $X$ is flasque\index{flasque!topological bornological space}\index{topological bornological space!flasque} if it admits a morphism
$f:X\to X$ such that:
\begin{enumerate}
\item $f$ is homotopic to $\id$.
\item For every bounded $B$ in $X$ there exists $k$ in $\nat$ such that $f^{k}(X)\cap B=\emptyset$.
\end{enumerate}
 \end{ddd}
 
\begin{ex}
Let $X$ be a topological bornological space, 
and let $[0,\infty)$ have the bornology 
given by relatively compact subsets.
  Then $[0,\infty) \otimes  X$   is flasque. We can define the morphism $f:[0,\infty) \otimes  X\to [0,\infty) \otimes  X$ by $f(t,x):=(t+1,x)$.\hB
 \end{ex}

Let $\bC$ be a   complete stable  $\infty$-category, and let $E\colon \TopBorn\to {\bC}$ be a functor.
\begin{lem} \label{efewoifeuwfoiewufoiew89u98234234324}Assume:
\begin{enumerate}
\item 
$E$ is locally finite.
\item $E$ is   homotopy invariant.
\item  $X$ is flasque. 
\end{enumerate} 
Then $E(X)\simeq 0$.
\end{lem}

\begin{proof}
 The argument is very similar to the proof of Lemma \ref{ekfjweiofu9o4r34555}. Let $X$ be a topological bornological space with bornology $\cB$, and let $f\colon X\to X$ implement flasqueness.
We consider the diagram 
\[\xymatrix{
 \lim_{B\in \cB} E(X,X\setminus B) \ar[rr]^-{\id_{X,*}} \ar@{=}[d] & &E(X,X)  \\
 \lim_{B\in \cB} E(X,X\setminus B) \ar@{=}[d] \ar[rr]^-{\id_{X,*}+f_{*}} & &E(X,f(X))\ar[u] \\
 \lim_{B\in \cB} E(X,X\setminus B) \ar@{=}[d] \ar[rr]^-{\id_{X,*}+f_{*}+f^{2}_{*}} & & E(X, f^{2}(X))\ar[u] \\
\vdots \ar@{=}[d] & & \vdots \ar[u] \\
 \lim_{B\in \cB}E(X,X\setminus B)  \ar[rr]^-{\sum_{k}f^{k}_{*} } \ar@{=}[d] & & \lim_{k\in \nat}E(X,f^{k}(X)) \ar[u] \ar[d]^{!}\\
  \lim_{B\in \cB}E(X,X\setminus B)  && \lim_{B\in \cB}E(X,X\setminus B) \\
 E(X) \ar[rr]^-{F} \ar[u]_{\simeq} & &E(X)\ar[u]_{\simeq} 
}\]

In order to define the morphism marked by $!$ we note that for every $B$ in $\cB$ we can choose $k$ in $\nat$ such that
$f^{k}(X)\cap B=\emptyset$. Then we have a map of pairs $(X,  f^{k}(X))\to(X,X\setminus B)$.

All the small cells in the upper part commute. For example the filler of the upper square is  an equivalence $f_{*}\simeq 0$. It   is obtained from the factorization   of $f$ as $X\to f(X) \to X$. Indeed, we have the diagram $$\xymatrix{E(X\setminus B)\ar[r]\ar[d]&E(f(X))\ar[d]\ar[r]&E(f(X))\ar[d]\\E(X)\ar[d]\ar[r]&E(f(X))\ar[r]\ar[d]&E(X)\ar[d]\\E(X,X\setminus B)\ar[r]&0\ar[r]&E(X,f(X))}$$
where the columns are pieces of fibre sequences and the lower map is the definition of the induced map $f_{*}$.
The fillers provide the desired equivalence $f_{*}\simeq 0$.
 
The upper part of the diagram 
defines the map denoted by the suggestive symbol   $\sum_{k}f^{k}_{*}$.
This map further induces $F$.

The construction of $F$ implies that
$$E(\id_{X})+E(f)\circ E(F)\simeq E(F)\, .$$
Using that $E$ is homotopy  invariant    and $f$ is homotopic to    $\id_{X}$   we get 
$$E(\id_{X})+E(F)\simeq E(F)$$ and hence $E(\id_{X})\simeq 0$. This implies $E(X)\simeq 0$.
\end{proof}

We now discuss the notion of excision.
If $Y $ is a subset of a topological bornological space $X$, then we consider $Y$ as a topological bornological space with the induced structures.

In order to capture all examples we will consider three versions of excision. 

Let $\bC$ be a stable $\infty$-category, and let 
 $E:\TopBorn\to {\bC}$ be a functor. 
\begin{ddd}
 $E$ satisfies (open or closed) excision\index{excision!open}\index{excision!closed}\index{open excision}\index{closed excision} if for every (open or closed) decomposition $(Y,Z)$ of a topological bornological space
$X$ we have a push-out
$$\xymatrix{E(Y\cap Z)\ar[r]\ar[d]&E(Y)\ar[d]\\E(Z)\ar[r]&E(X)}$$
\end{ddd}

\begin{ex}\label{iowdqwioduqz9q2342343424}
We consider the functor
 $$\ell\circ   C^{\sing}\circ \cF_{\cB}  :\TopBorn\to \Ch_{\infty}\, ,$$ where $\ell$ is as in \eqref{qewflkklefmklqwefqwefqfqefqwefe},
 $\cF_{\cB}:\TopBorn \to \Top$ forgets the bornology, and 
 $C^{\sing}:\Top\to   \Ch$ is the   singular chain complex functor. 
 This composition is  homotopy invariant and well-known to   satisfy open excision.
Closed   excision fails.

The functor $$\Sigma_{+}^{\infty,\topp}\circ \cF_{\cB}:\TopBorn\to \Sp$$ (see \eqref{vervelkvjekvevweqvevwv}) is homotopy invariant and satisfies open excision (Property \ref{ewifjweoifiewfewfwfewfewfwfwf} of $\Sigma_{+}^{\infty,\topp}$). 
Again, closed excision fails for $\Sigma^{\infty,\topp}_{+}\circ \cF_{\cB}$.

We will see below {(combine  Assertion\ref{goegoeri45345345345} and \ref{fwifjweoifwefwefwef})} that analytic locally finite $K$-homology $K^{\an,\lf}$ satisfies closed excision. We do not know if it satisfies open excision.
\hB
\end{ex}

The following notion of weak excision is an attempt for a concept which comprises both open and closed excision.
Let $X$ be a topological space, and let $\cY:=(Y_{i})_{i\in I}$ be a filtered family of subsets. 
\begin{ddd}\label{rojepgokerggerg}
 $\cY$ is called a big family\index{big family!in $\TopBorn$} if for every $i$ in $I$ there exists
$i^{\prime}$ in $I$ such that $Y_{i^{\prime}}$ contains an open neighbourhood of $\bar Y_{i}$.
 \end{ddd}
\begin{ex}
The bornology of a topological bornological space is a big family. The condition introduced in Definition \ref{rojepgokerggerg} is satisfied by compatibility between the topology and the bornology, see Definition \ref{fewilfjweiofoi4325345345}.
\hB
\end{ex}

Let $\bC$ be a  cocomplete stable  $\infty$-category.
If $\cY$ is a big family on $X$ and $E:\TopBorn\to {\bC}$ is a functor, then we write $$E(\cY):=\colim_{i\in I}E(Y_{i})\, .$$
We say that a pair $(\cY,\cZ)$ of  two big families $\cZ$ and $\cY$  is a decomposition of $X$ if there exist members $Z$ and $Y$ of $\cZ$ and $\cY$, respectively,  such that $X=Z\cup Y$. 

Let $\bC$ be a cocomplete stable   $\infty$-category, and let
 $E:\TopBorn\to {\bC}$ be a functor.
\begin{ddd}
  $E$ satisfies weak excision\index{excision!weak}\index{weak excision} if for every decomposition $(\cY,\cZ)$ of a topological bornological space
$X$ into two big families  we have a push-out
$$\xymatrix{E(\cY\cap \cZ)\ar[r]\ar[d]&E(\cY)\ar[d]\\E(\cZ)\ar[r]&E(X)}$$
\end{ddd}
By cofinality arguments it is clear that open or closed excision implies weak excision.


{Let $\bC$ be a cocomplete and cocomplete  stable  $\infty$-category, and let $E:\TopBorn\to \bC$ be a functor.}
\begin{ddd}\label{rewfiewrjgoiegergreefwerfw}
 $E$ is a locally finite homology theory if it has the following poperties:\index{locally finite!homology theory}\index{homology theory!locally finite}
\begin{enumerate}
\item $E$ is locally finite.
\item $E$ is homotopy invariant.
\item $E$ satisfies weak excision.
\end{enumerate}
\end{ddd}
 
\begin{rem}
\index{wrong way maps}\index{shriek maps|see{wrong way maps}}\index{umkehr maps|see{wrong way maps}}
A locally finite homology theory $E$  gives rise to wrong way maps for bornological open inclusions. We consider a topological bornological space $ X$ with bornology $\cB_{X}$ and an open  subset $U$ in $X$. Let $\cB_{U}$ be some compatible  bornology on $U$ such that the inclusion $U\to X$ is bornological. So $\cB_{U}$ may be smaller than the induced bornology $\cB_{X}\cap U$. We further assume that every $B$ in $\cB_{U}$ has an open neighbourhood $V$ such that
$\cB_{U}\cap V=\cB_{X}\cap V$.

We let $\tilde U$ denote the topological bornological space with the induced topology and the bornology $\cB_{U}$. We use the tilde symbol in order to distinguish that space from the topological bornological space $U$ which has by definition the induced bornology $U\cap \cB_{X}$.

In contrast to $U\to X$ the inclusion $\tilde U\to X$ is in general not a morphism in $\TopBorn$. The observation in this remark is that
we have a wrong-way morphism
$$E(X)\to E(\tilde U)$$ defined as follows:
For a bounded closed subset $B$ in $\cB_{U}$ we get  open coverings $(X\setminus B,U)$ of $X$ and
$(U\setminus B,V)$ of $U$. Using open descent for $E$  twice  we get the excision equivalence \begin{equation}\label{gregjnergkhui34h53453545455}
E(X,X\setminus B)\simeq E(V,V\setminus B)\simeq E(\tilde U,\tilde U\setminus B)\, .
\end{equation}
The desired wrong-way map is now given by
$$E(X)\simeq \lim_{B\in \cB_{X}}E(X,X\setminus B)\to\lim_{B\in \cB_{U}} E(X,X\setminus B)\stackrel{\eqref{gregjnergkhui34h53453545455}}{\to} \lim_{B\in \cB_{U}} E(\tilde U,\tilde U\setminus B)\simeq E(\tilde U)\, ,$$
where the first morphism is induced by the restriction of index sets along $\cB_{U}\to \cB_{X}$ which is defined since the   inclusion $\tilde U\to X$ was assumed to be  bornological.
 
 A typical instance of this is the inclusion of an open subset $U$ into a locally compact space $X$.
 In general, the induced bornology $\cB_{X}\cap U$ is larger than the bornology $\cB_{U}$ of relatively compact subsets.
If $B$ is relatively compact in $U$, then it has a relatively compact neighbourhood. 
So in this case the wrong-way map
$E(X)\to E(\tilde U)$ is defined.
These wrong-way maps are an important construction in index theory, but since they do not play any role in the present paper we will not discuss them further. 

Note that in  {\cite[Sec.~2]{ww_pro}} the wrong way maps are encoded in a completely different manner
 by defining  the functors themselves on {the} larger category $\cE^{\bullet}$ instead of $\cE$ (notation in \cite{ww_pro}).
\hB
\end{rem}
 
 \subsubsection{Additivity}
 
In the following we discuss additivity. Let $ E:\TopBorn\to {\bC}$ be a functor which satisfies weak excision. We consider a family 
$(X_{i})_{i\in I}$ of topological bornological spaces. Recall the Definition \ref{foi24r443535} of the free union $\bigsqcup_{i\in I}^{\free} X_{i}$. For $j$ in $I$ we can form the two big one-member  families of  {$\bigsqcup_{i\in I}^{\free} X_{i}$} consisting of 
$X_{j}$ and  {$\bigsqcup_{i\in I\setminus \{j\}}^{\free} X_{i}$.}
Excision provides  the first equivalence in the following  definition of projection morphisms
$$E \Big(  {\bigsqcup_{i\in I}^{\free}} X_{i} \Big) \simeq E(X_{j}) \oplus  E \Big(  {\bigsqcup_{i\in I\setminus \{j\}}^{\free}} X_{i} \Big) \to E(X_{j})\, .$$
These projections for all $j$ in $I$ together induce a morphism 
\begin{equation}\label{frwihfiio24554435}
E \Big(  {\bigsqcup_{i\in I}^{\free}} X_{i} \Big) \to  \prod_{i\in I}E(X_{i})\, .
\end{equation}

{Let $\bC$ be a {complete} stable $\infty$-category, and let}  $E:\TopBorn\to {\bC}$ be a weakly excisive functor.
\begin{ddd}
 $E$ is additive\index{additive!locally finite homology theory}\index{locally finite!homology theory!additive} if \eqref{frwihfiio24554435} is an equivalence for all    families $(X_{i})_{i\in I}$ of topological bornological spaces.
\end{ddd}

\begin{lem}\label{dsf89888}
If $E$ is locally finite and satisfies weak excision, then $E$ is additive.
\end{lem}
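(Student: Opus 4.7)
The plan is to combine local finiteness with weak excision on the clopen disjoint-union decomposition of the free union, and then reindex a limit using the structure of the bornology $\cB_X$ on $X := \bigsqcup_{i \in I}^{free} X_i$.

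First I would observe that for any finite subset $J \subseteq I$, the pair $(\bigsqcup_{i \in J}^{free} X_i,\, \bigsqcup_{i \notin J}^{free} X_i)$ gives a decomposition of $X$ into two clopen subspaces; regarded as one-member families each piece is its own open neighbourhood, so they form big families, and weak excision (together with $E(\emptyset) \simeq 0$, which follows from local finiteness applied to $\emptyset$) yields, for finite $J$,
\[
E\Big(\bigsqcup^{free}_{i\in J}X_{i}\Big)\simeq \bigoplus_{i\in J}E(X_{i})
\]
by induction on $|J|$. More importantly, for a bounded subset $B\in \cB_{X}$ the defining description of the bornology of a free union (Definition~\ref{foi24r443535}) says that $B$ meets only finitely many components, so there is a finite $J=J(B)\subseteq I$ and bounded $B_{i}\in \cB_{X_{i}}$ for $i\in J$ with $B=\bigsqcup_{i\in J}B_{i}$. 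Since $X\setminus B$ contains the clopen complement $\bigsqcup_{i\notin J}^{free}X_{i}$ as a summand, the same weak excision argument gives
\[
E(X,X\setminus B)\simeq \bigoplus_{i\in J}E(X_{i},X_{i}\setminus B_{i}).
\]

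Next I would feed this into the local finiteness formula. The indexing poset $\cB_{X}$ is cofinally parametrized by pairs $(J,(B_{i})_{i\in J})$ with $J\subseteq I$ finite and $B_{i}\in \cB_{X_{i}}$, and this parametrization factors as a double limit $\lim_{J}\lim_{(B_{i})_{i\in J}}$. Using local finiteness of $E$ and the previous display I obtain
\[
E(X)\simeq \lim_{B\in \cB_{X}}E(X,X\setminus B)\simeq \lim_{J\subseteq I\text{ finite}}\;\lim_{(B_{i})\in \prod_{i\in J}\cB_{X_{i}}}\bigoplus_{i\in J}E(X_{i},X_{i}\setminus B_{i}).
\]
Because each inner direct sum is finite, it commutes with limits in $\Sp$, so the inner limit passes inside the sum and local finiteness of $E$ applied to each $X_{i}$ rewrites the inner expression as $\bigoplus_{i\in J}E(X_{i})$. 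Then the outer limit over finite $J$ with transition maps given by the summand projections computes the product $\prod_{i\in I}E(X_{i})$.

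The routine parts are the reindexing of $\cB_{X}$ and the limit-sum interchange; the substantive step is the application of weak excision, since weak excision is only asserted for decompositions into two big families, so I need to handle arbitrary, possibly infinite, free unions via a decomposition of $X$ into $\bigsqcup_{i\in J(B)}X_{i}$ and its clopen complement tied to the bounded subset $B$ under consideration. The main thing to verify carefully is that the resulting equivalence $E(X)\simeq \prod_{i\in I}E(X_{i})$ is indeed the canonical map \eqref{frwihfiio24554435}: this is checked by noting that on each summand $E(X_{j})$ it is induced by the excision projection associated to the finite family $\{j\}\subseteq J$, which is exactly the definition of \eqref{frwihfiio24554435}.
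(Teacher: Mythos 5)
Your proof is correct and follows essentially the same route as the paper: excision on the clopen splitting determined by the finitely many components meeting a bounded set $B$, followed by computing $\lim_{B\in\cB_X}$ as a double limit over finite $J\subseteq I$ and over $(B_i)_{i\in J}$, with local finiteness applied componentwise and the outer limit yielding the product. Your extra checks ($E(\emptyset)\simeq 0$ via local finiteness, and that the resulting equivalence agrees with the canonical map \eqref{frwihfiio24554435}) are correct details the paper leaves implicit.
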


\begin{proof}
We consider a family 
$(X_{i})_{i\in I}$ of topological bornological spaces and  {their free union $X := \bigsqcup_{i\in I}^{\free} X_{i}$.}
If $B$ is a bounded subset of $X$, then  the subset $J(B):=\{i\in I\:|\: B\cap X_{i}\not=\emptyset\}$  of $ I$   is finite. 
By excision we have an equivalence
$$E(X,X\setminus B)\simeq   \bigoplus_{j\in J(B)} E( {X_j}, X_{j}\setminus B)\, .$$
We now form the limit over $B$ in $\cB$ in two stages $$\lim_{B\in \cB}\dots\simeq \lim_{J\subseteq I}\lim_{B\in \cB, J(B)=J}\dots\, ,$$ where
$J$ runs over the finite subsets of $I$.
The inner limit gives by local finiteness of $E$
$$\lim_{B\in \cB, J(B)=J} \bigoplus_{j\in J } E( {X_j}, X_{j}\setminus B)\simeq  \bigoplus_{ {j}\in J } E(X_{ {j}} )\, .$$
Taking now the limit over the finite subsets $J$ of $ I$,   and using
$$\lim_{J\subseteq I}  \bigoplus_{j\in J(B)} E(X_{j} )\simeq \prod_{i\in I} E(X_{i})$$
 and the local finiteness of $E$ again we get the equivalence
$$E(X)\simeq \prod_{i\in I} E(X_{i})$$
as claimed.
\end{proof}

Let $\bC$ be a complete and cocomplete stable $\infty$-category, and let $E:\TopBorn\to \bC$ be a functor.

\begin{kor}\label{tgiortgergfewferfw}
If $E$ is a locally finite homology theory, then $E$ is additive.
\end{kor}

In general it is notoriously difficult to check that a given functor is locally finite if it is not already given as $E^{\lf}$.
In the following we show a result {which} can be used to deduce local finiteness from weak excision and additivity, see Remark \ref{eofjewoifjefoewi23452345}.

Let $\bC$ be a {complete} stable $\infty$-category, and let $E:\TopBorn\to {\bC}$ be a weakly excisive  functor. 

\begin{ddd} 
$E$ is countably additive\index{countably additive!locally finite homology theory}\index{locally finite!homology theory!countably additive} if \eqref{frwihfiio24554435} is an equivalence for all   countable  families $(X_{i})_{i\in I}$ of topological bornological spaces.
\end{ddd}

An additive functor is countably additive.

We consider an  increasing family $(Y_{k})_{k\in \nat}$ of subsets  {of} a topological bornological  space~$X$.
 \begin{lem}\label{lijfweiofjou4r3435434534534}
 Assume:
 \begin{enumerate}
 \item\label{dvfkjkqwxsadcaxsdac} For   every bounded subset $B$ of   $X$ there exists $k$ in $\nat$ such that $B\subseteq Y_{k}$.
 \item $E$ is weakly excisive.
 \item  $E$ is countably additive. \end{enumerate} Then  $\lim_{n\in \nat} E(X\setminus Y_{n})\simeq 0$.
 \end{lem}

\begin{proof}
For $k,\ell$ in $\nat$ and $k\ge \ell$  we let $$f_{k,\ell}:X\setminus Y_{k}\to X\setminus Y_{\ell}$$ be the inclusion.
 We have a fibre sequence
$$\to \lim_{n\in \nat} E(X\setminus Y_{n})\to \prod_{n\in \nat} E(X\setminus Y_{n})\stackrel{d}{\to}  \prod_{n\in \nat} E(X\setminus Y_{n})\to $$
where $d$ is  described in the language of elements  by  $$d((\phi_{n})_{n}):=(\phi_{n}-f_{n+1,n}\phi_{n+1})_{n}\, .$$
We must show that $d$ is an equivalence.

For every $m$ in $\nat $ we have a morphism of  topological bornological  spaces $$g_{m}:=\sqcup_{n\ge m} f_{n,m} : {\bigsqcup_{\substack{n\in \nat\\ n\ge m}}^{\free}} X\setminus Y_{n}\to X\setminus Y_{m}\, .$$ The condition \ref{dvfkjkqwxsadcaxsdac} ensures that $g_{m}$ is proper. 
 
 Using the additivity of $E$, for every $m$ in $\nat$ we can define the morphism
$$s_{m}:\prod_{n\in \nat} E(X\setminus Y_{n})\stackrel{proj}{\to} \prod_{\substack{n\in \nat\\ n\ge m}} E(X\setminus Y_{n}) \simeq
E\big({\bigsqcup_{\substack{n\in \nat\\ n\ge m}}^{\free}} X\setminus Y_{n}\big)\stackrel{g_{m}}{\to} E(X\setminus Y_{m})\, .$$
Let $\pr_{m}:\prod_{n\in \nat} E(X\setminus Y_{n})\to E(X\setminus Y_{m})$ denote the projection. By excision we have the relation \begin{equation}\label{fkjhnrekvjververevverve}
\pr_{m}+f_{m+1,m}s_{m+1}\simeq s_{m}\, .
\end{equation}
We further have the relation \begin{equation}\label{fkjhnrekvjververevverve1}
s_{m}((f_{n+1,n}\phi_{n+1})_{n})\simeq f_{m+1,m}(s_{m+1}((\phi_{n})_{n}))\end{equation}
We can now define 
$$h: \prod_{n\in \nat} E(X\setminus Y_{n})\to  \prod_{n\in \nat} E(X\setminus Y_{n})$$
by
$$h((\phi_{n})_{n})=(s_{m}((\phi_{n})_{n}))_{m}\, .$$
We calculate \begin{eqnarray*}
(d\circ h)((\phi_{n})_{n})&\simeq &d((s_{m}((\phi_{n})_{n}))_{m})\\&\simeq&
(s_{m}((\phi_{n})_{n})-f_{m+1,m}(s_{m+1}((\phi_{n})_{n})))_{m}\\&\stackrel{\eqref{fkjhnrekvjververevverve}}{\simeq}&
( \phi_{m})_{m} 
\end{eqnarray*}
 and
\begin{eqnarray*}
(h\circ d)((\phi_{n})_{n})&\simeq & h((\phi_{n}-f_{n+1,n}\phi_{n+1})_{n})\\&\simeq&
(s_{m}((\phi_{n})_{n}))_{m}- (s_{m}((f_{n+1,n}\phi_{n+1})_{n}))_{m}\\&\stackrel{\eqref{fkjhnrekvjververevverve1}}{\simeq}& 
(s_{m}((\phi_{n})_{n}))_{m}- (f_{m+1,m}(s_{m+1}((\phi_{n})_{n})))_{m}\\&\stackrel{\eqref{fkjhnrekvjververevverve}}{\simeq}& (\phi_{m})_{m}\, .
\end{eqnarray*}
This calculation shows that $d$ is an equivalence.
Consequently, $$\lim_{n\in \nat} E(X\setminus Y_{n})\simeq 0\, ,$$
finishing the proof.
\end{proof}

\begin{rem}
If a functor $E:\TopBorn\to {\bC}$ is countably additive, homotopy invariant, and satisfies
weak excision, then it vanishes on flasque spaces. In fact, the weaker version of local finiteness shown in Lemma \ref{lijfweiofjou4r3435434534534} suffices to make a modification of the proof of Lemma \ref{efewoifeuwfoiewufoiew89u98234234324} work.  \hB
\end{rem}


\subsubsection{Construction of locally finite homology theories}

Let $\bC$ be a complete stable $\infty$-category, and let $E\colon\TopBorn\to {\bC}$ be a functor. 
Recall the Definition \ref{fiojwoifjwoifuwe45435} of the locally finite evaluation.

\begin{lem}\label{qdkjwqhdjkio233r}
If $E$ is homotopy invariant, then $E^{\lf}$ is homotopy invariant.
\end{lem}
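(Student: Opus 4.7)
The plan is to reduce homotopy invariance of $E^{\mathit{lf}}$ to homotopy invariance of $E$ applied term-by-term inside the limit defining $E^{\mathit{lf}}$. The key observation will be that the bornology on $[0,1]\times X$ admits the family $\{[0,1]\times B\}_{B\in\cB_X}$ as a cofinal system, re-indexed by the bornology of $X$ itself via the projection.

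First I will verify this cofinality claim. Since the inclusions $i_0,i_1:X\to [0,1]\times X$ are required to be morphisms in $\TopBorn$, the relevant bornology on $[0,1]\times X$ has $\{[0,1]\times B : B\in\cB_X\}$ as a cofinal subfamily; this is essentially the $\times_p$-style bornology (cf.\ Example~\ref{eiofweoifwefuewfieuwf9wwfwef}), and compatibility with the topology is inherited from $X$. Using this cofinality, the limit defining $E^{\mathit{lf}}([0,1]\times X)$ can be restricted to this subfamily and hence re-indexed by $\cB_X$.

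Next, for each $B\in\cB_X$ I will use the set-theoretic identity
\[ ([0,1]\times X)\setminus([0,1]\times B) \;=\; [0,1]\times(X\setminus B)\ . \]
Applying homotopy invariance of $E$ both to $X$ and to $X\setminus B$, and using naturality of the projection $\pi:[0,1]\times X\to X$, I will get an equivalence
\[ E\bigl([0,1]\times X,\,[0,1]\times(X\setminus B)\bigr) \;\simeq\; E(X,X\setminus B) \]
induced by $\pi$. This equivalence is natural in $B$ with respect to inclusions of bounded subsets, as it is obtained from a map of cofibre sequences all of whose vertical arrows are instances of the homotopy-invariance equivalence for $E$.

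Finally, taking the limit over $B\in\cB_X$ and combining with the cofinality step, I will obtain
\[ E^{\mathit{lf}}([0,1]\times X) \;\simeq\; \lim_{B\in\cB_X} E\bigl([0,1]\times X,\,[0,1]\times(X\setminus B)\bigr) \;\simeq\; \lim_{B\in\cB_X} E(X,X\setminus B) \;=\; E^{\mathit{lf}}(X), \]
with the composite induced by $\pi$. The main subtlety, such as it is, lies in bookkeeping the cofinal system of bounded subsets of $[0,1]\times X$; once that is pinned down, homotopy invariance of $E^{\mathit{lf}}$ reduces to a pointwise application of homotopy invariance of $E$.
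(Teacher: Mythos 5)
Your proposal is correct and follows essentially the same route as the paper's proof: identify $\{[0,1]\times B : B\in\cB_X\}$ as cofinal among the bounded subsets of $[0,1]\times X$, use $([0,1]\times X)\setminus([0,1]\times B)=[0,1]\times(X\setminus B)$, apply homotopy invariance of $E$ to get an equivalence on each relative term $E([0,1]\times X,[0,1]\times(X\setminus B))\to E(X,X\setminus B)$, and pass to the limit. The paper's argument is just a more condensed version of exactly this.
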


\begin{proof}
%
We consider the  projection
 $[0,1] \otimes X\to X$.
Then the subsets $[0,1]\times B$  for bounded subsets  $B$ of $X$ are cofinal in the bounded subsets of $[0,1] \otimes X$. So we can conclude that
$E^{\lf}([0,1] \otimes X)\to E^{\lf}(X)$ is the limit of equivalences (by homotopy invariance of $E$)
$$E([0,1] \otimes X, [0,1] {\times} (X\setminus B))\to E(X,X\setminus B)$$ and consequently an equivalence, too.
%
\end{proof}

Let $\bC$ be a complete and  cocomplete stable $\infty$-category, and let $E\colon\TopBorn\to {\bC}$ be a functor.
\begin{lem}\label{fwljwefjoiuoi3224234}
If $E$ satisfies (open, closed or weak) excision, then
$E^{\lf}$ also satisfies (open, closed or weak) excision.
\end{lem}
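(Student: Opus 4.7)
The strategy is uniform across the three variants: apply the excision hypothesis of $E$ both to $X$ and to $X \setminus B$ for each $B \in \cB_X$, take cofibres of the map between these two pushout squares induced by the inclusion $X \setminus B \hookrightarrow X$, and finally take $\lim_{B \in \cB_X}$. The point is that in the stable $\infty$-category $\Sp$, cofibres of a map between pushout squares are pushout squares, and limits of pushout squares are pushout squares.

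First I would verify that decompositions restrict well. If $(Y,Z)$ is an open (resp.\ closed) decomposition of $X$ and $B \in \cB_X$, then $(Y \setminus B, Z \setminus B)$ is an open (resp.\ closed) decomposition of $X \setminus B$; likewise, if $(\cY,\cZ)$ is a big-family decomposition of $X$, then the member-wise intersections $\cY \setminus B := (Y_i \setminus B)_i$ and $\cZ \setminus B := (Z_j \setminus B)_j$ form a big-family decomposition of $X \setminus B$. The big-family condition is preserved because an open neighbourhood of $Y_i$ in $X$ intersected with $X \setminus B$ is an open neighbourhood of $Y_i \setminus B$ in $X \setminus B$, and the covering property $Y \cup Z = X$ descends to $(Y \setminus B) \cup (Z \setminus B) = X \setminus B$.

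Next, for each $B \in \cB_X$, apply excision of $E$ to both $X$ and $X \setminus B$ to obtain two pushout squares, and then take cofibres of the natural transformation between them. By stability of $\Sp$ this yields a pushout square whose corners are the pairs $E(W, W \setminus B)$ for $W$ running through $X$, $Y$, $Z$, $Y \cap Z$ (or the big-family analogues $E(\cW, \cW \setminus B) := \Cofib(\colim_i E(W_i \setminus B) \to \colim_i E(W_i))$). Taking $\lim_{B \in \cB_X}$ preserves the pushout property, and the $X$-corner becomes $E^{\mathit{lf}}(X)$ by definition.

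It then remains to identify each of the other corners with $E^{\mathit{lf}}$ of the corresponding subspace or big family. For a subset $W \subseteq X$ with the induced bornology, the cofinal map $\cB_X \to \cB_W$, $B \mapsto B \cap W$, gives $\lim_B E(W, W \setminus B) \simeq \lim_{B_W \in \cB_W} E(W, W \setminus B_W) = E^{\mathit{lf}}(W)$, which settles the open and closed cases. For the weak case one must identify $\lim_B \colim_i E(W_i, W_i \setminus B)$ with $E^{\mathit{lf}}(\cW) = \colim_i E^{\mathit{lf}}(W_i) = \colim_i \lim_B E(W_i, W_i \setminus B)$, i.e.\ exchange a filtered colimit with a limit over the bornology; this is the main obstacle. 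I would handle it by observing that for fixed $B \in \cB_X$ the filtered system $i \mapsto E(W_i, W_i \setminus B)$ is essentially constant on the cofinal subsystem of indices $i$ with $W_i$ containing an open neighbourhood of $B \cap \bigcup_i W_i$ — such indices exist by the big-family condition — and that by (weak/open) excision of $E$ applied to the open decomposition of $W_{i'}$ by such a neighbourhood and $W_{i'} \setminus (B \cap W_i)$ the value stabilises up to equivalence; the exchange of $\lim_B$ and $\colim_i$ then follows from this eventual constancy, yielding the desired pushout and completing the proof.
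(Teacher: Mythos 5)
Your treatment of the open and closed cases is correct and is exactly the paper's argument: restrict the decomposition to $X\setminus B$, take the cofibre of the induced map of push-out squares (using excision of $E$ twice), and pass to $\lim_{B\in\cB}$, which preserves push-outs by stability; the identification of the corners via the cofinal map $\cB_X\to\cB_W$, $B\mapsto B\cap W$, is a detail the paper leaves implicit. The paper only writes out the open case and declares the other two ``similar''.

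In the weak case you have correctly isolated the point the paper glosses over, namely that the construction produces $\lim_{B}\colim_i E(W_i,W_i\setminus B)$ at the corners while weak excision for $E^{\mathit{lf}}$ demands $\colim_i\lim_{B} E(W_i,W_i\setminus B)$. However, your proposed resolution has a gap. The big-family condition in $\TopBorn$ only says that each member $W_i$ is contained, together with an open neighbourhood, in some later member $W_{i'}$; it says nothing about bounded subsets. In particular there need not exist \emph{any} index $i$ such that $W_i$ contains an open neighbourhood of $B\cap\bigcup_i W_i$: for instance $\big((-\infty,-1/n]\cup[1/n,\infty)\big)_{n\in\nat}$ is a big family in $\R$ (with the bornology of relatively compact subsets), yet for $B=[-1,1]$ no member contains $B\cap\bigcup_n W_n=[-1,0)\cup(0,1]$ at all. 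So the cofinal subsystem on which you want to argue essential constancy may be empty, and the claimed stabilisation of $i\mapsto E(W_i,W_i\setminus B)$ fails. Moreover, even where such stabilisation holds for each fixed $B$, the index at which it sets in depends on $B$, so the interchange of $\lim_B$ with $\colim_i$ would still require a separate argument (filtered colimits do not commute with the limits over $\cB$ in $\Sp$). As written, the weak case of your proof does not go through.
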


\begin{proof}
We discuss open excision. The other two cases are similar. Let $(Y,Z)$ be an open decomposition of $X$. For every bounded $B$ we get an open decomposition $(Y\setminus B,Z\setminus B)$ of $X\setminus B$. The cofibre of the corresponding maps of push-out diagrams (here we use that $E$ satisfies open excision) is the push-out diagram
$$\xymatrix{E(Y\cap Z,(Y\cap Z)\setminus B)\ar[r]\ar[d]&E(Y,Y\setminus B)\ar[d]\\E(Z,Z\setminus B)\ar[r]&E(X,X\setminus B)}$$
Using stability of the category {$\bC$} the limit of these push-out diagrams over $B$ in $\cB$ is again a push-out diagram.
\end{proof}

Let $\bC$ be a complete and cocomplete stable $\infty$-category,  and let $E\colon\TopBorn\to {\bC}$ be a functor.  \begin{prop}\label{prop:sfd89230}
If $E$ is homotopy invariant and satisfies weak excision, then its locally finite evaluation $E^{\lf}$ is a locally finite homology theory.
\end{prop}

\begin{proof}
By Lemma~\ref{fjwelkfwjoii4234} {the functor} $E^{\lf}$ is locally finite. Furthermore, by Lemma~\ref{qdkjwqhdjkio233r} shown below it is homotopy invariant. Finally, by Lemma~\ref{fwljwefjoiuoi3224234} also shown below it satisfies excision. This shows that $E$ is a locally finite homology theory. 
\end{proof}

Let $\bC$ be a complete and cocomplete stable $\infty$-category, and let $E\colon\TopBorn\to {\bC}$ be a functor.
\begin{lem}\label{fwl0943n4234}
Assume:
\begin{enumerate}
\item\label{regihjeogerferferfwrf} $\bC$ has the property that  filtered limits distribute over sums\index{limits distribute over sums}\index{distribute!limits over sums} (Remark \ref{qerugiergregrefrefwerf}). 
\item 
  $E$ preserves coproducts\index{coproducts!preserved!in $\TopBorn$}.
  \end{enumerate}
   Then
$E^{\lf}$ also preserves coproducts.
\end{lem}

\begin{proof}
Let $(X_i)_{i \in I}$ be a family of topological bornologial spaces. We must show that the canonical morphism
\[\bigoplus_{i \in I} E^{\lf}(X_i) \to E^{\lf} \big( \coprod_{i \in I} X_i \big)\]
is an equivalence. We have the chain of equivalences
\begin{align*}
E^{\lf} \big( \coprod_{i \in I} X_i \big) & := \lim_{B \in \cB} E \big( \coprod_{i \in I} X_i, \coprod_{i \in I} X_i \setminus B_i \big)\\
& \stackrel{!!}{ \simeq} \lim_{B \in \cB} \bigoplus_{i \in I} E ( X_i, X_i \setminus B_i )\\
&\stackrel{!}{ \simeq} \bigoplus_{i \in I} \lim_{B_i \in \cB_i} E ( X_i, X_i \setminus B_i )\\
& \simeq \bigoplus_{i \in I} E^{\lf} (X_i)\, .
\end{align*}
For the equivalence marked by $!!$ we use that $E$ preserves coproducts.
Furthermore, for
  the   equivalence marked by $!$ we use  that the bornology of $\cB$ of the coproduct is can be identified with   $\prod_{i \in I} \cB_i$, and Assumption \ref{regihjeogerferferfwrf}.
\end{proof}

\begin{rem}
Note that the condition on $\bC$ that filtered limits distribute over sums seems\index{limits distribute over sums}\index{distribute!limits over sums} to be quite exotic. 
It is satisfied, e.g.\ for $\bC=\Sp^{\op}$ since in $\Sp$ filtered colimits distribute over products.\index{colimits distribute over products}\index{distribute!colimits over products}
\hB
\end{rem}



\begin{ex}\label{weoifujweoifowe234234345}
The functor $\Sigma^{\infty,\topp}_{+}:\TopBorn\to \Sp$ (we omit to write the forgetful functor $\cF_{\cB}$) is homotopy invariant and satisfies open excision (see Example \ref{iowdqwioduqz9q2342343424} and the list of properties of this functor in Subsection \ref{eriogjeoigerfwerfwerfwrf}). 
Consequently, its locally finite evaluation $$\Sigma^{\infty,\lf}_{+}:=(\Sigma^{\infty,loc})^{\lf}:\TopBorn\to \Sp$$
is an additive locally finite homology theory.
\index{$\Sigmalf$}

Let $\bC$ be a complete and cocomplete stable $\infty$-category,  and let $C$ be an object of $\bC$.
Then
$$(C\wedge \Sigma^{\infty,\topp}_{+})^{\lf}:\TopBorn\to \bC$$ is a  locally finite homology theory. 

The order of applying $C\wedge-$ and forming the local finite evaluation in general matters.
If the functor
$C\wedge-:\Sp\to \bC$  commutes with limits, then we have an equivalence
$$ C\wedge \Sigma^{\infty, \lf }_{+}  \simeq (C\wedge \Sigma^{\infty,\topp}_{+})^{\lf}\, .$$
But if $C\wedge- $ does not preserve limits, then applying this functor will destroy local finiteness.

For example, 
$\bC=\Sp$ and $C$ is a dualizable spectrum, then $C\wedge -$ preserves limits. In
Example~\ref{hhfweweiufufwefwefwef} {we} gave an example  {of a} spectrum  $C$ where $C\wedge -$ does not commute with limits.
\hB
\end{ex}

\begin{ex}\label{joifwjweoifuoifuewioufoiwefewfewfefewfwef}
The functor
$$\ell\circ  C^{\sing}\colon \TopBorn\to  \Ch_{\infty}$$  is homotopy invariant  {and} satisfies open excision (see Example \ref{iowdqwioduqz9q2342343424}). 
Its locally finite evaluation
$$(\ell\circ C^{\sing})^{\lf}\colon \TopBorn\to  \Ch_{\infty}$$
is a  locally finite homology theory. 
It is the $\TopBorn$-version of Borel--Moore homology for locally compact spaces.
\hB
\end{ex}

\subsubsection{Classification of locally finite homology theories}\label{erjgiofggsfwergergsgsgr}

\newcommand{\bDelta}{\mathbf{\Delta}}
\newcommand{\bMor}{\mathbf{Mor}}

In this subsection we provide   a partial classification of locally finite homology theories. 
The result  Proposition \ref{ifjewifweofewifuoi23542335345}, and also our approach are
 very similar to the classification result of Weiss--Williams \cite{ww_pro}.

Let $X$ be a set,  and let $\cF$ be a subset of $\cP_{X}$. Then we can consider 
$\cF$ as a poset with respect to the inclusion relation. We define $X\setminus \cF:=\{X\setminus Y\:|\: Y\in \cF\}$.

We consider the full subcategory $\bF$ of $  \TopBorn$ of topological bornological spaces 
 which are homotopy equivalent to spaces $X$   which admit  a cofinal subset $\cB'$ of the bornology with the following properties:
 \begin{enumerate}
   \item \label{wertgkowergergefwref}For every $B$ in $\cB^{\prime}$ there exists $B^{\prime}$ in $\cB^{\prime}$ and a subset  $\{B\}$ of $\cP_{X}$ satisfying:
  \begin{enumerate}
  \item  $\{B\}$ is filtered and a big family (Definition \ref{rojepgokerggerg}).
  \item Every member $\tilde B$ of $\{B\}$ satisfies $B\subseteq \tilde B\subset B^{\prime}$.
  \item For every member $\tilde B$ of $\{B\}$ the inclusion $B\to \tilde B$ is a  homotopy equivalence.
  \item The family $X\setminus \{B\}:=\{X\setminus B'': B''\in \{B\} \}$ is filtered and a big family.  
    \end{enumerate}\item Every $B$ in $\cB^{\prime}$ is homotopy equivalent to a finite $CW$-complex.
\end{enumerate}
  
     The Condition  \ref{wertgkowergergefwref} on the members of ${\cB'}$ is a sort of cofibration condition. It is satisfied, e.g., if the inclusion $B\to X$ has a normal unit disc bundle $D$. Then we can take $B'=D_{1}$ and  $\{B\}:=\{D_{r}\:|\: r\in (0,1)\}$, where $D_{r}$ is the total space of the subbundle of discs of radius~$r$.

\begin{ex}\label{weriogwetgwergregegw9}
 A  locally finite simplicial complex with the topological and bornological structures induced by the spherical path metric  belongs to $\bF$.
  For $\cB^{\prime}$ we can take the family of finite subcomplexes.

 A typical example of such a complex is the coarsening  space $\|\hat \cY\|$, see \eqref{fwhhefwifiui23r2r23423434},  provided  the complex $\|\cY_{n}\|$ is finite-dimensional    for every $n$ in $\nat$. The space $ \|\hat \cY\|$ itself need not be finite-dimensional.
\hB
\end{ex}

The following is an adaptation of a result of Weiss--Williams \cite{ww_pro}.

Let $\bC$ be a   cocomplete stable  $\infty$-category. In the following we use the tensor structure \eqref{wervewrvwevfevfdvsfdvdfsvsfv} of $\bC$ over $\Sp$.

 
 Let $E:\TopBorn\to \bC$ be a functor.

\begin{prop}\label{ifjewifweofewifuoi23542335345}
If $E$ is homotopy invariant, then there exists a natural transformation
\[E(*) \wedge \Sigma^{\infty,\topp}_{+}  \to E \, .\]
If $E$ is a locally finite homology theory, then {the induced} transformation ${( \Sigma^{\infty}_{+} \wedge E(*))^{\lf}\to E}$ induces an equivalence on all objects of $\bF$.
\end{prop}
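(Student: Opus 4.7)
My overall plan is a two-part argument: first construct the natural transformation, then verify it is an equivalence on objects of $\bF$.

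\textbf{Construction of the transformation.} My strategy is to first build a natural transformation $\alpha\colon \Sigma^{\infty}_{+}(-)\wedge E(*) \to E$ of spectrum-valued functors on $\TopBorn$, and then apply the locally finite evaluation $(-)^{\mathit{lf}}$ in the sense of Definition~\ref{fiojwoifjwoifuwe45435}. The key starting observation is that for every $X\in\TopBorn$ and every point $x\in X$ the inclusion $\iota_x\colon *\to X$ is a morphism in $\TopBorn$: continuity is automatic, and properness is forced by Example~\ref{fljwlewkiou2ori23r2323}, which guarantees that singletons are bounded. Functoriality of $E$ yields $\iota_{x,*}\colon E(*)\to E(X)$, and the assignment $x\mapsto \iota_{x,*}$ is the expected assembly map which one promotes to a natural map $\Sigma^{\infty}_{+}X\wedge E(*)\to E(X)$. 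Applying $(-)^{\mathit{lf}}$ and using the canonical transformation $E\to E^{\mathit{lf}}$ (an equivalence when $E$ is locally finite) produces the desired $(\Sigma^{\infty}_{+}\wedge E(*))^{\mathit{lf}}\to E$.

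\textbf{Equivalence on $\bF$, stage one.} First I would check the equivalence on compact (bounded) objects $X\in\bF$, i.e.\ those homotopy equivalent to finite $CW$-complexes with the maximal bornology. On such spaces local finiteness of both sides is automatic since the bornology has a maximal element, so both theories equal their un-truncated values. I then induct on the number of cells. The base case $X=*$ gives the identity by construction of $\alpha$. For the inductive step $X=Y\cup_{S^{n-1}}D^n$, weak excision applied to both $(\Sigma^{\infty}_{+}\wedge E(*))^{\mathit{lf}}$ and $E$ presents both as pushouts, homotopy invariance collapses $D^n$ to a point, and the inductive hypothesis on $Y$ and $S^{n-1}$ closes the argument.

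\textbf{Equivalence on $\bF$, stage two.} For general $X\in\bF$, I would use the cofinal family $\cB^{\prime}$ of well-behaved bounded subsets to reduce to stage one. Local finiteness of both functors gives
\[
E(X)\simeq \lim_{B\in\cB^{\prime}} E(X,X\setminus B),\qquad (\Sigma^{\infty}_{+}\wedge E(*))^{\mathit{lf}}(X)\simeq \lim_{B\in\cB^{\prime}} \Cofib\bigl(\Sigma^{\infty}_{+}(X\setminus B)\wedge E(*)\to \Sigma^{\infty}_{+}X\wedge E(*)\bigr).
\]
The cofibration-like hypotheses defining $\bF$---each $B\in\cB^{\prime}$ sits in some $B^{\prime}\in\cB^{\prime}$ via a big family $\{B\}$ whose members are homotopy equivalent to $B$ and whose complement $B^{\prime}\setminus\{B\}$ is itself big---allow weak excision to identify each relative term $E(X,X\setminus B)$ with $E(B^{\prime},B^{\prime}\setminus\{B\})$, a pair of compact objects of $\bF$ to which stage one applies. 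The same manipulation on the source side, combined with the naturality of $\alpha$, lifts these pointwise equivalences to the desired equivalence of limits.

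\textbf{Main obstacle.} The hard part is precision in the first step: constructing $\alpha$ as a genuinely natural transformation of $\Sp$-valued functors requires that the pointwise assignment $x\mapsto \iota_{x,*}$ vary continuously in $x$ so as to extend from the discrete set of points to a map of spectra, and do so compatibly with proper continuous maps in $\TopBorn$. The cleanest formulation is probably via a universal property: $\Sigma^{\infty}_{+}(-)\wedge E(*)$ should be the free homotopy-invariant, colimit-preserving functor $\TopBorn\to\Sp$ whose value on $*$ is $E(*)$, so that $\alpha$ is induced by the identity $E(*)\to E(*)$. Alternatively, one realizes $X$ as a simplicial or cellular colimit of copies of $*$ and uses homotopy invariance of $E$ to define $\alpha$ on this diagram. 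Either route requires careful bookkeeping of naturality, and is the technical heart of the proposition.
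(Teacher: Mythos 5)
Your argument for the second half of the statement (the equivalence on $\bF$) is essentially the paper's: reduce via local finiteness and the cofinal family $\cB'$ to relative terms, use weak excision and the cofibration-type hypotheses on $\bF$ to identify $E(X,X\setminus\{B\})$ with a pair of bounded spaces homotopy equivalent to finite $CW$-complexes, and settle those by excision and homotopy invariance starting from the point. That part is fine.

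The genuine gap is in the first half: you never actually construct the natural transformation, and the construction is half of what the proposition asserts. Your primary proposal --- assemble the maps $\iota_{x,*}\colon E(*)\to E(X)$ over the points $x\in X$ --- cannot work as stated: the set of points of $X$ carries no structure that lets you promote a discrete family of maps of spectra to a map out of $\Sigma^{\infty}_{+}X\wedge E(*)$; this is precisely the difficulty you flag, and flagging it does not resolve it. Your first fallback (a universal property making $\Sigma^{\infty}_{+}(-)\wedge E(*)$ the free homotopy-invariant \emph{colimit-preserving} functor with prescribed value at $*$) does not directly apply either, because $E$ is only assumed homotopy invariant, not colimit-preserving, so the universal property gives you no map to $E$. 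Your second fallback --- realize $X$ as a colimit of its simplices and use homotopy invariance --- is the correct route and is exactly what the paper does, but it has to be carried out: the paper introduces the category $\TopBorn^{\mathit{simp}}$ of pairs $(X,\sigma\colon\Delta^{n}\to X)$ with its two forgetful functors $p$ and $q$, forms the left Kan extension $E^{\%}:=p_{!}(E\circ q)$, obtains $E^{\%}\to E$ from the tautological transformation $E\circ q\to E\circ p$ given by $E(\sigma)$, and then uses homotopy invariance ($E(\Delta^{n})\simeq E(*)$) to identify $E^{\%}$ with $\lvert\mathrm{Sing}(-)\rvert\wedge E(*)\simeq\Sigma^{\infty}_{+}(-)\wedge E(*)$. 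Only after that does one apply $(-)^{\mathit{lf}}$ and the equivalence $E\simeq E^{\mathit{lf}}$. Without this (or an equivalent) construction your proof establishes neither the existence nor the naturality of the transformation, so the remainder of your argument has nothing to evaluate on $\bF$.
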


\begin{proof}
{Let $\bDelta$ denote the usual category of the finite posets $[n]$ for $n$ in $\nat$.}
We have a functor $$\Delta:\bDelta\to \TopBorn\, , \quad [n]\mapsto \Delta^{n}$$
which sends the poset $[n]$ to the $n$-dimensional topological simplex. 
The simplex has the maximal bornology since it is compact, see Example \ref{fljwlewkiou2ori23r2323}.
%

We consider the category $\TopBorn^{\simpp}$ whose objects are pairs $(X,\sigma)$ of a topological bornological space $X$ and a singular simplex $\sigma:\Delta^{n}\to X$. A morphism $(X,\sigma)\to (X^{\prime},\sigma^{\prime})$ is {a commutative} diagram
$$\xymatrix{\Delta^{n}\ar[r]^{\phi}\ar[d]^{\sigma}&\Delta^{n^{\prime}}\ar[d]^{\sigma}\\X\ar[r]^{f^{\prime}}&X^{\prime}}$$
where $f$ is a morphism in $\TopBorn$ and $\phi$ is induced by a morphism $[n]\to [n^{\prime}]$ in $\bDelta$.

We have    two forgetful functors
$$ \TopBorn\stackrel{p}{\leftarrow}\TopBorn^{\simpp}\stackrel{q}{\rightarrow} \TopBorn\, ,  \quad p(X,\sigma):=X\, , \quad q(X,\sigma):= \Delta^{n}\, .$$
We then define the functor 
$$  E^{\%}:  \TopBorn\to  \bC$$
by left-Kan extension 
 $$\xymatrix{\TopBorn^{\simpp}\ar[rr]^-{ E\circ q }\ar[dd]^{p}\ar@{::>}[dr]&& \bC \\& &\\
\TopBorn\ar@{..>}@/_1pc/[uurr]_-{E^{\%}}}
\, .$$ The objectwise formula for Kan extensions  gives 
$$    E^{\%}(X):=\colim_{ (\Delta^{n}\to X)}  E(\Delta^{n})  \, .$$
We have a natural transformation
\begin{equation}\label{ewfijiojfoi23r23r2r32r}
E\circ q\to E\circ p\, , \quad (X,\sigma)\mapsto E(\sigma):E(\Delta^{n})\to E(X)\, .
\end{equation} 
The universal property of the Kan extension provides an equivalence of mapping spaces in functor categories
$$\Map(E\circ q,E\circ p)\simeq \Map(E^{\%},E)\, .$$
Consequently, the transformation \eqref{ewfijiojfoi23r23r2r32r} provides a natural transformation
\begin{equation}\label{fwbgg3golkewi23ri23r23r23r}
E^{\%}\to E\, . 
\end{equation} 
 We use now that $E$ is homotopy invariant.
The projection $\Delta^{n}\to *$ is a homotopy equivalence  in $\TopBorn$ for every $n$ in $\nat$.
We get an equivalence of functors $\TopBorn^{\simpp}\to \bC$
\begin{equation}\label{elwfkjwklfio3t34t34t43t3}
E\circ q\stackrel{\simeq}{\to} \const(E(*))\, . 
\end{equation}  
We now use 
 the definition \eqref{frknfkejnfkjfrefwefwerfwref} of $\Sigma^{\infty,\topp}_{+}$ and 
 the well-know facts  that
$$\colim_{\Top^{simp}/X} \ell(\sing(*))\simeq \ell(\sing(X))$$ in $\Spc$
(where $\Top^{simp}$ similar as $\TopBorn^{simp}$ is the category of pairs $(X,\sigma)$ of topological spaces $X$ with a singular simplex)), and that $\Sigma^{\infty}_{+}$ preserves colimits.  By  the point-wise formula   
the left-Kan extension of $ \const(E(*))$ can be identified with the functor
$$X\mapsto     E(*)\wedge \Sigma^{\infty}_{+} \ell(\sing(X)) \simeq   E(*) \wedge \Sigma^{\infty,\topp}_{+}X \, .$$
The left Kan extension of  the equivalence  \eqref{elwfkjwklfio3t34t34t43t3}  therefore yields an equivalence $$ E^{\%}    \stackrel{\simeq}{\to}   E(*)  \wedge   \Sigma^{\infty}_{+} \, .$$ 
The composition of the inverse of this equivalence with \eqref{fwbgg3golkewi23ri23r23r23r} yields the asserted transformation
$$ E(*)   \wedge  \Sigma^{\infty}_{+} \to E\, .$$
We now use that $E$ is   locally finite. We apply the $(-)^{\lf}$-construction and use  Lemma \ref{fjwelkfwjoii4234} for the second equivalence in order to get the transformation between locally finite homology theories
\begin{equation}\label{hwefwhieowfioewf234534555345}
(    E(*)\wedge  \Sigma^{\infty}_{+} )^{\lf}\to E^{\lf}\simeq E\, .
\end{equation}
If one evaluates this transformation on the one-point space, then it induces an equivalence.
 By excision and homotopy invariance we get an equivalence on all bounded spaces which are homotopy equivalent   to finite $CW$-complexes.
 
Assume now that $X$  belongs to $\bF$. After replacing $X$ by a homotopy equivalent space we can assume that it admits a cofinal subset  $\cB'$ of its bornology satisfying   the assumptions stated above.
  Every member $B$ of $ \cB^{\prime}$ is homotopy equivalent to a finite $CW$-complex. 
   Furthermore there exists $B^{\prime}$ in $\cB^{\prime}$ such that
 $( X\setminus \{B\}, \{B^{\prime}\})$ is  a decomposition of $X$ into two big familes.
By excision we get a natural equivalence  \begin{eqnarray*}
 ( \Sigma^{\infty}_{+} \wedge E(*))^{\lf}(X, X\setminus \{B \}) &\simeq &
( \Sigma^{\infty}_{+} \wedge E(*))^{\lf}( \{B^{\prime}\},  \{B^{\prime}\}\setminus \{B \})\\&\simeq& E ( \{B^{\prime}\},  \{B^{\prime}\}\setminus \{B \})\\&\simeq& E (X, X\setminus \{B \})\, .
 \end{eqnarray*}
 The  two middle terms involve a double colimit $\colim_{\{B\}}\colim_{\{B'\}}$.
 
 In the middle equivalence we use that the natural transformation \eqref{hwefwhieowfioewf234534555345} induces an equivalence on spaces which are bounded and homotopy equivalent to finite $CW$-comlexes.
 
 If we now take the limit over $B$ in $\cB^{\prime}$ and  use the cofinality of $\cB'$ in $\cB$ and  the  local finiteness of $E$, then we get
 $$( \Sigma^{\infty}_{+} \wedge E(*))^{\lf}(X)\stackrel{\simeq}{\to} E(X)\, .$$
which is the desired equivalence. 
\end{proof}

%
%
%
%

\subsection{{Coarsification of locally finite theories}}
\label{jhksdf2323r}

In this section we explain the construction of coarsifying a locally finite homology theory in the sense of Definition \ref{rewfiewrjgoiegergreefwerfw}. This construction is a generalization of the construction 
of the coarse stable homotopy theory $Q$ in Definition \ref{efifjweiof89247u23553435534543}.

We refer to \cite{ass} for a similar construction whose input is a local homology theory on the category of uniform bornological coarse spaces.


Let $X$ be a bornological coarse space. If $U$ is an entourage of $X$, then we can consider the space of controlled probability measures $P_{U}(X)$ defined in  \eqref{gegljio3tuio38tut3t3t34} as a topological bornological space with the bornology generated by the subsets $P_{U}(B)$ for all bounded subsets $B$ of $X$. The arguments given in the proof of Lemma \ref{lefjefewjfieofeifw24}
 applied to the big family $\cB$ show that the bornology and the topology of $P_{U}(X)$ are compatible in the sense of Definition~\ref{fewilfjweiofoi4325345345}. 
From now we will consider $P_{U}(X)$ as an object of $\TopBorn$. 

Let $\BC^{\cC}$ be as in Remark \ref{fjwefoijewojf923ur9u23refewfwefewf} the category of pairs $(X,U)$ of a bornological coarse space $X$ and an entourage $U$ of $X$. We define the functor
$$\cP:\BC^{\cC}\to \TopBorn\, , \quad \cP(X,U):=P_{U}(X)\, .$$
On the level of morphisms $\cP$ is defined in the obvious way in terms of the push-forward of measures.
 
Let $\bC$ be a {cocomplete} stable $\infty$-category,  and  let $E:\TopBorn\to {\bC}$ be a functor. 
\begin{ddd}\label{wefiwiofuwe987u982523453453}
 We  define  the functor\index{$Q$}
$$  QE:\BC\to {\bC}$$ by left Kan-extension
$$\xymatrix{\BC^{\cC}\ar[rr]^-{E\circ \cP }\ar[dd]\ar@{::>}[dr]&&{\bC}\\
& &\\
\BC\ar@{..>}@/_1pc/[uurr]_-{  QE}
}\, .$$
We call $QE$ the coarsification of $E$.
\end{ddd}

\begin{rem}\label{eifewfioeu9ewu982742345345}
The evaluation of $QE$ on a bornological coarse space  $X$ with coarse structure $\cC$  is given by
\begin{equation}
\label{knj23123}
QE(X)\simeq \colim_{U\in \cC} E(P_{U}(X))\, .
\end{equation}
{This expression  follows} from the object-wise formula for the left Kan extension.
\hB
\end{rem}

Let $\bC$ be a complete and cocomplete stable $\infty$-category, and let $E\colon\TopBorn\to \bC$ be a functor.

\begin{prop}\label{cegiojiojergergeg}\mbox{}
\begin{enumerate}
\item \label{ergiooerfewrfrefwferf}
If  $E$ is a  locally finite homology theory, then $QE$ is a coarse homology theory.
\item \label{gfsdggsgsgfd} If in $\bC$ filtered colimits distribute over products\index{colimits distribute over products}\index{distribute!colimits over products} (Definition \ref{wekrgergfvsfgsgfdg}), then  $QE$ is strongly additive.
\item \label{wegioewfwrefweferfwref}
If $E$ preserves  coproducts, then so does $QE$.
\end{enumerate}
\end{prop}

\begin{proof}
The proof of \ref{ergiooerfewrfrefwferf}  is very similar to the proof of Theorem \ref{efwifjwifou23984u23984u3294823424234}.
  
Assume that  $E$ is a locally finite homology theory.  
We must verify  that $QE$ satisfies the four conditions listed in Definition \ref{rgljogreggregrege} (with $QE$ in place of $E$).

In order to see that $  QE$ is coarsely invariant we use that $E$ is homotopy invariant and then argue as in the proof of Lemma \ref{fewkljlwefewfewfewfwfwef}.

Let $X$ be in $\BC$, and let 
   $(Z,\cY)$ be a complementary pair on $X$. Let furthermore   $U$  be an entourage of $X$.  
 We have shown in the proof of Lemma \ref{lefjefewjfieofeifw24} that
$(P_{U}(\{Z\}),P_{U}(\cY))$ is a decomposition of the topological bornological space $P_{U}(X)$ into two big families.
We now use weak excision of $E$ and form the colimit of the resulting push-out diagrams over the coarse entourages $U $ of $X$
in order to conclude  that $$\xymatrix{  QE(\{Z\}\cap \cY)\ar[r]\ar[d]&  QE(\{Z\})\ar[d]\\  QE(\cY)\ar[r]&  QE(X)}$$
is a push-out diagram. We finally use that $  QE$ is coarsely invariant in order to replace $\{Z\}$ by $Z$.
  
Assume that $X$ is flasque and that flasqueness of $X$ is implemented by $f:X\to X$.
As in the proof of Proposition \ref{ergkjwergrereferfwrfwerfw}.\ref{ewrgijewroigergrefwreferf}
we can find a cofinal set 
  set of entourages $U$ of $X$ such that $(f\times f)(U)\subseteq U$ and $(\id_{X}\times f)(\diag_{X})\subseteq U$. For such $U$
 the map $P_{U}(f):P_{U}(X)\to P_{U}(X)$ implements flasqueness of $P_{U}(X)$ as a topological bornological space. We now use Lemma \ref{efewoifeuwfoiewufoiew89u98234234324} and that $E$ is homotopy invariant and locally finite in order to see that $E(P_{U}(X))\simeq 0$ for  those $U$. By taking the colimit over these entourages we conclude that $QE(X)\simeq 0$.
  
Finally, $QE$ is $u$-continuous by definition. {This finishes the argument that $QE$ is a coarse homology theory.}

We now show \ref{gfsdggsgsgfd}.  Let $(X_i)_{i \in I}$ be a family of bornological coarse spaces and set $X := \bigsqcup_{i \in I}^{\free} X_i$. Using \eqref{knj23123} and that $P_U(X) \cong \bigsqcup_{i \in I}^{\free} P_{U_i} (X_i)$ we get
\[QE(X) \simeq \colim_{U \in \cC} E \big(\bigsqcup_{i \in I}^{\free} P_{U_i} (X_i)\big)\, ,\]
where we set $U_i := U\cap (X_i \times X_i)$. Note that contrary to \eqref{njdsfui232}, where we were working   with topological spaces, here we have to write   the free union  in order to take  the bornology   into account  properly (which is not the bornology of the coproduct in this case). Since $E$ is additive  by Lemma~\ref{tgiortgergfewferfw}, we get
\[\colim_{U \in \cC} E \big(\bigsqcup_{i \in I}^{\free} P_{U_i} (X_i)\big) \simeq \colim_{U \in \cC} \prod_{i \in I} E (P_{U_i} (X_i))\, .\]
We finish the argument by interchanging the colimit with the product (see the end of the proof of Lemma \ref{jnksdui2332} for a similar argument.) At this point we use the additional assumption on $\bC$.

We now show \ref{wegioewfwrefweferfwref}. We assume that   $E$ preserves coproducts.  Let $(X_i)_{i \in I}$ be a family of bornological coarse spaces and set $X := \coprod_{i \in I} X_i$. We now have an isomorphism $P_U(X) \cong \coprod_{i \in I} P_{U_i} (X_i)$ (the coproduct is understood in topological bornological spaces) and therefore the following chain of equivalences
\[QE(X) \simeq \colim_{U \in \cC} \bigoplus_{i \in I} E(P_{U_i}(X_i)) \simeq \bigoplus_{i \in I} \colim_{U \in \cC} E(P_{U_i}(X_i)) \simeq \bigoplus_{i \in I} QE(X_i)\, .\]
This completes the proof of Proposition \ref{cegiojiojergergeg}.
\end{proof}
  
\begin{ex}\label{wklqdjqwkldjwqdjoiu3e232344324}
Let $\bC$ be a complete and cocomplete stable $\infty$-category, and let $C$ be an object of $\bC$.
 Then we can construct the   locally finite homology theory $(C\wedge \Sigma_{+}^{\infty,\topp})^{\lf}$ as in
  Example \ref{weoifujweoifowe234234345}. 
  
  By Proposition \ref{cegiojiojergergeg} we obtain  to obtain a  coarse homology theory $$Q(C\wedge \Sigma_{+}^{\infty,\topp})^{\lf}:\BC\to \bC\, .$$  
  If $\bC$ has the property that filtered colimits distribute over products\index{colimits distribute over products}\index{distribute!colimits over products},  then  $Q(C\wedge \Sigma_{+}^{\infty,\topp})^{\lf}$ is strongly additive.
 
 We have $$Q(C\wedge \Sigma_{+}^{\infty,\topp})^{\lf}(*)\simeq C\, .$$

  Note that $C\wedge Q$ (Corollary \ref{goirjgio334t43gregergrege}) is  also  coarse homology theory with the   value $C$ on the one-point space, but this coarse homology theory is   in general  not  even  additive.
  
  We always 
  have a natural transformation  \begin{equation}\label{weqfoihqiowefqewfewdqd}
C\wedge Q\to QC(\wedge \Sigma_{+}^{\infty,\topp})^{\lf}
\end{equation}
 of coarse homology theories. On the other hand, if the functor $C\wedge -$ preserves  limits, then the transformation \eqref{weqfoihqiowefqewfewdqd} is an equivalence, 
 see Example \ref{hhfweweiufufwefwefwef}.
\hB
\end{ex}

%
%


\subsection{{Analytic locally finite \texorpdfstring{$K$}{K}-homology}}
\label{kjsdf09232fds}

\subsubsection{Extending functors from locally compact spaces to \texorpdfstring{$\TopBorn$}{TopBorn}}\label{wergoiwergwergerfwfrwfwefwerf}

In order to deal with analytic locally finite $K$-homology we consider the full subcategory $\Top^{\lc}$ of $ \TopBorn$\index{$\Top^{\lc}$} of separable, locally compact spaces with the bornology of relatively compact subsets.  By definition,  morphisms in  $\Top^{\lc}$ are proper continuous maps.
{Analytic} locally finite $K$-homology is initially defined on $\Top^{\lc}$. So we must    extend this functor to 
$ \TopBorn$ preserving good properties. We first discuss such extensions in general and then apply the theory to $K$-homology.

In the following it is useful to remember the Examples 
  \ref{eigoffewerevfdsvsdfvsdfv} and   \ref{fljwlewkiou2ori23r2323}.  
 
Let $X$ be a topological bornological space, and  let  $C$  be a subset of $X$.
\begin{ddd}
The subset $C$ is $X$-locally compact\footnote{We omit saying ``separable'' in order to shorten the text.} if $C$ with\index{$X$-locally compact subset}\index{subset!$X$-locally compact} the induced topology  is separable and locally compact and the induced bornology is given by the relatively compact subsets of~$C$.
  \end{ddd}
 We let $\Loc(X)$\index{$\Loc(-)$} denote the poset of locally compact subsets of $X$.

  Let $X$ be a topological bornological space,  and let  $C$ be a subset of $X$.
 \begin{lem}\label{fewlfjweoifewfwewr435345} Assume:
 \begin{enumerate}
 \item $X$ is Hausdorff.
 \item  $C$ is $X$-locally compact.
 \end{enumerate} 
 Then $C$ is a closed subset of $X$.   \end{lem}

\begin{proof}
We show that $X\setminus C$ is open. To this end we consider a point $x$ in $X \setminus C$. Let $\cJ$ be the family of closed and bounded neighbourhoods of $x$ in $X$.
 Since $\{x\}$ is bounded such neighbourhoods exist by the compatibility of the  bornology with the topology of $X$ (see Definition \ref{fewilfjweiofoi4325345345}).
For every $W$ in $\cJ$ the intersection $W\cap C$ is   closed {in $C$} and relatively compact {in $C$}, therefore it is compact. We claim that  $\bigcap_{W\in \cJ} (W\cap C)=\emptyset$. Indeed, if $\bigcap_{W\in \cJ} (W\cap C)\not=\emptyset$, then because of $\bigcap_{W\in \cJ} W=\{x\}$ (since $X$ is Hausdorff by assumption)  we would have $x\in C$. By the claim and the compactness of the intersections $W\cap C$
 there exists $W$ in $\cJ$ such that $W\cap C=\emptyset$. Hence $W$ is a (closed) neighbourhood of $x$ contained in $X\setminus C$.
\end{proof}

If $C$ is a $X$-locally compact subset of $X$, then $C$ is Hausdorff.  
 A closed subset of a locally compact subset $C$ of $X$ is therefore again $X$-locally compact.
 By Lemma \ref{fewlfjweoifewfwewr435345} 
the $X$-locally compact subsets of $X$ contained in the $X$-locally compact subset $C$ are  precisely the closed subsets of $C$.

{Let $\bC$ be a stable $\infty$-category, and let} $E:\Top^{\lc}\to {\bC}$ be a functor. Then we can say that $E$ is countably additive, locally finite,  homotopy invariant or satisfies closed descent by interpreting the  corresponding definitions made for $\TopBorn$ in the obvious way.  For the first two properties we must assume that $\bC$ admits countable products.

\begin{rem} 
Since $\Top^{\lc}$ only has countable {free unions} (because of the separability assumption) it does not make sense to consider additivity for larger families.

Furthermore, by Lemma \ref{fewlfjweoifewfwewr435345} we are forced to consider closed descent. The condition of 
open descent could not even be formulated since open subsets of a locally compact space in general do not belong to $\Top^{\lc}$, see Example \ref{eigoffewerevfdsvsdfvsdfv}.

 The interpretation of the local finiteness condition is as
 $$\lim_{B\in \cB\cap \cS} E(C\setminus B)\simeq 0\, .$$
We must restrict the limit to open, relatively compact subsets
since then $C\setminus B$   is a closed subset of $C$ and therefore also locally compact.
Since a separable, locally compact space is $\sigma$-compact, the intersection $ \cB\cap \cS$ contains a countable cofinal subfamily.
\hB
\end{rem}

We now discuss the  left Kan extension of functors defined on $\Top^{\lc}$  to $\TopBorn$. 
{{Let $\bC$ be a   cocomplete stable $\infty$-category, and let} $E:\Top^{\lc}\to {\bC}$ be a functor.}
\begin{ddd}\label{weirghiwergregwegwergwerg9}
We define\index{$L(-)$}
$$L(E):\TopBorn\to {\bC}\, , \quad L(E)(X):=\colim_{C \in \Loc(X)} E(C)\, .$$
\end{ddd}

\begin{rem}
Using the Kan extension technique we can turn the above description of the functor on objects into a proper definition of a functor. We consider the category $\TopBorn^{\Loc}$ of pairs $(X,C)$, where $X$ is a topological bornological space and $C$ is an element of $\Loc(X)$.
 A morphism $f:(X,C)\to (X^{\prime},C^{\prime})$ is a morphism $f:X\to X^{\prime}$ of topological bornological spaces such that $f(C)\subseteq C^{\prime}$.
We have the  forgetful functors $$\TopBorn^{\Loc}\to \TopBorn\, ,  (X,C)\mapsto X\, ,\quad p:\TopBorn^{\Loc}\to \Top^{\lc}\, ,  (X,C)\mapsto C\, . $$
We then  define $L(E)$ as left Kan extension
$$\xymatrix{\TopBorn^{\Loc}\ar[rr]^-{ E\circ p }\ar[dd]\ar@{::>}[dr]&&{\bC}\\
& &\\
\TopBorn\ar@{..>}@/_1pc/[uurr]_-{L(E)}
}\, .$$ \hB
\end{rem}

If $C$ is a separable locally compact space, then $C$ is final in the poset $ \Loc(C)$. This implies 
that  \begin{equation}\label{uwfghwejk7823z8r23ir23wf}
 L(E)_{|\Top^{\lc}}\simeq E\, . 
\end{equation} 

\begin{lem}\label{wefoiuwef982345u25355}
If $E$ is homotopy invariant, then so is $L(E)$.
\end{lem}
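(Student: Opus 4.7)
The plan is to reduce the statement to a cofinality argument combined with the homotopy invariance of $E$. Concretely, I would introduce the functor $F\colon \Loc(X) \to \Loc([0,1] \times X)$ sending $C \mapsto [0,1] \times C$, and show that it is well-defined and cofinal. Well-definedness amounts to checking that $[0,1] \times C \in \Loc([0,1] \times X)$ for $C \in \Loc(X)$: local compactness and separability are preserved by the product with the compact $[0,1]$, and compactness of $[0,1]$ together with $C \in \Loc(X)$ lets one identify the induced bornology on $[0,1] \times C$ with its relatively compact subsets (a subset $A \subseteq [0,1]\times C$ is contained in some $[0,1]\times B'$ with $B'$ relatively compact in $C$ iff $\pi_2(\bar A)$ is compact in $C$).

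The key step is cofinality of $F$. Given $D \in \Loc([0,1] \times X)$, I claim $\pi(D) \in \Loc(X)$, where $\pi\colon [0,1] \times X \to X$ is the projection; then $D \subseteq [0,1] \times \pi(D) = F(\pi(D))$ exhibits an initial object of the comma category $D/F$, which is therefore nonempty and connected (any other $(C, D \subseteq [0,1]\times C)$ sits above $(\pi(D), D \subseteq [0,1]\times\pi(D))$). To verify $\pi(D) \in \Loc(X)$: closedness is automatic since $[0,1]$ is compact and hence $\pi$ is a closed map, and separability transfers from $D$. For local compactness at $x \in \pi(D)$, the fibre $D_x := D \cap ([0,1] \times \{x\})$ is a compact subset of $D$ (being closed in the compact slice $[0,1]\times\{x\}$); cover $D_x$ by finitely many open subsets $V_i$ of $D$ with compact closures, put $V := \bigcup_i V_i$ and $K := \overline V^D$, and write $V = W \cap D$ for $W$ open in $[0,1] \times X$. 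Then $\pi(D \setminus W)$ is closed in $X$ and avoids $x$, so there exists an open neighbourhood $U$ of $x$ with $\pi^{-1}(U) \cap D \subseteq K$; consequently $U \cap \pi(D) = \pi(\pi^{-1}(U)\cap D)$ has compact closure $\pi(K)\cap\pi(D)\subseteq\pi(K)$ in $\pi(D)$. For the bornology, if $A \subseteq \pi(D)$ is contained in a bounded $B \subseteq X$, then $D \cap ([0,1] \times B)$ is bounded in $[0,1]\times X$ and therefore relatively compact in $D$ by $D \in \Loc([0,1]\times X)$; thus $A \subseteq \pi(D\cap([0,1]\times B))$ has compact closure in $\pi(D)$. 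The reverse inclusion, that relatively compact subsets of $\pi(D)$ are bounded in $X$, follows from Example~\ref{fljwlewkiou2ori23r2323}.

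Given cofinality, the proof concludes with the chain
\[ L(E)([0,1] \times X) = \colim_{D \in \Loc([0,1]\times X)} E(D) \simeq \colim_{C \in \Loc(X)} E([0,1] \times C) \simeq \colim_{C \in \Loc(X)} E(C) = L(E)(X), \]
the first equivalence by cofinality of $F$ and the second by homotopy invariance of $E$ applied objectwise to the projections $[0,1] \times C \to C$, natural in $C$. An unwinding of the construction of $L(E)$ on morphisms shows that this composite equivalence agrees with $L(E)(\pi)$: on the component $E(D)$ it is $E$ applied to the composition $D \hookrightarrow [0,1] \times \pi(D) \to \pi(D)$, which is precisely the morphism $\pi|_D$ in $\Top^{lc,sep}$ used to define $L(E)(\pi)$ on this component.

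The main obstacle is verifying $\pi(D) \in \Loc(X)$; a priori the induced bornology from $X$ could exceed the relatively compact subsets, but the combination of a tube-lemma-type argument and the local compactness built into $D \in \Loc([0,1]\times X)$ prevents this from happening.
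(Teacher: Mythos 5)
Your proposal is correct and follows essentially the same route as the paper, whose entire proof is the one-sentence assertion that the subsets $[0,1]\times C$ for $C\in\Loc(X)$ are cofinal among the locally compact subsets of $[0,1]\times X$. You simply supply the verification (via $\pi(D)\in\Loc(X)$ and the tube-lemma argument) that the paper leaves implicit.
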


\begin{proof}
The subsets $[0,1]\times C$ of $[0,1]\times X$ for $C$ in $\Loc(X)$ are cofinal in  $\Loc([0,1]\times X)$.
\end{proof}

\begin{lem}\label{fwifjweoifwefwefwef}
If $E$ satisfies  closed  excision, then so does $L(E)$.
\end{lem}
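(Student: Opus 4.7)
The plan is to reduce closed excision for $L(E)$ to closed excision for $E$ applied to each locally compact subset of $X$, by exchanging the defining colimit with the pushout.

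First I would record two preliminary observations. (1) If $Y\subseteq X$ is closed and $D\in \Loc(Y)$, then $D\in \Loc(X)$: the topology and bornology on $D$ induced from $Y$ agree with those induced from $X$ (since $Y$ carries the structures induced from $X$), so separability, local compactness, and the identification of the bornology with the relatively compact subsets of $D$ all survive the change of ambient space. (2) The poset $\Loc(X)$ is filtered, since it is closed under finite unions: a finite union of closed separable locally compact subsets is again closed, separable, and locally compact, and relative compactness of subsets is compatible with the ambient bornology of $X$ because compact subsets of $X$ are bounded (Example~\ref{fljwlewkiou2ori23r2323}).

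Now let $(Y,Z)$ be a closed decomposition of $X$. For any $C\in \Loc(X)$, $C\cap Y$ and $C\cap Z$ are closed in $C$, hence closed separable locally compact subsets with the induced bornology, and $C=(C\cap Y)\cup(C\cap Z)$. Thus $(C\cap Y,C\cap Z)$ is a closed decomposition of $C\in \Top^{lc,sep}$, and by closed excision for $E$ combined with \eqref{uwfghwejk7823z8r23ir23wf} the square
$$\xymatrix{E(C\cap Y\cap Z)\ar[r]\ar[d]&E(C\cap Y)\ar[d]\\E(C\cap Z)\ar[r]&E(C)}$$
is cocartesian in $\Sp$. I would then take the colimit of this square over $C\in \Loc(X)$. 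Because $\Loc(X)$ is filtered and $\Sp$ is a stable presentable $\infty$-category (where filtered colimits commute with finite limits, hence with pushouts), the colimit is again cocartesian, and the bottom right corner is $L(E)(X)$ by definition.

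It remains to identify the other three corners as $L(E)(Y)$, $L(E)(Z)$ and $L(E)(Y\cap Z)$, naturally in the inclusions. I would verify this for $Y$; the other cases are identical. The assignment $C\mapsto C\cap Y$ defines a functor $F_Y:\Loc(X)\to \Loc(Y)$, and by observation~(1) above there is an inclusion $\iota_Y:\Loc(Y)\to \Loc(X)$ with $F_Y\circ \iota_Y=\id_{\Loc(Y)}$. In particular, for every $D\in \Loc(Y)$ the object $\iota_Y(D)=D$ of $\Loc(X)$ satisfies $D\subseteq F_Y(D)$, which together with the filteredness of both posets shows that $F_Y$ is cofinal. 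Hence
$$\colim_{C\in \Loc(X)} E(C\cap Y)\;\simeq\;\colim_{D\in \Loc(Y)} E(D)\;=\;L(E)(Y),$$
and this equivalence is compatible with the map to $L(E)(X)$ induced by $Y\hookrightarrow X$. Substituting these identifications into the cocartesian square above yields closed excision for $L(E)$.

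The main obstacle is the cofinality argument, which is where both the closedness of $Y$ and $Z$ and the specific definition of $\Loc$ (bornology equal to relatively compact subsets) are used; open excision would require an analogous cofinality statement for open subsets that does not appear to hold, which is consistent with the fact that the lemma is stated only for closed excision.
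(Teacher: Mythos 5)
Your proof is correct and follows the same route as the paper: restrict the closed decomposition to each $C\in\Loc(X)$, apply closed excision for $E$ to $(C\cap Y, C\cap Z)$, and pass to the colimit of pushout squares (which is a pushout because colimits commute with colimits — filteredness and stability are not actually needed for this step). Your explicit cofinality argument identifying $\colim_{C\in\Loc(X)}E(C\cap Y)$ with $L(E)(Y)$ fills in a step the paper leaves implicit, and it is valid.
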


\begin{proof}
Here we use that for  $C$ in $\Loc(X)$   a closed decomposition of $X$ induces a closed decomposition of $C$. We thus get a colimit of push-out diagrams which is again a push-out diagram \end{proof}

\begin{lem}\label{jsdiu2}
{If $E$ satisfies closed excision, then $L(E)$ preserves all coproducts.}
\end{lem}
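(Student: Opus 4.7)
Let $X = \coprod_{i \in I} X_i$ in $\TopBorn$. The strategy is to describe $\Loc(X)$ in terms of the $\Loc(X_i)$, split the value of $E$ on a locally compact subset using closed excision, and then commute colimits with direct sums.

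The geometric heart of the argument is that every $C \in \Loc(X)$ meets only finitely many components $X_i$. Suppose for contradiction that $C$ intersects $X_i$ for $i$ ranging over an infinite set $K \subseteq I$, and pick $p_i \in C \cap X_i$ for each $i\in K$. By the description of the coproduct bornology in $\TopBorn$, the set $B := \{p_i\}_{i \in K}$ is bounded in $X$, since $B \cap X_i \in \{\emptyset, \{p_i\}\}$ is bounded in $X_i$ for every $i$. However, each $X_i \subseteq X$ is clopen, so the $C \cap X_i$ are clopen in $C$, and $B$ is an infinite closed discrete subset of the Hausdorff space $C$, hence not relatively compact. This contradicts the compatibility requirement that the bornology on $C$ induced from $X$ coincides with the family of relatively compact subsets of $C$. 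Thus $J(C) := \{i : C \cap X_i \neq \emptyset\}$ is finite and $C = \bigsqcup_{i \in J(C)} C_i$ with $C_i := C \cap X_i \in \Loc(X_i)$ (each $C_i$ is closed in $C$, being clopen). Iterated closed excision on this finite disjoint decomposition, together with the standard vanishing $E(\emptyset) \simeq 0$, yields
\[E(C) \simeq \bigoplus_{i \in J(C)} E(C_i).\]

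Let $\Loc(X_i)^+ := \Loc(X_i) \sqcup \{\emptyset\}$ with $\emptyset$ adjoined as an initial object, and extend $E$ by $E(\emptyset) := 0$. Each $\Loc(X_i)$ is directed, since the union $C_1 \cup C_2$ of two locally compact subsets of $X_i$ is locally compact (one checks closedness, local compactness, and the bornology condition directly); hence $\Loc(X_i)^+$ is filtered, and the inclusion $\Loc(X_i) \hookrightarrow \Loc(X_i)^+$ is cofinal whenever $X_i \neq \emptyset$ (empty $X_i$ may be discarded without loss of generality). The assignment $C \mapsto (C \cap X_i)_{i \in I}$ identifies $\Loc(X)$ with the filtered category $\colim_{J \subseteq I \text{ finite}} \prod_{i \in J} \Loc(X_i)^+$ of finitely-supported tuples. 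Assembling these ingredients,
\begin{align*}
L(E)(X) &\simeq \colim_{J} \colim_{(C_i)_{i \in J}} \bigoplus_{i \in J} E(C_i) \\
&\simeq \colim_{J} \bigoplus_{i \in J} \colim_{C_i \in \Loc(X_i)^+} E(C_i) \\
&\simeq \colim_{J} \bigoplus_{i \in J} L(E)(X_i) \simeq \bigoplus_{i \in I} L(E)(X_i),
\end{align*}
where in the second line one uses that colimits commute with coproducts (and coproducts are direct sums in spectra), and in the third line cofinality of $\Loc(X_i) \hookrightarrow \Loc(X_i)^+$. A final verification that this composite equivalence agrees with the canonical morphism $\bigoplus_i L(E)(X_i) \to L(E)(X)$ induced by the inclusions $X_i \hookrightarrow X$ completes the proof.

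The main obstacle is the support-finiteness claim in the second paragraph: this is precisely where the specific form of the $\TopBorn$-coproduct bornology is essential. Everything else is a formal (co)limit manipulation in a stable $\infty$-category.
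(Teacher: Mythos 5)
Your proposal is correct and follows essentially the same route as the paper: the key observation that a locally compact subset of the coproduct meets only finitely many components (forced by the coproduct bornology versus relative compactness), followed by closed excision on the resulting finite disjoint decomposition and an interchange of colimits with direct sums. You merely spell out in more detail the finiteness argument and the reindexing of $\Loc(X)$ that the paper leaves implicit.
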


\begin{proof}
Let $X := \coprod_{i \in I} X_i$ be the coproduct of the family $(X_{i})_{i\in I}$ in the category $\TopBorn$ and let $C$ be in $\Loc(X)$.  Since the induced bornology on $C$ is the bornology of relatively compact subsets the set $I_C :=\{i \in I : C \cap X_i \not= \emptyset\}$ is finite. Furthermore, for every $i$ in $I$ the space $C_i := C \cap X_i$  belongs to $\Loc(X)$.  Since $E$ satisfies closed excision it commutes with finite coproducts. Therefore we get the chain of equivalences:
\begin{align*}
L(E)(X) & \simeq \colim_{C \in \Loc(X)} E(C)\\
& = \colim_{C \in \Loc(X)} E \big( \coprod_{i \in I_C} C_i \big)\\
& \simeq \colim_{C \in \Loc(X)} \bigoplus_{i \in I_C} E(C_i)\\
& \simeq \bigoplus_{i \in I} \colim_{C_i \in \Loc(X_i)} E(C_i)\\
& \simeq \bigoplus_{i \in I} L(E)(X_i)
\end{align*}
which finishes the proof.
\end{proof}
 
Let $\bC$ be a complete and cocomplete stable $\infty$-category, and $E\colon \Top^{\lc}\to {\bC}$ be a functor.

\begin{prop}\label{wetigjowerfrwefwerf}\mbox{}
\begin{enumerate}
\item \label{ewriutghwieurgrwefrfr}
If $E $ is homotopy invariant and satisfies closed excision, then
$L(E)^{\lf}\colon\TopBorn\to \bC$ is a locally finite homology theory.
\item  \label{ewriutghwieurgrwefrfr1} If in $\bC$ filtered limits distribute over sums\index{limits distribute over sums}\index{distribute!limits over sums}, then $ L(E)^{\lf}$ preserves coproducts.
\end{enumerate}
\end{prop}

\begin{proof} We show Assertion \ref{ewriutghwieurgrwefrfr}.
By Lemmas \ref{fwifjweoifwefwefwef} and  \ref{wefoiuwef982345u25355} the functor $L(E)$ satisfies
closed excision and is homotopy invariant. By Proposition \ref{prop:sfd89230} the functor $L(E)^{\lf}$
is a  locally finite homology theory.

In order to show Assertion  \ref{ewriutghwieurgrwefrfr1} we combine the 
  Lemmas \ref{jsdiu2} and    \ref{fwl0943n4234}.
\end{proof}

Note that $L(E)^{\lf}$ satisfies closed excision.

The following lemma ensures that   $L(E)^{\lf} $ really extends $E$ if it was locally finite.

\begin{lem}\label{ilfjiweofuwei9u9435345345}
If $E$ is locally finite, then  $L(E)^{\lf}_{|\Top^{\lc}}\simeq E$.
\end{lem}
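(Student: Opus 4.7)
The plan is to identify $L(E)^{\mathit{lf}}(C) = \lim_{B\in \cB_C} L(E)(C,C\setminus B)$ with $E(C)$ for every $C\in \Top^{lc,sep}$, by reducing the limit over bounded subsets to one where the complement is still a member of $\Top^{lc,sep}$, and then invoking the hypothesis that $E$ is locally finite.

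First, I would use cofinality to restrict the limit defining $L(E)^{\mathit{lf}}(C)$ to the subfamily $\cB_C\cap \cT_C$ of open relatively compact subsets $B$, which is cofinal in $\cB_C$ by the compatibility axiom of Definition~\ref{fewilfjweiofoi4325345345}. For such a $B$, the complement $C\setminus B$ is closed in $C$, hence separable and locally compact in the induced topology. The key observation here is that because $C\setminus B$ is closed in $C$, a subset $A\subseteq C\setminus B$ has compact closure in $C$ if and only if it has compact closure in $C\setminus B$; thus the bornology induced from $C$ on $C\setminus B$ agrees with its intrinsic bornology of relatively compact subsets. This shows that $C\setminus B$ is itself an object of $\Top^{lc,sep}$ and is a locally compact subset of $C$ in the sense of our definition.

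Next, I would apply the equivalence $L(E)_{|\Top^{lc,sep}}\simeq E$ of \eqref{uwfghwejk7823z8r23ir23wf} to both $C$ and $C\setminus B$, yielding a natural equivalence $L(E)(C,C\setminus B)\simeq E(C,C\setminus B)$. Passing to the limit over $B\in \cB_C\cap \cT_C$ then gives
$$L(E)^{\mathit{lf}}(C)\simeq \lim_{B\in \cB_C\cap \cT_C} E(C,C\setminus B)\ ,$$
and the assumption that $E$ is locally finite on $\Top^{lc,sep}$ finally identifies the right-hand side with $E(C)$. Naturality in $C$ is immediate from the construction.

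The only non-formal step is the verification that for $B$ open relatively compact in $C$, the subspace $C\setminus B$ with its induced structures really belongs to $\Top^{lc,sep}$ so that \eqref{uwfghwejk7823z8r23ir23wf} may be invoked; this is the main obstacle and is handled by the closedness argument above. Everything else reduces to a rearrangement of cofinal index categories.
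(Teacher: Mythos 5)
Your proof is correct and follows essentially the same route as the paper's: restrict the limit to open relatively compact $B$ by compatibility, observe that $C\setminus B$ is then a closed, hence separable locally compact, subset so that the restriction equivalence $L(E)_{|\Top^{lc,sep}}\simeq E$ applies, and conclude by local finiteness of $E$. Your explicit verification that the induced bornology on $C\setminus B$ agrees with its intrinsic bornology of relatively compact subsets is a detail the paper leaves implicit, but the argument is the same.
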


\begin{proof}
Let $X$ be  in $\Top^{\lc}$.
For an open bounded subset $B$ of $C$    we then have  $X\setminus B\in \Top^{\lc}$.
 We now use that by compatibility of the bornology and the topology we can define the locally finite evaluation as a limit over open bounded subsets of $X$.   
This gives   $$L(E)^{\lf} (X)\simeq \lim_{B\in \cB\cap \cS}  L(E) (X,X\setminus B)\stackrel{\eqref{uwfghwejk7823z8r23ir23wf}}{\simeq} \lim_{B\in \cB\cap \cS} E(X,X\setminus B)
  \simeq E(X)\, ,$$
  where for  the last equivalence we  use that $E$ is locally finite.
\end{proof}

\subsubsection{Cohomology for \texorpdfstring{$C^{*}$}{Cstar}-algebras}\label{fweoifjewio982u4r53453453454}

In order to construct the analytic $K$-homology and its locally finite version we employ a $K$-cohomology functor for $C^{*}$-algebras.
In this subsection we recall the required  properties of such a cohomology functor  in an axiomatic way
and provide references to the corresponding literature. Our main {examples} are the $K$-cohomology {functors} {from} Definition \ref{eughierwgvfdsdfvdfvsdfvfsdvsdfvswtf}.

Let $C^{*}\Alg^{\sep}$\index{$C^{*}\Alg^{\sep}$} denote the category of separable (very small) $C^{*}$-algebras. 
These algebras need not be unital, and morphisms need  not preserve units in case they exist.

Let $\bC$ be a complete stable $\infty$-category.
We consider a   functor 
$$F:C^{*}\Alg^{\sep,\op} \to \bC\, .$$

\begin{ddd}\label{weiotgwegerfefwrefre}  We define the following properties of $F$.
\begin{enumerate}
\item  homotopy invariance:  For every $A$ in $C^{*}\Alg^{\sep}$  the inclusion $A\to C([0,1],A)$ as constant functions induces an equivalence $ F(C([0,1],A))\to F(A)$.
\item  countable additivity:  For a countable family $(A_{i})_{i\in I}$ in $ \Alg^{\sep}$ the canonical map
$$F(\bigoplus_{i\in I}A_{i})\to \prod_{i\in I} F(A_{i})$$ is an equivalence.
\item  exactness:  We have $F(0)\simeq 0$, and for every short exact sequence
\[
0\to I\to A\to Q\to 0
\]
of $C^{*}$-algebras in $\Alg^{\sep}$ (where $I$ is a closed ideal) we have 
a pull-back
$$\xymatrix{F(Q)\ar[r]\ar[d]&F(A)\ar[d]\\0\ar[r]&F(I)}$$
\end{enumerate}
If $F$ is  homotopy invariant, additive and exact, then we call $F$ a cohomology theory\index{cohomology theory!for $C^{*}$-algebras}.
\end{ddd}

In the following we use a spectrum-valued $\KK$-theory functor in order to construct an  example of a cohomology theory as in Definition \ref{weiotgwegerfefwrefre}. 

Our starting point  is a
 stable $\infty$-category $\bK\bK$\index{$\bK\bK$} whose objects are 
the  separable $C^{*}$-algebras as objects.  As usual, for $A,B$ in $C^{*}\Alg^{\sep}$ we let $\map_{\bK\bK}(A,B)$ in $\Sp$ denote the mapping spectrum in $\bK\bK$.
This category has the following properties. 
\begin{enumerate}
\item There is a functor $C^{*}\Alg^{\sep}\to \bK\bK$ which is the identity on objects.
 \item \label{wergjiogerfeffewfwerfwerfwref}
For   $A,B$ in $C^{*}\Alg^{\sep}$  we have an isomorphism of $\Z$-graded groups
$$ \pi_{*}(\map_{\bK\bK}(A,B))\cong KK_{*}(A,B)\, ,$$
where $\KK_{*}$ denotes Kasparovs bivariant $K$-theory groups  (see Blackadar \cite{blackadar}). 
This  isomorphism  identifies the   composition of morphisms  in $\bK\bK$ with the  Kasparov product   for $\KK_{*}$.
\item The zero algebra $0$ is a zero object in $\bK\bK$.
\item \label{qreoighjiogqrfrgergfsgsgsdg} We have an equivalence of spectra $\map_{\bK\bK}(\C,\C)\simeq KU$. 

More generally, for  $B$ in $C^{*}\Alg^{\sep}$ we have $ \map_{\bK\bK}(\C,B)\simeq \Kast(B)$, where $\Kast(B)$ is the $C^{*}$-algebra $K$-theory spectrum of $B$. In fact, one could take this as a definition of $\Kast(B)$. We refer to  Subsection \ref{wifjewiof2323443534} for  further discussion.
\item \label{werigoewgerffewferwfw}An exact sequence of   $C^{*}$-algebras in  $C^{*}\Alg^{\sep}$ (where $I$ is a closed ideal)
$$0\to I\to A\to Q\to 0$$ gives rise  to a
  push-out  diagram $$\xymatrix{I\ar[r]\ar[d]&A\ar[d]\\0\ar[r]&Q}$$
  in $\bK\bK$.
   \item\label{ewfjweoifuioe45} If $B$ is a separable $C^{*}$-algebra and   $(A_{i})_{i\in I}$ is a countable family {of} separable $C^{*}$-algebras, then 
\begin{equation}\label{qdwqdoijooir23oioir23r2}
\map_{\bK\bK}\big(\bigoplus_{i\in I} A_{i},B\big)\simeq \prod_{i\in I}  \map_{\bK\bK}(A_{i},B)\, .
\end{equation} 
Equivalently, $\bigoplus_{i\in I} A_{i}$ represents the coproduct  of the family      $(A_{i})_{i\in I}$   in $\bK\bK$.
\item The upper left corner inclusion $A\to A\otimes \mathbb{K}$ induces an equivalence in $\bK\bK$, where $ \mathbb{K}$ denotes the compact operators on the Hilbert space $\ell^{2}(\nat)$.
\item \label{qeriogorefqwefqewfewdewd} The inclusion $A\to C([0,1],A)$ as constant functions induces an equivalence in $\bK\bK$.
\end{enumerate}

There are various constructions of such a stable $\infty$-category  
\cite{Mahanta:2012aa,Barnea:2015aa,Joachim:2007aa}  {and} \cite[Def.~3.2]{Land:2016aa}.

The  equivalence  \eqref{qdwqdoijooir23oioir23r2}  follows from    \cite[Thm.~19.7.1]{blackadar} (attributed to J.~Rosenberg).  In this reference the isomorphism 
$$\KK_{0}\big(\bigoplus_{i\in I} A_{i},B\big)\cong \prod_{i\in I}  \KK_{0}(A_{i},B)$$
is shown. The corresponding isomorphism in degree $1$ can be obtained by replacing $B$ by its suspension $C_{0}(\R,B)$.
By Bott periodicity these two cases imply Equivalence~\eqref{qdwqdoijooir23oioir23r2}.

 
 Let $B$ be a separable $C^{*}$-algebra. 
 
 \begin{ddd}\label{eughierwgvfdsdfvdfvsdfvfsdvsdfvswtf}
 We define the  $K$-cohomology for $C^{*}$-algebras with coefficients in $B$  by $$K_{B}:=\map_{\bK\bK}(-,B)\colon C^{*}\Alg^{\sep,\op}\to \Sp\, .$$
 \end{ddd}

\begin{kor}\label{tgkioegefrefwef}
The functor 
  $K_{B}\colon \Alg^{\sep,\op}\to \Sp$ 
is a cohomology theory\index{cohomology theory!for $C^{*}$-algebras}. 
\end{kor}
\begin{proof} This functor is homotopy invariant by  Property \ref{qeriogorefqwefqewfewdewd} of $\bK\bK$, sends countable sums to products by  Property  \ref{ewfjweoifuioe45}, and is exact by  Property  \ref{werigoewgerffewferwfw}.
\end{proof}


 \subsubsection{Locally finite homology theories from cohomology theories for \texorpdfstring{$C^{*}$}{Cstar}-algebras}\label{wklgowergerfrfwferf}

If $X$ is a separable  locally compact topological space, then  $C_{0}(X)$ denotes   the $C^{*}$-algebra defined as the completion of
the $*$-algebra of compactly supported functions in the supremum norm.  Since 
 $X$ is a separable, this $C^{*}$-algebra   is separable, too. 
 Recall that morphisms in $\Top^{\lc}$ are proper continuous maps. If $X\to X'$ is a morphism in $\Top^{\lc}$, then the pull-back of functions
 provides a morphism $C_{0}(X')\to C_{0}(X)$ in $C^{*}\Alg^{\sep}$. We therefore get a functor 
 $$C_{0}:\Top^{\lc,\op}  \to C^{*}\Alg^{\sep}\, .$$
 This functor has the following properties:
 \begin{enumerate}
 \item  \label{ergioeriogegfgsfgsfg9} If $X$ is in $\Top^{\lc}$ and $Y$  is a closed subset of $X$, then the inclusion $Y\to X$ 
  is   a morphism in $\Top^{\lc}$, and $X\setminus Y$ belongs to $\Top^{\lc}$ (though the inclusion into $X$ is in general not a morphism).
  The inclusion  induces the  second map in the exact sequence
 $$0\to C_{0}(X\setminus Y)\to C_{0}(X)\to C_{0}(Y)\to 0\, .$$
 The  first map is given by extension by zero.
 \item\label{sbiojdboibsfdfvfvsfvsfvsdfv} If $(X_{i})_{i\in I}$ is a countable family in $\Top^{\lc}$, then $\bigsqcup^{\free}_{i\in I} X_{i}$ (interpreted in $\Top\Born$)
 is given by the coproduct of the family in $\Top$ which is again separable and locally compact and therefore an object of $\Top^{\lc}$. We have a canonical isomorphism
 $$C_{0}(\bigsqcup^{\free}_{i\in I} X_{i})\cong \bigoplus_{i\in I}C_{0}(X_{i})\, .$$
 \end{enumerate}

 Let $\bC$ be a complete stable $\infty$-category, and let $F\colon C^{*}\Alg^{\sep,\op}\to \bC$ be a functor.
\begin{ddd}
We define $F^{\an}:=F\circ C_{0}\colon \Top^{\lc}\to \bC$.
\end{ddd}

%
%

 \begin{prop} \label{qerugfihierwgegwerfwefwerfwerfw}
 If $F$ is a cohomology theory, then the functor $F^{\an}$ has the following properties:  \begin{enumerate}
\item  \label{goegoeri45345345345}
 homotopy invariant,   
 \item \label{goegoeri453453453451}   excisive (for closed decompositions),
 \item \label{wrthijwirogfrefwerfw} countably additive,
 \item  \label{wrthijwirogfrefwerfw1}     locally finite.
 \end{enumerate}
\end{prop}
\begin{proof} 
Assertion \ref{goegoeri45345345345}  follows from homotopy invariance of $F$ and the fact that
$$C([0,1],C_{0}(X))\cong C_{0}([0,1]\times X)\, .$$

For  Assertion \ref{goegoeri453453453451} we argue as follows.  Let $X$ be in $\Top^{\lc}$, and let 
 $(A,B)$ be a closed decomposition of  $X$.  Then we have an isomorphism $B\setminus (B\setminus A)\cong X\setminus A$ in $\Top^{\lc}$ and therefore an isomorphism of $C^{*}$-algebras
$$C_{0}(X\setminus A)\cong C_{0}(B\setminus (A\cap B))\, .$$ 
We consider the following commuting diagram in $C^{*}\Alg^{\sep}$:
$$\xymatrix{C_{0}( B\setminus (A\cap B)) \ar[r]&C_{0}(B) \ar[r]&C_{0}(A\cap B)  \\
C_{0}(X\setminus A) \ar[u]^{\cong}\ar[r]&C_{0}(X)\ar[r]\ar[u]&C_{0}( A)\ar[u]}$$
The right square comes from a commuting square in $\Top^{\lc}$, and the left square commutes by the explicit description of the left horizontal maps given in Property \ref{ergioeriogegfgsfgsfg9} of $C_{0}$ which also asserts that 
  horizontal sequences are exact.
We now apply $F$ and get the commuting diagram
$$\xymatrix{F^{\an}(B\cap A)\ar[d]\ar[r]&F^{\an}(B)\ar[d]\ar[r]&F^{\an}( B\setminus (A\cap B))\ar[d]^{\simeq}\\
F^{\an}(A)\ar[r]&F^{\an}(X)\ar[r]&F^{\an}(X\setminus A)}$$
Since $F$ is excisive, the horizontal sequences are pieces of fibre sequences. Since the right vertical morphism is an equivalence, the left square 
is a push-out. This implies closed excision.

We now show Assertion \ref{wrthijwirogfrefwerfw}.
 Let $(X_{i})_{i\in I}$ be a countable family in $\Top^{\lc}$.  Using Property \ref{sbiojdboibsfdfvfvsfvsfvsdfv} of $C_{0}$ and that $F$ is additive we get the chain of equivalences
\[F^{\an}\big({\bigsqcup_{i\in I}^{\free}} X_{i}\big)\simeq F(\bigoplus_{i\in I}C_{0}(X_{i}))\simeq \prod_{i\in I} F(C_{0}(X_{i}))\simeq \prod_{i\in I} F^{\an}(X_{i})\, .\]

Finally we show Assertion \ref{wrthijwirogfrefwerfw1}. 
Let $X$ be in $\Top^{\lc}$. Then there exists   an open exhaustion $(B_{n})_{n\in \nat}$ of $X$ such that $\bar B_{n}$ is compact for every $n$ in $\nat$. Then every relatively compact subset of $X$ is contained in $B_{n}$ for sufficiently large $n$.  
The same argument as in the proof 
 Lemma \ref{lijfweiofjou4r3435434534534}  (using that $F^{\an}$ is  additive and excisive)  shows that
 $\lim_{n\in \nat} F^{\an}(X\setminus B_{n})\simeq 0$. 
 The analog of 
 Remark \ref{eofjewoifjefoewi23452345}
 for $\Top^{\lc}$ then implies that $F^{\an}$ is locally finite.
  \end{proof}

Let $F\colon C^{*}\Alg^{\sep,\op}\to \bC$ be a functor  and assume that $\bC$ is a complete and cocomplete stable $\infty$-category.
The following Definition combines \ref{weirghiwergregwegwergwerg9}  and  \ref{fiojwoifjwoifuwe45435}.
\begin{ddd}\label{weoigjoiegrergfsvgsdfvfdvsdfvsddfsvsdfvsdfvsf}
We define the functor $$F^{\an,\lf}:=L(F^{\an})^{\lf}\colon\TopBorn\to \bC\, .$$
\end{ddd}

\begin{kor} Assume that $F$ is a cohomology theory.
\begin{enumerate}
\item \label{weigjowerfgerfrefwef}
$F^{\an,\lf}$ is a locally finite homology theory.
\item \label{weigjowerfgerfrefwef1} $F^{\an,\lf}_{{|\Top^{\lc}}}\simeq F^{\an}$.
\end{enumerate}
\end{kor}
\begin{proof}
Assertion \ref{weigjowerfgerfrefwef} follows from Propositions \ref{qerugfihierwgegwerfwefwerfwerfw}  and \ref{wetigjowerfrwefwerf}.
For \ref{weigjowerfgerfrefwef1} we use Lemma~\ref{ilfjiweofuwei9u9435345345}. 
\end{proof}

%
%
%
%

 We can apply Proposition \ref{ifjewifweofewifuoi23542335345} to
$F^{\an,\lf}$. Recall the subset of objects $\bF$  of $\TopBorn$ introduced at the beginning of Subsection \ref{erjgiofggsfwergergsgsgr}.  
By Example \ref{weriogwetgwergregegw9}  the set  $\bF$ contains, e.g. \  locally finite   simplicial complexes with the structures induced from the spherical path metrics. 
  Let $X$ be in $\TopBorn$.

\begin{kor}\label{lijfoieijwofoewefewfewwf} If 
 $X$ is homotopy equivalent to an object of the subcategory $\bF$, then we have a natural equivalence
$$(F(*)\wedge \Sigma^{\infty,\topp}_{+})^{\lf}{(X)}\simeq F^{\an,\lf}(X)\, .$$
\end{kor}

%
%
%
 
Let $B$ be in $\Calg^{\sep}$.
 
\begin{ddd}\label{foiehweiofui23ur892342424}
The analytic locally finite $K$-homology\index{$K$-homology!analytic locally finite}\index{analytic!locally finite $K$-homology}\index{locally finite!analytic $K$-homology} with coefficients in $B$ is the functor \index{$K_{B}^{\an,\lf}$}
$$K_{B}^{\an,\lf}\colon \TopBorn\to \Sp$$ 
obtained by applying Definition \ref{weoigjoiegrergfsvgsdfvfdvsdfvsddfsvsdfvsdfvsf} to the $K$-cohomology $K_{B}$ of $C^{*}$-algebras  with coefficients in $B$ (Definition \ref{eughierwgvfdsdfvdfvsdfvfsdvsdfvswtf}).
 \end{ddd}

In view of the Property \ref{qreoighjiogqrfrgergfsgsgsdg}  of $\bK\bK$ we have an equivalence 
$K_{B}^{\an,\lf}(*)\simeq \Kast(B)$.

Specializing Corollary \ref{lijfoieijwofoewefewfewwf} we get for $X$ in $\TopBorn$:
 \begin{kor} \label{wergkjeowrgerfwefref} If  $X$ in $\bF$, then we have an equivalence
$$(\Kast(B)\wedge \Sigma^{\infty,\topp}_{+})^{\lf}{(X)}\simeq K_{B}^{\an,\lf}(X)\, .$$
\end{kor}  

 In particular,  if $X$ belongs to $\bF$ and is separable and locally compact, then using Lemma \ref{ilfjiweofuwei9u9435345345} and Property \ref{wergjiogerfeffewfwerfwerfwref} of $\bK\bK$ we get an isomorphism of groups
 \begin{equation}\label{4thgoijoieflk245efv}
\pi_{*}(\Kast(B)\wedge \Sigma^{\infty,\topp}_{+})^{\lf}{(X)}\simeq   \KK_{*}(C_{0}(X),B)\, .
\end{equation}  
  
\begin{rem}
 We got the impression that   the isomorphism \eqref{4thgoijoieflk245efv}
 is known as folklore, but we were not able to trace down a reference for this fact. 
 
In the global analysis literature it is standard to consider the functor  $X\mapsto \KK_{*}(C_{0}(X),B)$ as the definition of the group-valued locally finite $K$-homology  functor with coefficients in {a $C^*$-algebra} $B$. Its homotopy theoretic version  
represented by the left-hand side of~\eqref{4thgoijoieflk245efv} is  {usually} not used.
\hB
\end{rem}

\subsection{Coarsification spaces}\label{sdf132fwd}


In this section we introduce the notion of a coarsification of a  bornological coarse space~$X$.  A coarsification of $X$ is a simplicial complex with a reference map from $X$ such that the value of a coarsification of a locally finite homology on $X$ can be calculated by applying the locally finite homology theory itself to the coarsification, see Proposition~\ref{prop:sdf09432g23}.

This is actually one of the origins of coarse algebraic topology. Assume that the classifying space $BG$ of a group~$G$ has a   model which is a finite simplicial complex. Gersten \cite[Thm.~8]{gersten} noticed that for such a group the group cohomology $H^\ast(G, \IZ G)$ is a quasi-isometry invariant. Using coarse algebraic topology, we may explain this phenomenon as follows. We have the isomorphism $H^\ast(G,\IZ G) \cong H_{c}^\ast(EG)$, see \cite[Prop.~VIII.7.5]{brown}. Furthermore,
we have an isomorphism $H_{c}^\ast(EG)\cong  H\!\cX^\ast(EG)$, where  the latter is Roe's coarse cohomology \cite[{Prop.~3.33}]{roe_coarse_cohomology}.

Let now $G$ and $H$ are two groups with the property that   $BG$ and $BH$  are homotopy equivalent to finite simplicial complexes.  We equip $G$ and $H$ with word metrics associated to choices of finite generating sets. Then a quasi-isometry $G \to H$ of the underlying metric spaces translates to a coarse equivalence $EG \to EH$. Since coarse cohomology is invariant under coarse equivalences we get a chain of isomorphisms relating $H^\ast(G,\IZ G)$ and $H^{\ast}(H,\IZ H)$.  Therefore the cohomology group  $H^\ast(G,\IZ G)$ is a quasi-isometry invariant of $G$.

The above observation, {translated to homology theories,} can be generalized to the fact that a locally finite homology theory evaluated on a finite-dimensional, uniformly contractible space is a coarse invariant of such spaces, see Lemma \ref{fweoifuweiofuioewfewuifewfwefewf}. So on such spaces locally finite homology theories behave like coarse homology theories. And actually, this observation originally coined the idea of what a coarse homology theory should be.


Let $f_{0},f_{1}:X\to X^{\prime}$ be maps between topological spaces, and let $A^{\prime}$ be a   be a subset of $ X^{\prime}$ containing the images of $f_{0}$ and $f_{1}$. We say that $f_{0}$ and $f_{1}$ are homotopic in $A^{\prime}$ if there exists a homotopy $h:I\times X\to A^{\prime}$ from $f_{0}$ to $f_{1}$ whose image is also contained in $A^{\prime} $.

Let $X$ be a metric space. For  a point $x$ of $X$ and   a positive real number   $R $  we denote by $B(R,x)$ the ball in $X$ of radius $R$ with center $x$.
\begin{ddd}[{{\cite[Sec.\ 1.D]{gromov}}}]\label{defn:sdf8923}
 $X$ is uniformly contractible\index{uniformly!contractible} if for all $R $ in $(0,\infty)$ exists an $S$  in $[R,\infty)$  such that {for all $x $ in $X$} the inclusion $B(R,x) \to {X}$ is homotopic  in $B(S,x)$  to a constant map.
\end{ddd}

Note that the property of being uniformly contractible only depends on the quasi-isometry class of the metric.

In the following a metric space $X$ will be considered as a bornological coarse space $X_{d}$ with the structures induced by the metric, see Example \ref{welifjwelife89u32or2}.
We equip a simplicial complex $K$ with a good metric  (see Definition \ref{fwoiefjeoiwfefewf})  and denote the associated bornological coarse  space by $K_{d}$. If $K$ is finite-dimensional, then the metric on $K$ is independent of the choices up to quasi-isometry. In particular, the   bornological coarse space $K_{d}$ is well-defined up to equivalence.
\begin{ddd}[{\cite[Def.~2.4]{roe_index_coarse}}]
\label{defn:sdf8wef923}
A coarsification\index{coarsification!of a bornological coarse space} of a bornological coarse space  $X $ is a pair $(K,f)$  consisting of a  finite-dimensional, uniformly contractible simplicial complex~$K$ and  an equivalence $f :  X\to K_{d}$  in $\BC$.
\end{ddd}

\begin{ex}
If $X$ is  a non-empty bounded bornological coarse space {with the maximal coarse structure (e.g., a bounded metric space)},  then the pair $(*,f : X \to *)$ is a coarsification of $X$.

A coarsification of $\IZ$ is given by $(\IR, \iota :  \IZ \to \IR)$.
Another possible choice is    $(\IR, -\iota)$. 
 
Let $K$ be  a finite-dimensional  simplicial complex with a good metric. We assume that $K$ is   uniformly contractible.    Its the zero skeletion $K^{(0)}$ has an  induced metric  and gives rise to a bornological coarse space $K_{d}^{(0)}$.
Then the inclusion $\iota : K^{(0)}_{d}\to K_{d}$  turns $(K_{d},\iota)$ into a coarsification of the bornological coarse    space $K^{(0)}_{d}$.

{The following example is due to Gromov \cite[Ex.\ 1.D$_1$]{gromov}:} We consider a finitely generated group $G$ and assume that it admits a model for {its} classifying space $BG$ which is a finite simplicial complex. The action of $G$ on the universal covering $EG$ of $BG$ provides a choice of a map $f:G\to EG$ which  depends on the choice of a base point. We equip $EG$ with a good metric.
Then $(EG_{d},f)$  is a coarsification  of~$G$ equipped with the bornological coarse structure introduced in Example \ref{ewfijwfioiouiou23roi2r32rrew}.
\hB    
\end{ex}

Let $K$ be a simplicial complex, and let $A$   be a subcomplex of $K$. Furthermore, let  $X$ be a  metric space, and let
$f:K_{d}\to X_{d}$ be a  morphism of bornological coarse spaces such that $f_{|A}$ is continuous.

\begin{lem}\label{lem:sdf9823}
If $K$ {is} finite-dimensional and $X$ is
uniformly contractible, 
  then $f$   is close to a morphism of bornological coarse spaces which extends $f_{|A}$ and  is in addition continuous.
\end{lem}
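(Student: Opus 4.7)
The plan is to construct $g$ by induction on the skeleta of $K$ relative to $A$, using uniform contractibility of $X$ to extend continuously at each stage, and the finite-dimensionality of $K$ to ensure that the closeness constants stay bounded. Let $d_K$ and $d_X$ denote the metrics. By Condition \ref{fjwefwefewlfeioo} of Definition \ref{fwoiefjeoiwfefewf} there is a constant $D_n>0$ such that every $n$-simplex of $K$ has $d_K$-diameter at most $D_n$. Since $f$ is controlled, the entourage $U_{D_n}\in\cC_{d_K}$ maps into some entourage of $X_d$, so there is a constant $E_n>0$ such that for every $n$-simplex $\sigma$ the set $f(\sigma)$ has $d_X$-diameter at most $E_n$.

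I would then build, for each $n=-1,0,1,\dots,\dim K$, a continuous map $g_n \colon A\cup K^{(n)}\to X$ extending $f_{|A}$ and $(f \times g_n)(\diag)$-close to $f$, in the sense that there is a constant $R_n>0$ with $d_X(f(x),g_n(x))\le R_n$ for all $x\in A\cup K^{(n)}$. Start with $g_{-1}:=f_{|A}$, which is continuous by hypothesis (set $R_{-1}:=0$), and on $K^{(0)}\setminus A$ define $g_0:=f$, which is automatically continuous on a discrete set (set $R_0:=0$). For the induction step, suppose $g_n$ is constructed. For each $(n+1)$-simplex $\sigma$ of $K$ not contained in $A$, the boundary $\partial\sigma$ is already mapped continuously to $X$ by $g_n$; by the inductive closeness bound together with the estimates above, $g_n(\partial\sigma)$ is contained in a ball $B(R,x_0)$ around, say, $x_0:=f(v)$ for a vertex $v$ of $\sigma$, where $R:=R_n+E_{n+1}$ depends only on $n+1$ (not on $\sigma$). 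By uniform contractibility of $X$ (Definition \ref{defn:sdf8923}) there exists $S=S(R)\ge R$ such that the inclusion $B(R,x_0)\hookrightarrow B(S,x_0)$ is nullhomotopic in $B(S,x_0)$; the nullhomotopy gives a continuous extension of $g_n|_{\partial\sigma}$ to $\sigma$ with image in $B(S,x_0)$. On simplices of $K^{(n+1)}$ contained in $A$ we simply keep $f_{|A}$, and these definitions agree on common faces. This yields $g_{n+1}$ with $R_{n+1}:=S+E_{n+1}$.

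Since $\dim K$ is finite, the construction terminates in finitely many steps, producing a continuous map $g:=g_{\dim K}\colon K\to X$ extending $f_{|A}$ with a global bound $d_X(f(x),g(x))\le R:=\max_{n\le\dim K} R_n$. In particular $(f\times g)(\diag_K)\subseteq U_R\in\cC_{d_X}$, so $f$ and $g$ are close in the sense of Definition \ref{ioewfjweoifoijwfw32424}. It remains to check that $g$ is a morphism in $\BC$. Control follows from closeness and control of $f$ via the triangle inequality: if $(x,y)$ lies in an entourage of $K_d$ then $d_X(g(x),g(y))\le 2R+d_X(f(x),f(y))$ is bounded. Properness follows because $g^{-1}(B)\subseteq f^{-1}(U_R[B])$ for every bounded $B\subseteq X$, and $f$ is proper.

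The only genuinely delicate point is controlling how the closeness constant $R_n$ evolves under the inductive extensions; this is exactly what finite-dimensionality of $K$ buys us, since the function $R\mapsto S(R)$ supplied by uniform contractibility is applied only $\dim K$ times. Apart from this, the argument is a standard skeletal extension/obstruction-theory computation adapted to the bornological coarse setting.
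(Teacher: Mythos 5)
Your proof is correct and follows essentially the same route as the paper's: a skeletal induction relative to $A$, using the good metric to bound simplex diameters and uniform contractibility to extend over each $(n{+}1)$-cell inside a controlled ball, with finite-dimensionality guaranteeing the closeness constants are updated only finitely often. Your explicit bookkeeping of the constants $R_n$, $E_n$ and the final check that $g$ is controlled and proper are slightly more detailed than the paper's write-up but add nothing essentially different.
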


%
%
\begin{proof}
 We will define  a continuous map $g :  K \to X$ by induction over the relative skeleta of $K$ such that it is 
 close to $f$ and satisfies $f_{|A}=g_{|A}$. Then $g:K_{d}\to X_{d}$ is also a morphism of bornological coarse spaces.

On the zero skeleton $K^{(0)}\cup A$ {(relative to $A$)} of $K$ we define $g_0:=f_{|K^{(0)}\cup A}$. 

Let now $n\in \nat$  and 
assume that $g_n:K^{(n)}\cup A\to X$ is already defined such that $g_n$ is continuous  and close to the restriction of $f$ to $K^{(n)}\cup A$.  Since the metric on $K$ is good there exists a uniform bound    of the diameters of $(n+1)$-simplices of $K$.
 Since $g_{n}$ is controlled  there exists an $R$ in $(0,\infty)$ such that $g(\partial \sigma)$ is contained in some $R$-ball for every $(n+1)$-simplex $\sigma$ of $K$. 
 Let     $S$ in $[R,\infty)$ be
as in Definition \ref{defn:sdf8923}. 

{We now extend $g_{n}$ to the interiors of the $(n+1)$-simplices separately.   Let $\sigma$ be an $(n+1)$-simplex  in $K^{(n+1)}\setminus A$.} 
  Using a contraction of the simplex $\sigma$ to its center and the contractibility of $R$-balls of $X$ inside $S$-balls, we    can continuously extend   $(g_{n})_{|\partial \sigma}$ to $\sigma$ with image in
an $(S+R)$-neighbourhood of $g_{n}(\partial \sigma)$.

 In this  way we get a continuous extension  
$g_{n+1} :K^{(n+1)}\cup A\to X$ which is still close to $f_{|K^{(n+1)}\cup A}$.  
 \end{proof}

A finite-dimensional simplicial complex $K$ can  naturally be considered as a topological  bornological space with the bornology of  metrically bounded subsets. We will denote   this topological bornological space by $K_{t}$. A morphism of bornological coarse spaces $K_{d}\to K_{d}^{\prime}$  which is in addition continuous is a morphism $K_{t}\to K_{t}$ of topological bornological spaces.

We consider finite-dimensional simplicial complexes $K$ and $K^{\prime}$ and a morphism $f:K_{d}\to K_{d}^{\prime}$ of bornological coarse spaces.
\begin{lem}\label{fweoifuweiofuioewfewuifewfwefewf}
If $K$ and $K^{\prime}$ are  uniformly contractible, and 
 $f $ is an equivalence, then 
 $f$  is close to a homotopy equivalence 
 $ K_{t}\to K^{\prime}_{t}$, and any two choices of {such a}  homotopy equivalence    are homotopic to each other.
 \end{lem}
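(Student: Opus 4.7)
The strategy is to apply Lemma~\ref{lem:sdf9823} three times: once to straighten $f$ (and its coarse inverse) to a continuous morphism, and twice more to use a cylinder construction that upgrades the closeness relation to a continuous homotopy.

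First I would apply Lemma~\ref{lem:sdf9823} with $A=\emptyset$ to the morphism $f:K_{d}\to K^{\prime}_{d}$ in order to produce a continuous morphism $\tilde f:K_{t}\to K^{\prime}_{t}$ which is close to $f$. Since $f$ is a coarse equivalence, it admits a coarse inverse $g:K^{\prime}_{d}\to K_{d}$, and the same lemma produces a continuous $\tilde g:K^{\prime}_{t}\to K_{t}$ close to $g$. Because closeness is preserved under composition, $\tilde f\circ\tilde g$ is close to $\id_{K^{\prime}}$ and $\tilde g\circ\tilde f$ is close to $\id_{K}$.

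The main step is to promote these closeness statements to continuous homotopies. I would treat $\tilde f\circ\tilde g\simeq\id_{K^{\prime}}$, the other case being symmetric. Triangulate $[0,1]\times K^{\prime}$ as a finite-dimensional simplicial complex in which the subcomplex $A:=\{0,1\}\times K^{\prime}$ is full, and equip it with the product good metric (this is again good up to the allowed uniform distortion on simplices). Define a map of sets $H^{\prime}:[0,1]\times K^{\prime}\to K^{\prime}$ by $H^{\prime}(t,x):=\tilde f(\tilde g(x))$ for $t<1/2$ and $H^{\prime}(t,x):=x$ for $t\ge 1/2$. A direct check shows that $H^{\prime}$ is proper and controlled: properness follows from properness of $\tilde f,\tilde g$ and compatibility of the bornologies, and controlledness reduces on each half-cylinder to the fact that $\tilde f\circ\tilde g$ resp.\ $\id$ is controlled, while at the jump $t=1/2$ the estimate $d(\tilde f\tilde g(x),x^{\prime})\le d(\tilde f\tilde g(x),x)+d(x,x^{\prime})$ is bounded by the closeness entourage plus the input entourage. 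Since $H^{\prime}$ is already continuous on the subcomplex $A$, Lemma~\ref{lem:sdf9823} (applied with source $[0,1]\times K^{\prime}$, which is finite-dimensional, and target $K^{\prime}$, which is uniformly contractible) yields a continuous morphism $H:([0,1]\times K^{\prime})_{t}\to K^{\prime}_{t}$ extending $H^{\prime}|_{A}$. Then $H$ is the desired homotopy from $\tilde f\circ\tilde g$ to $\id_{K^{\prime}}$. The symmetric argument on $K$ gives $\tilde g\circ\tilde f\simeq\id_{K}$, so $\tilde f$ is a homotopy equivalence close to $f$.

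For uniqueness, suppose $\tilde f_{0},\tilde f_{1}:K_{t}\to K^{\prime}_{t}$ are two continuous morphisms each close to $f$. By the triangle-like argument for closeness they are close to each other. The very same cylinder construction, now with the map $(t,x)\mapsto\tilde f_{0}(x)$ for $t<1/2$ and $(t,x)\mapsto\tilde f_{1}(x)$ for $t\ge 1/2$ on $[0,1]\times K$, is a morphism of bornological coarse spaces which is already continuous on the subcomplex $\{0,1\}\times K$; invoking Lemma~\ref{lem:sdf9823} once more produces a continuous extension, which is a homotopy from $\tilde f_{0}$ to $\tilde f_{1}$. The main obstacle I anticipate is bookkeeping around the simplicial structure on $[0,1]\times K^{\prime}$ and the verification that the piecewise definition of $H^{\prime}$ is genuinely a morphism in $\BC$; both are essentially routine but need the good metric hypothesis to keep the simplex diameters uniformly bounded.
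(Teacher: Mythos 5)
Your proposal is correct and follows essentially the same route as the paper: straighten $f$ and a coarse inverse $g$ to continuous maps via Lemma~\ref{lem:sdf9823}, then encode the closeness relation $\tilde f\circ\tilde g \sim \id$ as a (discontinuous) morphism on the cylinder $I\times K^{\prime}$ and apply Lemma~\ref{lem:sdf9823} again relative to the ends $\{0,1\}\times K^{\prime}$ to produce the continuous homotopy. The uniqueness statement is handled by the same cylinder trick in both arguments, so there is nothing to add.
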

 \begin{proof}
 By Lemma \ref{lem:sdf9823} we can replace $f$ by a close continuous map which we will also denote by $f$. We claim that this is the desired homotopy equivalence $f:K_{t}\to K_{t}$. 
 Denote by $g  :  K_{d}^{\prime} \to K_{d}$ an inverse equivalence.
 By Lemma \ref{lem:sdf9823} we can assume  that $g$ is  continuous.
The compositions $f\circ g$ and $g\circ f$ are close to the respective identities.

We consider the complex $I\times K$ with the subcomplex $\{0,1\}\times K$.
We define  a morphism of bornological coarse spaces $I\times K_{d}\to K_{d}$ by $\id_{K}$ on $\{0\}\times K$ and by $g\circ f\circ \pr_{K}$ on
$(0,1]\times K$. By Lemma \ref{lem:sdf9823} we can replace this map by a close  continuous map which is a homotopy between $\id_{K_{t}}$ and $g\circ f:K_{t}\to K_{t}$.
 This shows that $g\circ f:K_{t}\to K_{t}$ and $\id_{K_{t}}$ are  homotopic.
 We argue similarly for $f\circ g$.
 
 Furthermore, we obtain homotopies between different choices of continuous replacements of $f$ by a similar argument. 
\end{proof}

Let $X$ be a bornological coarse space and consider two coarsifications $(K,f)$ and $(K^{\prime},f^{\prime})$ of $X$. 
\begin{kor}\label{cor:sdf8923}
There is a   homotopy equivalence $K_{t}\to K_{t}^{\prime}$  uniquely {determined} up to homotopy 
by the compatibility with the structure maps $f$ and $f^{\prime}$.
\end{kor}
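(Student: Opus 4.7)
The plan is to reduce the statement to Lemma \ref{fweoifuweiofuioewfewuifewfwefewf}. First I would choose a coarse inverse $g : K_d \to X$ of $f$ in $\BC$, so that $g \circ f$ and $f \circ g$ are close to the respective identities. Then the composition $f' \circ g : K_d \to K'_d$ is an equivalence in $\BC$ between two finite-dimensional uniformly contractible simplicial complexes. Applying Lemma \ref{fweoifuweiofuioewfewuifewfwefewf} produces a homotopy equivalence $\phi : K_t \to K'_t$ which is close to $f' \circ g$, and any two such continuous replacements are homotopic.

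Next I would verify the compatibility of $\phi$ with $f$ and $f'$ in the sense that $\phi \circ f$ and $f'$ are close as morphisms $X \to K'_d$ in $\BC$. Indeed, $\phi$ is close to $f' \circ g$ by construction, hence $\phi \circ f$ is close to $f' \circ g \circ f$; but $g \circ f$ is close to $\id_X$, so $f' \circ g \circ f$ is close to $f'$, and closeness is transitive since $K'_d$ is a bornological coarse space. Thus $\phi$ is indeed compatible.

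For uniqueness up to homotopy, suppose $\phi_0, \phi_1 : K_t \to K'_t$ are two homotopy equivalences such that each $\phi_i \circ f$ is close to $f'$. Then $\phi_0 \circ f$ is close to $\phi_1 \circ f$, and composing with the coarse inverse $g$ of $f$ shows that $\phi_0$ and $\phi_1$ are close as morphisms $K_d \to K'_d$ (using again that $f \circ g$ is close to $\id_{K_d}$). In particular both $\phi_0$ and $\phi_1$ are continuous representatives close to the single $\BC$-morphism $f' \circ g$, so by the uniqueness clause of Lemma \ref{fweoifuweiofuioewfewuifewfwefewf} they are homotopic as maps $K_t \to K'_t$.

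I do not anticipate a genuine obstacle here, since all the real work has been packaged into Lemmas \ref{lem:sdf9823} and \ref{fweoifuweiofuioewfewuifewfwefewf}; the only point requiring a bit of care is the bookkeeping that shows closeness of $\phi_0 \circ f$ and $\phi_1 \circ f$ transfers to closeness of $\phi_0$ and $\phi_1$, which is where the coarse inverse $g$ of $f$ is used a second time.
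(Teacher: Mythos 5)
Your proposal is correct and follows exactly the route of the paper's own (very terse) proof: choose a coarse inverse of one structure map, compose with the other to get an equivalence $K_d \to K'_d$ of finite-dimensional uniformly contractible complexes, and apply Lemma \ref{fweoifuweiofuioewfewuifewfwefewf}. The extra bookkeeping you supply for compatibility and uniqueness is sound and merely makes explicit what the paper leaves implicit.
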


\begin{proof}
Let $g:K_{d}\to X$ be an inverse equivalence of $f$.
Then we apply Lemma \ref{fweoifuweiofuioewfewuifewfwefewf} to the composition $f^{\prime}\circ g:K_{d}\to K_{d}^{\prime}$.
\end{proof}

%
%
%
%

%
%
%
%
%
%

Let $X$ be a bornological coarse space.
\begin{ddd}\label{ejfwjefklwejlkewfwefwefwefewf}
$X$ has strongly bounded geometry\index{strongly!bounded geometry}\index{bounded geometry!strongly} if it has the minimal compatible  bornology  and for every entourage $U$ of $X$ there exists a uniform finite upper bound on the cardinality of $U$-bounded subsets of $X$.
\end{ddd}

\begin{rem}
In the literature, the above notion of strongly bounded geometry is usually only given for discrete metric spaces and either just called ``bounded geometry'' (Nowak--Yu \cite[Def.~1.2.6]{nowak_yu}) or ``locally uniformly finite'' (Yu~\cite[Pg.~224]{yu_baum_connes_conj_coarse_geom}).
\hB
\end{rem}

Note that the above notion is not invariant under equivalences. The invariant notion is as follows {(cf.~Roe~\cite[Def.~2.3(ii)]{roe_index_coarse} or Block--Weinberger~\cite[Sec.~3]{block_weinberger_1}):}
\begin{ddd}\label{dsf892323234}
A bornological coarse space has bounded geometry\index{bounded geometry!of a bornological coarse space} if it is equivalent to a bornological coarse space of strongly bounded geometry.
\end{ddd}

Let $K$ be a simplicial complex.
\begin{ddd}\label{sdf8923nwer2fd}
$K$ has bounded geometry\index{bounded geometry!of a simplicial complex}  if there exist a uniform finite upper bound on the 
 cardinality of the set of vertices of the stars at all vertices of $K$.
\end{ddd}
Note that this condition implies that $K$ is finite-dimensional and that the number of simplices that meet at any vertex is uniformly bounded.

%
%

Let $K$ be a simplicial complex. If $K$ has bounded geometry (as a simplicial complex), then the associated bornological coarse space $K_{d}$ has bounded geometry (as a coarse bornological space).  In this case the set of vertices of $K$ with induced structures is an equivalent bornological coarse space of  strongly   bounded geometry.

\begin{rem}
In general, bounded geometry of $K_{d}$ does not imply bounded geometry of $K$ as a simplicial complex. Consider, e.g., the disjoint union $K:=\bigsqcup_{n\in \nat} \Delta^{n}$, where we equip $\Delta^{n}$
with the metric induced from the standard embedding into $S^{n}$. The main point is that the diameters of $\Delta^{n}$ for $n$ in $\nat$ are uniformly bounded.
\hB
\end{rem}

Let $K$ be a simplicial complex.
\begin{prop}\label{prop:sdf23d9823f}
If $K$ has bounded geometry and is uniformly contractible,   then we have a natural   equivalence $Q(K_{d}) \simeq \Sigmalf K_{t}$.
\end{prop}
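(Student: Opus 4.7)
The plan is to identify the iterated cofibre--limit defining $Q$ with the locally finite evaluation of $\Sigma^{\infty}_{+}$ on $P_{U}(K)_{t}$, and then to use bounded geometry together with uniform contractibility to produce a homotopy equivalence $P_{U}(K)_{t}\simeq K_{t}$ in $\TopBorn$, coherently in $U$.

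First I would exploit bounded geometry: for every entourage $U$ of $K_{d}$ the $U$-bounded subsets of $K$ have cardinality bounded by some $N(U)$, so $P_{U}(K)$ carries a canonical structure of a simplicial complex of dimension $\le N(U)-1$ whose vertex set is the underlying set of $K$ and whose (non-degenerate) simplices are precisely the non-empty $U$-bounded subsets of $K$. I write $P_{U}(K)_{t}$ for the corresponding topological bornological space with the bornology generated by $P_{U}(B)$ for $B\in\cB$. A key observation is that $\mu\in P_{U}(K)$ meets a bounded subset $B$ if and only if $\supp(\mu)\subseteq U[B]$, which yields the sandwich
\[P_{U}(K)\setminus P_{U}(U[B])\ \subseteq\ P_{U}(K\setminus B)\ \subseteq\ P_{U}(K)\setminus P_{U}(B)\ .\]
Since $B\mapsto U[B]$ is cofinal in $\cB$, passing to cofibres and taking the limit over $B$ gives a natural equivalence
\[\lim_{B\in\cB}\Cofib\bigl(\Sigma^{\infty}_{+}P_{U}(K\setminus B)\to \Sigma^{\infty}_{+}P_{U}(K)\bigr)\simeq \Sigma^{\infty,\mathit{lf}}_{+}P_{U}(K)_{t}\ ,\]
so that $Q(K_{d})\simeq \colim_{U\in\cC}\Sigma^{\infty,\mathit{lf}}_{+}P_{U}(K)_{t}$.

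Next I would construct for every $U$ a morphism $\pi_{U}:P_{U}(K)_{t}\to K_{t}$ in $\TopBorn$ satisfying $\pi_{U}(\delta_{x})=x$ on the $0$-skeleton, by induction on the skeleta of $P_{U}(K)$ in the style of Lemma \ref{lem:sdf9823}. If the $(n-1)$-skeleton has already been sent, for every simplex with vertices in a $U$-bounded set $F$, into a ball of radius $R_{n-1}$ around $F$, then uniform contractibility yields a function $S$ allowing each $n$-simplex to be mapped into a ball of radius $R_{n}:=S(R_{n-1})$ around $F$. Because bounded geometry bounds the dimension of $P_{U}(K)$, this inductive process terminates after $N(U)$ steps and the propagation of $\pi_{U}$ is controlled by $R_{N(U)-1}$; continuity and properness are then automatic. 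The map $\delta:K_{t}\to P_{U}(K)_{t}$, $x\mapsto\delta_{x}$, is manifestly a morphism in $\TopBorn$ (its preimage of $P_{U}(B)$ is $B$), and the construction gives $\pi_{U}\circ\delta=\id_{K_{t}}$.

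For the other composition I would choose an entourage $V\supseteq U$ large enough that $\supp(\mu)\cup\{\pi_{U}(\mu)\}$ is $V$-bounded for every $\mu\in P_{U}(K)$ (possible by the propagation estimate for $\pi_{U}$), and consider the linear homotopy
\[h:[0,1]\times P_{U}(K)_{t}\to P_{V}(K)_{t}\ ,\qquad h_{t}(\mu):=(1-t)\mu+t\,\delta_{\pi_{U}(\mu)}\ .\]
This is a continuous and proper $\TopBorn$-homotopy from the inclusion $\iota_{U,V}:P_{U}(K)\hookrightarrow P_{V}(K)$ to $\delta\circ\pi_{U}$ (viewed as a map into $P_{V}(K)$). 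By homotopy invariance of the locally finite evaluation (Lemma \ref{qdkjwqhdjkio233r}), the maps $\Sigma^{\infty,\mathit{lf}}_{+}(\pi_{U})$ and $\Sigma^{\infty,\mathit{lf}}_{+}(\delta)$ assemble as $U$ varies into mutually inverse equivalences between the filtered diagram $U\mapsto \Sigma^{\infty,\mathit{lf}}_{+}P_{U}(K)_{t}$ and the constant diagram with value $\Sigma^{\infty,\mathit{lf}}_{+}K_{t}$; the filtered colimit therefore collapses to $\Sigma^{\infty,\mathit{lf}}_{+}K_{t}$, yielding the claimed equivalence $Q(K_{d})\simeq \Sigma^{\infty,\mathit{lf}}_{+}K_{t}$. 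Naturality in $K$ follows because every ingredient of the construction is natural in $K$.

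The main obstacle is the inductive production of $\pi_{U}$ with quantitative control on propagation: one must verify that the radii $R_{n}$, and hence the propagation of $\pi_{U}$, stabilize after the finite number of steps dictated by bounded geometry. This is precisely the step at which the twin hypotheses ``bounded geometry'' and ``uniformly contractible'' are used together; once $\pi_{U}$, $\delta$ and $h$ are set up, the identification of $Q(K_{d})$ with $\Sigma^{\infty,\mathit{lf}}_{+}K_{t}$ is formal from the sandwich argument in the first paragraph and homotopy invariance of $\Sigma^{\infty,\mathit{lf}}_{+}$.
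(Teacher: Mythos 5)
Your overall strategy is the same as the paper's (identify $Q(K_d)$ with $\colim_{U}\Sigma^{\infty,\mathit{lf}}_{+}P_{U}(-)_{t}$, produce a continuous projection to $K_t$ and a section, and let homotopy invariance of $\Sigma^{\infty,\mathit{lf}}_{+}$ collapse the colimit), and your sandwich argument identifying the iterated cofibre--limit with the locally finite evaluation is fine. However, there are two genuine gaps, both caused by working with $P_{U}(K)$ on the \emph{full point-set} of $K$ rather than on the zero-skeleton. First, the claim that bounded geometry bounds the cardinality of $U$-bounded subsets of $K$ is false once $\dim K>0$: an entire simplex is $U$-bounded for suitable $U$ and has uncountably many points, so $P_{U}(K_d)$ is not a finite-dimensional simplicial complex and your skeletal induction producing $\pi_U$ (which needs Lemma \ref{lem:sdf9823}, hence a finite-dimensional complex with a good metric as domain) does not get off the ground. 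Bounded geometry of the simplicial complex $K$ (Definition \ref{sdf8923nwer2fd}) controls vertices of stars, and bounded geometry of $K_d$ only says it is \emph{equivalent} to a space of strongly uniformly locally bounded geometry, namely $K^{(0)}_d$.

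Second, the section $\delta\colon K_t\to P_{U}(K)_t$, $x\mapsto\delta_x$, is not a morphism of topological bornological spaces: $P(X)$ carries the weak-$*$ topology dual to $C_0(X)$ with $X$ \emph{discrete}, so $\delta_{x_i}\to\delta_x$ forces $\delta_{x_i}(e_x)\to 1$, which fails for any nonconstant convergent sequence $x_i\to x$ in the metric topology of $K$. The paper repairs both problems at once by first replacing $K_d$ with the coarsely equivalent $K^{(0)}_d$ (which does have strongly uniformly locally bounded geometry, so $P_U(K^{(0)}_d)$ is a finite-dimensional simplicial complex), and by defining the continuous map $K_t\to P_U(K^{(0)}_d)_t$ not as the Dirac map but as its affine (convex-interpolation) extension over the simplices of $K$, using that all simplices are $U$-bounded. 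With that substitution your argument essentially becomes the paper's; note also that the final collapse of the filtered colimit should be justified at the level of homotopy groups (as in the Nowak--Yu argument the paper quotes, after passing to a cofinal subsequence of entourages), since the homotopies $\delta\circ\pi_U\simeq\iota_{U,V}$ you construct are not claimed to be coherent as a map of diagrams.
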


\begin{proof}
We consider the inclusion $K^{(0)}\to K$ of the zero skeleton and let $K^{(0)}_{d}$ be the coarse bornological space with the induced structures.
 If $U$ is an entourage of $K_{d}^{(0)}$, then we consider the map
\begin{equation}\label{fwefiuhiu243rtwrg}
K^{(0)}\to  P_{U }(K_{d}^{(0)})
\end{equation} 
 given by Dirac measures. 
Since the metric on $K$ is good and $K$ is finite-dimensional we can choose $U$ so large that all simplices 
of $K$ are $U$-bounded. Using convex interpolation we can extend the map \eqref{fwefiuhiu243rtwrg} to a continuous map
$$f:K\to P_{U}(K_{d}^{(0)})\, .$$
By construction,
$$f:K_{d}\to  P_{U}(K_{d}^{(0)})_{d}$$ is an equivalence of bornological coarse spaces.
Since $K$  has bounded geometry, $K^{(0)}_{d}$ is of strongly    bounded geometry, and hence  $P_{U}(K^{(0)}_{d})$ is a  finite-dimensional simplicial complex.
Since $K_{d}$ is   uniformly contractible,   by Lemma \ref{lem:sdf9823}  we can choose a continuous inverse equivalence $g: P_{U}(K_{d}^{(0)})_{d}\to K_{d}$.

%
%
%
The following argument is taken from {Nowak--Yu} \cite[Proof of Thm.~7.6.2]{nowak_yu}. Since the coarse structure of $K_{d}^{(0)}$ is induced by a metric we
can  choose a cofinal sequence $(U_n)_{n \in \IN}$ of entourages of $K_{d}^{(0)}$. We let   $$f_n :  P_{U_n}(K_{d}^{(0)})_{d} \hookrightarrow P_{U_{n+1}}(K_{d}^{(0)})_{d}$$ be the inclusion maps. Each $f_n$ is an   equivalence of bornological coarse spaces. For every $n$ in $\nat$     we can choose an inverse equivalence    $$g_n :  P_{U_{n+1}}(K_{d}^{(0)})_{d} \to K_{d}$$   to the composition $f_n \circ \cdots \circ f_0 \circ f$ which {is} in addition continuous. Then the composition
 $$g_n \circ (f_n \circ \cdots \circ f_0 \circ f):K_{t}\to K_{t}$$ is  homotopic to $\id_{K_{t}}$.
 The following picture illustrates the situation.
\[\xymatrix{
K_{t} \ar@/^1pc/[r]^-{f} & P_U(K^{(0)}_{d})_{t}\ar@/^1pc/[r]^-{f_0} \ar[l]^<{g} & P_{U_1}(K^{(0)}_{d})_{t} \ar@/^1pc/[r]^-{f_1} \ar@/^1pc/[ll]^<{g_0} & P_{U_2}(K_{d}^{(0)})_{t} \ar@/^1pc/[r]^-{f_2} \ar@/^2pc/[lll]^<{g_1} & \cdots
}\]

Note that we are free to replace the sequence $(U_{n})_{n\in \nat}$ by a cofinal subsequence $(U_{n_{k}})_{k\in \nat}$.
We use this freedom in order to ensure that for every $n\in \nat$ the map  $f_n$ is  homotopic to the composition  $(f_n \circ \cdots \circ f_0 \circ f) \circ g_{n-1}$.

Since the functor $\Sigmalf
$ is homotopy invariant (see Example \ref{weoifujweoifowe234234345}), it follows that the induced map $$(f_n \circ \cdots \circ f_0 \circ f)_\ast :  \pi_\ast(\Sigmalf K_{t}) \to \pi_\ast(\Sigmalf P_{U_{n+1}}(K_{d}^{(0)})_{t})$$ is an isomorphisms onto the image of $(f_n)_\ast$. So the induced map
$$\Sigmalf K_{t}\to \colim_{n\in \nat}  \Sigmalf P_{U_{n}}(K^{(0)}_{d})_{t}\simeq  \colim_{U} \Sigmalf  P_{U}(K_{d}^{(0)})_{t}\stackrel{\text{Def.} \ref{efifjweiof89247u23553435534543}}{\simeq}   Q(K_{d}^{(0)}) $$ induces an  isomorphism in homotopy groups.
The desired equivalence is now given by
$$\Sigmalf K_{t}\simeq Q(K_{d}^{(0)})\simeq Q(K_{d})\, ,$$
where for the last equivalence we use that $Q$ preserves equivalences between bornological coarse spaces and that $K_{d}^{(0)}\to K_{d}$ is an equivalence.
\end{proof}

%
%
%

{Let $\bC$ be a complete and cocomplete stable   $\infty$-category.} Let $E:\TopBorn\to \bC$ be a locally finite homology theory, and let $X$ be a bornological coarse space.
\begin{prop}\label{prop:sdf09432g23}
If $X$ admits a coarsification $(K,f)$ of bounded geometry, 
then we have a canonical equivalence  $(QE)(X) \simeq E(K_{t})$.
\end{prop}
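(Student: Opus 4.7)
The plan is to reduce the statement to Proposition \ref{prop:sdf23d9823f} by replacing $\Sigma^{\infty, \mathit{lf}}_+$ with the given locally finite homology theory $E$. First I would use coarse invariance of the coarse homology theory $QE$ together with the equivalence $f\colon X\to K_d$ to reduce to proving $QE(K_d)\simeq E(K_t)$. Since $K$ is finite-dimensional and carries a good metric, every simplex has uniformly bounded diameter; consequently the inclusion of the $0$-skeleton $K_d^{(0)}\to K_d$ is a coarse equivalence, so coarse invariance of $QE$ together with the objectwise formula from Remark \ref{eifewfioeu9ewu982742345345} gives $QE(K_d)\simeq \colim_{U} E(P_U(K_d^{(0)}))$.

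Next I would exploit that $K$ has bounded geometry to ensure that $K_d^{(0)}$ has strongly uniformly bounded geometry, so that each $P_U(K_d^{(0)})$ is a finite-dimensional simplicial complex whose topological bornological structure matches the one given by relatively compact subsets. Since the coarse structure of $K_d^{(0)}$ is induced by a metric, I can replace the colimit over all entourages by a colimit over a cofinal sequence $(U_n)_{n\in\nat}$, and, for $U_0$ sufficiently large, there is a natural continuous map $f\colon K_t\to P_{U_0}(K_d^{(0)})_t$ which is an equivalence in $\BC$. By Lemma \ref{lem:sdf9823}, using uniform contractibility of $K$, I can choose continuous inverse equivalences $g_n\colon P_{U_{n+1}}(K_d^{(0)})_t\to K_t$ to the compositions $\varphi_n:=f_n\circ\cdots\circ f_0\circ f$, and (after passing to a cofinal subsequence) arrange that $g_n\circ\varphi_n\simeq \id_{K_t}$ and $f_n\simeq \varphi_n\circ g_{n-1}$ as morphisms in $\TopBorn$. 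This is essentially the construction carried out in the proof of Proposition \ref{prop:sdf23d9823f}.

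Finally I would apply $E$ to this data. Homotopy invariance of the locally finite homology theory $E$ turns the homotopies into equivalences of spectra: $E(g_n)\circ E(\varphi_n)\simeq \id_{E(K_t)}$ and $E(f_n)\simeq E(\varphi_n)\circ E(g_{n-1})$. The first relation realizes $E(K_t)$ as a compatible retract of each $E(P_{U_n}(K_d^{(0)})_t)$, while the second shows that every structure map of the filtered system factors through $E(K_t)$. The standard telescope argument then forces the canonical map $E(K_t)\to \colim_n E(P_{U_n}(K_d^{(0)})_t)$ to be an equivalence in $\Sp$, which combined with the earlier reductions yields $QE(X)\simeq E(K_t)$.

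The only non-routine point is the verification that the whole telescope argument from Proposition \ref{prop:sdf23d9823f} goes through with $\Sigma^{\infty,\mathit{lf}}_+$ replaced by an arbitrary locally finite homology theory $E$. The main obstacle is conceptual rather than technical: one must observe that the argument used there depends solely on homotopy invariance of the target functor (for identifying $E(f_n)$ with $E(\varphi_n)\circ E(g_{n-1})$ and the retraction relation up to homotopy), a property built into the definition of a locally finite homology theory, so no further hypothesis on $E$ such as local finiteness or excision enters the telescoping step itself.
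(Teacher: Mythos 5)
Your proposal is correct and follows essentially the same route as the paper: the paper's own proof consists precisely of reducing to $(QE)(K_d)$ via coarse invariance and the equivalence $f$, and then rerunning the telescope argument of Proposition \ref{prop:sdf23d9823f} with $E$ in place of $\Sigma^{\infty,\mathit{lf}}_+$. Your observation that only homotopy invariance of $E$ is needed for that argument is exactly the point the paper leaves implicit.
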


\begin{proof}
Since $f :  X \to K_{d}$ is an equivalence of bornological coarse spaces, it induces an equivalence $(QE)(X) \simeq (QE)(K_{d})$. An argument similar  to the proof of Proposition \ref{prop:sdf23d9823f}  (replace $\Sigmalf$ by $E$) gives the equivalence $(QE)(K_{d}) \simeq E(K_{t})$.
\end{proof}

\section{Coarse \texorpdfstring{$\boldsymbol{K}$}{K}-homology}
\label{sec:sdn934}

This final section of the book is devoted to the construction and investigation of coarse $K$-homology. 
This coarse homology theory and its applications to index theory, geometric group theory and topology
are one of the driving forces of development in coarse geometry. A central result is the  coarse Baum--Connes conjecture which asserts that under certain conditions on a bornological coarse space the assembly map  from  the coarsification of the locally finite version of $K$-theory to  the $K$-theory of a certain Roe algebra associated to the bornological coarse  space is an equivalence.

The classical construction of coarse $K$-homology works for proper metric spaces and produces $K$-theory groups. 
The construction involves the following steps:
\begin{enumerate}
\item   For a space $X$ one chooses an ample $X$-controlled Hilbert space.
\item One then defines a Roe subalgebra of the bounded operators on  that Hilbert space.
\item Finally one defines the coarse $K$-homology groups of $X$  as the topological $K$-theory groups of this Roe algebra. 
\end{enumerate} 
All these notion will be discussed in detail in the subsequent subsections.
 
In order to fit coarse $K$-homology into the general framework developed in the present book we require a spectrum valued coarse $K$-theory functor which is defined on the category $\BC$.
The necessity to choose an ample Hilbert space causes two problems.
First of all, as we shall see, ample Hilbert spaces do not always  exist.
Furthermore, even for proper metric spaces (where ample Hilbert spaces exist) it is not clear how to ensure functoriality on the spectrum level. 

Our solution is to define for every $X$ in $\BC$  a Roe $C^{*}$-category in a functorial way  which comprises all possible choices of $X$-controlled Hilbert spaces.  We then   define the coarse $K$-homology spectrum as the topological $K$-theory  spectrum of this $C^{*}$-category. 
This solves the functoriality problem.  Furthermore,
because the Roe category ``absorps'' all $X$-controlled Hilbert spaces we do not need the existence of one ample $X$-controlled Hilbert space which absorps all other.

 We will  define actually two slightly different versions of coarse $K$-homology functors. The construction    and the verification that these functors  {satisfy}   the axioms of a  coarse homology theory will be completed   in Section~\ref{sec:jkbewrgh}.  
 In a subsequent paper \cite{Bunke:ad} we vastly generalize the constructions of the present section.
 We consider the equivariant case and allow coefficients in a $C^{*}$-category  {with} group action.
 These equivariant coarse $K$-homology theories are then used to obtain injectivity results for assembly maps.

In Sections~\ref{ofjewiofiouioru233244343244} till~\ref{sec:dfs8934} we discuss  the technical preliminaries needed for the definition of coarse $K$-homology.

In Section~\ref{ijfweoifweoifoi239804234342} we show that in   essentially  all cases of interest our new definition of coarse $K$-homology 
  coincides with the classical one roughly described above.

Since we define different versions of coarse $K$-homology, the question under which circumstances   they coincide immediately arises. A partial answer to this question is given by  an application of the comparison result  {stated as} Corollary \ref{thm:sdf98245csv}.  

In Section~\ref{subsec:drsgd} we will discuss some simple applications of our setup of coarse $K$-homology to index theory, and in Section~\ref{sec:kjnbsfd981} we construct the assembly map as a natural transformation between  {the} coarsification of locally finite $K$-homology and coarse $K$-homology. The comparison theorem then immediately proves the coarse Baum--Connes conjecture for spaces of finite asymptotic dimension --- a result first achieved by Yu \cite{yu_finite_asymptotic_dimension}.

\subsection{\texorpdfstring{$X$}{X}-controlled Hilbert spaces} \label{ofjewiofiouioru233244343244}

Let $X$ be a bornological coarse space. We can consider $X$ as a discrete topological space. We    let $C(X)$\index{$C(-)$} be the $C^{\ast}$-algebra of all bounded, continuous  (a void condition) $\C$-valued functions   on $X$ with the  {supremum}-norm and involution given by complex conjugation. 
For a subset $Y$ of $X$  we denote by $\chi_Y$  in $C(X)$ the characteristic function of $Y$.

\begin{ddd}\label{feiojwoeffewfwefewfwef}
An $X$-controlled Hilbert space\index{Hilbert space!controlled}\index{controlled!Hilbert space} is a pair $(H,\phi)$, where $H$ is a Hilbert space and $\phi :  C(X)\to B(H)$ is a unital $^\ast$-representation such that for every  bounded subset $B$ of $X$ the  subspace $\phi(\chi_{B}) H $ is separable. 
\end{ddd}

For a subset $Y$ of $ X$ we    introduce the notation\index{$H(-)$}
\begin{equation}
\label{eq_notation_fiber_Hilbertspace}
H(Y):=\phi(\chi_{Y})H
\end{equation}
for the subspace of vectors of $H$ ``supported on $Y$''.  The operator  $\phi(\chi_{Y})$ is the orthogonal projection from $H$ onto $H(Y)$. Since we require $\phi$ to be unital, we have $H(X) = H$. We will also often use the notation $\phi(Y):=\phi(\chi_{Y})$.  
Note that we do not restrict the size of the Hilbert space $H$ globally. But we require 
  that $H(B)$ is separable for every bounded subset $B$ of $X$, i.e., $H$ is locally separable on $X$. We will also need the following more restrictive local finiteness condition.

Let $X$ be a bornological coarse space, and let $(H,\phi)$ be an $X$-controlled Hilbert space.
\begin{ddd}\label{ewoifjweoifiufu89234234324}
$(H,\phi)$ is called locally finite\index{Hilbert space!locally finite}\index{locally finite!Hilbert space} if for every  bounded subset $B$ of $X$ the space $H(B)$ is finite-dimensional.
\end{ddd}

Let $X$ be a bornological coarse space, and let $(H,\phi)$ be an $X$-controlled Hilbert space.
\begin{ddd}\label{rgioerjog34t4t34t34t}
$(H,\phi)$ is determined on points\index{determined on points}\index{controlled!Hilbert space!determined on points}\index{Hilbert space!determined on points} if for every {vector} $h$ of $H$ the condition $\phi(\{x\})h=0$ for all  points $x$ of $ X$ implies  $h=0$.
\end{ddd}

 Let $(H,\phi)$ be an $X$-controlled Hilbert space.
Note that the direct sums below are   taken in the sense of Hilbert spaces and in general involve a completion.
\begin{lem} \label{fghiofweqewfqwed}The following are equivalent:
\begin{enumerate}
\item   \label{egkjweogerwgegewg1}  $(H,\phi)$ is determined on points,
\item  \label{egkjweogerwgegewg2}
The family  $(H(\{x\})\to H)_{x\in X}$ of canonical   inclusions induces an isomorphism  $ \bigoplus_{x\in X} H(\{x\})\cong H$ 
\item  \label{egkjweogerwgegewg3}
For every  partition $(Y_{i})_{i\in I}$ of $X$ the family of natural inclusions $H(Y_{i})\to H$ induces an  isomorphism $  \bigoplus_{i\in I} H(Y_{i})\cong H$.
\end{enumerate}
\end{lem}
\begin{proof}
It is clear that Assertion \ref{egkjweogerwgegewg3} implies Assertion  \ref{egkjweogerwgegewg2}, and that 
Assertion    \ref{egkjweogerwgegewg2} implies Assertion    \ref{egkjweogerwgegewg1}.

Assume now Assertion  \ref{egkjweogerwgegewg1} and let $(Y_{i})_{i\in I}$ be a pairwise disjoint  partition of $X$. It is clear that the canonical map 
 $\bigoplus_{i\in I} H(Y_{i})\to H$ is injective. We must show that its orthogonal complement is trivial. Assume that $h$ in $H$ is orthogonal to the image.
 If $x$ is in $X$, then we choose $i$ in $I$ such that $x\in Y_{i}$. Since $H(\{x\})\subseteq H(Y_{i})$
 we conclude that $\phi(\{x\})h=0$. Since this holds for every $x$ in $X$ we conclude that $h=0$.
 \end{proof}

\begin{ex} 
We consider the bornological coarse space $\nat_{min,max} $ given by the set $\nat$ with the minimal coarse and the maximal bornological structures,   see Example~\ref{welkfjwekfo23u4234234}. We choose a point  in the boundary of the Stone--\v{C}ech compactification of the discrete space $\nat$. The evaluation at this point is a character $\phi:C(\nat)\to \C=B(\C)$ which annihilates all finitely supported functions.
The $X$-controlled Hilbert space $(\C,\phi)$ is not determined on points.
\hB
\end{ex}

In the following we discuss a construction of $X$-controlled Hilbert spaces.
%

\begin{ex}\label{fjwefjewewofewfewfewfewf}
Let $X $ be a bornological coarse space.
  Let $D$  be a locally countable subset of $X$ (see Definition \ref{defn:asdfwewt}). We define the counting measure $\mu_{D}$\index{$\mu_{-}$}  on the $\sigma$-algebra $\cP(X)$ of all subsets of $X$ by   \[\mu_{D}(Y):= |(Y\cap D)|.\]
We set $H_{D}:=L^{2}(X,\mu_{D})$ and define the representation $\phi_{D}$ of $C(X)$ on $H_D$ such that $f$ in $C(X)$ acts as a multiplication operator.  
Then $(H_{D},\phi_{D})$ is an $X$-controlled Hilbert space which is determined on points. 

If $D $ is   locally finite (see Definition \ref{defn:asdfwewt}), then the Hilbert space $(H_{D},\phi_{D})$  is locally finite.

As usual we set
$\ell^{2}:=L^{2}(\nat,\mu_{\nat})$.  Then
  $(H_{D}\otimes \ell^{2},\phi_{D}\otimes \id_{\ell^{2}})$ is also an $X$-controlled Hilbert space. 
  If $D\not=\emptyset$, then it is not locally finite.
\hB
\end{ex}

\begin{rem}\label{weoigjoreogewrg9}
Observe that the notion of an $X$-controlled Hilbert space and the properties of being determined on points or locally finite only depend on the bornology of $X$.
\hB
\end{rem}

 
 Let $X$ be a bornological coarse space.
We consider two 
 $X$-controlled Hilbert spaces $(H,\phi)$, $(H^{\prime},\phi^{\prime})$  and a bounded linear operator  $ A\colon H\to H^{\prime}$.
 Let $\cP_{X}$ denote the power set of $X$.
\begin{ddd}\label{defn:iuwn3r2}\mbox{}
\begin{enumerate}
\item  
The support\index{support!of an operator} of $A$ is the subset of $X\times X$ given by 
\[\supp(A):=\bigcap_{\left\{U\subseteq X\times X :  \left(\forall Y\in \cP_{X} :  A(H(Y)) \subseteq H(U[Y])  \right)\right\}}U\,.\]
\item  $A$ has controlled propagation\index{controlled!propagation}\index{propagation} if $\supp(A)$ is a coarse entourage of $X$.
\end{enumerate}
\end{ddd}

  The idea of the support of $A$ is that if a vector $h$ in $H$ is supported on $Y$, then  $Ah$ is supported on $\supp(A)[Y]$.   So if $A$ has controlled propagation, it moves supports in a controlled manner.


Let $f :  X\to X^{\prime}$ be a morphism  between bornological coarse spaces, and  let $(H,\phi)$ be an $X$-controlled Hilbert space.
\begin{ddd} \label{ewljfewilewfiouwefwefew}
We define the $X^{\prime}$-controlled Hilbert space $f_{*}(H,\phi) $  
to be the Hilbert space $H$ with the control $\phi\circ f^{*}$.
 \end{ddd}

Indeed, {$(H, \phi \circ f^\ast)$ is an $X'$-controlled Hilbert space:} if $B^{\prime}$ is a  bounded subset  of $X^{\prime}$, then $f^{-1}(B^{\prime})$ is bounded in $X$ since $f$ is proper. Hence $(\phi\circ f^{*})(\chi_{B^{\prime}})H=\phi(\chi_{f^{-1}(B^{\prime})})H$ is separable. If $(H,\phi)$ is locally finite, then so is $f_{*}(H,\phi)$. Similarly, if $(H,\phi)$ is determined on points, then so is $f_{*}(H,\phi)$.

In the following we list some obvious properties of the support of operators between controlled Hilbert spaces
  The support  of operators has the following properties which are easy to check:
  \begin{enumerate}
\item 
  $\supp(A^{*})=\supp(A)^{-1}$.
  \item 
   If $A,A'$ are two operators between the same  $X$-controlled Hilbert spaces, and $\lambda$ is in $\C$, then $$\supp(A+\lambda A')\subseteq \supp(A)\cup \supp(A)\,.$$
   \item If $(H,\phi)$, $(H',\phi')$ and $(H'',\phi'')$ are $X$-controlled Hilbert spaces and $A\colon H\to H'$ and $B\colon H'\to H''$ are  bounded operators, then 
   $$\supp(B\circ A)\subseteq \supp(B)\circ \supp(A)\, .$$
   \item Let $f\colon X\to X'$ be a morphism in $\BC$. If $(H,\phi)$ and $(H',\phi')$ are two $X$-controlled Hilbert spaces and $A:H\to H'$ is a bounded operator, then we  
let $f_{*}A:=A$ be considered as an operator between $X'$-controlled Hilbert spaces. Then we have
$$\supp(f_{*})\subseteq f_{*}(\supp(A))\, .$$
\end{enumerate}

Let $(H,\phi)$ be an $X$-controlled Hilbert space, and let $H^{\prime}$ a closed subspace of $ H$.

\begin{ddd}\label{defn:sfd9823}
\index{locally finite!subspace}
$H^\prime$ is locally finite, if it admits an $X$-control $\phi^\prime$ such that:
\begin{enumerate}
\item  $(H^\prime, \phi^\prime)$ is locally finite (Definition \ref{ewoifjweoifiufu89234234324}).
\item  $(H^\prime, \phi^\prime)$ is determined on points.
\item  The inclusion $H^\prime  \to H $ has controlled propagation (Definition \ref{defn:iuwn3r2}).
\end{enumerate}
If we want to highlight $\phi^{\prime}$, we say that it recognizes $H^{\prime}$ as a locally finite subspace of $H$.
\end{ddd}

 \begin{ex} We consider $\Z$ as a bornological coarse space with the metric structures.
 Furthermore, we consider the $X$-controlled Hilbert space $H:=L^{2}(\Z)$ with respect to the counting measure.
 The control $\phi$ is given by multiplication operators. The $\Z$-controlled Hilbert space $(H,\phi)$ is locally finite.
For $n$ in $\Z$ we let  $\delta_{n}$ in $L^{2}(\Z)$ denote the corresponding basis vector of $H$.
 
 The one-dimensional subspace $H^{\prime}$ generated by the vector
 $\sum_{n\in \nat} e^{-|n|}\delta_{n}$ is not locally finite.
 
  On the other hand, the  $\infty$-dimensional subspace $H^{\prime\prime}$ generated by the family  $(\delta_{2n})_{n\in \Z}$  is locally finite. 
 \hB
 \end{ex}

\begin{ex}
We consider a bornological coarse space $X$ and a   locally countable subset $D$ of $  X$. Then we can form the $X$-controlled Hilbert space $(H_{D}\otimes \ell^{2},\phi_{D}\otimes \id_{\ell^{2}})$ as in Example \ref{fjwefjewewofewfewfewfewf}.
Let $F$  be some finite-dimensional subspace of $  \ell^{2}$, and $D^{\prime}$   let be a locally finite subset  of $D$. Then $H^{'}:=H_{D^{\prime}}\otimes F$ considered as a subspace of $H_{D}\otimes \ell^{2}$ in the natural way is a locally finite subspace. As control we can take the restriction of $\phi_{D}\otimes \id_{\ell^{2}}$.
\hB
\end{ex}

\subsection{Ample \texorpdfstring{$X$}{X}-controlled Hilbert spaces}
\label{oiwfiowioewioewfewfew}
 Let $(H,\phi)$ be an $X$-controlled Hilbert space. 
\begin{ddd}\label{fjwefewiojoi2jroi23jr23r23r23r}
\index{ample}\index{controlled!Hilbert space!ample}
 $(H,\phi)$ is ample if it is determined on points and there exists an entourage $U$ of $X$ such that
$H(U[x])$ is $\infty$-dimensional for every $x$ in $X$.
\end{ddd}

Note that an ample $X$-controlled Hilbert space is not locally finite.

\begin{rem}
Our notion of ampleness differs from the usual notion where one also demands that no non-zero function  in $C(X)$ acts by compact operators.

Consider for example the bornological coarse space $\Q$ with the structures induced from the embedding into the metric space $\R$. The $\Q$-controlled Hilbert space $(H_{\Q},\phi_{\Q})$ (see Example \ref{fjwefjewewofewfewfewfewf}) is ample. But the characteristic functions
of points (they are continuous since we consider $\Q$ with the discrete topology) act by rank-one operators and are in particular compact.
\hB
\end{rem}


In the following we collect some facts about ample $X$-controlled Hilbert spaces.
Let $X$ be a bornological coarse space, $D$   be a subset of $X$, and $U$ be an entourage of $X$.

\begin{ddd}\label{defn:okmnsf}\mbox{}
\begin{enumerate}
\item\label{defn:ojiwnetr445}
   $D$  is    dense\index{dense}\index{subset!dense} in $X$, if there exists an entourage $V$ of $X$ such that $V[D]=X$. Sometimes we say that $D$ is $V$-dense.
\item $D$ is  $U$-separated\index{$U$-!separated}\index{subset!$U$-separated}   if for every two distinct points $d,d^{\prime}$ in $D$ we have $d^{\prime}\not\in U[d]   $. 
\item \label{rfjhewwefwewfewfewf} $X$ is locally countable\index{locally!countable!space}\index{locally!finite!space}\index{space!locally countable}\index{space!locally finite} (or locally finite, respectively) if it admits an entourage $V$  such that every  $V$-separated subset of $X$ is locally countable (or locally finite, respectively).
 \end{enumerate} 
 \end{ddd}
 
\begin{rem}
One should not confuse the above notion of local finiteness of $X$ as a bornological coarse space with the notion of local finiteness of $X$ (as a subset of itself) as given in Definition~\ref{defn:asdfwewt}. {But if $X$ is locally finite in the latter sense, then it is locally finite in the sense of Definition~\ref{defn:okmnsf}.\ref{rfjhewwefwewfewfewf}.}
\hB
\end{rem}

%

\begin{ex}\label{ex:ert34rn394}
Let $D$   be a dense and locally countable subset of $X$ (see Definition~\ref{defn:asdfwewt}). Then the $X$-controlled Hilbert space $(H_{D}\otimes \ell^{2}, \phi_{D}\otimes \id_{\ell^{2}})$ constructed in Example \ref{fjwefjewewofewfewfewfewf} is ample.
\hB
\end{ex}

\begin{ex}\label{ewljfewilewfiouwefwefew1}
Let $f :  X\to X^{\prime}$ be a morphism between bornological coarse spaces, and   $(H,\phi)$ be an ample $X$-controlled Hilbert space.
Then $ f_{*}(H,\phi)$ is ample if and only if $f(X)$   is dense in $ X^\prime$.
\hB
\end{ex}

Let $X$ be a bornological coarse space. 
\begin{lem}\label{lem:iohwenrf}
For every symmetric entourage $U$ of $X$ there exists a $U$-separated and $U$-dense subset of $ X$. \end{lem}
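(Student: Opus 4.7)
The plan is to apply Zorn's lemma to the collection of $U$-separated subsets of $X$ ordered by inclusion.

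First I would observe that the union of a chain of $U$-separated subsets is again $U$-separated, since the $U$-separation condition only involves pairs of points and any two points of a union of a chain lie already in some member of the chain. The empty set is trivially $U$-separated, so the poset is nonempty. Hence by Zorn's lemma there exists a maximal $U$-separated subset $D \subseteq X$.

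Next I would verify that this maximal $D$ satisfies $U[D] = X$. Suppose for contradiction that there exists $x \in X \setminus U[D]$. Then $x \notin U[d]$ for every $d \in D$. Using that $U$ is symmetric, this is equivalent to $d \notin U[x]$ for every $d \in D$. Consequently, for every $d \in D$ we have both $d \notin U[x]$ and $x \notin U[d]$, so $D \cup \{x\}$ is still $U$-separated. This contradicts the maximality of $D$, so $U[D]=X$ as required.

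The argument is entirely formal and the only subtle point is ensuring that one uses the symmetry of $U$ correctly when checking that $D \cup \{x\}$ remains $U$-separated after adjoining a point $x \notin U[D]$; this is where the hypothesis $U = U^{-1}$ enters. No part of this proof presents a real obstacle.
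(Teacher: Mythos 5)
Your proof is correct and follows exactly the paper's argument: Zorn's lemma applied to the poset of $U$-separated subsets, followed by the observation that maximality together with the symmetry of $U$ forces $U[D]=X$. The paper states this more tersely, but the details you supply (chains have $U$-separated unions; a point outside $U[D]$ could be adjoined) are precisely the ones being suppressed there.
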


\begin{proof}
The $U$-separated subsets of $X$ are partially ordered by inclusion. We can use the Lemma of Zorn to get a maximal $U$-separated subset  $D$ of $X$. Due to maximality of~$D$ and  since $U$ is symmetric we can conclude that $U[D] = X$.
\end{proof}

Let $X$ be a bornological coarse space.

\begin{prop}\label{fkwjlkwejlewfewfewfewf3242344}
$X$ admits an ample $X$-controlled Hilbert space if and only if $X$ is locally countable.
\end{prop}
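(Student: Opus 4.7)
The plan is to prove both directions by explicit construction (for ``if'') and by a separability/orthogonality argument (for ``only if'').

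For the ``if'' direction, assume $X$ is locally countable with witnessing entourage $V$. First observe that if $V$ has the property that every $V$-separated subset of $X$ is locally countable, then so does any larger entourage $V'\supseteq V$: a $V'$-separated subset is automatically $V$-separated (since $V'[d]\supseteq V[d]$), hence locally countable. So I may replace $V$ by $V\cup V^{-1}\cup\diag_X$ and assume it is symmetric and contains the diagonal. Now invoke Lemma~\ref{lem:iohwenrf} to obtain a $V$-separated subset $D\subseteq X$ with $V[D]=X$, i.e.\ $D$ is dense. By the choice of $V$, the set $D$ is locally countable (as a subset of $X$ in the sense of Definition~\ref{defn:asdfwewt}). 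Form $(H_D\otimes\ell^2,\phi_D\otimes\id_{\ell^2})$ as in Example~\ref{fjwefjewewofewfewfewfewf}; this is an $X$-controlled Hilbert space because $(\phi_D\otimes\id)(\chi_B)(H_D\otimes\ell^2)\cong L^2(B\cap D)\otimes\ell^2$ is separable for every bounded $B$. Ampleness with respect to the entourage $V$ follows because for every $x\in X$ there exists $d\in D$ with $d\in V[x]$ (by density and symmetry of $V$), so $H(V[x])$ contains the infinite-dimensional subspace $\C\delta_d\otimes\ell^2$.

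For the ``only if'' direction, let $(H,\phi)$ be an ample $X$-controlled Hilbert space with ampleness entourage $U_0$. Put $U:=U_0\cup U_0^{-1}\cup\diag_X$ and $V:=U\circ U$; both are entourages of $X$. I claim that every $V$-separated subset of $X$ is locally countable, which establishes local countability of $X$. Let $D\subseteq X$ be $V$-separated. The key geometric observation is that the sets $\{U[d]\}_{d\in D}$ are pairwise disjoint: if $y\in U[d]\cap U[d']$ for distinct $d,d'\in D$, then $d'\in U^{-1}[y]=U[y]\subseteq U\circ U[d]=V[d]$, contradicting $V$-separation.

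Now fix a bounded subset $B\subseteq X$. The disjointness of the $U[d]$ forces the closed subspaces $H(U[d])\subseteq H$ to be pairwise orthogonal, because $\phi(\chi_{U[d]})\phi(\chi_{U[d']})=\phi(\chi_{U[d]\cap U[d']})=0$. By ampleness, each $H(U[d])$ is infinite-dimensional, in particular non-zero. For $d\in D\cap B$ these subspaces are all contained in $H(U[B])$, and $U[B]$ is bounded by the compatibility of bornology and coarse structure (Definition~\ref{jewflewjjwoiuoifewfewe}), so $H(U[B])$ is separable. A separable Hilbert space admits only countably many pairwise orthogonal non-zero subspaces, so $D\cap B$ is countable. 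Hence $D$ is locally countable, completing the proof.

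The argument is largely routine once one chooses the correct entourage $V=U\circ U$; the only mildly delicate point is the passage from ampleness with respect to $U_0$ to the pairwise disjointness of the $U$-thickenings of points in a $V$-separated set, which forces one to symmetrise and compose the ampleness entourage.
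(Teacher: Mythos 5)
Your proof is correct and follows essentially the same route as the paper's: the ``if'' direction uses Lemma~\ref{lem:iohwenrf} to produce a dense, separated, locally countable subset $D$ and then takes $(H_D\otimes\ell^2,\phi_D\otimes\id_{\ell^2})$, and the ``only if'' direction is the paper's orthogonality-versus-separability argument for a suitably separated subset (the paper uses $U\circ U^{-1}$-separation where you symmetrise and use $U\circ U$, an immaterial difference). Your explicit justification of why one may enlarge and symmetrise the witnessing entourage is a detail the paper leaves implicit, but nothing is genuinely different.
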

  
\begin{proof}

Assume that $X$ is locally countable. Then we can find an entourage $U$ of $X$ such that every $U$-separated subset of $X$ is locally countable. After enlarging $U$ if necessary we can in addition assume that $U$ is symmetric.

 Using Lemma \ref{lem:iohwenrf} we get a $U$-separated subset $D$ with $U[D] = X$. By our assumption on $U$ the subset  $D$ is  locally countable. We then construct the ample $X$-controlled Hilbert space as in Example~\ref{ex:ert34rn394}.

We now show the converse. Let $(H,\phi)$ be an ample $X$-controlled Hilbert space and let $U$ be an entourage such that $H(U[x])$ is $\infty$-dimensional for all $x$ in $X$.    Consider a $U\circ U^{-1}$-separated subset $Y$. We must show that $Y\cap B$ is at most countable for every  bounded subset~$B$ of~$X$.

We have an inclusion
\[H \Big( \bigcup_{y\in Y\cap B}U[y] \Big) \subseteq H(U[B])\, .\]
Since $U[B]$ is bounded  we know that $H(U[B])$ is separable. Hence $H(\bigcup_{y\in Y\cap B}U[y])$
is separable, too. On the other hand $U[y]\cap U[y^{\prime}]=\emptyset$ for all distinct pairs $y,y^{\prime}$ in $Y$.
 Hence
 \[H \Big( \bigcup_{y\in Y\cap B}U[y] \Big) \cong \bigoplus_{y\in Y\cap B} H(U[y])\, .\]
 Since this is a separable Hilbert space and all $H(U[y])$ are non-zero, 
 the index set  $Y\cap B$ of the sum  is at most countable.
\end{proof}

 Let $X$ be a bornological coarse space and  $(H,\phi)$ and $(H^\prime, \phi^\prime)$ be two $X$-controlled Hilbert spaces. 
\begin{lem}\label{lem:drf8934}
If  $(H,\phi)$ and $(H^\prime, \phi^\prime)$ are ample, then there exists a unitary operator $H \to H^\prime$ with controlled propagation.
\end{lem}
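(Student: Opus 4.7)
The strategy is to carry out a coarse version of the standard ``any two separable infinite dimensional Hilbert spaces are unitarily isomorphic'' argument, performed locally on a sufficiently fine uniformly discrete partition of $X$ and then assembled into a single unitary of controlled propagation. The key inputs are the ampleness entourages for $H$ and $H'$, the fact that $X$ is locally countable (Proposition \ref{fkwjlkwejlewfewfewfewf3242344}), and the existence of maximal separated subsets (Lemma \ref{lem:iohwenrf}).

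First, I would choose a single symmetric entourage $U\in\cC$ that simultaneously witnesses ampleness for both $(H,\phi)$ and $(H',\phi')$ and witnesses local countability in the sense of Definition \ref{defn:okmnsf}(\ref{rfjhewwefwewfewfewf}); such a $U$ exists simply by unioning and symmetrizing the given data. Next I would apply Lemma \ref{lem:iohwenrf} to the symmetric entourage $U\circ U$ in order to obtain a maximal $(U\circ U)$-separated subset $D\subseteq X$ with $(U\circ U)[D]=X$. Because $U\circ U$-separation forces $U[d]\cap U[d']=\emptyset$ for distinct $d,d'\in D$, I can choose a partition $X=\bigsqcup_{d\in D}X_d$ with $U[d]\subseteq X_d\subseteq(U\circ U)[d]$, e.g.\ by sending each $x\in X$ to the unique $d$ with $x\in U[d]$ when such a $d$ exists, and otherwise selecting arbitrarily among the $d\in D$ with $x\in(U\circ U)[d]$.

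The second step is to build the unitaries locally. For each $d\in D$, the subspace $H(X_d)$ contains $H(U[d])$ (which is infinite dimensional by ampleness) and is contained in $H((U\circ U)[d])$. Since $\{d\}$ is bounded and $(U\circ U)[d]$ is the $(U\circ U)$-thickening of a bounded set, compatibility of the bornology with the coarse structure implies that $(U\circ U)[d]$ is bounded; hence $H((U\circ U)[d])$ is separable by the definition of an $X$-controlled Hilbert space. Therefore $H(X_d)$ is separable and infinite dimensional, and the same holds for $H'(X_d)$. I can thus pick a unitary $u_d\colon H(X_d)\to H'(X_d)$ and assemble $u:=\bigoplus_{d\in D}u_d$, which is well-defined and unitary because $(X_d)_{d\in D}$ partitions $X$, so $H=\bigoplus_{d\in D}H(X_d)$ and $H'=\bigoplus_{d\in D}H'(X_d)$.

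Finally I would check that $u$ has controlled propagation. Given a bounded subset $B\subseteq X$ and $v\in H(B)$, decompose $v=\sum_d\phi(\chi_{X_d\cap B})v$; then $uv$ is supported in the union $\bigcup_{\{d:X_d\cap B\neq\emptyset\}}X_d$, and whenever $X_d\cap B\neq\emptyset$ one has $d\in(U\circ U)[B]$, so $uv$ is supported in $(U\circ U)^2[B]$. Thus $\supp(u)\subseteq(U\circ U)^2$, which is an entourage of $X$, proving controlled propagation. The main obstacle — and the only place where care is really needed — is arranging the partition so that every piece $X_d$ is simultaneously bounded (for local separability) and large enough to inherit ampleness (for infinite dimensionality); this is exactly what the choice $U[d]\subseteq X_d\subseteq(U\circ U)[d]$, made possible by taking $D$ maximal $(U\circ U)$-separated, accomplishes.
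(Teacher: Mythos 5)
Your proof is correct and follows essentially the same route as the paper's: both pick a symmetric entourage witnessing ampleness, pass to a maximal separated subset $D$ via Lemma \ref{lem:iohwenrf}, and compare the two Hilbert spaces block by block over the resulting bounded pieces, where ampleness supplies infinite dimensionality and the bornology supplies separability of each block. The only difference is that the paper routes through the standard model $(H_{D}\otimes \ell^{2},\phi_{D}\otimes \id_{\ell^{2}})$ and composes two controlled unitaries whose construction it leaves to the reader, whereas you build the unitary $H\to H'$ directly and actually supply that omitted block-by-block construction (both arguments tacitly use that $H$ is the closed orthogonal sum of the subspaces $H(X_{d})$).
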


\begin{proof}
The bornological coarse space
$X$ is locally countable by Lemma \ref{fkwjlkwejlewfewfewfewf3242344}. Let $U$ be a symmetric entourage such that every $U$-separated subset is locally countable. After enlarging $U$ if necessary we can also assume  that the Hilbert spaces  $H(U[x])$ and $H^\prime(U[x])$ are $\infty$-dimensional for every $x$  in $X$. Choose a maximal $U^{2}$-separated subset $D$  of $ X$ as in Lemma \ref{lem:iohwenrf}. 
Consider  the ample $X$-controlled Hilbert space $(H_{D}\otimes \ell^{2}, \phi_{D}\otimes \id_{\ell^{2}})$ from Example~\ref{ex:ert34rn394}. 

We can construct unitary operators $u_H \colon  H \to H_{D}\otimes \ell^{2}$ and $u_{H^\prime} \colon  H^\prime \to H_{D}\otimes \ell^{2}$ of controlled propagation. Then    $u_{H^\prime}^\ast u_H \colon  H \to H^\prime$ is the desired unitary operator.

In order to construct, e.g., $u_{H}$  we proceed as follows. Since $D$ is $U^{2}$-separated, $(U[d])_{d\in D}$  is a pairwise disjoint family of subsets of $X$. Because $D$ is maximal    the family of subsets, $(U^{2}[d])_{d\in D}$ covers $X$. We choose a well-ordering on $D$ and using transfinite induction
we define a partition  $(B_{d})_{d\in D}$ of $X$ such that
$$U[d]\subseteq B(d)\subseteq U^{2}[d]$$ for all $d$ in $D$. To this end we set
$$B_{d_{0}}:=U^{2}[d_{0}]\setminus \bigcup_{d\in D,d_{0}<d} U[d]$$ for the smallest element $d_{0}$ of $D$.
Let now $d$ in $D$ and assume that $B_{d^{\prime}}$ has been constructed for all $d^{\prime}$ in $D$ with $d^{\prime}<d$.
Then we set
$$B_{d}:=U^{2}[d]\setminus \Big(\bigcup_{d^{\prime}\in D, d^{\prime}<d} B_{d^{\prime}}\cup \bigcup_{d^{\prime\prime}\in D, d<d^{\prime\prime}}U[d^{\prime\prime}]\Big)\, .$$
Because $U[d]\subseteq B_{d}$ and $B_{d}$ is bounded, the Hilbert space
$H(B_{d})$ is both separable and $\infty$-dimensional. We now construct a unitary $u_{H}:H\to H_{D}\otimes \ell^{2}$ by choosing a unitary 
$H(B_{d})\to (H_{D}\otimes \ell^{2})(\{d\})$ for every $d$ in $D$. The propagation of $u_{H}$ is controlled by $U^{2}$.
\end{proof}

Next we show  that an ample $X$-controlled Hilbert space absorbs every other $X$-controlled Hilbert space.
In view of later applications we will show a slightly stronger result.

Let $X$ be a bornological coarse space and $(H,\phi)$,  $(H^{\prime\prime},\phi^{\prime\prime})$ be $X$-controlled Hilbert spaces. Let $H^{\prime}\subseteq H$ be a closed subspace.

 \begin{lem}\label{rwoijwoifjoeiwewfjewjfo} 
 Assume: \begin{enumerate}
 \item $(H,\phi)$ is ample.
 \item $H^{\prime}$ is locally finite.
 \item $(H^{\prime\prime},\phi^{\prime\prime})$ is determined on points.
 \end{enumerate}
 Then there exists an isometric inclusion $H^{\prime\prime}\to H\ominus H^{\prime}$ of controlled propagation.
 \end{lem}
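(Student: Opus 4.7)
The strategy is to exploit ampleness of $(H,\phi)$ to find, locally at each point of $X$, infinite-dimensional room inside the orthogonal complement of $H'$, and then to patch these local embeddings into a globally controlled operator. The main obstacle is orthogonality bookkeeping across points — ensuring that the local pieces have pairwise orthogonal images in $H$ while still covering all of $H''$ with controlled propagation. This will be resolved by a sufficiently-separated discretization of $X$ combined with a partition whose members live in slightly larger neighborhoods of the discretization.

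Fix a control $\phi'$ recognizing $H'$ as locally finite (Definition \ref{defn:sfd9823}), and let $V$ be an entourage of $X$ for which the inclusion $i\colon (H',\phi') \hookrightarrow (H,\phi)$ has $\supp(i) \subseteq V$. Choose a symmetric entourage $U$ of $X$ containing $V\circ V^{-1}$ and large enough to witness ampleness, i.e.\ $H(U[x])$ is infinite-dimensional for every $x\in X$. Let $P\colon H\to H$ be the orthogonal projection onto the closed subspace $H'$. Then for every bounded $B\subseteq X$ we have $P\,H(B) = i\,i^{\ast}\,H(B) \subseteq i(H'(V^{-1}[B]))$, which is finite-dimensional by local finiteness of $(H',\phi')$; hence $P|_{H(B)}$ is of finite rank. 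Applying this with $B = U[x]$ and using that $H(U[x])$ is infinite-dimensional and separable (bornology is compatible with the coarse structure, so $U[x]$ is bounded), we conclude that $H(U[x])\cap \ker P$ is infinite-dimensional and separable for every $x\in X$.

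Since $H$ is ample, $X$ is locally countable by Proposition \ref{fkwjlkwejlewfewfewfewf3242344}. Using Zorn (as in Lemma \ref{lem:iohwenrf}), pick a maximal $U^{2}$-separated subset $D\subseteq X$; by symmetry of $U$ and $U^{2}$-separation, the subsets $(U[d])_{d\in D}$ are pairwise disjoint, and $X = U^{2}[D]$. Well-order $D$ and define $A_{d} := U^{2}[d] \setminus \bigsqcup_{d'<d} A_{d'}$; this yields a partition $X = \bigsqcup_{d\in D} A_{d}$ with $d\in A_{d}\subseteq U^{2}[d]$, and in particular every $A_{d}$ is bounded. Decompose $H'' = \bigoplus_{d\in D} H''(A_{d})$ using the orthogonality of the projections $\phi''(\chi_{A_{d}})$. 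Each summand $H''(A_{d})$ is separable (as $A_{d}$ is bounded), so choose an isometric linear embedding
\[
\iota_{d}\colon H''(A_{d}) \longrightarrow H(U[d])\cap \ker P,
\]
which exists because the target is separable and infinite-dimensional.

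Finally, set $u := \bigoplus_{d\in D} \iota_{d}\colon H'' \to H$. Because $U[d]\cap U[d']=\emptyset$ for distinct $d,d'\in D$, the subspaces $H(U[d])$ are mutually orthogonal, so $u$ is well-defined and isometric, and its image lies in $\ker P = H\ominus H'$ by construction. For the propagation estimate, let $Y\subseteq X$ be any subset; then $u(H''(Y)) = \bigoplus_{d} \iota_{d}(H''(Y\cap A_{d}))$, and the $d$-th summand is non-zero only when $Y\cap A_{d}\neq \emptyset$, which forces $d\in U^{2}[Y]$ since $A_{d}\subseteq U^{2}[d]$. Therefore
\[
u(H''(Y)) \subseteq \bigoplus_{d\in U^{2}[Y]} H(U[d]) \subseteq H(U^{3}[Y]),
\]
so $\supp(u)\subseteq U^{3}$ is an entourage of $X$ and $u$ has controlled propagation, completing the construction.
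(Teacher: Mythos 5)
Your proof is correct and follows essentially the same route as the paper's: both rest on the observation that the orthogonal projection onto $H^{\prime}$ has finite rank on $H(B)$ for bounded $B$ (so the orthogonal complement of $H^{\prime}$ is locally infinite-dimensional) and then patch local isometries over a $U$-separated net. The only difference is organizational — the paper first assembles the local complements into an ample subspace $H_{1}\subseteq H\ominus H^{\prime}$ and then invokes the absorption argument of Lemma \ref{lem:drf8934}, whereas you embed $H^{\prime\prime}$ directly piece by piece; both versions rely equally on the (implicitly assumed) decomposition $H^{\prime\prime}=\bigoplus_{d\in D}H^{\prime\prime}(A_{d})$ over a partition into bounded sets.
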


(The notation $H\ominus H^{\prime}$ stands for the orthogonal complement of $H^{\prime}$ in $H$.)
\begin{proof}
Below we will construct an ample $X$-controlled Hilbert space $(H_{1},\phi_{1})$ and an isometric
inclusion of controlled propagation $H_{1}\to  H\ominus H^{\prime}$. By the arguments in the proof of Lemma \ref{lem:drf8934} we can find an isometric inclusion $H^{\prime\prime}\to H_{1}$ of controlled propagation. Then the composition
$H^{\prime\prime}\to H_{1}\to  H\ominus H^{\prime}$ is an isometric inclusion of  controlled propagation.

If view of the proof of Lemma \ref{lem:drf8934} we can assume that $(H,\phi)$ is of the form
$(H_{D}\otimes \ell^{2},\phi_{D}\otimes \ell^{2})$ for some locally countable $U$-dense subset $D$, where $U$ is some entourage of $X$.
Since $H^{\prime}$ is locally finite, for every $d$ in $D$  the subspace $$H_{1}(\{d\}):=H_{D}\{d\}\otimes \ell^{2}\cap H^{\prime,\perp}$$ is  {$\infty$}-dimensional. We define the closed subspace
$$H_{1}:= {\bigoplus_{d\in D} H_{1}(\{d\})}\subseteq H\ominus H^{\prime}$$
with the control $\phi_{1} := (\phi_{D}\otimes \ell^{2})_{|H_{1}}$. 
The $X$-controlled Hilbert space $(H_{1},\phi_{1})$ is ample and the propagation of {the} inclusion
$H_{1}\to H$ is controlled by $\diag_{X}$.
\end{proof}

\begin{ex}\label{rem:sdb7834rw}
Let $M$ be a Riemannian manifold, and let $E$ be a hermitian vector bundle on $M$. Then we can consider the Hilbert space $H:=L^{2}(M,E)$ of square integrable measurable sections of $E$, where $M$ is equipped with the Riemannian volume measure. This measure is defined on the Borel $\sigma$-algebra of $M$ or some completion of it, but not on $\cP(M)$ except if $M$ is zero-dimensional. 

We consider $M$ as a bornological coarse space with the bornological coarse structure induced by the Riemannian distance. By our convention $C(M)$ is the $C^{*}$-algebra of all bounded $\C$-valued functions on $M$ and
therefore contains non-measurable functions. Therefore it can not act simply as multiplication operators on $H$.
  To get around this problem we can apply the following general procedure.

We call a measure space $(Y,\cR,\mu)$ separable if the metric space $(\cR,d)$ with the metric $d(A,B) := \mu(A \Delta B)$ is separable. Note that $L^2(Y,\mu)$ is a separable Hilbert space if $(Y,\cR,\mu)$ is a separable measure space.

Let $(X,\cC,\cB) $ be a bornological coarse space such that the underlying set $X$ is also equipped with the structure of a  measure space $(X,\cR,\mu)$. We want that the Hilbert space $H := L^2(X,\mu)$ becomes $X$-controlled by equipping it with a representation of $C(X)$ which is derived from the representation by multiplication operators.

Assume the following:
\begin{enumerate}
\item\label{edgeuzuwgfwuefoi0} $\cB \cap \cR$ is cofinal in $\cB$.
\item\label{edgeuzuwgfwuefoi1} A subset $Y$ of $X$ is measurable if the intersection $Y \cap B$ is measurable for every $B$ in  $ \cB \cap \cR$.
\item\label{edgeuzuwgfwuefoi2} For every measurable, bounded subset $B$ of $X$ the measure space $(B,\cR \cap B, \mu_{|\cR \cap B})$ is separable.
\item\label{jdjofewfwefewfewfwf} There exists an entourage $U$ in $\cC$ and a partition $(D_\alpha)_{\alpha \in A}$ of $X$ into non-empty, pairwise disjoint, measurable, $U$-bounded subsets of $X$ with the property that the set $\{\alpha \in A :  D_\alpha \cap B \not= \emptyset\}$ is countable for every bounded subset $B$ of $X$.
\end{enumerate}
For every $\alpha$ in $A$ we    choose a point $d_\alpha$  in $D_\alpha$.

We consider the Hilbert space
$$H:=L^{2}(X,\mu)\, .$$
We furthermore define a map
$$\phi:C(X)\to B(H)\, , \quad 
 f\mapsto \phi(f) := \sum_{\alpha \in A} f(d_\alpha) \chi_{D_\alpha}$$
 where we understand the right-hand side as a multiplication operator by a function which we will also denote by $\phi(f)$ in the following.
 First note that $\phi(f)$ is a measurable function. Indeed, for every $B$  in $\cB \cap \cR$ the restriction $\phi(f)|_B$ is a countable sum of measurable functions (due to {Assumption~\ref{jdjofewfwefewfewfwf}}) and hence measurable. By {Assumption~\ref{edgeuzuwgfwuefoi1}} we then conclude that $\phi(f)$ is measurable.
 
Since $ (D_{\alpha})_{\alpha\in A}$ is a partition we see that $\phi$ is a homomorphism of $C^{*}$-algebras.

If $B$ is a bounded subset of $X$, then $$H(B) = \phi(\chi_{B})H \subseteq \chi_{U[B]}H \subseteq \chi_V H\, .$$ where $V$ is  some measurable subset of $X$ which  contains $U[B]$ and is   bounded. Such a~$V$    exists due to {Assumption~\ref{edgeuzuwgfwuefoi0}} and the compatibility of $\cB$ and $\cC$. Because of {Assumption~\ref{edgeuzuwgfwuefoi2}} we know that $\chi_V H$ is separable, and therefore   $H(B)$ is separable. So $L^2(X,\mu)$ is an $X$-controlled Hilbert space. {By construction it is determined on points.}

We continue  the example of our Riemannian manifold.
We show that we can apply the construction above in order to turn $H=L^{2}(M,E)$ into an $M$-controlled Hilbert space. The measure space $(M,\cR^{Borel},\mu)$ with the Borel $\sigma$-algebra
$\cR^{Borel}$ and the Riemannian volume measure $\mu$ clearly satisfies the Assumptions \ref{edgeuzuwgfwuefoi0}, \ref{edgeuzuwgfwuefoi1} and \ref{edgeuzuwgfwuefoi2}. 

We discuss {Assumption~\ref{jdjofewfwefewfewfwf}}.
 We consider the entourage $U_{1}$ in $\cC$ (see \eqref{ffwefewfe24543234wfefewef}). By Lemma~\ref{lem:iohwenrf} we can find a countable $U_{1}$-separated (Definition \ref{defn:okmnsf}) subset $A$ of $ M$  such that $U_{1}[A]=M$. We fix a bijection of $A$ with $\nat$ which induces an ordering of $A$. 
 Using   induction
we can construct a partition $(D_{\alpha})_{\alpha\in A}$ of $M$ such that $D_{\alpha}\subseteq U_{1}[\alpha]$ for every $\alpha$ in $A$. If $\alpha$ is the smallest element of $A$, then we set $D_{\alpha}:=U_{1}[\alpha]$.
Assume now that $\alpha$ is in $A$ and  $D_{\beta}$ has been defined already for every
$\beta$ in $A$ with $\beta<\alpha$. Then we set $$D_{\alpha}:=U_{1}[\alpha]\setminus \bigcup_{\{\beta\in A :  \beta<\alpha\}} D_{\beta}\, .$$ This partition satisfies {Assumption~\ref{jdjofewfwefewfewfwf}} for the entourage $U_{1}$.
It consists of Borel measurable subsets since $U_{1}[\alpha]\subseteq M$ is open for every $\alpha$ in $A$ and hence measurable, and  the set $D_{\alpha}$ is obtained from the collection of    $U_{1}[\beta]$ for $\beta$ in $A$ using  countably many set-theoretic operations.  Since $A$ was $U_{1}$-separated we have   $\alpha\in D_{\alpha}$ for every $\alpha$ in $A$ and therefore $D_{\alpha}$ is non-empty.
\hB
\end{ex}

\subsection{Roe algebras}\label{ejfoiwfwoefueoi2342343243}

Let $X$ be a bornological coarse space, and let $(H,\phi)$ be an $X$-controlled Hilbert space. In the present section we associate to this data various $C^{*}$-algebras, all of which are  versions of the Roe algebra.
They differ by the way the conditions of controlled propagation and local finiteness are implemented.

We start with the local finiteness conditions.  
  Recall Definition \ref{defn:sfd9823} {for} the notion of a locally finite subspace.

We consider two $X$-controlled Hilbert spaces $(H,\phi)$, $(H^\prime, \phi^\prime)$  and a bounded linear operator  $A \colon (H,\phi) \to (H^\prime, \phi^\prime)$.

\begin{ddd}\label{defn:ertcnvb} 
  $A$ is locally finite\index{locally!finite!operator}\index{operator!locally finite} if there exist orthogonal decompositions $H=H_{0}\oplus H_{0}^{\perp}$ and   $H^{\prime}=H_0^\prime \oplus H_{0}^{\prime,\perp}$ and a bounded operator $A_{0}:H_{0}\to H_{0}^{\prime}$ such that: 
\begin{enumerate} 
\item   $H_{0}$ and $H_{0}^{\prime}$ are locally finite.
\item $$A=\left(\begin{array}{cc}{A_0}&0\\0&0\end{array}\right)\, .$$  
\end{enumerate}
\end{ddd}

\begin{ddd}\label{defn:ertcnvb1}
 $A$ is locally compact\index{locally!compact}\index{operator!locally compact} if for every bounded subset
$B$ of $X$ the products $\phi^{\prime}( B )A$ and $A\phi(  B )$ are compact.
 \end{ddd}
 
 It is clear that a locally finite operator is locally compact.
 
 \begin{rem}
 One could think of defining local finiteness of $A$ similarly as local compactness by requiring that
 $\phi^{\prime}(B)A$ and $A\phi(B)$ are finite-dimensional for every bounded subset of $X$.
 It is not clear that this definition is equivalent to Definition \ref{defn:ertcnvb}.
 Our motivation for using \ref{defn:ertcnvb} is that it works in the Comparison Theorem \ref{fwefiwjfeiooi234234324434}.   \hB
\end{rem}

 We now introduce propagation conditions.

Let $X$ be a bornological coarse space.
  We consider two subsets $B$ and $B^{\prime}$ and an entourage $U$ of $X$.
\begin{ddd}
\index{subset!$U$-separated}\index{$U$-!separated!subsets}
 $B'$ is $U$-separated from $B$ 
 \[U[B]\cap  B^{\prime}=\emptyset\, .\]
 \end{ddd}

Let $(H,\phi)$ and $(H^{\prime},\phi^{\prime})$ be two $X$-controlled Hilbert spaces and $A:H\to H^{\prime}$ be a bounded linear operator, and let $U$ be an entourage of $X$.
  \begin{ddd}\label{bvcbdfg}\mbox{}
  \begin{enumerate}
  \item $A$ is $U$-quasi-local\index{quasi-local!operator}\index{operator!quasi-local} if   for every   $\varepsilon$ in $(0,\infty)$ there exists an integer $n$ in $\nat $   such that for any two  mutually $U^{n}$-separated subsets $B$ and $B^{\prime}$ of $X$     we have
 $\|\phi^{\prime}(B^{\prime})A\phi(B)\|\le \varepsilon$.
 \item $A$ is quasi-local if it is $U$-quasi-local for some entourage $U$ of $X$. \end{enumerate}
 \end{ddd}
  It is clear that an operator  with controlled propagation  quasi-local.
 Indeed, if $A$ is $U$-controlled, the $\phi^{\prime}(B^{\prime})A\phi(B)=0$ provided
 $B'$ is $U$-separated from $B$.

Another possible definition  of the notion of  quasi-locality is the following. We add the word ``weakly'' in order to distinguish it from Definition \ref{bvcbdfg}

\begin{ddd}\label{wetrhiwgfwregwergreg9}
$A$ is  weakly quasi-local if for every $\epsilon$ in $(0,\infty)$  there exists an entourage $U$ of $X$ such that for any two mutually $U$-separated subsets $B$ and $B^\prime$ of $X$ we have $\|\phi^{\prime}(B^{\prime})A\phi(B)\|\le \varepsilon$.
\end{ddd}

It is obvious that quasi-locality implies weak quasi-locality. The following is also clear.

\begin{kor}\label{wtegiuhwefrefwerfwe}
If the coarse structure of $X$ is generated by a single entourage, then the notions of weak quasi-locality and quasi-locality over $X$ coincide.
\end{kor}

 \begin{rem}\label{rem:sf834}
In general,    being weakly quasi-local is  strictly weaker than  being quasi-local  as the following example shows.

We equip the set $X:=\Z\times \{0,1\}$ with the minimal  bornology and the coarse structure $\cC\langle (U_{n})_{n\in \nat}\rangle$, where   for $n$ in $\Z$ the entourage $U_{n}$ is given by $$U_{n}:=\diag_{X}\cup \{((n,0),(n,1)),((n,1),(n,0))\}\, .$$

Let $(H,\phi)$ be the $X$-controlled Hilbert space as in Example \ref{fjwefjewewofewfewfewfewf} with $D=X$.
We further let $A$ in $B(H)$ be the operator
which sends $\delta_{(n,0)}$ to $e^{-|n|}\delta_{(n,1)}$ and is zero otherwise.

We claim that $A$ is weakly quasi-local, but not quasi-local.
Every entourage $U$ of $X$ is contained in a finite union of generators. Hence there exists $n$ in $\Z$
such that $$U\cap (\{n\}\times \{0,1\})^{2}=\{(n,n)\}\times \diag_{\{0,1\}}\, .$$
The points $(n,0)$ and $(n,1)$ are $U^{k}$-separated for all $k$ in $\nat$.
But $$\|\phi(\{(n,1)\})A\phi(\{(n,0)\})\|=e^{-|n|}$$ independently of $k$. So the operator $A$ is not quasi-local in the sense of  Definition 
\ref{bvcbdfg}. 

On the other hand, given $\varepsilon$ in $(0,\infty)$, we can choose the entourage $$U:=\bigcup_{n\in \Z, e^{|-n|}\ge\epsilon} U_{n}$$ of $X$.
If $B$ and $B^{\prime}$ are mutually $U$-separated subsets, then one easily checks that
$$\|\phi^{\prime}(B^{\prime})A\phi(B)\|\le \varepsilon\ .$$ {This shows}
 {that $A$ is  weakly quasi-local.}\hB
\end{rem}

 Let $(H,\phi)$ and $(H^{\prime},\phi^{\prime})$ be two $X$-controlled Hilbert spaces, and $U$ be an entourage of $X$.

 \begin{lem}\label{qwrifuq9erfewdqewdqewdqewd}\mbox{}
 \begin{enumerate}
 \item\label{werigjowergrwegreferfw} The space of locally compact operators is a closed subspace of $B(H,H')$.
\item \label{werigjowergrwegreferfw1} 
The space of $U$-quasi-local operators is a closed subspace of $B(H,H')$.  
\item \label{werigjowergrwegreferfw2} 
 The space of  weakly quasi-local operators is a closed subspace of $B(H,H')$.  
\end{enumerate}
 \end{lem}
 \begin{proof}
 Assertion \ref{werigjowergrwegreferfw} is clear since the compact operators on $H$ or $H'$ are closed in the corresponding spaces of bounded operators.
 
 We now show Assertion \ref{werigjowergrwegreferfw1}.
 Assume that $A:H\to H'$ is in the closure of the $U$-quasi-local operators. 
 We must show that $A$ itself is $U$-quasi-local.
 We fix $\epsilon$ in $(0,\infty)$. Then there exists a   $U$-quasi-local operator $A':H\to H'$ such that $\|A-A'\|\le \epsilon/2$.
 We can now choose  $n$ in $\nat$ sufficiently large  $X$ such that for all pairs $B,B'$ of mutually $U^{n}$-separated subsets of $X$
 we have $\|\phi(B')A'\phi(B)\|\le \epsilon/2$. Then we have 
 $$\|\phi'(B')A\phi(B)\|\le \|\phi'(B')(A-A')\phi(B)\|+\|\phi(B')A'\phi(B)\|\le \epsilon\, .$$
 
 For Assertion \ref{werigjowergrwegreferfw2} we argue similarly.
 Assume that $A:H\to H'$ is in the closure of the weakly quasi-local operators. 
 We must show that $A$ itself is weakly quasi-local.
 We fix $\epsilon$ in $(0,\infty)$. Then there exists a  quasi-local operator $A':H\to H'$ such that $\|A-A'\|\le \epsilon/2$.
 We can now choose  an entourage $U$ of $X$ such that for all pairs $B,B'$ of mutually $U$-separated subsets of $X$
 we have $\|\phi(B')A'\phi(B)\|\le \epsilon/2$. Then we have 
\[\|\phi'(B')A\phi(B)\|\le \|\phi'(B')(A-A')\phi(B)\|+\|\phi(B')A'\phi(B)\|\le \epsilon\, .\qedhere\]
\end{proof}

Our motivation to use the notion of quasi-locality in the sense of  Definition~\ref{bvcbdfg}  in contrast to the weak quasi-locality  as in Definition \ref{wetrhiwgfwregwergreg9} is that it ensures the $u$-continuity of the coarse $K$-theory functor $\KXql$, see Proposition \ref{hewifeiiu3984u2398244241}.

\begin{rem}
Note  that  by Lemma \ref{qwrifuq9erfewdqewdqewdqewd} the weakly  quasi-local operators  form a closed subspace of all operators. 
In contrast, the space of quasi-local operators defined as in Definition \ref{bvcbdfg} is not closed in general.
But by Corollary \ref{wtegiuhwefrefwerfwe}  it is closed if the coarse structure is generated by a single entourage.
 \hB \end{rem}


Let $X$ be a bornological coarse space and $(H,\phi)$ an $X$-controlled Hilbert space.
In the following we define four versions of Roe algebras. They are all closed $C^{*}$-subalgebras of
$B(H)$ which are generated by operators with appropriate local finiteness and propagation conditions.

\begin{ddd}\label{qekdfjqodqqwdqwdqwdqdwq}
\index{Roe!algebra}
\mbox{}
\begin{enumerate}
\item   $\cC^{*}(X,H,\phi)$\index{$\cC^{*}(-)$|see{Roe algebra}}  is  generated by the operators which have controlled propagation and are locally finite.  
\item   $\cClc^\ast(X,H,\phi)$\index{$\cClc^\ast(-)$} is   generated by the operators which have controlled propagation and are locally compact.  
\item    $\cCql^\ast(X,H,\phi)$\index{$\cCql^\ast(-)$} is  generated by the operators which are quasi-local and locally finite.  
\item   $\cClcql^\ast(X,H,\phi)$\index{$\cClcql^\ast(-)$}  is   generated by the operators which are quasi-local and locally compact.  
\end{enumerate}
\end{ddd}

\begin{rem}
If $(H,\phi)$ is determined on points and if the coarse structure of $X$ is generated by a single entourage, then in view of 
 Lemmas \ref{qwrifuq9erfewdqewdqewdqewd} and \ref{webgiojewrgerfewferf}
 we can replace  the phrase ``generated by the''  by ``is the algebra of'' in the case of $\cClcql^\ast(X,H,\phi)$. 
\hB \end{rem}

We have the following obvious inclusions of $C^{*}$-algebras:
\begin{equation}\label{ewfiwuehfeuiiu2432342342234}
\xymatrix{& \cClc^\ast(X,H,\phi) \ar[dr] &\\
\cC^{*}(X,H,\phi) \ar[ur] \ar[dr] & & \cClcql^\ast(X,H,\phi)\\
& \cCql^\ast(X,H,\phi) \ar[ur] &} 
\end{equation}

Let $(H,\phi)$ and $(H^{\prime},\phi^{\prime})$ be two $X$-controlled Hilbert spaces, and let $u\colon H \to H^{\prime}$ be an isometry of controlled propagation.   Then we get   induced injective homomorphisms
\begin{equation}
\label{eq:sdfhiu432}
\cC_?^{*}(X,H,\phi)\to  \cC_?^{*}(X,H^{\prime},\phi^{\prime})
\end{equation}
given in all four cases by the formula $A\mapsto u A u^{*}$. For justification in the case $?\in \{\emptyset,\ql\}$
note that
if  $H''$ is a locally finite subspace of $H$, then $u(H'')$ is a locally finite subspace of~$H'$.

\begin{rem}
Classically the Roe algebra is defined as the closure of the locally compact, controlled propagation operators. The reason why we use the locally finite version as our standard version of Roe algebra (it does not come with any subscript in its notation) is because the functorial definition of coarse $K$-homology in Section~\ref{sec:jkbewrgh} is modeled on the locally finite version and we want the Comparison Theorem  \ref{fwefiwjfeiooi234234324434}.

Proposition~\ref{lem:sdfbi23} shows that in almost all cases of interest (Riemannian manifolds and countable discrete groups endowed with a left-invariant, proper metric) the locally finite version of the Roe algebra coincides with its locally compact version.

But note that we do not know whether the analogous statement of Proposition~\ref{lem:sdfbi23} for the quasi-local case is true, i.e., under which non-trivial  conditions on $X$ the equality   $$\cCql^\ast(X,H,\phi) \stackrel{?}{=} \cClcql^\ast(X,H,\phi)$$
holds true. 
\hB
\end{rem}

\begin{rem}
It is an interesting question under which non-trivial conditions on $X$ we have equalities  $$\cC^{*}(X,H,\phi) = \cCql^{*}(X,H,\phi)\, , \quad \cClc^{*}(X,H,\phi) = \cClcql^{*}(X,H,\phi)\, .$$ As far as the authors know the only non-trivial space (i.e., not a bounded one) for which this was known is $\mathbb{R}^n$ \cite[Prop.~1.1]{lange_rabinovich}. The conjecture was that these equalities hold true for any space $X$ of finite asymptotic dimension; see {Roe} \cite[Rem.~on Page 20]{roe_index_coarse} where this is claimed (but no proof is given). Meanwhile\footnote{after the appearance of the first version of the present paper}, \v{S}pakula and Tikuisis \cite{st} have shown the {second} equality  under the even  weaker condition {that $X$ has} finite straight {decomposition complexity.}
\hB
\end{rem}

Let $X$ be a bornological coarse space, and let $(H,\phi)$ be an $X$-controlled Hilbert space.
If $(H,\phi)$ is locally finite (Definition \ref{ewoifjweoifiufu89234234324}), then it is obvious that
$$\cC^{*}(X,H,\phi) = \cClc^\ast(X,H,\phi)\, .$$
The following proposition  gives a generalization of this equality to cases where $(H,\phi)$ is not necessarily locally finite.

Let $X$ be a bornological coarse space.

\begin{ddd}\label{weofweoifu9824234343224}
$X$ is  separable\index{space!separable}\index{separable}\index{bornological coarse space!separable} if it admits an entourage $U$ for which there exists   a countable $U$-dense (Definition \ref{defn:okmnsf}) subset.
\end{ddd}

\begin{ex}
If $(X,d)$ is a separable metric space, then  the bornological coarse space $(X,\cC_{d},\cB)$ is separable, where $\cC_{d}$ is the coarse structure \eqref{hvhdgqwuztd8t1z87231231233} induced by the metric and $\cB$ is any compatible bornology.
\hB
\end{ex}

Let $X$ be a bornological coarse space, and let $(H,\phi)$ be an $X$-controlled Hilbert space.

\begin{prop}\label{lem:sdfbi23}Assume:
\begin{enumerate} 
\item 
 $X$ is separable and locally finite (Definition~\ref{defn:okmnsf}).
 \item  $(H,\phi)$ is determined on points (Definition \ref{rgioerjog34t4t34t34t}).
 \end{enumerate}  Then $\cC^{*}(X,H,\phi) = \cClc^\ast(X,H,\phi)$.
\end{prop}

\begin{proof}
Since $X$ is separable we can choose an entourage  $U^{\prime}$ and  a countable subset $D^{\prime}$    of $X$    such that $D^{\prime}$ is $U^{\prime}$-dense. Since $X$ is locally finite we can enlarge $U^{\prime}$ and in addition assume that every $U^{\prime}$-separated subset of $X$ is locally finite. Let $D$ be a maximal $U^{\prime}$-separated subset of $D^{\prime}$ and set $U:=U^{\prime,2}$. Then $D$ is $U$-dense and locally finite.
For simplicity of notation we only consider the case that $D$ is infinite.
Let $\nat\ni n\mapsto d_{n}\in D$ be a bijection. We define inductively $B_{0}:=U[d_{0}]$ and
$$B_{n}:=U[d_{n}]\setminus \bigcup_{m<n} B_{m}\, .$$
Then   $(B_{n})_{n\in \nat}$ is a  partition of $X$ into  $U$-bounded  subsets. After deleting empty members and renumbering we can assume that $B_{n}$ is not empty for every $n$ in $\nat$.

We consider an operator  $A$ in $\cClc^{*}(X,H,\phi)$. We will show that for every given $\epsilon$ in $(0,\infty)$ there exists a locally finite  
subspace $H^{\prime}$ of $  H$  
and an operator $A^{\prime}:H^{\prime}\to H^{\prime}$ such that
\begin{enumerate}
\item $\|A- A^{\prime}  \|\le \epsilon$ (we {implicitly} extend $A^{\prime}$ by zero on $H^{\prime,\perp}$).
\item $A^{\prime}$ has controlled propagation.
\end{enumerate}
Note that we verify a condition that is stronger than Definition \ref{defn:ertcnvb} since we take the same subspace as domain and target of $A^{\prime}$.

As a first step we choose a locally compact operator $A_{1}$ of controlled propagation such that $\|A-A_{1}\|\le \epsilon/2$. 
For every $n$ in $\nat$ we define the subset $$B_{n}^{\prime}:=\supp(A_{1})[B_{n}]\cup B_{n}\, .$$ We then have
$A_{1}B_{n}=B_{n}^{\prime}A_{1}B_{n}$ for every $n$ in $\nat$, where for a subset $B$ of $X$ we abbreviate the operator  $\phi(B)$ by $B$. Since $A_{1}$ is locally compact
 we can now find {for every $n$ in $\nat$} a finite-dimensional projection $P_{n}$ on $H(B_{n}^{\prime})$ such that
$$\|P_{n}B_{n}^{\prime}A_{1}B_{n}P_{n}-A_{1}B_{n}\|\le \epsilon \cdot 2^{-n-2}\, .$$

Then we define $H^{\prime}$ to be the subspace of $H$ generated by the images of $P_{n}$.
Let $H_{n}$    be the subspace of  $H^{\prime}$ generated by the images of $P_{1},\dots,P_{n}$.
We define $Q_{n}$ to be the orthogonal projection onto the subspace $H_{n}\ominus H_{n-1}$.
For every $n$ in $\nat$ we choose a point $b_{n}$ in $B_{n}$. Then we define the control on $H^{\prime}$ by
$\phi^{\prime}(f):=\sum_{n\in \nat} f(b_{n}) Q_{n}$.
With these choices  the inclusion $ (H^{\prime}, \phi^\prime) \to (H,\phi)$ has controlled propagation. 
Moreover, $(H^{\prime}, \phi^\prime)$ is a locally finite $X$-controlled Hilbert space. Indeed, if $B$ is a bounded subset of $X$, then $U^{-1}[B]\cap D$ is finite since $D$ is locally finite. Hence the set $\{n\in \nat\:|\: B\cap B_{n}\not=\emptyset\}$ is finite. Similarly, $\{n\in \nat\:|\: B\cap B^{\prime}_{n}\not=\emptyset\}$ is finite.
  This shows that $H^{\prime}$   is a locally finite subspace of $ H$.

  Furthermore,
$$\Big\| \sum_{n\in \nat}   P_{n}B_{n}^{\prime}A_{1}B_{n}P_{n}-A_{1} \Big\|\le \epsilon/2\, ,$$
where the sum  converges {in the strong operator topology}.
The operator $$A^{\prime}:=\sum_{n\in \nat}   P_{n}B_{n}^{\prime}A_{1}B_{n}P_{n}$$ is an operator on $H^{\prime}$
which has controlled propagation (when considered as an operator on  $(H,\phi)$ after extension by zero).

Finally, we observe that $$\|A^{\prime}-A\|\le \epsilon\, ,$$ 
which completes the proof.
\end{proof}






Let $X$ be a bornological coarse space, and let $(H,\phi)$, $(H^{\prime},\phi^{\prime})$ be two $X$-controlled Hilbert spaces.
\begin{lem}
 If both $(H,\phi)$ and $(H^{\prime},\phi^{\prime})$ are ample (see Definition \ref{fjwefewiojoi2jroi23jr23r23r23r}), then we have isomorphisms of $C^{*}$-algebras {(for all four possible choices for ? from Definition~\ref{qekdfjqodqqwdqwdqwdqdwq})}
\[\cC_?^{*}(X,H,\phi)\cong \cC_?^{*}(X,H^{\prime},\phi^{\prime})\]
{given by conjugation with a unitary operator $H \to H'$ of controlled propagation.}
\end{lem}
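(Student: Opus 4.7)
The plan is to apply Lemma~\ref{lem:drf8934} to obtain a unitary $u\colon H\to H'$ of controlled propagation, and then verify that the conjugation map
\[
\Psi\colon B(H)\longrightarrow B(H'),\qquad \Psi(A):=uAu^{*},
\]
restricts to an isomorphism on each of the four flavours of Roe algebra. Since $\Psi$ is a $*$-isomorphism of $C^{*}$-algebras with inverse $B\mapsto u^{*}Bu$, and since $u^{*}$ has controlled propagation with $\supp(u^{*})=\supp(u)^{-1}$, by symmetry it suffices to show that $\Psi$ maps the generators of each $\cC^{*}_{?}(X,H,\phi)$ into $\cC^{*}_{?}(X,H',\phi')$.

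Let $U:=\supp(u)\in \cC$. A repeated use of the support definition gives the basic identities
\[
\phi'(B)\,u = \phi'(B)\,u\,\phi(U^{-1}[B]),\qquad u^{*}\,\phi'(B) = \phi(U^{-1}[B])\,u^{*}\,\phi'(B)
\]
for every subset $B\subseteq X$ (and analogously with $u$ and $u^{*}$ interchanged). First I would use these to handle \emph{controlled propagation}: if $\supp(A)\subseteq V$ for some $V\in\cC$, then a direct support computation gives $\supp(uAu^{*})\subseteq U\circ V\circ U^{-1}\in\cC$. Next, for \emph{local compactness}, if $A$ is locally compact and $B\in\cB$, write
\[
\phi'(B)\,uAu^{*} \;=\; \phi'(B)\,u\,\bigl(\phi(U^{-1}[B])\,A\bigr)\,u^{*};
\]
since $U^{-1}[B]$ is bounded (compatibility of $\cC$ and $\cB$) the bracket is compact, so the whole product is compact; the other side is analogous.

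For \emph{local finiteness}, suppose $A$ admits a decomposition $A=\begin{pmatrix}A_{0}&0\\0&0\end{pmatrix}$ relative to $H=H_{0}\oplus H_{0}^{\perp}$ and $H=H_{0}'\oplus H_{0}'^{,\perp}$, with $H_{0},H_{0}'$ locally finite. Set $K_{0}:=u(H_{0})$ and $K_{0}':=u(H_{0}')$, both closed subspaces of $H'$. Transport the controls that recognise $H_{0}$ and $H_{0}'$ as locally finite subspaces of $H$ along the unitary $u$ to obtain controls $\phi_{K_{0}},\phi_{K_{0}'}$; since $u$ has controlled propagation, the inclusions $K_{0}\hookrightarrow H'$ and $K_{0}'\hookrightarrow H'$ have controlled propagation (the supports compose), so $K_{0},K_{0}'$ are locally finite subspaces of $H'$. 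The operator $uAu^{*}$ vanishes on $K_{0}^{\perp}=u(H_{0}^{\perp})$ and has image in $K_{0}'$, hence is locally finite.

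The subtlest step is \emph{quasi-locality}, which I would treat last. Assume $A$ is $V$-quasi-local and let $W\in\cC$ be a symmetric entourage with $\diag_{X}\cup V\cup U\cup U^{-1}\subseteq W$. Given $\varepsilon>0$, choose $n$ from the $V$-quasi-locality of $A$ so that $\|\phi(C')A\phi(C)\|\le\varepsilon$ for $V^{n}$-separated $C,C'$. Using the identities above,
\[
\phi'(B')\,uAu^{*}\,\phi'(B) \;=\; \phi'(B')\,u\,\phi(U^{-1}[B'])\,A\,\phi(U^{-1}[B])\,u^{*}\,\phi'(B),
\]
so $\|\phi'(B')\,uAu^{*}\,\phi'(B)\|\le\|\phi(U^{-1}[B'])A\phi(U^{-1}[B])\|$. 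If $B,B'$ are $W^{n+2}$-separated, then the inclusions $V^{n}[U^{-1}[B]]\subseteq W^{n+1}[B]$ and similarly for $B'$ show that $U^{-1}[B]$ and $U^{-1}[B']$ are $V^{n}$-separated, yielding the required bound $\varepsilon$. Hence $uAu^{*}$ is $W$-quasi-local. Applying the analogous argument to $u^{*}$ completes the proof.
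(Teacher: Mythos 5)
Your proof is correct and takes essentially the same route as the paper: the paper's proof is the one-line observation that Lemma~\ref{lem:drf8934} supplies the controlled-propagation unitary and that conjugation by such an isometry induces homomorphisms between all four Roe algebras (the map \eqref{eq:sdfhiu432}, which the paper asserts without proof). Your write-up simply supplies the case-by-case verification of that assertion, and all four verifications (propagation, local compactness, local finiteness via transported controls, and the $W^{n+2}$-separation argument for quasi-locality) are sound.
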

 
\begin{proof}
This follows from Lemma~\ref{lem:drf8934} together with \eqref{eq:sdfhiu432}.
\end{proof}

Finally, let $f:X\to X^{\prime}$ be a morphism between bornological coarse spaces, and $(H,\phi)$ be an $X$-controlled
Hilbert space. Then we can  form the $X^{\prime}$-controlled
Hilbert space $f_{*}(H,\phi):=(H,\phi\circ f^{*})$ (see Definition
 \ref{ewljfewilewfiouwefwefew}). The Roe algebras for $(H,\phi)$ and $f_{*}(H,\phi)$ are closed subalgebras of $B(H)$.

\begin{lem}\label{elkjqdqoiuo3432424324}
We have inclusions
\[\cC_?^{*}(X,H,\phi)\subseteq  \cC_?^{*}(X,H,\phi\circ f^{*})\]
{for all four possible choices for ? from Definition~\ref{qekdfjqodqqwdqwdqwdqdwq}.}
\end{lem}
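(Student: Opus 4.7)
The statement to verify should read $\cC_?^{*}(X,H,\phi)\subseteq \cC_?^{*}(X',H,\phi\circ f^{*})$, where the right-hand side is the Roe algebra of the $X'$-controlled Hilbert space $f_{*}(H,\phi)$. Both algebras are closed $C^{*}$-subalgebras of $B(H)$, so it suffices to check that each of the four generating classes of operators for the left-hand algebra is contained in the corresponding generating class on the right. The plan is to verify separately that the propagation and local-finiteness/local-compactness conditions pass along $f$, using only that $f$ is controlled and proper.

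First I would handle the propagation conditions. If $A\in B(H)$ has $X$-support in the entourage $U\in\cC$, then from the definition of support (Definition \ref{defn:iuwn3r2}) and the compatibility $(\phi\circ f^{*})(\chi_{B'})H = \phi(\chi_{f^{-1}(B')})H$, one checks directly that the $X'$-support of $A$ is contained in $(f\times f)(U)$, which is an entourage of $X'$ because $f$ is controlled. For quasi-locality, suppose $A$ is $U$-quasi-local in $X$, and set $V:=(f\times f)(U)\in\cC'$. Given $\varepsilon>0$, pick $n\in\IN$ witnessing $U$-quasi-locality. For any $V^{n}$-separated subsets $B,B'\subseteq X'$, the preimages $f^{-1}(B), f^{-1}(B')$ are $U^{n}$-separated in $X$: if $x\in U^{n}[f^{-1}(B)]\cap U^{n}[f^{-1}(B')]$, then $f(x)\in V^{n}[B]\cap V^{n}[B']=\emptyset$, a contradiction. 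Then
\[\|(\phi\circ f^{*})(B')\,A\,(\phi\circ f^{*})(B)\| = \|\phi(f^{-1}(B'))\,A\,\phi(f^{-1}(B))\| \le \varepsilon,\]
so $A$ is $V$-quasi-local in $X'$.

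Next I would treat local compactness. For any bounded $B'\in\cB'$ we have $f^{-1}(B')\in\cB$ by properness of $f$, and $(\phi\circ f^{*})(\chi_{B'})=\phi(\chi_{f^{-1}(B')})$. Hence local compactness of $A$ with respect to $\phi$ immediately gives local compactness with respect to $\phi\circ f^{*}$. For local finiteness of an operator (Definition \ref{defn:ertcnvb}), suppose $A=\begin{pmatrix}A_{0}&0\\0&0\end{pmatrix}$ with $H_{0}\subseteq H$ a locally finite $X$-subspace, recognized by a control $\phi_{0}$ on $H_{0}$. I claim $\phi_{0}\circ f^{*}$ recognizes $H_{0}$ as a locally finite $X'$-subspace: local finiteness of $(H_{0},\phi_{0}\circ f^{*})$ follows as above from properness of $f$, and the inclusion $H_{0}\hookrightarrow H$, having $X$-support in some entourage $W$, has $X'$-support in $(f\times f)(W)\in\cC'$. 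This same decomposition then exhibits $A$ as locally finite in the $X'$-sense.

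Combining these four verifications, every generator of $\cC_{?}^{*}(X,H,\phi)$ lies in $\cC_{?}^{*}(X',H,\phi\circ f^{*})$; taking closures in $B(H)$ yields the desired inclusion of $C^{*}$-algebras. I do not anticipate any serious obstacle, since all conditions are preserved under the elementary set-theoretic manipulations available from $f$ being controlled and proper; the only mildly subtle point is the separation argument needed for quasi-locality, which I have sketched above.
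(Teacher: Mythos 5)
Your proposal is correct and follows essentially the same route as the paper: for each of the four generating classes one checks that the local finiteness / local compactness conditions transfer via properness of $f$ (with the locally finite subspace $H_0$ recognized by $\phi_0\circ f^{*}$) and that $U$-controlled propagation, resp.\ $U$-quasi-locality, becomes $(f\times f)(U)$-controlled propagation, resp.\ $(f\times f)(U)$-quasi-locality. You in fact supply more detail than the paper (notably the separation argument $V^{n}[B]\cap V^{n}[B']=\emptyset\Rightarrow U^{n}[f^{-1}(B)]\cap U^{n}[f^{-1}(B')]=\emptyset$, which is correct), and you rightly read the right-hand side as the Roe algebra of the $X'$-controlled Hilbert space $f_{*}(H,\phi)$.
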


\begin{proof}
We just show that the local finiteness and propagation conditions imposed on $A$ in $B(H)$ using the $X$-control $\phi$ are stronger than the ones imposed with the $X^{\prime}$-control $\phi\circ f^{*}$. In greater detail one argues as follows:

If $A$  is locally compact with respect to the $X$-control $\phi$, then it is locally compact with respect to the $X^{\prime}$-control $\phi\circ f^{*}$. This follows from the properness of $f$.

If $H^{\prime} $ is a locally finite subspace of $H$ with respect to the $X$-control $\phi$, then it is so
with respect to the $X^{\prime}$-control $\phi\circ f^{*}$. Indeed, if $\phi^{\prime}$ is an $X$-control of $H^{\prime}$ such that $(H^{\prime},\phi^{\prime})$ is locally finite and $H^{\prime}\to H$ has finite propagation, then $\phi^{\prime}\circ f^{*}$ is an $X^{\prime}$-control for $H^{\prime}$ which can be used to recognize $H^{\prime}  $ as a locally finite subspace of $H$ with respect to the $X^{\prime}$-control $\phi\circ f^{*}$. This reasoning implies that if $A$ is locally finite on $(H,\phi)$, then it is so on $f_{*}(H,\phi)$.

For the propagation conditions we argue similarly. If $A$ is of $U$-controlled propagation (or $U$-quasi-local) with respect to the $X$-control $\phi$, then it is of  $(f\times f)(U)$-controlled propagation (or $(f\times f)(U)$-quasi-local) with respect to the $X^{\prime}$-control $\phi\circ f^{*}$.
\end{proof}

\begin{rem}\label{rem43wer45er}
Let $(H,\phi)$ be an $X$-controlled Hilbert space. By definition this means that $\phi$ is a unital $^\ast$-representation of all bounded functions on $X$. But usually, e.g., if $X$ is a Riemannian manifold with a Hermitian vector bundle $E$,  we want to use the Hilbert space  $L^2(E,X)$ and the representation should be the one given by multiplication operators. But this representation is  not defined on all bounded functions on $X$, but only on the measurable ones. To get around this we can construct a new representation as in Example~\ref{rem:sdb7834rw}. But now we have to argue why the Roe algebra we get by using this new representation coincides with the Roe algebra that we get from using the usual representation by multiplication operators. We will explain this now in a slightly more general context than Riemannian manifolds but focus on ample Hilbert spaces.

Let $(Y,d)$ be a separable proper metric space,  and let $Y_{d}$ be the associated bornological coarse space. 
We let $Y_{t}$ denote the underlying locally compact topological space of $Y$ which we consider as a topological bornological   space, see Subsection \ref{wergoiwergwergerfwfrwfwefwerf}. Let furthermore $(H,\phi)$ be an ample $Y_{{d}}$-controlled Hilbert space (in the sense of Definition \ref{fjwefewiojoi2jroi23jr23r23r23r})  and $\rho:C_{0}(Y_{t})\to B(H)$ be a $*$-representation (see Subsection \ref{wklgowergerfrfwferf} for $C_{0}$)
such that the following conditions are satisfied:
\begin{enumerate}
\item If $\rho(f)$ is compact, then $f=0$.
\item There exists an entourage $U$ of $Y_{d}$ such that
$$\phi(U[\supp(f)])\rho(f)=\rho(f)\phi(U[\supp(f)]) =\rho(f)\, .$$
\item There exists an  entourage $U$ of $Y_{d}$ such that for every bounded subset $B$ of $Y_{d}$ there exists
$f$ in $C_{0}(Y_{t})$ with $\supp(f)\subseteq U[B]$ such that $\rho(f)\phi(B)={\phi(B)\rho(f)} =\phi(B)$.
\end{enumerate}
Since $Y_{d}$ has a countable exhaustion by bounded subsets the Hilbert space $H$ is separable.
Note that the third condition implies that $\rho$ is non-degenerate. Together with
the first condition it  ensures that $H$ is an ample or  standard Hilbert space in the  sense of Roe \cite[Sec.~4.1]{roe_coarse_cohomology} \cite[Sec.~4.4]{roe_lectures_coarse_geometry} and Higson--Roe \cite{hr}. The second and third conditions together imply that the local finiteness, local compactness, finite propagation and quasi-locallity conditions imposed using $\rho$ or $\phi$ are equivalent.

Returning to our example of a Riemannian manifold $X$ with a Hermitian vector bundle $E$, we denote by $H$ the Hilbert space $L^2(E,X)$ and by $\rho$ the representation of $C_0(X)$ by multiplication operators. Using the construction from Example~\ref{rem:sdb7834rw} we get a representation $\phi$ of all bounded functions on $X$. If $X$ has no zero-dimensional component, then   $(H,\phi)$  is an  ample $X$-controlled Hilbert space.  In this case the three conditions above are satisfied and   we have the equalities $\cC_?^{*}(X,H,\phi) = \cC_?^{*}(X,H,\rho)$ as desired.
\hB
\end{rem}

\subsection{\texorpdfstring{$K$}{K}-theory of \texorpdfstring{$C^{\ast}$}{C*}-algebras}\label{wifjewiof2323443534}

Let $\Calg$ denote the category of not necessarily unital $C^{*}$- algebras and not necessarily unit preserving homomorphisms. The construction of coarse $K$-homology theory  uses a $K$-theory functor
\begin{equation}\label{rojgrogergp9345345}
\Kast:\Calg\to \Sp
\end{equation}
for $C^{*}$-algebras (not to be confused with the $K$-cohomology functor introduced in Definition \ref{eughierwgvfdsdfvdfvsdfvfsdvsdfvswtf}).
In the following we list the properties of this functor which we will use in the constructions later.
The functor $\Kast$ has the following properties.
\begin{enumerate}
\item \label{ergoijuefoiqewfqwfewfqwefqwf} $\Kast$ represents $C^{*}$-algebra $K$-theory groups:
For every $C^{*}$-algebra $A$ the spectrum $\Kast(A)$ represents the topological $K$-theory of $A$, i.e.,
the homotopy groups $\pi_{*}\Kast(A)$  are naturally isomorphic to the $K$-theory groups~$K_{*}(A)$. References for the group-valued  $K$-theory  {of} $C^{*}$-algebras are Blackadar \cite{blackadar} and Higson--Roe \cite{higson_roe}.
\item\label{wergijowergergrgregwefewrfwerf} We have $\Kast(\C)\simeq KU$.
\item\label{wtgwergfervfdsv} $\Kast$ preserves filtered colimits: Given a filtered family $(A_{i})_{i\in I}$  in $\Calg$  we can form the colimit $A:=\colim_{i\in I}A_{i}$ in  $\Calg$. The natural morphism
$$\colim_{i\in I} \Kast(A_{i})\to \Kast(A)$$
is an equivalence.

For example, in the case of a family subalgebras of $B(H)$ the colimit  $A$ is the closure of the union $\bigcup_{i\in I}A_{i}$.  

\item \label{rgpojweogweggffrw} $\Kast$ preserves sums: If $(A_{i})_{i\in I}$ is a family of $C^{*}$-algebras, then
the canonical morphism $$\bigoplus_{i\in I} \Kast(A_{i})\to \Kast(\bigoplus_{i\in I}A_{i})$$ is an equivalence.
\item \label{wtgwergfervfdsv1}$\Kast$ is exact:  We have $\Kast(0)\simeq 0$ and $\Kast$ sends   exact sequences of $C^{*}$-algebras 
$0\to I\to A\to Q\to 0$ to cocartesian squares
\begin{equation}\label{fjhweuif8723452534}
\xymatrix{\Kast(I)\ar[r]\ar[d]&{\Kast(A)}\ar[d]\\0\ar[r]&\Kast(Q)}\end{equation} in $\Sp$.
Equivalently, we have a fibre sequence 
$$\cdots \to\Kast(I)\to \Kast(A)\to \Kast(Q)\to \Sigma \Kast(I)\to \cdots \, .$$
\item \label{wegoijweroigjergwerg} $\Kast$ is stable: For every $A$ in $\Calg$ the left upper corner inclusion $A\to A\otimes  \mathbb{K}(\ell^{2})$
induces an equivalence $\Kast(A)\to \Kast(A\otimes  \mathbb{K}(\ell^{2}))$. Here $ \mathbb{K}(\ell^{2})$ denotes the compact operators on the Hilbert space $\ell^{2}$.
  \item \label{wroigjiowertgerwgrewgwreg} $\Kast$ is homotopy invariant:
  For every $A$ in $\Calg$ the inclusion $A\to C([0,1],A)$ as constant functions induces an equivalence
  $\Kast(A)\to \Kast( C([0,1],A))$.
\item $\Kast$ is Bott periodic: For every $A$ in $\Calg$ there exists a natural equivalence $\Kast(A)\to \Sigma^{2}\Kast(A)$.
In order to check that a morphism   $\Kast(A)\to \Kast(B)$ is an equivalence it suffices to show that
$\pi_{i}\Kast(A)\to \pi_{i}\Kast(B)$ is an isomorphism for $i=0,1$.
\item\label{rgoijwergioregrerfwref} Assume that $h\colon A\to B$ is a  homomorphism between $C^{*}$-algebras, and that $u$ is a partial isometry in the multiplier algebra of
$B$ such that $hu^{*}u=h$. Then $h':=uhu^{*}\colon A\to B$ is a homomorphism of $C^{*}$-algebras, and we have an equality of  induced maps
$\pi_{*}\Kast(h)=\pi_{*}\Kast(h')\colon \pi_{*}\Kast(A)\to \pi_{*}\Kast(B)$.
\end{enumerate}

%
%
%
%
%
%

A construction of a $K$-theory functor \eqref{rojgrogergp9345345} was  {provided} by Joachim \cite[Def.~4.9]{joachimcat}.
He verifies \cite[Thm.~4.10]{joachimcat} that his  spectrum indeed represents $C^{*}$-algebra $K$-theory.
In the following we argue that this functor has the properties listed above, which are not stated explicitly in  this reference.

 Concerning homotopy invariance note {that} every spectrum-valued functor representing $C^{*}$-algebra $K$-theory has this property.
 This follows from the fact that the group-valued functor sends the  homomorphism $A\to C([0,1],A)$ 
  to an isomorphism. 
 
Similarly, continuity with respect to filtered colimits is implied by the corresponding known property of the group-valued $K$-theory functor.
 
 Finally, the group-valued $K$-theory functor sends
 a sequence \eqref{fjhweuif8723452534} to a long exact sequence of $K$-theory groups  {(which will be a six-term sequence due to Bott periodicity).}
 So once we know that the composition $K(I)\to K(A)\to K(A/I)$ is the zero map in spectra we can produce a morphism to
 $K(I)\to \Fib(K(A)\to K(A/I))$ and conclude, using the Five Lemma, that it is an equivalence.
 An inspection of  \cite[Def.~4.9]{joachimcat} shows that the  composition $K(I)\to K(A)\to K(A/I)$   is level-wise the constant map to the base point.

The Bott periodicity is actually a formal consequence of the other properties. 
The periodicity morphism of spectra can be constructed from the long exact sequence of a so-called Toeplitz extension
and an equivalence
$\Sigma \Kast(C_{0}(\R,A))\simeq \Kast(A)$.

An alternative option is to  construct
$\Kast$ as a left Kan-extension 
\begin{equation}\label{erfiofjweorijfwerfwerewdwdwed} \xymatrix{
\Calg^{\sep}\ar[rr]^-{\map_{\bK\bK}(\C,-)}\ar[d]^{\incl}&&\Sp\\
\Calg\ar[urr]_-{\Kast}&&}\, ,
\end{equation}
where $\bK\bK$ is  the stable $\infty$-category discussed in Subsection \ref{fweoifjewio982u4r53453453454}.
The point wise formula for the evaluation of this left Kan extension yields
$$\Kast(A):=\colim_{B\subseteq A} \ \map_{\bK\bK}(\C,B)\, ,$$
where    the colimit runs over all separable subalgebras {$B$} of $A$. 
We will use this description later to define the assembly map.
All properties except \ref{wtgwergfervfdsv} of $\Kast$ are then consequences of the properties of $\bK\bK$ and the functor $\Calg^{\sep}\to \bK\bK$ listed in Subsection \ref{fweoifjewio982u4r53453453454}.

\begin{rem}
The Property \ref{rgoijwergioregrerfwref} of $\Kast$ is  not so easy to find in the literature.  Hence it is reasonable to   provide the argument. 

Since $u$ is a partial isometry we have two projections $p:=u^{*}u$ and $q:=uu^{*}$ in the multiplier algebra of $B$. 
We consider the $C^{*}$-algebra $C:=pBp\oplus qBq$ and the homomorphism
$d:C\to M_{2}(B)$ given by $(b,b')\mapsto \diag(b,b')$. 
The assumption on $h$ is equivalent to the fact that $h$ has a factorization 
$$h:A\stackrel{\tilde h}{\to} pBp\to B\ ,$$ where the second homomorphism is the embedding. 
We then have an equality 
$$i_{11}\circ h=d\circ \tilde h: A\to M_{2}(B)\ , $$ where  $i_{11}$ is the   left-upper corner inclusion.   The matrix
$$v:=\left(\begin{array}{cc} 0&u^{*}\\u&0\end{array}\right)$$
can be considered as a unitary in the multiplier algebra of $C$.
We now calculate that
$$d\circ (v\tilde hv^{*})=i_{22}\circ h'\ ,$$
where $i_{22}$ is the lower right corner inclusion and $h':=uhu^{*}$. 
By \cite[Lem. 4.6.1]{hr}  we have the equality 
$$\pi_{*}\Kast(v\tilde hv^{*})=\pi_{*}\Kast(\tilde h)\ .$$
This implies by functoriality of $\pi_{*}\Kast$ that 
$\pi_{*}\Kast(i_{11}\circ h)=\pi_{*}\Kast(i_{22}\circ h')$. 
Since we also have the equality of isomorphisms $\pi_{*}\Kast(i_{11})=\pi_{*}\Kast(i_{22})$ 
we conclude that $\pi_{*}\Kast(  h)=\pi_{*}\Kast(  h')$. 
\hB
\end{rem}

\subsection{\texorpdfstring{$C^\ast$}{C*}-categories and their \texorpdfstring{$K$}{K}-theory}
\label{sec:dfs8934}

We have seen in Section \ref{ejfoiwfwoefueoi2342343243} that we can associate a Roe algebra (with various decorations) to a bornological coarse space $X$ equipped with an $X$-controlled Hilbert space. Morally, coarse $K$-homology of $X$ should be coarse $K$-theory of this Roe algebra. This definition obviously depends on the choice of the $X$-controlled Hilbert space.
In good cases $X$ admits an ample $X$-controlled Hilbert space. Since any two choices of an ample Hilbert space are isomorphic (Lemma \ref{lem:drf8934}), the Roe algebra is independent of the choice of the ample $X$-controlled Hilbert space up to isomorphism.

It is not good enough to associate to $X$ the $K$-theory spectrum of the Roe algebra for some choice of an  $X$-controlled ample Hilbert space. First of all this would not give a functor. Moreover, we want the coarse $K$-theory to be defined for all bornological coarse spaces, also for those which do not admit an $X$-controlled  ample Hilbert space (Lemma~\ref{fkwjlkwejlewfewfewfewf3242344}). 

\begin{rem}
The {naive} way out would be to define
the coarse $K$-theory spectrum of $X$ as the colimit over all choices of $X$-controlled Hilbert spaces of the $K$-theory spectra of the associated Roe algebras. The problem is that the index category of this colimit is not a filtered poset. It would give a wrong homotopy type of the coarse $K$-theory spectrum of~$X$ even in the presence of ample $X$-controlled Hilbert spaces.

One {could try the following solution:  one could consider} the category of $X$-controlled Hilbert spaces as a topological category, interpret the functor from $X$-controlled Hilbert spaces to spectra as a topologically enriched functor and take the colimit in this realm. 

{One problem with this approach is the following: in order to show functoriality, given a morphism $X \to Y$ and $X$-controlled, resp.~Y-controlled Hilbert spaces $(H,\phi)$ and $(H^\prime, \phi^\prime)$, we must choose an isometry $V\colon H \to H^\prime$ with a certain propagation condition on its support (cf.~Higson--Roe--Yu \cite[Sec.~4]{higson_roe_yu}). Then one uses conjugation by $V$ in order  to construct a map from the Roe algebra $\cC^{*}(X,H,\phi)$ to $\cC^{*}(Y,H^\prime,\phi^\prime)$. Now two different choices of isometries $V$ and $V^\prime$ are related by an inner conjugation. Therefore, on the level on $K$-theory groups of the Roe algebras conjugation by $V$ and $V^\prime$ induce the same map. But inner conjugations do not necessarily act trivially on the $K$-theory spectra. When we pursue this route further we would need the statement that the space of  such isometries $V$ is contractible. But up to now we were only able to show that this space is connected (see Theorem~\ref{fwefiwjfeiooi234234324434e}).}

In the present paper we pursue {therefore the following alternative idea.} 
Morally we perform the colimit over the Hilbert spaces on the level of $C^{*}$-algebras before going to spectra. Technically we consider the $X$-controlled Hilbert spaces as objects of a $C^{*}$-category whose morphism spaces are 
the analogues of the Roe algebras. The idea of using $C^{*}$-categories in order to produce a functorial $K$-theory spectrum is not new and has been previously  employed in different situations {, e.g., by Davis--L{\"u}ck \cite{davis_lueck}, Hambleton--Pedersen \cite{hamped}, and Joachim \cite{joachimcat}.}
\hB
\end{rem}

In the present Section we review $C^{*}$-categories  a their $K$-theory. The details of the application to coarse $K$-homology will be given in Section \ref{sec:jkbewrgh}.

\begin{rem}\label{ewtgwergregregrgwegergw}
Motivated by the usage of $C^{*}$-categories in the present section   several follow-up papers dealing with
aspects of the theory of $C^{*}$-categories appeared after  
 the first version of the book was finished. 
\begin{enumerate}
\item \cite{startcats}:  homotopy theory for unital $C^{*}$-categories 
\item \cite{crosscat}:  categorial properties of the category of non-unital $C^{*}$-categories and crossed products 
\item \cite{cank}: infinite orthogonal sums in $C^{*}$-categories and properties of the $K$-theory for $C^{*}$-categories, in particular that it preserves  infinite products
\item \cite{Bunke:ad}: coarse $K$-homology with coefficients in a $C^{*}$-category,  a generalization of the theory of {the} present section to the equivariant case and with the category of Hilbert spaces replaced by a general $C^{*}$-category\hB
\end{enumerate}
\end{rem}

In the sections below we will consider $C^{*}$-algebras and spectra in the small universe (as opposed to the very small universe). In this case we add a superscript $(-)^{\la}$ to the notation for the categories, but keep the notation for the functors.

\subsubsection{Definition of \texorpdfstring{$C^{*}$}{Cstar}-categories}

In the present paper we will consider small $C^{*}$-categories. The  category $\Ccat$\index{$\Ccat$} is a large category.
Before we give the {details of the} definition note that our basic example is the small category of very small Hilbert spaces and bounded operators.

\begin{ddd}\label{qwfijoafvffdfadf}
A $C^{*}$-category\index{$C^{*}$-category} $\bC$ is given by the following structures:
\begin{enumerate}
\item a (possibly non-unital) category $\bC$,
\item for every two objects $c,c'$ in $\bC$ a complex Banach space structure on the set $\Hom_{\bC}(c,c')$,
\item an involution $*:\bC^{\op}\to \bC$.
\end{enumerate}
These structures must satisfy the following conditions:
\begin{enumerate}
\item The involution fixes objects and acts isometrically and anti-linearly on the morphism spaces.
\item The $C^{*}$-condition is satisfied: For every morphism $A:c\to c'$ in $\bC$ we have
\[\|A^{*}A\|_{\Hom_{\bC}(c,c)}=\|A\|_{\Hom_{\bC}(c,c')}^{2}\,.\]
\item For every morphism $A:c\to c'$ in $\bC$ the operator $A^{*}A$ is a non-negative operator in the $C^{*}$-algebra $\End_{\bC}(c)$.
\end{enumerate}
\end{ddd}

Note that we will usually just write $\|-\|$ instead of $\|-\|_{\Hom_{\bC}(c,c')}$.
 
\begin{ddd}\label{weogijoerwgrefrefwerfrewfw}
 A functor between $C^{*}$-categories is a (not necessarily unit-preserving) functor between $\C$-linear categories which is compatible with the $*$-operation.
 \end{ddd}

\begin{rem}
In view of Definition  \ref{qwfijoafvffdfadf} it would be very natural to require a condition of compatibility of a functor with the norms on the morphism spaces. But it turns out that that a functor in the sense of Definition \ref{weogijoerwgrefrefwerfrewfw} is automatically norm decreasing \cite[Prop.~2.14]{mitchc}. Hence such a condition is redundant. 

We refer to \cite{crosscat} for an alternative, but equivalent definition of a $C^{*}$-category as a $\C$-linear $*$-category with additional properties (in contrast to additional Banach space structures).
\hB
\end{rem}

\begin{ddd}\mbox{}
\begin{enumerate}\item 
A $C^{*}$-category is  unital if every object has an identity morphism.
\item A functor between unital $C^{*}$-categories is unital if it preserves
identity morphisms. \end{enumerate}
\end{ddd}
 
\begin{rem}
In the   papers listed in Remark \ref{ewtgwergregregrgwegergw} the notation $\Ccat$ is reserved for the category of unital $C^{*}$-categories and unital functors. The notation for the non-unital case is $\Ccat^{\mathrm{nu}}$. 
In the present book we use the notation  $\Ccat$   for the non-unital case and say explicitly when we require a $C^{*}$-category or a functor to be unital. 
Since we only consider the large category of small $C^{*}$-categories we {do} not add the superscript ``$\la$''.
\hB
\end{rem}

A good reference for $C^{*}$-categories is \cite{mitchc}.
   We furthermore refer to Davis--L{\"u}ck \cite[Sec.~2]{davis_lueck} and Joachim \cite[Sec.~2]{joachimcat}.      

\subsubsection{From \texorpdfstring{$C^{*}$}{Cstar}-categories to \texorpdfstring{$C^{*}$}{Cstar}-algebras and \texorpdfstring{$K$}{K}-theory}
 
We let  $\Calg^{\la}$\index{$\Calg^{\la}$} denote the large category of (possibly non-unital)  small $C^{*}$-algebras and (not necessarily unit preserving) homomorphisms. Note that in contrast $\Calg$ is the small full subcategory of {$\Calg^{\la}$}  of very small $C^{*}$-algebras.

A   $C^{*}$-algebra can be considered as a $C^{*}$-category with a single object. This provides
an inclusion $\Calg^{\la}\to \Ccat$. It fits into an adjunction \begin{equation}\label{hbhjeuzefwefewfwe}
A^{f}:\Ccat\leftrightarrows \Calg^{\la}:\mathit{\incl}\, .
\end{equation}  
The functor $A^{f}$\index{$A^{f}$}  has first been introduced by Joachim \cite{joachimcat}.
Even if $\bC$ is a unital $C^{*}$-category the $C^{*}$-algebra $A^{f}(\bC)$ is non-unital in general. This happens, e.g.\ in the case when  $\bC$ has infinitly many objects.
Because of this fact we need the context of non-unital $C^{*}$-categories in order to present $A^{f}$ as a left-adjoint.

\begin{rem}
 In the following we describe  $A^{f}$ explicitly. The {functor $A^{f}$}  associates to a $C^{*}$-category~$\bC$ a $C^{*}$-algebra $A^{f}(\bC)$. The unit of the adjunction is a functor
$\bC\to A^{f}(\bC)$ between $C^{*}$-categories. The superscript $f$ stands for free which distinguishes the algebra from another $C^{*}$-algebra $A(\bC)$ associated to $\bC$ which is simpler, but not functorial in $\bC$. We will discuss $A(\bC)$ later.

  We first construct the free $*$-algebra $\tilde A^{f}(\bC)$ generated by the morphisms of $\bC$.
  If $A$ is a morphism in $\bC$, then we let $(A)$ in $\tilde A^{f}(\bC)$ denote the corresponding generator.
    We then form the quotient by an equivalence relation which reflects the sum, the composition, and the $*$-operator already present in $\bC$.       \begin{enumerate}
    \item \label{ifewfewifoewfuiou1} If $c,c^{\prime}$ are in $\Ob(\bC)$, $A,B$ are in $\Hom_{\bC}(c,c^{\prime})$, and $\lambda,\mu$ are in $\C$, then we have the relation
    $\lambda(A)+\mu(B) \sim (\lambda A+\mu B)$.
    \item \label{ifewfewifoewfuiou2} If $c,c^{\prime},c^{\prime\prime}$ are in $\Ob(\bC)$, $A$ is in  $\Hom_{\bC}(c,c^{\prime})$, and $B$ is in  $\Hom_{\bC}(c^{\prime},c^{\prime\prime})$, then we have the relation
    $(B)(A) \sim (B\circ A)$.
    \item \label{ifewfewifoewfuiou3} For every morphism $A$  in $\bC$ we have the relation $(A^{*}) \sim (A)^{*}$.
    \end{enumerate}
    We denote the quotient $*$-algebra of  $\tilde A^{f}(\bC)$ by this set of relations
    by $A^{f}(\bC)_{0}$. 
We equip $A^{f}(\bC)_{0}$ with the maximal $C^{*}$-norm and let $A^{f}(\bC) $ be the closure. In general, this $C^{*}$-algebra is only small (as opposed to very small) since $\bC$ is small. This is the reason for considering 
large $C^{*}$-algebras. 
 The natural  functor \begin{equation}\label{jkhwkejf8929348r23r23r3}
\bC\to A^{f}(\bC)
\end{equation} sends every object of $\bC$ to the single object of $A^{f}(\bC)$
and a morphism $A$ of $\bC$ to the image of the generator $(A)$ in $A^{f}(\bC)$ under the quotient map and the natural map into the completion.

It has been observed by {Joachim}  \cite{joachimcat} that this construction has the following universal property:
For every $C^{*}$-algebra $\bB$ (considered as a $C^{*}$-category with a single object) a functor $\bC\to \bB$ in $\Ccat$ has  a unique extension to a homomorphism $A^{f}(\bC)\to \bB$ of $C^{*}$-algebras.

Let now $\bC\to \bC^{\prime}$ be a functor between $C^{*}$-categories. Then we get a functor
$\bC\to A^{f}(\bC^{\prime})$ by composition with \eqref{jkhwkejf8929348r23r23r3}. It now follows from the universal property of $A^{f}(\bC)$ that this functor extends uniquely  to a homomorphism of $C^{*}$-algebras
$$A^{f}(\bC)\to A^{f}(\bC^{\prime})\, .$$
This finishes the construction of the functor $A^{f}$ on functors between $C^{*}$-categories.
The universal property of $A^{f}$ is equivalent to  the adjunction \eqref{hbhjeuzefwefewfwe}.
\hB
\end{rem}

To a  $C^\ast$-category $\bC$ we can also associate a simpler $C^{*}$-algebra $A(\bC)$. In order to construct this $C^{*}$-algebra  we start with the description of a $*$-algebra $A(\bC)_{0}$. The underlying $\C$-vector space of  $A(\bC)_0$ is the algebraic direct sum
\begin{equation}\label{efefwefeefefefewfef}
A(\bC)_0 := \bigoplus_{c,c^{\prime} \in \Ob(\bC)} \Hom_{\bC}(c,c^{\prime})\, .
\end{equation}
The product $(g,f) \mapsto gf$ on $A_{0}(\bC)$ is defined by  
\[gf := \begin{cases} g \circ f& \text{if }g\text{ and }f\text{ are composable\,,}\\ 0 & \text{otherwise\,.}\end{cases}\]
The $*$-operation on $\bC$ induces an involution on $A(\bC)_{0}$ in the obvious way. We again equip $A(\bC)_{0}$ with the maximal $C^{*}$-norm and define $A(\bC)$ as the closure of $A(\bC)_{0}$ with respect to this norm.\index{$A$}

\begin{ex}
If $\bC$ is the category of very small Hilbert spaces, then the vector space
$A(\bC)_{0}$ is the sum over all spaces of bounded operators between two very small Hilbert spaces.
This $*$-algebra belongs to the small universe and $A(\bC)$ is not very small. \hB
\end{ex}

%
%


Note that the assignment $\bC \mapsto A(\bC)$ is not a functor since non-composable morphisms in a $C^\ast$-category may become composable after applying a $C^\ast$-functor. However, $\bC \mapsto A(\bC)$ is functorial for $C^\ast$-functors that are injective on objects.

There is a canonical $C^{*}$-functor $\bC\to A(\bC)$. By the universal property of $A^f$  it extends uniquely to a homomorphism   $\alpha_{\bC}\colon A^f(\bC) \to A(\bC)$ of $C^{*}$-algebras.

 \begin{lem}
 If $f\colon \bC\to \bD$ is a functor between $C^{*}$-categories which is injective on objects, then the following square commutes:
 $$\xymatrix{A^{f}(\bC)\ar[r]^{A^{f}(f)}\ar[d]_{\alpha_{\bC}}&A^{f}(\bD)\ar[d]^{\alpha_{\bD}}\\A(\bC)\ar[r]_{A(f)}&A(\bD)}$$
 \end{lem}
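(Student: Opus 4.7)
The plan is to verify the commutativity of the square by appealing to the universal property of $A^{f}(\bC)$ stated in the excerpt: every $C^{*}$-functor from $\bC$ into a $C^{*}$-algebra $\bB$ extends uniquely to a homomorphism $A^{f}(\bC)\to \bB$. Since both compositions in the square are homomorphisms of $C^{*}$-algebras from $A^{f}(\bC)$ to $A(\bD)$, it suffices to show that they agree after precomposition with the canonical $C^{*}$-functor $\iota_{\bC}\colon\bC\to A^{f}(\bC)$; then uniqueness forces them to be equal.

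First I would verify that $A(f)\colon A(\bC)\to A(\bD)$ is well-defined as a homomorphism of $C^{*}$-algebras. This is the only place where the injectivity of $f$ on objects is used: on the algebraic direct sum \eqref{efefwefeefefefewfef} the map $A(f)$ is given summand-wise by $f\colon \Hom_{\bC}(c,c')\to \Hom_{\bD}(f(c),f(c'))$, and for this to be multiplicative one needs that two morphisms become composable in $\bD$ only if they were already composable in $\bC$, which follows from $f$ being injective on objects. The $*$-structure is preserved by functoriality of $f$, and the construction is continuous with respect to the maximal $C^{*}$-norms so it extends to the completion.

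Next I would compute the two compositions restricted to $\iota_{\bC}$. For a morphism $A\in\Hom_{\bC}(c,c')$ denote by $[A]$ its image under $\iota_{\bC}$. On the one hand, by the construction of $A^{f}(f)$ via the universal property one has $A^{f}(f)\circ\iota_{\bC}=\iota_{\bD}\circ f$, so $\alpha_{\bD}\circ A^{f}(f)\circ\iota_{\bC}$ equals $\alpha_{\bD}\circ\iota_{\bD}\circ f$, and the composition $\alpha_{\bD}\circ\iota_{\bD}$ is precisely the canonical $C^{*}$-functor $\bD\to A(\bD)$. Hence the first composition sends $A$ to the element $f(A)\in\Hom_{\bD}(f(c),f(c'))\subseteq A(\bD)$. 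On the other hand, $\alpha_{\bC}\circ\iota_{\bC}$ is the canonical functor $\bC\to A(\bC)$, sending $A$ to $A\in\Hom_{\bC}(c,c')\subseteq A(\bC)$, and then $A(f)$ sends this to $f(A)\in\Hom_{\bD}(f(c),f(c'))\subseteq A(\bD)$. The two $C^{*}$-functors $\bC\to A(\bD)$ thus agree on every morphism of $\bC$, so they are equal as $C^{*}$-functors.

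By the universal property of $A^{f}(\bC)$ the two homomorphisms $\alpha_{\bD}\circ A^{f}(f)$ and $A(f)\circ\alpha_{\bC}$ from $A^{f}(\bC)$ to $A(\bD)$, which extend this common $C^{*}$-functor, must coincide, and the square commutes. The main subtlety is the preliminary verification that $A(f)$ is a well-defined $C^{*}$-algebra homomorphism under the injectivity hypothesis; once that is in place, the rest of the argument is a routine application of the adjunction \eqref{hbhjeuzefwefewfwe}.
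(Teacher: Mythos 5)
Your proposal is correct and follows essentially the same route as the paper: the paper draws the diagram with $\bC$ and $\bD$ in the middle, observes that all inner cells commute by construction of $\alpha$, $A^{f}(f)$ and $A(f)$, and concludes that the outer square commutes — which, as you make explicit, relies on the uniqueness clause of the universal property of $A^{f}(\bC)$. Your version merely spells out that last step (and the well-definedness of $A(f)$, which the paper notes just before the lemma) more carefully.
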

 
\begin{proof}
 We consider the diagram
 $$\xymatrix{A^{f}(\bC)\ar[dd]_{\alpha_{\bC}}\ar[rrr]^{A^{f}(f)}&&&A^{f}(\bD)\ar[dd]^{\alpha_{\bD}}\\
 &\bC\ar[ul]\ar[dl]\ar[r]^{f}&\bD\ar[ur]\ar[dr]&\\A(\bC)\ar[rrr]_{A(f)}&&&A(\bD)}$$
 The left and the right triangles commute by construction of the transformation $\alpha_{\ldots}$.
 The upper and the lower middle square commute by construction of the functors $A^{f}(-)$ and $A(-)$.
 Therefore the outer square commutes.
\end{proof}

{Joachim} \cite[Prop.~3.8]{joachimcat} showed that if $\bC$ is unital and has only countably many objects, then $\alpha_{\bC}\colon A^f(\bC) \to A(\bC)$ is a stable homotopy equivalence and therefore induces an equivalence in $K$-theory.
 We will generalize this now to our more general setting.
 Let $\Kast\colon \Calg^{\la}\to \Sp^{\la}$ be the $K$-theory functor for small $C^{*}$-algebras discussed in Subsection \ref{wifjewiof2323443534}.

\begin{prop}\label{prop:dfs78934}
The homomorphism  $\alpha_{\bC}:A^f(\bC) \to A(\bC)$  induces an equivalence  of spectra
$$\Kast(\alpha_{\bC}):\Kast(A^{f}(\bC))\to\Kast (A(\bC))\, .$$
\end{prop}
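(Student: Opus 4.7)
The strategy is to reduce to Joachim's theorem \cite[Prop.~3.8]{joachimcat}, which establishes the assertion when $\bC$ is unital with only countably many objects. The reduction proceeds in two stages.

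First, I will reduce to $C^*$-categories with countably many objects. The functor $A^f$ is a left adjoint by \eqref{hbhjeuzefwefewfwe} and therefore preserves all colimits. Although $A$ is not functorial on all of $\Ccat$, it is functorial under $C^*$-functors that are injective on objects, and it preserves filtered colimits along such functors; in particular it behaves well under inclusions of full subcategories. Combining this with the fact that the $K$-theory functor on $C^*$-algebras preserves filtered colimits, I can write an arbitrary $\bC$ as the filtered colimit of its full subcategories on countable subsets of $\Ob(\bC)$ and thereby reduce to the case in which $\bC$ has countably many objects.

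Second, I will pass from this countable case to the unital one by forming the unitalization $\bC^{+}$ obtained by replacing each endomorphism $C^{*}$-algebra $\End_{\bC}(c)$ with its minimal unitization $\End_{\bC}(c)^{+}$, keeping all off-diagonal morphism spaces unchanged. Then $\bC^{+}$ is unital with countably many objects, and the quotient $\bC^{+}/\bC$ has the same objects as $\bC$ with $\Hom_{\bC^{+}/\bC}(c,c^\prime)=0$ for $c\neq c^\prime$ and $\End_{\bC^{+}/\bC}(c)=\C$ generated by the added unit. I will show that applying $A^f$ and $A$ to the short exact sequence $0\to\bC\to\bC^{+}\to\bC^{+}/\bC\to 0$ yields two short exact sequences of $C^{*}$-algebras, compatible via the natural transformation $\alpha$. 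A direct calculation then identifies both $A^f(\bC^{+}/\bC)$ and $A(\bC^{+}/\bC)$ canonically with $c_{0}(\Ob(\bC))$, with $\alpha_{\bC^{+}/\bC}$ the identity. Joachim's theorem applied to $\bC^{+}$ together with the Five Lemma on the resulting fibre sequences in $K$-theory then yields the proposition.

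The principal technical obstacle is the verification of exactness of the sequence
\[0\to A^{f}(\bC)\to A^{f}(\bC^{+})\to A^{f}(\bC^{+}/\bC)\to 0.\]
Exactness for $A$ is immediate from the algebraic direct sum description \eqref{efefwefeefefefewfef}. For $A^{f}$ one must combine the universal property with a careful analysis of the ideal generated by the image of $A^{f}(\bC)$ in $A^{f}(\bC^{+})$, using the presentation by the relations in \ref{ifewfewifoewfuiou1}--\ref{ifewfewifoewfuiou3} to show that this ideal coincides with the kernel of the quotient map; right-exactness of $A^f$ is free from its left-adjoint nature, so only the injectivity of $A^{f}(\bC)\to A^{f}(\bC^{+})$ and the identification of the kernel require argument.
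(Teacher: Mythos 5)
Your first reduction --- writing $\bC$ as the filtered colimit of its full subcategories on countable object sets, using that $A^{f}$ preserves colimits, that $A$ is functorial and colimit-compatible along functors injective on objects, and that $K$ commutes with filtered colimits --- is correct and is exactly what the paper does. The problem lies in your second stage. The sequence
\[0\to A^{f}(\bC)\to A^{f}(\bC^{+})\to A^{f}(\bC^{+}/\bC)\to 0\]
is \emph{not} exact in the middle, and the gap is not a matter of ``careful analysis'': the image of $A^{f}(\bC)$ in $A^{f}(\bC^{+})$ is the closed \emph{subalgebra} generated by the elements $(A)$ with $A\in\Mor(\bC)$, whereas the kernel of the quotient map is the closed \emph{ideal} they generate. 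These differ because relation \ref{ifewfewifoewfuiou2} only collapses products of composable morphisms; a product $(B)(A)$ with $A\in\Mor(\bC)$, $B\in\Mor(\bC^{+})\setminus\Mor(\bC)$ and $A,B$ non-composable lies in the kernel but not, in general, in the subalgebra (nor is that subalgebra an ideal). This failure of $A^{f}$ to respect ideals and quotients is precisely why the paper proves Lemma \ref{fjweoifjweo89234234234} for $A(-)$ only and explicitly flags it as ``the reason why we will sometimes work with $A(-)$ instead of $A^{f}(-)$.'' Replacing $A^{f}(\bC)$ by the actual kernel $J$ does not help, since you would then need $K(A^{f}(\bC))\simeq K(J)$, which is essentially the statement you are trying to prove. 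A second, smaller error: $A^{f}(\bC^{+}/\bC)$ is not $c_{0}(\Ob(\bC))$. Since non-composable generators are not forced to multiply to zero in $A^{f}$, this algebra is a free product of copies of $\C$ indexed by $\Ob(\bC)$, which is highly noncommutative; $\alpha_{\bC^{+}/\bC}$ is the quotient map onto $c_{0}(\Ob(\bC))$, not the identity. (That particular instance of the proposition could still be obtained from Joachim's theorem, since $\bC^{+}/\bC$ is unital, but it is not a ``direct calculation.'')

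For the non-unital countable case the paper takes a different and more hands-on route: it revisits Joachim's proof of the unital statement and checks that it survives without units. Concretely, one constructs the stable inverse $\beta:A(\bC)\to A^{f}(\bC)\otimes\IK$ on a morphism $A\in\Hom_{\bC}(x,y)$ by $\beta(A):=A\otimes\Theta_{y,x}$, where $\Theta_{y,x}$ are rank-one operators on $\ell^{2}(\Ob(\bC)\cup\{e\})$; the relations $\Theta_{z,y}\Theta_{y,x}=\Theta_{z,x}$ make $\beta$ a $*$-homomorphism with no appeal to identity morphisms, and the only place Joachim uses units (the paths $u_{x}(t)$ entering the definition of a certain map $\Xi$) can be bypassed by defining $\Xi$ directly. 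If you want to keep a unitalization-plus-excision strategy, you would have to run it entirely through $A(-)$ (where Lemma \ref{fjweoifjweo89234234234} does give exactness) and find an independent argument relating $K(A^{f}(-))$ and $K(A(-))$ --- which is circular --- so I would recommend abandoning this stage in favour of the direct adaptation of Joachim's homotopies.
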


\begin{proof}

If $\bC$ is unital and has countable may objects, then the assertion of the proposition has been shown by Joachim \cite[Prop.~3.8]{joachimcat}.

First assume that $\bC$ has countably many objects, but is possibly non-unital.  In the following we argue that   the arguments from the proof of \cite[Prop.~3.8]{joachimcat} are applicable and show that the canonical map $\alpha_{\bC}:A^{f}(\bC)\to A(\bC)$  is a stable homotopy equivalence. 
In the following we recall the construction of the stable inverse which  is a homomorphism  $$\beta:A(\bC)\to A^{f}(\bC)\otimes \IK$$  of $C^{*}$-algebras, 
where $\IK:=\IK(H)$ {are the compact operators on} the Hilbert space $$H := \ell^2(\Ob(\bC) \cup \{e\})\, ,$$  where  $e$ is an artificially added point.
The assumption on the cardinality of $\Ob(\bC)$ is made since we want that $\IK$ is the algebra of compact operators on a separable Hilbert space.
Two points $x,y$ in $\Ob(\bC) \cup \{e\}$ provide a rank-one operator $\Theta_{y,x}$ in  $\IK(H)$ which sends the basis vector corresponding to $x$ to the vector corresponding to $y$, and is zero otherwise.
By definition, the homomorphism $\beta$ sends  $A$  in $\Hom_{\bC}(x,y)$ to  $$\beta(A) := A \otimes \Theta_{y,x}\, .  $$
If $A$ and $B$ are composable morphisms, then the relation $\Theta_{z,y}\Theta_{y,x}=\Theta_{z,x}$ implies   that $\beta(B\circ A)=\beta(B)\beta(A)$. Moreover, 
if $A$,$B$ are not composable, then $\beta(B)\beta(A) = 0$. Finally, $\beta(A)^{*}=\beta(A^{*})$ since
$\Theta_{y,x}^{*}=\Theta_{x,y}$. It follows that  $\beta$ is a well-defined $^\ast$-homomorphism.

The argument now proceeds by showing that the composition
$(\alpha_{\bC}\otimes \id_{\IK(H)})\circ \beta$ is homotopic to $\id_{A(\bC)}\otimes \Theta_{e,e}$, 
and that the composition
$\beta\circ \alpha_{\bC}$ is homotopic to $\id_{A^{f}(\bC)}\otimes \Theta_{e,e}$.
Note that in our setting $\bC$ is not necessarily unital. In the following we directly refer to the  proof of \cite[Prop.~3.8]{joachimcat}.
The only step in the  proof of that proposition where the identity morphisms are used is the definition of the maps denoted by $u_x(t)$ in the reference. But they in turn are only used to define the map denoted by $\Xi$  later in that proof. The crucial observation is that we can define this map $\Xi$ directly without using any identity morphisms in $\bC$.

We  conclude that the morphism $$\Kast(\alpha_{\bC}):\Kast(A^{f}(\bC))\to\Kast (A(\bC))$$  {is an equivalence} for $C^\ast$-categories $\bC$ with countably many objects.

In order to extend this to all $C^{*}$-categories, we use that 
$$\bC\cong  \colim_{\bC^\prime} A^f(\bC^\prime)$$ in $\Ccat$, where the colimit runs over the small filtered poset of full subcategories with countably many objects. This can be shown  easily by verifying the universal property of a colimit, see
also \cite[Lem.~4.5]{crosscat}.

Since $A^{f}$ is a left-adjoint it preserves colimits and we get an isomorphism 
$$A^f(\bC) \cong \colim_{\bC^\prime} A^f(\bC^\prime)\, .$$  
The connecting maps of the indexing family   are functors which are injections on objects. 
By an inspection of the definition we see that the canonical map is an isomorphism
$$\colim_{\bC'} A(\bC')_{0}\cong A(\bC)_{0}\, ,$$  where
the colimit is interpreted in the category of pre-$C^{*}$-algebras \cite[Def. 2.14]{startcats}.  
The completion with respect to the maximal norm is a left-adjoint \cite[(3.17)]{startcats} and preserves colimits. Hence the canonical morphism   is an isomorphism\footnote{One could also check this  isomorphism directly as it was the approach in the first version of this book. But the theory developed in the reference formalizes some of the arguments nicely.} \begin{equation}\label{rgoijioqjefefqwefqwefqewfqf}
\colim_{\bC^{\prime}} A(\bC^{\prime})\cong A(\bC)\, .
\end{equation}

 Recall that  
the index category of these colimits is  a filtered poset. By Property \ref{wtgwergfervfdsv} of $\Kast$ saying that this functor  
  preserves  filtered colimits 
the morphism $$\Kast(\alpha_{\bC}):\Kast(A^{f}(\bC))\to \Kast(A(\bC))$$ is equivalent to the morphism
$$\colim_{\bC'} \Kast(\alpha_{\bC'}):\colim_{\bC^{\prime}} \Kast(A^{f}(\bC^{\prime}))\to \colim_{\bC^{\prime}} \Kast(A(\bC^{\prime}))\, .$$ Since the categories $\bC^{\prime}$ appearing in the colimit now have at most countable many objects we have identified
$\Kast(\alpha_{\bC})$  with a colimit of equivalences. Hence this morphism itself is an equivalence. 
  \end{proof}

     \begin{ddd}\label{wegjioegergewfewrfewef}
   We define the topological $K$-theory functor for $C^{*}$-categories as the composition
   $$\Kcat:\Ccat\stackrel{A^{f}}{\to} \Calg^{\la}\stackrel{\Kast}{\to} \Sp^{\la}\, .$$
   \end{ddd}

In the subsequent subsections we show the basic properties of this $K$-theory functor for $C^{*}$-categories.

\subsubsection{\texorpdfstring{$K$}{K}-theory preserves filtered colimits}

Note that the functor   $A^{f}\colon\Ccat\to \Calg^{\la}$ is a left-adjoint, see \eqref{hbhjeuzefwefewfwe}. Hence it 
 preserves all small colimits.  Furthermore, the functor  $\Kast\colon \Calg^{\la}\to \Sp^{\la}$   preserves small filtered colimits by Property \ref{wtgwergfervfdsv}. 
 
 \begin{kor}\label{wegkoerfrefewrfwerf} The functor $\Kcat$ preserves small filtered colimits.
 \end{kor}

  As shown in \cite[Thm.~4.1]{crosscat} the category $\Ccat$ admits all small colimits. 
 But in general it might be complicated to calculate a filtered colimit of $C^{*}$-categories explicitly. 
In the present book we only need the special case  formulated in Proposition \ref{erijgowegwergrefewrferfw}. 

We consider a $C^{*}$-category $\bC$ and a filtered family $(\bD_{i})_{i\in I}$ of subcategories
 of $\bC$ with the following special properties:
\begin{enumerate}
\item These subcategories all have the same set of objects as $\bC$. 
\item On the level of morphisms the inclusion $\bD_{i}\hookrightarrow \bC$ is an isometric embedding of a closed subspace.
\end{enumerate}
In this situation we can define a full subcategory
\begin{equation}\label{defwkjnnjjkwejkf89324234234}
\bD:=\colim_{i\in I}\bD_{i} 
\end{equation}
of $\bC$. It again has the same set of objects as $\bC$. For two objects $c,c^{\prime}$ of  $\Ob(\bC)$ we define
$\Hom_{\bD}(c,c^{\prime})$ to be the closure in $\Hom_{\bC}(c,c^{\prime})$ of the linear subspace $\bigcup_{i\in I}\Hom_{\bD_{i}}(c,c^{\prime})$.
One checks easily that $\bD$ is again a $C^{*}$-category.

\begin{prop}\label{erijgowegwergrefewrferfw}
The canonical morphism
$$\colim_{i\in I} \Kcat(\bD_{i})\to \Kcat(\bD)$$ is an equivalence.
\end{prop}
\begin{proof}
One easily checks by verifying the universal property that $\bD$ represents the colimit of the family
$(\bD_{i})_{i\in I}$. For a detailed argument see also  \cite[Lem.~4.5]{crosscat}. 
We can now apply Corollary \ref{wegkoerfrefewrfwerf}.
 \end{proof}

\subsubsection{\texorpdfstring{$K$}{K}-theory preserves unitary equivalences}

   Let $\bD$ be a $C^{*}$-category, and let $u:d\to d'$ be a morphism in $\bD$.
   \begin{ddd}\mbox{}
   \begin{enumerate}
   \item $u$ is a partial isometry if $uu^{*}$ and $u^{*}u$ are   projections.
   \item $u$ is an isometry if it is a partial isometry and $u^{*}u=\id_{d}$.
   \item $u$ is unitary, if it is an isometry and $u^{*}$ is an isometry, too. 
   \end{enumerate}
   \end{ddd}
 If $u:d\to d'$ is a unitary, then $d$ and $d'$ admit unit endomorphisms.
   
 Let $f,g:\bC\to \bD$ be two functors in $\Ccat$.
 \begin{ddd}
 A unitary isomorphism $u\colon f\to g$ is a natural transformation $u=(u_{c})_{c\in \bC}$
 such that $u_{c}\colon f(c)\to g(c)$ is unitary for every object $c$ in $\bC$. 
 \end{ddd}

  Let $f,g:\bC\to \bD$ be two functors between unital $C^{*}$-categories.
 
 \begin{lem}\label{ojewjfowfewfewfewf}
Assume: \begin{enumerate}
\item $\bD$ is unital.
\item  $f$ and $g$ are unitarily isomorphic.
 \item $\Kcat(f):\Kcat\bC)\to \Kcat ( \bD)$  is an equivalence.
 \end{enumerate}
 Then  
 $\Kcat(g):\Kcat(\bC))\to \Kcat(\bD)$ is an equivalence.
 \end{lem}
 \begin{proof}
 We first assume that $\bC$ and $\bD$ have countably many objects. 
 The difficulty of the proof is that a unitary isomorphism between the functors $f$ and $g$ is not well-reflected
 on the level of algebras $A^{f}(\bC)$ and $A^{f}(\bD)$. But it induces an isomorphism between the induced functors $f_{*}$ and $g_{*}$
 on the module categories of the $C^{*}$-categories $\bC$ and $\bD$.
 
The following argument provides this transition. 
  We use that the $K$-theory of $A^{f}(\bC)$ considered as a $C^{*}$-category with a single object is the same as the $K$-theory of $ A^{f}(\bC)$ considered as a $C^{*}$-algebra. Since the unit of the adjunction 
 \eqref{hbhjeuzefwefewfwe} is a natural transformation we have a commutative diagram
\[\xymatrix{\bC\ar[rr]^-{h}\ar[d] & & \bD\ar[d]\\
A^{f}(\bC)\ar[rr]^-{A^{f}(h)} & & A^{f}(\bD)}\]
 of $C^{*}$-categories, where $h$ is in $ \{f,g\}$.
 We apply $K$-theory $K_{*}=\pi_{*}\Kast$ and get the commutative square:
\[\xymatrix{K_{*}(\bC)\ar[rr]^-{K_{*}(h)}\ar[d] & & K_{*}(\bD)\ar[d]\\
 K_{*}(A^{f}(\bC))\ar[rr]^-{K_{*}(A^{f}(h))} & & K_{*}(A^{f}(\bD))}\]
By \cite[Cor.~3.10]{joachimcat}  the vertical homomorphism are isomorphisms. 
By assumption $K_{*}(A^{f}(f))$ is an isomorphism. We conclude that $K_{*}(f)$ is an isomorphism. 
The proof of \cite[Lem.~2.7]{joachimcat} can be modified to show that the functors $f_{*}$ and $g_{*}$ between the module categories of $\bC$ and $\bD$ are equivalent. So if $K_{*}(f)$ is an isomorphism, then so is $K_{*}(g)$.
 Finally, we conclude that $K_{*}(A^{f}(g))$ is an isomorphism. 
 This gives the result in the case of countable many objects.
 
 The general case then follows by considering filtered colimits over subcategories with countably many objects as in the proof of Proposition \ref{prop:dfs78934}.
 \end{proof}

 Let $f:\bC\to \bD$ be a functor between unital $C^{*}$-categories. 
 \begin{ddd}
 $f$ is a unitary equivalence if there exists a functor $g:\bD\to \bC$ such that $f\circ g$ and $g\circ f$  are unitary isomorphic to the respective identity functors.
 \end{ddd}
 
 The functor $g$ is called an inverse of $f$.

Let $f:\bC\to \bD$ be a functor between unital $C^{*}$-categories.
 \begin{kor}\label{ojewjfowfewfewfewf1}
 If $f $ is a unitary equivalence, then $\Kcat(f):\Kcat(\bC)\to \Kcat(\bD)$ is an equivalence.
 \end{kor}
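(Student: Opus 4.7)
The strategy is to reduce the statement to Proposition \ref{ojewjfowfewfewfewf} applied twice, once for each composition involved in the definition of an equivalence. Recall that $f\colon\bC\to\bD$ being an equivalence means that there exists a functor $g\colon\bD\to\bC$ such that $g\circ f$ is unitarily equivalent to $\id_\bC$ and $f\circ g$ is unitarily equivalent to $\id_\bD$.

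First, since $A^f$ is a functor by construction, it is compatible with composition and identities, so
\[
K(A^f(g\circ f)) \simeq K(A^f(g)) \circ K(A^f(f)), \qquad K(A^f(f\circ g)) \simeq K(A^f(f)) \circ K(A^f(g)),
\]
and $K(A^f(\id_\bC))$, $K(A^f(\id_\bD))$ are the respective identity maps, hence equivalences. Applying Proposition \ref{ojewjfowfewfewfewf} with the pair $(\id_\bC, g\circ f)$ and the hypothesis that $K(A^f(\id_\bC))$ is an equivalence, we conclude that $K(A^f(g\circ f))$ is an equivalence. Analogously, $K(A^f(f\circ g))$ is an equivalence.

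Therefore, in the homotopy category of spectra, $K(A^f(f))$ has $K(A^f(g))$ as a right inverse (from $K(A^f(f\circ g))\simeq \id$) and as a left inverse (from $K(A^f(g\circ f))\simeq \id$). A morphism in any category that admits both a left and a right inverse is an isomorphism, so $K(A^f(f))$ is an equivalence in the stable $\infty$-category $\Sp$.

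The only subtle point is that Proposition \ref{ojewjfowfewfewfewf} is stated symmetrically for two unitarily equivalent functors, so we must check that unitary equivalence is indeed an equivalence relation on the set of functors (which is immediate, since unitaries are closed under inverses and composition) and that we may use it in both directions (from $\id$ to $g\circ f$, and from $\id$ to $f\circ g$); no further obstacle arises.
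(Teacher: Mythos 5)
Your proof is correct and follows essentially the same route as the paper: apply Proposition \ref{ojewjfowfewfewfewf} to the unitary equivalences $g\circ f\simeq \id_{\bC}$ and $f\circ g\simeq \id_{\bD}$ (using that $K(A^{f}(\id))$ is an equivalence), then use functoriality and the two-sided-inverse argument. You merely spell out the final step in more detail than the paper does.
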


\begin{proof}
Let $g:\bD\to \bC$ be an inverse of $f$.
Then $f\circ g$ (or $g\circ f$, respectively) is unitarily isomorphic to $\id_{\bD}$ (or $\id_{\bC}$, respectively).
By Lemma \ref{ojewjfowfewfewfewf} the morphisms   $\Kcat( g\circ f)$ and $\Kcat(f\circ g)$ are equivalences of spectra.
Since $\Kcat$ is a functor this implies that $\Kast( f)$ is an equivalence of spectra. 
\end{proof}

\subsubsection{Exactness of \texorpdfstring{$K$}{K}-theory}

 Let $\bD$ be a $C^{*}$-category.  We furthermore consider a wide $C^{*}$-subcategory $\bC$ of $\bD$,
 i.e.,  $\bC$ has the same objecs as $\bD$, and the morphism spaces of $\bC$ are closed subspaces of the morphism spaces of $\bD$ which are preserved by the $*$-operation.

\begin{ddd}\label{eiogegwergfewrfewfwefwe} $\bC$ is a closed  ideal\index{ideal in a $C^\ast$-category} in $\bD$ if
for all triples $x,y,z$ of objects of $\bD$,  
$f$ in $\Hom_\bD(x,y)$ and   $g$  in $\Hom_\bC(y,z)$ we have  $gf \in \Hom_\bC(x,z)$.
 \end{ddd}
The condition in Definition  \ref{eiogegwergfewrfewfwefwe} is the analogue of being a right-ideal. Since $\bC$ is preserved by the involution
this condition also implies an analoguous left-ideal condition.

If $\bC \to \bD$ is a {closed} ideal we can form the quotient category $\bD / \bC$ by declaring
\[\Ob(\bD / \bC) := \Ob(\bD) \text{ and } \Hom_{\bD / \bC}(x,y) := \Hom_{\bD}(x,y) / \Hom_{\bC}(x,y),\]
where the latter quotient is taken in the sense of Banach spaces.

 The following has been verify, e.g.\ in \cite[Cor.~4.8]{mitchc}.
\begin{lem}\label{iojfoiwef32904u23424324}
{If $\bC \to \bD$ is a closed ideal, then the} quotient category $\bD / \bC$ is a $C^\ast$-category.
\end{lem}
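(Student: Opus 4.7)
The plan is to verify the axioms of a $C^\ast$-category for $\bD/\bC$ directly, one by one. The algebraic structure descends easily: the composition $[B]\circ[A]:=[B\circ A]$ is well-defined because $\Hom_{\bC}$ is a two-sided ideal (Condition~\ref{felkwjowefewfewf}), and the involution $[A]^\ast:=[A^\ast]$ is well-defined because $\Hom_{\bC}$ is closed under $\ast$ (Condition~\ref{feoifjweofewfewfewf}). Submultiplicativity of the quotient-norm composition and the anti-linear isometry of the induced involution follow mechanically from the corresponding properties in $\bD$ together with the definition of the Banach-quotient norm. Since each $\Hom_{\bC}(x,y)\subseteq \Hom_{\bD}(x,y)$ is a closed subspace, the quotient is a Banach space.

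The remaining content is the $C^\ast$-identity $\|[A]^\ast[A]\|=\|[A]\|^2$ and positivity of $[A]^\ast[A]$ in $\End_{\bD/\bC}(x)$. First I would dispose of the case $x=y$: for each object $x$, the endomorphism algebra $E_x:=\End_{\bD}(x)$ is a $C^\ast$-algebra and $J_x:=\End_{\bC}(x)\subseteq E_x$ is a closed two-sided $\ast$-ideal, so by the classical result on $C^\ast$-algebra quotients, $E_x/J_x$ is a $C^\ast$-algebra; its $C^\ast$-norm coincides with the Banach-quotient norm on $\End_{\bD/\bC}(x)$, and this yields both $C^\ast$-identity and positivity for all endomorphism morphism spaces in $\bD/\bC$.

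For general $x,y$ the plan is to reduce to the endomorphism case via the standard approximate-unit trick. Choose a positive approximate unit $(u_\lambda)_{\lambda\in\Lambda}$ in the $C^\ast$-algebra $J_x$ with $0\le u_\lambda\le 1$. By the ideal condition, $Au_\lambda \in \Hom_{\bC}(x,y)$ for every $A\in \Hom_{\bD}(x,y)$, so $\|[A]\|\le \|A-Au_\lambda\|$ for all $\lambda$. A routine adaptation of the classical approximate-unit argument for $C^\ast$-algebra quotients gives the reverse inequality in the limit, so $\|[A]\|=\lim_\lambda \|A-Au_\lambda\|$. Expanding the square,
\[
\|A-Au_\lambda\|^2 = \|(A-Au_\lambda)^\ast(A-Au_\lambda)\| = \|A^\ast A - A^\ast A u_\lambda - u_\lambda A^\ast A + u_\lambda A^\ast A u_\lambda\|,
\]
and using that $1-u_\lambda$ is positive of norm at most one in the unitization of $E_x$, the right-hand side is bounded by $\|A^\ast A - A^\ast A u_\lambda\|$. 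Since $A^\ast A u_\lambda \in J_x$, taking $\lambda\to\infty$ and applying the corresponding approximate-unit identity inside the $C^\ast$-algebra $E_x/J_x$ yields $\|[A]\|^2 \le \|[A^\ast A]\|$. The converse inequality $\|[A^\ast A]\|\le \|[A]\|^2$ is immediate from submultiplicativity and isometry of the involution, proving the $C^\ast$-identity. Finally, positivity of $[A]^\ast[A]$ reduces to positivity of $A^\ast A$ in $E_x$ together with the $C^\ast$-algebra structure on $E_x/J_x$ already established.

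The main technical point will be the approximate-unit identity $\|[A]\|=\lim_\lambda \|A-Au_\lambda\|$ and the estimate of its square by $\|A^\ast A - A^\ast A u_\lambda\|$; both are parallel to the corresponding facts for closed two-sided $\ast$-ideals in $C^\ast$-algebras, and no new obstacle arises once the ideal axioms on $\bC$ have been translated into ``one-sided'' absorption statements on the morphism spaces.
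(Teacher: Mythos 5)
Your proof is correct. The paper itself gives no argument here (it simply declares the lemma ``straightforward to check''), and your proposal is the standard adaptation of the $C^\ast$-algebra quotient proof: the only points that genuinely need checking are (i) that $Au_\lambda\in\Hom_{\bC}(x,y)$, which follows from Condition~\ref{felkwjowefewfewf} applied to $u_\lambda A^\ast$ together with closure under $\ast$, and (ii) that the approximate unit of $\End_{\bC}(x)$ is a right approximate unit for each $\Hom_{\bC}(x,y)$ — needed in the inequality $\limsup_\lambda\|A-Au_\lambda\|\le\|A+j\|$ — which follows from the $C^\ast$-identity via $\|j-ju_\lambda\|^2=\|(1-u_\lambda)j^\ast j(1-u_\lambda)\|\le\|j^\ast j-j^\ast ju_\lambda\|\to 0$. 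With those two observations in place, every step you outline goes through.
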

 
We will depict this situation by
$$0\to \bC\to \bD\to \bD/\bC\to 0$$
and call this an exact sequence of $C^{*}$-categories.

\begin{rem}
In \cite[Thm.~4.1]{crosscat} {it is} shown that $\Ccat$ is cocomplete, hence in particular admits push-outs.
The quotient $\bD/\bC$ fits into  a square
$$\xymatrix{\bC\ar[r]\ar[d]&\bD\ar[d]\\0[\Ob(\bD)]\ar[r]&\bD/\bC}$$
where  $0[X]$ denotes the  $C^{*}$-category with the set of objects $X$ and only zero morphisms.   
 This square is a push-out and a pull-back at the same time.
\hB
\end{rem}

\begin{prop}\label{qerijogergwregwregwerg9}
The functor $\Kcat$ sends exact sequences of $C^{*}$-categories to fibre sequences.
\end{prop}

The next lemma  prepares the proof of this proposition.

\begin{lem}\label{fjweoifjweo89234234234}
If  $\bC \to \bD$ is  a {closed} ideal, then we have a short exact sequence of $C^\ast$-algebras
\[0 \to A(\bC) \to A(\bD) \to A(\bD / \bC) \to 0\,.\]
\end{lem}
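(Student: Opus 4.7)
The plan is to establish exactness at each of the three positions by first checking exactness algebraically on the dense $*$-subalgebras $A(\bE)_{0}$ and then showing that this survives maximal $C^{*}$-completion. Since $\bC \to \bD$ is the identity on objects, the ideal conditions directly give a short exact sequence of $*$-algebras
\[ 0 \to A(\bC)_{0} \to A(\bD)_{0} \to A(\bD/\bC)_{0} \to 0, \]
with $A(\bC)_{0}$ an honest two-sided $*$-ideal in $A(\bD)_{0}$ by the ideal property, and the right-hand identification being the componentwise $\Hom_{\bD}(c,c')/\Hom_{\bC}(c,c')$ from Lemma~\ref{iojfoiwef32904u23424324}. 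Surjectivity of $A(\bD) \to A(\bD/\bC)$ and the inclusion $A(\bC) \subseteq \ker$ are then immediate, since the image of a $*$-homomorphism of $C^{*}$-algebras is closed and contains the dense subalgebra $A(\bD/\bC)_{0}$.

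The main obstacle will be injectivity of $A(\bC) \to A(\bD)$, i.e., that the maximal $C^{*}$-norm on $A(\bC)_{0}$ coincides with the norm inherited from $A(\bD)$. One direction is clear (any $*$-representation of $A(\bD)_{0}$ restricts to one of $A(\bC)_{0}$). For the other direction one must extend a given $*$-representation $\pi\colon A(\bC)_{0}\to B(H)$ to $A(\bD)_{0}$ without increasing norms. Decomposing $H$ into the degenerate and non-degenerate parts reduces to non-degenerate $\pi$. The key point will be that, because $A(\bC)_{0}$ is a two-sided ideal in $A(\bD)_{0}$, every $d \in A(\bD)_{0}$ defines a double centralizer on $A(\bC)_{0}$, yielding a $*$-homomorphism $A(\bD)_{0}\to M(A(\bC))$ into the multiplier algebra of the maximal completion $A(\bC)$. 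Composing with the canonical extension of the non-degenerate $\pi$ to $M(A(\bC))\to B(H)$ produces the desired extension $\tilde\pi\colon A(\bD)_{0}\to B(H)$ with $\tilde\pi|_{A(\bC)_{0}}=\pi$, and the estimate $\|\pi(c)\|\le \|c\|_{A(\bD)}$ follows from the universal property defining the maximal $C^{*}$-norm on $A(\bD)_{0}$.

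Once injectivity is established, identification of the kernel amounts to showing that the induced surjection $A(\bD)/A(\bC)\twoheadrightarrow A(\bD/\bC)$ is an isomorphism; equivalently, that it is isometric on the dense subalgebra $A(\bD)_{0}/A(\bC)_{0}\cong A(\bD/\bC)_{0}$. Here one direction of the norm comparison is automatic for any $*$-homomorphism of $C^{*}$-algebras; for the other one, any $*$-representation $\sigma$ of $A(\bD/\bC)_{0}$ pulls back to a $*$-representation of $A(\bD)_{0}$ that vanishes on $A(\bC)_{0}$, hence extends to $A(\bD)$ and factors through $A(\bD)/A(\bC)$, giving the required bound on the maximal $C^{*}$-norm by the quotient norm. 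Taking the supremum over $\sigma$ then finishes the proof.
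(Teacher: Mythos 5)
Your overall strategy -- establish the exact sequence on the dense $*$-subalgebras $A(\bC)_{0}\to A(\bD)_{0}\to A(\bD/\bC)_{0}$ and then control the maximal completions by extending, respectively descending, representations -- is genuinely different from the paper's. The paper never extends a representation: it reduces everything to the full subcategories $\bC'\subseteq\bD'$ on finitely many objects, where $A(\bC')_{0}$ and $A(\bD')_{0}$ are already closed in $A(\bD)$ (the input cited from Joachim), so that uniqueness of $C^{*}$-norms on these finite pieces forces the maximal norm on $A(\bC)_{0}$ to agree with the norm induced from $A(\bD)$. Your route can be made to work, but as written it has a gap exactly at the step you correctly identify as the main obstacle.

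The gap is the well-definedness of the map $A(\bD)_{0}\to M(A(\bC))$. To obtain a multiplier of $A(\bC)$ from $d\in A(\bD)_{0}$ you must show that $c\mapsto dc$ and $c\mapsto cd$ are bounded on $A(\bC)_{0}$ \emph{for the maximal norm of} $A(\bC)_{0}$; equivalently, for the extension $\tilde\pi(d)\,\pi(c)\xi:=\pi(dc)\xi$ of a nondegenerate $\pi$ you need $\sum_{i,j}\langle\pi(c_{j}^{*}d^{*}dc_{i})\xi_{i},\xi_{j}\rangle\le\lambda\sum_{i,j}\langle\pi(c_{j}^{*}c_{i})\xi_{i},\xi_{j}\rangle$. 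The natural estimate -- $d^{*}d\le\|d\|^{2}$ in the unitization of $A(\bD)$, hence $c^{*}(\|d\|^{2}-d^{*}d)c=(ec)^{*}(ec)\ge 0$ -- is circular: $ec$ lies only in the closure of $A(\bC)_{0}$ inside $A(\bD)$, and a representation of $A(\bC)_{0}$ bounded for the maximal norm need not be bounded for the a priori smaller $A(\bD)$-norm, so $\pi((ec)^{*}(ec))\ge 0$ cannot be asserted; the comparability of the two norms is precisely what is being proved. The repair is to argue generator by generator: for $A\in\Hom_{\bD}(x,y)$ the element $\|A\|^{2}1-A^{*}A$ is positive in the unitization of the $C^{*}$-algebra $\Hom_{\bD}(x,x)$, so it equals $E^{*}E$ with $E=\mu 1+E_{0}$ and $E_{0}\in\Hom_{\bD}(x,x)$, whence $Ec=\mu c+E_{0}c\in A(\bC)_{0}$ by the ideal property and the quadratic form estimate holds with $\lambda=\|A\|^{2}$; a general $d$ is a finite sum of such generators. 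Separately, in your last step the inequality points the wrong way: pulling back representations of $A(\bD/\bC)_{0}$ only re-proves that $A(\bD)/A(\bC)\to A(\bD/\bC)$ is contractive. For injectivity you need the converse direction, namely that every representation of $A(\bD)$ vanishing on $A(\bC)$ restricts to a representation of $A(\bD)_{0}$ killing $A(\bC)_{0}$ and hence descends to $A(\bD/\bC)_{0}$, which bounds the quotient norm by the maximal norm of $A(\bD/\bC)_{0}$.
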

\begin{proof} 
Recall  that we define  the $C^{*}$-algebra $A(\bC)$ as the closure of the pre-$C^{*}$-algebra $A(\bC)_{0}$ given by \eqref{efefwefeefefefewfef} using the maximal $C^{*}$-norm. Since taking the closure with respect to the maximal $C^{*}$-norm is a left-adjoint it preserves surjections. Consequently the obvious 
surjection  $A(\bD)_{0}\to A(\bD/\bC)_{0}$ induces a surjection  $A(\bD) \to A(\bD/\bC)$.

The homomorphism $A(\bC)_0 \to A(\bD)_0$ of pre-$*$-algebras  induces a morphism $A(\bC) \to A(\bD)$  of $C^{*}$-algebras. We must show that this is the injection of a closed ideal which is precisely the kernel of the surjection $A(\bD)\to A(\bD/\bC)$.
 
In the following we argue that $ A(\bC)\to A(\bD)$ is isometric. We consider a finite subset of the set of objects of $\bD$ and  the full subcategories $\bC^{\prime}$ and $\bD^{\prime}$ of $\bC$ and $\bD$, respectively, on these objects. It has been observed in the proof of \cite[Lem.~3.6]{joachimcat} that $A(\bD^{\prime})_{0}$ with its topology as a finite sum \eqref{efefwefeefefefewfef} of Banach spaces is closed in $A(\bD)$. Consequently, the inclusion
$A(\bD^{\prime})_{0} \hookrightarrow A(\bD)$ induces the maximal norm on $A(\bD^{\prime})_{0}$.
We now use that $A(\bC^{\prime})_{0}$ is closed in $A(\bD^{\prime})_{0}$. Consequently, the maximal $C^{*}$-norm on
$A(\bC^{\prime})_{0}$ is the norm induced from the  inclusion $A(\bC^{\prime})_{0} \hookrightarrow A(\bD^{\prime})_{0}  \hookrightarrow  A(\bD)$. 
%

%
%
%

The norm induced on $A(\bC)_{0}$ from the inclusion $A(\bC)_{0}\hookrightarrow A(\bD)$ is bounded above by the maximal norm but induces the maximal norm on $A(\bC^{\prime})_{0}$. 
Since every element of $  A(\bC)_0$ is  contained in $A(\bC^{\prime})_{0}$ for some  full subcategory $\bC^{\prime}$ 
on a finite subset of objects, we conclude that the inclusion $A(\bC)_0 \to A(\bD)$ 
induces the maximal norm.

It follows that the $^\ast$-homomorphism $A(\bC) \to A(\bD)$ is isometric.
As an isometric inclusion the homomorphism of $C^{*}$-algebras $A(\bC)\to A(\bD)$ is injective.

The composition $$A(\bC) \to A(\bD) \to A(\bD / \bC)$$ vanishes by the functoriality of the process of taking completions with respect to maximal $C^{*}$-norms. 
 So it remains to show that this sequence is exact.
 We start with the obvious exact sequence
 $$A(\bC)_{0} \to A(\bD)_{0} \stackrel{\pi}{\to} A(\bD / \bC)_{0}\, .$$
If we restrict to the subcategories with finite objects we get an isomorphism
$$ A(\bD^{\prime})_{0}/A(\bC^{\prime})_{0}\to A(\bD^{\prime} / \bC^{\prime})_{0}\, .$$
This isomorphism is an isometry when we equip all algebras with their maximal $C^{*}$-norms.
Furthermore, by the same arguments as above  the inclusion 
$ A(\bD^{\prime})_{0}/A(\bC^{\prime})_{0}\to A(\bD / \bC)$
is an isometry.  Using the arguments from above we conclude that
$A(\bD )_{0}/A(\bC )_{0}\to A(\bD / \bC)$ is an isometry.
 This assertion implies   that the closure of $A(\bC)_{0}$  in the topology induced from $A(\bD)$
 is the kernel of  $A(\bD) \to A(\bD / \bC)$. 
 But above we have seen that this closure is exactly $A(\bC)$.   
 \end{proof}
 Note that Lemma \ref{fjweoifjweo89234234234} implies an isomorphism of $C^{*}$-algebras
\begin{equation}\label{sdfvsdfvkporgdfsvs}
A(\bD/\bC)\cong A(\bD)/A(\bC)\, .
\end{equation}

 \begin{proof}[Proof of Proposition \ref{qerijogergwregwregwerg9}]
 Let $$0\to \bC\to \bD\to \bD/\bC\to 0$$
 be an exact sequence of $C^{*}$-categories. Then we get the following commuting diagram
$$\xymatrix{\Kcat(\bC)\ar[r]\ar[d]^{\Kast(\alpha_{\bC})}_{\simeq}&\Kcat(\bD)\ar[r]\ar[d]_{\simeq }^{\Kast(\alpha_{\bD})}&\Kcat(\bD/\bC)\ar[d]^{\Kast(\alpha_{\bD/\bC})}_{\simeq}\\
\Kast(A(\bC))\ar[r]&\Kast(A(\bD))\ar[r]&\Kast(A(\bD/\bC))}$$
The vertical morphisms are equivalences by Proposition \ref{prop:dfs78934} and the definition $\Kcat:=\Kast\circ A^{f}$.
Using \eqref{sdfvsdfvkporgdfsvs} and Property \ref{wtgwergfervfdsv1} of $\Kast$
we see that the lower part of the diagram is a segment of a fibre sequence.
Hence the upper part is a segment of a fibre sequence, too.
\end{proof}

%
%
%

\subsubsection{Additivity of \texorpdfstring{$K$}{K}-theory}\label{ergioheriogergrewferfwef}
 
Let $\bD$ be a $C^{*}$-category, and let $d,d'$ be objects in $\bD$.\index{orthogonal sum in $C^\ast$-categories}\index{$C^\ast$-category!orthogonal sum}
\begin{ddd}\label{erigoowerffrefwefwerf}
An orthogonal sum of $d$ and $d'$ is triple $(d\oplus d',u,u')$ of an object $d\oplus d'$ of $\bD$ and isometries
$u:d\to d\oplus d'$ and $u':d'\to d\oplus d'$ such that $u'u^{*}=0$ and $uu^{*}+u'u^{\prime,*}=\id_{d\oplus d'}$.
\end{ddd}
The  isometries $u$ and $u'$ are called the canonical inclusions.
If an orthogonal sum exists, then it is unique up to unique unitary isomorphism. 

\begin{rem}
In \cite{cank} we discuss the concept of infinite orthogonal sums in $C^{*}$-categories.  
In the present book we only need  the  obvious abstract Definition \ref{erigoowerffrefwefwerf} of finite orthogonal sums.
We will encounter infinite orthogonal sums as well but only in the context of $C^{*}$-categories built from Hilbert spaces where these sums can be understood directly. 
\hB
\end{rem}

An object $d$ in a $C^{*}$-category $\bD$ is called a zero object if it  $\End_{\bD}(d)\cong 0$.
\begin{ddd}
 $\bD$ is called additive\index{additive $C^\ast$-category}\index{$C^\ast$-category!additive} if it admits a zero object and orthogonal sums for all pairs of objects.
\end{ddd}

Note that an additive $C^{*}$-category is automatically unital.  
 
Let $\bC$ be a second $C^{*}$-category, and let  $\phi,\phi'\colon \bC\to \bD$  be two functors.
Then we can define a new functor $\phi\oplus \phi'\colon \bC\to \bD$.  It sends an object $c$ in $\bC$ to a choice of a sum
$\phi(c)\oplus {\phi(c)}$. Furthermore, it sends a morphism  $A\colon c_{0}\to c_{1}$  in $\bC$ to the morphism 
 $u_{1}\phi(A)u_{0}^{*}+u_{1}'\phi'(A)u_{0}^{\prime,*}$, where $u_{i}$ and $u'_{i}$ for $i=0,1$  are the canonical inclusions for the sums $\phi(c_{i})\oplus \phi'(c_{i})$. 
 
 The functor is uniquely determined  up to unique unitary isomorphism by the choices adopted on the level of objects.
 \newcommand{\Homol}{\Kcat}

 The following proposition says that the $K$-theory functor $\Kcat$ preserves sums of functors.

  Let $\bC$ and $\bD$ be in $\Ccat$, and let $\phi,\phi':\bC\to \bD$ be two functors.
  \begin{prop}\label{rguiwtgwreergrgwrgg}
  If $\bD$ is additive, then  we have\index{$K$-theory!preserves sums of functors}
an equivalence $$\Homol(\phi\oplus \phi')\simeq \Homol(\phi)+\Homol(\phi'):\Homol(\bC)\to \Homol(\bD)\, .$$
\end{prop}

 Before we start with the actual proof  of the proposition we show that the $K$-theory functor $\Kcat$ preserves some   products of $C^{*}$-categories.\index{$K$-theory!preserves products of $C^\ast$-categories}

   Let $\bC$ and $\bD$ be in $\Ccat$.
\begin{lem}\label{erighi9wrteogwergrgregwergwreg}Assume:
\begin{enumerate}
\item $\bC$ and $\bD$ are unital.
\item $\bC$ and $\bD$ admit  zero objects.
\end{enumerate}
Then the morphism \begin{equation}\label{fweqdqewdeqwedqwedqwedqwedqwedqwedwedqwedq}
\Homol(\pr_{\bC})\oplus \Homol(\pr_{\bD}):\Homol(\bC\times \bD)\to \Homol(\bC)\oplus \Homol(\bD)
\end{equation} is an equivalence.
 \end{lem}
\begin{proof}
For a set $X$ let $0[X]$ be the category with the set of objects $X$ and with only zero morphisms.
  We consider the  commuting diagram
  \begin{equation}\label{qfhiuqwehfiuqwefefqff}
\xymatrix{0[\Ob(\bC)]\ar[rr]^-{\omega_{\bC}}\ar[d]&&\bC\ar[d]^-{z_{\bC}}\ar[rr]^{=}&& \bC\ar[d]^{z'_{\bC}}\\ 0[\Ob(\bC)]\times \bD\ar[rr]^-{\omega_{\bC}\times \id_{\bD}}&&\bC\times \bD\ar[rr]&&\bC\times 0[\Ob(\bD)]}
\end{equation}     in $\Ccat$. The functor $\omega_{\bC}$  is the canonical inclusion.
 The vertical functors send an object $c$ of $\bC$ to the object $(c,0_{\bD})$ of $\bC\times \bD$, where $0_{\bD}$ is a chosen zero object in $\bD$.
  The action of the left vertical functor on morphisms is clear. Finally, the middle and right vertical functor  send a morphism $A:c\to c'$ in $\bC$ to the morphism $(A,0):(c,0_{\bD})\to (c',0_{\bD})$ in $\bC\times \bD$.
    The horizontal lines are exact sequences of $C^{*}$-categories.
    It is furthermore easy to check that the right vertical functor $z'_{\bC}$ is a unitary equivalence. Indeed, it is obviously fully faithful and essentially surjective. 
    We now apply the functor $\Kcat$ and get the commuting diagram 
\begin{align}\label{qfhiuqwehfiuqwefefqff22}
\xymatrix{
\Kcat(0[\Ob(\bC)])\ar[rr]_-{\Kcat(\omega_{\bC})}\ar[d]&&\Kcat(\bC)\ar[d]^-{\Kcat(z_{\bC})}\ar[rr]&&\Kcat( \bC)\ar[d]^{\Kcat(z_{\bC}')}_{\simeq}\\
\Kcat(0[\Ob(\bC)]\times \bD)\ar[rr]_-{\Kcat(\omega_{\bC}\times \id_{\bD})}&&\Kcat(\bC\times \bD)\ar[rr]&&\Kcat(\bC\times 0[\Ob(\bD)])
}
\end{align} 
in $\Sp^{\la}$.
Using the observations made above we conclude with   Proposition \ref{qerijogergwregwregwerg9} that the horizontal sequences are segments of fibre sequence, and with Corollary \ref{ojewjfowfewfewfewf1} that 
 left vertical morphism is an equivalence. 
 Hence the left square is a push-out.
 
We now use that  there is a unitary equivalence $z'_{\bD}:\bD\to 0[\Ob(\bC)]\times \bD$. Consequently we have an identification $\Kcat(z'_{\bD}):\Kcat(\bD)\stackrel{\simeq}{\to}  \Kcat(0[\Ob(\bC)]\times \bD)$ for the lower left corner.
Finally note that the left upper corner is the $K$-theory of a category consisting only of zero objects and therefore vanishes by Property \ref{wtgwergfervfdsv1} of $\Kast$ and the fact that $A^{f}(0[X])\cong 0$. Using stability in order to identify coproducts with products we conclude that
\begin{equation}\label{dqweodfhqiofnfwfqwefqewfqewfqwf}
\Kcat(z_{\bC})+  \Kcat(z_{\bD}):\Kcat(\bD)\oplus \Kcat(\bD)\to \Kcat(\bC\times \bD)
\end{equation}
is an equivalence.
 We now observe that $\pr_{\bC}\circ z_{\bC}=\id_{\bC}$ and $\pr_{\bD}\circ z_{\bD}=\id_{\bD}$, and that
$\pr_{\bC}\circ z_{\bD}$ and $\pr_{\bC}\circ z_{\bD}$ are zero morphisms.
This immediately implies that  \eqref{fweqdqewdeqwedqwedqwedqwedqwedqwedwedqwedq}
 is an equivalence inverse to \eqref{dqweodfhqiofnfwfqwefqewfqewfqwf}.
 \end{proof}

 \begin{proof} [Proof of Proposition \ref{rguiwtgwreergrgwrgg}] 
Since $\bD$ is additive we can choose a zero object $0_{\bD}$ in $\bD$.
We consider the diagram in $\Sp^{\la}$
\begin{equation}\label{ebwegrfreferwfwefwrefwev}
\xymatrix{
&\Homol(\bD\times \bD)\ar[dr]^-{\quad\Homol(\bigoplus)}\ar[dl]^-{\simeq}_-{\Homol(\pr_{0})\oplus \Homol(\pr_{1})\quad\quad\quad\quad}\\
\Homol(\bD)\oplus \Homol(\bD)\ar[rr]^-{+}
&&\Homol(\bD)}
\end{equation}
 where the 
 left vertical 
 morphism is an equivalence by Lemma  \ref{erighi9wrteogwergrgregwergwreg}. 
 We claim that \eqref{ebwegrfreferwfwefwrefwev} naturally commutes.
 Using the explicit inverse \eqref{dqweodfhqiofnfwfqwefqewfqewfqwf} of $\Homol(\pr_{0})\oplus \Homol(\pr_{1})$ and the universal property of $+$
   it suffices to show that the compositions
\[\mathclap{
\Homol(\bD)\stackrel{\iota_{i}}{\to}\Homol(\bD)\oplus \Homol(\bD)\xrightarrow{\Homol(z_{0})+\Homol(z_{1})}\Homol(\bD\times \bD)\xrightarrow{\Homol(\id_{\bD}\oplus \id_{\bD})} \Homol(\bD)
}\]
 are equivalent to the identity, where $\iota_{i}:\Homol(\bD)\stackrel{\iota_{i}}{\to}\Homol(\bD)\oplus \Homol(\bD)$ denote the canonical inclusions for $i=0,1$. 

 In the case $i=0$ this composition is induced by applying $\Homol$ to the endofunctor
 $s :\bD\to \bD$  which is given as follows:
 \begin{enumerate}
 \item objects: $s $ sends an object $D$ to the representative $D\oplus 0_{\bD} $.
 \item morphisms: $s$ sends a morphism $f:D\to D'$ to the morphism $f\oplus 0:D\oplus 0_{\bD} \to D'\oplus 0_{\bD} $. 
 \end{enumerate}
We have a unitary equivalence $u:\id_{\bD} \to s $ given by the family $(u_{D})_{D\in \Ob(\bD)}$, where
$u_{D}:D\to D\oplus 0_{\bD} $ is the canonical inclusion.
 Hence $\Homol(s )\simeq \Homol(\id_{\bD})$.
 The case $i=1$ is analoguous.


   We have the following  diagram in $\Sp^{\la}$
\begin{equation}\label{svfdviuvhuisdfvsdfvsdfvsdfv}
\xymatrix{
&\Homol(\bC)\ar[dr]^-{\diag_{\Homol(\bC)}}\ar[dl]_-{\Homol(\diag_{\bC})\quad} &\\
\Homol(\bC\times \bC)\ar[rr]_{\simeq}^-{\Homol(\pr_{0})\oplus \Homol(\pr_{1})} \ar[d]^{\Homol(\phi\times \phi')}& &\Homol(\bC)\oplus \Homol(\bC)\ar[d]^{\Homol(\phi)\oplus \Homol(\phi')} \\
\Homol(\bD \times  \bD)\ar[dr]_-{\Homol( \bigoplus)}\ar[rr]_{\simeq}^-{\Homol(\pr_{0})\oplus \Homol(\pr_{1})} &  &\Homol(\bD)\oplus \Homol(\bD) \ar[dl]^-{+}\\&\Homol(\bD)&
}
\end{equation}      
  The lower triangle commutes by \eqref{ebwegrfreferwfwefwrefwev}.
The upper triangle and the middle square obviously commute. The left top-down path is the map
$\Homol(\phi\oplus \phi')$, while the right top-down path is $\Homol(\phi)+\Homol(\phi')$. 
The filler of \eqref{svfdviuvhuisdfvsdfvsdfvsdfv} provides the desired equivalence.
 \end{proof}
 
  \subsection{Coarse \texorpdfstring{$K$}{K}-homology}\label{sec:jkbewrgh}

In this section we define the coarse $K$-homology functors $\KX$ and $\KXql$.  Our main result is Theorem \ref{irgfjiwoowierer23423424243} stating that these functors are coarse homology theories.


Let $X$ be a bornological coarse space.
Note that Hilbert spaces below are assumed to be very small. The Roe categories also defined below will be small $C^{*}$-categories and therefore objects of $\Ccat$. Recall the Definitions \ref{feiojwoeffewfwefewfwef},    \ref{ewoifjweoifiufu89234234324}, \ref{rgioerjog34t4t34t34t} and \ref{defn:iuwn3r2}.

We consider the $\C$-linear $*$-category $\bC^{*}(X)_{0}$  whose objects are the locally finite $X$-controlled Hilbert spaces which are determined on points, and whose morphisms are the bounded operators of controlled propagation.
 Using the properties of the support listed in Subsection \ref{ofjewiofiouioru233244343244}
 and the closure property of the coarse structure of $X$ (Definition \ref{ewgiubhwiergfvsvdfvsfdvs}) we check that $\bC^{*}(X)_{0}$ is indeed a $\C$-linear $*$-category.  
 If $(H,\phi)$ and $(H',\phi')$ are objects of $\bC^{*}(X)_{0}$, then
 the norm of  the Banach space of bounded operators $B(H,H')$ induces a norm on $\Hom_{\bC^{*}(X)_{0}}((H,\phi),(H',\phi'))$. We can form the completion  of the morphism spaces of $\bC^{*}(X)_{0}$ with respect to these norms.

\begin{ddd}\label{foiefoiefoewfewfewiofef}
\index{Roe!category}
We define the Roe category $\bC^{*}(X)$\index{$\bC^{*}(-)$|see{Roe category}} as follows:
\begin{enumerate} \item
The objects of $\bC^{*}(X)$  are the locally finite $X$-controlled Hilbert spaces which are determined on points.
\item The morphism spaces of $\bC^{*}(X)$ are 
  obtained from the morphism spaces of $\bC^{*}(X)_{0}$  by  completing   with respect to the norms described above.  \end{enumerate}
\end{ddd}

It is clear that $\bC^{*}(X)$ is a well-defined $C^{*}$-category in the sense of Definition \ref{qwfijoafvffdfadf}.

For the following recall also Definition \ref{bvcbdfg}.
Let 
 $(H,\phi)$, $(H',\phi')$ and $(H'',\phi'')$  be $X$-controlled Hilbert spaces, and let $A:H\to H'$ and $B:H'\to H''$  be quasi-local operators. 
\begin{lem}\label{webgiojewrgerfewferf} If $(H',\phi')$ is determined on points, then $B\circ A$ is quasi-local.
  \end{lem}
\begin{proof}
If an operator is $U$-quasi-local, then it is also $U'$-quasi-local for every bigger entourage $U'$. We can therefore assume that $A$ and $B$ are both $U$-quasi-local for some entourage $U$ of $X$. 
We will show, that then $BA$ is also $U$-quasi-local.

We consider $\epsilon$ in $(0,\infty)$. Then we can find an integer $n$ in $\nat$ such that 
 \begin{equation}\label{fvpokpovsdvsfsfdv}
\|\phi''(Y')B\phi'(Y)\|\le \frac{\epsilon}{2(1+\|A\|)}\:\:\mbox{and}\:\: \|\phi'(Y')A\phi(Y)\|\le \frac{\epsilon}{2(1+\|B\|)}
\end{equation}
whenever
  $Y'$ and $Y$ are mutually $U^{n}$-separated subsets of $X$.  Assume now that $Z$ and $Z'$ are two mutually $U^{2n}$-separated subsets of $X$. Since  $(H',\phi')$ is determined on points, by Lemma \ref{fghiofweqewfqwed}  we have $$\phi'(U^{n}[Z])+\phi'(X\setminus U^{n}[Z])=\id_{H'}\, .$$
  This gives  the equality 
  $$\phi''(Z')BA\phi(Z)=\phi''(Z')B\phi'(U^{n}[Z])A\phi(Z) +\phi''(Z')B\phi'(X\setminus U^{n}[Z])A\phi(Z)\, .$$
  We now note that $Z'$ and $U^{n}[Z]$ are mutually $U^{n}$-separated, and that also 
  $X\setminus U^{n}[Z]$ and $Z$ are mutually $U1n$-separated. 
  Using the estimates in \eqref{fvpokpovsdvsfsfdv} we conclude that 
\[\|\phi''(Z')BA\phi(Z)\|\le \|\phi''(Z')B\phi'(U[Z])A\phi(Z)\| +\|\phi''(Z')B\phi'(X\setminus U^{n}[Z])A\phi(Z)\|\le  \epsilon\, .\qedhere\]
\end{proof}

We consider the category $\bC^{*}_{\ql}(X)_{0}$ whose objects are all locally finite $X$-controlled Hilbert spaces on $X$ which are determined on points, and whose morphisms are the quasi-local operators.  It is clear that the adjoint  of a quasi-local operator  is quasi-local. A linear combination of a $U$-quasi-local and a $U'$-quasi-local operator is $U\cup U'$-quasi-local.
 By Lemma \ref{qwrifuq9erfewdqewdqewdqewd} the composition of two quasi-local operators is again quasi-local.
 This shows that $\bC^{*}_{\ql}(X)_{0}$ is a $\C$-linear $*$-category. 
  If $(H,\phi)$ and $(H',\phi')$ are objects of $\bC^{*}_{\ql}(X)_{0}$, then
 the norm of  the Banach space of bounded operators $B(H,H')$ induces a norm on $\Hom_{\bC^{*}_{\ql}(X)_{0}}((H,\phi),(H',\phi'))$. We can form the completion  of the morphism spaces of $\bC^{*}_{\ql}(X)_{0}$ with respect  these norms.

\begin{ddd}\label{foiefoiefoewfewfewiofef1}
We define the quasi-local Roe category\index{Roe!category!quasi-local} $\bC_{\ql}^{*}(X)$\index{$\bC_{\ql}^{*}(-)$|see{quasi-local Roe category}}\index{quasi-local!Roe category} as follows:
\begin{enumerate} \item
The objects of $\bC^{*}_{\ql}(X)$  are the locally finite $X$-controlled Hilbert spaces which are determined on points.
\item The morphism spaces of $\bC^{*}_{\ql}(X)$ are 
  obtained from the morphism spaces of $\bC_{\ql}(X)_{0}$  by  completing   with respect to the norms described above.  \end{enumerate}
\end{ddd}

It is again clear that $\bC^{*}_{\ql}(X)$ is a well-defined $C^{*}$-category in the sense of Definition \ref{qwfijoafvffdfadf}.

%
%
%
%

The categories $\bC^{*}(X)$ and $\bC^{*}_{\ql}(X)$ are $C^{*}$-categories which have the same set of objects.
Since operators of controlled propagation are quasi-local   we conclude that 
  morphisms of $\bC^{*}(X)$ form a subset of the morphisms of $\bC^{*}_{\ql}(X)$.

The definition of the Roe categories is closely related to the definitions of the corresponding Roe algebras $\cC^\ast(X,H,\phi)$ and $\cCql^\ast(X,H,\phi)$ from Definition \ref{qekdfjqodqqwdqwdqwdqdwq}.
For every object
$(H,\phi)$ in $\bC^{*}(X)$   {by definition} we have isomorphisms 
$$\End_{\bC^{*}(X)}((H,\phi)) \cong  \cC^{*}(X,H,\phi)\,, \quad  \End_{\bC_{\ql}^{*}(X)}((H,\phi),(H,\phi)) \cong  \cC_{\ql}^{*}(X,H,\phi)\, .$$

\begin{rem}
Our reason for using   the locally finite versions of the Roe algebras to model coarse $K$-homology is that  the Roe categories as defined above are unital.

The obvious alternative to define the Roe categories to consist of all $X$-controlled Hilbert spaces, and to take  locally compact versions of the operators as morphisms, failed, {as we will explain now.}

Recall that Corollary \ref{ojewjfowfewfewfewf1} says that equivalent $C^{*}$-categories have equivalent $K$-theories.
For non-unital $C^{*}$-algebras it would be necessary  to redefine the notion of equivalence using a generalization to $C^{*}$-categories of the notion of a multiplier algebra of a $C^{*}$-algebra. But we were not able to fix all details of an argument which extends Corollary \ref{ojewjfowfewfewfewf1} from the unital to the non-unital case.
 
 This corollary is used in an essential way to show that the coarse $K$-homology functor is coarsely invariant and satisfies excision. 
\hB
\end{rem}

 As a next step we extend the Definitions \ref{foiefoiefoewfewfewiofef} and \ref{foiefoiefoewfewfewiofef1}
to functors
$$\bC^{*},\bC_{\ql}^{*}:\BC\to \Ccat\, .$$
Let $f:X\to X^{\prime}$ be a morphism between bornological coarse spaces.
If $(H,\phi)$ is a locally finite  $X$-controlled Hilbert space, then the $X^{\prime}$-controlled Hilbert space
$f_{*}(H,\phi)$ defined in Definition \ref{ewljfewilewfiouwefwefew} is again locally finite (since $f$ is proper).
Similiarly, if $(H,\phi)$ is determined on points, then $f_{*}(H,\phi)$ is determined on points. 
Here we use the conclusion    $\ref{egkjweogerwgegewg1}\Rightarrow \ref{egkjweogerwgegewg3}$ in  Lemma \ref{fghiofweqewfqwed} for the partition $(f^{-1}(\{x'\}))_{x'\in X'}$ of $X$.
On the level of objects the functors 
$\bC^{*}(f)$ and $\bC^{*}_{\ql}(f)$ send $(H,\phi)$ to $f_{*}(H,\phi)$.

Let $(H,\phi)$ and $(H^{\prime},\phi^{\prime})$ be objects of $\bC^{*}(X)$, and let 
$A$ be in $\Hom_{\bC^{*}(X)}((H,\phi),(H^{\prime},\phi^{\prime}))$ (or  $\Hom_{\bC^{*}_{\ql}(X)}((H,\phi),(H^{\prime},\phi^{\prime}))$, respectively).  Then by definition $A$ can  be approximated by operators  of controlled propagation (or quasi-local  operators, respectively) with respect to the controls $\phi $ and $\phi^{\prime} $.
As in the proof of Lemma  \ref{elkjqdqoiuo3432424324} we see  that 
$A$ can also be approximated by operators  of controlled propagation (or  quasi-local operators, respectively) with respect to the controls $\phi\circ f^{*}$ and $\phi^{\prime}\circ f^{*}$. We can thus define define the functor $\bC^{*}(f)$ (or $\bC^{*}_{\ql}(f)$, respectively) on objects such that it sends $A$ to $A$. 

It is obvious that this description  of $\bC^{*}_{?}(f)$ is compatible with the composition and the $*$-operation.

In the following we use the $K$-theory functor $\Kcat$ for $C^{*}$-categories {of} Definition~\ref{wegjioegergewfewrfewef}.  \begin{ddd}\label{oiwefewiofu9234234234324}
We define the coarse $K$-{homology} functors\index{$K$-homology!coarse}\index{coarse!$K$-homology}
$$\KX, \KXql:\BC\to \Sp$$
\index{$\KX$, $\KXql$}
as compositions
$$\BC\xrightarrow{\bC^{*}, \bC_{\ql}^{*}}\Ccat  \xrightarrow{\Kcat} \Sp^{\la}\, .$$
\end{ddd}

\begin{theorem}\label{irgfjiwoowierer23423424243}
The functors $\KX$  and $\KXql$ are   coarse homology theories.
\end{theorem}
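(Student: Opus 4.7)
The plan is to verify the four properties characterizing coarse homology theories as listed in Corollary \ref{fejkwefweopfiof234234}: coarse invariance, excision, vanishing on flasque spaces, and $u$-continuity. Because the arguments for $K\cX$ and $K\cX_{ql}$ are structurally parallel, differing only in whether controlled propagation or quasi-locality is used, I would treat both cases simultaneously and indicate where the quasi-local case requires extra care. The overall strategy is to reduce each property for $K\cX$ and $K\cX_{ql}$ to a corresponding property of the Roe $C^{*}$-category $\bC^{*}(X)$ or $\bC^{*}_{ql}(X)$, then transport it to $K$-theory through $A^{f}$ and $K$.

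For coarse invariance, given an equivalence $f\colon X\to X'$ with inverse $g$ close to the identity, the assignments $(H,\phi)\mapsto f_{*}(H,\phi)$ and $(H,\phi)\mapsto g_{*}(H,\phi)$ together with the identity operators on underlying Hilbert spaces yield functors of Roe categories whose compositions are unitarily equivalent to the respective identities (the unitary witness being $\id_{H}$, which has controlled propagation since $g\circ f$ is close to $\id_{X}$, using Lemma \ref{elkjqdqoiuo3432424324}). One then invokes Corollary \ref{ojewjfowfewfewfewf1}. Vanishing on flasques will follow from an Eilenberg swindle mimicking the proof of Lemma \ref{ekfjweiofu9o4r34555}: given a flasque implementation $f\colon X\to X$, the functor $(H,\phi)\mapsto\bigoplus_{k\in \nat} f^{k}_{*}(H,\phi)$ produces an object that remains locally finite by Condition \ref{fwoiejweoifeowi231} of Definition \ref{efijewifjewiofwifwfew322423424} and carries a shift whose propagation is controlled by Conditions \ref{asdfgjtzkj561}, \ref{sdfn3443243t61}. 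This furnishes on the level of $K$-theory an endomorphism $F$ with $\id\simeq\id+F$, forcing $K\cX(X)\simeq 0$; closeness of $f$ to the identity identifies $F$ with the identity of $K\cX(X)$ through coarse invariance already established.

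The $u$-continuity follows by observing that the Roe categories $\bC^{*}(X_{U})$ as $U$ ranges over $\cC$ form a filtered system of subcategories of $\bC^{*}(X)$ in the sense of \eqref{defwkjnnjjkwejkf89324234234}, whose colimit is $\bC^{*}(X)$: every controlled propagation operator is $U$-controlled for some $U\in\cC$. For the quasi-local case this is precisely the point of Definition \ref{bvcbdfg}: quasi-locality is defined with respect to powers $U^{n}$ of a single entourage, so every quasi-local operator belongs to $\bC^{*}_{ql}(X_{U})$ for some $U$. Combined with the fact that $A^{f}$ commutes with such filtered colimits of $C^{*}$-categories (which can be checked directly using the universal property together with the compatibility of the maximal $C^{*}$-norm with filtered colimits) and that $K$ preserves filtered colimits of $C^{*}$-algebras, one obtains the $u$-continuity equivalence.

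The main obstacle is excision, and here the presentation via $A^{f}$ is inconvenient because we need a short exact sequence of $C^{*}$-algebras. The plan is to first replace $A^{f}(-)$ by $A(-)$ using the natural equivalence on $K$-theory from Proposition \ref{prop:dfs78934}, and then, for a complementary pair $(Z,\cY)$ on $X$, identify the subcategory $\bC^{*}(\cY)\subseteq\bC^{*}(X)$ of morphisms whose support eventually lies in a thickening of some $Y_{i}\in\cY$ as a closed ideal in the sense of Lemma \ref{iojfoiwef32904u23424324}; concretely one takes the filtered colimit \eqref{defwkjnnjjkwejkf89324234234} over $i$ of the ideals of morphisms supported in $Y_{i}$. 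Lemma \ref{fjweoifjweo89234234234} then produces a short exact sequence $0\to A(\bC^{*}(\cY))\to A(\bC^{*}(X))\to A(\bC^{*}(X)/\bC^{*}(\cY))\to 0$, whose quotient we identify (using the complementary pair condition and a thickening argument) with the corresponding object for $Z$, giving a fiber sequence of $K$-theory spectra that recovers the excision square. The quasi-local case requires additional care: one must verify that the ideal structure and identification of the quotient with the $Z$-part persist under the weaker quasi-local support condition, which exploits the uniform $U^{n}$-decay in Definition \ref{bvcbdfg}.
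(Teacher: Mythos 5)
Your proposal follows essentially the same route as the paper: coarse invariance via unitary equivalence of the induced functors on the Roe categories and Corollary \ref{ojewjfowfewfewfewf1}, vanishing on flasques via the shift functor $S(H,\phi)=\bigoplus_{n}f^{n}_{*}(H,\phi)$ together with an Eilenberg swindle using the sum functor $\mu$, $u$-continuity from the identification $\bC^{*}_{?}(X)\cong\colim_{U\in\cC}\bC^{*}_{?}(X_{U})$ and the colimit-preservation of $A^{f}$ and $K$, and excision by passing to $A(-)$ via Proposition \ref{prop:dfs78934}, exhibiting the $\cY$-supported morphisms as a closed ideal, applying Lemma \ref{fjweoifjweo89234234234}, and identifying the quotient with the corresponding quotient for $Z$. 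One small correction to your flasqueness step: the relation coming from $S\cong \id\oplus f_{*}S$ is $F\simeq \id+F$ on $K$-theory (after using that $f$ is close to $\id_{X}$ and the Eckmann--Hilton identification of $*_{\mu}$ with $+$), which forces $\id\simeq 0$; as written, $\id\simeq\id+F$ would only give $F\simeq 0$.
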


\begin{proof}
In the following we verify the conditions listed in Definition \ref{rgljogreggregrege}. The Propositions \ref{hewifeiiu3984u239824424}, \ref{ioejfiowefjweoifewuf8u3294823442}, \ref{hewifeiiu3984u2398244244} and \ref{hewifeiiu3984u2398244241} together  will imply   the theorem.
\end{proof}

\begin{prop}\label{hewifeiiu3984u239824424}
The functors $\KX$ and $\KXql$ are coarsely invariant.
\end{prop}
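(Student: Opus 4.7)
The plan is to reduce coarse invariance of $K\cX$ and $K\cX_{ql}$ to the known behaviour of the $K$-theory of $C^{*}$-categories under unitarily equivalent functors, via the description $K\cX = K\circ A^{f}\circ \bC^{*}$ and $K\cX_{ql} = K\circ A^{f}\circ \bC^{*}_{ql}$ (Definition \ref{oiwefewiofu9234234234324}). First I would observe that the Roe categories $\bC^{*}(X)$ and $\bC^{*}_{ql}(X)$ are unital (the identity of any locally finite $X$-controlled Hilbert space is of controlled propagation and hence also quasi-local), so Proposition \ref{ojewjfowfewfewfewf} and in particular Corollary \ref{ojewjfowfewfewfewf1} apply to functors out of them.

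Let $f\colon X\to X'$ be an equivalence in $\BC$ and choose an inverse equivalence $g\colon X'\to X$, so that $f\circ g$ and $g\circ f$ are close to $\id_{X'}$ and $\id_{X}$, respectively. By Lemma \ref{iwowiru9823u4923424324} the functors $\bC^{*}(f\circ g)$ and $\bC^{*}(\id_{X'}) = \id_{\bC^{*}(X')}$ are unitarily equivalent, and likewise for $\bC^{*}(g\circ f)$ and for the quasi-local variants. Since $K(A^{f}(\id)) \simeq \id$ is trivially an equivalence of spectra, Proposition \ref{ojewjfowfewfewfewf} yields that $K(A^{f}(\bC^{*}(f\circ g)))$ and $K(A^{f}(\bC^{*}(g\circ f)))$ are equivalences in $\Sp$, and the same argument applies verbatim to $\bC^{*}_{ql}$.

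By functoriality of $\bC^{*}$, $A^{f}$ and $K$, this means that $K\cX(f)\circ K\cX(g)$ and $K\cX(g)\circ K\cX(f)$ are both equivalences of spectra. A standard two-out-of-six style argument then implies that $K\cX(f)$ itself is an equivalence: the first composition provides $K\cX(f)$ with a right inverse up to equivalence and $K\cX(g)$ with a left inverse, while the second provides $K\cX(f)$ with a left inverse and $K\cX(g)$ with a right inverse. The identical argument works for $K\cX_{ql}$, which completes the verification of coarse invariance.

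The only subtle point in this plan is the invocation of Proposition \ref{ojewjfowfewfewfewf}; I expect the main obstacle to be purely formal, namely ensuring that the unitary equivalence produced by Lemma \ref{iwowiru9823u4923424324} really is a unitary equivalence of unital $C^{*}$-functors in the sense required by that proposition. Since the unitary implementing the equivalence is $\id_{H}$ regarded as a controlled map between the two different controls $\phi\circ f^{*}$ and $\phi\circ g^{*}$, unitarity and naturality are immediate, and no further difficulty arises.
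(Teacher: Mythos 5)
Your proposal is correct and uses exactly the same ingredients as the paper's proof, namely Lemma \ref{iwowiru9823u4923424324} together with the invariance of $K(A^{f}(-))$ under unitary equivalence. The only cosmetic difference is that the paper verifies coarse invariance via the projection $\{0,1\}\times X\to X$ and its section, citing Corollary \ref{ojewjfowfewfewfewf1} directly, whereas you re-run that corollary's two-sided-inverse argument for an arbitrary coarse equivalence $f\colon X\to X'$; both are valid and essentially identical.
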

\begin{proof}

Let $f,g:X\to X^{\prime}$ be morphisms between bornological coarse spaces.
\begin{lem}\label{iwowiru9823u4923424324}
If $f$ and $g$    are close (Definition \ref{ioewfjweoifoijwfw32424}),
then the functors $\bC^{*}(f)$ and $ \bC^{*}(g)$ (or $  \bC_{\ql}^{*}(f)$ and $ \bC_{\ql}^{*}(g)$)  are unitarily isomorphic.
\end{lem}
\begin{proof}
 We define a unitary isomorphism
$u:\bC^{*}(f)\to \bC^{*}(g)$. The isomorphism $u$ is given by the family of unitaries $(u_{(H,\phi)})_{(H,\phi)\in \Ob( \bC^{*}(X))}$, where 
$$u_{(H,\phi)}:=\id_{H}:(H,\phi\circ f^{*})\to (H,\phi\circ g^{*})\, .$$
Since $f$ and $g$ are close, the identity $\id_{H}$ has indeed controlled propagation. 
The quasi-local case is analoguous.
\end{proof}

We can now finish the proof of the proposition.
Let $X$ be a bornological coarse space. We consider the projection   $\pi:\{0,1\} \otimes X\to X$ and the inclusion  $i_{0}:X\to \{0,1\}\otimes X$, where $\{0,1\}$ has the maximal structures. Then $\pi\circ i_{0}=\id_{X}$ and $i_{0}\circ \pi$ is close to the identity of $\{0,1\}\otimes X$.
By Lemma \ref{iwowiru9823u4923424324} the functor $\bC^{*}(\pi)$ (or $\bC_{\ql}^{*}(\pi)$, respectively) is an equivalence of $C^{*}$-categories.
By Corollary \ref{ojewjfowfewfewfewf1} the morphism $\KX( \pi)$ (or $\KXql(\pi)$, respectively) is an equivalence of spectra. \end{proof}

%

\begin{prop}\label{ioejfiowefjweoifewuf8u3294823442}
The functors $\KX$ and $\KXql$ satisfy excision.
\end{prop}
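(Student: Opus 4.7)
My strategy is to realize $K\cX(\cY)$ (respectively $K\cX(Z\cap\cY)$) as the $K$-theory of a closed ideal in the Roe category $\bC^{*}(X)$ (respectively $\bC^{*}(Z)$), to obtain via Lemma~\ref{fjweoifjweo89234234234} fibre sequences whose cofibres I can identify, and then to deduce the excision equivalence from a morphism of fibre sequences. The argument runs in parallel for $K\cX$ and $K\cX_{ql}$ since the only difference is controlled versus quasi-local propagation.

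For a subset $A\subseteq X$ I define a $C^{*}$-subcategory $\bI_{A}\subseteq\bC^{*}(X)$ having the same objects, whose morphism spaces are the closed linear spans of operators $T\colon(H,\phi)\to(H',\phi')$ satisfying $T=\phi'(U[A])\,T\,\phi(U[A])$ for some entourage $U$ of $X$. One checks that $\bI_{A}$ is a closed ideal (the ideal property uses that composition with a morphism of $\bC^{*}(X)$ only enlarges the supporting entourage). For the big family $\cY$ I set $\bI_{\cY}:=\colim_{i\in I}\bI_{Y_{i}}$ using the colimit construction from \eqref{defwkjnnjjkwejkf89324234234}, which is again a closed ideal. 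Lemma~\ref{fjweoifjweo89234234234} combined with Proposition~\ref{prop:dfs78934} then yields a fibre sequence
\[K(A^{f}(\bI_{\cY}))\to K\cX(X)\to K(A^{f}(\bC^{*}(X)/\bI_{\cY})),\]
and analogously for $Z$ with $\bI_{Z\cap\cY}\subseteq\bC^{*}(Z)$. The inclusion $\bC^{*}(Z)\hookrightarrow\bC^{*}(X)$ (pushforward along $Z\hookrightarrow X$) induces a morphism of fibre sequences.

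Two identifications remain. First, $K(A^{f}(\bI_{\cY}))\simeq K\cX(\cY)$: since $K$-theory commutes with filtered colimits of $C^{*}$-algebras, this reduces to showing $K(A^{f}(\bI_{Y_{i}}))\simeq K\cX(Y_{i})$ for each $i$. The pushforward functor $\bC^{*}(Y_{i})\to\bI_{Y_{i}}$ extending a $Y_{i}$-controlled Hilbert space by zero is not essentially surjective, but the absorption statement Lemma~\ref{rwoijwoifjoeiwewfjewjfo} (together with Corollary~\ref{foijfowffewfwefewfewf}\eqref{rglkeglregregporeigroepgergregregg}, which identifies $K\cX(Y_{i})\simeq K\cX(\{Y_{i}\})$ so that the thickenings $U[Y_{i}]$ do not change the answer) forces this functor to induce an equivalence of $K$-theory spectra by the reasoning of Corollary~\ref{ojewjfowfewfewfewf1}. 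Second, the inclusion descends to an isomorphism of $C^{*}$-categories $\bC^{*}(Z)/\bI_{Z\cap\cY}\xrightarrow{\sim}\bC^{*}(X)/\bI_{\cY}$. Given these, the Five Lemma (for fibre sequences of spectra) applied to the morphism of fibre sequences produces the desired excision equivalence $K\cX(Z,Z\cap\cY)\to K\cX(X,\cY)$.

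The main obstacle is establishing the isomorphism $\bC^{*}(Z)/\bI_{Z\cap\cY}\cong\bC^{*}(X)/\bI_{\cY}$. The key input is the defining property of a complementary pair: there exists $i_{0}\in I$ with $Z\cup Y_{i_{0}}=X$. Given an $X$-controlled Hilbert space $(H,\phi)$ and an entourage $U$, the complement of $U[Z]$ is contained in $Y_{j}$ for sufficiently large $j$ (using that $\cY$ is big), so modulo $\bI_{\cY}$ every morphism is represented by its truncation $\phi'(U[Z])\,T\,\phi(U[Z])$, which lies in the image of $\bC^{*}(Z)$. This gives surjectivity of the quotient map. Injectivity amounts to the statement that a morphism in $\bC^{*}(Z)$ which happens to be $\cY$-supported when regarded in $\bC^{*}(X)$ was already supported near $Z\cap\cY$; this follows from $Z\cap U[Y_{i}]\subseteq U[Z\cap Y_{j}]$ for suitable $j\geq i$ (again using bigness). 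For the quasi-local case one must additionally check that the truncation $\phi'(U[Z])\,T\,\phi(U[Z])$ of a quasi-local $T$ remains quasi-local: this is immediate from Definition~\ref{bvcbdfg} since $\phi'(U[Z])$ has propagation contained in the diagonal. Carefully book-keeping the locally finite subspace condition of Definition~\ref{defn:sfd9823} under these truncations (so that the truncated objects still lie in the Roe category as we defined it) is the technical heart of the argument.
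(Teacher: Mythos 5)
Your overall architecture coincides with the paper's: an ideal of morphisms supported near $\cY$, a comparison of the quotients over $Z$ and over $X$, and the exact sequence of Lemma~\ref{fjweoifjweo89234234234} fed through $K$-theory via Proposition~\ref{prop:dfs78934}. The genuine gap is in your first identification, $K(A^{f}(\mathbf{I}_{\cY}))\simeq K\cX(\cY)$. Your subcategory $\mathbf{I}_{Y_{i}}$ has \emph{all} locally finite $X$-controlled Hilbert spaces as objects, but an object $(H,\phi)$ has an identity morphism in $\mathbf{I}_{Y_{i}}$ only when $H=H(U[Y_{i}])$ for some entourage $U$; the category is therefore non-unital, and the pushforward $\bC^{*}(Y_{i})\to\mathbf{I}_{Y_{i}}$ is not essentially surjective. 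Corollary~\ref{ojewjfowfewfewfewf1} is proved only for equivalences of \emph{unital} $C^{*}$-categories (the paper explicitly remarks that it could not extend this to the non-unital setting), and Lemma~\ref{rwoijwoifjoeiwewfjewjfo} presupposes an ample $X$-controlled Hilbert space, which by Proposition~\ref{fkwjlkwejlewfewfewfewf3242344} exists only when the space is locally countable. So neither cited tool closes this step for a general bornological coarse space. The paper's $\bD(Z)$ is designed precisely to avoid this: it keeps the objects of $\bC^{*}_{ql}(X)$ but takes as morphisms the operators between the \emph{compressions} $H(Z)\to H^{\prime}(Z)$, so that $\bD(Z)$ is unital and the compression functor $\bD(Z)\to\bC^{*}_{ql}(Z)$, $(H,\phi)\mapsto(H(Z),\phi_{Z})$, is a fully faithful and essentially surjective functor of unital $C^{*}$-categories, to which Corollary~\ref{ojewjfowfewfewfewf1} applies directly.

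Two further points. First, your parenthetical argument that $\mathbf{I}_{A}$ is an ideal for every single subset $A$ (``composition only enlarges the supporting entourage'') is valid only in the controlled-propagation category: a quasi-local operator composed with an operator supported on $U[A]$ is not exactly supported on any thickening of $A$, only approximately so, and the ideal property in $\bC^{*}_{ql}(X)$ holds only for the big-family version after taking closures, via the $\varepsilon$-approximation argument of the paper's Lemma~\ref{fweijweoifoewfewfefewf}. Since your fibre sequences only need $\mathbf{I}_{\cY}$ and $\mathbf{I}_{Z\cap\cY}$, this is repairable, but ``the argument runs in parallel'' hides real work in the quasi-local case. Second, your quotient comparison cannot be an isomorphism of categories, since $\bC^{*}(Z)/\mathbf{I}_{Z\cap\cY}$ and $\bC^{*}(X)/\mathbf{I}_{\cY}$ have different object sets; at best it is an equivalence of the (unital) quotient categories, and verifying that the inverse ``truncate to $Z$'' is multiplicative modulo the ideal requires showing that $ZAZ^{c}BZ$ lies in the ideal --- the computation using quasi-locality of $A$ and $B$, bigness of $\cY$ and $Z^{c}\subseteq Y_{i_{0}}$ that forms the technical core of the paper's proof and which your write-up does not carry out.
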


\begin{proof}
We will discuss the quasi-local case in detail. For $\KX$ the argument is similar.\footnote{A detailed argument in this case can also be found in \cite{Bunke:ad}.}  
Let $(Z,\cY)$ be a complementary pair on a  bornological coarse space
$X$.
We must show that    \begin{equation}\label{veer34534543541}
\xymatrix{{\KX}_{\ql} (Z\cap \cY)\ar[r]\ar[d]&{\KX}_{\ql} (Z) \ar[d]\\{\KX}_{\ql} ( \cY )\ar[r]&{\KX}_{\ql}(X) }
\end{equation}
is a push-out square.

 
 If $f:Z\to X$ is an inclusion of  a subset with induced structures, then we let  
$\bD(Z)$  be the $C^{*}$-category defined as follows:
\begin{enumerate}
\item The objects of $\bD(Z)$ are the objects of $\bC_{\ql}^{*}(X)$.
\item The morphisms $\Hom_{\bD(Z)}((H,\phi),(H^{\prime},\phi^{\prime}))$ are the 
quasi-local bounded operators $(H(Z),\phi_{Z} )\to (H^{\prime}(Z),\phi^{\prime}_{Z} )$.
\end{enumerate}
Here the $Z$-control of $H(Z)$ is defined by  $\phi_{Z}(f):=\phi(\tilde f)_{|H(Z)}$, where $\tilde f$ in $C(X)$ is any extension of $f$ in $C(Z)$ to $X$.  
 
 We have a natural inclusion $ \bD(Z)\to \bC_{\ql}^{*}(X)$ as a closed subcategory given by the identity on objects and the canonical inclusions
 $$\Hom_{\bD(Z)}((H,\phi),(H^{\prime},\phi^{\prime}))\to \Hom_{\bC_{\ql}^{*}(X)}((H,\phi),(H^{\prime},\phi^{\prime}))$$
 on the level of morphisms.
 
 We have  a fully-faithful  functor \begin{equation}\label{uhfweuifhewhweifuwewefwefewf}
\iota_{Z}:\bD(Z)\to \bC^{*}_{\ql}(Z)\, .
\end{equation}
It 
  sends the object $(H,\phi)$ of $\bD(Z)$ to  the object
 $(H(Z),\phi_{Z})$ of  $\bC^{*}_{\ql}(Z)$.  On morphisms it is given by  the  identification
 $$ \Hom_{\bD(Z)}((H,\phi),(H^{\prime},\phi^{\prime}))= \Hom_{\bC_{\ql}^{*}(Z)}((H(Z),\phi_{Z}),(H^{\prime}(Z),\phi^{\prime}_{Z}))\, .$$  This functor is essentially surjective. Indeed, if $(H,\phi)$ is in $\Ob(\bC_{\ql}^{*}(Z))$, then $$f_{*}(H,\phi)=(H,\phi\circ f^{*})\in \Ob(\bC_{\ql}^{*}(X))$$ can be considered as an object of $\bD(Z)$, and we have an unitary isomorphism    $$(H(Z),(\phi\circ f^{*})_{Z})\cong (H,\phi)$$ in $\bC_{\ql}^{*}(Z)$.
 Consequently, $\iota_{Z}:\bD(Z)\to \bC^{*}_{\ql}(Z)$ is a unitary equivalence of $C^{*}$-categories. By     Corollary~\ref{ojewjfowfewfewfewf1} the induced morphism
  \begin{equation}\label{jhewfkwhefehewiuhiu234234}
\Kcat( \bD(Z))\to {\KX}_{\ql}(Z)
\end{equation}  an equivalence of spectra.

 The functor $f_{*}:\bC^{*}_{\ql}(Z)\to \bC^{*}_{\ql}(X)$ identifies $\bC^{*}_{\ql}(Z)$
isomorphically with the full subcategory of all objects $(H,\phi)$ of $\bD(Z)$ with $H(Z)=H$. The inclusion 
$\bC^{*}_{\ql}(Z)\to \bD(Z)$ is a unitary equivalence of $C^{*}$-categories with inverse $\iota_{Z}$.
We therefore have the following factorization 
\begin{equation}\label{wvlkmflvkvadvadsvadvadv}
\xymatrix{{\KX}_{\ql}(Z)\ar[rr]^-{\KXql(f)}\ar[dr]^{\simeq}&&\KXql(X)\\
&\ar@/_-18pt/@{..>}[ul]_-{\simeq}^-{\Kcat(\iota_{Z})}\Kcat(\bD(Z))\ar[ur]&}
\end{equation}



We consider  a big family 
 $\cY=(Y_{i})_{i\in I}$ in $X$. We get an increasing family $(\bD(Y_{i}))_{i\in I}$ (defined as above with $Z$ replaced by $Y_{i}$) of subcategories  of $\bC_{\ql}^{*}(X)$.
 We let
$ \bD(\cY)$   be the closure of the union of this family  in $\bC^{*}_{\ql}(X)$ in the sense of $C^{*}$-categories, see \eqref{defwkjnnjjkwejkf89324234234}.

\begin{lem}\label{fweijweoifoewfewfefewf}
  $\bD( \cY )$ is a  {closed} ideal in $\bC_{\ql}^{*}(X)$.
\end{lem}
 
\begin{proof}
  We  consider a morphism 
 $A$ in $\bD(\cY)$ between two objects.    
  Let moreover  $Q$ be a morphism in $\bC_{\ql}^{*}( X )$ which is left composable with $A$. We must show that $QA\in \bD(\cY)$.
  Since $\bD(\cY)$ is closed it suffices to show this for quasi-local operators $Q$. In the following we will assume that $Q$ is quasi-local.

 The morphism 
 $A$   can be approximated by a family $( A_{i})_{i\in I}$  of morphisms 
 $A_{i}$ in  $\bD(Y_{i}) $ between the same objects.


We consider $\epsilon$ in $(0,\infty)$  and choose $i$ in $I$ such that
\[\| A_{i} -A\|\le \frac{\epsilon}{2(\|Q\|+1)}\, .\]
Here the norms are the Banach norms on the respective morphism spaces.
For an     $X$-controlled Hilbert space $(H,\phi)$ and a subset $B$ of $ X$ in the following we abbreviate the action of $\phi(\chi_{B})$ on $H$ by $B$.
Since $Q$ is weakly quasi-local we can find an entourage $U$ of $X$ such that $$\| B^{\prime}  Q   B   \| \le \frac{\epsilon}{2(\|A_{i}\|+1)}$$ whenever $B$, $B^\prime$ are mutually $U$-separated. Using that the family of subsets $\cY$ is big we now choose  $j$ in $I$ so that $X\setminus Y_{j}$ and $Y_{i}$ are mutually $U$-separated.  Then we   get the inequality
\[\| (X\setminus Y_{j})Q  A_{i} \| \le  \epsilon/2\, .\]
Here we use implicitly the relation $A_{i}=A_{i} Y_{i} $.
Now $ Y_{j} Q A_{i}$ belongs to   $ \bD( \cY   ) $, and $$\| Y_{j} Q A_{i} -QA\|\le \epsilon\, .$$  Since we can take $\epsilon$ arbitrary small and $  \bD( \cY   )$ is closed we get $QA \in \bD( \cY   )$.
   We can analogously conclude that $AQ \in  \bD(\cY)$.
\end{proof}

 Since $\bD(\cY)$ is a closed ideal we can form the quotient
  $C^{*}$-category $\bC_{\ql}^{*}(X )/ \bD( \cY )$, see Lemma \ref{iojfoiwef32904u23424324}.

We now consider a complementary pair $(Z,\cY)$.  The family $(\bD(Z\cap Y_{i}))_{i\in I}$ is an increasing family of subcategories of $\bD(Z)$. We let $\bD(Z\cap \cY)$ be the closure of the union of these subcategories.  The same argument as for Lemma \ref{fweijweoifoewfewfefewf} shows that $\bD(Z\cap \cY)$ is a closed ideal in $\bD(Z)$. Alternatively one could use the unitary equivalence $\iota_{Z}$ in \eqref{uhfweuifhewhweifuwewefwefewf} and its inverse $f_{*}$ in order to reduce the assertion formally to this lemma.

\begin{lem}\label{eriogegwefewrfwe} The inclusion $\bD(Z)\to \bC_{\ql}^{*}(X)$ induces an isomorphism of quotients
\begin{equation}\label{fwjehfhwjkfuifefiuewfwefewfwf1}
 \psi:\bD(Z)/\bD (Z\cap \cY)\to \bC_{\ql}^{*}(X )/\bD ( \cY )\, . 
\end{equation}
 \end{lem}
\begin{proof}
 Note that the inclusion is the identity on objects.
We construct an inverse functor $$\lambda: \bC_{\ql}^{*}(X )/\bD ( \cY )\to \bD(Z)/\bD (Z\cap \cY)\, .$$
It is again the identity on objects. On morphisms it sends  a class
  $[A]$ in  $\bC_{\ql}^{*}(X )/\bD( \cY)$  to $$\lambda([A]):=[ Z A Z]\, .$$
We now argue that
$\lambda$ is well-defined and an inverse of $\psi$.

    We first show that $\lambda$ is well-defined as a map between morphism sets.
  We fix  two objects of $\bC_{\ql}^{*}(X)$ and consider morphisms between these objects.
  We assume that $A$ in $\bD( \cY )$ is such a morphism. 
 Then we can obtain $A$ as a limit of a family $(A_{i})_{i\in I} $ of morphisms   $A_{i}$ in $ \bD(Y_{i})$.   
 Now $ Z A_{i } Z$ belongs to the image of $ \bD(Z\cap \cY_{i})$. %
Hence for the   limit we get $  Z A Z  \in\bD(Z\cap \cY)$.

It is clear that $\lambda$ preserves the involution and is compatible with the $\C$-linear structure on the morphism spaces. It remains to check the compatibility with compositions. 
We  
 consider  two composeable morphisms $A$ and $B$  in $\bC^{*}_{\ql}(X)$.
We must show that $$\lambda([A  B])=\lambda([A])  \lambda ([B])\, .$$ To this end it suffices to show that 
$ Z AB Z - Z A Z BZ\in\bD (Z\cap \cY)$. 
 We can write this difference in the form
 $ZAZ^{c} BZ$, where $Z^{c}:=X\setminus Z$. Let $i$ in $I$ be such that $Z\cup Y_{i}=X$. Then $Z^{c}\subseteq Y_{i}$. 
 Since the ideal is closed we assume that $A$ and $B$ are  quasi-local. 
 
 For every $U$ in the coarse structure $\cC$ of $X$ there exists 
   $j_{U}$ in $I$ with $i \le j_{U}$ such that
 $U[Y_{i}]\subseteq Y_{j_{U}}$. In this case    the sets $Z^{c}$ and $Y^{c}_{j_{U}}$ are mutually  $U$-separated.
  Since $A$ and $B$ are weakly quasi-local we have $$\lim_{U\in \cC}\|ZAZ^{c} BZ-Y_{j_{U}}ZA Z^{c} BY_{j_{U}}Z\|=0\, .$$  
  Note that the projection $Y_{j_{U}}Z$ belongs to $\bD(Z\cap \cY)$. Since this is an ideal we have 
 $Y_{j_{U}}ZA Z^{c} , Z^{c}BY_{j_{U}}Z\in  
 \bD( Z\cap \cY)$.  Since the ideal is closed we finally conclude that
 $ZAZ^{c} BZ\in \bD( Z\cap \cY)$ as required.

 Finally we show that $\lambda$ is an inverse of $\psi$.
  It is easy to see that $\lambda\circ \psi=\id$. We claim  that   $\psi\circ \lambda=\id$.
 We consider the difference $[A]-\psi(\lambda([A]))$ which is represented by $A-  Z A Z $.  Then we study the  term \begin{equation}\label{fwefwefwfew3r3r3ffwef}
 (A- Z A Z)- Y_{i} (A- Z A Z ) Y_{i} \, .
 \end{equation}
Note that the second term belongs to $\bD( \cY)$.
Hence in order to show the claim it suffices to see that \eqref{fwefwefwfew3r3r3ffwef} can be made arbitrary small by choosing $i$ in $I$ appropriately.  
We have the identity \[(A-ZAZ) =Z^{c}A+ZAZ^{c}\,.\]
This gives
\[(A-ZAZ)-Y_{i}(A-ZAZ)Y_{i}=Y_{i}^{c}Z^{c}A+Y_{i}^{c}ZAZ^{c}+Y_{i}Z^{c}AY^{c}+Y_{i}ZAZ^{c}Y_{i}^{c}\,.\]
Using that $X=Y_{i}\cup Z$ for sufficiently large $i$ in $I$ we get $Y_{i}^{c}Z^{c}=0$. Therefore if $i$ in $I$ is large it remains to    consider
\[Y^{c}_{i}ZAZ^{c}+Y_{i}Z^{c}AY_{i}^{c}\,.\]
Since $\bD( \cY)$ is closed we can assume that $A$ is quasi-local. For every $U$ in $\C$   
 we can choose $i_{U}$ in $I$ so large that
$Y_{i_{U}}^{c}$ and $Z^{c}$ are $U$-separated.  Then  
we have
\[\lim_{U\in \cC}\|Y^{c}_{i_{U}}ZAZ^{c}\|=0\ , \quad \lim_{U\in \cC} \|Y_{i_{U}}Z^{c}AY_{i_{U}}^{c}\| =0\, .\qedhere\]
 \end{proof}
After these preparations we can now turn to the actual proof of  
 Proposition~\ref{ioejfiowefjweoifewuf8u3294823442}.  
 By Proposition \ref{qerijogergwregwregwerg9} stating that $\Kcat$ is exact we obtain the following commuting diagram
\begin{equation}\label{cwdnmcbwncwcjkcecw}
\xymatrix{\Kcat( \bD(Z\cap \cY))\ar[r]\ar[d]&\Kcat(\bD(Z) )\ar[d]\ar[r]&\Kcat(\bD(Z) /\bD(Z\cap \cY))\ar[d]_{\Kcat(\psi)}^{\simeq}\\
\Kcat(\bD( \cY ))\ar[r]\ &\Kcat(\bC_{\ql}^{*}( X )) \ar[r]&\Kcat(\bC_{\ql}^{*}(X)/\bD( \cY))}
\end{equation}
where the horizontal sequences are segments of fibre sequences.
The vertical morphisms are induced by the inclusions and the right vertical morphism is an equivalence by Lemma \ref{eriogegwefewrfwe}.
We conclude that the left square is a push-out square in $\Sp^{\la}$. 

We now show that this square is equivalent to the square \eqref{veer34534543541} which is then also a push-out square.
The equivalence is induced by the fillers of   the following big diagram:
$$
\xymatrix{\Kcat( \bD(Z\cap \cY))\ar[ddd]\ar[rrr]&&&\Kcat(\bD(Z))\ar[ddd]\\&\ar[ul]^{\simeq}\KX_{\ql} (Z\cap \cY)\ar[r]\ar[d]&{\KX}_{\ql} (Z) \ar[d]\ar[ur]_{\simeq}^{K(f_{*})}&\\&\KX_{\ql} ( \cY )\ar[r]\ar[dl]_{\simeq}&{\KX}_{\ql}(X) \ar@{=}[dr] &\\ \Kcat(\bD(\cY))\ar[rrr]&&&\KX_{\ql}(X) }$$
We   explain the lower left diagonal equivalence. 
 The inclusions $Y_{i}\to X$ induce unitary equivalences $\bC^{*}_{\ql}(Y_{i})\to \bD(Y_{i})$
which are natural in $i$. Consequently we have an equivalence 
$$\KX_{\ql} ( \cY )\simeq  \colim_{i\in I} \Kcat(\bC^{*}_{\ql}(Y_{i}))\simeq \colim_{i\in I}  \Kcat(\bD(Y_{i}))\stackrel{\text{Prop.}~\ref{erijgowegwergrefewrferfw}}{\simeq} \Kcat(\bD(\cY))\, .$$
The triangle \eqref{wvlkmflvkvadvadsvadvadv} yields the filler of the right square. Furthermore, 
 the family of triangles \eqref{wvlkmflvkvadvadsvadvadv} for $Y_{i}$ in place of $Z$ yields the filler of the lower square. 
 One gets the upper left diagonal equivalence and the upper square in a similar manner. 
 Finally the filler left square is induced from the   family of  commuting squares
 $$\xymatrix{\bD(Z\cap Y_{i}) \ar[d]&\bC^{*}_{\ql}(Z\cap Y_{i})\ar[l]\ar[d]\\\bD(Y_{i}) &\ar[l] \bC^{*}_{\ql}(Y_{i})}$$
 
%
%
%
This completes the proof of Proposition~\ref{ioejfiowefjweoifewuf8u3294823442}.
\end{proof}

Let $X$ be a bornological coarse space.
\begin{prop}\label{hewifeiiu3984u2398244244}
If $X$ is flasque, then
$\KX(X)\simeq 0$ and $\KXql(X)\simeq 0$.
\end{prop}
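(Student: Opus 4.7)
The plan is to implement an Eilenberg swindle on the $C^{*}$-categories $\bC^{*}(X)$ and $\bC^{*}_{ql}(X)$, in the spirit of the proof of Lemma~\ref{ekfjweiofu9o4r34555}. Let $f:X\to X$ implement flasqueness. I would define an endofunctor $S$ of $\bC^{*}(X)$ (and by the identical formula on $\bC^{*}_{ql}(X)$) by
\[S(H,\phi):=\bigoplus_{k=0}^{\infty}f^{k}_{*}(H,\phi),\qquad S(A):=\bigoplus_{k=0}^{\infty}f^{k}_{*}(A),\]
where $f^{k}_{*}$ denotes iteration of the change of control from Definition~\ref{ewljfewilewfiouwefwefew} and $S(A)$ acts block-diagonally.

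Well-definedness uses all three conditions of Definition~\ref{efijewifjewiofwifwfew322423424}. Properness of $f$ gives that each preimage $f^{-k}(B)$ of a bounded $B\subseteq X$ is bounded, so every summand $H(f^{-k}(B))$ of $S(H,\phi)(B)$ is finite-dimensional; Condition~\ref{fwoiejweoifeowi231} forces all but finitely many summands to vanish, hence $S(H,\phi)$ is locally finite. The identity $\supp(f^{k}_{*}(A))=(f^{k}\times f^{k})(\supp A)$ in the new controls combined with Condition~\ref{sdfn3443243t61} shows that $S(A)$ has controlled propagation. The main technical step is the analogous verification for quasi-locality: if $A$ is $U$-quasi-local, then $S(A)$ is $V$-quasi-local for the entourage $V:=\bigcup_{k}(f^{k}\times f^{k})(U)$, the key point being that $V^{n}$-separation of subsets $B,B'\subseteq X$ implies $U^{n}$-separation of $f^{-k}(B)$ and $f^{-k}(B')$ for every $k\in \nat$, since applying $f^{k}$ sends $U^{n}$-paths to $V^{n}$-paths.

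With $S$ defined, observe the canonical natural isomorphism of endofunctors
\[S\ \cong\ \mu\circ\bigl(\id_{\bC^{*}(X)},\, S\circ \bC^{*}(f)\bigr),\]
with the sum functor $\mu$ from~\eqref{fjjwiofwfwefewfewf}, since the right-hand side sends $(H,\phi)$ to $(H,\phi)\oplus\bigoplus_{k\ge 1}f^{k}_{*}(H,\phi)$. Applying $K\circ A^{f}$, transferring to $K\circ A$ via Proposition~\ref{prop:dfs78934}, and using the identification~\eqref{fwef32423423} of $K(A(\mu))$ with addition under the natural splitting $K(A(\bC^{*}(X)\oplus \bC^{*}(X)))\simeq K\cX(X)\oplus K\cX(X)$ (arising from $A(\bC\oplus \bD)\cong A(\bC)\oplus A(\bD)$) yields
\[s\ \simeq\ \id_{K\cX(X)}\;+\;s\circ K\cX(f)\]
as self-maps of $K\cX(X)$, where $s:=K(A^{f}(S))$. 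Since $f$ is close to $\id_{X}$, the coarse invariance of $K\cX$ already established in Proposition~\ref{hewifeiiu3984u239824424} (applied via the projection $\pi:\{0,1\}\times X\to X$ and its two sections) gives $K\cX(f)\simeq \id_{K\cX(X)}$; cancelling $s$ in the abelian group $[K\cX(X),K\cX(X)]$ of the stable category $\Sp$ yields $\id_{K\cX(X)}\simeq 0$, so $K\cX(X)\simeq 0$. The identical argument proves $K\cX_{ql}(X)\simeq 0$.
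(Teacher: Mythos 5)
Your proof is correct and follows essentially the same route as the paper: the same swindle functor $S(H,\phi)=\bigoplus_{k}(H,\phi\circ f^{k,*})$, the same identity $S\cong\mu\circ(\id\oplus f_{*}\circ S)\circ\diag$, and the same use of \eqref{fwef32423423} together with coarse invariance (Proposition~\ref{hewifeiiu3984u239824424}) to conclude $x+K(\bS)_{*}(x)=K(\bS)_{*}(x)$ and hence $x=0$. Your verification of quasi-locality of $S(A)$ is in fact more detailed than the paper's, which only cites Condition~\ref{sdfn3443243t61} of Definition~\ref{efijewifjewiofwifwfew322423424}.
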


\begin{proof}
We discuss the quasi-local case. For $\KX$ the argument is similar.\footnote{A detailed argument in this case can also be found in \cite{Bunke:ad}.}  
  
Let $f:X\to X$ be a morphism which implements flasqueness (see Definition \ref{efijewifjewiofwifwfew322423424}).
We define a functor
$$S:\bC^\ast_{\ql}(X)\to \bC^\ast_{\ql}(X)$$ as follows:
\begin{enumerate}
\item On objects $S$ is given by $$S(H,\phi):= \Big( \bigoplus_{n\in \nat} H,\bigoplus_{n\in \nat} \phi\circ f^{n,*} \Big)\, .$$
\item On morphisms we set $$S(A):=\bigoplus_{n\in \nat}A\, .$$
\end{enumerate}
 {C}ondition \ref{fwoiejweoifeowi231} in Definition \ref{efijewifjewiofwifwfew322423424} ensures that $S(H,\phi)$ is locally finite. One furthermore checks in a straightforward manner that $S(H,\phi)$ is determined on points.
 
 If $A$ is $U$-quasi-local, then $S(A)$ is $\bigcup_{n\in \nat} (f^{n}\times f^{n})(U)$-quasi-local.
 By {C}ondition~\ref{sdfn3443243t61} of Definition~\ref{efijewifjewiofwifwfew322423424}  the operator $S(A)$ 
 belongs to $\bC^\ast_{\ql}(X)$.   
 
We now note that the $C^{*}$-category $\bC^{*}_{\ql}(X)$ is additive. The sum of two objects $(H,\phi)$ and $(H',\phi')$ is represented by $(H\oplus H',\phi\oplus \phi')$. As explained in Subsection \ref{ergioheriogergrewferfwef} we can form the direct sum of functors with values in $\bC^{*}_{\ql}(X)$.
 
We note the following 
relation
\begin{equation}\label{kdhewidhiefewfewfewfewf}
 \id_{\bC^{*}_{\ql}(X)}\oplus f_{*}\circ S\cong  S\, .
\end{equation}
of endofunctors of $\bC^{*}_{\ql}(X)$.
We now apply $\Kcat$ and use its additivity shown Proposition  \ref{rguiwtgwreergrgwrgg} in order to get 
\begin{equation*}\label{}
\id_{\KXql(X)}+ \KXql(f_{*})\circ \Kcat\circ S\simeq \Kcat \circ S\, .
\end{equation*}
Since $f_{*}$ is close to $ \id_{X}$ by Condition \ref{asdfgjtzkj561}  in Definition \ref{efijewifjewiofwifwfew322423424}
and $ \KXql$ is coarsely invariant by Proposition \ref{hewifeiiu3984u239824424} we conclude that 
$ \KXql(f_{*})\simeq \id_{\id_{\KXql(X)}}$. The resulting relation
\begin{equation}\label{}
\id_{\KXql(X)}+  \Kcat\circ S\simeq \Kcat \circ S
\end{equation}
in $\End_{\Sp^{\la}}(\KXql(X))$
implies that $\KXql(X)\simeq 0$.
\end{proof}

\begin{prop}\label{hewifeiiu3984u2398244241}
The functors $\KX$ and $\KXql$ are $u$-continuous.
\end{prop}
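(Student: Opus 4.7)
The plan is to reduce the statement to a corresponding continuity property of the Roe categories, pass it through the functor $A^{f}$, and then invoke continuity of $C^{*}$-algebra $K$-theory under filtered colimits.

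First, I would observe that for every entourage $U\in \cC$ of $X$ the identity map on the underlying set yields a morphism $X_{U}\to X$, and that the objects of $\bC^{*}(X)$ and $\bC^{*}(X_{U})$ agree: an $X_{U}$-controlled locally finite Hilbert space is the same as an $X$-controlled locally finite Hilbert space, since passing from $X$ to $X_{U}$ leaves the underlying set and the bornology unchanged (hence $C(X)=C(X_{U})$, and local finiteness only refers to the bornology).  Moreover, on morphism spaces the inclusion $\bC^{*}(X_{U})\hookrightarrow \bC^{*}(X)$ is an isometric embedding of closed subspaces; the same applies in the quasi-local case. Thus the family $(\bC^{*}(X_{U}))_{U\in\cC}$ (resp.\ $(\bC^{*}_{ql}(X_{U}))_{U\in\cC}$) is a filtered family of subcategories of $\bC^{*}(X)$ (resp.\ $\bC^{*}_{ql}(X)$) of the special form considered in \eqref{defwkjnnjjkwejkf89324234234}.

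Next I would verify the two equalities of $C^{*}$-categories
\[
\colim_{U\in\cC}\bC^{*}(X_{U})\;=\;\bC^{*}(X)\ ,\qquad
\colim_{U\in\cC}\bC^{*}_{ql}(X_{U})\;=\;\bC^{*}_{ql}(X)\ ,
\]
where the colimits are the ones constructed in \eqref{defwkjnnjjkwejkf89324234234}. For the propagation case this is immediate: any generator of $\bC^{*}(X)$ has controlled propagation contained in some $U\in\cC$ and hence already lies in $\bC^{*}(X_{U})$, so the norm closure of $\bigcup_{U}\Hom_{\bC^{*}(X_{U})}(\cdot,\cdot)$ agrees with $\Hom_{\bC^{*}(X)}(\cdot,\cdot)$. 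For the quasi-local case the crucial point, and the reason for choosing the particular form of Definition \ref{bvcbdfg} (cf.\ Remark \ref{rem:sf834}), is that any $A\in \bC^{*}_{ql}(X)$ is $U$-quasi-local for some single entourage $U\in\cC$; unpacking the definitions one sees that this is exactly the condition for $A$ to be a morphism in $\bC^{*}_{ql}(X_{U})$, so already $\bigcup_{U}\Hom_{\bC^{*}_{ql}(X_{U})}(\cdot,\cdot)=\Hom_{\bC^{*}_{ql}(X)}(\cdot,\cdot)$, without needing to take any closure.

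Finally, I would apply the two functors in \eqref{oiwefewiofu9234234234324} and pass the colimit through them. The functor $A^{f}:\Ccat\to \Calg$ is a left adjoint by \eqref{hbhjeuzefwefewfwe}, so it commutes with the filtered colimits in $\Ccat$ of the type considered above; concretely, $A^{f}(\colim_{U}\bC^{*}(X_{U}))$ is the closure in the maximal $C^{*}$-norm of the union of the images of $A^{f}(\bC^{*}(X_{U}))$, which agrees with the filtered colimit of $A^{f}(\bC^{*}(X_{U}))$ in $\Calg$. The $K$-theory functor $K:\Calg\to \Sp$ preserves filtered colimits (Section \ref{wifjewiof2323443534}), so
\[
K\cX(X)\simeq K(A^{f}(\bC^{*}(X)))\simeq \colim_{U\in\cC}K(A^{f}(\bC^{*}(X_{U})))\simeq \colim_{U\in\cC}K\cX(X_{U})\ ,
\]
and the analogous chain of equivalences holds for $K\cX_{ql}$. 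The main technical point of the argument is the verification in the quasi-local case that our definition of quasi-locality forces an operator to be $U$-quasi-local for a single entourage $U$; once this is in hand, the rest is a formal consequence of the fact that $A^{f}$ is a left adjoint and that $C^{*}$-algebra $K$-theory is continuous.
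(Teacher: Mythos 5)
Your proof is correct and follows essentially the same route as the paper: identify the objects of $\bC^{*}_{?}(X)$ and $\bC^{*}_{?}(X_{U})$, observe that $\bC^{*}_{?}(X)\cong\colim_{U\in\cC}\bC^{*}_{?}(X_{U})$ (with the key point being that a quasi-local operator in the sense of Definition \ref{bvcbdfg} is $U$-quasi-local for a single entourage $U$), and then use that $A^{f}$ and $K$ preserve filtered colimits. The only difference is that you spell out the controlled-propagation case and the closure bookkeeping, which the paper dispatches with "the argument is similar."
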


\begin{proof}
We consider the quasi-local case. For $\KX$ the argument is similar.\footnote{A detailed argument in this case can also be found in \cite{Bunke:ad}.}

Let $X$ be a bornological coarse space with coarse structure $\cC$.
We observe that the    $C^{*}$-categories $\bC^{*}_{\ql}(X)$ and $\bC^{*}_{\ql}(X_{U})$  have the same set of objects, see Remark \ref{weoigjoreogewrg9}.
Let $(H,\phi)$ and $(H^{\prime},\phi^{\prime})$ be $X$-controlled Hilbert spaces.
Then  $\Hom_{\bC_{\ql}^{*}(X_{U})}((H,\phi),(H^{\prime},\phi^{\prime}))$ can be identified with the subspace of 
$\Hom_{\bC_{\ql}^{*}(X)}((H,\phi),(H^{\prime},\phi^{\prime}))$ of all $U$-quasi-local operators. 
The $C^{*}$-category  $\bC_{\ql}^{*}(X)$ is then equal to  the completion of the union of these subcategories for all $U$ in $\cC$. 
 By Proposition \ref{erijgowegwergrefewrferfw} we conclude that 
$$  \colim_{U\in \cC}  \KXql(X_{U}) \simeq \KXql(C) $$ as required. 
\end{proof}
This finishes the proof of Theorem \ref{irgfjiwoowierer23423424243}.

A priori the condition of controlled propagation is stronger than quasi-locality. Hence for every bornological coarse space $X$ we have a 
  natural inclusion $$\bC^{*}(X)\to \bC^{*}_{\ql}(X)$$ of $C^{*}$-categories. It induces a  natural transformation between the coarse $K$-theory functors
\begin{equation}\label{wefh982ruz2kjefhewkfwwef}
\KX\to \KXql\, .
\end{equation}

\begin{lem}\label{iefjweoifoeiwfji23432423432}
If $X$ is discrete, then 
 $\KX(X)\to \KXql(X)$ is an equivalence.
\end{lem}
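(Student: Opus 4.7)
The plan is to show that the natural transformation $K\cX(X) \to K\cX_{ql}(X)$ is an equivalence by arguing that on a discrete bornological coarse space $X$ the inclusion of $C^{*}$-categories $\bC^{*}(X) \hookrightarrow \bC^{*}_{ql}(X)$ is actually an isomorphism on morphism spaces, and then invoking Corollary \ref{ojewjfowfewfewfewf1}. The starting observation is that on a discrete $X$ the only entourage is $\diag_{X}$, so in particular every power $\diag_{X}^{n}$ equals $\diag_{X}$. This means that two subsets $B,B'\subseteq X$ are $U^{n}$-separated in the sense of Definition \ref{bvcbdfg} precisely when they are disjoint, and unwinding the controlled-propagation condition produces only the assertion that $A(H(B))\subseteq H'(B)$ for every bounded $B$.

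Next I would analyze the quasi-local condition on such an $X$. By Definition \ref{bvcbdfg}, quasi-locality of $A:(H,\phi)\to (H',\phi')$ requires $\|\phi'(\chi_{B'})A\phi(\chi_{B})\|\le \varepsilon$ for every pair of disjoint $B,B'\subseteq X$ and every $\varepsilon>0$, which forces $\phi'(\chi_{B'})A\phi(\chi_{B})=0$ outright. Setting $B'=X\setminus B$ shows this is equivalent to $A$ carrying $H(B)$ into $H'(B)$ for \emph{every} subset $B\subseteq X$ (not just the bounded ones). Consequently $\bC^{*}_{ql}(X)\subseteq \bC^{*}(X)$ trivially by restricting to bounded $B$.

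The heart of the argument is the reverse containment, namely that on a locally finite $X$-controlled Hilbert space the controlled-propagation condition automatically promotes to hold for all subsets. Since $(H,\phi)$ is locally finite and every singleton is bounded, each $H(\{x\})$ is finite-dimensional and the projections $\phi(\chi_{\{x\}})$ are pairwise orthogonal. Controlled propagation says $A$ preserves each $H(\{x\})$. For an arbitrary subset $B$ and $v\in H(B)$, one writes $v=\sum_{x\in B}\phi(\chi_{\{x\}})v$ and concludes $Av=\sum_{x\in B}A\phi(\chi_{\{x\}})v\in H'(B)$, giving quasi-locality. The main technical obstacle will be justifying the decomposition $v=\sum_{x\in B}\phi(\chi_{\{x\}})v$: this is the claim that $\sum_{x}\phi(\chi_{\{x\}})$ converges strongly to the identity on every locally finite $X$-controlled Hilbert space over a discrete $X$. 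One expects to verify this by exploiting the cofinal approximation of $\phi(\chi_{B})$ by the finite sums $\phi(\chi_{F})=\sum_{x\in F}\phi(\chi_{\{x\}})$ for bounded $F$ and combining this with the separability and local-finiteness assumptions on $(H,\phi)$, so that any ``continuous part'' of the representation (on which all $\phi(\chi_{\{x\}})$ act as zero) would already be absent from the closure used to define the morphism spaces of $\bC^{*}(X)$.

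Once the two conditions are identified, the inclusion $\bC^{*}(X)\hookrightarrow \bC^{*}_{ql}(X)$ is the identity on both objects and morphism spaces, so the functor $K\circ A^{f}$ sends it to an equivalence and the lemma follows. In particular no deep analytic input beyond the triviality of the coarse structure on a discrete space is required.
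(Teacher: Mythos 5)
Your strategy is the same as the paper's: show that for discrete $X$ the inclusion $\bC^{*}(X)\hookrightarrow\bC^{*}_{ql}(X)$ is an equality of $C^{*}$-categories and conclude by functoriality (the paper's entire proof is the assertion that, the diagonal being the maximal entourage, quasi-locality reduces to controlled --- in fact zero --- propagation). Your unwinding of the two conditions is correct as far as it goes: on a discrete space quasi-locality amounts to $A(H(B))\subseteq H'(B)$ for \emph{all} subsets $B$, controlled propagation only to the same statement for \emph{bounded} $B$, so the easy containment is $\bC^{*}_{ql}(X)\subseteq\bC^{*}(X)$ and the content lies in the reverse one.

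The step you flag as the ``main technical obstacle'' is a genuine gap, and the justification you sketch does not work. Local finiteness (Definition \ref{ewoifjweoifiufu89234234324}) constrains only the subspaces $H(B)$ for \emph{bounded} $B$; it does not force $\sum_{x}\phi(\chi_{\{x\}})$ to converge strongly to $\id_{H}$, because nothing in Definition \ref{feiojwoeffewfewfwefwef} excludes a nonzero part of $H$ on which $\phi(\chi_{B})=0$ for every bounded $B$. Concretely, take $X=\nat$ with the discrete coarse structure and the minimal bornology: a character $C(X)=\ell^{\infty}(\nat)\to\C=B(\C)$ given by a free ultrafilter is a unital representation with $H(B)=0$ for every bounded $B$, hence a locally finite $X$-controlled Hilbert space with $\sum_{x}\phi(\chi_{\{x\}})=0\neq\id$. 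Your closing remark that such a ``continuous part'' is ``absent from the closure used to define the morphism spaces'' is a category error: that closure is taken in the Banach space of operators and cannot alter the objects of $\bC^{*}(X)$. On the direct sum of two such characters for distinct ultrafilters there is even an operator of controlled propagation (vacuously, since all bounded subspaces vanish) whose off-diagonal entry is detected by a complementary pair of unbounded sets, so it is not quasi-local; thus the literal equality of morphism spaces fails on such objects. To be fair, the paper's two-line proof makes the identical identification without comment; but as written your promotion from bounded to arbitrary subsets requires an extra hypothesis (e.g.\ that $\phi(\chi_{B})$ is the strong supremum of the $\phi(\chi_{B'})$ over bounded $B'\subseteq B$), and local finiteness alone does not supply it.
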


\begin{proof}
For a discrete bornological coarse space $X$ the natural inclusion $\bC^{*}(X)\to \bC_{\ql}^{*}(X)$ is an equality. 
Indeed, the diagonal is the maximal entourage of such a space. Consequently, the condition of quasi-locality reduces to the condition of controlled (in this case, actually zero) propagation.
\end{proof}

\subsection{Comparison with the classical definition}\label{ijfweoifweoifoi239804234342}

We consider a bornological coarse space $X$ and an $X$-controlled Hilbert space $(H,\phi)$. By $\cC^{*}(X,H,\phi)$  and  $\cC_{\ql}^{*}(X,H,\phi)$ we denote  the Roe algebras (Definition \ref{qekdfjqodqqwdqwdqwdqdwq}) associated to this data.  
Recall that these are the versions generated by locally finite operators of finite propagation (or quasi-local operators, respectively). 

 We let $K_{*}:=\pi_{*}\Kast\colon \Calg^{\la}\to \Ab^{\la,\Z\gr}$ denote the $\Z$-graded abelian group-valued $K$-theory functor for $C^{*}$-algebras (see Subsection \ref{wifjewiof2323443534}), and we let $   \KX_{\ql,*}:=\pi_{*}\KXql$ and $\KX_{*}:=\pi_{*}\KX$ denote the $\Z$-graded abelian group-valued functors derived from the coarse homology functors.
\begin{theorem}[Comparison Theorem]\label{fwefiwjfeiooi234234324434}
If  $(H,\phi)$ is ample, then  we have canonical isomorphisms $$K_{*}(\cC^{*}(X,H,\phi))\cong  \KX_{*}(X) \text{  {and} } K_{*}(\cC_{\ql}^{*}(X,H,\phi))\cong  \KX_{\ql,*}(X)\, .$$
\end{theorem}

\begin{rem}
In Theorem \ref{fwefiwjfeiooi234234324434e} below (and also \cite[Thm.~6.1]{indexclass}  in the equivariant case)  
we refine  the left isomorphism   to a natural equivalence of spectra. \hB
\end{rem}

\begin{proof}
The actual proof of Theorem \ref{fwefiwjfeiooi234234324434} requies a few preparations.
We discuss the case of the controlled-propagation Roe algebra. The quasi-local case is analogous.

We consider a bornological coarse space and an $X$-controlled Hilbert space $(H,\phi)$. Recall Definition~\ref{defn:sfd9823} of the notion of a locally finite subspace. 
Let $H^\prime$ and $H''$ be locally finite subspaces of $H$. Then we can form the closure of their algebraic sum in $H$ for which we use the notation
$$H^{\prime} \mathbin{\bar{+}} H^{\prime\prime}:=\overline{H' + H''}\subseteq H\, .$$
\begin{lem}\label{rojweoifwewffewf}
If 
$(H,\phi)$  is determined on points, then
$H^{\prime} \mathbin{\bar{+}} H^{\prime\prime}$  is a  locally finite subspace of $H$.
\end{lem}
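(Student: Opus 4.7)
Let $\phi_1$ and $\phi_2$ be $X$-controls on $H'$ and $H''$ recognizing them as locally finite subspaces of $H$ in the sense of Definition~\ref{defn:sfd9823}. My strategy is to realize $H' \mathbin{\bar{+}} H''$ as the image, under a controlled-propagation map, of a locally finite $X$-controlled Hilbert space, and then transport the control along that map.

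First I would form the external orthogonal direct sum $K := H' \oplus H''$ equipped with the product control $\phi_K := \phi_1 \oplus \phi_2$. For every bounded $B \subseteq X$ one has $K(B) = H'(B) \oplus H''(B)$, which is finite-dimensional by hypothesis, so $(K, \phi_K)$ is a locally finite $X$-controlled Hilbert space. Next, consider the bounded linear map
\[
\sigma : K \to H, \qquad \sigma(h', h'') := h' + h''.
\]
If $U_1$ and $U_2$ are entourages of $X$ controlling the propagations of the inclusions $H' \hookrightarrow H$ and $H'' \hookrightarrow H$ (with respect to $\phi_1, \phi_2$, and $\phi$), then $\sigma$ has support contained in $U_1 \cup U_2$; in particular, $\sigma$ has controlled propagation. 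Its range is the algebraic sum $H' + H''$, whose closure in $H$ is $H' \mathbin{\bar{+}} H''$.

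To equip $H' \mathbin{\bar{+}} H''$ with an $X$-control, I would use the polar decomposition $\sigma = v |\sigma|$, where $|\sigma| = (\sigma^{\ast}\sigma)^{1/2} \in B(K)$ and $v : K \to H$ is the partial isometry with initial subspace $(\ker \sigma)^{\perp} \subseteq K$ and final subspace $H' \mathbin{\bar{+}} H''$ (since the range of $\sigma$ is dense there). I would then define $\tilde\phi(f) := v \phi_K(f) v^{\ast}$ for $f \in C(X)$. Local finiteness of $(H' \mathbin{\bar{+}} H'', \tilde\phi)$ would then follow from local finiteness of $(K, \phi_K)$ via the isometric identification of $H' \mathbin{\bar{+}} H''$ with $(\ker \sigma)^{\perp} \subseteq K$ induced by $v$, and controlled propagation of the inclusion $H' \mathbin{\bar{+}} H'' \hookrightarrow H$ would be inherited from the controlled propagation of $\sigma$ together with that of $v^{\ast}$.

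The main obstacle is that $\tilde\phi$ must be a $^{\ast}$-representation, which requires $(\ker \sigma)^{\perp}$ to be $\phi_K$-invariant; since $\ker \sigma \cong H' \cap H''$ via $h \mapsto (h,-h)$, this amounts to the compatibility condition $\phi_1(f)h = \phi_2(f)h$ for all $h \in H' \cap H''$ and $f \in C(X)$. The controls recognizing $H'$ and $H''$ as locally finite subspaces are not uniquely determined, and the technical heart of the proof is to exploit this freedom to arrange the compatibility on the shared subspace $H' \cap H''$, for instance by first altering $\phi_2$ to agree with $\phi_1$ on $H' \cap H''$ and then extending in a propagation-controlled way to all of $H''$, before carrying out the construction above.
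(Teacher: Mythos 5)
Your reduction to the external sum $K=H'\oplus H''$ and the sum map $\sigma$ is a reasonable starting point, but the proposal has two genuine gaps, and the second one is fatal to the polar-decomposition route.

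First, the step you defer to the end — arranging that the controls $\phi_1$ and $\phi_2$ agree on $H'\cap H''$ so that $\ker\sigma$ is $\phi_K$-invariant — is not a technicality you can wave at. The controls recognizing $H'$ and $H''$ as locally finite subspaces are essentially unrelated representations of $C(X)$, the intersection $H'\cap H''$ need not be invariant under either of them, and ``altering $\phi_2$ to agree with $\phi_1$ on $H'\cap H''$ and then extending in a propagation-controlled way'' is itself a construction of exactly the same difficulty as the lemma you are trying to prove. Without it, $\tilde\phi(f):=v\phi_K(f)v^{*}$ is not multiplicative.

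Second, and more seriously: even granting that invariance, you assert that the partial isometry $v$ (equivalently $v^{*}$) inherits controlled propagation from $\sigma$. It does not. One has $v=\lim_{\epsilon\to 0}\sigma(\sigma^{*}\sigma+\epsilon)^{-1/2}$, and functions such as inverse square roots of a positive finite-propagation operator are at best quasi-local, never of controlled propagation in general; moreover, since $H'+H''$ is not an orthogonal sum, $|\sigma|$ need not be bounded below on $(\ker\sigma)^{\perp}$, so the limit need not even converge in norm. Since $\tilde\phi(\chi_B)(H'\mathbin{\bar{+}}H'')\subseteq v\bigl(\phi_K(\chi_B)K\bigr)$, the controlled propagation of the inclusion $H'\mathbin{\bar{+}}H''\hookrightarrow H$ really does require controlled propagation of $v$, which you have not established and which fails in general.

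The paper avoids both problems by never transporting a control from $H'\oplus H''$. Instead it builds a new control on $H'\mathbin{\bar{+}}H''$ directly from the \emph{ambient} control $\phi$: choose a symmetric entourage $U$ and a $U$-separated subset $D$ with $U[D]=X$, partition $X$ into $U$-bounded cells $B_d$ indexed by a well-ordering of $D$, observe that each $\phi(B_d)(H'+H'')$ is finite-dimensional because $\phi(B_d)H'\subseteq\phi'(V[B_d])H'$ (and likewise for $H''$), and let $f\in C(X)$ act by $\sum_d f(d)Q_d$ for the resulting orthogonal projections $Q_d$. This yields local finiteness and $U$-controlled propagation of the inclusion at once, with no need to reconcile $\phi_1$ with $\phi_2$ or to take polar decompositions. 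You would need either to adopt an argument of this kind or to supply genuinely new proofs of the two missing steps above.
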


\begin{proof}
By assumption we can find an entourage $V$ of $X$ and  $X$-controls 
 $\phi^{\prime}$ and $\phi^{\prime\prime}$  on $H^{\prime}$ and $H^{\prime\prime}$
such that $(H^{\prime},\phi^{\prime})$ and $(H^{\prime\prime},\phi^{\prime\prime})$ are locally finite $X$-controlled Hilbert spaces and the inclusions
$H^{\prime}\to H$ and $H^{\prime\prime}\to H$ have propagation controlled by   $V^{-1}$.

Fix a symmetric entourage $U$ of $X$. By 
  Lemma~\ref{lem:iohwenrf} we can choose a $U$-separated subset $D$ of   $X$ with $U[D] = X$. 
  
By the   axiom of choice we choose a well-ordering of $D$. We will construct the $X$-control~$\psi$ on $H^{\prime} \mathbin{\bar{+}} H^{\prime\prime}$  by a transfinite induction. In the following argument $f$ denotes a generic element of $C(X)$.

We start with the smallest element $d_0$ of $D$. We set $$B_{d_{0}}:=U[d_{0}]\, .$$
Because of the inclusion
$$\phi(B_{d_0}) H' \subseteq \phi'(V[B_{d_0}]) H^{\prime}$$   the subspace 
 $\phi(B_{d_0}) H'$ is finite-dimensional.  Analogously we conclude that $\phi(B_{d_0}) H''$ is finite-dimensional. Consequently,   $\phi(B_{d_0}) (H' + H'')=\phi(B_{d_0}) (H' \bar{+} H'')$ is finite-dimensional.
We define the subspace $$H_{d_{0}} := \phi(B_{d_0}) (H' + H'') \subseteq H$$ and let $Q_{d_{0}}$ be the orthogonal projection onto $H_{d_{0}}$. We further define the control $\psi_{d_{0}}$ of $H_{d_{0}}$ by $\psi_{d_{0}}(f) := f(d_0) Q_{d_{0}}$.  Using $\psi_{d_{0}}$ we recognize $H_{d_{0}}\subseteq H$  as a locally finite subspace.

Let $\lambda + 1$ in $D$ be a successor ordinal and suppose that the subset $B_{\lambda}$  of  $ U[\lambda]$,   the subspace $H_\lambda$ of $ H$, the projection $Q_{\lambda}$, 
and $\psi_\lambda:C(X)\to B(H_{\lambda})$ have already been constructed such that $\psi_{\lambda}$ recognizes $H_{\lambda}$ as a locally finite subspace of $H$.  Then we define the subset
$$B_{\lambda+1}:=U[\lambda+1]\setminus \bigcup_{\mu\le\lambda} B_{\mu}$$ of $X$.
 As above we observe that $\phi(B_{\lambda + 1})(H'  \mathbin{\bar{+}}  H'') \subseteq H$ is finite-dimensional.  We define the closed subspace $$H_{\lambda + 1} := H_\lambda + \phi(B_{\lambda + 1})(H'  \mathbin{\bar{+}}  H'')$$ of $H$.  We let  $Q_{\lambda + 1}$ be the orthogonal projection onto $H_{\lambda + 1} \ominus H_\lambda$ and define the control $\psi_{\lambda + 1}$ of $H_{\lambda + 1}$ by $$\psi_{\lambda + 1}(f) := \psi_\lambda(f) + f(d_{\lambda + 1})Q_{\lambda + 1}\, .$$ We conclude easily that it recognizes  $H_{\lambda + 1} $ as a locally finite subspace of $H $.

Let $\lambda$ be a limit ordinal.
Then we set $B_{\lambda}:=\emptyset$ and $Q_{\lambda}:=0$.  We  define the closed subspace $$H_\lambda := \overline{\bigcup_{\mu < \lambda} H_\mu}$$ of $H$.  We furthermore  define the control $\psi_\lambda$ of $H_\lambda$ by    $$\psi_\lambda(f) := \sum_{\mu < \lambda} f(d_\mu) Q_\mu\, .$$ We argue that the sum describes a well-defined operator on $H_{\lambda}$. The sum has a well-defined interpretation on $H_{\mu}$ for all $\mu<\lambda$.

Let $(h_{k})_{k\in \nat}$ be a sequence in $\bigcup_{\mu < \lambda} H_\mu $  converging to 
$h$.  For every $k$ in $\nat$  there is  a $\mu$ in $D$ with $\mu <\lambda$ such that
$h_{k}\in H_{\mu}$. We set $\psi_{\lambda}(f)h_{k}:=\psi_{\mu}(f)h_{k}$. This definition does not depend on the choice of $\mu$. 
Given $\epsilon$ in $(0,\infty)$ we can find $ k_{0}$ in $\nat$ such that  
$\|h_{k}-h\|\le \epsilon$ for all $k$ in $\nat$ with $k \ge k_{0}$. Then for integers $k,k'$ in $\nat$ with  $k,k^{\prime}\ge k_{0}$ we have 
 $\|\psi_{\lambda}(f)h_{k}-\psi_{\lambda}(f)h_{k^{\prime}}\|\le \epsilon\|f\|_{\infty}$.
This shows that $(\psi_{\lambda}(f)h_{k})_{k\in \nat}$ is a Cauchy sequence. We define
$$\psi_{\lambda}(f)h:=\lim_{k\to \infty} \psi_{\lambda}(f)h_{k}\, .$$
The estimates $\|\psi_{\lambda}(f)h_{k}\|=\|\psi_{\mu}(f)h_{k}\|\le \|f\|_{\infty}\|h_{k}\|$ for all $k$ in $\nat$ imply the estimate $\|\psi_{\lambda}(f)h\|\le \|f\|_{\infty}\|h\|$ and hence
show that $\psi_{\lambda}(f)$ is continuous. One furthermore checks, using the mutual orthogonality of the $Q_{\mu}$ for $\mu<\lambda$, that $f\mapsto \psi_{\lambda}(f)$ is a $C^{*}$-algebra homomorphism from $C(X)$ to $B(H_{\lambda})$.

   We claim that $ \psi_\lambda$ recognizes $H_\lambda$ as a locally finite subspace of $H $. If $B$    is a bounded subset of $X$, then the inclusion  $\psi_\lambda(B) H_\lambda \subseteq \phi(U[B]) (H'  \mathbin{\bar{+}}  H'')$ shows that  $\psi_\lambda(B) H_\lambda$ is finite-dimensional. Furthermore, the 
  propagation  of the inclusion of $H_{\lambda}$ into $H$ is controlled by   $U$.
  
By construction $(H_{\lambda},\phi_{\lambda})$ is determined on points.
\end{proof}

Let $(H,\phi)$ be an $X$-controlled Hilbert space.
The set of locally finite subspaces of $(H,\phi)$ is partially ordered by inclusion.
Lemma \ref{rojweoifwewffewf} has the following corollary.

\begin{kor}
If  $(H,\phi)$  is determined on points, then the partially ordered set of locally finite subspaces of an $X$-controlled Hilbert space is filtered.
\end{kor}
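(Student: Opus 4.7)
The plan is to deduce this directly from Lemma \ref{rojweoifwewffewf} together with a triviality for non-emptiness. Recall that a (non-empty) partially ordered set is filtered precisely when any two elements admit a common upper bound.

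First I would verify non-emptiness: the zero subspace $\{0\} \subseteq H$ is trivially a locally finite subspace of $H$, since the zero map $\phi' : C(X) \to B(\{0\})$ recognizes it as locally finite (every $\phi'(\chi_B)\{0\} = \{0\}$ is finite-dimensional), and the inclusion $\{0\} \to H$ has propagation contained in any entourage, in particular in $\diag_X$.

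Next, given two locally finite subspaces $H', H'' \subseteq H$, the sum $H' \mathbin{\bar{+}} H''$ is a locally finite subspace of $H$ by Lemma \ref{rojweoifwewffewf}, and by construction $H' \subseteq H' \mathbin{\bar{+}} H''$ and $H'' \subseteq H' \mathbin{\bar{+}} H''$ in the partial order of subspaces of $H$. This provides the required common upper bound, so the partially ordered set is filtered. There is essentially no obstacle here since all the work has already been carried out in Lemma \ref{rojweoifwewffewf}; the corollary is merely a reformulation in the language of filtered posets.
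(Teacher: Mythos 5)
Your proof is correct and takes exactly the route the paper intends: the corollary is stated as an immediate consequence of Lemma \ref{rojweoifwewffewf}, with $H' \mathbin{\bar{+}} H''$ serving as the common upper bound of $H'$ and $H''$. The added remark on non-emptiness via the zero subspace is a harmless (and valid) completion of the filteredness check.
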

  
%
%

If $H^{\prime}$   is a locally finite subspace of $H$, then
the Roe algebra
$\cC^{*}(X,H^{\prime},\phi^{\prime})$ (as a subalgebra of $B(H')$) and the 
inclusion $\cC^{*}(X,H^{\prime},\phi^{\prime})\to \cC^{*}(X,H,\phi)$ 
 do not depend on the choice of $\phi^{\prime}$ recognizing $H^{\prime}$ as a locally finite subspace.  
 This provides the connecting maps of the colimit in the statement of the  following lemma. 

 \begin{lem}\label{irfjioweoi32424} We have a canonical isomorphism
$$K_{*}(\cC^{*}(X,H,\phi))\cong \colim_{H^{\prime}} 
K_{*}(\cC^{*}(X,H^{\prime},\phi^{\prime}))$$
where the colimit
runs over the filtered partially ordered
set of locally finite subspaces $H^{\prime}$ of $ H$.
 \end{lem}

\begin{proof}
If $A$ is a bounded linear operator on $H^{\prime}$, then the conditions of having controlled propagation as an operator on $(H,\phi)$ or on $(H^{\prime},\phi^{\prime})$
are equivalent. It follows that
$\cC^{*}(X,H^{\prime},\phi^{\prime})$ is exactly  the subalgebra generated by operators coming from $H^{\prime}$.
So by definition of the Roe algebra
$$\cC^{*}(X,H,\phi)\cong \colim_{H^{\prime}} \cC^{*}(X,H^{\prime},\phi^{\prime})\, ,$$
where the colimit is taken over the filtered partially ordered set of locally finite subspaces of $H$ and interpreted in the category of $C^{*}$-algebras. Since the $K$-theory functor for $C^{*}$-algebras  preserves filtered colimits by Property \ref{wtgwergfervfdsv} of $\Kast$
we conclude the desired isomorphisms.
\end{proof}
 
 We now continue  with  the proof of Theorem \ref{fwefiwjfeiooi234234324434}.
By Proposition \ref{prop:dfs78934} and Definition~\ref{oiwefewiofu9234234234324} 
 we have an isomorphism
\begin{equation}
\label{eq:sdf90823}
\KX_{*}(X)\cong  {K_{*}}(A (\bC^{*}(X))) \, .
\end{equation}

The same argument as for the isomorphism \eqref{rgoijioqjefefqwefqwefqewfqf} shows that we 
have a canonical isomorphism
\begin{equation}
\label{eqi2lkjh43re}
 \colim_{\bD} A(\bD)  \cong  A(\bC^{*}(X))  \, ,
\end{equation}
where $\bD$ runs over the filtered subset of  subcategories of $\bC^{*}(X)$ with finitely many objects. Since the $K_{*}$- commutes with filtered colimits 
we get the canonical isomorphism
\begin{equation*}
\label{eqi2lkkjh6jh43re}
 \colim_{\bD} K_{*}(A(\bD))  \cong K_{*}(A(\bC^{*}(X))) \, .
\end{equation*}

If $H^{\prime}$ is a locally finite subspace of $H$, then we can consider $\cC(X,H^{\prime},\phi^{\prime})$
as a subcategory of $\bC^{*}(X)$ with a single object $(H^{\prime},\phi^{\prime})$. These inclusions induce the two up-pointing homomorphisms in  {D}iagram \eqref{udheuqi32} below.

Let $H'$ and $H''$ be closed locally finite subspaces of $H$ such that
$H^{\prime}\subseteq H^{\prime\prime}$. 
\begin{lem}\label{fhuiewqewwerr}
The following diagram commutes:
\begin{equation}\label{udheuqi32}
 \xymatrix{&K_{*}(A(\bC^{*}(X))) &\\K_{\ast}(\cC^{*}(X,H^{\prime},\phi^{\prime}))\ar[ur]\ar[rr]\ar[dr]&&K_{*}(\cC^{*}(X,H^{\prime\prime},\phi^{\prime\prime}))\ar[ul]\ar[dl]\\&K_{*}(\cC^{*}(X,H ,\phi )) &}
\end{equation}
\end{lem}

\begin{proof}
The {commutativity of the} lower triangle is clear since it is induced from a commutative triangle of Roe algebras.
We now consider the upper triangle. Let $i:\cC^{*}(X,H^{\prime},\phi^{\prime})\to A(\bC^{*}(X))$, $j:\cC^{*}(X,H^{\prime\prime},\phi^{\prime\prime}))\to A(\bC^{*}(X))$, and $k:\cC^{*}(X,H^{\prime},\phi^{\prime})\to  (X,H^{\prime\prime},\phi^{\prime\prime}))$ denote the inclusions. 
We consider the  inclusion  $u:H'\to H''$ in $\Hom_{\bC^{*}(X)}(H',\phi'),(H'',\phi''))$   as a morphism in $A(\bC^{*}(X))$ in the natural way, it lives in single summand of \eqref{efefwefeefefefewfef}.
Then we have equalities of homomorphisms $i=i u^{*}u$ and $i':=uiu^{*}=j\circ k$ from $ \cC^{*}(X,H^{\prime},\phi^{\prime})$ to $A(\bC^{*}(X))
$.
By Property 
  \ref{rgoijwergioregrerfwref} of $\Kast$ the maps $i$ and $i'$ induce the same maps on the level of $K$-theory groups.
Hence the upper triangle commutes as well.
\end{proof}

We continue with the proof of Theorem \ref{fwefiwjfeiooi234234324434}.
By a combination of Lemma \ref{irfjioweoi32424} and Lemma~\ref{fhuiewqewwerr} we get a canonical homomorphism
\begin{equation}\label{delkdjoi3242342344}
\alpha:K_{*}(\cC^{*}(X,H,\phi))\to K_{*}(A(\bC^{*}(X)))\, . 
\end{equation} 

\begin{lem}\label{eifjwefioiofioefewf}
The  homomorphism \eqref{delkdjoi3242342344} is an isomorphism. 
\end{lem}

\begin{proof}
Let $x$ in $K_{*}(\cC^{*}(X,H,\phi))$ be given such that $\alpha(x)=0$. Then  there exists a locally finite subspace $H^{\prime}$ of $H$ such that 
 $x$ is realized by some $x^{\prime}$ in $K_{*}(\cC^{*}(X,H^{\prime},\phi^{\prime}))$. Furthermore there exists a subcategory
$\bD$ of $\bC^{*}(X)$  with finitely many objects  containing $(H^{\prime},\phi^{\prime})$ such that
$x^{\prime}$ maps to zero in $K_{*}(A(\bD))$.
Using ampleness of $(H,\phi)$, by Lemma \ref{rwoijwoifjoeiwewfjewjfo} we can extend the embedding $H^{\prime}\to H$ to an embedding
$H^{\prime\prime}:=\bigoplus_{(H_{1},\phi_{1}) \in \Ob(\bD)}H_{1}\to H$
as a locally finite subspace containing $H^{\prime}$.  Then $x^{\prime}$ maps to zero in
$K_{*}(\cC(X,H^{\prime\prime},\phi^{\prime\prime}))$. Hence $x=0$.

Let now $y$ in $K_{*}(A(\bC^{*}(X)))$ be given. Then there exists a subcategory with finitely many objects
$\bD$ of $\bC^{*}(X)$ and a class $y^{\prime}$ in $K(A(\bD))$ mapping to $y$.
By Lemma \ref{rwoijwoifjoeiwewfjewjfo} we can choose an embedding of $H^{\prime }:=\bigoplus_{(H_{1},\phi_{1}) \in \Ob(\bD)}H_{1}\to H$ as a locally finite subspace. Then $y^{\prime}$ determines an element $x$ in $K_{*}(\cC^{*}(X,H,\phi))$ such that
$\alpha(x)=y$. 
\end{proof}

 {The combination of  Lemma  \ref{eifjwefioiofioefewf} with Equation \eqref{eq:sdf90823} finishes the proof of Theorem \ref{fwefiwjfeiooi234234324434}.}
\end{proof}


\begin{rem}
A version of the Comparison Theorem \ref{fwefiwjfeiooi234234324434} has been discussed in \cite[Sec.~2.2]{hamped}.
\hB
\end{rem}

In combination with Proposition \ref{lem:sdfbi23}  the Theorem \ref{fwefiwjfeiooi234234324434} implies  the following comparison.
Let $X$
 be a bornological coarse space and $(H,\phi)$ an $X$-controlled Hilbert space.
 \begin{kor}\label{iofewoifu9234234234324}
Assume that
\begin{enumerate}
\item $X$ is separable (Definition \ref{weofweoifu9824234343224}),
\item {$X$ is locally finite (Definition~\ref{defn:okmnsf}), and}
\item $(H,\phi)$ is ample (Definition \ref{fjwefewiojoi2jroi23jr23r23r23r}).
\end{enumerate}
Then we have a canonical isomorphism
$$K_{*}(\cC_{\lc}^\ast(X,H,\phi))\simeq \KX_{*}(X)\, .$$
\end{kor}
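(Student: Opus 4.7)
The plan is to combine the two named ingredients directly. By Theorem~\ref{fwefiwjfeiooi234234324434}, ampleness of $(H,\phi)$ gives the canonical isomorphism
\[K_{*}(\cC^{*}(X,H,\phi))\;\cong\; K\cX_{*}(X)\,,\]
which is constructed via the colimit over locally finite subspaces together with the identification \eqref{eq:sdf90823} with $K_{*}(A(\bC^{*}(X)))$. On the other hand, Proposition~\ref{lem:sdfbi23} (whose hypothesis is precisely separability of $X$) asserts the equality of $C^{*}$-algebras
\[\cC^{*}(X,H,\phi)\;=\;\cC^{*}_{lc}(X,H,\phi)\,,\]
so the two algebras have literally the same $K$-theory.

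Therefore I would simply compose: applying $K_{*}$ to the equality of Roe algebras furnished by Proposition~\ref{lem:sdfbi23}, and then applying Theorem~\ref{fwefiwjfeiooi234234324434}, yields the chain of canonical isomorphisms
\[K_{*}(\cC^{*}_{lc}(X,H,\phi))\;=\;K_{*}(\cC^{*}(X,H,\phi))\;\cong\;K\cX_{*}(X)\,,\]
which is the desired statement. Both assumptions of the corollary are used exactly once, and each feeds into one of the two ingredients.

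There is essentially no obstacle here, since all the work has been done in the preceding two results; the only small point to check is that the two isomorphisms are compatible in the expected way, but this is immediate because the isomorphism from Theorem~\ref{fwefiwjfeiooi234234324434} is defined canonically (through the inclusions of the Roe algebras on locally finite subspaces into $A(\bC^{*}(X))$ via Lemmas~\ref{irfjioweoi32424} and~\ref{fhuiewqewwerr}) and Proposition~\ref{lem:sdfbi23} provides an equality rather than merely an abstract isomorphism of algebras. Thus the corollary is a one-line deduction, and no further technical step is required.
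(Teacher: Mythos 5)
Your proposal is correct and is precisely the paper's own argument: the text introduces the corollary with the sentence that it follows from combining Proposition~\ref{lem:sdfbi23} (separability gives $\cC^{*}(X,H,\phi)=\cClc^{*}(X,H,\phi)$) with Theorem~\ref{fwefiwjfeiooi234234324434} (ampleness gives $K_{*}(\cC^{*}(X,H,\phi))\cong K\cX_{*}(X)$). Nothing further is needed.
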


\begin{ex} Combining the Properties \ref{wergijowergergrgregwefewrfwerf} and \ref{wegoijweroigjergwerg} of $\Kast$ we conclude $\Kast(\IK(\ell^{2}))\simeq  KU$.

We can consider $\ell^{2}$ as an ample $*$-controlled Hilbert space on {the space} $*$.  In this case all four versions of the Roe algebras coincide with $\IK(\ell^{2})$.
We get
\[\KX(*)\simeq \KXql(*)\simeq \Kast(\IK(\ell^{2}))\]
and hence $\KX(*)$ and $\KXql(*)$ are equivalent to $KU$.
\hB
\end{ex}

  Let $X$ be a bornological coarse space.
  \begin{theorem}\label{jfweofjwoeifjewfoewfewfewfewf54tt34t}
 Assume that $X$ has weakly finite asymptotic dimension (Definition \ref{wegoijobgwtwferfrewfer}).
 Then:
\begin{enumerate}
\item \label{fewijwefoew234234} The canonical transformation $\KX(X)\to \KXql(X)$ is an equivalence.
\item\label{fewijwefoew2342341} If $(H,\phi)$ is an ample $X$-controlled Hilbert space, then the inclusion of Roe algebras
$\cC^{*}(X,H,\phi)\hookrightarrow \cC_{\ql}^{*}(X,H,\phi)$ induces an  equivalence
\[K(\cC^{*}(X,H,\phi))\to K(\cC_{\ql}^{*}(X,H,\phi))\]
of $K$-theory spectra.
\end{enumerate}
 \end{theorem}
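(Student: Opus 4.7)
The plan is to reduce both statements to a single application of the comparison theorem for coarse homology theories proved earlier in the paper.

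For Part \ref{fewijwefoew234234}, I would first invoke Theorem \ref{irgfjiwoowierer23423424243}, which establishes that both $K\cX$ and $K\cX_{ql}$ are coarse homology theories, and note that the canonical inclusion $\bC^{*}(X) \hookrightarrow \bC^{*}_{ql}(X)$ of Roe categories is natural in $X$, hence induces a natural transformation $K\cX \to K\cX_{ql}$ between coarse homology theories in the sense of Definition \ref{kjfwlefjewoifjewfewf}. By Lemma \ref{iefjweoifoeiwfji23432423432}, this transformation is an equivalence on every discrete bornological coarse space, since for such spaces the diagonal is the maximal entourage and quasi-locality reduces to controlled (indeed zero) propagation, so $\bC^{*}(X) = \bC_{ql}^{*}(X)$. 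The hypothesis on $X$ is exactly the hypothesis of Theorem \ref{jfweofjwoeifjewfoewfewfewfewf}, so Corollary \ref{thm:sdf98245csv} applies and yields that $K\cX(X) \to K\cX_{ql}(X)$ is an equivalence.

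For Part \ref{fewijwefoew2342341}, I would use Theorem \ref{fwefiwjfeiooi234234324434}, which produces canonical isomorphisms $K_{*}(\cC^{*}(X,H,\phi)) \cong K\cX_{*}(X)$ and $K_{*}(\cC^{*}_{ql}(X,H,\phi)) \cong K\cX_{ql,*}(X)$ whenever $(H,\phi)$ is ample. An inspection of the proof of that theorem shows that these isomorphisms are compatible with the natural inclusion $\cC^{*}(X,H,\phi) \hookrightarrow \cC^{*}_{ql}(X,H,\phi)$ on the one hand and the natural transformation $K\cX \to K\cX_{ql}$ on the other, since both are induced by the same identity-on-objects inclusion of $C^{*}$-categories. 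Part \ref{fewijwefoew234234} then gives the asserted equivalence of $K$-theory spectra. To upgrade from homotopy groups (as stated in Theorem \ref{fwefiwjfeiooi234234324434}) to spectra, one passes through the filtered colimit description employed in the proof of Lemma \ref{irfjioweoi32424}, which is manifestly a colimit in $\Sp$ and hence detects spectrum-level equivalences.

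The main technical point to verify carefully is the naturality of the isomorphisms from Theorem \ref{fwefiwjfeiooi234234324434} with respect to the inclusion of Roe algebras into their quasi-local versions, since the proof of that theorem proceeds through a filtered colimit over locally finite subspaces and a separate colimit over finite subcategories of $\bC^{*}(X)$. However, the quasi-local analogue of Lemma \ref{irfjioweoi32424} goes through verbatim because the propagation condition is only weakened, not the local finiteness of subspaces, and the argument of Lemma \ref{fhuiewqewwerr} uses only that the inclusions $H' \to H''$ have controlled (hence quasi-local) propagation. Thus the two colimit presentations fit into a commutative square with the natural inclusion on both the algebra and the category side, and the resulting morphism of colimits is exactly the one induced by $K\cX(X) \to K\cX_{ql}(X)$, completing the argument.
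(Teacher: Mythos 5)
Your proposal is correct and follows essentially the same route as the paper: Part \ref{fewijwefoew234234} is Lemma \ref{iefjweoifoeiwfji23432423432} combined with Corollary \ref{thm:sdf98245csv}, and Part \ref{fewijwefoew2342341} is Part \ref{fewijwefoew234234} combined with Theorem \ref{fwefiwjfeiooi234234324434}. Your extra care about the naturality of the comparison isomorphisms and the passage from homotopy groups to spectra addresses points the paper leaves implicit, but does not change the argument.
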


\begin{proof}
By Theorem \ref{irgfjiwoowierer23423424243} the functors $\KX$ and $\KXql$ are coarse homology theories in the sense of Definition \ref{rgljogreggregrege}.
Assertion~\ref{fewijwefoew234234} follows from Lemma \ref{iefjweoifoeiwfji23432423432} together with Corollary \ref{thm:sdf98245csv}.

Assertion~\ref{fewijwefoew2342341} follows from \ref{fewijwefoew234234}
and Theorem \ref{fwefiwjfeiooi234234324434}.
\end{proof}

For the study of secondary or higher invariants in coarse index theory it is important to refine the isomorphism given in Theorem \ref{fwefiwjfeiooi234234324434} to the spectrum level. {This is the content of Theorem~\ref{fwefiwjfeiooi234234324434e}.\ref{0923erfd} below.}

Let  $X$, $X^{\prime}$ be bornological coarse spaces. Let furthermore  
$(H,\phi)$ be an  ample $X$-controlled Hilbert space and
$(H^{\prime},\phi^{\prime})$ be an ample $X^{\prime}$-controlled Hilbert space. Assume that we are given a morphism $f : X^\prime \to X$ and an isometry $V:H^\prime \to H$. In the following in order to interpret  $\supp(V)$  we view $V$ as an operator from $f_{*}(H,\phi)$ to $(H',\phi')$. We assume that 
 there exists an entourage $U$ of $X$ such that $\supp(V)\subseteq U$.
Then we define a morphism of $C^{*}$-algebras by
$$v:\cC^{*}(X^{\prime},H^{\prime},\phi^{\prime})\to \cC^{*}(X,H,\phi)\, , 
\quad v(A):=VAV^{*}\, .$$
If $A$ has $W$-controlled propagation, then $VAV^{*}$ has $U\circ (f\times f)(W)\circ U^{-1}$-controlled propagation. 
One furthermore checks that the conjugation $V-V^{*}$ preserves local finiteness.
The only relation between $v$ and $f$ is the support condition on $V$. If $V_1$, $V_2$ are two isometries $H^\prime \to H$ satisfying  $\supp(V_{1})\subseteq U$ and $\supp(V_{2})\subseteq U$, then the induced maps $K_\ast(v_1)$ and $K_\ast(v_2)$ on $K$-theory groups are the same. To see this, following  \cite[Lem.~3 in Sec.~4]{higson_roe_yu}, 
we note that $ V_{2}V_{1}^{*}$ is a multiplier on $\cC^{*}(X,H,\phi)$, and that 
$v_{1} (V_{2}V_{1}^{*})^{*} (V_{2}V_{1}^{*})=v_{1}$ and $v_{2}= (V_{2}V_{1}^{*})v_{1} (V_{2}V_{1}^{*})^{*}$.
We now use Property \ref{rgoijwergioregrerfwref} of $\Kast$.
The refinement of  the equality  $K_\ast(v_1)=K_\ast(v_2)$ to the spectrum level is content of Point~\ref{34ertfds} in the next theorem.

\begin{theorem}\label{fwefiwjfeiooi234234324434e}
\mbox{}
\begin{enumerate}
\item\label{0923erfd} There is a canonical (up to equivalence) equivalence of spectra  
$$\kappa_{(X,H,\phi)}:\Kast(\cC^{*}(X,H,\phi))\to \KX(X)\, .$$
\item\label{34ertfds} We have a commuting diagram
\begin{equation}\label{coiehjocwecwecc}
\xymatrix{\Kast(\cC^{*}(X^{\prime},H^{\prime},\phi^{\prime})) \ar[rr]^-{\kappa_{(X^{\prime},H^{\prime},\phi^{\prime})}}\ar[d]^{ \Kast(v)}&&  \KX(X^{\prime})\ar[d]^{f_{*}}\\ \Kast(\cC^{*}(X,H,\phi))   \ar[rr]^-{\kappa_{(X,H,\phi)}}&&\KX(X)}
\end{equation}
\end{enumerate}
\end{theorem}

\begin{proof}
We start with the proof of the first assertion. We consider the category $\bC^{*}(X,H,\phi)$ whose objects are triples 
$(H^{\prime},\phi^{\prime},U)$\footnote{This interpretation of the symbols $H^{\prime}$ and $\phi^{\prime}$ is  local in the proof of the first part of the theorem.}, where   $(H^{\prime},\phi^{\prime})$ is an object of $\bC^{*}(X)$ and $U$ is a  controlled isometric embedding
$U:H^{\prime}\to H$ as a locally finite subspace. 
A morphism $A:(H_{0}^{\prime},\phi_{0}^{\prime},U_{0})\to (H_{1}^{\prime},\phi_{1}^{\prime},U_{1})$
is an operator in $\Hom_{\bC^{*}(X)}((H_{0}',\phi_{0}'),(H_{1}',\phi_{1}'))$.

We consider  the Roe algebra $\cC^{*}(X,H,\phi)$ as a non-unital  $C^{*}$-category  with a single object.
Then we have  functors between non-unital $C^{*}$-categories
$$\bC^{*}(X) \xleftarrow{F}  \bC^{*}(X,H,\phi) \xrightarrow{I} \cC^{*}(X,H,\phi)\, .$$
The functor $F$ forgets the inclusions. The action of $I$ on objects is clear, and it maps a morphism $A:(H^{\prime},\phi^{\prime},U^{\prime})\to  (H^{\prime\prime},\phi^{\prime\prime},U^{\prime\prime})$  in   $\bC^{*}(X,H,\phi)$ to the morphism   $I(A):=U^{\prime\prime}AU^{\prime,*}$  of  $C^{*}(X,H,\phi)$. 

We get an induced diagram
of $K$-theory spectra 
$$\Kcat( \bC^{*}(X))  \xleftarrow{\Kcat(F)} \Kcat (\bC^{*}(X,H,\phi) ) \xrightarrow{\Kcat(I)} \Kast (\cC^{*}(X,H,\phi))\, .$$

\begin{lem}
$K(F)$ is an   equivalence.
\end{lem}
\begin{proof}
We claim that $F$ is a unitary equivalence of $C^{*}$-categories. Then $\Kcat(F)$ is an equivalence of spectra by  Corollary \ref{ojewjfowfewfewfewf1}.

Note that $F$ is fully faithful.
Furthermore, since $(H,\phi)$ is ample, for every object $(H^{\prime},\phi^{\prime})$ of $\bC^{*}(X)$ there exist
a controlled isometric embedding $U:H^{\prime}\to H$ {by Lemma~\ref{rwoijwoifjoeiwewfjewjfo}}. Hence the object $(H^{\prime},\phi^{\prime})$ is in the image of $F$. So $F$ is surjective on objects.
\end{proof}

\begin{lem}
$\Kast(I)$ is an equivalence.
\end{lem}
\begin{proof}
We show that
$K_{*}(I)$  is isomorphism on homotopy groups.

\textbf{Surjectivity:} Let $x$ be in $ K_{*}(\cC^{*}(X,H,\phi))$. By Lemma \ref{irfjioweoi32424} there exists a locally finite subspace
$H^{\prime}$ of $(H,\phi)$ and a class $x^{\prime}$ in $  K_{*}(\cC^{*}(X,H^{\prime},\phi^{\prime}))$
such that $K_{*}(u^{\prime})(x^{\prime})=x$ for 
 the canonical homomorphism $u^{\prime}: \cC^{*}(X,H^{\prime},\phi^{\prime})\to \cC^{*}(X,H,\phi)$ which  sends the operator $A$ in its domain to $U^{\prime}AU^{\prime,*}$. Here $\phi^{\prime}$ is  an $X$-control on $H^{\prime}$ which exhibits it as a locally finite subspace of {$H$}, and $U^{\prime}:H^{\prime}\to H$ is the isometric embedding. We have a commuting diagram of non-unital $C^{*}$-categories 
$$\xymatrix{\cC^{*}(X,H^{\prime},\phi^{\prime})\ar[dr]^{a}\ar[rr]^{u^{\prime}}&&\cC^{*}(X,H,\phi)\\&\bC^{*}(X,H,\phi)\ar[ur]^{I}&}$$
where $a$ sends the unique object of its domain to $(H^{\prime},\phi^{\prime},U)$ and is the obvious map on morphisms.  This factorization implies that $x$ is also in the image of $K_{*}(I)$.

\textbf{Injectivity:} Assume that $ x$ in $ K_{*}(\bC^{*}(X,H,\phi))$ is such that $K_{*}(I)(x)=0$. There is a subcategory $\bD$ of  $\bC^{*}(X,H,\phi)$ with finitely many objects and a class
$y$ in $ K_{*}(\bD)$ with $x=K_{*}(i)(y)$, where $i:\bD\to \bC^{*}(X,H,\phi)$ is the inclusion. {This follows from  Proposition~\ref{wegkoerfrefewrfwerf}.}

We define
$$\tilde H
:=\sum_{(H^{\prime},\phi^{\prime},U^{\prime})\in \bD} H^{\prime}+ H_{1}$$
with the sum taken in $H$, where $H_{1}$ is a locally finite subspace which will be chosen below.
By Lemma \ref{rojweoifwewffewf}  we know that $\tilde H$ is a locally finite subspace of $H$. Hence we can choose a control function
$\tilde \phi$ on $\tilde H$ exhibiting $\tilde H$ as a locally finite subspace and let $\tilde U:\tilde H\to H$ denote the inclusion.
We form the full subcategory category $\tilde \bD$  of  $\bC^{*}(X,H,\phi)$
 with set of objects $\Ob(\bD)\cup \{(\tilde H,\tilde \phi,\tilde U)\}$. 
We have a functor $\tilde I:\bD\to \cC^{*}(X,\tilde H,\tilde   \phi)$ defined similarly as
$I$ and a diagram
$$\xymatrix{
& K_{*}(\bC^{*}(X,H,\phi))\\
 K_{*}(\bD)\ar[r] \ar[d]^{K_{*}(\tilde I)}\ar[ur]^{K_{*}(i)}\ar@/_3cm/[dd]_{K_{*}(I)}& K_{*}(\tilde \bD) \ar[u]\\
 K_{*}(\cC^{*}(X,\tilde H,\tilde \phi))\ar[ur]^{!}\ar[d]^{{K_{*} (\tilde U)}}&\\
 K_{*}(\cC^{*}(X,H,\phi))&}$$
The upper  right triangle is given by the obvious inclusions of $C^{*}$-categories.
Also the left triangle commutes on the level of $C^{*}$-categories.
The middle triangle commutes by an argument which is similar to the argument for the upper triangle in \eqref{udheuqi32},
where the marked morphism is induced by the inclusion $\cC^{*}(X,\tilde H,\tilde \phi)\to \tilde \bD$ of $C^{*}$-categories.

Now by Lemma \ref{irfjioweoi32424}, since $K_{*}(I)(y)=0$, we can choose $H_{1}$ so large such that  $K_{*}(\tilde I)(y)=0$. This implies $x=K_{*}(i)(y)=0$.
\end{proof}

   By definition $\KX
(X) \simeq \Kcat(\bC^{*}(X))$ and so the composition  of an inverse $\Kcat(I)^{-1}$ with $\Kcat(F)$ provides the asserted equivalence
$$\kappa_{(X,H,\phi)}:\Kast(\cC^{*}(X,H,\phi))\to \KX(X)\, .$$
We now show    the second assertion of Theorem~\ref{fwefiwjfeiooi234234324434e}. 
  We have the induced morphism of $C^{*}$-algebras
$$v:\cC^{*}(X^{\prime},H^{\prime},\phi^{\prime})\to \cC^{*}(X,H,\phi)\, , 
\quad v(A):=VAV^{*}\, .$$
The square \eqref{coiehjocwecwecc} 
 is induced from a commuting (in the one-categorical sense) diagram of $C^{*}$-categories 
$$\xymatrix{\cC^{*}(X^{\prime},H^{\prime},\phi^{\prime} )\ar[d]^{ v}&\bC^{*}(X^{\prime},H^{\prime},\phi^{\prime})\ar[l]_-{{I^\prime}}\ar[r]^-{{F^\prime}}\ar[d]&  \bC(X^{\prime})\ar[d]^{f_{*}}\\
\cC^{*}(X,H,\phi)   & \bC^{*}(X,H,\phi)\ar[l]_-{{I}}\ar[r]^-{{F}}&\bC(X)}$$
where all horizontal arrows induce equivalences in $K$-theory, and the middle arrow
is the functor which sends  the object {$(H^{\prime\prime},\phi^{\prime\prime},U^{\prime\prime})$ to $(H^{\prime\prime},\phi^{\prime\prime}\circ f^{*},V\circ U^{\prime\prime})$} and is the identity on morphisms. 
It is then clear that the right square commutes.
One also checks directly that the left square commutes. 

This finishes the proof of  Theorem \ref{fwefiwjfeiooi234234324434e}.
\end{proof}

\subsection{Additivity and coproducts}\label{jnk239s23}

In this section we investigate how coarse $K$-homology interacts with coproducts and free unions. 
We refer to 
Section \ref{jkndsfjh23} for definitions. 
\subsubsection{Additivity}

 \newcommand{\B}{\mathbb{B}}
 \newcommand{\K}{\mathbb{K}}
 
In order to approach additivity  and strong additivity for coarse $K$-homology we need results stating that
the $K$-theory for $C^{*}$-algebras preserves   infinite products at least under certain additional conditions.

Let $(A_i)_{i\in I}$ be a family of   $C^\ast$-algebras.
The  projections $p_{j}:\prod_{i\in I} A_{i}\to A_{j}$ for all $j$ in $I$
give a morphism of   spectra
\begin{equation}\label{vadsvoij3oigferfgfa}
(\Kast(p_{i}))_{i\in I}:\Kast(\prod_{i\in I}A_{i})\to \prod_{i\in I} \Kast(A_{i})\, .
\end{equation}

Then we ask for conditions on the factors ensuring that \eqref{vadsvoij3oigferfgfa} is an equivalence.

\begin{rem}
In order to understand one of the {difficulties} note that
$K$-theory classes are represented by projections or unitaries in matrix algebras. Matrices with coefficients in $\prod_{i\in I}A_{i}$   correspond to families of matrices in the factors of uniformly bounded size. On the other hand,   a class in $\prod_{i\in I} \Kast(A_{i})$ is a  family of $K$-theory classes for the factors. If   the index set $I$ is infinite, the members of this family 
 might  not be representable by a family of matrices of uniformly bounded size, see Remark \ref{erwgopijergwer} and the references give there. 

Therefore conditions which help to avoid matrices are useful. 
$\K$-stability is such a condition. Recall that a $C^{*}$-algebra is called $\K$-stable if $A\cong A\otimes \K$, where $\K:=\K(\ell^{2})$.
But note that an infinite product of $\K$-stable $C^{*}$-algebras might not be $\K$-stable again.
The notion of quasi-$\K$-stability introduced in Definition \ref{wtiogjweogfrefwwerfwreferfsvfvsd}  is better behaved in this respect by Lemma \ref{sojoferfdcadsc}. 
\hB
\end{rem}

Let $A$ be a $C^\ast$-algebra. We let $M_{n}(A)$ denote the $n\times n$-matrix algebra over $A$. Furthermore, 
for a $C^{*}$-algebra $B$ we let $\cM(B)$ denote the multiplier algebra of $B$.

\begin{ddd}[{\cite[Defn.\ 2.7.11]{willett_yu_book}}]\label{wtiogjweogfrefwwerfwreferfsvfvsd}
$A$ is called \emph{quasi-$\K$-stable}\index{quasi-stable} if for every $n$ in $\nat$  there exists an isometry $v$ in $\cM(M_n(A))$  such that $v v^\ast$ is the elementary matrix $e_{11}$.
\end{ddd}

\begin{rem}\label{rem_isometry_v}
Let $v$ be an isometry as in  {Definition \ref{wtiogjweogfrefwwerfwreferfsvfvsd}}.
Because of the relation   $e_{11} v = v v^* v = v$ we know that   $v$ is of the form
\[
v =
\begin{pmatrix}
v_{11} & v_{12} & \cdots & v_{1n}\\
0 & 0 & \cdots & 0\\
\vdots & \vdots & \ddots & \vdots\\
0 & 0 & \cdots & 0
\end{pmatrix}\, ,
\]
where $v_{1i}$ belongs to  $ \cM(A)$ for every $i$  in $\{1,\dots,n\}$. 
Here  we use the compatibility of  multiplier algebras with forming matrices: $\cM(M_n(A)) = M_n(\cM(A))$.
Because  
  $v$ is an isometry, i.e., $v^* v = \id_{\cM(M_n(A))}$, and since   $v v^* =e_{11}$,  we furthermore have the relations
$$v_{1i}^{*}v_{1j}=\delta_{ij}1_{\cM(A)}\, , \quad \sum_{i=1}^{n} v_{{1i}}v_{1i}^{*}=1_{\cM(A)}\, .$$ 
In particular, $v_{1i}$ is an isometry for every  $i$  in $\{1,\dots,n\}$. 

Let 
 $x$ be any element of $M_n(A)$. Then $v x v^*$ is supported only in the upper left corner of $M_n(A)$ and hence may be identified canonically with an element of $A$. This follows from the equality $$e_{11} v x v^*   e_{11}= v v^* v x v^*  v v^*  = v x v^*\, .$$ 
where we 
  use  that $v$ is an isometry, i.e., $v^* v = \id_{\cM(M_n(A))}$, and that   $v v^* =e_{11}$.

 
{Let  $\iota_n\colon A \to M_n(A)$  denote the inclusion as the upper left corner. Let furthermore  
 $\pi_{11}\colon M_n(A) \to A$ be the $\C$-linear projection onto this corner.} 
The $\C$-linear map $A \to A$ given by  $x \mapsto \pi_{11}( v \iota_n(x) v^* )$
is a homomorphism of $C^{*}$-algebras. It  is equal to the conjugation map \begin{equation}
\label{eq_conjugation_v}
x \mapsto v_{11} x v_{11}^*\,,
\end{equation} by the isometry  $v_{11}$  in $\cM(A)$.
%
\hB
\end{rem}

\begin{ex}\label{ex_BH_quasi_stable}
Let $H$ be an infinite-dimensional Hilbert space. The $C^\ast$-algebra $\mathbb{K}(H)$ of all compact  operators on $H$ is quasi-$\K$-stable. In order to see this 
we identify $M_n(\mathbb{K}(H))$ with $\B(H^{\oplus n})$. Since  $H$ is infinite-dimensional we can choose  a unitary $u\colon H^{\oplus n} \to H$. We let   $\iota\colon H \to H^{\oplus n}$ be  the inclusion as the first summand. Then the   isometry $v:=\iota \circ u$ has the required properties.

The same isometries also exhibit $\B(H)$ as a quasi-$\K$-stable $C^{*}$-algebra.
\hB
\end{ex}

\begin{rem}
An elaboration of Example~\ref{ex_BH_quasi_stable} shows that every $\K$-stable $C^\ast$-algebra is quasi-$\K$-stable.

Furthermore, since the $C^\ast$-algebra $\B(H)$ of all bounded linear operators on an infinite-dimensional Hilbert space $H$ is not $\K$-stable, we conclude that the notion of quasi-$\K$-stability is strictly more general than the notion of $\K$-stability.
\hB
\end{rem}

Let $(A_{i})_{i\in I}$ be a family  of $C^{*}$-algebras.
\begin{lem}\label{sojoferfdcadsc}
If $A_{i}$ is quasi-$\K$-stable for every $i$ in $I$, then 
$\prod_{i\in I} A_{i}$ is quasi-$K$-stable.
 \end{lem}
 \begin{proof}
 Let $n$ be  in $\nat$. For every $i$ in $I$ we choose an isometry $v_{i}$ in $\cM(M_{n}(A_{i}))$
 as in the Definition \ref{wtiogjweogfrefwwerfwreferfsvfvsd}. The family $(v_{i})_{i\in I}$ gives rise to an element $v$ in $\cM(M_{n}(\prod_{i\in I} A_{i}))$ satisfying the condition of Definition  \ref{wtiogjweogfrefwwerfwreferfsvfvsd}.
 \end{proof}

 Let $(A_{i})_{i\in I}$ be a family  of $C^{*}$-algebras.

\begin{prop}\label{ejioerigjregregwer9}
If $A_{i}$ is quasi-$\K$-stable for every $i$ in $I$, then
  \begin{equation}\label{vfdvioefjvoijvkdlvfvfvavdscda}
(\Kast(p_{i}))_{i\in I}:\Kast(\prod_{i\in I}A_{i})\to \prod_{i\in I} \Kast(A_{i})\, .
\end{equation}
is an equivalence.
\end{prop}
\begin{proof}
We use the notation  $K_{*}=\pi_{*}\Kast$ for the group-valued $K$-theory functor. 
By Bott periodicity it suffices to show that 
 \begin{equation}\label{vfdvioefjvoijvkdlvfvfvavdscdea}
(K_{*}(p_{i}))_{i\in I}:K_{*}(\prod_{i\in I}A_{i})\to \prod_{i\in I} K_{*}(A_{i})
\end{equation}
is an isomorphism for $*=0,1$.

  In the following we identify a $C^{*}$-algebra $A$ with a subalgebra of $M_{n}(A)$
using the left-upper corner inclusion. 

We let $A^{+}$ denote the unitalization of $A$. We let  further   $e:A^{+}\to \C$ be the canonical projection such that $\ker(e)=A$.
We use the same symbol for the extension of  $e$ to matrix algebras.
For simplicity we will not add a subscript indicating that $e$ is associated with $A$.

In general, an element $x$   in $K_{0}(A)$  can represented by a pair of projections $(p,q)$ in $M_{n}(A^{+})$ for some $n$ in $\nat$
 such that $e(p)=e(q) $. We write $x=[p,q]$. If $A$ is quasi-$\K$-stable and
  $v$ is the isometry as in Definition \ref{wtiogjweogfrefwwerfwreferfsvfvsd}, then using Property  \ref{rgoijwergioregrerfwref}  of $\Kast$ we
 see that  $x=[vpv^{*},vqv^{*}]$.
 Note that $vpv^{*}$ and $vqv^{*}$ are projections in $A^{+}$ such that $e(vpv^{*})=e(vqv^{*}) $. 
 Therefore, if $A$ is quasi-$\K$-stable, then every element in $K_{0}(A)$ can actually  be represented  by a  pair $(p,q)$ of  projections $p,q$ in $A^{+}$ with $e(p)=e(q)$.

Let $p,q$ be projections in $A^{+}$ such that $e(p)=e(q)$.
If $[p,q]=0$, then after increasing $n$ (and filling matrices up by zero) if necessary, there exists a partial isometry $u$ in $M_{n}(A^{+})$ such that $p=uu^{*}$ and $q=u^{*}u$.
 We define the partial isometry $w:=puq$ in $A^{+}$. Then $p=ww^{*}$ and $q=w^{*}w$.

 An element $y$ in $K_{1}(A)$ can be represented by a unitary $u$ in $M_{n}(A^{+})$ for some $n$ in $\nat$ such that $e(u)=1_{M_{n}(\C)}$.   If $A$ is quasi-$\K$-stable and $v$ is the isometry as in Definition \ref{wtiogjweogfrefwwerfwreferfsvfvsd},  then {$u^\prime \coloneqq v(u-1_{M_{n}(A^{+})})v^{*} + 1_{A^+}$} is a unitary in $A^{+}$ (where we use that  the matrix $v(u-1_{M_{n}(A^{+})})v^{*}$ is supported  in the left upper corner and is thus considered as  an element in $A$, see Remark~\ref{rem_isometry_v}) with $e( {u^\prime})=1$. By  Property  \ref{rgoijwergioregrerfwref}  of $\Kast$  we have  $y=[u^\prime]$.
Hence,  if $A$ is quasi-$\K$-stable, then  every element in $K_{1}(A)$ can be represented by a unitary $u$ in $A^{+}$ with $e(u)=1$.

After these preparations  we now turn to the actual proof of Proposition \ref{vfdvioefjvoijvkdlvfvfvavdscda}.

\emph{surjectivity on $K_0$:}

An element  $x=(x_{i})_{i\in I}$ of $\prod_{i \in I} K_0(A_i)$ may be represented by a family  of pairs $((p_{i} ,q_i))_{i\in I}$ of projections $p_i,q_i $ in $A_i^{+}$ such that $e(p_{i})=e(q_{i}) $ for every $i$ in $I$. Since $e(p_{i})$ is a projection we have $e(p_{i})\in \{0,1\}$.
   For $k$ in $\{0,1\}$ we define the subsets
$I_{k}:=\{i\in I\:|\:   e(p_{i})=k\}$ of $I$. Then $(I_{0},I_{1})$ is a partition of $I$.  For $k$ in $\{0,1\}$ we form the familes $(p_{k,i})_{i\in I}$ and $(q_{k,i})_{i\in I}$ 
$$p_{k,i}:=\left\{\begin{array}{cc} p_{i}-k1_{A^{+}}&i\in I_{k}\\0&i\not\in I_{k}
\end{array}\right.\ , \quad q_{k,i}:=\left\{\begin{array}{cc} q_{i}-k1_{A^{+}}&i\in I_{k}\\0&i\not\in I_{k}
\end{array}\right. $$
Note that $p_{k,i},q_{k,i}\in A_{i}$ for all $i$ in $I$.
 Then we define  the elements 
 $$P_{k}:=(p_{k,i})_{i\in I}+ k1_{(\prod_{i\in I} A_{i})^{+}}\ , \quad Q_{k}:= (q_{k,i})_{i\in I}+ k1_{(\prod_{i\in I} A_{i})^{+}}$$
in $(\prod_{i\in I} A_{i})^{+}$. 
One checks that $P_{k}$ and $Q_{k}$ are projections, and that $e(P_{k})=e(Q_{k})$.
Hence we get classes $[P_{k},Q_{k}]$  in $K_{0}( \prod_{i\in I}A_{i})$ for $k$ in $\{0,1\}$.
We claim that $y:=[P_{0},Q_{0}]+[P_{1},Q_{1}]$ in $K_{0}( \prod_{i\in I}A_{i})$  is a preimage of $x$.
Indeed, the projection of $y$ to the $i$'th factor is  given by 
$$[p_{0,i},q_{0,1}]+[p_{1,i}+1_{A_{i}^{+}} ,q_{1,i} +1_{A_{i}^{+}}]\, .$$ If $i\in I_{0}$, then this is equal to 
$$[p_{i},q_{i}]+[1_{A_{i}^{+}},1_{A_{i}^{+}}]=[p_{i},q_{i}]=x_{i}\, .$$ If $i\in I_{1}$, then this class is equal to
$$[0,0]+[p_{i},q_{i}]=[p_{i},q_{i}]=x_{i}$$ again. 
 
\emph{injectivity on $K_0$:}

We consider a class  $[P,Q] $ in $K_{0}(\prod_{i\in I}A_{i})$, where $P,Q$ are projections in   $(\prod_{i\in I}A_{i})^{+}$ such that $e(P)=e(Q) $. Let $k:=e(P)$.  Then we have
$$P=(\tilde p_{i})_{i\in I}+k1_{(\prod_{i\in I }A_{i})^{+}}\qquad \mbox{and}\qquad  Q=(\tilde q_{i})_{i\in I}+k1_{(\prod_{i\in I }A_{i})^{+}}$$  such that $ \tilde p_{i},\tilde q_{i}$ belong to $A_{i}$ and
$p_{i}:=\tilde p_{i}+k1_{A_{i}^{+}}$ and $q_{i}:=\tilde q_{i}+k1_{A_{i}^{+}}$ are projections for every $i$ in $I$. 

Assume that $[P,Q]$ is sent to zero. Then for every $i$ in $I$
 there exists a partial isometry
$u_{i}$ in $A_{i}^{+}$ such that $u_{i}^{*}u_{i}=p_{i}$ and $u_{i}u_{i}^{*}=q_{i}$. If $k=0$, then $e(u_{i})=0$ for all $i$ in $I$. If $k=1$, then 
we can replace $u_{i}$ by  $e(u_{i})^{-1}u_{i}$. 
After this modification we  have  $e(u_{i})=k 1_{A_{i}^{+}}$ for all $i$ in $I$.
Then $U:=(u_{i}-k1_{A_{i}^{+}})+k1_{(\prod_{i\in I }A_{i})^{+}}$ is a partial isometry in
$(\prod_{i\in I }A_{i})^{+}$ such that $U^{*}U=P$ and $UU^{*}=Q$. We conclude that $[P,Q]=0$.

\emph{surjectivity on $K_1$:}

We consider a class  in $\prod_{i \in I} K_{1}(A_i)$ which may be  represented by a family $(u_{i})_{i\in I}$ of unitaries $u_{i}$ in  $A_{i}^{+}$ such that $e(u_{i})=1_{A_{i}^{+}}$. Then $$U:=(u_{i}-1_{A_{i}^{+}})_{i\in I}+1_{(\prod_{i\in I} A_{i})^{+}}$$ is a unitary
in $(\prod_{i\in I}A_{i})^{+}$ with $e(U)=1$. It represents the desired preimage of our class in $K_{1}(\prod_{i\in I}A_{i})$.

\emph{injectivity on $K_1$:}

This is different from the above cases since the relation defining $K_1$ is not algebraic. A unitary represents the trivial element in $K$-theory if it can be connected by a path with the identity. The problem is that the product of an infinite family of paths is not necessarily continuous. This is true only if the family is equi-continuous.

We will solve the problem by showing that by passing to large matrix algebras one can improve the Lipschitz constants of homotopies. This argument is inspired by \cite[Proof of Prop.~12.6.3]{willett_yu_book}.

Let $U$ be a unitary in $(\prod_{i\in I}A_{i})^{+}$ with $e(U)=1$. Then $$U= (u_{i}-1_{A_{i}^{+}})_{i\in I}+ 1_{(\prod_{i\in I}A_{i})^{+}}$$ for unitaries $u_{i}$ in  $A_{i}^{+}$ with $e(u_{i})=1_{A_{i}^{+}}$. Assume that the image of $U$ in $\prod_{i \in I} K_{1}(A_i)$ vanishes. Then  for each $i$ in $I$ there is an $n_i$ in $\IN$ and a homotopy $\tilde{u}_{i,t}$  parametrized by  $[0,1]$ in the unitary group of $M_{n_i}(A_i)^+$ from $\tilde{u}_{i,0} \coloneqq \diag(u_i,1_{A_{i}^{+}},\ldots,1_{A_{i}^{+}})$ to $\tilde{u}_{i,1} = 1_{M_{n_i}(A_{i})^{+}}$ satisfying $e(\tilde{u}_{i,t}) = 1_{M_{n_i}(\C)}$ for all $t$ in $[0,1]$.

We can replace the  homotopies $\tilde{u}_{i,t}$ by Lipschitz paths.
 The argument is as follows. For  any unital $C^*$-algebra $B$ the group $\mathrm{Gl}_n(B)$ of invertible martices with entries in $B$ is an open subgroup of $M_n(B)$. If we have a continuous path   parametrized by $[0,1]$ in  $\mathrm{Gl}_n(B)$, then we get a Lipschitz continuous path  in $\mathrm{Gl}_n(B)$ between the same endpoints    by subdividing the interval $[0,1]$ sufficiently fine   and {using} linear interpolation.  The map $(z,s) \mapsto (1-s)z + s(z z^*)^{-1/2} z$ defines a deformation retraction of $\mathrm{Gl}_n(B)$ onto $U_n(B)$ with the property that it maps Lipschitz continuous path in $\mathrm{Gl}_n(B)$ to Lipschitz continuous path in $U_n(B)$. Therefore, if we have a continuous path in $U_n(B)$, then there exists also a Lipschitz continuous path  in $U_n(B)$ between the same endpoints. Finally, if $A$ is a non-unital $C^*$-algebra and we have a continuous path $u_t$ in $U_n(A^+)$ with $e(u_t) = 1_{M_n(A^+)}$, then the construction above gives a Lipschitz continuous path $u^\prime_t$ in $U_n(A^+)$ from $u_0$ to $u_1$ with $e(u^\prime_t)=1_{M_n(A^+)}$.

For $i$ in $I$ and a natural number $N_i$ we write $\diag(\tilde{u}_{i,0},1,\ldots,1)$ as the product
\begin{equation}\label{erfoijowefwerfweqrfwefwerfwefrwerf}
\begin{pmatrix}
\tilde{u}_{i,0} & 0 & \cdots & 0\\
0 & \tilde{u}_{i,{1/N_i}} & \cdots & 0\\
\vdots & \vdots & \ddots & \vdots\\
0 & 0 & \ldots & \tilde{u}_{i,1}
\end{pmatrix}
\cdot
\begin{pmatrix}
1_{M_{n_i}(A_{i})^{+}} & 0 & \cdots & 0\\
0 & (\tilde{u}_{i,{1/N_i}})^* & \cdots & 0\\
\vdots & \vdots & \ddots & \vdots\\
0 & 0 & \ldots & (\tilde{u}_{i,1})^*
\end{pmatrix}
 \end{equation}
We  now define  a homotopy $\hat{u}_{i,t}$ from $\diag(\tilde{u}_{i,0},1,\ldots,1)$ to $1_{M_{N_i n_i}(A_{i})^{+}}$ inside the unitary group of $M_{N_i n_i}(A_i)^+$ with $e(\hat{u}_{i,t}) = 1_{M_{N_i n_i}(A_i)^+}$.   
The first part of this homotopy is parametrized by the interval $[0,1/2]$ and given by 
 the paths  from $(\tilde{u}_{i,{k/N_{i}}})^*$ to $(\tilde{u}_{i,{(k-1)/N_{i}}})^*$ of the matrix entries  of the second factor for each $k$ in $\{1,\dots,N_{i}\}$.  For the second part, 
 we let   $r_{i,t}$  denote the rotation homotopy between
\begin{align*}
\mathclap{
\begin{pmatrix}
1_{M_{n_i}(A_{i})^{+}} & 0 & 0 & \cdots & 0\\
0 & (\tilde{u}_{i,{0/N_{i}}})^* & 0 & \cdots & 0\\
0 & 0 & (\tilde{u}_{i,{1/N_{i}}})^* & \cdots & 0\\
\vdots & \vdots & \vdots & \ddots & \vdots\\
0 & 0 & 0 & \ldots & (\tilde{u}_{i,{(N_{i}-1)/N_{i}}})^*
\end{pmatrix}}
\end{align*}and
\begin{align*}
\mathclap{ 
 \begin{pmatrix}
(\tilde{u}_{i,{0/N_{i}}})^* & 0 & \cdots & 0 & 0\\
0 & (\tilde{u}_{i,{1/N_{i}}})^* & \cdots & 0 & 0\\
\vdots & \vdots & \ddots & \vdots & \vdots\\
0 & 0 & \cdots & (\tilde{u}_{i,{(N_{i}-1)/N_{i}}})^* & 0\\
0 & 0 & \ldots & 0 & 1_{M_{n_i}(A_{i})^{+}}
\end{pmatrix}}
\end{align*}
parametrized by $[1/2,1]$.
The second part of the homotopy consists of applying the homotopy 
$e(r_{i,t})^* r_{i,t}$ to the second factor of the product in \eqref{erfoijowefwerfweqrfwefwerfwefrwerf}.
We can arrange that the Lipschitz constant   $e(r_{i,t})^* r_{i,t}$ is bounded by $8\pi$ independently of 
$i$.
 
If the Lipschitz constant of $\tilde{u}_{i,t}$ was bounded from above by $L_i$, then the Lipschitz constant of $\hat{u}_{i,t}$ is bounded from above by $\max\{2 L_i / N_{i}, 8\pi\}$.

We choose for each $i$ in $I$ a number $N_i$ such that the resulting family of homotopies $(\hat{u}_{i,t})_{i \in I}$ has a uniform upper bound on their Lipschitz constants. For $i$ in $I$ we let $v_i$ in $\cM(M_{N_i n_i}(A))$ be the isometry from Definition~\ref{wtiogjweogfrefwwerfwreferfsvfvsd}. Then $\bar{u}_{i,t}\coloneqq v_i (\hat{u}_{i,t} - 1_{M_{N_i n_i}(A_{i})^{+}}) v_i^* + 1_{A_{i}^+}$ defines a homotopy in the unitary group of $A_i^+$ connecting $v_{i,11} (u_i-1_{A_i^+}) v_{i,11}^* + 1_{A_i^+}$ to $1_{A_{i}^{+}}$. Here we use \eqref{eq_conjugation_v} and $v_{i,11}$ denotes the upper left corner of $v_i$. The   family $(\bar{u}_{i,t})_{i \in I}$ of Lipschitz paths has a uniform upper bound on their Lipschitz constants. Then $$\bar{U}_t \coloneqq (  \bar{u}_{i,t}-1_{A_{i}^{+}})_{i\in I}+1_{(\prod_{i \in I} A_i)^+}$$ is a homotopy in the unitary group of $(\prod_{i \in I} A_i)^+$ connecting $$(  v_{i,11})_{i\in I} (U-1_{(\prod_{i \in I} A_i)^+}) (  v_{i,11})^{*}_{i\in I} + 1_{(\prod_{i \in I} A_i)^+}$$ to the identity $1_{(\prod_{i \in I} A_i)^+} $. Since $ (  v_{i,11})_{i\in I}$ is an isometry in the $C^*$-algebra $\prod_{i \in I} \cM(A_i)$ which acts via multipliers on $\prod_{i \in I} A_i$
we have the first equality in
\[[U]= [(  v_{i,11})_{i\in I} (U-1_{(\prod_{i \in I} A_i)^+}) (  v_{i,11})^{*}_{i\in I} + 1_{(\prod_{i \in I} A_i)^+}]=0\, .\qedhere\]
\end{proof}

\begin{rem}
In \cite[Thm.\ 2.7.12]{willett_yu_book} it is claimed that \eqref{vfdvioefjvoijvkdlvfvfvavdscdea} is an isomorphism for $*\in \{0,1\}$. 
The argument for surjectivity in the case $*=0$ is sketched in the reference, while rest of the argument  was  left to the reader as a straightforward exercise.  
 So Proposition \ref{ejioerigjregregwer9} {is a solution to} this exercise in  \cite{willett_yu_book}, {in which}    the argument for the injectivity on $K_{1}$  turned out to be more complicated than expected.
%
%
\hB
\end{rem}

For later use let us record the following special case  where $A_{i}=\K$ for every $i$ in $I$.
Let $I$ be any set.

\begin{kor}\label{rigwoergevsfvsfdvfdsv}
The map $(\Kast(p_{i}))_{i\in I}\colon \Kast(\prod_{i\in I}\K)\to \prod_{i\in I} \Kast(\K)$ is an equivalence.
\end{kor}
\begin{proof}
We note that $\K$ is  quasi-$\K$-stable, see Example \ref{ex_BH_quasi_stable}.
\end{proof}

\begin{rem}\label{erwgopijergwer} If one drops the assumption of quasi-$\K$-stability on the factors, then  in general the morphism \eqref{vadsvoij3oigferfgfa} is not an equivalence.
  An explicit   counter-example is the case where $I \coloneqq \IN$ and $A_i \coloneqq \IC$ for each $i$  in $I$ (see \cite[Ex.\ 2.11.15]{willett_yu_book} or \cite[Ex.\ 7.7.3]{higson_roe}).
\hB
\end{rem}

Let $X$ be a bornological coarse space, and let $(H,\phi)$ be an ample $X$-controlled Hilbert space.

\begin{lem}\label{lem_RoeAlg_quasistable}
The Roe algebra $\cC^\ast(X,H,\phi)$ is quasi-$\K$-stable.
\end{lem}
\begin{proof}
By   Lemma~\ref{lem:drf8934} we can assume that $(H,\phi)$ is of the form $(H' \otimes \ell^2, \phi' \otimes \id_{\ell^2})$ 
for some   $X$-controlled Hilbert space $(H',\phi')$.  Then we can identify
$M_{n}(\cC^{*}(X,H' \otimes \ell^2, \phi' \otimes \id_{\ell^2}))$ with 
$\cC^{*}(X,H' \otimes (\ell^2\otimes \C^{n}), \phi' \otimes \id_{\ell^2\otimes \C^{n}})$.
We choose a unitary $u:\ell^{2}\otimes \C^{n}\to \ell^{2}$ and let $\iota:\ell^{2}\to \ell^{2}\otimes \C^{n}$ be the embedding
induced by the first basis vector of $\C^{n}$. Then $v:=\id_{H_{D}}\otimes (\iota\circ u)$
is an isometry which acts as a multiplier on $M_{n}(\cC^{*}(X,H' \otimes \ell^2, \phi' \otimes \id_{\ell^2}))$ 
and satisfies $vv^{*}=e_{1,1}$ as required.
%
%
%
\end{proof}

Recall Proposition \ref{fkwjlkwejlewfewfewfewf3242344} which states that a bornological coarse space  $X$ admits an ample $X$-controlled Hilbert space if and only if it is locally countable.
 Let $(X_i)_{i\in I}$ be a family of locally countable bornological coarse spaces. 
Recall the Definition \ref{foifewieof89u8924r443535} of a free union of such a family. Note that 
  $\bigsqcup_{i\in I}^{\free} X_{i}$ is  again  locally countable and therefore admits an ample $\bigsqcup_{i\in I}^{\free} X_{i}$-controlled Hilbert space $(H,\phi)$. Then 
  for every $i$  in $I$ the $X_i$-controlled Hilbert space $(H(X_i),\phi_{X_i})$ is also ample,
where $H(X_{i})$ is the image of the projection $\phi(X_{i}):=\phi(\chi_{X_{i}})$, and $\phi_{X_{i}}$ is the control obtained from $\phi$ by restriction (see the proof of Proposition \ref{ioejfiowefjweoifewuf8u3294823442} for a similar construction).   
\begin{prop}\label{werijgowregferfwerfwrf}
We have an isomorphism of $C^{*}$-algebras  \begin{equation}
\label{sdfjwef23rfew}
\cC^\ast \Big( \bigsqcup_{i\in I}^{\free} X_{i},H,\phi \Big) \cong \prod_{i \in I} \cC^\ast(X_i,H(X_i),\phi_{X_i})\, .
\end{equation}
\end{prop}
\begin{proof}
Let $e_{i}:H(X_{i})\to H$ denote the inclusion. 
We define a homomorphism 
$$\cC^\ast \Big( \bigsqcup_{i\in I}^{\free} X_{i},H,\phi \Big)\to \prod_{i \in I} \cC^\ast(X_i,H(X_i),\phi_{X_i})$$
such that it sends an operator $A$ in $ \cC^\ast \Big( \bigsqcup_{i\in I}^{\free} X_{i},H,\phi \Big)$ to the family
$(e_{i}^{*}A e_{i})_{i\in I}$.  Since $A$ is bounded, this family is uniformly bounded.
Since the components $(X_{i})_{i\in I}$ are mutually coarsely disjoint in $\bigsqcup_{i\in I}^{\free} X_{i}$ we have 
 $ e_{i}^{*}A e_{i'}=0$ if $i,i'$ are in $I$ with $i\not=i'$, and $e_{i}e_{i}^{*}A e_{i}=Ae_{i}$ for every $i$ in $I$. This implies that 
 $ A\mapsto  (e_{i}^{*}A e_{i})_{i\in I}$      is compatible with the product. 
 It is also immediate from the formula that this  map is compatible with the involution.
   If $A$ is locally finite and 
 has controlled propagation, then the operators $e_{i}^{*}A e_{i}$  on $(H(X_{i}),\phi_{X_{i}})$ are  locally finite and have controlled propagation for all $i$ in $I$. 
 This implies that the map $ A\mapsto  (e_{i}^{*}A e_{i})_{i\in I}$   has values in the product of Roe algebra. 
 
  The inverse sends a family 
 $(A_{i})_{i\in I}$ in $\prod_{i \in I} \cC^\ast(X_i,H(X_i),\phi_{X_i})$ to
 $\sum_{i\in I} e_{i}A_{i} e_{i}^{*}$, where the sum converges strongly.
 Since the family $(A_{i})_{i\in I}$ is uniformly bounded and the family of inclusions $(e_{i})_{i\in I}$ is mutually orthogonal the sum defines  a bounded operator $A$, and the described map $(A_{i})_{i\in I}\mapsto A$ is compatible with the composition and the involution.
 Again, if $A_{i}$ has controlled propagation and is locally finite for every $i$, then $A$ has controlled propagation and is locally finite. 
 So the inverse also has values in the Roe algebra.
 \end{proof}
 
 We will need the following special case of Proposition \ref{werijgowregferfwerfwrf}.
 Let $I$ be a set, and let 
  $(H,\phi)$ be an ample Hilbert space on  $\bigsqcup_{i\in I}^{\free} *$.

 \begin{kor}\label{eriughiwerorferwfwf}
 We have an isomorphism $\cC^{*}\Big( \bigsqcup_{i\in I}^{\free} *,H,\phi \Big)\cong \prod_{i\in I} \K$.
 \end{kor}
\begin{proof}
We let $*_{i}$ denote the point in the component with index $i$.
We note that $H(*_{i})\cong \ell^{2}$  for every $i$ in $I$, and that \begin{equation}\label{aefvkjovvsfvsf}
\cC^{*}(*_{i},H(*_{i} ),\phi_{*_{i}})\cong \K\, .
\end{equation}
\end{proof}

\begin{kor}\label{cor_additivitiy_KX_KXql}
The functors $\KX$ and $\KXql$ are additive.
\end{kor}
\begin{proof}
Since $\KX\to \KXql$ is an  equivalence on discrete bornological coarse spaces by Lemma \ref{iefjweoifoeiwfji23432423432} it suffices to consider the case of $\KX$. 
We choose an ample Hilbert space $(H,\phi)$ on $\bigsqcup_{i\in I}^{\free} *$.
Thhe chain of equivalences (which is a  factorization of the map to be considered)
\begin{eqnarray*}
\KX\Big(\bigsqcup_{i\in I}^{\free} *\Big)&\stackrel{\text{Thm.}~\ref{fwefiwjfeiooi234234324434e}}{\simeq}&
 \Kast\big( \cC^{*}\Big( \bigsqcup_{i\in I}^{\free} *,H,\phi \Big)\big)\\&\stackrel{\text{Cor.}~\ref{eriughiwerorferwfwf}}{\simeq}&
\Kast(\prod_{i\in I} \K)\\&\stackrel{\text{Cor.}~\ref{rigwoergevsfvsfdvfdsv}}{\simeq}&
 \prod_{i\in I} \Kast(\K)\\
 &\stackrel{\eqref{aefvkjovvsfvsf}}{\simeq}&
 \prod_{i\in I} \Kast( \cC^{*}(*_{i},H(*_{i} ),\phi_{*_{i}} ))\\
&\stackrel{\text{Thm.}~\ref{fwefiwjfeiooi234234324434e}}{\simeq}&
 \prod_{i\in I}\KX(*_{i})
\end{eqnarray*}
proves the claim.
\end{proof}

The following is a step towards strong additivity.
Let $(X_i)_{i\in I}$ be a family  of  bornological coarse spaces. 
\begin{kor} If $X_{i}$ is locally countable for every $i$ in $I$, then 
the canonical  morphism  \begin{equation}\label{jfbrefuihdfuivfwdfewfewfqwefewfq}
 \KX\Big(\bigsqcup_{i\in I}^{\free} X_{i}\Big)\to \prod_{i\in I}  \KX(X_{i})
\end{equation}
is an equivalence.  
\end{kor}
\begin{proof}
This follows from Proposition \ref{werijgowregferfwerfwrf}, Theorem \ref{fwefiwjfeiooi234234324434e}, Proposition \ref{ejioerigjregregwer9}, and Lemma \ref{lem_RoeAlg_quasistable}.
\end{proof}

\begin{rem}
In   \cite{Bunke:ad} we will show that the coarse $K$-homology functor $\KX$ is actually strongly additive. Therefore the homomorphism \eqref{jfbrefuihdfuivfwdfewfewfqwefewfq} is an isomorphism for every family $(X_{i})_{i\in I}$
of bornological coarse spaces.  The proof does not use Roe algebras but rather 
works directly with the Roe categories and uses the fact   shown in \cite{cank} that the $K$-theory functor 
for $C^{*}$-categories preserves products of additive $C^{*}$-categories. \hB
\end{rem}


\begin{ex}
In this example we show that for an infinite set $I$ the product $\prod_{i\in I} \K$ is not $\K$-stable.
In view of Corollary \ref{eriughiwerorferwfwf} this implies that 
the Roe algebra $\cC^\ast(X,H,\phi)$  associated to a  bornological coarse space $X$ and an ample $X$-controlled Hilbert space $(H,\phi)$ is in general not $\K$-stable.

%
%
%
%

Let $I$ be an infinite set. In order to show that  $\prod_{i\in I} \K$ is not $\K$-stable we use the following property of $\K$-stable $C^{*}$-algebras.

 If   $A$ is $\K$-stable, then there is a sequence $\{E_n\}_{n \in \IN}$ of mutually orthogonal, mutually equivalent projections in the multiplier algebra $\cM(A)$ of $A$ such that $\sum_{n=0}^\infty E_n = 1$, where the sum converges   in the strict topology.\footnote{If $A$ is $\sigma$-unital, then the converse also holds \cite[Thm.~2.2]{rordam_stable}.}

We now show that 
  $\prod_{i\in I}\K$ does not admit such a sequence.   Assume that $\{E_n\}_{n \in \IN}$ is such a family of projections.
  Then $E_{n}=(E_{i,n})_{i\in I}$ is a family of projections in $\K$ for every $n$ in $\nat$.
 Let $ \kappa:\nat \to I$ be an injective map. It exists since we assume that $I$ is infinite. 
 For every $n$ in $\nat$ we choose a non-zero projection $P_{n}$ in $\K$ which is orthogonal to  $E_{\kappa(n),m}$ for all $m$ in $\nat$ with $m\le n$.  We now define a  projection  $Q:=(Q_{i})_{i\in I}$ in $\prod_{i\in I}\K$,  where $Q_{i}:=P_{n}$ if $i=\kappa(n)$ for some (uniquely determined) $n$ in $\nat$, and $Q_{i}:=0$ else.     
 We have $E_{m}Q=(E_{i,m}Q_{i})_{i\in I}$. If $i=\kappa(n)$ for $n$ in $\nat$, then $E_{i,m}Q_{i}=0$ for all $m$ in $\nat$ with $m\le n$. Hence $\|\sum_{m=0}^{n} E_{m}Q-Q\|=1$ for all $n$ in $\nat$.
 This is a contradiction to the condition $\sum_{n=0}^\infty E_n = 1$ in the strict topology.
 \hB
\end{ex}

\subsubsection{Coproducts}

The goal of this subsection is to show that the functors $\KX$ and $\KXql$ preserve coproducts. 
We start with an explicit description of the coproduct $\bC:=\coprod_{i\in I}\bC_{i}$ of a family  $(\bC_i)_{i \in I}$ 
 of $C^*$-categories. 
 The set of objects  of $\bC$ is  given by $\bigsqcup_{i\in I} \Ob(\bC_{i})$. We write $(c,i)$ for the object given by $c$ in $C_{i}$.  The morphisms of $\bC$   are given by
$$\Hom_{\bC}((c,i),(c^{\prime},i^{\prime})):=\left\{\begin{array}{cc}\Hom_{\bC_{i}}(c,c^{\prime}) &  i=i^{\prime}\\0&i\not=i^{\prime}\end{array}\right.\, .$$
The composition and the involution are given in the obvious way.


By an inspection of the definition of $A$ (see e.g. \eqref{efefwefeefefefewfef}) we observe that  have a canonical isomorphism
\begin{equation}
\label{nwef7823}
\bigoplus_{i \in I} A(\bC_i)\cong 
 A(\bC)    \end{equation}
of $C^{*}$-algebras.

 By Definition \ref{wegjioegergewfewrfewef},  Proposition \ref{prop:dfs78934} and since $K$-theory of $C^{*}$-algebras sends direct sums to coproducts (Property \ref{rgpojweogweggffrw} of $\Kast$)
 we have a canonical equivalence \begin{equation}\label{ergvoiu34oit3r4gregeg}
\bigoplus_{i\in I} \Kcat(\bC_{i})\simeq \Kcat(\bC)\, .
\end{equation}
  
  \begin{rem}
Since $A^{f}$ is a left adjoint it preserves colimits. The coproduct in $C^{*}$-algebras is the free product. Hence $A^{f}(\bC)$ is the free product of the $C^{*}$-algebras $A^{f}(\bC_{i})$ for $i$ in $I$. Since $\Kast$ sends free products to sums this would also lead to the equivalence \eqref{ergvoiu34oit3r4gregeg} avoiding the use of Proposition \ref{prop:dfs78934}.\hB
\end{rem}
  
\begin{prop}\label{wetiugwergwrfrefrwef9}
The functors $\KX$ and $\KXql$ preserve coproducts.
\end{prop}

\begin{proof}
We will only discuss the case of $\KX$ since the quasi-local case $\KXql$ is analogous.

Let $(X_i)_{i \in I}$ be a family of bornological coarse spaces and set $X := \coprod_{i \in I} X_i$. 
We must show that the canonical morphism
$$\bigoplus_{i\in I} \KX(X_{i})\to \KX(X)$$
is an equivalence.
This morphism has the following factorization
\begin{eqnarray*}
\bigoplus_{i\in I} \KX(X_{i})&\simeq&\bigoplus_{i\in I} \Kcat(\bC^{*}(X_{i}))\\
&\stackrel{\eqref{ergvoiu34oit3r4gregeg}}{\simeq} & \Kcat(\coprod_{i\in I}\bC^{*}(X_{i}))\\&\stackrel{(1)}{\simeq}&
 \Kcat((\coprod_{i\in I}\bC^{*}(X_{i})_{\oplus})\\&\stackrel{(2)}{\simeq}&
  \Kcat( \bC^{*}(X))\\&\simeq&
  \KX(X)
\end{eqnarray*}
This first and the last equivalence follow from the definition of $\KX$.
In order to finish the proof we must explain the equivalences marked by $(1)$ and $(2)$.

From the universal property of the coproduct of $C^{*}$-categories we get a functor
\begin{equation}\label{frefoij4o23t34g43g34g3}
\coprod_{i\in I} \bC^{*}(X_{i})\to \bC^{*}(X)
\end{equation}
which sends for every $i$ in $I$ the $X_{i}$-controlled Hilbert space $(H,\phi)$ to the
$X$-controlled Hilbert space $\iota_{i,*}(H, \phi )$, where $\iota_{i}:X_{i}\to X$ is the inclusion.
This functor induces the composition $(2)\circ (1)$.
It is not yet essentially surjective since its image contains only those $X$-controlled Hilbert spaces which are
supported on a single component $X_{i}$. A general $X$-controlled Hilbert space $(H,\phi)$ is locally finite, i.e., $\supp(H,\phi)$ is a locally finite subset of $X$. Note that a subset $B$ of $X$ is bounded if and only if $B\cap X_{i}$ is bounded for every $i$ in $I$.  This implies that 
 the  set  $\{i\in  I\::\: H(X_{i})\not\cong 0\}  $ is finite. In other words, $(H,\phi)$ is supported on finitely many of the components $X_{i}$. 

In order to obtain these objects as well we extend 
the functor  \eqref{frefoij4o23t34g43g34g3}  to the  additive completion of the coproduct  (we use the model introduced by Davis--L{\"u}ck  \cite[Sec.~2]{davis_lueck})
\begin{equation}\label{5z46745z45z45z5}
\Big(\coprod_{i\in I} \bC^{*}(X_{i})\Big)_{\!\oplus}\to \bC^{*}(X)\, .
\end{equation}
This extension exists since 
 $\bC^{*}(X)$ admits finite  sums.
This extension \eqref{5z46745z45z45z5}  sends the finite family
$((H_{1},\phi_{1}),\dots,(H_{n},\phi_{n}))$ in $(\coprod_{i\in I} \bC^{*}(X_{i}))_{\oplus}$ to the sum
$(\bigoplus_{k=1}^{n}H_{k},\oplus_{k=1}^{n}\phi_{k})$ in $\bC^{*}(X)$.
 
In order to define an inverse functor 
we choose an ordering on the index set $I$.
Then we can define a functor
\begin{equation}\label{5z46745z45z45z51}
\bC^{*}(X)\to \Big(\coprod_{i\in I} \bC^{*}(X_{i})\Big)_{\!\oplus}
\end{equation} which sends $(H,\phi)$ to the tuple
$(H(X_{i}),\phi_{X_{i}})^{\prime}$. Here the index ${}^{\prime}$ indicates that  the tuple is obtained from the infinite
family $(H(X_{i}),\phi_{X_{i}})_{i\in I}$  by deleting all zero spaces and listing the remaining (finitely many members) in the order determined by the order on $I$. On morphisms these functors are defined in the obvious way.
One now checks that \eqref{5z46745z45z45z5} and \eqref{5z46745z45z45z51} are inverse to each other equivalences of categories. Indeed, both are fully faithful and essentially surjective.

The  equivalence of $C^{*}$-categories \eqref{5z46745z45z45z5} induces the functor $(2)$ which is then 
an equivalence by Corollary \ref{ojewjfowfewfewfewf1}.
 
 The functor $(1)$ is induced by the canonical inclusion $$\coprod_{i\in I} \bC^{*}(X_{i}) \to  \Big(\coprod_{i\in I} \bC^{*}(X_{i})\Big)_{\!\oplus}\, .$$
 
 Now in general, for a $C^{*}$-category $\bC$
 we have an isomorphism
$A(\bC_{\oplus})\simeq A(\bC)\otimes \mathbb{K}$, and that the inclusion
$\bC\to \bC_{\oplus}$ induces the homomorphism
$$A(\bC)\to A(\bC_{\oplus})\cong A(\bC)\otimes \mathbb{K}$$
given by the  left-upper corner inclusion.  In particular, it induces an equivalence in $K$-theory by the stability Property \ref{wegoijweroigjergwerg} of $\Kast$. 
Using Proposition \ref{prop:dfs78934} we see that \begin{equation}\label{dwcqwoicjodicdcadc}
\Kcat(\bC) \to \Kcat(\bC_{\oplus})
\end{equation}
 is an equivalence. 
In particular, the morphism  $(1)$ is an equivalence.
%
%
%
\end{proof}

\begin{rem}
The functors $\KX$ and $\KXql$ have the stronger property called continuity discussed in \cite[Sec.~5]{equicoarse}, see Remark \ref{wethgrtgffsvfdvsvsdfv}.
In order to verify this property one uses that the objects of $\bC^{*}(X)$ and $\bC_{\ql}^{*}(X)$ are supported on locally finite subsets of $X$. Then the argument is similar to the proof of  \cite[Prop.~8.17]{equicoarse}.
By \cite[Lem.~5.17]{equicoarse} a continuous coarse homology theory preserves coproducts.  This would give an alternative proof of Proposition \ref{wetiugwergwrfrefrwef9}.
\hB
\end{rem}

\begin{rem}
In \cite{cank} we will show that $\Kcat$ preserves Morita equivalences of $C^{*}$-categories. 
Because $\bC\to \bC_{\oplus}$ is a Morita equivalence, this immediately implies that 
$\Kcat(\bC)\to \Kcat(\bC_{\oplus})$ is an equivalence.
\hB
\end{rem}

%

\subsection{Dirac operators}\label{subsec:drsgd}

The goal of this subsection is to explain how the coarse index class  with support of a Dirac operator defined in  \cite{roe_psc_note}
is captured by the coarse $K$-homology theory $\KX$.  It catches a glimpse of the theory developed in \cite{cw,indexclass}.

Let $(M,g)$ be a complete Riemannian manifold. The Riemannian metric induces a metric on the underlying set of $M$ and therefore (Example \ref{welifjwelife89u32or2})    a coarse and a bornological structure $\cC_g$ and $\cB_g$.  
We thus get a bornological coarse space $M_{g}$ in $\BC$. 
Let $S\to M$ be a Dirac bundle\index{Dirac!bundle} ({Gromov--Lawson} \cite{gromov_lawson_psc}) and  denote the associated Dirac operator\index{Dirac!operator} by 
 $\Dirac$.\index{$\Dirac$|see{Dirac operator}}
 In the following we discuss the index class of the Dirac operator in the $K$-theory of the Roe algebra.

 The 
  Weitzenb{\"o}ck formula   
\begin{equation}\label{fwefjhwef8726478234234}
\Dirac^2 = \Delta + \cR
\end{equation}
   expresses  the square of the Dirac operator in terms of the connection Laplacian $\Delta:=\nabla^\ast\nabla$ on $S$ and a bundle endomorphism  $\cR$  in $C^\infty(M,\End(S))$. \index{$\cR$}

For every $m$ in $M$ the value $\cR(m)$ in $\End(S_{m})$ is selfadjoint.   
For  $c$ in $\IR$  we define the subset
\[M_c := \{m \in M :  \cR(m) \ge  c\}\]
of points in $M$ on which $\cR(m)$ is bounded below by $c$.

\begin{ex}
For example, if $S$ is the spinor bundle associated to some spin structure on $M$, then $$\cR=\frac{s}{4}\id_{S}\, ,$$ where $s$ in $C^{\infty}(M)$ is the scalar curvature function, see Lawson--Michelsohn \cite[Equation (8.18)]{lawson_michelsohn}, Lichnerowicz \cite{MR0156292} or Schr{\"o}dinger \cite{schr}. In this case   $M_{c} $  is the subset of $M$ on which the scalar curvature is bounded below by  $ 4c$. 
\hB \end{ex}

Since $M$ is complete the Dirac operator $\Dirac$ and the connection Laplacian $\Delta $ are essentially selfadjoint unbounded operators on the Hilbert space $H:=L^{2}(M,S)$  defined on the dense  domain $C_{c}^{\infty}(M,S)$. 

The Laplacian $\Delta$ is non-negative. So morally the formula \eqref{fwefjhwef8726478234234} shows that  $\Dirac^{2}$  is lower bounded by $c$ on the submanifold $M_{c}$, and the restriction of  $\Dirac$ to this subset is invertible and therefore has vanishing index.   {Roe} \cite[Thm.~2.4]{roe_psc_note}  constructs a large scale index class of the Dirac operator which takes this positivity into account {(the original construction without taking the positivity into account may be found in \cite{roe_index_coarse}).} Our goal in this section is to interpret the construction of Roe in the language of bornological coarse spaces. 

We fix $c$ in $\R$ with $c>0$.
 \begin{ddd} 
We define the big family $\cY_{c}:=\{M\setminus M_{c}\}$ on $M_{g}$.   \end{ddd}

We use the construction explained in Remark~\ref{rem:sdb7834rw} in order to turn $H:=L^{2}(M,S)$ into an $M$-controlled Hilbert space $(H,\phi)$.

We equip every member $Y$ of $\cY_{c }$ with the induced bornological coarse structure. 
 For every member $Y$ of $\cY_{c }$ the inclusion $(H(Y),\phi_{Y})\hookrightarrow  (H,\phi)$ provides an inclusion of Roe algebras (Definition \ref{qekdfjqodqqwdqwdqwdqdwq})
$\cC_{\lc}^{*}(Y,H(Y),\phi_{Y})\hookrightarrow \cC_{\lc}(M,H,\phi)$.
We define the $C^{*}$-algebra
$$\cC_{\lc}^{*}(\cY_{c},H ,\phi) :=\colim_{Y\in \cY_{c}}\cC_{\lc}^{*}(Y,H(Y),\phi_{Y}) \, .$$
We will interpret $\cC_{\lc}^{*}(\cY_{ c},H ,\phi)$ as a closed subalgebra of $\cC_{\lc}(M,H,\phi)$ obtained by forming
  the closure of the union of subalgebras  $\cC_{\lc}^{*}(Y,H(Y),\phi_{Y})$.
\begin{rem}
The $C^{*}$-algebra $\cC_{\lc}^{*}(\cY_{c},H ,\phi)$ has first been considered by Roe  \cite{roe_psc_note}. In his notation it would have the symbol   
$C^{*}((M\setminus M_{c})\subseteq M)$. 
\hB
\end{rem}
We now recall the main steps  of Roe's construction of the large scale index classes
$\ind(\Dirac,c)$ in ${K_{\ast}}(\cC^\ast_{\lc}(\cY_{ c},H, \phi))$.   
  To this end we consider the  $C^\ast$-algebra $\cD^\ast(M,H,\rho)$.
Its definition uses the notion of pseudo-locality and depends on   the representation by multiplication operators  $\rho:C_{0}(M)\to B(H)$  of the $C^{*}$-algebra of continuous functions on $M$ vanishing at $\infty$.

\begin{ddd}
An operator $A$ in $B(H)$ is pseudo-local,\index{pseudo-local operator}\index{operator!pseudo-local} if  $[A,\rho(f)]$ is a compact operator for every function $f $ in $C_0(M)$.
\end{ddd}
Note that in this definition we can not replace $\rho$ by $\phi$. Pseudo-locality is a topological and not a bornological coarse concept. In order to control propagation and to define local compactness we could use $\rho$ instead of $\phi$, {see Remark~\ref{rem43wer45er}}.

\begin{ddd}\label{wefijwifo24425345345345}
The $C^{*}$-algebra $\cD^\ast(M,H,\rho)$ is\index{$\cD^\ast(-)$}
defined as the closed  subalgebra   of $B(H)$ generated by all   pseudo-local operators  of controlled propagation.
\end{ddd}

\begin{lem}
We have an inclusion 
$$\cC^\ast_{\lc}(\cY_{ c},H ,\phi) \subseteq \cD^\ast(M,H,\rho)$$  as a {closed} two-sided {*-}ideal. 
\end{lem}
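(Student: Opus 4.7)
The plan is to verify three separate properties: (1) $\cC^\ast_{lc}(\cY_{c},H,\phi)$ is a closed $\ast$-subalgebra of $B(H)$; (2) every element lies in $\cD^\ast(M,H,\rho)$; and (3) multiplication (from either side) by an element of $\cD^\ast(M,H,\rho)$ preserves $\cC^\ast_{lc}(\cY_{c},H,\phi)$.

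Property (1) is built into the definition: each $\cC^\ast_{lc}(Y,H(Y),\phi_Y)$ is a closed $\ast$-subalgebra of $B(H)$ via the isometric inclusion $H(Y)\hookrightarrow H$, the family $\cY_c$ is filtered, and we take the norm closure of the union. For (2), it suffices by norm-closedness of the pseudo-local operators (and closedness of $\cD^\ast$) to verify the claim on generators. Let $B$ be a locally compact, controlled-propagation operator supported in $H(Y)$ for some $Y\in\cY_c$, and let $f\in C_0(M)$. Using an entourage $U$ controlling the propagation of $B$ together with the partition $(D_\alpha)$ from Example~\ref{rem:sdb7834rw}, I would approximate $f$ in sup-norm by a step function $g$ constant on $U$-scale pieces that meet a chosen compact set, showing that $\rho(f)-\phi(g^\prime)$ (for a suitable $g^\prime\in C(M)$) has small norm on the range and cosupport of $B$ up to a compactly supported multiplier; combined with the local compactness of $B$ with respect to $\phi$, this makes the commutator $[B,\rho(f)]$ a norm limit of compact operators, hence compact. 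Controlled propagation is manifestly inherited from the generators.

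For (3), I would again reduce to generators: take $A\in \cD^\ast(M,H,\rho)$ pseudo-local of controlled propagation and $B$ a locally compact controlled-propagation operator supported in $H(Y)$ for some $Y\in\cY_c$. The propagation of $AB$ and $BA$ is controlled by the composition of entourages. Since $\cY_c$ is a big family, a controlled thickening of $Y$ is still a member, so $AB$ and $BA$ are supported in $H(Y^\prime)$ for some $Y^\prime\in\cY_c$. Local compactness of $AB$ follows from the standard commutator trick: for a bounded Borel subset $K\subseteq M$ and a continuous compactly supported $f$ with $f\equiv 1$ on $K$, write $\phi(\chi_K)AB = \phi(\chi_K)A\rho(f)B$, expand as $\phi(\chi_K)\bigl(\rho(f)A + [A,\rho(f)]\bigr)B$, and observe that $\rho(f)B$ is compact (local compactness of $B$, bridging $\rho$ and $\phi$ as in (2)) while $[A,\rho(f)]$ is compact by pseudo-locality of $A$; a symmetric argument handles $BA$. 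Taking norm limits on both sides gives the full ideal property.

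The main obstacle will be the bookkeeping in the passage between $\phi$-local compactness (used in the definition of the Roe algebra via step-function multipliers coming from $C(M)$) and $\rho$-local compactness (the natural notion for the commutator argument with continuous functions). Concretely one must show that for an operator $B$ of controlled propagation, compactness of $\phi(\chi_{B_0})B$ for all bounded Borel $B_0$ is equivalent to compactness of $\rho(f)B$ for all $f\in C_c(M)$. This uses that the partition $(D_\alpha)$ from Example~\ref{rem:sdb7834rw} consists of $U$-bounded pieces, so a continuous compactly supported $f$ can be sandwiched between step functions of the form $\phi(\chi_{B_0})$, with the error having compactly supported, controlled-propagation, locally compact remainder. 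Once this reconciliation is in place, the three properties follow from the standard Roe-style manipulations outlined above.
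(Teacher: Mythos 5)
Your overall architecture is the same as the paper's: reduce everything to generators, note that controlled propagation composes and that the big family $\cY_{c}$ absorbs controlled thickenings of a member $Y$, and obtain the compactness statements by inserting cutoffs supported on bounded sets. The one step that does not survive scrutiny is your route to pseudo-locality in part (2). You propose to approximate $f\in C_{0}(M)$ in sup-norm by a step function constant on the pieces $D_{\alpha}$ of the fixed partition from Example~\ref{rem:sdb7834rw}. Those pieces have a fixed positive scale (they are $U$-bounded for the fixed entourage used to build the partition), so a continuous function with nontrivial oscillation at that scale cannot be uniformly approximated by functions constant on them: the quantity $\|\rho(f)-\phi(g^{\prime})\|$ cannot be made small, and the subsequent ``norm limit of compacts'' argument has nothing to start from. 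The repair is much simpler and is in fact already latent in your own ``reconciliation'' remark. For $f\in C_{c}(M)$ the set $B_{0}:=U^{-1}[\supp f]$ is bounded and satisfies $\phi(\chi_{B_{0}})\rho(f)=\rho(f)=\rho(f)\phi(\chi_{B_{0}})$ (every $D_{\alpha}$ meeting $\supp f$ has its base point $d_{\alpha}$ in $B_{0}$), whence
$$[A,\rho(f)]=A\phi(\chi_{B_{0}})\rho(f)-\rho(f)\phi(\chi_{B_{0}})A$$
is compact directly from $\phi$-local compactness of $A$; the case of general $f\in C_{0}(M)$ follows by density of $C_{c}(M)$ and norm-continuity of the commutator. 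This is precisely the paper's argument; no approximation of $f$ and no pseudo-locality of anything is needed at this stage.

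In part (3) there is a smaller slip. The identity $\phi(\chi_{K})AB=\phi(\chi_{K})A\rho(f)B$ needs a word of justification (use the controlled propagation of $A$ to write $\phi(\chi_{K})A=\phi(\chi_{K})A\phi(\chi_{U^{-1}[K]})$ and then absorb $\phi(\chi_{U^{-1}[K]})$ into $\rho(f)$ for $f\equiv 1$ on a sufficiently large compact set), and your subsequent expansion $\phi(\chi_{K})\bigl(\rho(f)A+[A,\rho(f)]\bigr)B$ produces the term $\phi(\chi_{K})\rho(f)AB$, in which $\rho(f)$ sits to the \emph{left} of $A$, so the factor $\rho(f)B$ you want to declare compact never actually appears. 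The expansion is unnecessary anyway: once you have $\phi(\chi_{K})AB=\bigl(\phi(\chi_{K})A\bigr)\cdot\bigl(\rho(f)B\bigr)$ you are done, since $\rho(f)B$ is compact and $\phi(\chi_{K})A$ is bounded; pseudo-locality of $A$ plays no role in the ideal property, only its controlled propagation does. (The paper inserts $\phi(\chi_{U^{-1}[K]})$ instead of $\rho(f)$, avoiding $\rho$ altogether, and in fact only records the propagation and support statements, leaving local compactness of $QA$ implicit just as you would.) With these two repairs your proof is correct and coincides in substance with the paper's.
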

\begin{proof}
Recall that $\cC^\ast_{\lc}(\cY_{ c},H ,\phi)$ is generated by locally compact,  bounded operators of    controlled propagation which belong to  $\cC_{\lc}^\ast(Y,H(Y),\phi_{Y})$ some $Y$ in $\cY_{c}$.
For short we  say that such an operator is supported on $Y$.

  A locally compact operator $A$ is pseudo-local. Indeed,
  for $f$ in $C_{0}(M)$ we can find a bounded subset $B$ in $\cB_{g}$  such that
  $\rho(f)\phi(B)=\rho(f)=\phi(B)\rho(f)$. But then the commutator
  $$[ A,\rho(f)]=A\phi(B)\rho(f)-\rho(f)\phi(B)A$$ is compact.

In order to show that $\cC^\ast_{\lc}(\cY_{ c},H ,\phi)$ is an ideal we consider generators
 $A $ of $\cC^\ast_{\lc}(\cY_{ c},H ,\phi)$ supported on $Y$ in $ \cY_{c}$ and $Q$  in $\cD^\ast(M,H,\rho)$. In particular, both $A$ and $Q$ have  controlled propagation. 
 
We argue that
  $QA\in \cC^\ast_{\lc}(\cY_{c},H ,\phi)$.  The argument for $AQ$ is analogous. 
  It is clear that $QA$ again has controlled propagation. 
  Let  $U:=\supp(Q)$  be the propagation of~$Q$ (measured with the control $\phi$). Then
  $QA$ is supported on    $U[Y]$ which is also a member of $\cY_{ c}$. 
 
For every bounded subset $B$ of $M_{g}$ we furthermore have the identity $$ \phi(B)QA = \phi(B)Q\phi( U^{-1}[B]) A\, .$$ Since $U^{-1}[B]$ is bounded and $A$ is locally compact we conclude that $ \phi(B)QA$ is compact. Clearly also $QA\phi(B)$ is compact. This shows that $QA$ is locally compact.
\end{proof}

In order to define the index class  we use the boundary operator in $K$-theory 
$$\partial:{K_{\ast+1}}( \cD^\ast(M,H,\rho)/\cC^\ast_{\lc}(\cY_{ c},H ,\phi))\to {K_{\ast}}(\cC^\ast_{\lc}(\cY_{ c},H ,\phi))\, .$$
associated to the short exact sequence of $C^{*}$-algebras
$$0\to  \cC^\ast_{\lc}(\cY_{c},H ,\phi)\to  \cD^\ast(M,H,\rho)\to  \cD^\ast(M,H,\rho)/\cC^\ast_{\lc}(\cY_{c},H ,\phi)\to 0\, .$$
{The value of $\ast $ in $\{0,1\}$ will depend on whether the operator $\Dirac$ is graded or not.}

We choose  a function $\chi_{c}$ in $C^{\infty}(\R)$ with the following properties:
\begin{enumerate}
\item $\supp(\chi_{c}^{2}-1)\subseteq [-c,c]$  
\item $\lim_{t\to \pm\infty}\chi_{c}(t)=\pm 1$.
 \end{enumerate}

If $\psi:\R\to\R$ is a bounded measurable function on $\R$, then we can define the bounded operator $\psi(\Dirac)$ using functional calculus. The construction of the large scale index class of $\Dirac$ depends on the following key result of Roe:
 \begin{lem}[{\cite[Lem.~2.3]{roe_psc_note}}]\label{dijqwodiqwdqwqwdq} If $\psi$ in $C^{\infty}(\R)$ is supported in $[-c,c]$, then 
we have $ \psi(\Dirac)\in  \cC_{\lc}^{*}(\cY_{ c},H ,\phi)$.
\end{lem}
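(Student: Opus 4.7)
The proof proceeds in three steps: (i) show that $\psi(\Dirac)$ is a norm limit of operators of controlled propagation; (ii) show it is locally compact; (iii) show that these approximations can in fact be chosen to be supported on members of the big family $\cY_c$. Steps (i) and (ii) are the standard Chernoff + Rellich package: write $\psi(\Dirac) = (2\pi)^{-1/2}\int \hat\psi(t) e^{it\Dirac}\,dt$ via Fourier inversion, and use that on a complete Riemannian manifold the Dirac wave operator $e^{it\Dirac}$ has propagation speed bounded by $|t|$. Truncating the integral at $|t|\le R$ produces operators $\psi_R(\Dirac)$ of propagation at most $R$, and since $\hat\psi$ is Schwartz the norm $\|\psi(\Dirac)-\psi_R(\Dirac)\|$ decays faster than any power of $R^{-1}$. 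Local compactness of each $\phi(B)\psi(\Dirac)$ for bounded $B\subseteq M$ follows from elliptic regularity and Rellich's lemma applied to $\Dirac$.

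For step (iii), fix an entourage $U$ and put $V := U[M\setminus M_c]\in\cY_c$, so that $V^c\subseteq M_c$ lies at $U$-distance positive from $M\setminus M_c$. Decompose
\[
\psi(\Dirac) = \phi(V)\psi(\Dirac)\phi(V) + \phi(V^c)\psi(\Dirac)\phi(V) + \phi(V)\psi(\Dirac)\phi(V^c) + \phi(V^c)\psi(\Dirac)\phi(V^c).
\]
The first three summands belong to $\cC^\ast_{lc}(V,H(V),\phi_V)$ by construction, so it suffices to show that the error $\phi(V^c)\psi(\Dirac)\phi(V^c)$ becomes small in norm once $U$ is chosen large. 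Replacing $\psi(\Dirac)$ by $\psi_R(\Dirac)$ at the cost of a controlled error and choosing $U_R\subseteq U$, finite propagation speed forces both $\phi(V^c)$ and $\psi_R(\Dirac)\phi(V^c)$ to act only through sections supported in $M_c$. The Weitzenböck identity $\Dirac^2=\nabla^\ast\nabla+\cR$ then supplies the spectral lower bound $\langle \Dirac^2\xi,\xi\rangle \ge c\|\xi\|^2$ for every $\xi$ supported in $M_c$, and this bound is converted via an inequality of the form $|\psi(\lambda)|^2 \le C_\psi\,(c-\lambda^2)_+$ (valid for smooth $\psi$ supported in $[-c,c]$) into a quantitative decay estimate for $\|\psi_R(\Dirac)\phi(V^c)\xi\|$. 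Taking $U$ large enough then drives the error below any prescribed $\varepsilon$, placing $\psi(\Dirac)$ in the closed ideal $\cC^\ast_{lc}(\cY_c,H,\phi)$.

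The main obstacle is the spectral estimate inside step (iii): the Weitzenböck inequality is only a quadratic form estimate and does not localize the spectral measure of individual sections supported in $M_c$ within $\{|\lambda|\ge\sqrt c\}$, so passing from it to a norm bound on $\psi(\Dirac)\xi$ requires a careful functional-calculus argument and attention to the constants relating the support of $\psi$ to the curvature threshold $c$. The cleanest way to carry this out is to work with the bounded normalised operator $F := \Dirac(1+\Dirac^2)^{-1/2}$, transfer the Weitzenböck bound to $F^2 \ge c/(1+c)$ on sections supported in $M_c$, reinterpret this as an inequality in the quotient $C^\ast$-algebra $\cC^\ast_{lc}(M,H,\phi)/\cC^\ast_{lc}(\cY_c,H,\phi)$, and then invoke continuous functional calculus in the quotient to conclude that every $\chi(F)$ with $\chi$ supported in the resulting spectral gap maps to zero in the quotient; a change of variables $\lambda\mapsto \lambda/\sqrt{1+\lambda^2}$ brings this back to the statement for $\psi(\Dirac)$.
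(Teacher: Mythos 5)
The paper itself offers no proof of this lemma: it is quoted from Roe \cite[Lem.~2.3]{roe_psc_note} precisely because its proof is nontrivial, so the only comparison available is with Roe's argument, whose overall architecture you reproduce correctly. Steps (i) and (ii) are fine, and the reduction of step (iii) to showing that $\|\phi(V^{c})\,\psi(\Dirac)\,\phi(V^{c})\|$ is small for $V=U[M\setminus M_{c}]$ with $U$ large is the right reduction. Two smaller inaccuracies first: the cross terms $\phi(V^{c})\psi(\Dirac)\phi(V)$ and $\phi(V)\psi(\Dirac)\phi(V^{c})$ do \emph{not} lie in $\cC_{lc}^{*}(V,H(V),\phi_{V})$, since they are not supported on $V\times V$; after replacing $\psi(\Dirac)$ by its propagation-$R$ truncation they lie in $\cC_{lc}^{*}(W,H(W),\phi_{W})$ for the larger member $W=U_{R}[V]$ of $\cY_{c}$, which is what you actually need. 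Also, $\cR\ge c$ on $M_{c}$ gives the gap $|\Dirac|\ge\sqrt{c}$ on sections supported there, so the pointwise bound $|\psi(\lambda)|^{2}\le C_{\psi}(c-\lambda^{2})_{+}$ requires $\psi$ to be supported in $[-\sqrt{c},\sqrt{c}]$, not $[-c,c]$; this normalization mismatch is already latent in the statement as transcribed and you should fix one convention.

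The genuine gap is the conversion of the Weitzenb\"ock form estimate into a norm bound on $\psi(\Dirac)\phi(V^{c})$. Your first attempt fails outright: with $\mu_{\xi}$ the spectral measure of $\xi$, the bound $|\psi|^{2}\le C_{\psi}(c-\lambda^{2})_{+}$ gives $\|\psi(\Dirac)\xi\|^{2}\le C_{\psi}\int(c-\lambda^{2})_{+}\,d\mu_{\xi}$, and the Weitzenb\"ock input $\int\lambda^{2}\,d\mu_{\xi}\ge c\|\xi\|^{2}$ only controls $\int(c-\lambda^{2})\,d\mu_{\xi}\le 0$; since $(c-\lambda^{2})_{+}\ge c-\lambda^{2}$, the inequality runs the wrong way and the positive part can stay large (mass of $\mu_{\xi}$ near $0$ is compensated by mass at large $|\lambda|$). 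You correctly flag this, but the proposed repair is not carried out and is essentially circular. The claim ``$F^{2}\ge c/(1+c)$ on sections supported in $M_{c}$'' does not follow from Weitzenb\"ock, because $\langle F^{2}\xi,\xi\rangle=\|\Dirac u\|^{2}$ with $u=(1+\Dirac^{2})^{-1/2}\xi$, and it is $u$, not $\xi$, that must be supported in $M_{c}$ for the curvature bound to apply; the resolvent does not preserve supports. And ``reinterpret this as an inequality in the quotient'' asks for a self-adjoint $j$ in the ideal with $F^{2}+j\ge c/(1+c)-\varepsilon$ on all of $H$, which is equivalent to $(\tfrac{1}{1+\Dirac^{2}}-\tfrac{1}{1+c})_{+}$ lying in $\cC_{lc}^{*}(\cY_{c},H,\phi)$ --- i.e.\ to the lemma itself for a particular continuous function supported in $[-\sqrt{c},\sqrt{c}]$. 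The missing ingredient, which is the actual content of Roe's proof, is a quantitative localization of the spectral estimate: one must exploit that the resolvent is quasi-local with exponentially small tails (via $(1+\Dirac^{2})^{-1}=\tfrac12\int e^{-|t|}e^{it\Dirac}\,dt$ and unit propagation speed) to show that the failure of the operator inequality is concentrated, up to arbitrarily small norm, on a controlled thickening of $M\setminus M_{c}$ and hence absorbed into the ideal. As written, step (iii) restates the conclusion rather than proving it.
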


Note that $\chi_{c}^{2}-1$ is supported in $[-c,c]$. The lemma implies therefore that
$$[(1+\chi_{c}(\Dirac))/2]\in K_{0}( \cD^\ast(M,H,\rho)/\cC^\ast_{\lc}(\cY_{ c},H ,\phi))\, .$$
We then define the class 
\begin{equation}\label{weflkjwefeiu23oiuo2332432423424}
\partial [(1+\chi_{c}(\Dirac))/2]\in K_{1}(\cC^\ast_{\lc}(\cY_{ c},H ,\phi))\, .
\end{equation}
This class is independent of the choice of $\chi_{c}$. Indeed, if $\tilde\chi_{c} $ is  a different choice, then
again by the lemma $(1+\chi_{c}(\Dirac))/2$ and $(1+\tilde\chi_{c}(\Dirac))/2$ represent the same class in the quotient $\cD^\ast(M,H,\rho)/\cC^\ast_{\lc}(\cY_{ c},H ,\phi)$.

The class \eqref{weflkjwefeiu23oiuo2332432423424} is the coarse index class in the case the Dirac operator $\Dirac$ is ungraded. In the case that $S$ is graded, i.e., $S = S^+ \oplus S^-$, and the Dirac operator $\Dirac$ is an odd operator, i.e., $\Dirac^{\pm}\colon S^\pm \to S^\mp$, we choose a unitary $U\colon S^- \to S^+$ of controlled propagation. At this point, for simplicity we assume that $M$ has no zero-dimensional components. Then the relevant controlled Hilbert spaces are ample and we can find such an operator $U$ by   Lemma~\ref{lem:drf8934}.  The above lemma implies that
\[[U \chi_c(\Dirac^+)]\in K_{1}( \cD^\ast(M,H^+,\rho^+)/\cC^\ast_{\lc}(\cY_{ c},H^+ ,\phi^+))\, ,\]
where $H^+:=L^{2}(M,S^+)$ and $\rho^+$, $\phi^+$ are the restrictions of $\rho$, $\phi$ to $H^+$.
We then define the class 
\begin{equation}\label{wo233mjnrf423424}
\partial [U \chi_c(\Dirac^+)]\in K_{0}(\cC^\ast_{\lc}(\cY_{ c},H^+ ,\phi^+))\, .
\end{equation}
Again by the lemma, this class is independent of the choice of $\chi_c$. It is also independent of the choice of $U$ since two different choices will result in operators $U \chi_c(\Dirac^+)$ and $\tilde U \chi_c(\Dirac^+)$ such that their difference in the quotient algebra $U \chi_c(\Dirac^+) (\tilde U \chi_c(\Dirac^+))^\ast \sim U \tilde U^\ast$ comes from $\cD^\ast(M,H^+,\rho^+)$. Hence they map to the same element under the boundary operator.

In the following we argue that the coarse index classes \eqref{weflkjwefeiu23oiuo2332432423424} and \eqref{wo233mjnrf423424} can naturally be interpreted as coarse $K$-homology classes in $\KX_{\ast}(\cY_{c})$. We also assume that  $M$ has no zero-dimensional components.
 
By an inspection of the construction  {in Example} \ref{rem:sdb7834rw}  we see that
there is a cofinal subfamily of members $Y$ of $ \cY_{c}$ with the property that $(H(Y),\phi_{Y})$ is ample. We will call such $Y$ good.
An arbitrary member may not be not good, since it may not contain enough of the evaluation points denoted by $d_{\alpha}$
in Example \ref{rem:sdb7834rw}.

Every member of $  \cY_{c}$ is a   separable  bornological coarse space.
  If {the member}  $Y$ is good, then by Proposition \ref{lem:sdfbi23}
the canonical inclusion is an equality $$\cC^\ast (Y,H(Y) ,\phi_{Y})= \cC^\ast_{\lc}(Y ,H(Y) ,\phi_{Y})\, .$$
In view of Theorem \ref{fwefiwjfeiooi234234324434} we have a canonical isomorphism
$${K_{\ast}}(\cC^\ast_{\lc}(Y,H(Y),\phi_{Y}))\cong  {\KX_{\ast}}(Y)\, .$$
Recall that we define
$$\KX(\cY_{c}):=\colim_{Y\in \cY_{c}} \KX(Y)\, .$$
We can restrict the colimit to the cofinal subfamily of good members.
Since {taking} $C^{*}$-algebra $K$-theory and   homotopy groups commutes {with} filtered colimits of $C^{*}$-algebras we get the canonical isomorphism
\begin{equation}
\label{jkbsdf78234wfsd}
{K_{\ast}}(\cC^\ast_{\lc}(\cY_{ c},H,\phi))\cong {\KX_{\ast}}(\cY_{c})\, .
\end{equation}

\begin{ddd}
The large scale index\index{large scale index class}\index{index class}
$\ind_{c}(\Dirac)$ in ${\KX_{\ast}}(\cY_{  c})$ of $\Dirac$ is {defined by \eqref{weflkjwefeiu23oiuo2332432423424} in the ungraded case, resp.\ by \eqref{wo233mjnrf423424} in the graded case,} under the identification \eqref{jkbsdf78234wfsd}.
\end{ddd}

\begin{rem}
A much more detailed construction of this index class (even in the equivariant case) is discussed in 
\cite{indexclass}.  In this paper we furthermore prove a relative index theorem and a compatibility with suspension.
One of the goals of that paper is to connect the analytical constructions  in \cite{MR3551834} with the coarse homotopy theory as developed in the present book.  As the discussion above shows this is not completely trivial since
the analytic representative of  index class lives in the $K$-theory of a Roe algebra associated to an ample $M$-controlled Hilbert space, while the $K$-theory $\KX(M)$ is build from Roe algebras of objects in $\bC^{*}(M)$ which are never ample by local finiteness. The bridge is provided  by the  comparison Theorem \ref{fwefiwjfeiooi234234324434} and its functorial version Theorem \ref{fwefiwjfeiooi234234324434e}.
 \hB
\end{rem}

These index classes are compatible for different choices of $c$. If $c,c'$ are in $\R$ such that  $0<c<c^{\prime}$, then $M_{c^{\prime}}\subseteq M_{c}$. Consequently  every member of $\cY_{c}$ is contained in some member of $\cY_{c^{\prime}}$. 
We thus get a map
$$\iota:\KX(\cY_{ c })\to  \KX(\cY_{c^{\prime} })\, .$$
An inspection of the construction using the independence of the choice of $\chi_{c}$ discussed above
one checks  the relation
$$\iota_{*}\ind_{c }(\Dirac)=\ind_{c^{\prime}}(\Dirac)\, .$$

At the cost of losing some information we can also encode the positivity of $\Dirac$ in a bornology $\cB_{c}$ on $M$.

\begin{ddd}\label{oidqiodqoidu981u98u791313213} 
We define $\cB_{c}$\index{$\cB_{c}$} to be the family of subsets $B$  of $X$ such that $B\cap Y\in \cB_{g}$  for every $Y$ in $\cY_{c}$.
\end{ddd}
It is clear that $\cB_{g}\subseteq \cB_{c}$.

\begin{lem}
 $\cB_{c}$ is a bornology on $M$ which is compatible with the coarse structure $\cC_{g}$.
\end{lem}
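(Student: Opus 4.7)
The plan is to verify the two axioms of a bornology and then the compatibility condition, using in each case the big-family structure $\cY_c = (U[M \setminus M_c])_{U \in \cC_g}$ and the fact that $(\cC_g, \cB_g)$ is itself a compatible bornological coarse structure on $M$.

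For the bornology axioms, closure under subsets is immediate: if $B' \subseteq B$ and $B \cap Y \in \cB_g$ for every $Y \in \cY_c$, then $B' \cap Y \subseteq B \cap Y$ is bounded. Closure under finite unions follows because $(B_1 \cup B_2) \cap Y = (B_1 \cap Y) \cup (B_2 \cap Y)$ is a finite union of members of $\cB_g$. Finally, $\cB_g \subseteq \cB_c$ since for $B \in \cB_g$ any intersection $B \cap Y$ is a subset of $B$, hence in $\cB_g$; combined with $\bigcup_{B \in \cB_g} B = M$ this shows $\bigcup_{B \in \cB_c} B = M$.

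The heart of the proof is compatibility. Fix $U \in \cC_g$ and $B \in \cB_c$; we must check $U[B] \in \cB_c$, i.e.\ that $U[B] \cap V[M \setminus M_c] \in \cB_g$ for every $V \in \cC_g$. The key geometric observation is the inclusion
\[
U[B] \cap V[M \setminus M_c] \;\subseteq\; U\bigl[B \cap (U^{-1} \circ V)[M \setminus M_c]\bigr].
\]
Indeed, given $x$ in the left-hand side, pick $b \in B$ with $(x,b) \in U$ and $y \in M \setminus M_c$ with $(x,y) \in V$; then $(b,y) \in U^{-1} \circ V$, so $b \in B \cap (U^{-1}\circ V)[M \setminus M_c]$, and $x \in U[b]$.

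Setting $W := U^{-1} \circ V \in \cC_g$ and using $B \in \cB_c$, the subset $B \cap W[M \setminus M_c]$ lies in $\cB_g$. The compatibility of $\cB_g$ with $\cC_g$ then gives $U[B \cap W[M \setminus M_c]] \in \cB_g$, and by the displayed inclusion $U[B] \cap V[M \setminus M_c]$ is a subset of a member of $\cB_g$, hence in $\cB_g$. Since $V$ was arbitrary, $U[B] \in \cB_c$. No step presents a genuine obstacle; the only point requiring care is the elementary entourage manipulation producing the entourage $U^{-1} \circ V$ in the displayed inclusion.
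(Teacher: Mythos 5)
Your proof is correct and follows essentially the same route as the paper: both verify the bornology axioms directly and then establish compatibility via the same displacement inclusion (the paper writes it as $U[B]\cap Y\subseteq U[B\cap U^{-1}[Y]]$ for a member $Y$ of $\cY_c$, which is your inclusion after unwinding $Y=V[M\setminus M_c]$ and composing entourages). Your version merely makes explicit the final appeal to compatibility of $\cB_g$ with $\cC_g$, which the paper leaves implicit.
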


\begin{proof}
It is clear that $\cB_{c}$ covers $M$, is closed under subsets and finite unions. 
Let $B$ be in $\cB_{c}$.
For every entourage
$U$ of $M$ and $Y$ a member of $\cY_{ c}$ we have
$$U[B]\cap Y\subseteq U[B\cap U^{-1}[Y]]\, .$$ This subset of $M$ belongs to $ \cB_{g}$, since
$U^{-1}[Y]\in \cY_{ c}$ and hence $B\cap U^{-1}[Y]\in \cB_{g}$.
\end{proof}

We  consider  the bornological coarse space $M_{g,c}:=(M,\cC_{g},\cB_{ c})$ in $ \BC $.  If $Y$ is a member 
of  $\cY_{ c}$ then it is considered as a bornological coarse space with the structures induced from $M_{g}$.
By construction of $\cB_{c}$ the inclusion $Y\to M_{g,c}$ is also proper and hence a morphism.
Taking the colimit we get a morphism of spectra 
 $$\kappa :\KX(\cY_{ c})\to \KX(M_{g,c})\, .$$
 We can consider the class
$$\ind_{M}(\Dirac,c):=\kappa(\ind(\Dirac,c))$$ in ${\KX_{\ast}}(M_{g,c})$.

\begin{ex} The Dirac operator 
 $\Dirac$ is invertible at $\infty$, if there exists $c > 0$ such that $M \setminus M_c$ is compact. In this case 
 $\cY_{c}=\cB_{g}$ is the family of relatively compact subsets of $M$ and $\cB_{c}$ is the maximal bornology.
 Consequently,   
  the map $$p :  M_{g,c} \to \ast$$ is a morphism in $\BC$ and therefore we can consider $p_\ast \ind_{M}(\Dirac,c)$ in ${\KX_{\ast}}(\ast)$.
  If $\Dirac$ is invertible at $\infty$, then it is Fredholm and  $p_\ast\ind_{M}(\Dirac,c)$
 is its Fredholm index.

Note that we  recover  the Fredholm index of $\Dirac$  by applying a morphism. This shows the usefulness of the category $\BC$, where we can change the bornology and coarse structure quite independently from each other.
\hB
\end{ex}

%

\begin{rem} In \cite{cw}
 we discuss more aspects of index theory in realm of coarse homotopy theory including the construction of secondary invariants in \cite{cw} and boundary value problems.  \hB
\end{rem}

\subsection{\texorpdfstring{$K$}{K}-theoretic coarse assembly map}
\label{sec:kjnbsfd981}
 
In this section we discuss the construction of the coarse assembly map following Higson--Roe \cite{hr} and Roe--Siegel \cite{Roe:2012aa}. We will just explain its definition using the language developed in this book. 
We will explain why the comparison result  Corollary \ref{thm:sdf98245csv} is not directly applicable
in order to prove that it is an isomorphism under a finite asymptotic dimension assumption, i.e., the coarse Baum--Connes conjecture. In \cite{ass} we will provide a better construction of the assembly map which indeed allows to apply
 Corollary \ref{thm:sdf98245csv} in order to deduce the coarse Baum--Connes conjecture.

Let $(Y,d)$ be a separable proper metric space,   and let $Y_{d}$ be the associated coarse bornological space. 
We let $Y_{t}$ denote the underlying locally compact topological space of $Y$ which we consider as a topological bornological   space. Let furthermore $(H,\phi)$ be an ample $Y_{d}$-controlled Hilbert space, and let $\rho:C_{0}(Y_{t})\to B(H)$ be a $^\ast$-representation {satisfying the three conditions mentioned in Remark~\ref{rem43wer45er}. The notions of local compactness and controlled propagation for operators on $H$ do not depend on using $\rho$ or $\phi$.} We refer to the Example \ref{rem:sdb7834rw} which explains why we need these requirements and can not just assume that $\phi$ extends $\rho$.

 We get an exact sequence   of
  $C^{*}$-algebras
\begin{equation}\label{vfeklelkjvioerjtert}
0\to \cC^\ast_{\lc}(Y_{d},H,\phi)\to \cD^\ast(Y_{d},H,\rho)\to Q^\ast(Y_{d},H,\rho)\to 0 
\end{equation}
defining the quotient  $Q^\ast(Y_{d},H,\rho)$\index{$Q^\ast(Y_{d},H,\rho)$}, where the $C^{*}$-algebra in the middle is  characterized in Definition \ref{wefijwifo24425345345345}. 

Since $(H,\rho)$ is ample, we have the  Paschke duality isomorphism (marked by $P$)\index{Paschke duality} 
\begin{equation}\label{erkgjhergiu43tui34ut3t3t3t43}K_{*+1}(Q^\ast(Y_{d},H,\rho))\stackrel{P}{\cong} KK_{*}(C_{0}(Y_{t}),\C )\cong K^{\an,\lf}_{*}(Y_{t})\, ,\end{equation}
see  \cite{paschke,higson_roe,MR2220524,higson_paschke_duality}. The analytic locally finite $K$-homology  $K^{\an,\lf}(Y_{t})$ was introduced in Definition \ref{foiehweiofui23ur892342424} (with the notation $K^{\an,\lf}_{\C}$, but here we drop the subscript $\C$), and we use Lemma \ref{ilfjiweofuwei9u9435345345} and Proposition \ref{qerugfihierwgegwerfwefwerfwerfw} (the fact that $K^{\an}$ is locally finite) for the second isomorphism.

The boundary map in $K$-theory associated to the exact sequence of $C^{*}$-algebras \eqref{vfeklelkjvioerjtert} therefore gives rise to a  homomorphism\index{$A$}
\[A:K^{\an,\lf}_{*}(Y_{t})\stackrel{\eqref{erkgjhergiu43tui34ut3t3t3t43}}{\cong} K_{*+1}(Q^\ast(Y_{d},H,\rho)) \xrightarrow{\partial} K_\ast(\cC^\ast_{\lc}(Y_{d},H,\phi)) \cong \KX_{lc,*}(Y_{d}) \cong \KX_{*}(Y_{d})\, ,\]
{where the last two isomorphisms are due to Theorem~\ref{fwefiwjfeiooi234234324434} and Corollary~\ref{iofewoifu9234234234324}.}
It is further known that the homomorphism $A$ is independent of the choice of the $H$, $\phi$ and $\rho$ above {\cite[Sec.~4]{higson_roe_yu}}.
Following \cite[Def.~1.5]{MR2220524} we adopt the following definition.

Let $(Y,d)$ be a separable proper metric space.
\begin{ddd}\label{eifjweoifoiwefwefwefw}
 The homomorphism $$A:K^{\an,\lf}_{*}(Y_{t})\to  \KX_{*}(Y_{d})$$ described above is called the analytic assembly map.\index{analytic!assembly map}\index{assembly map!analytic}
\end{ddd}

We now apply the above construction to the space of {controlled}  probability measures  $P_{U}(X)$ on  a bornological coarse space $X$, see \eqref{gegljio3tuio38tut3t3t34}. 

Let $X$ be a bornological coarse space. 
\begin{ddd}\label{oijioegewr}
$X$ has strongly locally bounded geometry if it has the minimal compatible bornology and for every entourage $U$ of $X$ every $U$-bounded subset of $X$ is finite.\index{bounded geometry!strongly locally}\index{locally!bounded geometry!strongly}\index{strongly!locally bounded geometry}
\end{ddd}

In contrast to the notion of  ``strongly bounded geometry''    (Definition~\ref{ejfwjefklwejlkewfwefwefwefewf}) we drop   the condition of uniformity of the bound  on the cardinalities of $U$-bounded subsets.

Assume that $X$ is  a bornological coarse space  which has strongly  locally bounded geometry.
For every entourage $U$ of $X$
  containing the diagonal  we consider the space $P_{U}(X)$.  Since $X$ has strongly locally bounded geometry $P_{U}(X)$  is a  locally finite simplicial complex. We equip the simplices of  $P_{U}(X)$ with the spherical 
metric and $P_{U}(X)$ itself with the induced path-metric $d$ (recall that points in different components have infinite distance). With this metric $(P_{U}(X),d)$ is a proper metric space and we denote by $P_{U}(X)_{d}$   the underlying       bornological coarse space, and by $P_{U}(X)_{t}$ the underlying topological bornological space.
The Dirac measures provide an embedding $X\to P_{U}(X)$. This map is an equivalence $X_{U}\to P_{U}(X)_{d}$ in $\BC$, see Definition \ref{ioioieorwerr3245345345432}.
We therefore get a map
$$A_U:K^{\an,\lf}_{*}(P_{U}(X)_{t})\xrightarrow{A} \KX_{*}(P_{U}(X)_{d})\stackrel{\ref{hewifeiiu3984u239824424}}{\cong} \KX_{*}(X_{U})\, .$$
These maps are compatible with the comparison maps on domain and target associated to inclusions $U\subseteq U^{\prime}$ of entourages.
We now form the colimit over the entourages $  \cC$ of $X$  and get the homomorphism 
$$\mu:  QK_\ast^{\an,\lf}(X) \cong \colim_{U\in\cC}K^{\an,\lf}_{*}(P_{U}(X)_{t})\to \colim_{U\in \cC}\KX_{*}(X_{U}) \cong \KX_{*}(X)\, ,$$
{where $  QK^{\an,\lf}$ is defined in Definition  \ref{wefiwiofuwe987u982523453453}, the first isomorphism explained in  Remark \ref{eifewfioeu9ewu982742345345} and the second isomorphism follows from  Proposition \ref{hewifeiiu3984u2398244241}.}
%

In the construction above we assumed that $X$ is a bornological  coarse space with strongly locally bounded geometry. 
 The condition of strongly locally bounded geometry is not invariant under equivalences of bornological coarse spaces. For example, the inclusion $\Z\to \R$ is an equivalence, but $\Z$  has strongly locally bounded geometry, while $\R$ has not.  

Let $X$ be a bornological coarse space.
\begin{ddd}\label{fwejfoiwejfewojfoefjewfewfewf}
$X$ has locally bounded geometry\index{bounded geometry!locally}\index{locally!bounded geometry} if  it is equivalent to a
bornological coarse space of strongly locally bounded geometry. 
\end{ddd}
Note that bounded geometry (Definition \ref{dsf892323234}) implies locally bounded geometry.

We now observe that the domain (see Proposition \ref{cegiojiojergergeg}) and target of the homomorphism $\mu$ are coarsely invariant. By naturality we can therefore  define the homomorphism $\mu$ for all $X$ of  locally bounded geometry using
a locally finite approximation $X^{\prime}\to X$
$$\mu:  Q K^{\an,\lf}_{*}(X)\cong  Q K^{\an,\lf}_{*}(X^{\prime})\to \KX_{*}(X^{\prime})\cong \KX_{*}(X)\, .$$

Let  $X$ be a bornological coarse space of locally bounded geometry. 
\begin{ddd}\label{wefijweiofoiwe245435}
 The homomorphism $$\mu:  Q K^{\an,\lf}_{*}(X)\to \KX_{*}(X)$$\index{$Ass$|see{coarse assembly map}}described above is called the $K$-theoretic coarse assembly map.\index{assembly map!coarse}\index{coarse!assembly map}
\end{ddd}

\begin{rem}
The notation
$  Q K^{\an,\lf}_{*}(X)$ for the domain of the assembly map looks complicated and this is not an accident.
This coarse homology group is defined using a complicated procedure starting with functional analytic data and using the homotopy theoretic machine of locally finite homology theories in order to produce the functor $K^{\an,\lf}$ which 
we then feed into the coarsification machine   introduced in Definition \ref{wefiwiofuwe987u982523453453}. The construction of the assmbly map heavily depends on the analytic picture, in particular on Paschke duality and the boundary operator in $K$-theory.

The spaces $P_{U}(X)$ which occcur during the construction of the assembly map are all locally finite-dimensional simplicial complexes. So by Corollary \ref{lijfoieijwofoewefewfewwf} we could replace
$QK^{\an,\lf}(X)$ by $Q(KU \wedge \Sigma^{\infty}_{+})^{\lf}$ if we wished.
\hB
\end{rem}

\begin{rem}\label{fejweiofewoiwefwf}
One could ask wether the comparison result Corollary \ref{thm:sdf98245csv} implies the coarse Baum--Connes conjecture \index{coarse!Baum--Connes problem}\index{Baum--Connes problem}  stating that the coarse assembly map $\mu$ is an isomorphism, if $X$ is a bornological coarse space of bounded geometry of weakly finite   asymptotic dimension.

Unfortunately, Corollary \ref{thm:sdf98245csv} 
does not directly apply to the 
coarse assembly map $\mu$. 

First of all $\mu$ is not defined for all bornological coarse spaces.
Furthermore it is not a transformation between  spectrum-valued functors. 

The first problem can easily be circumvented by replacing   the category $\BC$ by its full subcategory of 
 bornological coarse spaces of locally bounded geometry everywhere and applying a corresponding version of
 Corollary \ref{thm:sdf98245csv}.

The  second problem is more serious. One would need to refine the constructions leading to the map $A$ (Definition \ref{eifjweoifoiwefwefwefw}) to the spectrum level.  

In \cite{ass} we will study the construction of spectrum-valued assembly maps in a systematic manner.
It will turn out that in general we must modify the construction of the domain of the assembly map.
\hB
\end{rem}

\begin{rem}\label{rem243erds2343e}
There are examples of spaces $X$ with bounded geometry such that the $K$-theoretic coarse assembly map $\mu:  Q K^{\an,\lf}_{*}(X)\to \KX_{*}(X)$ is not surjective, i.e., these spaces are counter-examples to the coarse Baum--Connes conjecture. For the construction of these examples see Higson \cite{counterex_coarse_BC} and Higson--Lafforgue--Skandalis \cite{counterex_BC}.

Composing the $K$-theoretic coarse assembly map with the natural transformation $\KX \to \KXql$ we get the quasi-local version
\[\mu_{\ql}:  Q K^{\an,\lf}_{*}(X)\to \KX_{ql,*}(X)\]
of the coarse assembly map. The natural question is now whether the above surjectivity counter-examples still persist in the quasi-local case.

We were not able to adapt Higson's arguments to the quasi-local case, i.e., the question whether we do have surjectivity counter-examples to the quasi-local version of the coarse Baum--Connes conjecture is currently open.
\hB
\end{rem}

\bibliographystyle{alpha}
\bibliography{born}

\begin{thebibliography}{BEKW20}

\bibitem[ALR03]{TAC_continuous_cat}
J.~Ad{\'{a}}mek, F.~W. Lawvere, and J.~Rosick{\'{y}}.
\newblock {Continuous Categories Revisited}.
\newblock {\em Theory Appl. Cat.}, 11(11):252--282, 2003.

\bibitem[Bar16]{MR3598160}
A.~Bartels.
\newblock On proofs of the {F}arrell-{J}ones conjecture.
\newblock In {\em Topology and geometric group theory}, volume 184 of {\em
  Springer Proc. Math. Stat.}, pages 1--31. Springer, [Cham], 2016.

\bibitem[BCa]{Bunke:2019ab}
U.~Bunke and L.~Caputi.
\newblock {Controlled objects as a symmetric monoidal functor}.
\newblock \href{https://arxiv.org/pdf/1902.03053.pdf}{arXiv:1902.03053}.

\bibitem[BCb]{Bunke:2019aa}
U.~Bunke and L.~Caputi.
\newblock {Localization for coarse homology theories}.
\newblock \href{https://arxiv.org/pdf/1902.04947.pdf}{arXiv:1902.04947}.

\bibitem[BC20]{Bunke:2017aa}
U.~Bunke and D.-Ch. Cisinski.
\newblock {A universal coarse $K$-theory}.
\newblock {\em New York J. Math.}, 26:1--27, 2020.
\newblock \href{https://arxiv.org/pdf/1705.05080.pdf}{arXiv:1705.05080}.

\bibitem[BCKW]{unik}
U.~Bunke, D.-Ch. Cisinski, D.~Kasprowski, and Ch. Winges.
\newblock {Controlled objects in left-exact $\infty$-categories and the Novikov
  conjecture}.
\newblock \href{https://arxiv.org/abs/1911.02338}{arXiv:1911.02338}.

\bibitem[BEa]{cank}
U.~Bunke and A.~Engel.
\newblock {Additive $C^{*}$-categories and $K$-theory}.
\newblock In preparation.

\bibitem[BEb]{ass}
U.~Bunke and A.~Engel.
\newblock {Coarse assembly maps}.
\newblock \href{https://arxiv.org/abs/1706.02164}{arXiv:1706.02164}.

\bibitem[BEc]{coho}
U.~Bunke and A.~Engel.
\newblock {Coarse cohomology theories}.
\newblock \href{https://arxiv.org/abs/1711.08599}{arXiv:1711.08599}.

\bibitem[BEd]{indexclass}
U.~Bunke and A.~Engel.
\newblock {The coarse index class with support}.
\newblock \href{https://arxiv.org/abs/1706.06959}{arXiv:1706.06959}.

\bibitem[BEe]{Bunke:ad}
U.~Bunke and A.~Engel.
\newblock {Topological equivariant coarse $K$-homology and injectivity of
  assembly maps}.
\newblock In preparation.

\bibitem[BEKWa]{injectivity}
U.~Bunke, A.~Engel, D.~Kasprowski, and Ch. Winges.
\newblock {Injectivity results for coarse homology theories}.
\newblock \href{https://arxiv.org/abs/1809.11079}{arXiv:1809.11079}.

\bibitem[BEKWb]{coarsetrans}
U.~Bunke, A.~Engel, D.~Kasprowski, and Ch. Winges.
\newblock Transfers in coarse homology.
\newblock {\em To appear in M{\"u}nster J. Math.}
\newblock \href{https://arxiv.org/abs/1809.08300}{arXiv:1809.08300}.

\bibitem[BEKW19]{transb}
U.~Bunke, A.~Engel, D.~Kasprowski, and Ch. Winges.
\newblock Coarse homology theories and finite decomposition complexity.
\newblock {\em Alg. Geom. Topol.}, 19(6):3033--3074, 2019.
\newblock \href{https://arxiv.org/abs/1712.06932}{arXiv:1712.06932}.

\bibitem[BEKW20]{equicoarse}
U.~Bunke, A.~Engel, D.~Kasprowski, and Ch. Winges.
\newblock {Equivariant coarse homotopy theory and coarse algebraic
  $K$-homology}.
\newblock {\em Contemp. Math.}, 749:13--194, 2020.
\newblock \href{https://arxiv.org/abs/1710.04935}{arXiv:1710.04935}.

\bibitem[BG]{Bunke:2013aa}
U.~Bunke and D.~Gepner.
\newblock {Differential function spectra, the differential Becker-Gottlieb
  transfer, and applications to differential algebraic $K$-theory}.
\newblock \href{http://arxiv.org/abs/1306.0247}{arXiv:1306.0247}.

\bibitem[BGT13]{MR3070515}
A.~J. Blumberg, D.~Gepner, and G.~Tabuada.
\newblock A universal characterization of higher algebraic {$K$}-theory.
\newblock {\em Geom. Topol.}, 17(2):733--838, 2013.

\bibitem[BJM17]{Barnea:2015aa}
I.~Barnea, M.~Joachim, and S.~Mahanta.
\newblock {Model structure on projective systems of $C^*$-algebras and
  bivariant homology theories}.
\newblock {\em New York J. Math.}, 23:383--439, 2017.

\bibitem[BKWa]{fj}
U.~Bunke, D.~Kasprowski, and Ch. Winges.
\newblock Assembly maps and coarse homology theories.
\newblock In preparation.

\bibitem[BKWb]{Bunke:aa}
U.~Bunke, D.~Kasprowski, and Ch. Winges.
\newblock Split injectivity of {A}-theoretic assembly maps.
\newblock \href{https://arxiv.org/abs/1811.11864}{arXiv:1811.11864}, to appear
  in \textit{Int.\ Math.\ Res.\ Not.\ IMRN}.

\bibitem[BL11]{Bartels:2011fk}
A.~Bartels and W.~L{\"u}ck.
\newblock {The Farrell-Hsiang method revisited}.
\newblock {\em Math. Ann.}, 354:209--226, 2011.
\newblock \href{http://arxiv.org/abs/1101.0466}{arXiv:1101.0466}.

\bibitem[Bla98]{blackadar}
B.~Blackadar.
\newblock {\em {$K$-Theory for Operator Algebras}}.
\newblock Cambridge University Press, 2nd edition, 1998.

\bibitem[BLR08]{blr}
A.~Bartels, W.~L{\"u}ck, and H.~Reich.
\newblock {The $K$-theoretic Farrell--Jones conjecture for hyperbolic groups}.
\newblock {\em Invent. math.}, 172:29--70, 2008.

\bibitem[BNV16]{MR3462099}
U.~Bunke, Th. Nikolaus, and M.~V{{\"o}}lkl.
\newblock Differential cohomology theories as sheaves of spectra.
\newblock {\em J. Homotopy Relat. Struct.}, 11(1):1--66, 2016.

\bibitem[Bro82]{brown}
K.~S. Brown.
\newblock {\em {Cohomology of Groups}}, volume~87 of {\em Graduate Texts in
  Mathematics}.
\newblock Springer, 1982.

\bibitem[Buna]{cw}
U.~Bunke.
\newblock {Coarse homotopy theory and boundary value problems}.
\newblock \href{https://arxiv.org/abs/1806.03669}{arXiv:1806.03669}.

\bibitem[Bunb]{crosscat}
U.~Bunke.
\newblock {Non-unital $C^{*}$-categories, (co)limits, crossed products and
  exactness}.
\newblock In preparation.

\bibitem[Bun19]{startcats}
U.~Bunke.
\newblock {Homotopy theory with *-categories}.
\newblock {\em Theory Appl. Cat.}, 34(27):781--853, 2019.

\bibitem[BW92]{block_weinberger_1}
J.~Block and S.~Weinberger.
\newblock {A}periodic {T}ilings, {P}ositive {S}calar {C}urvature, and
  {A}menability of {S}paces.
\newblock {\em J. Amer. Math. Soc.}, 5(4):907--918, 1992.

\bibitem[Cap]{caputi_coarseHH}
L.~Caputi.
\newblock {Cyclic homology for bornological coarse spaces}.
\newblock \href{https://arxiv.org/abs/1907.02849}{arXiv:1907.02849},
  PhD-thesis, University of Regensburg 2019.

\bibitem[CDV14]{cencelj_dydak_vavpetic}
M.~Cencelj, J.~Dydak, and A.~Vavpeti\v{c}.
\newblock {Coarse amenability versus paracompactness}.
\newblock {\em J. Topol. Anal.}, 6(1):125--152, 2014.
\newblock \href{http://arxiv.org/abs/1208.2864v3}{arXiv:math/1208.2864v3}.

\bibitem[Cis19]{Cisinski:2017}
D.~C. Cisinski.
\newblock {\em Higher categories and homotopical algebra}, volume 180 of {\em
  Cambridge studies in advanced mathematics}.
\newblock Cambridge University Press, 2019.
\newblock Available online under
  \url{http://www.mathematik.uni-regensburg.de/cisinski/CatLR.pdf}.

\bibitem[CP98]{MR1634649}
G.~Carlsson and E.~K. Pedersen.
\newblock {\v{C}ech homology and the Novikov conjectures for $K$- and
  $L$-theory}.
\newblock {\em Math. Scand.}, 82(1):5--47, 1998.

\bibitem[DG07]{dadarlat_guentner}
M.~Dadarlat and E.~Guentner.
\newblock {Uniform embeddability of relatively hyperbolic groups}.
\newblock {\em J. reine angew. Math.}, 612:1--15, 2007.

\bibitem[DL98]{davis_lueck}
J.~F. Davis and W.~L{\"u}ck.
\newblock {Spaces over a Category and Assembly Maps in Isomorphism Conjectures
  in $K$- and $L$-Theory}.
\newblock {\em $K$-Theory}, 15:201--252, 1998.

\bibitem[Dyd16]{dydak}
J.~Dydak.
\newblock {Coarse amenability and discreteness}.
\newblock {\em J. Aust. Math. Soc.}, 100:65--77, 2016.
\newblock \href{http://arxiv.org/abs/1307.3943v2}{arXiv:math/1307.3943v2}.

\bibitem[EM06]{em}
H.~Emerson and R.~Meyer.
\newblock Dualizing the coarse assembly map.
\newblock {\em J. Inst. Math. Jussieu}, 5(2):161--186, 2006.

\bibitem[Ger93]{gersten}
S.~M. Gersten.
\newblock {Isoperimetric functions of groups and exotic cohomology}.
\newblock In {\em {Combinatorial and Geometric Group Theory}}, volume 204 of
  {\em London Mathematical Society Lecture Note Series}, pages 87--104, 1993.

\bibitem[GL83]{gromov_lawson_psc}
M.~Gromov and H.~B. {Lawson, Jr.}
\newblock {Positive scalar curvature and the Dirac operator on complete
  Riemannian manifolds}.
\newblock {\em Publ. Math. IH\'ES}, 58(1):83--196, 1983.

\bibitem[Gro93]{gromov}
M.~Gromov.
\newblock {Asymptotic Invariants of Infinite Groups}.
\newblock In G.~A. Niblo and M.~A. Roller, editors, {\em {Geometric Group
  Theory}}, volume 182 of {\em London Mathematical Society Lecture Note
  Series}, 1993.

\bibitem[GTY12]{Guentner:2010aa}
E.~Guentner, R.~Tessera, and G.~Yu.
\newblock {A notion of geometric complexity and its application to topological
  rigidity}.
\newblock {\em Invent. math.}, 189(2):315--357, 2012.

\bibitem[Hei]{Heiss:2019aa}
D.~Heiss.
\newblock Generalized bornological coarse spaces and coarse motivic spectra.
\newblock \href{https://arxiv.org/pdf/1907.03923.pdf}{arXiv:1907.03923}.

\bibitem[Hig95]{higson_paschke_duality}
N.~Higson.
\newblock {$C^\ast$-Algebra Extension Theory and Duality}.
\newblock {\em J. Funct. Anal.}, 129:349--363, 1995.

\bibitem[Hig99]{counterex_coarse_BC}
N.~Higson.
\newblock {Counterexamples to the coarse Baum--Connes conjecture}.
\newblock Unpublished.
  \href{http://www.personal.psu.edu/ndh2/math/Unpublished_files/}{http://www.personal.psu.edu/ndh2/math/Unpublished\_files/},
  1999.

\bibitem[HLS02]{counterex_BC}
N.~Higson, V.~Lafforgue, and G.~Skandalis.
\newblock {Counterexamples to the Baum--Connes conjecture}.
\newblock {\em Geom. Funct. Anal.}, 12:330--354, 2002.

\bibitem[Hoy17]{Hoyois:2015aa}
M.~Hoyois.
\newblock {The six operations in equivariant motivic homotopy theory}.
\newblock {\em Adv. Math.}, 305:197--279, 2017.
\newblock \href{http://arxiv.org/abs/1509.02145}{arXiv:1509.02145}.

\bibitem[HP04]{hamped}
I.~Hambleton and E.K. Pedersen.
\newblock Identifying assembly maps in {K}- and {L}-theory.
\newblock {\em Math. Ann.}, 328:27--57, 2004.

\bibitem[HPR96]{higson_pedersen_roe}
N.~Higson, E.~K. Pedersen, and J.~Roe.
\newblock {$C^\ast$-algebras and controlled topology}.
\newblock {\em $K$-Theory}, 11:209--239, 1996.

\bibitem[HR95]{hr}
N.~Higson and J.~Roe.
\newblock {On the coarse Baum--Connes conjecture}.
\newblock In S.~C. Ferry, A.~Ranicki, and J.~Rosenberg, editors, {\em {Novikov
  conjectures, index theorems and rigidity, Vol.~2}}, London Mathematical
  Society Lecture Notes 227. Cambridge University Press, 1995.

\bibitem[HR00a]{higson_roe_propA}
N.~Higson and J.~Roe.
\newblock {Amenable group actions and the Novikov conjecture}.
\newblock {\em J.~reine angew. Math.}, 519:143--153, 2000.

\bibitem[HR00b]{higson_roe}
N.~Higson and J.~Roe.
\newblock {\em {Analytic $K$-Homology}}.
\newblock Oxford University Press, 2000.

\bibitem[HR05]{MR2220524}
N.~Higson and J.~Roe.
\newblock {Mapping surgery to analysis. III. Exact sequences}.
\newblock {\em $K$-Theory}, 33(4):325--346, 2005.

\bibitem[HRY93]{higson_roe_yu}
N.~Higson, J.~Roe, and G.~Yu.
\newblock {A coarse Mayer--Vietoris principle}.
\newblock {\em Math. Proc. Camb. Phil. Soc.}, 114:85--97, 1993.

\bibitem[JJ06]{Joachim:2007aa}
M.~Joachim and M.~W. Johnson.
\newblock {Realizing Kasparov's KK-theory groups as the homotopy classes of
  maps of a Quillen model category}.
\newblock {\em Contemp. Math.}, 399:163--198, 2006.
\newblock \href{http://arxiv.org/abs/0705.1971}{arXiv:0705.1971}.

\bibitem[Joa03]{joachimcat}
M.~Joachim.
\newblock {$K$-homology of $C^{\ast}$-categories and symmetric spectra
  representing $K$-homology}.
\newblock {\em Math. Ann.}, 327:641--670, 2003.

\bibitem[Joy08]{Joyal}
A.~Joyal.
\newblock {The Theory of Quasi-Categories and its Applications}.
\newblock Online available at
  \href{http://mat.uab.cat/~kock/crm/hocat/advanced-course/Quadern45-2.pdf}{http://mat.uab.cat/$\sim$kock/crm/hocat/advanced-course/Quadern45-2.pdf},
  2008.

\bibitem[Kle01]{klein}
J.~R. Klein.
\newblock {The dualizing spectrum of a topological group}.
\newblock {\em Math. Ann.}, 319(3):421--456, 2001.

\bibitem[Lic63]{MR0156292}
A.~Lichnerowicz.
\newblock Spineurs harmoniques.
\newblock {\em C. R. Acad. Sci. Paris}, 257:7--9, 1963.

\bibitem[LM89]{lawson_michelsohn}
H.~B. {Lawson, Jr.} and M.-L. Michelsohn.
\newblock {\em {Spin Geometry}}.
\newblock Princeton University Press, 1989.

\bibitem[LN18]{Land:2016aa}
M.~Land and Th. Nikolaus.
\newblock {On the Relation between $K$- and $L$-Theory of $C^*$-Algebras}.
\newblock {\em Math. Ann.}, 371:517--563, 08 2018.

\bibitem[LR85]{lange_rabinovich}
B.~V. Lange and V.~S. Rabinovich.
\newblock {Noether property for multidimensional discrete Convolution
  operators}.
\newblock {\em Math. Notes Acad. Sci. USSR}, 37(3):228--237, March 1985.
\newblock {Translated from Matematicheskie Zametki, 37(3):407--421,
  March~1985}.

\bibitem[Lur09]{htt}
J.~Lurie.
\newblock {\em Higher topos theory}, volume 170 of {\em Annals of Mathematics
  Studies}.
\newblock Princeton University Press, Princeton, NJ, 2009.

\bibitem[Lur17]{HA}
J.~Lurie.
\newblock Higher algebra.
\newblock Available at
  \href{http://www.math.harvard.edu/~lurie/}{www.math.harvard.edu/lurie}, 2017.

\bibitem[Mah15]{Mahanta:2012aa}
S.~Mahanta.
\newblock {Noncommutative stable homotopy and stable infinity categories}.
\newblock {\em J. Topol. Anal.}, 7(1):135--165, 2015.

\bibitem[Mit01]{MR1834777}
P.~D. Mitchener.
\newblock {Coarse homology theories}.
\newblock {\em Alg. Geom. Topol.}, 1:271--297, 2001.

\bibitem[Mit02]{mitchc}
P.~D. Mitchener.
\newblock {$C^{*}$-categories}.
\newblock {\em Proceedings of the London Mathematical Society}, 84:375--404,
  2002.

\bibitem[Mit10]{mit}
P.~D. Mitchener.
\newblock {The general notion of descent in coarse geometry}.
\newblock {\em Alg. Geom. Topol.}, 10:2419--2450, 2010.

\bibitem[MV99]{mv99}
F.~Morel and V.~Voevodsky.
\newblock {{$\mathbb A^1$}-homotopy theory of schemes}.
\newblock {\em Publications Math{\'e}matiques de l'Institut des Hautes
  {\'E}tudes Scientifiques}, 90:45--143, 1999.

\bibitem[NY12]{nowak_yu}
P.~W. Nowak and G.~Yu.
\newblock {\em {Large Scale Geometry}}.
\newblock EMS, 2012.

\bibitem[Pas81]{paschke}
W.L. Paschke.
\newblock {$K$-Theory for Commutants in the Calkin Algebra}.
\newblock {\em Pacific J. Math.}, 95(2):427--434, 1981.

\bibitem[Roe93a]{MR1147350}
J.~Roe.
\newblock Coarse cohomology and index theory on complete {R}iemannian
  manifolds.
\newblock {\em Mem. Amer. Math. Soc.}, 104(497):x+90, 1993.

\bibitem[Roe93b]{roe_coarse_cohomology}
J.~Roe.
\newblock {Coarse Cohomology and Index Theory on Complete Riemannian
  Manifolds}.
\newblock {\em Memoirs of the American Mathematical Society}, 104(497):1--90,
  1993.

\bibitem[Roe96]{roe_index_coarse}
J.~Roe.
\newblock {\em {I}ndex {T}heory, {C}oarse {G}eometry, and {T}opology of
  {M}anifolds}, volume~90 of {\em CBMS Regional Conference Series in
  Mathematics}.
\newblock AMS, 1996.

\bibitem[Roe03]{roe_lectures_coarse_geometry}
J.~Roe.
\newblock {\em {Lectures on Coarse Geometry}}, volume~31 of {\em University
  Lecture Series}.
\newblock American Mathematical Society, 2003.

\bibitem[Roe16]{roe_psc_note}
J.~Roe.
\newblock {Positive curvature, partial vanishing theorems and coarse indices}.
\newblock {\em Proc. Edinburgh Math. Soc.}, 59:223--233, 2016.

\bibitem[R{\o}r04]{rordam_stable}
M.~R{\o}rdam.
\newblock {Stable $C^\ast$-algebras}.
\newblock In H.~Kosaki, editor, {\em {Operator Algebras and Applications}},
  Adv. Stud. Pure Math., vol.~38, pages 177--199. Math. Soc. Japan, 2004.

\bibitem[RS12]{Roe:2012aa}
J.~Roe and P.~Siegel.
\newblock {Sheaf theory and Paschke duality}.
\newblock \href{http://arxiv.org/abs/1210.6420}{arXiv:math/1210.6420}, 2012.

\bibitem[RTY14]{Ramras:2011aa}
D.~A. Ramras, R.~Tessera, and G.~Yu.
\newblock {Finite decomposition complexity and the integral Novikov conjecture
  for higher algebraic $K$-theory}.
\newblock {\em J. reine angew. Math.}, 694:129--178, 2014.

\bibitem[Sch32]{schr}
E.~Schr{\"{o}}dinger.
\newblock Reflections and spinors on manifolds.
\newblock In {\em Sitzungsber. Preuss. Akad. Wissen. Phys.--Math. 11}, 1932.

\bibitem[{\v{S}}T19]{st}
J.~{\v{S}}pakula and A.~Tikuisis.
\newblock {Relative Commutant Pictures of Roe Algebras}.
\newblock {\em Communications Math. Physics}, 365:1019--1048, 2019.

\bibitem[{\v{S}}Z20]{spakula_zhang}
J.~{\v{S}}pakula and J.~Zhang.
\newblock {Quasi-Locality and Property A}.
\newblock {\em J. Funct. Anal.}, 278(1), 2020.

\bibitem[Tu01]{tu}
J.-L. Tu.
\newblock {Remarks on Yu's `Property A' for discrete metric spaces and groups}.
\newblock {\em Bull. Soc. math. France}, 129(1):115--139, 2001.

\bibitem[Wil09]{w_exact}
R.~Willett.
\newblock {Some notes on Property A}.
\newblock In G.~Arzhantseva and A.~Valette, editors, {\em {Limits of graphs in
  group theory and computer science}}, pages 191--281. EPFL Press, 2009.

\bibitem[Wri05]{nw1}
N.~J. Wright.
\newblock The coarse {B}aum-{C}onnes conjecture via {$C_0$} coarse geometry.
\newblock {\em J.~Funct. Anal.}, 220(2):265--303, 2005.

\bibitem[WW95]{ww_pro}
M.~Weiss and B.~Williams.
\newblock {Pro-excisive functors}.
\newblock In S.~C. Ferry, A.~Ranicki, and J.~Rosenberg, editors, {\em {Novikov
  conjectures, index theorems and rigidity, Vol.~2}}, LMS Lecture Notes 227.
  Cambridge University Press, 1995.

\bibitem[WY20]{willett_yu_book}
R.~Willett and G.~Yu.
\newblock {\em {Higher index theory}}.
\newblock Cambridge studies in advanced mathematics, vol.~189. Cambridge
  University Press, 2020.

\bibitem[Yu95]{yu_baum_connes_conj_coarse_geom}
G.~Yu.
\newblock {Baum--Connes Conjecture and Coarse Geometry}.
\newblock {\em $K$-Theory}, 9:223--231, 1995.

\bibitem[Yu98]{yu_finite_asymptotic_dimension}
G.~Yu.
\newblock {The Novikov conjecture for groups with finite asymptotic dimension}.
\newblock {\em Ann. Math.}, 147:325--355, 1998.

\bibitem[Yu00]{yu_embedding_Hilbert_space}
G.~Yu.
\newblock {The coarse Baum--Connes conjecture for spaces which admit a uniform
  embedding into Hilbert space}.
\newblock {\em Invent. math.}, 139(1):201--240, 2000.

\bibitem[Zei16]{MR3551834}
R.~Zeidler.
\newblock {Positive scalar curvature and product formulas for secondary index
  invariants}.
\newblock {\em J. Topol.}, 9(3):687--724, 2016.

\end{thebibliography}

\printindex

\end{document}